\documentclass[reqno, oneside]{amsbook}
\usepackage{amssymb, color}
\usepackage{amsmath}
\usepackage{upgreek}
\usepackage{url}

\usepackage{etoolbox}

\usepackage[colorlinks=true, linkcolor=blue, citecolor=blue, urlcolor=blue]{hyperref} 

\AtBeginEnvironment{thebibliography}{\interlinepenalty=10000}

\renewcommand{\thesection}{\thechapter.\arabic{section}}

\makeatletter
\@addtoreset{equation}{section}

\makeatother

\makeatletter
\newcommand{\startchaptertight}[1]{
  \begingroup
  \let\clearpage\relax
  \let\cleardoublepage\relax
  \chapter{#1}
  \endgroup
}

\makeatother

\renewcommand\thefigure{\thesection.\@arabic\c@figure}
\renewcommand\thetable{\thesection.\@arabic\c@table}

\newtheorem{theorem}{Theorem}[section]

\newtheorem{lemma}[theorem]{Lemma}
\newtheorem{proposition}[theorem]{Proposition}
\newtheorem{corollary}[theorem]{Corollary}

\newtheorem{definition}[theorem]{Definition}
\newtheorem{remark}[theorem]{Remark}
\newtheorem{example}[theorem]{Example}
\newtheorem{exercise}[theorem]{Exercise}

\newcommand{\mc}[1]{{\mathcal #1}}
\newcommand{\mf}[1]{{\mathfrak #1}}

\newcommand{\bb}[1]{{\mathbb #1}}

\def\Z{\mathbb Z}
\def\R{\mathbb R}
\def\N{\mathbb N}

\def\L{\mathcal L}
\def\I{\mathcal I}
\def\B{\mathcal B}
\def\F{\mathcal F}

\def\M{\mathcal M}
\def\T{\bb T}

\def\E{\mathbb E}
\def\P{\mathbb P}
\def\S{{\Z^d}}
\def\g{{\mf g}}

\def\H{\mathcal{H}}
\def\f{\hat{f}}

\DeclareRobustCommand{\SkipTocEntry}[5]{}

\begin{document}

\pagenumbering{gobble}          
\frontmatter

\begin{titlepage}

\begin{center}
{\Huge \textbf{Notes on Hydrodynamic Limits and Related Topics}}\\[.9 cm]
\end{center}
\vskip .2cm

{\huge {Sunder Sethuraman}}
\vskip .1cm

{\Large University of Arizona}
\vskip 1.5cm

\addtocontents{toc}{\SkipTocEntry}
\vskip .7cm

\section*{}

In these lecture notes, we discuss various `hydrodynamic LLN' and `CLT' scaling limits, among others, in types of stochastic interacting particle systems, connecting `microscopic' behaviors to continuum laws. 
Via `short stories', 
the aim is to present some of the `basics' for students and those entering the field, as a complement to books such as \cite{KL}, \cite{Ko_La_Ol}, \cite{Liggett1}, \cite{Liggett2}.  
 To be concrete, attention is restricted to a few `mass conservative' systems on discrete spaces, namely exclusion and zero-range processes, that have proved robust in the study of different phenomena.

After preliminaries, we discuss the `entropy' and `relative entropy' methods to prove hydrodynamic limits of the bulk mass in finite volume, as well as other items such as construction of systems in infinite volume and the structure of their invariant measures, and scaling limits of local functionals, such as occupation times of sites and the motion of a tagged particle.  In the last part, we also discuss equilibrium fluctuations of the bulk mass when the process starts from an invariant measure.  

While such a list forms a brief introduction, there are of course several results not covered.
For instance, those with respect to large deviations, mixing times, metastability and condensation phenomena, non-equilibrium fluctuations, the `KPZ' equation and `KPZ' fixed point, and integrable probability, though referenced, are not much discussed.

\vskip 1.5cm

\noindent {\it Acknowledgements.}  These notes were begun for a topics course given in Spring 2012 at the University of Arizona, with input from research papers and texts, and revised during a 2026 sabbatical, supported in part by the Simons Foundation. They have also served as background for recent minicourses in Crete 2023 (Stochastic Methods in Finance and Physics IV), Rio 2024 (IMPA), and Hermosillo 2025 (XV Symposium on Probability and Stochastic Processes).
Many thanks to the  
participants
in these venues for their comments and interest.

\end{titlepage}
\frontmatter

\pagenumbering{gobble}          
\tableofcontents
\clearpage
\pagenumbering{arabic} 

\mainmatter

\chapter[Section $1$]{Preliminaries and hydrodynamics of independent random walks}
\label{lec1}

Before discussing a basic example, illustrating possibilities in the study of `hydrodynamics of stochastic particle systems', we recall some basic notions in Markov chains to set the stage.

\section{Markov chains}

\subsection{Construction}
A family of random variables $\{X_n: n\geq 0\}$ taking values on a countable state space $\Omega$ is called a `discrete time Markov chain' if the `stationary Markov property' is satisfied:
\begin{eqnarray*}
P(X_n = x_n| X_0 = x_0, \ldots, X_m=x_m) & = & P(X_n = x_n| X_m=x_m) \\
& = & P(X_{n-m}=x_n|X_0=x_m)\end{eqnarray*}
for all $x_0,\ldots, x_m\in \Omega$ and $n>m\geq 0$.  When $n=1$ and $m=0$, the last quantity on the right-side represents a `transition probability' of the Markov chain, denoted $p(x,y) = P(X_1=y|X_0=x)$.  The $n$-step probability $p^{(n)}(x,y) = P(X_n=y|X_0=x)$ satisfies a recurrence, $p^{(n+1)}(x,y)= \sum_{z\in \Omega} p^{(n)}(x,z)p(z,y)$.

We now construct a `continuous time Markov chain' on $\Omega$ with `skeleton' $\{X_n: n\geq 0\}$ and a transition probability vanishing on the diagonal, that is $p(x,x)=0$ for all $x\in \Omega$.  Let $\{\lambda_x: x\in \Omega\}$ be a collection of positive numbers, and let $\{W_n: n\geq 0\}$ be a collection of independent identically distributed exponential random variables with rate $1$, independent of the skeleton discrete time chain.  

Define the process $\{Z_t: t\geq 0\}$ as follows:  Initially, $Z_0 = X_0\in \Omega$.  After time $\lambda_{X_0}^{-1}W_0$, the process jumps to value $X_1$, and after a subsequent time $\lambda_{X_1}^{-1} W_1$, the process jumps to value $X_2$, and so on.  Let $T_k = \sum_{i=0}^k \lambda_{X_i}^{-1}W_i$ for $k\geq 0$.  Then,
$$
Z_t  \ = \ \left\{\begin{array}{rl}
x& \ {\rm for \ } 0\leq t< T_0\\
X_1& \ {\rm for  \ } T_0\leq t< T_1\\
\vdots& \ \ \vdots\\
X_n& \ {\rm for \ } T_{n-1}\leq t< T_n\\
\vdots& \ \ \vdots
\end{array}\right.
$$
where $0\leq t< T_\infty = \lim_{n\uparrow\infty}T_n$.
Sufficient conditions for $T_\infty = \infty$ include the cases that the state space $\Omega$ is finite, or that $\sup_{x\in \Omega}\lambda_x<\infty$.  We will assume from now on that the process is `regular', that is $T_\infty = \infty$, so that the process $Z_t$ is defined for all time $t\geq 0$.  

One can show that the `continuous time' chain $\{Z_t: t\geq 0\}$ satisfies the stationary Markov property, which in this context is equivalent to
\begin{eqnarray}
\label{ct_MP}
P(Z_t = y| Z_{t_0}=x_0, \ldots, Z_{t_m} = x_m, Z_s = x) & = & P(Z_t=y| Z_s=x) \\
& = & P(Z_{t-s}=y|Z_0 = x)\nonumber
\end{eqnarray}
for all $x,y,x_0,\ldots, x_m\in \Omega$, $0\leq t_0< \cdots t_m < s<t$ and $m\geq 0$.

Conversely, given a process $\{Z_t: t\geq 0\}$ satisfying \eqref{ct_MP} and also the `jump property' that there exists a sequence of strictly increasing stopping times $\{T_n: n\geq 0\}$ such that $T_0>0 = T_{-1}$ and $Z_t$ is constant on intervals $[T_n,T_{n+1})$ and $Z_{T_n^-} \neq Z_{T_n}$ for $n\geq 0$, one can determine a unique skeleton discrete time Markov chain $\{X_n: n\geq 0\}$ and positive jump parameters $\{\lambda_x: x\in \Omega\}$ such that $X_n = Z_{T_n}$, $p(x,y) = P(Z_{T_{n+1}}=y|Z_{T_n}=x)$, and $T_{n}-T_{n-1}\big |\{X_n\}_{n\geq 0}$ are independent exponentials with rates $\lambda_{X_n}$ for $n\geq 0$.  Chains with the same skeleton and jump parameters have the same joint distributions.  The condition $Z_{T_n^-}\neq Z_{T_n}$ ensures $p$ vanishes on the diagonal.

\subsection{Generators and Kolmogorov equations}
In the discrete time setting, we can define the transition operator $P = \big(p(x,y): x,y\in \Omega\big)$, which is a matrix when $\Omega$ is finite.  Then,
the $n$th powers give the $n$th step probabilities, $P^n(x,y) = P(X_n=y|X_0=x)$.
  
Computing the $t$-time probabilities for the continuous time Markov chain $Z_t$ is more complicated.  Define the transition probability
$$P_t(x,y) \ = \ P(Z_t = y| Z_0 = x).$$
Then, by the Markov property we have
$$P_{t+s}(x,y) \ = \ \sum_{z\in \Omega} P_t(x,z)P_s(z,y)$$
or in terms of operators $P_{t+s} = P_tP_s$, the semigroup property.

Given regularity of the process, the transition functions are differentiable in time, and satisfy the `first jump' relation
\begin{align*}
P_t(x,y) &= P(Z_t=y, T_0>t|Z_0=x) + P(Z_t=y, T_0\leq t|Z_0=x)\\
&= 1(x=y)e^{-\lambda_x t} + \int_0^t \lambda_xe^{-\lambda_x s} \sum_{z\neq x}p(x,z)P_{t-s}(z,y),
\end{align*}
from which the backward equation is seen:
\begin{eqnarray*}\frac{d}{dt} P_t(x,y) &=& \sum_{z\in \Omega} \lambda_xp(x,z)[P_t(z,y) - P_t(x,y)]\\
P_0(x,y) &=& 1(x=y).
\end{eqnarray*} 
Similarly, from a decomposition of the jump time just before time $t$, one has the forward equation
$$\frac{d}{dt}P_t(x,y) \ = \ \sum_{z\in \Omega} [P_t(x,z)-P_t(x,y)]\lambda_zp(z,y). $$

\begin{exercise}\rm Review and perform these derivations.  
\end{exercise}

Define the operator
$$L(x,y) \ = \ \left\{\begin{array}{rl}
\lambda_x p(x,y) & \ {\rm for \ }y\neq x\\
-\lambda_x & \ {\rm for \ } y=x.
\end{array}\right.$$
Then,
neatly expressed, the backward and forward equations become
$$\frac{d}{dt}P_t \ = \ LP_t \ \ \ {\rm and \ \ \ } \frac{d}{dt}P_t \ = \ P_tL.$$
Also, $\lim_{t\downarrow 0} t^{-1}[P_t - I] = L$ and $P_t(x,y) = \delta_{x,y} + tL(x,y) + o(t)$ as $t\downarrow 0$.

When the space $\Omega$ is finite, $L$ is a `generator' matrix, that is $L(x,y)\geq 0$ for $x\neq y$ and $L(x,x) = -\sum_{y\neq x}L(x,y)$, and by solving the ODE's, one obtains $P_t = e^{tL}$ which can be computed in some cases.  More generally, the semigroup $P_t$ can be understood in terms of the Hille-Yosida formulation.

Let $f: \Omega\rightarrow \R$ be a bounded function on the state space.  In the discrete time case, define $Pf(x) = \sum_{y\in \Omega}p(x,y)f(y)$, which is the conditional expectation of $f(X_1)$ given $X_0=x$.  Then, $P^nf(x)= \sum_{y\in \Omega} p^{(n)}(x,y)f(y)$ is the conditional expectation of $f(X_n)$ given $X_0=x$, where $p^{(n)}(x,y)$ is the $n$-fold convolution, or $n$th step transition probability.

In the continuous case, define $P_tf(x) = \sum_{y\in \Omega} P_t(x,y)f(y)$ which is the conditional expectation of $f(Z_t)$ given $Z_0=x$.  
In this framework, the generator $L$ is often expressed in terms of its action on compactly supported $f$:
\begin{eqnarray*}
(Lf)(x) & = & \sum_{y\in \Omega} L(x,y) [f(y)-f(x)]\\
&=& \sum_{y\in \Omega}\lambda_xp(x,y)[f(y)-f(x)].
\end{eqnarray*}

\subsection{Invariant measures}
\label{sec:invariantmeasurefinite}
Let $\mu$ be a probability measure on $\Omega$.  In the discrete time situation, define $\mu P(x) = \sum_{y\in \Omega} \mu(y)p(y,x)$.  Hence, we see that $\mu P^n$ is the distribution at time $n$ when the initial state is distributed according to $\mu$.  
  
In the continuous-time  model, define 
$$\mu P_t(x) \ = \ \sum_{y\in \Omega} \mu(y)P_t(y,x), \ \ {\rm and } \ \  \mu L(x) \ = \ \sum_{y\in \Omega} \mu(y)L(y,x).$$

We say that $\mu$ is an `invariant measure' for discrete time chains if $\mu P = \mu$, and for continuous time chains if $\mu P_t = \mu$ for all $t\geq 0$.  We also say that $\mu$ is a `reversible' invariant measure in discrete time chains if $\mu(x)p(x,y) = \mu(y)p(y,x)$ for all $x,y\in \Omega$.  In continuous time chains, $\mu$ is `reversible' when $P_t$ is self-adjoint, that is $\sum_{x\in \Omega}\mu(x)f(x) P_t(x,y)g(y) = \sum_{x\in \Omega}\mu(x)g(x)P_t(x,y)f(y)$, or in terms of the inner product, $\langle f, P_tg\rangle_\mu = \langle P_t f, g\rangle_\mu$ for all compactly supported $f,g$.

One can verify that in the continuous time setting that $\mu$ being invariant is equivalent to $\mu L = 0$ or in terms of expectations $E_\mu\big[Lf\big]=0$ for all compactly supported $f$.  Also, $\mu$ being reversible is equivalent to $\mu(x) L(x,y) = \mu(y) L(y,x)$ for all $x,y\in \Omega$, or in terms of inner products $\langle f, Lg\rangle_\mu = \langle Lf, g\rangle_\mu$ for all compactly supported $f,g$.

In the finite state space case, invariant measures always exist, and if the skeleton chain can reach every state from any state in finite time, that is $p$ is irreducible, the invariant measure is unique.  However, in the countable state case there may be no invariant measures.

Reversibility has the following interesting implication.  Fix a time $t>0$, and consider $R_s = Z_{t-s}$ for $0\leq s\leq t$.  
Suppose that initially $Z_0$ is distributed according to an invariant measure $\mu$.  Then, it can be seen that $R_s$ is a continuous time Markov chain with 
transition probability $Q_s(x,y) = (\mu(y)/\mu(x))P_s(y,x)$.  In particular, when $\mu$ is reversible, $Q_s(x,y) = P_s(x,y)$ and in this case the `forward in time' and `backward in time' chains have the same distribution.
We remark that 
it may be easier to find directly a reversible measure and therefore an invariant measure by checking the reversibility conditions.

\subsection{Examples}

We will content ourselves for the moment with two basic continuous time examples, the two-state Markov chain, and random walk.

\begin{example}
\label{example1}\rm
Let $\Omega= \{0,1\}$ correspond to states `on' and `off', or sometimes `empty' and `occupied'.  Here, state $0$ can transition to state $1$ and vice versa.  Let $\lambda_0$ and $\lambda_1$ be the corresponding jump rates.  The skeleton chain probabilities are $p(0,1)=p(1,0)=1$.  The generator matrix is
$$L \ = \ \left[\begin{array}{cc}-\lambda_0& \lambda_0\\
\lambda_1& - \lambda_1\end{array}\right].$$
and correspondingly, it is an exercise in diagonalization to compute $P_t = e^{tL}$ and to find the unique invariant measure.

\end{example}

\begin{exercise}\rm  Compute the invariant measure and $P_t$.  Is it reversible?
\end{exercise}

\begin{example}\rm We define the Poisson process with parameter $\alpha$.  Let $\Omega=\N_0 := \{0,1,2,\ldots\}$, the whole numbers.  Let $\lambda_x \equiv \alpha$.  The skeleton chain is deterministic where transitions are to the nearest right site:  $p(x,x+1)=1$ for $x\geq 0$.  Then, with $Z_0=0$, the continuous time chain $Z_t$ counts the number steps made up to time $t\geq 0$.  The trajectories are step functions, which are right-continuous, with left limits.

\begin{exercise}  
\rm  Compute the backward/forward equation, and show that the transition probability is in `Poisson' form
$P_t(0,x)=P_t(y,y+x) = e^{-\alpha t} (\alpha t)^x/x!$.
\end{exercise}

\end{example}

\begin{example}\label{example2}
\rm
Let $\Omega = \T^d_N$ the $d$-dimensional torus where $\T_N = \Z/ N\Z$, or integers modulo $N$.  Let also $\lambda_x \equiv 1$, and $p$ be a finite-range, translation-invariant transition probability:  For all $x,y\in \Omega$, $p(x,y) = 0$ if $|x-y|\geq R$ some $R<\infty$, and $p(x,y) = p(0,y-x)=:p(y-x)$.  We will also assume that $p$ is irreducible.  For instance, the nearest-neighbor, symmetric case is one possibility.

Then, $L(x,y) = p(x,y)$ for $x\neq y$ and $L(x,x) = -\sum_{y\neq x}L(x,y)$.  In particular, the uniform distribution $\mu(x) \equiv N^{-1}$ is the unique invariant measure.  Moreover, $\mu$ is reversible exactly when $p(x)=p(-x)$ for all $x\in \Omega$.

Now let $R_t$ be the number of jumps before time $t$.  Since the jump rates are all $1$, we see that $R_t$ is a Poisson process with rate $1$.  In particular, the transition probability satisfies
\[P^N_t(y-x):= P_t(x,y) \ = \ E\big[p^{(R_t)}(x,y)\big] \ = \ \sum_{n\geq 0} \frac{e^{-t}t^n}{n!} p^{(n)}(x,y).\]

We now consider a sequence of chains $Z_t=Z^{(N)}_t$ on a sequence of torii $\T_N^d$ as $N\geq 1$ increases.
Define $m = \sum_{x\in \Omega} xp(x)$ to be the mean displacement of the position.  Then, it is not difficult to extablish the following law of large numbers,
$$\lim_{N\uparrow\infty} \frac{Z^{(N)}_{Nt}}{N} \ = \ mt \ \ \ {\rm in \ probability}.$$
Here, $m\in \T^d$ where $\T$ is the unit circle.

\begin{exercise}\rm
Show this LLN (law of large numbes) by say variance computations.  Noting $R_t$ is Poisson with variance $t$ is useful.  One can do it also by regeneration, and other methods.   \end{exercise}

Also, if $m=0$, let $C$ be the matrix of covariances, $C_{i,j} = \sum_{x}x_ix_jp(x)$ for $0\leq i,j\leq d$.  We have the central limit theorem,
$$  \frac{Z^{(N)}_{N^2 t}}{N} \ \Rightarrow \ {\rm N}(0,C t).$$
\begin{exercise}\rm
There are a few ways to show this CLT (central limit theorem).  One way is to compute the moment generating function or characteristic function, noting independence of $R^{(N)}_t$ and the displacements.  
\end{exercise}

\end{example}

Finally, when $m = \sum_x xp(x) \neq 0$, we say the walk is asymmetric, and when $m=0$ and $p(\cdot)$ is not symmetric, we say the walk is mean-zero asymmetric, and when $p(\cdot)$ is symmetric, we say the walk is symmetric.  

\section{Evolution of the mass of independent random walks}

We would like to understand the space-time evolution of the mass in a system of particles with a conservation law.  Perhaps the simplest model is that of unlabeled non-interacting random walks on a $d$-dimensional torus with $N$ locations.  When $N$ is large, and one looks at the system from afar, after long times, one can discern better the motion of the bulk of the mass rather than individual components.  The goal is to make precise the evolution of the mass in this scale in terms of a continuum equation.

We will be working with a Markov chain on $\Omega= \N_0^{\T^d_N}$, where we recall $\N_0 = \{0,1,2,\ldots\}$, which governs the motion of $K$ independent random walks on $\T^d_N$ specified in Example \ref{example2}.  Since we are interested in the `mass' of particles, we will consider the occupation numbers at each location on the lattice $\T^d_N$.  That is, let $Z^i_t$ be the position of the $i$th particle at time $t$.  Define
$$\eta_t(x) \ = \ \sum_{i=1}^K 1(Z^i_t = x).$$

We now observe that the process $\eta_t = \{\eta_t(x): x\in \T^d_N\}$ is a Markov chain.  Indeed, given independence and the Markov property of the individual particle movement, by splitting over all possibilities, the Markov property of $\eta_t$ can be deduced.

\begin{exercise}
\rm Show this Markov property.
\end{exercise}

\subsection{Invariant measures and distribution at time $t\geq 0$}
What are the invariant measures for the process?  Since the process $\eta_t$, corresponding to $K$ particles, is irreducible, there is a unique invariant measure.  It is not so easy to characterize it immediately.

Indeed, let us relax the assumption that there are $K$ particles in the system.  If we do not specify the initial number of particles, then $\eta_t$ is no longer irreducible, since there is no birth or death possible:  For instance, a system with $10$ particles cannot evolve to one with $20$ random walks.  Nevertheless, we may specify in good form several invariant measures for this `relaxed' reducible system.

Recall the Poisson distribution with parameter $\alpha$: $\kappa_\alpha(k) = e^{-\alpha}\alpha^k/k!$ for $k\geq 0$.  Its moment generating function is given by
$$\sum_{k\geq 0} e^{\lambda k} e^{-\alpha}\frac{\alpha^k}{k!} \ = \ e^{\alpha(e^\lambda -1)}.$$
For a nonnegative function $\rho_0: \T^d \rightarrow \R_+$, define the product measure
$\nu^N_{\rho_0(\cdot)}=\prod_{x\in \T_N^d} \kappa_{\rho_0(x/N)}$ on $\N_0^{\T^d_N}$ so that the means
$$E_{\nu^N_{\rho(\cdot)}}[\eta(x)] \ = \ \sum_{k\geq 0}k\kappa_{\rho_0(x/N)}(k) \ = \ \rho_0(x/N).$$
When $\rho_0(\cdot) \equiv \alpha$ is constant, we denote $\nu^N_\alpha=\nu^N_{\rho_0(\cdot)}$.

The process $\{\eta_t: t\geq 0\}$ belongs to the space $D([0,\infty), \Omega)$ of right-continuous paths with left limits in $\Omega=\N_0^{\T^d_N}$.  
 We will denote by $\P_\mu$ and $\E_\mu$ the probability measure and expectation with respect to the evolution of the process when initially $\eta_0$ is distributed according to $\mu$.  On the other hand, $E_\mu$ refers to the expectation with respect to $\mu$ on $\Omega$.     

Now, starting from $\nu^N_{\rho(\cdot)}$, we observe that we may calculate the distribution at later times $t>0$.  

\begin{proposition}
\label{later time prop}
Under $\P_{\nu^N_{\rho_0(\cdot)}}$, the distribution of $\eta_t$ is the inhomogeneous product of Poisson measures $\prod_{x\in \T_N^d} \kappa_{\psi_{N,t}(x)}$ where
$\psi_{N,t}(x) = E\big[\rho_0(N^{-1}(x-Z_t^N))\big]$ is the expectation with respect to the position of a random walk $Z^N_t$ with rates $p(\cdot)$ starting from the origin at time $t$.
\end{proposition}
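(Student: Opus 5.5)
We compute the joint moment generating function of $\{\eta_t(x): x\in\T^d_N\}$ and match it with that of the claimed product of Poisson measures. Since all variables are nonnegative integers on a finite index set, the joint probability generating function determines the law. So it suffices to show, for every $\lambda=(\lambda_x)_{x\in\T^d_N}$,
\[
\E_{\nu^N_{\rho_0(\cdot)}}\Big[\exp\Big(\sum_x \lambda_x\eta_t(x)\Big)\Big] \ = \ \prod_{x}\exp\big(\psi_{N,t}(x)(e^{\lambda_x}-1)\big).
\]

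The first step is to condition on the initial configuration $\eta_0$. Given $\eta_0$, the $\eta_0(y)$ particles at each site $y$ move as independent random walks with transition function $P_t^N$ from Example \ref{example2}. Then $\sum_x\lambda_x\eta_t(x)=\sum_i \lambda_{Z^i_t}$, and by independence
\[
\E\Big[e^{\sum_x\lambda_x\eta_t(x)}\,\Big|\,\eta_0\Big] \ = \ \prod_y \Big(\sum_x P_t(y,x)e^{\lambda_x}\Big)^{\eta_0(y)} \ =: \ \prod_y g(y)^{\eta_0(y)}.
\]

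The second step averages over $\eta_0\sim\nu^N_{\rho_0(\cdot)}$. The coordinates are independent Poisson with means $\rho_0(y/N)$, so the Poisson generating function $E[s^{\kappa_\alpha}]=e^{\alpha(s-1)}$ (the stated mgf with $s=e^\lambda$) gives
\[
\prod_y \exp\big(\rho_0(y/N)(g(y)-1)\big) \ = \ \exp\Big(\sum_x (e^{\lambda_x}-1)\sum_y \rho_0(y/N)P_t(y,x)\Big).
\]
Here we used $\sum_x P_t(y,x)=1$ to write $g(y)-1=\sum_x P_t(y,x)(e^{\lambda_x}-1)$, and then exchanged the finite sums.

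The last step identifies $\sum_y\rho_0(y/N)P_t(y,x)$ with $\psi_{N,t}(x)$. By translation invariance, $P_t(y,x)=P^N_t(x-y)=P(Z^N_t=x-y)$ for the walk started at the origin. Substituting $z=x-y$ gives $\sum_z \rho_0((x-z)/N)P(Z_t^N=z)=E[\rho_0(N^{-1}(x-Z^N_t))]$. The exponent then factorizes over $x$ into the Poisson mgf with parameter $\psi_{N,t}(x)$, which proves the claim.

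The only delicate point is justifying the conditional product formula for the particle system. This requires that, given $\eta_0$, the labels can be assigned arbitrarily and the walks are independent with law $P_t$, which is how $\eta_t$ was constructed. There are no infinite-sum issues, since the torus is finite and the total particle number is Poisson, hence almost surely finite.
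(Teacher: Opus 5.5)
Your proof is correct and matches the paper's argument essentially step for step. Both compute the joint moment generating function by conditioning on $\eta_0$, use independence of the walks together with the Poisson generating function, rewrite $E[e^{\theta(y+Z_t)}]-1=\sum_x P^N_t(x-y)(e^{\theta(x)}-1)$, exchange the finite sums, and identify $\sum_y \rho_0(y/N)P^N_t(x-y)$ with $\psi_{N,t}(x)$.
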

That the later time $t$-distribution would still be a product measure is a feature of the independence of the particles--not a generic feature in more general interacting particle systems!

When $\rho_0\equiv \alpha$ is constant, we immediately arrive at the following characterization.

\begin{corollary}
The measures $\nu^N_\alpha$ are invariant for the Markov chain $\eta_t$.
\end{corollary}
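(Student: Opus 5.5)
The plan is to deduce the corollary directly from Proposition \ref{later time prop} by specializing to the constant profile $\rho_0\equiv\alpha$. By that proposition, if $\eta_0$ is distributed as $\nu^N_\alpha$, then $\eta_t$ is distributed as $\prod_{x\in\T^d_N}\kappa_{\psi_{N,t}(x)}$ with
\[
\psi_{N,t}(x) \ = \ E\big[\rho_0(N^{-1}(x-Z^N_t))\big].
\]
Since $\rho_0$ is identically $\alpha$, the integrand is the constant $\alpha$ whatever the (random) position $Z^N_t$ of the walk. Hence $\psi_{N,t}(x)=\alpha$ for every $x\in\T^d_N$ and every $t\geq 0$, and the time-$t$ law is $\prod_x\kappa_\alpha=\nu^N_\alpha$.

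It then remains to match this with the definition of invariance in Subsection \ref{sec:invariantmeasurefinite}. That definition asks that $\mu P_t=\mu$ for all $t\geq 0$, and $\nu^N_\alpha P_t$ is exactly the distribution of $\eta_t$ under $\P_{\nu^N_\alpha}$. So the previous paragraph gives $\nu^N_\alpha P_t=\nu^N_\alpha$ for all $t$, which is the claim.

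The only point that needs a word of care is that $\eta_t$ lives on the infinite, though countable, space $\N_0^{\T^d_N}$ and is reducible, rather than on a finite irreducible space. This causes no trouble: the process preserves the total particle number, so on each fixed-number sector it is a finite-state chain, and the identity $\mu P_t=\mu$ is checked pointwise on configurations. No generator computation is needed.

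As an optional cross-check, one could verify $E_{\nu^N_\alpha}[Lf]=0$ directly, using the Poisson identity $\kappa_\alpha(k)k=\alpha\,\kappa_\alpha(k-1)$ to change variables in each jump term. I expect no real obstacle here, since the content of the corollary is already in the proposition.
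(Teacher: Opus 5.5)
Your proposal is correct and takes the same route as the paper: specialize Proposition \ref{later time prop} to $\rho_0\equiv\alpha$. Then $\psi_{N,t}\equiv\alpha$, so $\nu^N_\alpha P_t=\nu^N_\alpha$ for all $t\geq 0$, which is the paper's definition of invariance. Your remark about the countable, reducible state space is harmless but not needed, since the proposition already identifies the full time-$t$ law.
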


\noindent
{\it Proof of Proposition \ref{later time prop}.}  We need only compute the moment generating function of $\eta_t$. 
 Write
$$\eta_t(x) \ = \ \sum_{y\in \T^d_N} \sum_{k=1}^{\eta_0(y)} 1(Z^{y,k}_t = x)$$
and, for $\theta:\T_N^d\rightarrow \R$,
$$\sum_{x\in \T^d_N}\theta(x) \eta_t(x) \ = \ \sum_{x\in \T^d_N}\sum_{y\in \T^d_N} \sum_{k=1}^{\eta_0(y)} \theta(x)1(Z^{y,k}_t = x) \ = \ \sum_{y\in \T^d_N} \sum_{k=1}^{\eta_0(y)} \theta(Z^{y,k}_t)$$
where $Z^{y,k}_t$ denotes the position at time $t$ of the $k$th particle initially at location $y$.
 
Since particles move independently,
and initially there are a Poisson number of particles on each site of the lattice,
\begin{eqnarray*}\E_{\nu^N_{\rho_0(\cdot)}}\Big[\exp \sum_{x\in \T^d_N}\theta(x) \eta_t(x)\Big]
& = & \prod_{y\in \T^d_N} \E_{\nu^N_{\rho_0(\cdot)}} \Big[\exp \sum_{k=1}^{\eta_0(y)} \theta(Z^{y,k}_t)\Big]\\
& = & \prod_{y\in \T^d_N} E_{\nu^N_{\rho_0(\cdot)}} \Big( E\big[\exp \theta(Z^{y,1}_t)\big]\Big)^{\eta_0(y)}\\
 &=&\prod_{y\in \T^d_N} \exp \Big[ \rho_0(y/N) \big(E\big[e^{\theta(y+ Z_t)}\big] - 1\big)\Big]
\end{eqnarray*}
where $Z_t=Z_t^N$ is the position of a random walk on $\T^d_N$ starting at the origin.  

Now,
$$E\big[e^{\theta(y+ Z_t)}\big] \ = \ \sum_{x\in \T^d_N} P^N_t(x-y)e^{\theta(x)}$$
and
$$
E\big[e^{\theta(y+Z_t)}\big] - 1 \ = \ \sum_{x\in \T^d_N} P^N_t(x-y) (e^{\theta(x)} -1).$$
Hence, after a calculation,
\begin{eqnarray*}
\E_{\nu^N_{\rho_0(\cdot)}} \Big[ \exp \sum_{x\in \T^d_N} \theta(x)\eta_t(x)\Big] & = &\exp \sum_{y\in \T^d_N}\rho_0(y/N) \sum_{x\in \T^d_N} P^N_t(x-y)\big(e^{\theta(x)} -1\big).
\end{eqnarray*}
Since
\begin{eqnarray*}
\sum_{y\in \T^d_N} P^N_t(x-y)\rho_0(y/N) & = & \sum_{z\in \T^d_N} P^N_t(z)\rho_0(N^{-1}(x-z))\\
&=& E\big[\rho_0\big(N^{-1}(x - Z^N_t)\big)\big] \ = \ \psi_{N,t}(x),
\end{eqnarray*}
the proof concludes. \qed
\medskip

To come back to our initial question, we remark that the invariant measure $\nu^N_\alpha$ can be decomposed in terms of its restrictions to the sets $\Omega_K=\{\eta \in \Omega: \sum_{x\in \T^d_N}\eta(x)=K\}$ for $K\geq 0$.  These are closed and irreducible for the motion. Then, each $\nu_{N,K} = \nu^N_\alpha(\cdot | \sum_{x\in \T^d_N} \eta(x) = K)$ is the unique invariant measure on the restriction, and does not depend on $\alpha$.  In physics terminology, $\nu^N_\alpha$ is the `grand canonical' measure and $\nu_{\T^d_N,K}$ is the `canonical' one.

Since the mean of $\eta(x)$ under $\nu^N_\alpha$ equals $\alpha$, it makes sense to call $\alpha$ the `mass density' of the process.  In this way, $\{\nu^N_\alpha: \alpha\geq 0\}$ is a family of invariant measures indexed by density $\alpha$.

When $\rho_0$ is not constant, the measures $\nu^N_{\rho_0(\cdot)}$ are examples of `local equilibrium' measures, as near a continuity of point $u\in \T^d$ of $\rho_0(\cdot)$, they distribute particles near $\lfloor uN\rfloor$ near that of the invariant measure $\nu^N_{\rho_0(u)}$.  One may consider other non-product `local equilibrium' and `nonequilibrium' measures, but, as we have seen, these local equilibrium measures allow for some computations.

\subsection{Hydrodynamics in mean-value}
If we start with a local equilibrium measure $\nu^N_{\rho_0(\cdot)}$, then initially, the means $\{\rho_0(x/N): x\in \T_N^d\}$ are a discretization of the density `profile' $\rho_0$ defined on the continuous space $\T^d$.  At a later time $t$, the means have evolved to $\{\psi_{N,t}(x): x\in \T_N^d\}$.  But, how to understand a macroscopic picture?

The answers depend on the particular time and space scales chosen in the problem.  We will think of $\T^d_N$ as embedded in $\T^d$ where grid points on $\T^d_N$ are separated by distance $N^{-1}$.  In this way, a `macroscopic'
 point $u$ on $\T^d$ corresponds to the `microscopic' point $\lfloor uN\rfloor$.  As we will see, time should now be appropriately speeded up to see movement of the system.  How fast this speed up should be will depend on the structure of the underlying jump probability $p(\cdot)$.

When $t$ is fixed, one can see that $N^{-1}Z^N_t$ converges weakly to $0$.
Hence, at a continuity point $u$ for $\rho_0(\cdot)$, we have $\lim_{N\uparrow\infty} \psi_{N,t}(\lfloor uN\rfloor) = \rho_0(u)$.
So, if we do not speed up time at all, 
the system does not move.

\vskip .1cm

\subsubsection{Asymmetric motions}

In the asymmetric setting, let $m = \sum xp(x)\neq 0$.  Since $N^{-1}Z^N_{tN}\rightarrow mt$ in probability, we have, for $\epsilon>0$,
$$\lim_{N\uparrow\infty} \sum_{|z/N - mt|\leq \epsilon} P^N_{tN}(z) \ = \ \lim_{N\uparrow\infty} P\Big[\Big|\frac{Z^N_{Nt}}{N} - mt\Big| \leq \epsilon\Big] \ = \ 1.$$
 In this case, when the initial profile $\rho_0$ is continuous, $$\lim_{N\uparrow\infty}\psi_{N,Nt}(\lfloor uN\rfloor) \ =\  \rho_0(u-mt)\ :=\  \rho(t,u).$$
Therefore, if we speed up time by a factor of $N$, we see that the density profile has translated by $mt$.  In this sense $Nt$ is referred to as the `microscopic' time, and $t$ as the `macroscopic' time.  Moreover, the density $\rho(t,u)$ satisfies
\begin{equation}
\label{hyd_ind1}
\partial_t \rho + m\cdot \nabla \rho \ = \ 0.\end{equation}
This makes sense as individual particles displace an order $N$ microscopic locations at microscopic time $Nt$.

\vskip .1cm
\subsubsection{Mean-zero motions}
However, when $m=0$, particles do not displace as much, but follow the `square root' law, when say the underlying jump rates are finite-range.  In other words, displacements are of order $\sqrt{N}$ at time $Nt$, or alternatively of order $N$ at times $N^2t$.  The latter version fits in nicely with our space scaling of $N^{-1}$.  

By the central limit theorem for random walks in this case, $N^{-1}Z^N_{N^2t}\Rightarrow {\rm N}(0,C t)$ and, when again $\rho_0$ is continuous, we have
\begin{eqnarray*}\lim_{N\uparrow\infty}\psi_{N,N^2t}(\lfloor Nu\rfloor) & = & \lim_{N\uparrow\infty} \sum_{z\in \T^d_N} P^N_{N^2t}(z)\rho_0\big(N^{-1}(\lfloor Nu\rfloor - z)\big) \\
& = & \lim_{N\uparrow\infty} E\Big[\rho_0(u - N^{-1}Z^N_{N^2t})\Big] \ = \ \int_{\R^d} \bar\rho_0(x)G_t(u-x)dx.\end{eqnarray*}
Here, $\bar\rho_0$ is the periodic extension of $\rho_0$ with period $\T^d$, and $G_t$ is the Gaussian density with covariance $Ct$.
It follows that $\rho(t,u) := \int_{\R^d} \bar\rho_0(x)G_t(u-x)dx$, being a convolution with the $\bar\rho_0$, satisfies the heat equation on $\T^d$:
\begin{eqnarray}
\partial_t \rho &=& \triangle_C\rho := \sum_{1\leq i,j\leq d} C_{i,j} \partial^2_{u_i,u_j}\rho\nonumber\\
\rho(0,u)&=& \rho_0(u).
\label{hyd_ind2}
\end{eqnarray}

\subsection{Conclusion}

What we have done so far is to derive a macroscopic `hydrodynamic limit' for the mean values over $x\in \T_N^d$.  In the next Section \ref{lec2}, we will view the hydrodynamic limit as a full fledged law of large numbers of the empirical measure of particles.  We also give some physical motivation for the name `hydrodynamics'.

We call the equations \eqref{hyd_ind1} and \eqref{hyd_ind2} and their solutions $\rho(t,u)$ as `hydrodynamic' equations and `hydrodynamic' solutions for the space-time evolution of the macroscopic density.  For independent particles, to summarize, we have proved the following:

\begin{theorem}
Suppose $\rho_0:\T^d\rightarrow \R_+$ is continuous.  Let $v(N)= N$ if $m\neq 0$ and $v(N) = N^2$ if $m=0$.  Then, starting from the sequence $\nu^N_{\rho_0(\cdot)}$, the density average $\psi_{N, v(N)t}(\lfloor Nu\rfloor)$ at location $\lfloor Nu\rfloor$ and time $v(N)t$ converges to $\rho(t,u)$
which solves
equation \eqref{hyd_ind1} if $m\neq 0$ and equation \eqref{hyd_ind2} if $m=0$.
\end{theorem}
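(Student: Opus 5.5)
By Proposition \ref{later time prop}, under $\P_{\nu^N_{\rho_0(\cdot)}}$ the mean occupation at site $x$ and time $s$ is
\[
\psi_{N,s}(x) \ = \ E\big[\rho_0\big(N^{-1}(x-Z^N_s)\big)\big].
\]
The theorem therefore reduces to a limit statement about a single random walk, evaluated at $s=v(N)t$ and $x=\lfloor Nu\rfloor$.

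I would write
\[
\psi_{N,v(N)t}(\lfloor Nu\rfloor) \ = \ E\big[\bar\rho_0\big(u - N^{-1}Z^N_{v(N)t} + \epsilon_N\big)\big],
\]
where $\epsilon_N = N^{-1}\lfloor Nu\rfloor - u$ satisfies $|\epsilon_N|\le \sqrt d/N$. Here $Z^N$ is lifted to the walk $\tilde Z$ on $\Z^d$ with the same kernel $p$; this is possible because $p$ has finite range, so for $N>R$ the torus walk is the projection of the $\Z^d$ walk, and $\bar\rho_0$ is $1$-periodic. Since $\rho_0$ is continuous on the compact set $\T^d$, $\bar\rho_0$ is bounded and uniformly continuous. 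Consequently the shift $\epsilon_N$ changes the expectation by at most $\omega(\sqrt d/N)\to 0$, where $\omega$ is the modulus of continuity. It remains to show $E[\bar\rho_0(u-N^{-1}\tilde Z_{v(N)t})]\to\rho(t,u)$.

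In the case $m\ne0$, I would first establish the law of large numbers $N^{-1}\tilde Z_{Nt}\to mt$ in probability (the exercise following Example \ref{example2}). Writing $\tilde Z_{Nt}=\sum_{i\le R_{Nt}}\xi_i$ with $R$ a rate one Poisson process independent of the i.i.d.\ steps $\xi_i$, one gets $E\tilde Z_{Nt}=Ntm$ and $\mathrm{Var}(\tilde Z_{Nt})=Nt\,E|\xi|^2=O(N)$. Chebyshev's inequality then gives the convergence in probability. Bounded convergence applied to the bounded continuous function $\bar\rho_0$ yields the limit $\bar\rho_0(u-mt)=\rho(t,u)$. Differentiating this explicit formula shows $\partial_t\rho=-m\cdot\nabla\rho$, which is \eqref{hyd_ind1} (understood classically when $\rho_0$ is $C^1$, and otherwise in the weak sense of transport).

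In the case $m=0$, I would prove the CLT $N^{-1}\tilde Z_{N^2t}\Rightarrow {\rm N}(0,Ct)$ by characteristic functions. By independence of $R$ and the steps,
\[
E e^{i\theta\cdot \tilde Z_s} \ = \ \exp\big(s(\phi(\theta)-1)\big), \qquad \phi(\theta)=\sum_x p(x)e^{i\theta\cdot x}.
\]
Since $p$ has finite range and mean zero, $\phi(\theta/N)-1=-\tfrac{1}{2N^2}\theta\cdot C\theta+O(N^{-3})$. Taking $s=N^2t$ gives the Gaussian limit, with the covariance $C$ as normalized in the paper. Weak convergence then yields $E[\bar\rho_0(u-N^{-1}\tilde Z_{N^2t})]\to\int\bar\rho_0(x)G_t(u-x)\,dx$. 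The final step is to check that this Gaussian convolution solves \eqref{hyd_ind2}. For $t>0$ this follows by differentiating under the integral, using the fact that $G_t$ satisfies the corresponding heat equation; the integrability is justified by the periodicity and boundedness of $\bar\rho_0$. Continuity of $\rho_0$ gives the initial condition $\rho(t,u)\to\rho_0(u)$ as $t\downarrow0$.

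The main obstacle I expect is bookkeeping rather than depth. The torus walk must be lifted to $\Z^d$ with periodic $\bar\rho_0$ so that the LLN and CLT apply. The constant in front of the Laplacian must be matched to the normalization of $C$ (whether a factor $\tfrac12$ appears). Finally, \eqref{hyd_ind1} has to be interpreted correctly when $\rho_0$ is merely continuous. I would handle the last point by stating the transport solution weakly, or by approximating $\rho_0$ by smooth profiles and using uniform continuity.
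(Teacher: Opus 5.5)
Your proposal is correct and follows the paper's own argument. You reduce via Proposition \ref{later time prop} to $E[\bar\rho_0(u-N^{-1}Z^N_{v(N)t})]$, then apply the law of large numbers when $m\neq 0$ and the central limit theorem when $m=0$, using bounded or weak convergence against the continuous profile; the lifting to $\Z^d$, the $\lfloor Nu\rfloor$ correction, and the Chebyshev and characteristic-function proofs fill in what the paper leaves as exercises. On the constant you flagged: since $C_{i,j}=\sum_x x_ix_jp(x)$ and the limit is ${\rm N}(0,Ct)$, the convolution solves $\partial_t\rho=\frac12\triangle_C\rho$, which is the normalization the paper uses later for exclusion, so \eqref{hyd_ind2} should be read with that factor $\frac12$.
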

 
\section{Notes}
The material on Markov chains is a standard treatment based on the development in \cite{Karlin-Taylor}, \cite{Resnick} and \cite[Appendix 1]{KL}.  
The discussion on hydrodynamics of independent walks follows that in \cite[Chapter 1]{KL} and \cite{Landim-notes}.   See also \cite{Martin-lof} for a more general treatment.  An early rigorous work on hydrodynamics of independent particle systems is \cite{Dob}.

Other work on invariant measures and hydrodynamics of systems of independent particles includes \cite{JLTeixeira}, \cite{JP}, \cite{Lig_ind}, \cite{lpsx}, \cite{Peterson}.

\newpage
\chapter[Section $2$]{Empirical measures, and a first look at hydrodynamics of exclusion processes}
\label{lec2}

We introduce a notion of `hydrodynamics' via empirical measures, which will be used in the analysis of more general interacting particle systems. Then, after a preliminary discussion of certain martingales in the Markov chain context, we discuss the hydrodynamics of exclusion processes.  At the end, we comment on the physical motivation behind the name `hydrodynamics'.

\section{Empirical measures and view of `hydrodynamics' as a LLN}
We may recast the derivation of `hydrodynamics' in systems of independent random walks in terms of the random space-time scaled empirical distribution
$$\pi^N_{v(N)t} \ = \ \frac{1}{N^d}\sum_{x\in \T^d_N} \eta_{v(N)t}(x) \delta_{x/N}.$$
Here, $\pi^N_{v(N)t}$ is a member of ${\mathcal M}_+(\T^d)$, the nonnegative measures on $\T^d$.  Recall our definition of the independent particle process in Section \ref{lec1}:  We derived the distribution of the particle numbers $\eta_{v(N)t}$ at later times, starting from a local equilibrium measure $\nu^N_{\rho_0(\cdot)}$.
However, it will be easier in what follows to consider the weaker notion of the asymptotic behavior of the empirical distribution.   In this sense, what is meant by `hydrodynamics' is a law of large numbers, characterizing `first-order' behavior.

 Let $G$ be a smooth, bounded function on $\T^d$, and write  
\begin{equation}
\label{Ginner}
\langle G, \pi^N_{v(N)t}\rangle \ := \ \frac{1}{N^d}\sum_{x\in \T^d_N} G(x/N)\eta_{v(N)t}(x).\end{equation}
Here, the pairing $\langle F, \pi\rangle$ is another way to write the expectation $E_\pi[G]$.
Recall, from Section \ref{lec1}, the distribution of $\eta_{v(N)t}$ starting from $\nu^N_{\rho_0(\cdot)}$ is a product of Poisson measures with intensity $\psi_{N, v(N)t}(\cdot)=E\big[\rho_0\big(N^{-1}(x - Z^N_{v(N)t})\big)\big]$, where the expectation is with respect to the random walk $Z^N_\cdot$. Then, in mean-value, starting in $\nu^N_{\rho_0(\cdot)}$, we have
\begin{eqnarray*}
&&\E_{\nu^N_{\rho_0(\cdot)}}\Big[ \frac{1}{N^d}\sum_{x\in \T^d_N} G(x/N)\eta_{v(N)t}(x) \Big] = \frac{1}{N^d}\sum_{x\in \T^d_N} G(x/N) \E_{\nu^N_{\rho_0(\cdot)}}[\eta_{v(N)t}(x)]\\
&&\ \ \ \ \ \ \ \ \ \ \ \ \ = \ \frac{1}{N^d}\sum_{x\in \T^d_N}G(x/N)E\big[\rho_0\big(N^{-1}(x-Z_{v(N)t})\big)\big].\end{eqnarray*}
Depending on whether $m\neq 0$ or $m=0$, in which case $v(N)=N$ or $v(N)=N^2$, the last quantity as before tends respectively to
\begin{equation}\label{hyd_sols_indep} \int \rho_0(u-mt)G(u)du \ \ {\rm or \ \ } \int G(u)\int \bar\rho_0(w)G_t(u-w)dwdu.\end{equation}

However, the variance of \eqref{Ginner}, when starting in $\nu^N_{\rho_0(\cdot)}$, since the occupation numbers 
$\eta_{v(N)t}$ are independent Poisson variables with intensities $\{\psi_{N,v(N)t}(x): x\in \T_N^d\}$, vanishes:
\begin{eqnarray*}
{\rm Var} \Big[ \frac{1}{N^d}\sum_{x\in \T^d_N} G(x/N)\eta_{v(N)t}(x)\Big] 
&=& \frac{1}{N^{2d}}\sum_{x\in \T^d_N} G^2(x/N)\psi_{N,v(N)t}(x)\\
&=& O(N^{-d}) \ \rightarrow \ 0,\end{eqnarray*}
as $G$ and $\rho_0$ are bounded.  Hence, $\langle G, \pi^N_{v(N)t}\rangle$ converges variously to the expressions in \eqref{hyd_sols_indep} in probability, depending on the drift $m$.  

In particular, we have shown, with respect to the initial distribution $\mu^N$ for the process $\eta_0$, the empirical measure $\pi^N_{v(N)t}$, a random element of $\mathcal{M}_+(\T^d)$, converges in probability to the deterministic measure $\rho(t,u)du$ corresponding to the macroscopic space-time mass evolution.  

Here, the topology on $\mathcal{M}_+(\T^d)$ used is as follows:  Consider $C(\T^d)$, the space of real continuous functions on $\T^d$ endowed with the sup-metric.  Let $\{f_k: k\geq 1\}$ be a dense, countable family of continuous functions in $C(\T^d)$.  Then, define the distance $\delta(\mu,\nu)$ on $\mathcal{M}_+(\T^d)$ by 
$$\delta(\mu,\nu)\ = \ \sum_{k=1}^\infty \frac{1}{2^k} \frac{|\langle \mu, f_k\rangle - \langle \nu, f_k\rangle|}{1+ |\langle \mu, f_k\rangle - \langle \nu, f_k\rangle|}.$$
Therefore, capturing the limit behavior of $\langle G,\pi^N_{v(N)t}\rangle$ for each continuous function $G$ is enough to compute the limits of $\pi^N_{v(N)t}$.

\section{Exclusion processes}

The exclusion process is one of the `canonical' interacting particle systems, introduced in \cite{Spitzer}.  Consider particles on $\T^d_N$ with the minimal interaction that no particle can jump onto another.  Accordingly, a configuration of occupation numbers $\eta_t$ belongs to the finite state space $\Omega = \{0,1\}^{\T^d_N}$ where $\eta_t(x) =0$ or $1$ depending on whether $x\in \T^d_N$ is empty or occupied at time $t\geq 0$.  Informally, $\eta_t$ updates in that each particle is a continuous time random walk carrying an exponential $1$ clock.  When a clock rings, the particle tries to move with skeleton jump probability $p$.  However, if the site chosen is already occupied, the jump is suppressed, and all clocks reset.  We will restrict attention to jump probabilities $p$ which are tranlsation-invariant:  $p(x,y)=p(y-x)$.

More formally, infinitesimally $\eta_t$ can change to $\eta_t^{x,x+y}$ when a particle at $x$ displaces by increment $y$ where
$$\eta^{a,b}(z) \ = \ \left\{\begin{array}{rl} 
\eta(b)& \ {\rm when \ } z=a\\
\eta(a)& \ {\rm when \ }z=b\\
\eta(z)& \ {\rm when \ }z\neq x,y
\end{array}\right.
$$
with rate $\eta(x)(1-\eta(x+y))p(y)$.  The `exclusion' factor `$\eta(x)(1-\eta(x+y))$' is $1$ exactly when $x$ is occupied and the destination $x+y$ is unoccupied; otherwise, it is $0$.
The generator of the process $\eta_t$ is given by
\begin{align}
\label{eq:exclu_gen}
(Lf)(\eta) \ = \ \sum_{x\in \T^d_N}\sum_{y\in \T^d_N} \eta(x)(1-\eta(x+y))p(y)\big[f(\eta^{x,x+y}) - f(\eta)\big]
\end{align}
for functions $f:\Omega \rightarrow \R$.

In the following, we will also assume that $p$ is finite-range, that is $p(z)=0$ for $|z|>R$ for some $R$.  Of course, a case is when $p$ is nearest-neighbor, that is when the range $R=1$.

When $p$ is symmetric, the process is called the `symmetric simple exclusion process'.  When $p$ is asymmetric, $\eta_t$ is termed the `asymmetric exclusion process'.  When the range $R=1$, the label `simple' is sometimes added to the names.

There is a simplification of the form of $L$ when $p$ is symmetric.  Namely, since $\eta^{x,x+y} = \eta^{x+y,x}$ and $p(y)=p(-y)$, we have
\begin{eqnarray}
\label{simplification}
(Lf)(\eta) &=& \frac{1}{2}\sum_{x\in \T^d_N}  \sum_{y\in \T^d_N} \big\{\eta(x)(1-\eta(x+y)) + \eta(x+y)(1-\eta(x))\big\}p(y)\nonumber\\
&&\ \ \ \ \ \ \ \ \ \ \ \ \ \ \ \ \ \ \ \  \times \big[f(\eta^{x,x+y})-f(\eta)\big]\\
&=& \frac{1}{2}\sum_{x\in \T^d_N}  \sum_{y\in \T^d_N} p(y)\big[f(\eta^{x,x+y})-f(\eta)\big].\nonumber
\end{eqnarray}
The last line follows as the term in curly braces equals $|\eta(x) - \eta(x+y)|=1$ exactly when the difference in square brackets vanishes.

Let $\nu^N_\alpha = \prod_{x\in \T_N^d}{\rm Bern}(\alpha)$ be the product measure on $\T^d_N$ with Bernoulli marginals with success probability $\alpha$.  Recall from Subsection \ref{sec:invariantmeasurefinite} that $E_\mu$ stands for the expectation under $\mu$, and $\P_\mu$ and $\E_\mu$ for the process measure and expectation when starting in $\mu$.
\begin{proposition}
\label{prop:exclusion_invariant}
The measures $\{\nu^N_\alpha: 0\leq \alpha\leq 1\}$ are invariant measures for $\eta_t$, for both symmetric and asymmetric $p$.  In the symmetric case, $\nu^N_\alpha$ is also reversible.
\end{proposition}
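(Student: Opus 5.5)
Since $\Omega=\{0,1\}^{\T^d_N}$ is finite, the criterion recalled in Subsection \ref{sec:invariantmeasurefinite} applies: $\nu^N_\alpha$ is invariant iff $E_{\nu^N_\alpha}[Lf]=0$ for all $f:\Omega\to\R$, and reversible iff $\nu^N_\alpha(\eta)L(\eta,\xi)=\nu^N_\alpha(\xi)L(\xi,\eta)$ for all $\eta\neq\xi$. We check these directly, starting from the generator \eqref{eq:exclu_gen}. The basic observation is that $\nu^N_\alpha(\eta)=\alpha^{|\eta|}(1-\alpha)^{N^d-|\eta|}$ with $|\eta|=\sum_x\eta(x)$, and $|\eta^{x,x+y}|=|\eta|$. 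Hence $\nu^N_\alpha(\eta^{x,x+y})=\nu^N_\alpha(\eta)$: exchanges preserve the product Bernoulli weight. The cases $\alpha=0,1$ are trivial, since they are point masses on the empty and full configurations, which do not move.

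\emph{Symmetric case.} Write the rate from $\eta$ to $\xi$ as
\[
L(\eta,\xi)=\sum_{x,y:\ \eta^{x,x+y}=\xi,\ \xi\ne\eta}\eta(x)(1-\eta(x+y))p(y).
\]
If $\xi=\eta^{x,x+y}$ arises from a particle moving from $x$ to $x+y$, then $\eta$ arises from $\xi$ by the particle at $x+y$ moving back by $-y$. This reverse move has rate $\xi(x+y)(1-\xi(x))p(-y)=p(y)$. Summing over the (possibly several) pairs $(x,y)$ producing the same $\xi$, this matching is a bijection, so $L(\eta,\xi)=L(\xi,\eta)$. Combined with $\nu^N_\alpha(\eta)=\nu^N_\alpha(\xi)$, this gives detailed balance, hence reversibility and therefore invariance. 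Equivalently, in the form \eqref{simplification}, one can check $E_{\nu}[g\,Lf]=E_{\nu}[f\,Lg]$ by the change of variables $\eta\mapsto\eta^{x,x+y}$, which is a $\nu^N_\alpha$-preserving involution.

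\emph{General (asymmetric) case.} Here I compute $E_{\nu}[Lf]$ term by term. For fixed $x,y$, use the change of variables $\xi=\eta^{x,x+y}$, which preserves $\nu$ and satisfies $\xi(x+y)(1-\xi(x))=\eta(x)(1-\eta(x+y))$. This gives
\[
E_\nu\big[\eta(x)(1-\eta(x+y))f(\eta^{x,x+y})\big]=E_\nu\big[\eta(x+y)(1-\eta(x))f(\eta)\big].
\]
Therefore
\[
E_\nu[Lf]=\sum_{z}E_\nu\Big[f(\eta)\,p(y)\big\{\eta(x+y)(1-\eta(x))-\eta(x)(1-\eta(x+y))\big\}\Big],
\]
with the sum over $z=(x,y)$. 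Since $\eta(x+y)(1-\eta(x))-\eta(x)(1-\eta(x+y))=\eta(x+y)-\eta(x)$, it suffices to show $\sum_{x,y}p(y)[\eta(x+y)-\eta(x)]=0$ pointwise in $\eta$. This follows from translation invariance on the torus: $\sum_x\eta(x+y)=\sum_x\eta(x)$ for each $y$.

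The main point requiring care is this final cancellation in the asymmetric case. It uses only the torus structure, through $\sum_x\eta(x+y)=\sum_x\eta(x)$, and not any symmetry of $p$; that is why invariance holds without reversibility. The remaining steps are routine bookkeeping on the change of variables.
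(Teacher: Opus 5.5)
Your proof is correct and is essentially the paper's argument. The paper uses the same $\nu^N_\alpha$-preserving change of variables $\eta\mapsto\eta^{x,x+y}$ to identify the adjoint $L^*$ as the exclusion generator with reversed jumps $p(-\cdot)$, concluding invariance from $L^*1=0$ and reversibility from $L^*=L$; your computation of $E_{\nu^N_\alpha}[Lf]$ is that computation with $h\equiv 1$, your detailed-balance check is its entrywise form, and you usefully make explicit the torus cancellation $\sum_{x,y}p(y)[\eta(x+y)-\eta(x)]=0$, which the paper leaves implicit when it identifies $L^*$.
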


\begin{proof}  First note that $f(\eta^{x, x+y}) - f(\eta)$ vanishes if $\eta(x)=\eta(x+y)$.  Hence, we may write
$$(Lf)(\eta) \ = \ \sum_{x\in \T^d_N}\sum_{y\in \T^d_N} \eta(x)p(y)\big[f(\eta^{x,x+y}) - f(\eta)\big],$$
where we dropped the factor `$1-\eta(x+y)$'.  

Note also, under the change of measure $\zeta = \eta^{x,y}$, which exchanges values $\eta(x)$ and $\eta(y)$, the measure $\nu^N_\alpha$ remains the same.  Hence, for functions $f,h$, the term
\begin{align*}
&E_{\nu^N_\alpha}\big[h(\eta)\eta(x)(1-\eta(x+y))p(y)f(\eta^{x,x+y})\big] \\
&\ \ \ \ \ \ \  = \ E_{\nu^N_\alpha}\big[h(\eta^{x,x+y})\eta(x+y)(1-\eta(x))p(y)f(\eta)].
\end{align*}
Then, by collecting terms, with simple manipulation,
$$E_{\nu^N_\alpha}\big[hLf\big] \ = \ E_{\nu^N_\alpha}\big[(L^*h)f\big],$$
where the $\nu^N_\alpha$-adjoint $L^*$ is seen as the exclusion generator with single particle jump probability $q(z)= p(-z)$.  

Now, since $L^* 1 = 0$, by inspection, we have that $E_{\nu^N_\alpha}[Lf]=0$ for all $f$.  This shows $\nu^N_\alpha$ is invariant. 

Moreover, as $L^*=L$ when $p$ is symmetric, $\nu^N_\alpha$ is reversible.
\end{proof}

From this result, we can deduce 
when there are exactly $K$ particles in the system that $\nu_\alpha^N(\cdot |\sum_{x\in \T^d_N}\eta(x)=K)$ is the unique `canonical' invariant measure for the motion on $\Omega_K = \big\{\eta: \sum_{x\in \T_N^d}\eta(x)=K\big\}$.

\begin{exercise}[Duality]
\label{ex:duality} \rm  Suppose $p$ is symmetric.  Show that the space of linear combinations of occupation variables $\eta(x)$ for $x\in \T^N_d$ remains invariant under action by generator $L$.  In fact, the space of linear combinations of $\prod_{j=1}^n\eta(x_j)$ for $\{x_j\in \R^N_d: 1\leq j\leq n\}$ is closed.  We comment that this property is sometimes referred to as `duality'. 
Hint:  A case of the property is straightforwardly seen by computing the action on the variable $\eta(x)$.
\end{exercise}

\section{Martingales and Markov chains}
\label{sec:2.3}
Recall that a martingale $M_t$ corresponding to sigma-fields $\mathcal{F}_t$ is a random process, adapted to the filtration $\{\mathcal{F}_t\}$, which satisfies
$$E[M_t|\mathcal{F}_s] \ = \ M_s \ \ {\rm and \  \ } E|M_t|<\infty$$
for all $t\geq s\geq 0$.

\begin{exercise}\rm
Let $N(t)$ be a Poisson process with rate $\lambda$.  Then, $M_t = N(t) - \lambda t$ is a martingale with respect to the `natural' sigma-fields $\mathcal{F}_t = \sigma\{N_u: u\leq t\}$.  Also, $Q_t = M^2_t - \lambda t$ is also a martingale with respect to $\{\mathcal{F}_t\}$.
\end{exercise}

Let $X_t$ be a Markov process on a countable state space $\Omega$. Let $F:\R_+\times \Omega\rightarrow \R$ be a twice continuously differentiable function whose first and second partial time derivatives are uniformly bounded.  Define
\begin{eqnarray*}
M^F_t &=& F(t,X_t) - F(0,X_0) - \int_0^t \Big(\partial_s+L\Big )F(s,X_s) ds \\
N^F_t&=& (M^F_t)^2 - \int_0^t \Big[(LF^2)(s,X_s) - 2F(s,X_s)(LF)(s,X_s)\Big]ds.\end{eqnarray*}

These two processes, which we show below are martingales, will be very useful in our stochastic analysis of Markov systems.  The term,
$$\langle M^F\rangle_t \ = \ \int_0^t \Big[(LF^2)(s,X_s) - 2F(s,X_s)(LF)(s,X_s)\Big]ds$$
is the `(predictable) quadratic variation' of the martingale $M^F_t$.

\begin{proposition}\label{martingale}  With respect to natural sigma-fields $\mathcal{F}_t = \sigma\{X_u: u\leq t\}$, both $M^F_t$ and $N^F_t$ are martingales.
\end{proposition}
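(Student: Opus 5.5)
By the Markov property it suffices to check the martingale identities conditionally on $X_s$. The plan is to reduce both claims to the Kolmogorov equations for $P_t$ from Section \ref{lec1}. Integrability is not an issue: $F$ is bounded with bounded time derivatives, and we are in a regular chain (on the finite state spaces of interest, $LF$ is bounded too), so $M^F_t$ and $N^F_t$ are integrable.

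\emph{Step 1 (the time-homogeneous key identity).} First show that, for bounded $G:\R_+\times\Omega\to\R$ that is $C^1$ in time,
\[
E\big[G(t,X_t)\,\big|\,\mathcal F_s\big] - G(s,X_s) \ = \ E\Big[\int_s^t (\partial_r + L)G(r,X_r)\,dr \,\Big|\, \mathcal F_s\Big].
\]
By the Markov property \eqref{ct_MP}, the left side equals $\sum_y P_{t-s}(X_s,y)G(t,y) - G(s,X_s)$. Define $\phi(r) = \sum_y P_{r-s}(X_s,y)G(r,y)$ for $r\in[s,t]$. Then
\[
\phi'(r) = \sum_y P_{r-s}(X_s,y)\,\partial_r G(r,y) + \sum_y (P_{r-s}L)(X_s,y)\,G(r,y)
\]
by the forward equation $\tfrac{d}{dt}P_t = P_tL$. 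The second sum equals $P_{r-s}(LG(r,\cdot))(X_s)$. Integrating $\phi'$ from $s$ to $t$ and applying the Markov property again (together with Fubini) turns $\int_s^t P_{r-s}\big((\partial_r+L)G\big)(X_s)\,dr$ into the conditional expectation of the integral. This gives the identity, and $E[M^F_t - M^F_s\,|\,\mathcal F_s]=0$ follows at once. The only care needed is justifying differentiation under the sum, which is immediate when $\Omega$ is finite and uses the boundedness assumptions in general.

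\emph{Step 2 (second martingale).} Apply Step 1 to $G=F^2$. This shows that $F^2(t,X_t) - F^2(0,X_0) - \int_0^t(\partial_s+L)F^2\,ds$ is a martingale. Write $F(t,X_t) = F(0,X_0) + \int_0^t(\partial_r+L)F\,dr + M^F_t$ and set $A_t = F(0,X_0)+\int_0^t (\partial_r+L)F\,dr$, a continuous process of finite variation. Then $(M^F_t)^2 = F^2(t,X_t) - 2F(t,X_t)A_t + A_t^2$. Using $d(A_t^2) = 2A_t\,dA_t$ and the product rule $d(F(t,X_t)A_t) = A_t\,dF(t,X_t) + F(t,X_t)\,dA_t$, which holds pathwise because $A$ is continuous and of bounded variation while $F(\cdot,X_\cdot)$ is c\`adl\`ag, we obtain
\[
(M^F_t)^2 = \text{martingale} + \int_0^t\Big[(\partial_r+L)F^2 - 2F(\partial_r+L)F\Big]dr - 2\int_0^t A_r\,dM^F_r .
\]
Here $\int A\,dM^F$ is a martingale, since $A$ is bounded on compacts and continuous, hence predictable. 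Finally, $\partial_r F^2 - 2F\partial_r F = 0$, so the remaining compensator is exactly $\int_0^t\big[LF^2 - 2F\,LF\big]ds$, and $N^F_t$ is a martingale.

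\emph{Expected obstacle.} The main obstacle is the stochastic-integral bookkeeping in Step 2. To stay elementary, I would avoid it and instead compute directly
\[
E\big[(M^F_t)^2 - (M^F_s)^2\,\big|\,\mathcal F_s\big] = E\big[(M^F_t-M^F_s)^2\,\big|\,\mathcal F_s\big],
\]
where the equality uses the martingale property from Step 1. I would then evaluate the right side by partitioning $[s,t]$ into small intervals, where the increments are orthogonal, and using the short-time expansion $P_h = I + hL + o(h)$. On each small interval the conditional second moment of the increment is $h\,(LF^2 - 2F\,LF)(X_r) + o(h)$, and summing the pieces yields the compensator.
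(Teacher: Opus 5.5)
Your Step 1 is the paper's argument: the paper writes $E[F(X_t)\mid\mathcal F_s]=P_{t-s}F(X_s)$ by the Markov property, differentiates in $t$ using the forward equation, and notes that the expression vanishes at $t=s$. It does this only for time-independent $F$, leaving the time-dependent case to an exercise whose hint is exactly your computation of $\phi'(r)$. The paper also leaves $N^F_t$ to the reader, and your Step 2 correctly fills this in: applying Step 1 to $F^2$ and integrating by parts against the continuous (hence predictable) finite-variation process $A$ yields precisely the compensator $\int_0^t[LF^2-2F\,LF]\,ds$. The pathwise product rule needs $t\mapsto F(t,X_t)$ to be of finite variation on compacts, which holds because a regular chain jumps finitely often on $[0,t]$. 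Your elementary partition alternative also works, provided the $o(h)$ in $P_h=I+hL+o(h)$ is uniform in the state, as it is on a finite $\Omega$.
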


\begin{proof}  We will show that $M^F_t$ is a martingale when $F$ does not depend on time.  Generalizations and verification of $N^F_t$ as a martingale are left to the reader.  We need only show that
$$E\big[F(X_t)|\mathcal{F}_s\big] -F(X_s) - \int_s^t E\big[(LF)(X_u)|\mathcal{F}_s\big] du \ = \ 0.$$
Now, $E[F(X_t)|\mathcal{F}_s] = P_{t-s}F(X_s)$ and $E[(LF)(X_u)|\mathcal{F}_s]= P_{u-s}(LF)(X_s)$ from the Markov property.  From the forward equation, the derivative of the left-side of the above display in $t$ equals
$$P_{t-s}(LF)(X_s) - P_{t-s}(LF)(X_s) \ = \ 0.$$
At time $t=s$, the left-side also vanishes.  This concludes the proof. \end{proof}

\begin{exercise}\rm
Complete the proof of Proposition \ref{martingale}.  Hint:  With  respect to $M^F_t$, we need to show
$$E\big[F(t,X_t)|\mathcal{F}_s\big] - F(s,X_s) = \int_s^t E\Big[ \big(\partial_s + L\big) F(u, X_u)\Big |\mathcal{F}_s\Big]ds.$$
When $t=s$, the relation holds.  Hence, if one shows the derivatives with respect to $t$ match, that is
$$\partial_t E\big[F(t, X_t)|\mathcal{F}_s\big] = P_{t-s} \big(\partial_t F\big)(t, x)|_{x=X_s} + P_{t-s} (LF)(t,x)|_{x=X_s}.$$ 
\end{exercise}

\section{Sketch of hydrodynamics for exclusion processes}
\label{sec:sketch-exclusionhydro}

 Let $\rho_0:\T^d\rightarrow \R_+$ be a continuous function.  We will denote by $\nu^N_{\rho_0(\cdot)}=\prod_{x\in \T_N^d}{\rm Bern}(\rho_0(x/N))$ the product measure with Bernoulli marginal at site $x\in \T_N^d$ with success probability $\rho_0(x/N)$.

Our goal is to analyze the asymptotic behavior of the empirical measure with respect simple exclusion process $\eta_t$,
$$\pi^N_{v(N)t} \ = \ \frac{1}{N^d}\sum_{x\in \T^d_N} \eta_t(x)\delta_{x/N},$$
in a time scale $v(N)$ to be chosen later.  We will start the process from local equilibrium measures $\mu^N = \nu^N_{\rho_0(\cdot)}$.

In general, one doesn't expect $\{\eta_{v(N)t}(x): x\in \T_N^d\}$ to consist of independent random variables, even if initially at $t=0$ they are independent.
Instead of computing the mean and variance, as with independent particles, we will use the martingale formulation.

To understand main ideas, let $G:\T^d\rightarrow \R$ be a smooth function, not depending on time.  Treating 
$$\langle G, \pi^N_{v(N)t}\rangle \ = \ \frac{1}{N^d}\sum_{x\in \T^d_N}G(x/N)\eta_{v(N)t}(x)$$ as a function $F$ of the Markov process $\eta_{v(N)t}$ on the state space $\Omega$, we obtain a microscopic evolution equation:
$$\langle G, \pi^N_{v(N)t}\rangle \ = \  \langle G, \pi^N_0\rangle + \int_0^{v(N)t} L\langle G, \pi^N_{s}\rangle ds + M^N_{t}(G)$$
where $M^N_t(G)$ is a martingale (cf. Proposition \ref{martingale}) with quadratic variation
$$\langle M^N(G)\rangle_t \ = \ \int_0^{v(N)t} \Big[L\big(\langle G,\pi^N_s\rangle \big)^2- 2\langle G,\pi^N_s\rangle \big(L\langle G,\pi^N_s\rangle\big)\Big]ds.$$

Now, the martingale is negligible in the $N\uparrow\infty$ limit:  We compute that
\begin{align}\label{quad_var_calc}
&\E_{\mu^N}\big[ (M^N_{t}(G))^2 \big] \\
&\ \ \ \ \ \  =  \int_0^{v(N)t} \frac{1}{N^{2(d+1)}}\sum_{x,y\in \T^d_N}\big(\nabla^N_{x,x+y}G\big)^2 p(y)\eta_s(x)(1-\eta_s(x+y))ds\nonumber\\
&\ \ \ \ \ \  \leq  \frac{v(N)t}{N^{2(d+1)}} \sum_{x,y\in \T^d_N}\big(\nabla^N_{x,x+y}G\big)^2p(y) 
 \ = \ O(v(N)N^{-d-2}).\nonumber\end{align}
In the last line, we have used that the occupation variables are bounded by $1$ and that the jump probabilities are finite-range.

A calculation shows that 
\begin{align}\label{summation} 
&L\langle G, \pi^N_{s} \rangle \\
&\ \ \ \ \ \ \ =  \frac{1}{N^{d+1}}\sum_{x,y\in \T^d_N} \eta_s(x)(1-\eta_s(x+y))p(y)\big[\nabla^N_{x+y,x}G\cdot (\eta_{s}(x) - \eta_{s}(x+y))\big]\nonumber\\
&\ \ \ \ \ \ \ = \frac{1}{N^{d+1}}\sum_{x,y\in \T^d_N} \eta_s(x)(1-\eta_s(x+y))p(y)\big[\nabla^N_{x+y,x}G\big]\nonumber
\end{align}
where $\nabla^N_{u,v}G = N[G(u/N)-G(v/N)] \sim (u-v)\cdot \nabla G(v/N)$.

\begin{exercise}\rm
 Verify the derivation of the quadratic variation given in \eqref{quad_var_calc}.\end{exercise}

\subsection{Symmetric case.}
\label{sec:symm-hyd}
  When $p$ is symmetric, recalling the simpler form of the generator \eqref{simplification}, a further summation by parts is possible and we obtain
\begin{eqnarray*}
L\langle G, \pi^N_s\rangle & = & \frac{1}{2N^{d+1}}\sum_{x,y\in \T^d_N} \big[\eta_s(x) - \eta_s(x+y)\big]p(y)\nabla^N_{x+y,x}G\\
&=&  \frac{1}{2N^{d+2}}\sum_{x,y\in\T^d_N}p(y)\eta_s(x) \triangle^N_{x,y}G
\end{eqnarray*}
where $\triangle^N_{x,y}G = N^2[G(x+y/N) - 2G(x/N)+G(x-y/N)]$.

Now, $\sum_y p(y)\triangle^N_{x,y}G = \triangle_CG(x/N) + o(1)$ where we recall
$$\triangle_C \ = \ \sum_{1\leq i,j\leq d} C_{i,j} \partial^2_{x_i,  x_j}, \ \ {\rm and \ covariances \ } C_{i,j} \ = \ \sum_{z\in \T^d_N} z_iz_j p(z).$$ 

Hence, if $v(N)=N^2$, we have that
$$\langle G, \pi^N_{v(N)t}\rangle \ = \  \langle G, \pi^N_0\rangle + \frac{1}{2N^2}\int_0^{v(N)t} \langle \triangle_C G, \pi^N_{s}\rangle ds + M^N_t(G) + o(1).$$

Putting these estimates together, along with \eqref{quad_var_calc}, for symmetric $p$, we have `closed' the equation:
$$\langle G, \pi^N_{v(N)t}\rangle \ = \ \langle G, \pi^N_0\rangle + \frac{1}{2}\int_0^{t} \langle \triangle_C G, \pi^N_{v(N)s}\rangle ds + o(1).$$
This suggests in the $N\uparrow\infty$ limit that the empirical measures $\pi^N_{N^2t}$ converge in the weak sense to a solution of the heat equation 
$$\partial_t \rho = \frac{1}{2}\triangle_C \rho.$$
The goal of Chapter \ref{lec3} is to make precise this statement for symmetric simple exclusion.

The reader will have noticed that the above display is virtually the same equation derived for independent particles.  This is due to the definition of the empirical measure $\pi^N_t$, which uses the occupation variables $\eta(x)$ as the masses at locations $x/N$.  The `duality' property of such functions with respect to symmetric exclusion mentioned earlier (cf. Exercise \ref{ex:duality}) allows to recover the heat equation.  There are differences however in the associated fluctuations, for instance as seen in Sections \ref{lec11}, and \ref{lec12} (specialized to independent particles).

\subsection{Drift case.}
When $p$ is asymmetric, say $m=\sum_z zp(z)\neq 0$, as in the independent particle model, we should choose $v(N)=N$.  But, one cannot `close' the equation.  One has to deal with the term
$$\frac{N}{N^{d+1}}\sum_{x,y\in \T^d_N} \eta_s(x)(1-\eta_s(x+y))p(y)\big[\nabla^N_{x+y,x}G\big]$$
composed of `two-point' functions $\eta(x)\eta(x+y)$.  In the limit, such a term due to `local averaging' should be replaced by a quadratic function of the empirical density.  

Formally, one would obtain a form of Burger's equation
$$\partial_t + m\cdot \nabla \rho(1-\rho) \ = \ 0.$$
There is much work on such hyperbolic mass conservation laws.  In particular, there may be several solutions even starting from smooth initial data.  It will turn out that the solution found by hydrodynamics is the unique `entropy' or `vanishing viscosity' solution.  We will discuss the $m\neq 0$ case in the context of `TASEP', that is in totally asymmetric simple exclusion setting in $d=1$ when $p(1)=1$ and $p(j)=0$f or $j\neq 1$ in Section \ref{lec6}.

\subsection{Asymmetric, mean-zero case.}
\label{sec:asym_meanzero}
In the final case when $p$ is asymmetric, but mean-zero, that is $p(z)\neq p(-z)$ for some $z$, but $\sum_z zp(z) = 0$, there are other complications.  The system is referred to as a `non-gradient' model.
Although, we should speed up time by $v(N)=N^2$, a second summation-by-parts as in the symmetric situation cannot be done.  Indeed, multiplying \eqref{summation} by $N^2$, we have
\begin{align*}
&\frac{N^2}{2N^{d+1}} \sum_{x\in \T^d_N}\nabla G(x/N)\\
&\ \ \ \ \ \ \cdot \sum_{y\in \T_N^d}\big\{\eta_s(x)(1-\eta_s(x+y))yp(y) - \eta_s(x+y)(1-\eta_s(x))yp(-y)\big\}.
\end{align*}
Unless $p$ is symmetric, one cannot rewrite the expression in curly braces as the exact difference of a function $f$ and its translate.

Let $\{e_i\}_{i=1}^d$ be the standard basis in $\Z^d$.  It can be shown that the sum over $y$ dotted with $e_i$ can be approximated by a difference $\sum_{j=1}^d a_{i,j}(\eta(x))-a_{i,j}(\eta(x+e_j))$, a `gradient'.  The function $a_{i,j}$ depends on $p$.  With respect to a certain  'homogenization' $\bar a_{i,j}$ of $a_{i,j}$, the hydrodynamic equation can then be written down as a nonlinear Heat equation,
$$\partial_t \rho = \frac{1}{2}\sum_{i=1}^d \partial_{x_i, x_j} \bar a_{i,j}(\rho(t,\theta)).$$
	We refer to \cite{Sasada}, \cite{KL}, \cite{Varadhan_notes}  for more discussion and details.

 \section{Why is it called `hydrodynamics'?}
 \label{sec:why-hydro}
 Finally, we comment on why the scaling limit is called a `hydrodynamic limit'.  The origins go back to the study of $L=\rho N$ identical mass $1$ particles with certain positions $\{q^\ell=\langle q^\ell_i: i=1,2,3\rangle\}_{\ell=1}^L$ and momenta $\{p^\ell = \langle p^\ell_i: i=1,2,3\rangle\}_{\ell=1}^L$ moving on a torus $N\T^3$, with width $N$, according to Newtonian dynamics: 
\begin{align*}
\frac{dq^\ell_i}{dt} &=  \frac{\partial H}{\partial p^\ell_i} \ = \ p_i^\ell \ \ {\rm and \ \ } \\
\frac{dp^\ell_i}{dt} & =  -\frac{\partial H}{\partial q^\ell_i} \ = \ -\sum_k V_i(q^\ell - q^k),
\end{align*}
where the energy $H$ is given by
\begin{align*}
H  & =  \ \frac{1}{2}\sum_\ell\sum_i |p^\ell_i|^2 + \frac{1}{2}\sum_{\ell\neq k} V\big(q^\ell - q^k\big)
\end{align*}
and $\nabla V = \langle V_1, V_2, V_3\rangle$
in terms of an even, nonnegative, smooth, compactly supported function $V:\R^3 \rightarrow \R$. 

In the system, the total mass, the three components of momentum, and energy are conserved.  The question is how to understand in a scaling limit, as time is speeded up by $N$, and space is rescaled by $N^{-1}$, the evolution of the local density $\rho(t,x)$, momenta $w=\langle w_1(t,x), w_2(t,x), w_3(t,x)\rangle$, and energy $e(t,x)$, that is the `hydrodynamic flow' of the system.

  One can form the scaled empirical measures corresponding to these quantities:
  Writing $\delta(\cdot)$ for the point mass $\delta_\cdot$,
\begin{eqnarray*}
\xi^0_t &=& \frac{1}{N^3}\sum_\ell \delta\Big(\frac{q^\ell(Nt)}{N}\Big)\\
\xi^i_t &=& \frac{1}{N^3}\sum_\ell \delta\Big(\frac{q^\ell(Nt)}{N}\Big)p^\ell_i(Nt) \ \ \ {\rm for \ }i=1,2,3\\
\xi^4_t &=& \frac{1}{N^3}\sum_\ell \delta\Big(\frac{q^\ell(Nt)}{N}\Big)h^\ell(Nt)
\end{eqnarray*}
where energy of the $\ell$th particle is
$$h^\ell(Nt) \ = \ \frac{1}{2}|p^\ell(Nt)|^2 + \frac{1}{2}\sum_k V\big(q^\ell(Nt) - q^k(Nt)\big).$$

On the torus $N\T^3$, one can define a $5$ parameter family of canonical point measures $\mu^N_{\rho, w, \beta}$, corresponding to density $\rho$, velocity $w$, and inverse temperature $\beta$, which are invariant for the dynamics: 
Here, $L = \rho N^3$ points $\{q^\ell\}$ are distributed in $N\T^3$ according to joint density, with respect to Lebesgue measure,
$$\frac{1}{Z} \exp \Big\{ -\frac{\beta}{2} \sum_{\ell\neq k} V(q^\ell - q^k)\Big\}$$
where $Z$ is a normalization.  The momenta $\{p^\ell\}$ are i.i.d. vectors with independent Gaussian components $\big\{N(w_i, \beta^{-1}): i=1,2,3\big\}$, independent of the positions $\{q^\ell\}$.  An infinite volume limit of these measures, as $N\uparrow\infty$, might be taken under some conditions.

The goal is to show in some sense the limits $\xi^i_t\rightarrow y^i(t,x)du$ for $i=0,1, 2,3,4$ where $y=\langle y_i: i=0,1,2,3,4\rangle$ satisfies an `Euler' equation
$$\partial_t y + \nabla_x F(y)  \ = \ 0.$$
Here $F$ is a $5\times 3$ matrix.
The rigorous passage to such a limit in general is open!  In computing $\frac{\partial}{\partial t} \xi^i$, one has to `close' the expressions in terms of functions of the empirical measures.  Formally, one can do this and derive the form of $F$, subject to a certain ansatz. 

For instance, let us derive the equation for the formal limiting `local average' density $y^0 = \rho(t,x)$ in terms of the formal limiting `local average' momentum $\langle y^1, y^2, y^3\rangle = w(t,x)$.  With respect to a test function $G$,
\begin{align*}
\frac{d}{d t} \frac{1}{N^3}\sum_\ell G\Big(\frac{q^\ell(Nt)}{N}\Big) & = \frac{1}{N^3}\sum_\ell \nabla G\Big(\frac{q^\ell(Nt)}{N}\Big) \cdot p^\ell (Nt)\\
&\sim \int_{\T^3} \big[\nabla G(x)\cdot w(t,x)\big]\rho(t,x)dx.
\end{align*}
Then,
$$\partial_t \rho + \nabla \cdot (\rho w) = 0.$$

To derive an equation for $w(t,x)$, however, we will need to understand how to average time-dependent quantities involving nonlinear terms such as 
$$
 p^\ell_i(Nt)p^\ell_j(Nt) \ {\rm \ and  \ \ }
 \big(q^\ell(Nt) - q^k(Nt)\big)V_i(q^\ell(Nt) -q^k(Nt)).$$  
One would expect that these quantities could be replaced by space averages with respect to an infinite volume local equilibrium measure.

Such an ergodic theorem for the purely deterministic coupled ODE dynamics to `close' the equation is however difficult to obtain.  One of the best results is in \cite{OVY} where a small amount of noise is added to the dynamics to make it into a reasonable Markov process so that local averaging can be done, and a rigorous limit can be proved.  See also the recent developments in the solution of Hilbert's sixth problem with respect to Boltzmann's equation \cite{Dengetal}.

One motivation, among others, for our study of stochastic interacting particle systems is that the randomness of the microscopic motions may allow for suitable ergodic theorems.  In a sense, to quote S.R.S. Varadhan, instead of `approximating an exact problem', we will try to solve `exactly an approximate problem'.  Of course, in the modeling of various phenomena, adding noise to the dynamics is often natural.

\begin{exercise}
\label{hyd_det_exercise}
\rm
Consider the ansatz, with respect to the canonical measures $\mu^N(\rho, w, \beta)$, that 
$$p^\ell_i(Nt)p^\ell_j(Nt) \sim w_i(t,x)w_j(t,x) + 1(i=j)\beta^{-1}(t,x)$$
and the `pressure'
\begin{align*}
& \frac{-1}{2}\sum_k\big(q_j^\ell(Nt) - q_j^k(Nt)\big)V_i\big(q^\ell(Nt) -q^k(Nt)\big) \sim
P_{j,i}\big(\rho(t,x), w(t,x), \beta^{-1}(t,x)\big),
\end{align*}
Note that 
\begin{align*}
&\sum_\ell G\Big(\frac{q^\ell(Nt)}{N}\Big)\sum_k V_i\big(q^\ell(Nt) - q^k(Nt)\big)\\
&\quad\quad = \frac{1}{2}\sum_{\ell, k} \Big(G\Big(\frac{q^\ell(Nt)}{N}\Big) - G\Big(\frac{q^k(Nt)}{N}\Big)\Big) V_i\big(q^\ell(Nt) - q^k(Nt)\big)
\end{align*}
as $V_i$ is anti-symmetric.  Derive formally from the ansatz that the equation for $w(t,x)$ is given by
\begin{align*}
&\frac{\partial}{\partial t}\big(w_i(t,x)\rho(t,x)\big) +\frac{\partial}{\partial x_i} \big(\beta^{-1}(t,x)\rho(t,x)\big) \\
&\quad \quad +\sum_{j=1}^3\frac{\partial}{\partial x_j} \big(w_j(t,x)w_i(t,x)\rho(t,x)\big)\\
&\quad\quad + \sum_{j=1}^3 \frac{\partial}{\partial x_j}\big[P_{j,i}\big(\rho(t,x), w(t,x),\beta^{-1}(t,x)\big) \rho(t,x)\big]= 0.
\end{align*}

We note that the last equation for the energy $y^5=e$ should be
\begin{align*}
&\frac{\partial}{\partial t}\big(e(t,x)\rho(t,x)\big) + \sum_{j=1}^3 \frac{\partial}{\partial x_j}\big(e(t,x)w_j(t,x)\rho(t,x)\big) \\
&\quad\quad + \sum_{j=1}^3 \sum_{i=1}^3 \frac{\partial}{\partial x_j} \big[P_{j,i}\big(\rho(t,x),w(t,x), \beta^{-1}(t,x)\big)w_i(t,x)\rho(t,x)\big]=0.
\end{align*}

\end{exercise}

\section{Notes}

The material on martingales can be found in \cite{EK} for instance, and other places.  Similar treatments of the hydrodynamics of exclusion can be found in \cite{KL} and \cite{Varadhan_notes}.  

The exclusion process--see \cite{Griffeath} for a retrospective--has many properties which make it amenable to calculation.  It has proved to be a versatile model, which can be defined on very general graphs. See \cite{dmp}, \cite{Liggett1}, \cite{Liggett2}, \cite{Liggett_notes}, \cite{Liggett3}, \cite{Seppalainen-book} for detailed studies using different properties.

We comment one may formulate other empirical measures of interest, those of `higher order' \cite{ChenJ}, or with respect to `local functions' \cite{KL}[Chapter III, Lemma V.5.5].

As one can see in the sketch of the hydrodynamics for symmetric exclusion processes, requirements may be weakened on the initial measure.  In fact, what is needed is a guarantee of a law of large numbers at time $0$.  The concept of `very weak local equilibrium' given below (cf. Chapter III in \cite{KL}) is sufficient and somewhat general.

Let $\rho_0:\T^d\rightarrow \R_+$ be a function.  We say that a sequence of probability measures $\mu^N$ on $\T^d_N$ is a `very weak local equilibrium' according to profile $\rho_0$ if 
 $$\lim_{N\uparrow\infty} E_{\mu^N}\Big[ \Big| \frac{1}{N^d} \sum_{x\in \T^d_N} G(x/N) \eta(x)  - \int_{\T^d} G(u)\rho_0(u)du\Big|\Big] \ = \ 0.$$
 for all bounded, continuous $G: \T^d \rightarrow \R$.
 
We remark that $\mu^N$ may be degenerate, that is supported on a single configuration, and that the sequence $\{\mu^N\}$ may consist of deterministic configurations which satisfy the law of large numbers in the definition above.

 With respect to Subsection \ref{sec:why-hydro}, for further discussion of hydrodynamics of deterministic evolutions see  \cite{Spohn}[Part 1], \cite{Saint-Raymond}, \cite{Dengetal}.  An early work in this regard is \cite{Morrey}.  We have followed the scheme in \cite{Varadhan_notes} which also elaborates more on the calculations surrounding Exercise \ref{hyd_det_exercise}.

\newpage
\chapter[Section $3$]{Proof of hydrodynamics for symmetric exclusion processes}
\label{lec3}

After stating the main theorem, we provide an outline of the proof and provide details on the associated steps.  Such an outline can be used to prove `hydrodynamic limits' in other models, in particular in zero-range processes in Chapter \ref{lec4}.

\section{Statement of hydrodynamics}

Recall the notation from the last subsection, with respect to $d$-dimensional symmetric exclusion processes with finite-range symmetric jump rate $p$, in particular the definition of the operator
$\triangle_C \ = \ \sum_{1\leq i,j\leq d} C_{i,j}\partial^2_{x_i, x_j}$
with $C_{i,j}  =  \sum_{z\in \T^d_N} z_iz_j p(z)$, and the measure $\mu^N= \nu^N_{\rho_0(\cdot)}=\prod_{x\in \T_N^d}{\rm Bern}(\rho_0(x/N))$.  Recall also that $\P_{\mu^N}$ and $\E_{\mu^N}$ stand for the process measure and expectation starting from $\mu^N$.

\begin{theorem}
Consider the symmetric exclusion process with finite-range translation-invariant jump probabilities.  Then, starting from the local equilibrium measure $\mu^N$, associated with continuous profile $\rho_0(\cdot)$, the empirical measure $\pi^N_{N^2t}$ converges in probability to the measure $\rho(t,u)du$ where $\rho(t,u)$ satisfies the hydrodynamic equation
\begin{equation}
\label{hyd_ssep}\partial_t\rho(t,u) \ = \ \frac{1}{2}\triangle_C\rho(t,u), \ \ \ {\rm and \ \ \ }\rho(0,u) = \rho_0(u).\end{equation}
\end{theorem}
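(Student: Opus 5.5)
The proof will follow the standard martingale/tightness scheme, made quite concrete by the closed equation from Subsection \ref{sec:symm-hyd}. Fix a time horizon $T$ and view $\pi^N_t:=\pi^N_{N^2t}$ as a process in $D([0,T],\mathcal{M}_+(\T^d))$, with the metric $\delta$ of Chapter \ref{lec2}. Let $Q^N$ be its law under $\P_{\mu^N}$.

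The plan has four steps:
\begin{enumerate}
\item Show that $\{Q^N\}$ is tight.
\item Show that every limit point is concentrated on paths $\pi_t(du)=\rho(t,u)du$ that are weak solutions of \eqref{hyd_ssep}.
\item Invoke uniqueness of weak solutions of the heat equation on $\T^d$ with density bounded by $1$.
\item Upgrade convergence in law to a deterministic limit into convergence in probability at each fixed $t$.
\end{enumerate}

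For tightness, total mass satisfies $\pi^N_t(\T^d)\le 1$, since $\eta(x)\le 1$. So it suffices, by a standard criterion (Aldous, or Kolmogorov-type), to control the time modulus of $\langle G,\pi^N_t\rangle$ for each $G$ in the dense family $\{f_k\}$, which we may take smooth. By Proposition \ref{martingale} and the computation in Subsection \ref{sec:symm-hyd}, with $v(N)=N^2$,
\[
\langle G,\pi^N_t\rangle-\langle G,\pi^N_s\rangle=\int_s^t \frac{1}{2N^d}\sum_{x}\eta_{N^2r}(x)\sum_y p(y)\triangle^N_{x,y}G\,dr+M^N_t(G)-M^N_s(G).
\]
The integrand is bounded uniformly by $C\|G''\|_\infty$, so the drift part is Lipschitz in time. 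For the martingale part, \eqref{quad_var_calc} with $v(N)=N^2$ gives $\E[(M^N_t(G))^2]=O(N^{-d})$. Doob's inequality then yields $\E\sup_{t\le T}|M^N_t(G)|^2\to 0$. Together these give tightness, and also show that limit points are supported on continuous paths.

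To characterize limits, first note that every limit point is supported on absolutely continuous measures $\pi_t=\rho(t,u)du$ with $0\le\rho\le 1$. Indeed, $|\langle G,\pi^N_t\rangle|\le N^{-d}\sum_x|G(x/N)|$, which in the limit gives $|\langle G,\pi_t\rangle|\le\int|G|du$. Next, the map
\[
\pi\mapsto\sup_{t\le T}\Big|\langle G,\pi_t\rangle-\langle G,\pi_0\rangle-\tfrac12\int_0^t\langle\triangle_C G,\pi_s\rangle ds\Big|
\]
is continuous on paths with continuous trajectories. The closed equation, with its $o(1)$ Riemann-sum error from replacing $\sum_y p(y)\triangle^N_{x,y}G$ by $\triangle_C G(x/N)$, plus the Doob bound, then shows that this functional vanishes $Q$-a.s. at a limit point $Q$.

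At time $0$, under $\mu^N=\nu^N_{\rho_0(\cdot)}$ the variables $\eta(x)$ are independent Bernoulli. So $\langle G,\pi^N_0\rangle$ has mean $N^{-d}\sum_xG(x/N)\rho_0(x/N)\to\int G\rho_0$ and variance $O(N^{-d})$. Hence $\pi_0=\rho_0(u)du$ a.s. Applying this to the countable family $\{f_k\}$ and using density, $Q$ is concentrated on weak solutions of \eqref{hyd_ssep}.

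The main obstacle I expect is uniqueness of weak solutions. I would handle it by linearity: the difference $w$ of two solutions is a bounded weak solution with zero initial data. Testing against $G=e^{2\pi i k\cdot u}$ gives the ODE
\[
\frac{d}{dt}\hat w_k(t)=-2\pi^2(k\cdot Ck)\,\hat w_k(t),\qquad \hat w_k(0)=0,
\]
so every Fourier coefficient vanishes and $w\equiv0$. Here $C$ is positive definite by irreducibility of $p$, though even $k\cdot Ck\ge0$ suffices. Consequently $Q^N$ converges weakly to the Dirac mass at the path $\rho(t,u)du$. Since this path is continuous in $t$, the projection at a fixed $t$ is continuous there. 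Convergence in law to a deterministic limit is equivalent to convergence in probability, which gives the theorem.
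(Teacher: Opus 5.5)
Your proposal is correct and follows the paper's scheme. That scheme is: tightness from the martingale decomposition, absolute continuity from $\eta\le 1$, identification of limit points via the closed equation, uniqueness, and finally continuity of the projection at fixed $t$ on continuous paths. Your single application of Doob's inequality on all of $[0,T]$ is a slightly cleaner version of the paper's subinterval argument.

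The one step where you genuinely depart from the paper is uniqueness. The paper tests against time-dependent $G(s,u)$ precisely so that it can quote uniqueness of bounded weak solutions from PDE theory. You instead keep $G$ time-independent and prove uniqueness by hand: testing against Fourier modes, each coefficient of the difference satisfies a scalar linear integral equation with zero initial value.

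This buys a self-contained argument, and it gives more than the paper needs. It yields uniqueness among measure-valued solutions, so absolute continuity is not even needed for this step. It also needs no sign or definiteness condition on $k\cdot Ck$: a linear equation with zero datum has only the zero solution, whatever the sign. The price is that it leans on linearity and on the Fourier diagonalization of $\triangle_C$ on $\T^d$. It would not transfer to the nonlinear equation $\partial_t\rho=\frac{1}{2d}\triangle\Psi(\rho)$ of the zero-range chapter, where the paper's framework (time-dependent test functions plus a PDE uniqueness theorem) is what one actually needs.

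One small precision: since $\triangle_C G$ appears in the weak identity, sup-norm density of $\{f_k\}$ does not let you pass from the countable family to all test functions. Simply include the countably many functions $\cos(2\pi k\cdot u)$ and $\sin(2\pi k\cdot u)$, $k\in\mathbb{Z}^d$, in your family. That is all your Fourier argument uses.
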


The proof is given through the following rough steps, which are then explained in more detail in later subsections.

\vskip .1cm

Step 1.  Consider the trajectory of empirical measures indexed in an interval of time, $\pi^N=\langle \pi^N_{N^2t}: t\in [0,T]\rangle$.  Here, $T>0$ is time length, fixed throughout.  Let $Q^N$ be the law of the trajectory.  The first step is to show that the laws $\{Q^N: N\geq 1\}$ are tight in the space of measure-valued right-continuous trajectories with left limits, the Skorohod space $\mathcal{D}([0,T]; \mathcal{M}_+(\T^d))$.  We will show this tightness in the stronger uniform topology.

\vskip .1cm

Step 2.  Given tightness, we will show that any subsequential limit $Q$ of $Q^N$ must be supported on trajectories $\langle \pi_t: t\in [0,T]\rangle$ satisfying
\begin{equation}
\label{limpt_sep}\langle G(t,\cdot), \pi_t\rangle - \langle G(0,\cdot), \rho_0\rangle \ = \ \int_0^t \Big\langle \Big(\partial_s + \frac{1}{2}\triangle_C\Big)G(s,\cdot),\pi_s\Big\rangle ds.\end{equation}
This equation is similar to what was derived in the last subsection when $G$ did not depend on time.  However, to input into PDE uniqueness results, we will need to derive the equation with respect to this wider class of functions $G$.  Moreover, after also showing in Step 3 below that trajectories under a limit point $Q$ are absolutely continuous, we will be able to conclude the associated densities satisfy a weak formulation of \eqref{hyd_ssep}.
\vskip .1cm

Step 3.  We will show that $Q$ supports trajectories such that for each $t\in [0,T]$, $\pi_t$ is absolutely continuous with respect to Lebesgue measure on $\T^d$, and hence can be written as $\pi_t = \rho(t,u)du$, which is a priori random.
Therefore, from Step 2, $\rho(t,u)$ is a weak solution of the hydrodynamic equation \eqref{hyd_ssep}.  By uniqueness of weak solutions to the heat equation in the class of bounded solutions, we see that $\rho(t,u)$ is actually deterministic.  In particular, all subsequential limits of $Q^N$ converge to the point mass supported on the trajectory $\langle \rho(t,u)du: t\in [0,T]\rangle$.  Since tightness was proved in the uniform topology, this trajectory is continuous in time. [This can also be inferred from regularity results in PDE.]  Hence, it can be concluded, at a fixed time $t\in [0,T]$, that $\pi^N_{N^2t}$ converges weakly to a constant, the measure $\rho(t,u)du$, and hence in fact converges in probability.

\subsection{Proof of Step 2: Identification}
We recall that we have almost shown Step 2 in the last subsection.
To this end, let $G:\R_+\times \T^d \rightarrow \R$ be a smooth $C^2$ function.  By the development in Subsection \ref{sec:symm-hyd}, we obtain
   \begin{eqnarray*}
   \langle G(t,\cdot), \pi^N_{N^2t}\rangle & = &  \langle G(0,\cdot), \pi^N_0\rangle + \int_0^{t} (\partial_s+N^2L)\langle G(s,\cdot), \pi^N_{s}\rangle ds + M^N_{t}(G)\\
   & = &  \langle G(0,\cdot), \pi^N_0\rangle + \int_0^{t} \Big\langle\Big(\partial_s + \frac{N^2N^{-2}}{2}\triangle_C\Big)G(s,\cdot), \pi^N_{s}\Big\rangle ds + o(1)\\
   &=& \langle G(0,\cdot), \pi^N_0\rangle + \int_0^{t} \Big\langle \Big(\partial_s + \frac{1}{2}\triangle_C \Big)G(s,\cdot), \pi^N_{s}\big\rangle ds + o(1).
   \end{eqnarray*}
   Therefore, if $\langle\pi_t: t\in [0,T]\rangle$ is a limit point of $\langle \pi^N_t: t\in [0,T]\rangle$, we obtain \eqref{limpt_sep}.

   We now discuss more carefully some topological considerations and argue Step 1, the most difficult part. Afterwards, we concentrate on Step 3 in a subsequent subsection.

\section{Topology and compactness}

Before making calculations with respect to Step 1, we first recall some definitions and results on weak convergence, and the spaces $C\big([0,T]; \mathcal{M}_+(\T^d)\big)$ and $\mathcal{D}\big([0,T]; \mathcal{M}_+(\T^d)\big)$ with a view toward characterizing when a sequence of probability measures $Q$ on these spaces is tight.  More details can be found in \cite{Billingsley}, \cite{KL}[Section 4.1]. 

\vskip .1cm
First, we recall that a family of probability measures $Q^N$ on a metric space is relatively compact if any subsequence of the family has a weakly convergent subsequence.  We say that the family is tight if for each $\epsilon>0$ there is a compact set $K_\epsilon$ such that all measures give at least weight $1-\epsilon$ to $K_\epsilon$.  Recall that $\Omega$ is a complete metric space if all Cauchy sequences converge in $\Omega$, and $\Omega$ is a separable space if it contains a countable dense set of points.
\begin{proposition}[Prokhorov's theorem]
Let $\Omega$ be a complete, separable metric space.  Then, a family $Q^N$ of probability measures on $\Omega$ is relatively compact exactly when the family is tight.

Moreover, if $\Omega$ is a complete metric space, then a family $Q^N$ of probability measures on $\Omega$ is relatively compact when the family is tight.
\end{proposition}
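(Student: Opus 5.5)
The plan is to prove the two implications separately. Tightness implies relative compactness in any metric space, completeness included, which gives the second sentence. Relative compactness implies tightness when $\Omega$ is complete and separable. Throughout, I use the Portmanteau characterization of weak convergence: $Q^N\Rightarrow Q$ if and only if $\limsup_N Q^N(F)\le Q(F)$ for all closed $F$, equivalently $\liminf_N Q^N(G)\ge Q(G)$ for all open $G$.

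\emph{Relative compactness implies tightness.} Fix $\epsilon>0$ and a countable dense set $\{x_i\}$. For each $k$, the balls $B(x_i,1/k)$ cover $\Omega$. I claim there is $n_k$ with $Q^N\big(\bigcup_{i\le n_k}B(x_i,1/k)\big)>1-\epsilon 2^{-k}$ for all $N$.
\begin{itemize}
\item If not, for every $n$ pick $N_n$ violating the inequality with the open set $G_n=\bigcup_{i\le n}B(x_i,1/k)$.
\item Since the family is not tight only if this fails, relative compactness gives a subsequence of $Q^{N_n}$ converging weakly to some $Q$.
\item For fixed $m$ and $n\ge m$, we have $G_m\subset G_n$, so Portmanteau gives $Q(G_m)\le\liminf_n Q^{N_n}(G_m)\le 1-\epsilon2^{-k}$.
\item Letting $m\uparrow\infty$, $G_m\uparrow\Omega$, so $Q(\Omega)\le 1-\epsilon2^{-k}$, a contradiction.
\end{itemize}
Now set $K=\overline{\bigcap_k\bigcup_{i\le n_k}\overline{B}(x_i,1/k)}$. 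This set is closed and totally bounded, hence compact by completeness. Also $Q^N(K^c)\le\sum_k\epsilon2^{-k}=\epsilon$ for every $N$.

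\emph{Tightness implies relative compactness.} Take compacts $K_1\subset K_2\subset\cdots$, which may be made increasing by finite unions, with $Q^N(K_j^c)\le 1/j$ for all $N$. Given a subsequence, I proceed in three steps.
\begin{itemize}
\item Each $C(K_j)$ is separable because $K_j$ is a compact metric space. The restrictions $Q^N|_{K_j}$ have mass at most $1$, so by Banach--Alaoglu and Riesz representation a diagonal argument yields a further subsequence along which $Q^N|_{K_j}\Rightarrow\mu_j$ as finite measures on $K_j$, for every $j$ simultaneously.
\item I then check that the $\mu_j$, viewed as Borel measures on $\Omega$ supported on $K_j$, are increasing in $j$. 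This uses that $Q^N|_{K_j}\le Q^N|_{K_{j+1}}$, tested against nonnegative continuous functions on $\Omega$, which restrict to $K_j$ and $K_{j+1}$.
\item I also check that $\mu_j(\Omega)\ge 1-1/j$. Then $Q(A):=\lim_j\mu_j(A)$ is a probability measure, with countable additivity following from monotone convergence.
\end{itemize}
Finally, for closed $F$, the set $F\cap K_j$ is closed in $K_j$, and
$$\limsup_N Q^N(F)\le\limsup_N Q^N(F\cap K_j)+1/j\le \mu_j(F\cap K_j)+1/j\le Q(F)+1/j.$$
Letting $j\uparrow\infty$ and applying Portmanteau gives $Q^N\Rightarrow Q$.

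The main obstacle is this second direction: constructing the limit $Q$ consistently from the compact pieces. Checking monotonicity of the $\mu_j$ needs care, because weak limits on different compacts must be compared through test functions on $\Omega$ via Urysohn or Tietze extension. Verifying countable additivity of $Q$ also needs care. If this becomes messy, the fallback is to note that tight measures are concentrated on the $\sigma$-compact, hence separable, set $\bigcup_j K_j$. One can then embed it homeomorphically into the Hilbert cube, where weak compactness follows from Riesz representation on the compact cube, and pull back using tightness to ensure no mass escapes.
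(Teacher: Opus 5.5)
Your proof is correct. The paper gives no argument for Prokhorov's theorem and simply defers to Billingsley, so the comparison is with the standard proof there.

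For ``relative compactness $\Rightarrow$ tightness'' your argument is exactly Billingsley's: the bound $Q^N(G_n)>1-\epsilon2^{-k}$ holds uniformly in $N$ for open $G_n\uparrow\Omega$, and then you take the closed, totally bounded set $\bigcap_k\bigcup_{i\le n_k}\overline{B}(x_i,1/k)$. This is the only place your proof uses separability and completeness.

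For ``tightness $\Rightarrow$ relative compactness'' you take a genuinely different route. Billingsley extracts a subsequence along which $Q^N(H)$ converges for every $H$ in a countable class $\mathcal H$ of sets built from the $K_j$. He then builds the limit by hand from a content, an outer measure and Carath\'eodory's extension theorem. You instead take weak limits $\mu_j$ of the restrictions $Q^N|_{K_j}$ on each compact, using separability of $C(K_j)$ and Riesz representation, and define $Q=\sup_j\mu_j$. Your version is shorter and makes the ``no mass escapes'' step transparent, since $\mu_j(\Omega)\ge 1-1/j$. The cost is importing Riesz/Banach--Alaoglu, whereas Billingsley's proof stays purely measure-theoretic. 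Both give this implication in an arbitrary metric space, which is more than the second sentence of the statement requires.

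The step you flagged as the main obstacle needs no Urysohn or Tietze extension. Apply $\int f\,d\mu_j\le\int f\,d\mu_{j+1}$ to $f_k=\max\{0,1-k\,d(\cdot,F)\}\downarrow 1_F$ to get $\mu_j(F)\le\mu_{j+1}(F)$ for closed $F$. Finite Borel measures on a metric space are inner regular by closed sets, which extends the inequality to all Borel sets. So the Hilbert-cube fallback is unnecessary.

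Two small repairs are needed in the first half:
\begin{itemize}
\item The sentence beginning ``Since the family is not tight only if this fails'' should just say that relative compactness yields a weakly convergent subsequence of $(Q^{N_n})$.
\item If some index $N$ occurs infinitely often among the $N_n$, you get an immediate contradiction from $Q^N(G_n)\uparrow 1$. So you may assume $N_n\to\infty$ and pass to a genuine subsequence.
\end{itemize}
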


We will consider in the following a generic complete, separable space $\Omega$ with metric $\updelta$. 
Often, $\Omega$ will be $\mathcal{M}_+$ the space of finite measures on $\T^d$ equipped with the metric
$$\updelta(\mu, \nu) \ = \ \sum_{k\geq 1} \frac{1}{2^k}\frac{|\langle f_k, \mu\rangle - \langle f_k, \nu\rangle|}{1+|\langle f_k, \mu\rangle - \langle f_k, \nu\rangle|}$$
where $\{f_k\}$ is a countable dense set of functions in $C(\T^d)$, the space of continuous functions on the torus $\T^d$ endowed with the `sup' metric $\|f-g\|_\infty$ between $f$ and $g$.   We may take $f_1\equiv 1$.  Here, $\mathcal{M}_+=\mathcal{M}_+(\T^d)$ is a complete, separable metric space.  Moreover, a set $K\subset \mathcal{M}_+$ is relatively compact exactly when $\sup_{\mu\in K}|\langle 1, \mu\rangle|<\infty$.

At other times, $\Omega$ may be $\R$ with usual Euclidean metric.

 Now, we consider the space $C([0,T]; \Omega)$ with the `uniform' distance,
 $$d(\pi, \chi) \ = \ \sup_{t\in [0,T]}\updelta(\pi_t,\chi_{t}).$$
 It is known that this space is a complete separable space, and therefore amenable to Prokhorov's theorem.

 Since our basic building blocks in our study of hydrodynamics involve jump processes, the space of continuous trajectories is not sufficient for our purposes.  We will often focus on the space $\mathcal{D}([0,T],\Omega)$ of right-continuous trajectories with left limits.  Unfortunately, the uniform distance will not make this space a complete, separable metric space.  Define $\Lambda$ to be the set of strictly increasing continuous functions $\lambda$ of $[0,T]$ into itself, and
 $$\|\lambda\| \ = \ \sup_{s\neq t} \left| \log \frac{\lambda(t)-\lambda(s)}{t-s}\right|.$$
 Then, define the Skorohod distance between elements in $\mathcal{D}([0,T],\mathcal{M}_+)$ as
 $$d(\pi, \chi) \ = \ \inf_{\lambda \in \Lambda}\max \left\{\|\lambda\|, \sup_{t\in [0,T]} \updelta(\pi_t, \chi_{\lambda(t)})\right\}.$$
 In some sense, the Skorohod distance compares two trajectories allowing small variation both in space and in time.  To contrast, the uniform distance only compares variation in space.  With the Skorohod distance, $\mathcal{D}([0,T], \Omega)$ is a complete, separable metric space.

How to characterize compact sets in these spaces?  Consider the following moduli of continuity:
\begin{eqnarray*}
w_\pi(\gamma) & = & \sup_{\stackrel{|t-s|\leq \gamma}{s,t\in [0,T]}}\updelta(\pi_t, \pi_s)\\
w'_\pi(\gamma) &=& \inf_{\{t_i\}_{i=0}^r}\max_{0\leq i\leq r-1}\sup_{t_i\leq s<t< t_{i+1}}\updelta(\pi_s,\pi_t)
\end{eqnarray*}
where $\{t_i\}_{i=0}^r$ refers to a partition $0=t_0< \cdots < t_r = T$ such that $t_{i+1}-t_i> \gamma$ for $0\leq i\leq r-1$.

Then, $\pi$ belongs to $C([0,T], \Omega)$ exactly when $\lim_{\gamma\downarrow 0} w_\pi(\gamma) = 0$, and $\pi$ belongs to $\mathcal{D}([0,T], \Omega)$ exactly when $\lim_{\gamma\downarrow 0} w'_\pi(\gamma) =0$.

\begin{exercise}\label{moduli_ex}\rm Relate the two moduli by showing that
$$w'_\pi(\gamma) \ \leq \ w_\pi(2\gamma).$$
\end{exercise}

\begin{exercise}\rm Show, when $\Omega=\R$ and $\pi = 1\big([0,T/2)\big)$, that $\lim_{\gamma\downarrow 0}w'_\pi(\gamma)=0$ but $\liminf_{\gamma\downarrow 0}w_\pi(\gamma)>0$.
\end{exercise}

Compact sets in these spaces, since they are complete, are characterized as follows.
\begin{proposition}
A set $A$ belonging to $C([0,T], \Omega)$ or $\mathcal{D}([0,T],\Omega)$ is compact exactly when
\begin{itemize}
\item[(a)] $\{\pi_t: \pi\in A\}$ is relatively compact in $\Omega$ for each $t\in[0,T]$.
\item[(b)] $\lim_{\gamma\downarrow 0} \sup_{\pi\in A} \bar{w}_\pi(\gamma)  =  0$
where $\bar{w}_\pi = w_\pi$ on $C([0,T], \Omega)$ and $\bar{w}_\pi = w'_\pi$ on $\mathcal{D}([0,T], \Omega)$.
\end{itemize}
\end{proposition}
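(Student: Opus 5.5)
The plan is to treat the two spaces in parallel, since the argument is an Arzel\`a--Ascoli type characterization. In each case I would use that a subset of a complete metric space is compact exactly when it is closed and totally bounded, or equivalently sequentially compact. I would read ``compact'' in the statement as ``relatively compact'', since closure is automatic once (a) and (b) hold.

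\emph{Necessity.} Suppose $A$ is relatively compact.
\begin{itemize}
\item For the uniform case, each evaluation map $\pi\mapsto\pi_t$ is $1$-Lipschitz into $\Omega$. The image of a relatively compact set is therefore relatively compact, which gives (a).
\item For (b), the map $\pi\mapsto w_\pi(\gamma)$ is continuous, with $w_\pi(\gamma)\le w_\chi(\gamma)+2d(\pi,\chi)$. It also decreases pointwise to $0$ as $\gamma\downarrow 0$. Dini's theorem on the compact closure of $A$ then gives uniform convergence.
\item In the Skorohod case I would use two facts. First, $w'_\pi(\gamma)$ is upper semicontinuous in $\pi$ up to a time change of size $\|\lambda\|$: if $d(\pi,\chi)<\epsilon$, then $w'_\pi(\gamma)\le w'_\chi(\gamma e^{\epsilon}+\ldots)+2\epsilon$ after transporting partitions through $\lambda$. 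Second, (a) follows because $\{\pi_t:\pi\in A\}$ lies within $\epsilon$ of $\{\chi_s: |s-t|\le \epsilon T,\ \chi\in F\}$ for a finite $\epsilon$-net $F$. For each fixed $\chi\in\mathcal D$, that set has compact closure, namely the range together with the left limits.
\item A covering argument over a finite net then gives the uniform statement (b).
\end{itemize}

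\emph{Sufficiency.} I would show that $A$ is totally bounded, since completeness of the ambient space then yields relative compactness.
\begin{itemize}
\item Fix $\epsilon>0$ and choose $\gamma$ with $\sup_A \bar w_\pi(\gamma)<\epsilon$.
\item In the continuous case, take a grid $0=s_0<\cdots<s_k=T$ of mesh $\le\gamma$. By (a), the set $K=\overline{\bigcup_j\{\pi_{s_j}:\pi\in A\}}$ is compact, so it has a finite $\epsilon$-net $E$.
\item Each $\pi\in A$ is within $3\epsilon$ uniformly of the piecewise-constant, or piecewise-interpolated, path determined by the nearest points of $E$ at the grid times. There are finitely many such paths, which gives total boundedness.
\item For $\mathcal D$, the partition $\{t_i\}$ realizing $w'_\pi(\gamma)<\epsilon$ depends on $\pi$. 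I would approximate each $t_i$ by a point of a fine fixed grid and absorb the discrepancy into a time change $\lambda$ with $\|\lambda\|$ small.
\item The approximating paths are then step functions with jumps at grid points and values in $E$. This again gives a finite family within Skorohod distance $O(\epsilon)$.
\item For this I need (a) on a slightly larger set, including values $\pi_{t_i}$ at non-grid times. That is fine, because (a) is assumed for every $t$. It requires, though, that the union over the finitely many relevant times is compact. To arrange this, I would instead use that $\pi_s$ for $s\in[t_i,t_{i+1})$ lies within $\epsilon$ of $\pi_{t_i}$, so it suffices to control values at a countable dense set of times.
\end{itemize}

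The main obstacle I expect is this last point in the Skorohod case. The interval endpoints $t_i$ vary with $\pi$, and relative compactness in (a) holds only pointwise in $t$. Bridging the two requires the construction of $\lambda$ with controlled $\|\lambda\|$, specifically a piecewise-linear map sending each $t_i$ to a nearby grid point. The separation condition $t_{i+1}-t_i>\gamma$ keeps the slopes close to $1$. I would follow the treatment in \cite{Billingsley} for this step.
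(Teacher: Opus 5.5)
Your proposal is correct and is the standard Arzel\`a--Ascoli/Skorohod argument. For sufficiency it uses step-function approximants with values in a finite net, plus a piecewise-linear time change with slopes near $1$. For necessity it uses the $1$-Lipschitz evaluation maps, the near upper semicontinuity of $w'$ under $d$, and a finite net. This is exactly what the paper relies on: it gives no proof and defers to \cite{Billingsley} and \cite{KL}.

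One precision for your last step. A \emph{finite} grid of mesh at most $\gamma$ already suffices, since each $[t_i,t_{i+1})$ has length $>\gamma$ and so contains a grid point; you do not need a countable dense set. However, the endpoint $T$ must be included and controlled through (a) at $t=T$ directly, because $w'_\pi$ never involves the value $\pi_T$.
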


We remark, when $A\subset C([0,T],[a,b])$, the above characterization reduces to the Ascoli-Arzela condition with respect to equicontinuous families of trajectories on the finite interval $[a,b]$, and (a) can be replaced by `$\{\pi_0: \pi\in A\}$ is relatively compact in $\Omega$'.

Recall now Exercise \ref{moduli_ex}.  We now have the following claim.
\begin{proposition}
\label{tightness_prop}
A family $Q^N$ of probability measures on $\mathcal{D}([0,T], \Omega)$ is tight exactly when
\begin{itemize}
\item[(1)] For each $t\in [0,T]$, the distributions of $\pi_t$ under $Q^N$ are tight.
\item[(2)] For every $\epsilon>0$, $\lim_{\gamma\downarrow 0}\lim_{N\uparrow\infty} Q^N(\pi: w'_\pi(\gamma)>\epsilon) = 0$.
\end{itemize}
Moreover, a sufficient condition for $(2)$ is
\begin{itemize}
\item[(2')] In condition $(2)$, replace $w'_\pi$ with $w_\pi$.
\end{itemize}
\end{proposition}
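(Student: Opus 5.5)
We will deduce this from Prokhorov's theorem together with the preceding characterization of compact subsets of $\mathcal{D}([0,T],\Omega)$. Since $\mathcal{D}([0,T],\Omega)$ with the Skorohod metric is complete and separable, tightness of $\{Q^N\}$ is equivalent to relative compactness. We will also use the convention that in (2) the limit in $N$ is a $\limsup$.

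\emph{Necessity.} Suppose the family is tight. For $\epsilon>0$, choose a compact $K_\epsilon$ with $Q^N(K_\epsilon)\ge 1-\epsilon$ for all $N$.

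For (1), note that condition (a) of the compactness proposition makes $\{\pi_t:\pi\in K_\epsilon\}$ relatively compact in $\Omega$. Its closure then carries mass at least $1-\epsilon$ under the law of $\pi_t$ for every $N$.

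For (2), condition (b) gives $\gamma_0$ with $\sup_{\pi\in K_\epsilon}w'_\pi(\gamma)\le\epsilon$ for all $\gamma\le\gamma_0$. Hence $Q^N(w'_\pi(\gamma)>\epsilon)\le\epsilon$ uniformly in $N$, and since $\epsilon$ is arbitrary, (2) follows.

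\emph{Sufficiency.} First, upgrade the $\limsup_N$ in (2) to a statement uniform over all $N$. For fixed $N$, a single law on $\mathcal{D}$ is tight, and each path satisfies $w'_\pi(\gamma)\to0$. Dominated convergence then gives $Q^N(w'_\pi(\gamma)>\epsilon)\to0$ as $\gamma\downarrow0$. The finitely many small $N$ can therefore be absorbed by shrinking $\gamma$, and (2) holds with $\sup_N$.

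Next, fix $\epsilon>0$.

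1. Enumerate the rationals $\{t_j\}$ of $[0,T]$ together with $T$. Using (1), choose compact $C_j\subset\Omega$ with $Q^N(\pi_{t_j}\notin C_j)\le\epsilon 2^{-j}$.
2. Using the uniform form of (2), choose $\gamma_k\downarrow0$ with $Q^N(w'_\pi(\gamma_k)>1/k)\le\epsilon2^{-k}$.
3. Let $A=\{\pi:\pi_{t_j}\in C_j\ \forall j,\ w'_\pi(\gamma_k)\le 1/k\ \forall k\}$. Then $Q^N(A^c)\le 2\epsilon$ for all $N$.

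It remains to show that $\bar A$ is compact. Condition (b) of the compactness proposition holds on $A$ by construction. The main obstacle is condition (a), since we control $\pi_t$ only at countably many times. To handle it, fix $t$ and $k$, and take the partition nearly achieving $w'_\pi(\gamma_k)$; its mesh is $>\gamma_k$. By right continuity, $\pi_t$ lies within $\updelta$-distance $2/k$ of $\pi_{s}$ for some rational $s$ in the same partition block. (If $t$ is the left endpoint, use $s$ just to the right; in general one may compare with points in the block, and the block has length $>\gamma_k$, so it contains rationals.) Thus $\{\pi_t:\pi\in A\}$ is covered by the $2/k$-neighbourhood of $\bigcup_{j\le J_k}C_j$, a finite union of compacts. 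Here $J_k$ is chosen so that the rationals $t_1,\dots,t_{J_k}$ are $\gamma_k/2$-dense. This shows the set is totally bounded, hence relatively compact in the complete space $\Omega$. Closure preserves (a) and (b), since $w'$ is upper semicontinuous appropriately. Alternatively, one can argue via sequential compactness and a diagonal extraction along rationals. Either way, $\bar A$ is compact, which gives tightness.

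\emph{Sufficiency of (2').} Exercise \ref{moduli_ex} gives $w'_\pi(\gamma)\le w_\pi(2\gamma)$. Hence $Q^N(w'_\pi(\gamma)>\epsilon)\le Q^N(w_\pi(2\gamma)>\epsilon)$, and (2') implies (2) after relabeling $\gamma$.
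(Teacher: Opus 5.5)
Your argument is correct and is the standard derivation the paper has in mind; the paper itself gives no proof and defers to Billingsley and Kipnis--Landim. You read necessity off the compact-set characterization, get sufficiency by upgrading $\limsup_N$ to $\sup_N$ and intersecting countably many marginal compacts with the constraints $w'_\pi(\gamma_k)\le 1/k$, and obtain (2') from Exercise~\ref{moduli_ex}.

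One wording slip: passing to $\bar A$ needs lower, not upper, semicontinuity. Precisely, $w'_\pi(\gamma')\le\liminf_n w'_{\pi^n}(\gamma)$ for $\gamma'<\gamma$ when $\pi^n\to\pi$. You also need the uniform-in-$t$ form of (a), which your covering already gives, since the cover does not depend on $t$ once the index of $T$ is included. Alternatively, skip the closure step by reading the compactness proposition as a characterization of relative compactness, as in Billingsley.
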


\begin{exercise}\rm
Observe that any limit point $Q$ of $\{Q^n\}$ satisfying $(2')$ is supported on continuous paths.  This is a well-known property; see \cite{Billingsley}.
\end{exercise}

It is not so easy to work with $(2')$ directly.  However, when $\Omega=\mathcal{M}_+$, one can understand a family $Q^N$ of probability measures on $\mathcal{D}([0,T],\mathcal{M}_+)$ by their actions on smooth functions on $\T^d$.  
\begin{proposition}\label{effective_tightness}
A family $\{Q^N\}$ of probability measures on $\mathcal{D}([0,T], \mathcal{M}_+)$ is tight if the distributions of $\{\langle G, \pi_t\rangle: t\in [0,T]\}$ under $Q^N$ for $N\geq 1$ are tight in $\mathcal{D}([0,T],\R)$, that is satisfying (1) and (2') in Proposition \ref{tightness_prop}, for each $G\in C^2(\T^d)$.
\end{proposition}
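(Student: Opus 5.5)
The plan is to check the two conditions of Proposition \ref{tightness_prop}, with $\Omega=\mathcal{M}_+$ and the metric $\updelta$. Everything is transferred to the real-valued processes $\langle f_k,\pi_t\rangle$, where $\{f_k\}$ is the countable dense family defining $\updelta$. Since $C^2(\T^d)$ is dense in $C(\T^d)$, we may assume each $f_k\in C^2(\T^d)$, with $f_1\equiv 1$. This changes the metric, but not the topology, on sets of bounded total mass.

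For condition (1), fix $t$. Tightness of $\langle 1,\pi_t\rangle$ under $Q^N$ (the case $G\equiv 1$) gives, for each $\epsilon>0$, an $M$ with $\sup_N Q^N(\langle 1,\pi_t\rangle>M)\le \epsilon$. The set $\{\mu: \langle 1,\mu\rangle\le M\}$ is relatively compact in $\mathcal{M}_+$ by the characterization recalled above. So the laws of $\pi_t$ are tight.

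For condition (2), I would use the sufficient form (2') together with the comparison $w'_\pi(\gamma)\le w_\pi(2\gamma)$ of Exercise \ref{moduli_ex}. Given $\epsilon>0$, choose $K$ with $\sum_{k>K}2^{-k}<\epsilon/2$. Each summand of $\updelta$ is at most $1$ and at most $|\langle f_k,\pi_t\rangle-\langle f_k,\pi_s\rangle|$, so
\begin{equation*}
w_\pi(\gamma)\ \le\ \sum_{k=1}^K 2^{-k} w_{\langle f_k,\pi\rangle}(\gamma) + \epsilon/2,
\end{equation*}
where $w_{\langle f_k,\pi\rangle}$ is the modulus of the real path $t\mapsto\langle f_k,\pi_t\rangle$. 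A union bound then gives
\begin{equation*}
Q^N\big(w_\pi(\gamma)>\epsilon\big)\ \le\ \sum_{k=1}^K Q^N\big(w_{\langle f_k,\pi\rangle}(\gamma)>\epsilon/2\big).
\end{equation*}
By hypothesis each term on the right satisfies (2'), so taking $\lim_{\gamma\downarrow0}\limsup_N$ yields (2') for $\pi$. Proposition \ref{tightness_prop} then gives tightness of $\{Q^N\}$ in $\mathcal{D}([0,T],\mathcal{M}_+)$.

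The main obstacle is the density step: justifying that the $f_k$ can be chosen in $C^2$ without changing the topology. Two measures in $\mathcal{M}_+$ with bounded mass that agree in the limit on a dense set of $C^2$ functions agree on all continuous functions, by uniform approximation. So the metric built from $C^2$ functions is equivalent to $\updelta$ on mass-bounded sets. Tightness of total mass from (1) localizes us to such sets with high probability.

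If this is not acceptable and the original $f_k$ must be general continuous functions, I would instead approximate each $f_k$ by a $C^2$ function $g_k$ with $\|f_k-g_k\|_\infty\le\delta$. Then
\begin{equation*}
|\langle f_k,\pi_t-\pi_s\rangle|\ \le\ |\langle g_k,\pi_t-\pi_s\rangle| + 2\delta\sup_{r\le T}\langle 1,\pi_r\rangle,
\end{equation*}
and I would control $\sup_{r\le T}\langle 1,\pi_r\rangle$ in probability. That control follows from the hypothesis for $G\equiv1$: tightness in $\mathcal{D}([0,T],\R)$ implies tightness of the supremum over $[0,T]$.
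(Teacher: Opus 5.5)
Your proof is correct and is the standard argument (Proposition IV.1.7 in \cite{KL}, which the paper leaves as an exercise). Taking $G\equiv 1$ and using compactness of mass-bounded sets gives (1), and truncating the series for $\updelta$ at finitely many test functions, plus a union bound, transfers (2') from the real processes $\langle f_k,\pi\rangle$ to $\pi$. Of your two treatments of the density issue, the second is the fully rigorous one: approximate each $f_k$ uniformly by a $C^2$ function and use tightness of $\sup_{r\le T}\langle 1,\pi_r\rangle$, which follows from the hypothesis for $G\equiv 1$. Your first treatment leans on (1), which only controls the mass at fixed times, so it does not by itself keep whole paths in a mass-bounded set.
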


\begin{exercise}
\rm
Give a proof of Proposition \ref{effective_tightness}.  Hint: See Proposition IV.1.7 in \cite{KL}.
\end{exercise}

 \section{Proof of Step 1: Tightness}
 \label{sec:3.3.3}
We are now back to considering the tightness of $\pi^N = \langle \pi^N_{N^2t}: t\in [0,T]\rangle$ for $N\geq 1$ which are elements of $\mathcal{D}([0,T], \mathcal{M}_+)$.  From Proposition \ref{effective_tightness}, we need only show for a smooth function $G:\T^d\rightarrow \R$, tightness of the distributions of $\{\langle G, \pi^N_{N^2t}\rangle:  t\in [0,T]\}$ for $N\geq 1$ which are elements of $\mathcal{D}([0,T], \R)$.

From Proposition \ref{tightness_prop}, we need to show conditions $(1)$ and $(2')$.  Condition $(1)$ is the simplest, and follows straightforwardly as for each $t\in [0,T]$,
$$|\langle G, \pi^N_{N^2t}\rangle| \ \leq \ \|G\|_\infty\cdot \frac{1}{N^d} \sum_{x\in \T^d_N}\eta_{N^2t}(x) \ \leq \ \|G\|_\infty.$$
Then, $\langle G, \pi^N_{N^2t}\rangle$ is a tight sequence in $\R$ as the sequence is uniformly bounded in $N$ with full probability.

Verifying Condition $(2')$, and therefore tightness in the uniform topology, is a little more involved.  Since
$$\langle G, \pi_{N^2t}^N\rangle \ = \ \langle G, \pi^N_0\rangle + \frac{1}{2}\int_0^t \frac{1}{N^d}\sum_{x,y\in \T^d_N}\eta_{N^2s}(x)p(y)\triangle^N_{x,y}G ds  + M^N_t(G),$$
we need only show condition (2') for each term separately.  The initial term $\langle G, \pi^N_0\rangle$ does not contribute in this respect.

We now bound the second term. 
Condition (2') follows as
$$\sup_{|t-s|\leq \gamma} \Big| \int_s^t \frac{1}{N^d}\sum_{x,y\in \T^d_N}\eta_{N^2s}(x)p(y)\triangle^N_{x,y}Gds\Big| \ \leq \ \gamma R\|\triangle G\|_\infty$$
where $R$ is the range of the probability $p$.

For the third term, 
to treat condition (2'), we first recall Doob's inequality (cf. \cite{Durrett}):  For a martingale $(M_t, \mathcal{F}_t)$, $a,b\in[0, T]$ and $\lambda>0$, we have
\begin{eqnarray*}
\P_{\mu^N}\Big(\sup_{t\in [a,b]} |M_t-M_a|>\lambda\Big) 
&\leq& \frac{1}{\lambda^2}\E_{\mu^N}\Big[|M_{b}-M_a|^2\Big].
\end{eqnarray*}

Now, partition the interval $[0,T]$ into $\lfloor T/\gamma\rfloor +1$ divisions $\{l_k\}$ of sublength $\gamma$, assuming $\gamma$ does not divide $T$ and $l_0=0$.  Noting, when $|t-s|\leq \gamma$, that either both $t,s$ lie in the same subinterval or lie in adjacent intervals, we have
\begin{eqnarray}
\label{condition_2'}
&&\P_{\mu^N}\Big(\sup_{|t-s|\leq \gamma} |M^N_t(G)-M^N_s(G)|>\lambda\Big)\nonumber\\
 &&\ \ \ \leq \  \P_{\mu^N}\Big(\sup_{\stackrel{l_k<t\leq l_{k+1}}{0\leq k\leq \lfloor T/\gamma\rfloor}}|M^N_t(G) - M^N_{l_k}(G)|>\lambda/3\Big)\nonumber\\
 && \ \ \ \leq \ \sum_{k=0}^{\lfloor T/\gamma\rfloor} \P_{\mu^N}\Big(\sup_{l_k<t\leq l_{k+1}}|M^N_t(G) - M^N_{l_k}(G)|>\lambda/3\Big)\nonumber\\
 &&\ \ \ \ \leq \ \frac{9}{\lambda^2}\sum_{k=0}^{\lfloor T/\gamma\rfloor}\E_{\mu^N}\Big[|M^N_{l_{k+1}}(G) - M^N_{l_k}(G)|^2\Big].
 \end{eqnarray}
 Note the quadratic variation bound for $M^N_t(G) - M^N_{l_k}(G)$ near \eqref{quad_var_calc},
 \begin{eqnarray*}
\E_{\mu^N}\Big[(M^N_{l_{k+1}}(G)-M^N_{l_k}(G))^2\Big] & \leq & \E_{\mu^N}\Big[\langle M^N_{l_{k+1}}(G) - M^N_{l_k}(G)\rangle_t
\Big]\\
 &=& \E_{\mu^N}\Big[\frac{N^2}{2N^{2d +2}}\int_{l_k}^{l_{k+1}}\sum_{x,y\in \T^d_N}p(y)(\nabla^N_{x,y}G)^2du\Big]\\
&=&O\big(R\|\nabla G\|_\infty N^{-d}|l_{k+1} - l_k|\big), \end{eqnarray*}
to bound \eqref{condition_2'} of order $O(\gamma^{-1}\gamma R\|\nabla G\|_\infty N^{-d})$ which vanishes as $N\uparrow\infty$.

\section{Proof of Step 3: Absolute continuity}
To show $\pi_t$ is absolutely continuous, observe for any continuous function $G:\T^d\rightarrow \R$ that
\begin{eqnarray*}
\sup_{t\in [0,T]}|\langle G, \pi^N_{N^2t}\rangle| & \leq & \frac{1}{N^d}\sum_{x\in \T^d_N} |G(x/N)|\eta_{N^2t}(x)\\
&\leq & \|G\|_{L^1},\end{eqnarray*}
since at most one particle is allowed per site.
Hence, since the function
$$\langle \pi_t: t\in [0,T]\rangle \ \mapsto \ \sup_{t\in [0,T]} |\langle G, \pi_t\rangle |$$
is continuous with respect to the Skorohod topology, any limit point satisfies, with full probability (see Portmanteau theorem \cite{Billingsley}[Chapter 1]),
$$\sup_{t\in [0,T]}|\langle G, \pi_t\rangle| \ \leq \ \|G\|_{L^1}.$$  Hence, any limit point $Q$ of $\{Q^N\}$ is supported on trajectories with the property that $\pi_t$ is absolutely continuous for all $t\in [0,T]$.

\begin{exercise}\rm Detail the use of the Portmanteau theorem in the previous paragraph.
\end{exercise}

At this point, as noted in the initial strategy, for each $t\in [0,T]$, $\pi_t$ can be written as $\pi_t = \rho(t,u)du$ where $\rho$ may be random.  However, by Step 2, $\rho$ is a weak solution to the hydrodynamic equation, which has a unique bounded solution.  Therefore, $\rho$ is deterministic, and $\pi_t = \rho(t,u)du$ where $\rho$ is the hydrodynamic density.  We remark when $\rho_0\in C^2(\T^d)$ then $m(t,u)$ is actually a `classical' solution given in terms of a Gaussian kernel convolution with $\rho_0$.  

What we have shown is that the law of $\langle\pi^N_{N^2t}: t\in [0,T]\rangle$, where initial configurations are distributed according to $\mu^N$, converges to the point mass on trajectory $\langle\rho(t,u)du: t\in [0,T]\rangle$.  To conclude convergence at a fixed time $t\in [0,T]$, we note that in Step 1, tightness of $Q^N$ was obtained in the uniform topology.  Hence, the trajectory in the support of the limit $Q$ is continuous for all $t\in [0,T]$.  For $t\in [0,T]$, let $h_t$ be the projection function,
$$\langle \pi_t: t\in [0,T]\rangle \ \mapsto \ \pi_t.$$
Now, $h_t$ is not continuous on $\mathcal{D}([0,T]; \mathcal{M}_+)$.  However, since the limit $Q$ is supported on a continuous trajectory, $h_t$ is continuous on the support of $Q$. Now it is known that if $Q^N\Rightarrow Q$ and $h=h_t$ is a continuous function almost surely on the support of $Q$, then $Q^N\circ h_t^{-1} \Rightarrow Q\circ h_t^{-1}$ (see \cite{Billingsley}[Chapter 1]).  In other words, the projection $\pi^N_{N^2t}$ converges in law to the point mass at $\rho(t,u)du$, and therefore also in probability.

 \section{Notes}
 
There are other proofs of hydrodynamics for symmetric exclusion processes, for instance using correlation functions and `duality' as in \cite{dmp} or superexponential estimates as in \cite{Varadhan_notes}.  We have followed mostly the treatment with some caveats in \cite{KL}, following the strategy, in the symmetric exclusion context, expounded in \cite{GPV}, a classic in the field.  It is worth noting by this method we have shown existence of weak solutions to the PDE \eqref{hyd_ssep}.

Besides \cite{Billingsley}, other good references for `weak convergence' in general spaces include
 \cite{EK}, \cite{JS}, \cite{Partha}.

We note, a subject of interest has been hydrodynamics of exclusion and other processes where the jump parameters are chosen from random environments, or when there are boundary conditions and reservoirs (cf. among others, \cite{Bernardinetal},
 \cite{FJL}, \cite{Goncalves-tams}, \cite{JLTeixeira}, \cite{JP}, \cite{lpsx}, \cite{S-Xue}, \cite{ZZ}, and references therein).

\newpage
\chapter[Section $4$]{Entropy, Dirichlet form, and averaging principle in zero-range processes}
\label{lec4}

We now consider another well-studied `mass conservative' interacting particle system, the so called `zero-range' model.  Unlike for symmetric simple exclusion, the evolution equation for the mass empirical measure does not close, and one must invoke an `averaging principle' to approximate a nonlinear term in terms of a function of the empirical measure.  Here, we develop the `entropy method' for this rigorous replacement in the associated hydrodynamics.  To this end, notions of entropy and Dirichlet form, as well as the `basic coupling', are introduced before defining the zero-range process.  At the end, we give an outline to derive the associated hydrodynamical equation.   

The following Chapter \ref{lec5} is devoted to proving steps in the outline, including the famous so called `1-block' and `2-block' lemmas introduced in \cite{GPV}.

\section{Relative entropy}
Let $\Omega$ be a countable space.   For probability measures $\mu$ and $\nu$ on $\Omega$, define the relative entropy of $\mu$ with respect to $\nu$ as
\begin{equation}
\label{var_entropy}
H(\mu|\nu) \ = \ \sup_f\left\{E_\mu[f] - \log E_\nu\big[e^f\big]\right\}
\end{equation}
where the supremum is over bounded functions $f:\Omega\rightarrow\R$.   Recall, as before, $E_\kappa$ stands for the expectation under $\kappa$.

Since the expression 
\begin{equation*}
E_\mu[(f+c)] - \log E_\nu\big[e^{(f+c)}\big] \ = \ E_\mu[f] - \log E_\nu\big[e^f\big]
\end{equation*}
is invariant for all constants $c$, the above supremum can be taken over bounded nonnegative functions $f$.

\begin{exercise}\rm Show that $H(\mu|\nu)$ is nonnegative, convex and lower semi-continuous in the argument $\mu$ (where $\mu_n\rightarrow\mu$ if $\mu_n$ converges weakly to $\mu$).  Hint:  For nonnegativity, substitute $f$ equal to a constant.
\end{exercise}

\begin{lemma}[Entropy inequality]
\label{lem:entropyinequality}
For all bounded functions $f:\Omega\rightarrow \R$ and $\alpha>0$, we have
$$E_\mu[f] \ \leq \ \frac{1}{\alpha}H(\mu|\nu) + \frac{1}{\alpha}\log E_\nu\big[e^{\alpha f}\big].$$
Also, when $f= 1(A)$, for $A\subset \Omega$, one has 
$$\mu(A) \ \leq \frac{\log 2 + H(\mu|\nu)}{\log\big(1+ \frac{1}{\nu(A)}\big)}.$$
\end{lemma}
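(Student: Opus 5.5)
The first inequality comes straight from the variational definition \eqref{var_entropy}. Fix a bounded $f$ and $\alpha>0$. Since $\alpha f$ is bounded, it is admissible in the supremum defining $H(\mu|\nu)$, so
$$H(\mu|\nu) \ \geq \ E_\mu[\alpha f] - \log E_\nu\big[e^{\alpha f}\big] \ = \ \alpha E_\mu[f] - \log E_\nu\big[e^{\alpha f}\big].$$
Rearranging and dividing by $\alpha>0$ gives
$$E_\mu[f]\ \leq\ \alpha^{-1}H(\mu|\nu)+\alpha^{-1}\log E_\nu\big[e^{\alpha f}\big].$$
Nothing beyond the definition is needed here. If $H(\mu|\nu)=\infty$, the statement is trivial.

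For the set bound, take $f=1(A)$. Then $E_\mu[f]=\mu(A)$ and
$$E_\nu\big[e^{\alpha 1(A)}\big] \ = \ 1-\nu(A)+e^\alpha\nu(A) \ = \ 1+\nu(A)(e^\alpha-1),$$
so for every $\alpha>0$,
$$\mu(A)\ \leq\ \alpha^{-1}\Big[H(\mu|\nu)+\log\big(1+\nu(A)(e^\alpha-1)\big)\Big].$$
Now choose $\alpha$ so that the logarithm is at most $\log 2$. Set $e^\alpha-1=1/\nu(A)$, that is, $\alpha=\log\big(1+1/\nu(A)\big)$. This is positive whenever $\nu(A)>0$, and the argument of the logarithm becomes $1+1=2$. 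Substituting gives
$$\mu(A)\ \leq\ \frac{\log 2+H(\mu|\nu)}{\log\big(1+\frac{1}{\nu(A)}\big)}.$$

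The only care needed is the degenerate case $\nu(A)=0$. There the right side should be read as $0$ if $H<\infty$. To justify this, apply the general inequality with $\alpha\uparrow\infty$: $E_\nu[e^{\alpha 1(A)}]=1$, which gives $\mu(A)\leq H(\mu|\nu)/\alpha\to 0$. This is consistent with the convention $\log(1+1/0)=\infty$. I do not expect any real obstacle. The main point is simply recognizing that the choice $\alpha=\log(1+1/\nu(A))$ balances the two terms.
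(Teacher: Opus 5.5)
Your proof is correct and takes the route the paper intends: the paper leaves this lemma as an exercise with the hint ``multiply and divide by $\alpha$,'' meaning you insert $\alpha f$ into the variational formula \eqref{var_entropy}. Your choice $\alpha=\log(1+1/\nu(A))$, which makes $E_\nu[e^{\alpha 1(A)}]=2$, is the standard way to get the set bound, and your handling of the case $\nu(A)=0$ agrees with Lemma \ref{lem:4.14}.
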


\begin{exercise}\rm
\label{exercise:entropy}
Prove this lemma.  Hint: Multiply and divide by $\alpha$.
\end{exercise}

\begin{lemma}
\label{lem:4.14} We have
$H(\mu|\nu)<\infty$ implies that $\mu \ll \nu$.  Also, if $\mu \ll \nu$, then
$$H(\mu|\nu) \ = \ \sum_{x\in \Omega} \mu(x) \log \frac{\mu(x)}{\nu(x)} = \sum_{x\in \Omega}\nu(x) \frac{\mu(x)}{\nu(x)}\log \frac{\mu(x)}{\nu(x)}.$$
Moreover, when $\Omega$ is finite, we have $\mu \ll \nu \Leftrightarrow H(\mu|\nu)= \sum_{x\in \Omega}\mu(x)\log \frac{\mu(x)}{\nu(x)}<\infty$.
\end{lemma}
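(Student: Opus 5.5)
We argue directly from the variational definition \eqref{var_entropy}, treating the three claims in turn.

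\emph{Finite entropy forces absolute continuity.} Suppose $\mu$ is not absolutely continuous with respect to $\nu$. Then there is a point $x_0\in\Omega$ with $\nu(x_0)=0$ and $\mu(x_0)>0$. Take the test function $f = M\,1(x=x_0)$ with $M>0$. Then
$$E_\mu[f]=M\mu(x_0) \quad\text{and}\quad E_\nu[e^f]=1,$$
so $H(\mu|\nu)\ge M\mu(x_0)\to\infty$ as $M\uparrow\infty$. Contrapositively, $H(\mu|\nu)<\infty$ implies $\mu\ll\nu$.

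\emph{The formula when $\mu\ll\nu$.} Let $\phi=d\mu/d\nu$, so $\phi(x)=\mu(x)/\nu(x)$ where $\nu(x)>0$ and $\phi=0$ elsewhere. The upper bound $H\le \sum_x\mu(x)\log\phi(x)$ comes from the entropy inequality of Lemma \ref{lem:entropyinequality} in its basic form, i.e. Jensen's inequality. For bounded $f$, write
$$\log E_\nu[e^f] \ \ge\ \log \sum_{\phi>0}\nu(x)e^{f(x)} \ =\ \log E_\mu\big[e^{f-\log\phi}\big] \ \ge\ E_\mu[f]-E_\mu[\log\phi],$$
using concavity of $\log$ in the last step. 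This gives $E_\mu[f]-\log E_\nu[e^f]\le E_\mu[\log\phi]$, and if the right side is $+\infty$ there is nothing to prove.

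For the lower bound we would like to insert $f=\log\phi$, but it is neither bounded nor finite where $\phi=0$. So truncate instead: take
$$f_{n}=\max(\min(\log\phi,n),-n)$$
on $\{\phi>0\}$, and $f_n=-n$ on $\{\phi=0\}$. Then $E_\nu[e^{f_n}]\to E_\nu[\phi]=1$ by dominated convergence, since $e^{f_n}\le \max(\phi,1)$, which is $\nu$-integrable. On the positive-part side, $E_\mu[f_n^+]\uparrow E_\mu[(\log\phi)^+]$ by monotone convergence. On the negative-part side, $E_\mu[f_n^-]\to E_\mu[(\log\phi)^-]$, and this limit is finite: $\mu(\{\phi=0\})=0$, and $\sum\nu\,\phi(\log\phi)^-\le e^{-1}$ because $t(\log t)^-\le e^{-1}$. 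Letting $n\uparrow\infty$ therefore yields $H\ge E_\mu[\log\phi]$. The second expression in the statement is the same sum rewritten, with the convention $0\log 0=0$. The main care needed is exactly this truncation, together with checking that the negative part of $\mu\log\phi$ stays controlled so that the sum is well defined in $[0,\infty]$.

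\emph{The finite case.} When $\Omega$ is finite and $\mu\ll\nu$, the sum has finitely many terms, each finite, so $H<\infty$. Conversely, finite $H$ gives $\mu\ll\nu$ by the first part. Combining the two directions with the formula gives the stated equivalence.
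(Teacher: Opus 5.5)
Your proof is correct. For the upper bound it takes a different route from the paper.

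The paper first reduces the supremum in \eqref{var_entropy} to functions supported on finite sets $\Omega_k$. It then maximizes the resulting concave finite-dimensional functional by setting its gradient to zero, which identifies $f=\log(\mu/\nu)$ up to an additive constant, and lets $\Omega_k\uparrow\Omega$. For the lower bound it inserts the spatially truncated test function $1(x\in\Omega_k)\log(\mu(x)/\nu(x))$.

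You get the upper bound in one line from Jensen's inequality, the Gibbs/Donsker--Varadhan argument. This avoids three things the paper needs: the reduction to finitely supported $f$, the existence of a critical point (which fails when $\mu$ vanishes at a point of $\Omega_k$, since the supremum is then not attained), and the interchange of limits.

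For the lower bound you truncate $\log\phi$ in value rather than in space. This keeps the test function bounded even where $\phi=0$. Together with the domination $e^{f_n}\le\max(\phi,1)$ and the bound $t(\log t)^-\le e^{-1}$, it makes the passage to the limit rigorous on countable $\Omega$. What the paper's computation buys is motivation: it derives $\log(d\mu/d\nu)$ as the optimizer, which explains why that is the right function to plug in.

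Two small remarks. In the first part, your scaling $M\,1(x=x_0)$ with $M\uparrow\infty$ is genuinely needed; the unscaled indicator only gives $H(\mu|\nu)\ge\mu(x_0)$. Also, the pointer to Lemma \ref{lem:entropyinequality} is not what carries your upper bound. With the variational definition that lemma is immediate and bounds $H$ from below. Your explicit Jensen computation stands on its own.
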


\begin{proof} If $\mu\not \ll \nu$, there is an $x_0\in \Omega$ such that $\mu(x_0)>0$ but $\nu(x_0)=0$.  In the variational definition of $H(\mu|\nu)$, we can insert the function $f(x) = 1(x=x_0)$ to see $H(\mu|\nu)=\infty$.  Hence, $H(\mu|\nu)<\infty \Rightarrow \mu\ll\nu$.

We now suppose $\mu\ll \nu$ and prove the equality in the display.  
By simple truncations, approximate $H(\mu|\nu)$ in \eqref{var_entropy} by the supremum over functions which vanish except for a finite number of points in $\Omega$.  
We now find the maximum of 
\begin{equation*}
f\mapsto \sum_{x\in \Omega_k}\mu(x)f(x) - \log \Big[\sum_{x\in \Omega_k}\nu(x)e^{f(x)} + (1-\nu(\Omega_k)\Big],
\end{equation*}
over functions $f$ supported on $\Omega_k$ where $\Omega_k\subset \Omega$ and $|\Omega_k|=k<\infty$.
Indeed, the functional is concave and takes its maximum where its gradient vanishes.  Computing the gradient, we obtain the maximum occurs when
$$\mu(x_0) \ = \ \frac{\nu(x_0)e^{f(x_0)}}{\sum_{x\in\Omega_k}\nu(x)e^{f(x)} + (1-\nu(\Omega_k)}$$
for each $x_0\in \Omega_k$.  We can add a constant $c$ to $f$, noting $1-\nu(\Omega_k) = \sum_{x\not\in \Omega_k}e^{f(x)}$ and $e^{f(x)+c} = e^c$ for $x\not\in \Omega_k$, and not change this equation--the mapping is extended to functions which are constant off $\Omega_k$.
Hence, we may choose $c$ so that
$$\sum_{x\in \Omega_k}\nu(x)e^{f(x) +c} + e^c(1-\nu(\Omega_k)) \ = \ 1.$$
With this choice, we obtain $f(x_0)+c = \log\big(\mu(x_0)/\nu(x_0)\big)$ for $x_0\in \Omega_k$, with convention $0/0=1$ .  As $k\uparrow\infty$ and $\Omega_k\uparrow \Omega$, since $\log\big[\mu(\Omega_k) + e^c(1-\nu(\Omega_k))\big]$ vanishes, we have the upper bound 
\begin{align*}
H(\mu|\nu) &\leq \sum_{x\in \Omega} \mu(x)\log\frac{\mu(x)}{\nu(x)} -\lim_{k\uparrow\infty} \log\Big[\sum_{x\in \Omega_k} \nu(x)\frac{\mu(x)}{\nu(x)} + e^c(1-\nu(\Omega_k))\Big]\\  
&= \sum_{x\in \Omega}\mu(x)\log \frac{\mu(x)}{\nu(x)}.
\end{align*}

We obtain the lower bound $H(\mu|\nu)\geq \sum_{x\in \Omega}\mu(x)\log \frac{\mu(x)}{\nu(x)}$ by inserting $f = 1(x\in \Omega_k)\log\big( \mu(x)/\nu(x)\big)$ into the variational formula for $H(\mu|\nu)$, and then taking $k\uparrow\infty$.  Hence, $\mu\ll \nu$ implies that $H(\mu|\nu) = \sum_{x\in \Omega}\mu(x)\log \frac{\mu(x)}{\nu(x)}$.

The second statement follows finiteness of $\Omega$.
    \end{proof}

\section{Entropy with respect to Markov chains}
\label{sec:entropy-MC}
Let $\eta_t$ be a continuous time  Markov chain on a finite space $\Omega$.   Let $\pi$ be an invariant measure.  Recall $P_t$ and $L$ stand for the semigroup and generator of the process.  We write in the following $\langle f, g\rangle_\mu := E_\mu[fg]$.

\begin{lemma}
\label{lem:finite-entropy}
For probability measures $\mu$ such that $\mu\ll\pi$, we have $\mu P_t\ll \pi$, and 
$H(\mu P_t|\pi) \leq H(\mu|\pi)$ for $t\geq 0$.
\end{lemma}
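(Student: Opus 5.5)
The plan is to combine the variational formula \eqref{var_entropy} for $H(\cdot|\pi)$ with the invariance $\pi P_t=\pi$, using $P_t$ acting on functions as the dual of $P_t$ acting on measures. Since $\Omega$ is finite, all functions are bounded, and by Lemma \ref{lem:4.14} finiteness of entropy is equivalent to absolute continuity. That equivalence gives absolute continuity for free once the inequality is proved.

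First I will check absolute continuity directly, as a sanity check. If $\pi(y)=0$, then invariance gives
$$0=\pi(y)=\sum_x \pi(x)P_t(x,y).$$
So $P_t(x,y)=0$ for every $x$ with $\pi(x)>0$. Since $\mu\ll\pi$, the measure $\mu$ charges only such $x$. Hence $\mu P_t(y)=\sum_x\mu(x)P_t(x,y)=0$, and $\mu P_t\ll\pi$.

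For the inequality, I fix a bounded $f$ and write $E_{\mu P_t}[f]=E_\mu[P_tf]$. Since $P_t(x,\cdot)$ is a probability measure, Jensen's inequality for the convex exponential gives $e^{P_tf}\le P_t(e^f)$ pointwise. Integrating against $\pi$ and using invariance,
$$E_\pi\big[e^{P_tf}\big]\le E_\pi\big[P_t e^f\big]=E_{\pi P_t}\big[e^f\big]=E_\pi\big[e^f\big].$$
Consequently,
$$E_{\mu P_t}[f]-\log E_\pi[e^f]\le E_\mu[P_tf]-\log E_\pi\big[e^{P_tf}\big]\le H(\mu|\pi),$$
where the last step uses that $P_tf$ is itself a bounded function admissible in \eqref{var_entropy}. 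Taking the supremum over $f$ yields $H(\mu P_t|\pi)\le H(\mu|\pi)$.

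The main point to get right is the Jensen step $e^{P_tf}\le P_te^f$ combined with invariance. This is what lets the pair $(f,\mu P_t)$ be compared with $(P_tf,\mu)$ without any differentiation in $t$. There is no real obstacle beyond that.

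An alternative route, if one prefers the explicit formula, is the following. Write $h_t=d(\mu P_t)/d\pi$, note that $h_t=P^*_th_0$ with $P^*_t$ the $\pi$-adjoint (time-reversed) Markov semigroup, and apply Jensen to the convex function $u\log u$. I would use the variational route above as the main argument.
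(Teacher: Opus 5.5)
Your proof is correct, but the main inequality is proved on the dual side from the paper's.

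\textbf{The paper's route.} It first gets $\mu P_t\le C\pi$ with $C=\max_x\mu(x)/\pi(x)$, which is finite because $\Omega$ is finite. It then works with densities. It writes $\frac{\mu P_t(x)}{\pi(x)}=\sum_z\frac{\mu(z)}{\pi(z)}\cdot\frac{\pi(z)P_t(z,x)}{\pi(x)}$ and observes that, by invariance, the weights $\pi(z)P_t(z,x)/\pi(x)$ form a probability in $z$ (the time-reversed kernel). It then applies Jensen to $\Phi(u)=u\log u$ in the explicit formula of Lemma \ref{lem:4.14}. This is exactly the alternative you sketch in your last paragraph.

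\textbf{Your route.} You apply Jensen to the exponential under $P_t(x,\cdot)$ and use invariance. This shows that $f\mapsto P_tf$ sends each test function for the pair $(\mu P_t,\pi)$ in \eqref{var_entropy} to a test function for $(\mu,\pi)$ with at least as large a value of the functional.

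\textbf{What each buys.} Your inequality step never divides by $\pi(x)$ and does not use $\mu\ll\pi$; it is trivial when $H(\mu|\pi)=\infty$. It also does not need the explicit formula. Your support argument for absolute continuity avoids the constant $C$. Together they go through verbatim on a countable state space, which is what Exercise \ref{ex:finite-entropy} asks for and what Lemma \ref{ex:4.1} later relies on for zero-range processes. The paper's route is more concrete, exhibiting the contraction through the time-reversed kernel. It is phrased directly in terms of the density $d\mu P_t/d\pi$, the object that is subsequently differentiated in Lemma \ref{entropy_diff}.
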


\begin{proof}
 We show first that $\mu P_t$ is absolutely continuous with respect to $\pi$, and therefore $H(\mu P_t|\pi) = \sum_x \mu P_t(x)\log \frac{\mu P_t(x)}{\pi(x)}$ by Lemma \ref{lem:4.14}.
As $|\Omega|<\infty$, and $\mu\ll\pi$, we have 
$\max_{x\in \Omega}\mu(x)/\pi(x) \ =\ C\ < \ \infty$.
Then, by the invariance $\sum_x \pi(x)P_t(x,y) = \pi(x)$, we conclude
$$\mu P_t(y) \ = \ \sum_{x\in \Omega} \mu(x) P_t(x,y) \ \leq \ C\sum_{x\in \Omega}\pi(x)P_t(x,y) \ = \ C\pi(x),$$ 
showing the absolute continuity.

Now, write $H(\mu P_t|\pi)$ in form $\sum_x \pi(x) \Phi\big(\frac{\mu P_t(x)}{\pi(x)}\big)$, where $\Phi(u) = u\log u$ is convex.  Since 
$$\frac{\mu P_t(x)}{\pi(x)} = \sum_z  \frac{\mu(z)}{\pi(z)} \frac{\pi(z)P_t(z,x)}{\pi(x)},$$
we have to finish,
\[\sum_x \pi(x)\Phi\Big(\frac{\mu P_t(x)}{\pi(x)}\Big) \leq \sum_x \pi(x) \sum_z \frac{\pi(z)P_t(z,x)}{\pi(x)} \Phi\Big( \frac{\mu(z)}{\pi(z)} \Big) = H(\mu|\pi). \qedhere \]
\end{proof}

\begin{exercise}
\label{ex:finite-entropy}
\rm
One can update Lemma \ref{lem:finite-entropy} to countable state space $\Omega$ by showing that $\mu P_t \ll \pi$ if $\mu\ll \pi$. Perform this deduction when $\Omega$ is countable.
\end{exercise}

We now find the rate of decrease in the entropy.

\begin{lemma}
\label{entropy_diff}
Under the assumptions of Lemma \ref{lem:finite-entropy}, we have
$$\frac{d}{dt} H(\mu P_t|\pi) \ = \ \Big\langle \frac{d\mu P_t}{d\pi}, L \log \frac{d\mu P_t}{d\pi}\Big\rangle_\pi.  $$
\end{lemma}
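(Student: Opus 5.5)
Write $f_t = d\mu P_t/d\pi$, which is well defined by Lemma \ref{lem:finite-entropy}. Since $\Omega$ is finite, Lemma \ref{lem:4.14} gives
$$H(\mu P_t|\pi) \ = \ \sum_{x\in\Omega}\pi(x)\,\Phi\big(f_t(x)\big), \qquad \Phi(u)=u\log u.$$
The plan is to differentiate this finite sum term by term and then rewrite the result as a $\pi$-inner product using the adjoint of $L$ in $L^2(\pi)$.

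\emph{Step 1: evolution of the density.} From the forward equation $\frac{d}{dt}P_t = P_tL$ we get $\frac{d}{dt}\mu P_t(y) = \sum_z \mu P_t(z)L(z,y)$. Dividing by $\pi(y)$ gives
$$\partial_t f_t(y) \ = \ \sum_z f_t(z)\frac{\pi(z)L(z,y)}{\pi(y)} \ = \ (L^* f_t)(y).$$
Here $L^*(y,z)=\pi(z)L(z,y)/\pi(y)$ is the $\pi$-adjoint, so that $\langle g, Lh\rangle_\pi = \langle L^*g,h\rangle_\pi$. This step needs $\pi(y)>0$. If $\pi$ vanishes at some states, I restrict to the support of $\pi$, which the chain does not leave from $\pi$-a.e. states by invariance. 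I also note that $\mu P_t \ll \pi$ for every $t$.

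\emph{Step 2: differentiate.} Because the sum is finite,
$$\frac{d}{dt}H \ = \ \sum_x \pi(x)\,\Phi'(f_t(x))\,\partial_t f_t(x) \ = \ \langle L^*f_t,\ \log f_t + 1\rangle_\pi \ = \ \langle f_t,\ L(\log f_t+1)\rangle_\pi.$$
Since $L1=0$, the constant term drops out, and we are left with $\langle f_t, L\log f_t\rangle_\pi$, which is the claim.

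\emph{Main obstacle.} The difficulty is the points where $f_t(x)=0$, since there $\log f_t$ is $-\infty$ and $\Phi$ is not differentiable at $0$. I would treat this in one of two ways.

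\textbf{Positivity route.} Argue that for $t>0$ the zero set of $f_t$ is closed under the dynamics in an appropriate sense, and then work on the set $\{f_t>0\}$. For instance, when the chain is irreducible, $f_t>0$ everywhere for $t>0$ whenever $\mu\neq 0$.

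\textbf{Regularization route.} Replace $\Phi$ by $\Phi_\epsilon(u)=(u+\epsilon)\log(u+\epsilon)$, compute the derivative as in Step 2 with $\log(f_t+\epsilon)$, and let $\epsilon\downarrow 0$. In the limit, the terms with $f_t(x)=0$ are killed by the factor $f_t(x)$, while off-diagonal contributions $f_t(x)L(x,y)\log f_t(y)$ with $f_t(y)=0$ and $f_t(x)>0$ would have to be ruled out, or interpreted as $-\infty$ consistently with $\partial_t f_t(y)\ge 0$.

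I would present the positivity route, using $f_t(y)>0 \Leftarrow \sum_x f_0(x)P_t(x,y)\pi(x)/\pi(y)>0$, and state the identity for $t$ at which $f_t>0$ on the support of $\pi$.
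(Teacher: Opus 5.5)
Your argument is correct and is essentially the paper's computation: the paper differentiates $\sum_x \mu P_t(x)\log\big(\mu P_t(x)/\pi(x)\big)$ directly and uses the forward equation plus an interchange of sums on the first term, which is your adjoint step. It then kills the second term by conservation of mass, which is your $L1=0$.

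On the zero set of $f_t$ you are more careful than the paper, which ignores it. You do not need to restrict to times where $f_t>0$ on all of $\mathrm{supp}\,\pi$. For $t>0$ the set $\{f_t>0\}$ is the time-independent set of states reachable from $\mathrm{supp}\,\mu$, and no jump leaves it, so running your computation on that set gives the identity in general.
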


\begin{proof}
Recall from the forward and backward equations that $\partial_t(\mu P_t(x)) = \mu P_t L(x) = \mu L P_t(x)$.  Then, as there is no problem to interchange limits and sums in finite state space,
\begin{eqnarray*}
\frac{d}{dt}\sum_x \mu P_t(x) \log \frac{\mu P_t(x)}{\pi(x)} &=& \sum_x \mu P_t L(x) \log \frac{\mu P_t(x)}{\pi(x)}\\
&& \ + \ \sum_x \mu P_t(x)\frac{\pi(x)}{\mu P_t(x)} \mu LP_t(x).
\end{eqnarray*}
The first term on the right-hand side equals
\begin{eqnarray*}\sum_x \sum_z \mu P_t(z) L(z,x) \log \frac{\mu P_t(x)}{\pi(x)} & = & \sum_z \mu P_t(z) \Big(L \log \frac{\mu P_t}{\pi}\Big)(z)\\
&=& \Big\langle \frac{d\mu P_t}{d\pi}, L\log \frac{d\mu P_t}{d\pi}\Big\rangle_\pi
\end{eqnarray*}
after interchanging the sum on $z$ and $x$.

On the other hand, the second term vanishes:
$$\sum_x \pi(x) \sum_z \mu(z) LP_t(z,x) \ = \ \sum_z \mu(z) E_\pi[Lg_z] \ = \ 0.$$
Here, $g_z = P_t(z, \cdot)$ is a function on $\Omega$ for each $z$, and so $E_\pi[Lg_z]=0$ given $\pi$ is invariant.
\end{proof}

\subsection{Dirichlet forms}
For a continuous time Markov process $\eta_t$ on a finite state space $\Omega$ with invariant measure $\pi$,
define the Dirichlet form on functions $f$ by
$$D(f) \ = \ -\langle f, Lf\rangle_\pi.$$

We now define the adjoint $L^*(x,y)$ by the relation $\pi(x)L(x,y) = \pi(y)L^*(y,x)$ for $x,y\in \Omega$.  Then, a simple computation shows
$$\langle g, Lf\rangle_\pi \ = \ \langle L^*g, f\rangle.$$

The operators $S = (L + L^*)/2$ and $A= (L-L^*)/2$ may also be defined.  If $\pi$ is reversible, $L=L^*=S$.  In particular, $S$ is a reversible operator, $\pi(x) S(x,y) = \pi(y)S(y,x)$, and $A$ is anti-symmetric in that $\pi(x)A(x,y)=-\pi(y)A(y,x)$ for all $x,y\in\Omega$.
Recall, also as we have seen before,
$$Lf(x) \ = \ \sum_{y} L(x,y)\big[f(y)-f(x)\big].$$

Then, by straightforward calculation, as also $D(f) = -\langle L^*f, f\rangle_\pi$,
\begin{eqnarray*}
D(f) &=& -\sum_{x,y} \pi(x) f(x)L(x,y)\big[ f(y)-f(x)\big]\\
&=& -\sum_{x,y} \pi(x) f(x) L^*(x,y)\big[f(y)-f(x)\big]\\
&=&   -\sum_{x,y}\pi(x)f(x)S(x,y)\big[f(y)-f(x)\big].
\end{eqnarray*}
Since $S$ is reversible, by interchanging $x$ and $y$ in the above sum and averaging the two expressions, we obtain the first of the following formulas.  The second follows by anti-symmetry of $A$.
\begin{lemma} We have
\begin{eqnarray}
\label{dirichlet_expression}
D(f) &=& \frac{1}{2}\sum_{x,y}\pi(x) S(x,y)\big[f(y)-f(x)\big]^2\nonumber\\
&=& \frac{1}{2}\sum_{x,y}\pi(x)L(x,y)\big[f(y)-f(x)\big]^2.
\end{eqnarray}
\end{lemma}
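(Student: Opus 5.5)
We start from the expression already derived just before the statement, namely
$$D(f) \ = \ -\sum_{x,y}\pi(x)f(x)S(x,y)\big[f(y)-f(x)\big],$$
and symmetrize it. Interchanging the names $x$ and $y$ and using reversibility of $S$, $\pi(x)S(x,y)=\pi(y)S(y,x)$, gives
$$D(f) \ = \ -\sum_{x,y}\pi(x)S(x,y)f(y)\big[f(x)-f(y)\big] \ = \ \sum_{x,y}\pi(x)S(x,y)f(y)\big[f(y)-f(x)\big].$$
Averaging the two expressions, the terms combine to
$$\tfrac12\sum_{x,y}\pi(x)S(x,y)\big(f(y)-f(x)\big)\big[f(y)-f(x)\big],$$
which is the first formula in \eqref{dirichlet_expression}. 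The diagonal terms $x=y$ vanish in every sum, so the convention that $L(x,x)$ carries a negative value causes no issue. All sums are finite since $\Omega$ is finite.

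For the second formula, we write $L=S+A$ entrywise, so it suffices to show
$$\sum_{x,y}\pi(x)A(x,y)\big[f(y)-f(x)\big]^2=0.$$
Swap $x$ and $y$. The square is symmetric in $(x,y)$, while $\pi(x)A(x,y)=-\pi(y)A(y,x)$, so the sum equals its own negative and therefore vanishes.

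The only point needing care is that $S$ and $A$ are defined through the adjoint $L^*(x,y)=\pi(y)L(y,x)/\pi(x)$. Reversibility of $S$ and antisymmetry of $A$ then follow directly from this definition, as asserted in the text, assuming $\pi(x)>0$. If some $\pi(x)=0$, those terms carry weight zero in all the sums and can simply be dropped. I expect no genuine obstacle here; the argument is an exercise in reindexing sums.
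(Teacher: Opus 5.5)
Your proof is correct and follows the paper's argument: you swap $x$ and $y$ in the $S$-expression, use reversibility of $S$, and average to get the first formula. You then write $L=S+A$ and use the antisymmetry of $\pi(x)A(x,y)$ to show the $A$-part vanishes, which gives the second.
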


We have the following properties of the Dirichlet form.  For a nonnegative function, let $I(f) = D(\sqrt{f})$.
\begin{lemma}
We have $f\mapsto I(f)$ is nonnegative, convex, and lower semi-continuous.  When the chain is irreducible and $I(f)=0$, then $f$ is a constant function.
\end{lemma}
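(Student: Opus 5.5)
The plan is to work from the representation \eqref{dirichlet_expression}. For nonnegative $f$ it gives
$$I(f) \ = \ \frac{1}{2}\sum_{x,y}\pi(x)S(x,y)\big[\sqrt{f(y)}-\sqrt{f(x)}\big]^2 .$$
Here $S(x,y)\geq 0$ for $x\neq y$, being the average of $L(x,y)$ and $L^*(x,y)=\pi(y)L(y,x)/\pi(x)$. Diagonal terms vanish, so nonnegativity is immediate.

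\textbf{Lower semicontinuity.} For $x\neq y$, each summand $\pi(x)S(x,y)[\sqrt{f(y)}-\sqrt{f(x)}]^2$ is a continuous function of $f$ under pointwise convergence of nonnegative functions, since $u\mapsto\sqrt u$ is continuous on $[0,\infty)$. On a finite $\Omega$, $I$ is therefore a finite sum of continuous functions, hence continuous and in particular lower semicontinuous. For countable $\Omega$ one would instead write $I$ as a supremum over finite truncations of nonnegative continuous terms and apply Fatou's lemma.

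\textbf{Convexity.} This is the main step, and I would reduce it to a two-variable inequality. It suffices to show that for fixed $c\geq 0$ the map $(a,b)\mapsto(\sqrt a-\sqrt b)^2=a+b-2\sqrt{ab}$ is convex on $[0,\infty)^2$. Since $a+b$ is linear, this amounts to concavity of $(a,b)\mapsto\sqrt{ab}$. That concavity holds because $\sqrt{ab}$ is the geometric mean, which is positively homogeneous of degree one. Superadditivity follows from Cauchy--Schwarz:
$$\sqrt{(a_1+a_2)(b_1+b_2)} \ \geq \ \sqrt{a_1b_1}+\sqrt{a_2b_2}.$$
Homogeneity plus superadditivity gives concavity. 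Alternatively, one can check that the Hessian is negative semidefinite on the open quadrant and extend by continuity. Summing with the nonnegative weights $\pi(x)S(x,y)$ then gives convexity of $I$.

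\textbf{Kernel.} Suppose $I(f)=0$. Every term vanishes, so $\sqrt{f(x)}=\sqrt{f(y)}$ whenever $S(x,y)>0$. Now $S(x,y)>0$ exactly when $L(x,y)>0$ or $L(y,x)>0$, because $\pi>0$ everywhere: $\pi$ is the invariant measure of an irreducible chain. Irreducibility of $L$ means any two states are joined by a chain of such edges, so $\sqrt f$, and hence $f$, is constant. The one point needing care is the positivity of $\pi$. It follows from invariance plus irreducibility, since $\pi(y)\geq\pi(x)P_t(x,y)>0$ for some $x$ with $\pi(x)>0$.
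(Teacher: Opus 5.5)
Your proposal is correct and follows essentially the same route as the paper. Nonnegativity comes from the sum-of-squares form \eqref{dirichlet_expression}, and convexity from expanding the square and using concavity of the cross term $\sqrt{f(x)f(y)}$: the paper invokes Jensen, where you use homogeneity plus Cauchy--Schwarz. Lower semicontinuity is continuity on a finite space, and the kernel statement propagates $\sqrt{f(x)}=\sqrt{f(y)}$ along edges with $S(x,y)>0$. Your explicit check that $\pi>0$ under irreducibility is a detail the paper leaves implicit.
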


\begin{proof}
Nonnegativity follows from the expression \eqref{dirichlet_expression}.  After squaring out terms, convexity follows by Jensen inequality applied to the crossterm.
Clearly, the Dirichlet form is continuous, and therefore lower semi-continuous, in its argument as the space is finite.

When $I(f)=0$, we have $(\sqrt{f}(y)-\sqrt{f}(x))^2=0$ for all $x,y$ where $S(x,y)>0$.  If the chain is irreducible, all values of $f$ must be the same.
\end{proof}

\begin{remark}
\rm
Naturally, the above notions of Dirichlet form extend to countable state Markov chains, with respect to compactly supported functions $f$, and to those in $L^2(\pi)$ in the domain of the generator $L$.
\end{remark}

\subsection{Connection between entropy and Dirichlet form}
Let $\eta_t$ be a continuous time finite space Markov chain with invariant measure $\pi$.
Recall in Lemma \ref{entropy_diff}, that the derivative of $H(\mu P_t|\pi)$ equals
$$\Big\langle \frac{d\mu P_t}{d\pi}, L \log \frac{d\mu P_t}{d\pi}\Big\rangle_\pi.$$
We now bound this expression in terms of the form $I(\cdot)$.

\begin{lemma}
\label{lem:sec4-relent}
We have
\begin{eqnarray*}
\Big\langle \frac{d\mu P_t}{d\pi}, L \log \frac{d\mu P_t}{d\pi}\Big\rangle_\pi & \leq &
-2D\Big(\sqrt{\frac{d\mu P_t}{d\pi}}\Big).\end{eqnarray*}
\end{lemma}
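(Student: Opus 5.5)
Write $f = d\mu P_t/d\pi$, which is well defined by Lemma \ref{lem:finite-entropy}. The plan is to expand $\langle f, L\log f\rangle_\pi$ with the explicit generator formula $Lg(x)=\sum_y L(x,y)[g(y)-g(x)]$, symmetrize, and then apply an elementary scalar inequality edge by edge.

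First suppose $f>0$ everywhere, so that $\log f$ is finite. Then
$$\langle f, L\log f\rangle_\pi = \sum_{x,y}\pi(x)f(x)L(x,y)\big[\log f(y)-\log f(x)\big].$$
The key pointwise inequality is: for $a,b>0$,
$$a(\log b-\log a) \ \leq \ 2\sqrt{a}\big(\sqrt{b}-\sqrt{a}\big).$$
To see it, use $\log u\leq u-1$ with $u=\sqrt{b/a}$, which gives $\tfrac12(\log b-\log a)\leq \sqrt{b/a}-1$, and then multiply by $2a$. Since $L(x,y)\geq 0$ for $x\neq y$, and diagonal terms vanish, we may apply this inequality termwise. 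This yields
$$\langle f, L\log f\rangle_\pi \ \leq \ 2\sum_{x,y}\pi(x)\sqrt{f}(x)L(x,y)\big[\sqrt f(y)-\sqrt f(x)\big] \ = \ 2\langle \sqrt f, L\sqrt f\rangle_\pi = -2D(\sqrt f).$$
By \eqref{dirichlet_expression}, this equals $-\sum_{x,y}\pi(x)L(x,y)[\sqrt f(y)-\sqrt f(x)]^2$, which is the claimed bound. No symmetrization with the adjoint $L^*$ is needed, because the inequality is applied directly to the nonsymmetric sum. That is convenient, since $\pi$ is only assumed invariant.

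The main obstacle is the case where $f$ vanishes at some states, so that $\log f=-\infty$ there. Two cases arise. If $f(x)=0$, the term carries the factor $f(x)$ and is read as $0$, which matches the right-hand side term $\sqrt{f}(x)\cdot(\cdots)=0$. If instead $f(x)>0$ and $f(y)=0$ with $L(x,y)>0$, the left-hand side term is $-\infty$, so the inequality holds trivially, though the derivative formula of Lemma \ref{entropy_diff} then degenerates. I would handle this either by this convention or, more cleanly, by replacing $f$ with $f_\epsilon=(f+\epsilon)/(1+\epsilon)$, applying the positive case, and letting $\epsilon\downarrow 0$; continuity holds on the finite space. Alternatively, one can note that for $t>0$ and an irreducible chain, $\mu P_t$ charges every state, so $f>0$ and the issue disappears.
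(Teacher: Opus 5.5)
Your proof is correct and follows the same route as the paper. Both apply the scalar inequality $a(\log b-\log a)\le 2\sqrt a(\sqrt b-\sqrt a)$ termwise, using $\pi(x)L(x,y)\ge 0$ off the diagonal, and then identify $2\langle \sqrt f, L\sqrt f\rangle_\pi=-2D(\sqrt f)$; your handling of states where $f$ vanishes supplies a detail the paper leaves to the reader.
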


\begin{proof} The argument follows from an application of the inequality $a(\log b - \log a) \leq 2\sqrt{a}(\sqrt{b}-\sqrt{a})$ for $a,b\geq 0$, 
which we leave to the reader.
\end{proof}

\begin{remark}
\rm
We comment that the conclusion of Lemmas \ref{entropy_diff} and \ref{lem:sec4-relent} may be compared with the more general Lemma \ref{lem:sec61-1}, which bounds the derivative of relative entropy with a not necessarily invariant measure, given in Section \ref{lec6}.  
\end{remark}

\section{Zero-range models}
\label{sec:zero-rangemodels}

We now discuss the `zero-range' system of interacting random walks on the $d$-dimensional torus $\T^d_N$ with $N^d$ locations.  Informally, particles interact infinitesimally only with those on their sites through their jump times, hence the name `zero-range'. 

 More specifically, at a vertex with $k$ particles, a particle displaces by $y$ with rate $[g(k)/k]p(y)$ where again we assume $p(\cdot)$ is a finite-range translation-invariant jump probability on $\T^d_N$ and $g: \N_0\rightarrow \R_+$ is a function such that $g(0)=0$ and $g(k)>0$ for $k\geq 1$.  An alternate description is that each vertex has its own exponential clock which rings at rate $g(k)$ when there are $k$ particles at the vertex.  When rung, one of the $k$ particle is selected at random and then it displaces according to $p(\cdot)$.

Formally, consider $\eta_t = \big\{\eta_t(x):x\in \T^d_N\big\}$ where $\eta_t(x)\in \N_0$ denotes the number of particles at $x$ at time $t\geq 0$.  The process on the countable configuration space $\Omega = \N_0^{T^d_N}$ is a continuous time Markov chain with semigroup $P_t$ and generator
\begin{equation}
\label{eq:Lf}
(Lf)(\eta) \ = \ \sum_{x,y} g(\eta(x))p(y) \big[f(\eta^{x,y})-f(\eta)\big]
\end{equation}
where $\eta^{x,y}$ is the configuration obtained from $\eta$ by moving a particle from $x$ to $y$:
$$\eta^{x,y}(z) \ = \ \left\{\begin{array}{rl} \eta(x)-1 & \ {\rm when \ }z=x\\
\eta(y)+1 & \ {\rm when \ } z=y\\
\eta(z)& {\rm \ otherwise.}\end{array}\right.
$$

\subsection{Invariant measures}
As with the exclusion process, there is an associated family of invariant measures depending on particle density.  Let
$\bar\nu^N_\Psi = \prod_{x\in \T_N^d}\kappa_\Psi$ be the product measure with common marginal on $\N_0$ given by
$$\kappa_\Psi(k) \ = \ \left\{\begin{array}{rl}
\frac{1}{Z(\Psi)} \frac{\Psi^k}{g(1)\cdots g(k)} & \ {\rm when \ }k\geq 1\\
\frac{1}{Z(\Psi)} & \ {\rm when \ }k=0.
\end{array}\right.
$$
Here, $Z(\Psi)$ is the normalization which converges for $0\leq \Psi <\liminf g(k)$.

An interesting point is that when $g(k)\equiv k$, this is the model of `independent' random walks considered in Section \ref{lec1} when there is no interaction.  In this case, of course $\kappa$ are Poisson measures.

We now index the family in terms of `density'.  Let $\rho(\Psi)$ be the density,
$E_{\bar\nu^N_{\Psi}}[\eta(0)] = \rho(\Psi)$.  One can check that the function $\Psi\mapsto \rho(\Psi)$ is a strictly increasing function which may or may not diverge when $\Psi\uparrow \liminf g(k)$, depending on the structure of $g$.
In any case, for $0\leq \Psi <\liminf g(k)$ we can invert $\rho(\Psi)$ and define
$$\nu^N_\rho \ := \ \bar\nu^N_{\Psi(\rho)},$$
the product measure which places a mean $\rho$ number of particles at each location in $\T_N^d$.  The term $\Psi(\rho)$ is sometimes called a `fugacity' or `chemical potential'. One may calculate that is also the mean of the rate function: $E_{\nu^N_\rho}[g(\eta(0))] = \Psi(\rho)$.  In the following, we drop the superscript `$N$' and write $\nu_\rho = \nu^N_\rho$ to simplify notation.

\begin{exercise}\label{ex:zr-particle}
\rm
For bounded, local $f$, that is $f$ supported only on a finite number of variables $\{\eta(x): x\in \T^d_N\}$, 
show
$$E_{\nu_\rho}\big[g(\eta(x))f(\eta^{x,y})\big] = \Psi(\rho) E_{\nu_\rho}\big[f(\eta + \delta_y)\big] = E_{\nu_\rho}\big[g(\eta(y))f(\eta)\big]$$
where $\eta^{x,y} = \eta -\delta_x + \delta_y$ and $\delta_z$ is the configuration with exactly one particle at $z$.  
\end{exercise}

\begin{lemma}
\label{lem:finite_inv}
Let $\rho$ be such that $\Psi(\rho)<\liminf g(k)$.  Then, $\nu_\rho$ is an invariant measure for process on $\T_N^d$.  

Moreover, $\nu_{N,K}:= \nu_\rho\big(\cdot|\sum_{x\in \T^d_N}\eta(x)=K\big)$ does not depend on $\rho$ and is the unique invariant measure on $\Omega_K=\big\{\eta: \sum_{x\in \T^d_N}\eta(x)=K\big\}$ when $p(\cdot)$ is irreducible.
\end{lemma}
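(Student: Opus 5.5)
To prove invariance of $\nu_\rho$, I would use the criterion recalled in Subsection~\ref{sec:invariantmeasurefinite}: it suffices to show $E_{\nu_\rho}[Lf]=0$ for all bounded local $f$ (compactly supported functions on the countable space $\Omega$). Writing out \eqref{eq:Lf}, where the sum is over $x$ and displacements $y$ with target $x+y$,
\[
E_{\nu_\rho}[Lf] \;=\; \sum_{x,y} p(y)\Big\{E_{\nu_\rho}\big[g(\eta(x))f(\eta^{x,x+y})\big]-E_{\nu_\rho}\big[g(\eta(x))f(\eta)\big]\Big\}.
\]
For the first term I apply Exercise~\ref{ex:zr-particle} (which I would verify directly: the substitution $\zeta=\eta-\delta_x$ and the identity $\kappa_\Psi(k)g(k)=\Psi\kappa_\Psi(k-1)$ give $E_{\nu_\rho}[g(\eta(x))h(\eta)] = \Psi(\rho)E_{\nu_\rho}[h(\eta+\delta_x)]$). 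This yields
\[
E_{\nu_\rho}\big[g(\eta(x))f(\eta^{x,x+y})\big]=E_{\nu_\rho}\big[g(\eta(x+y))f(\eta)\big].
\]
Hence
\[
E_{\nu_\rho}[Lf]=\sum_y p(y)\sum_x E_{\nu_\rho}\big[(g(\eta(x+y))-g(\eta(x)))f(\eta)\big],
\]
and the inner sum over $x\in\T^d_N$ vanishes by translation (reindex $x\mapsto x-y$ on the torus). Integrability is not an issue: $\Psi(\rho)<\liminf g(k)$ makes $Z(\Psi)$ finite and gives finite moments of $g(\eta(x))$, since $g(k)\kappa_\Psi(k)=\Psi\kappa_\Psi(k-1)$.

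For the second part, conditioning on $\Omega_K$ makes sense because $\Omega_K$ is finite, closed under the dynamics, and has positive $\nu_\rho$-mass. Since $L$ preserves $\sum_x\eta(x)$, for $f$ supported on $\Omega_K$ the function $Lf$ is also supported on $\Omega_K$. So
\[
E_{\nu_\rho}[Lf\,;\,\Omega_K]=E_{\nu_\rho}[L(f1_{\Omega_K})]=0,
\]
and $\nu_{N,K}$ is invariant. Independence from $\rho$ follows from the explicit formula: for $\eta\in\Omega_K$,
\[
\nu_\rho(\eta)=Z(\Psi)^{-N^d}\,\Psi^{K}\prod_x \frac{1}{g(1)\cdots g(\eta(x))},
\]
so the factor $Z^{-N^d}\Psi^K$ is constant on $\Omega_K$ and cancels on normalizing.

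Uniqueness follows because the chain restricted to the finite set $\Omega_K$ is irreducible when $p$ is irreducible, and a finite irreducible chain has a unique invariant measure (Subsection~\ref{sec:invariantmeasurefinite}). The step needing the most care is irreducibility. The argument: since $g(k)>0$ for $k\ge1$, any occupied site can send a particle along a $p$-path. Using this, any configuration in $\Omega_K$ can be brought to the configuration with all $K$ particles at the origin, moving one particle at a time along paths allowed by $p$ (irreducibility of $p$ supplies the paths). One must also check the reverse direction, from the origin-stacked configuration to any configuration. That needs paths from $0$ to each $z$, which irreducibility of $p$ again provides, since it means every site is reachable from every other.
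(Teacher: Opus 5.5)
Your proof is correct and follows essentially the paper's route. The identity of Exercise~\ref{ex:zr-particle} turns $E_{\nu_\rho}[g(\eta(x))f(\eta^{x,x+y})]$ into $E_{\nu_\rho}[g(\eta(x+y))f(\eta)]$, translation invariance (double stochasticity of $p$) then cancels the two sums, and the second part rests on $\Omega_K$ being a finite, closed, irreducible class. You also fill in details the paper leaves implicit: the cancellation of the constant factor $Z(\Psi)^{-N^d}\Psi^K$ on $\Omega_K$, which shows $\nu_{N,K}$ does not depend on $\rho$, and the path argument for irreducibility.
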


\begin{proof}  Recall the expression for $Lf(\eta)$ in \eqref{eq:Lf}.
We need only show that
$E_{\nu_\rho}[Lf(\eta)] = 0$ for all 
bounded local functions $f$.   Since $p(x,y)=p(y-x)$ is translation-invariant, it is doubly stochastic, $\sum_{x\in \T_N^d} p(x,y) = \sum_{y\in \T_N^d} p(x,y) = 1$.

Noting Exercise \ref{ex:zr-particle},
we have
\begin{align*}
\sum_{x,y\in \T_N^d} p(x,y) E_{\nu_\rho}\big[g(\eta(x))f(\eta^{x,y})\big] &=  \sum_{x,y\in \T_N^d} p(x,y)E_{\nu_\rho}\big[g(\eta(y))f(\eta)\big]\\
&= \sum_{y\in \T_N^d} E_{\nu_\rho}\big[g(\eta(y))f(\eta)\big].
\end{align*}
But, also,
$\sum_{x,y\in \T_N^d} p(x,y)E_{\nu_\rho}\big[g(\eta(x))f(\eta)\big] = \sum_{x\in \T_N^d}E_{\nu_\rho}\big[g(\eta(x))f(\eta)\big]$.  Hence, $E_{\nu_\rho}[Lf]=0$.

The second statement for $\nu_{N,K}$,  the restriction of $\nu_\rho$ to $\Omega_K$, as for independent particles, follows by noting $\Omega_K$ is a closed, irreducible set when $p(\cdot)$ is irreducible.
 \end{proof}

\noindent {\it Local equilibrium.} In the following, with respect to a continuous nonnegative function $\rho_0:\T^d \rightarrow \big[0,\Psi^{-1}(\liminf g(k))\big)$, define
$$\nu^N_{\rho_0(\cdot)}= \prod_{x\in \T_N^d} \kappa_{\Psi(\rho_0(x/N))}$$ 
to be the (inhomogeneous) product measure with marginals $\kappa_{\Psi(\rho_0(x/N))}$ over sites $x\in \T^d_N$.  Sometimes $\nu^N_{\rho_0(\cdot)}$ is referred to as a `local equilibrium' measure.  Again, we will drop the superscript `$N$' and write $\nu_{\rho_0(\cdot)} = \nu^N_{\rho_0(\cdot)}$ to simplify notation.

\begin{remark}
\label{ex:ginc}
\rm
We comment that divergence of $\rho(\Psi)$ as $\Psi\uparrow\liminf g(k)$ allows to define $\nu_\rho$ and $\nu_{\rho_0(\cdot)}$ for any $0\leq \rho<\infty$ and $\rho_0:\T^d \rightarrow [0,\infty)$, useful in the formulation of the hydrodynamic limit.

A sufficient condition is that $Z(\Psi)$ diverges as $\Psi \uparrow\liminf g(k)$.  This is the case when $g$ is an increasing function.  See Lemma II.3.3 and Section II.3 in \cite{KL} for an argument and more discussion.
  \end{remark}

\begin{exercise}
\rm
Suppose $g(k) = 1(k\geq 1)$.  Then, $\liminf g(k)=\lim g(k)=1$. 
By explicit calculation, show that $\rho(\Psi)$ diverges as $\Psi\uparrow 1$.  
\end{exercise}

\subsection{Connection with entropy}

The next results will be used to bound a certain `entropy production', discussed in Section \ref{lec5}, when starting from $\nu_{\rho_0(\cdot)}$.  Since here $\Omega = \N_0^{\T_N^d}$ is countable but not finite, we now update the results in Subsection \ref{sec:entropy-MC} to this context.

\begin{lemma}
\label{ex:4.1}
 Consider $\rho>0$ and $\bar \rho = \|\rho_0(\cdot)\|_\infty$ such that $\Psi(\rho), \Psi(\bar \rho)< \liminf g(k)$.  
 Then, $\nu_{\rho_0(\cdot)}\ll \nu_\rho$, and $H(\nu_{\rho_0(\cdot)}P_t | \nu_\rho) \leq H(\nu_{\rho_0(\cdot)}|\nu_\rho) = O(N^d)$ for $t\geq 0$.
\end{lemma}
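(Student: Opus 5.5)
We prove the three assertions in the order they appear: absolute continuity, the explicit $O(N^d)$ bound, and monotonicity in $t$. The last one needs the extension of Lemma \ref{lem:finite-entropy} to countable $\Omega$ (Exercise \ref{ex:finite-entropy}).

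\emph{Absolute continuity and the entropy bound.} Both $\nu_{\rho_0(\cdot)}$ and $\nu_\rho$ are product measures on $\Omega=\N_0^{\T^d_N}$. For $0\le\Psi<\liminf g(k)$, every marginal $\kappa_\Psi$ charges each $k\in\N_0$. For $\Psi=0$ it is the point mass at $0$, which is still absolutely continuous with respect to any $\kappa_{\Psi'}$ with $\Psi'>0$. Since $\rho>0$, we have $\Psi(\rho)>0$, so $\nu_{\rho_0(\cdot)}\ll\nu_\rho$. By Lemma \ref{lem:4.14} and the product structure, the relative entropy splits into a sum over sites:
$$H(\nu_{\rho_0(\cdot)}|\nu_\rho)=\sum_{x\in\T^d_N}H\big(\kappa_{\Psi(\rho_0(x/N))}\big|\kappa_{\Psi(\rho)}\big).$$
Writing $\Psi_x=\Psi(\rho_0(x/N))$, the likelihood ratio $\kappa_{\Psi_x}(k)/\kappa_{\Psi(\rho)}(k)$ equals $\frac{Z(\Psi(\rho))}{Z(\Psi_x)}(\Psi_x/\Psi(\rho))^k$. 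Hence each summand equals
$$\rho_0(x/N)\log\frac{\Psi_x}{\Psi(\rho)}+\log\frac{Z(\Psi(\rho))}{Z(\Psi_x)},$$
with the convention $0\log 0=0$ when $\Psi_x=0$.

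Each summand is bounded uniformly in $x$ and $N$:
\begin{itemize}
\item $\Psi$ is increasing, so $0\le\Psi_x\le\Psi(\bar\rho)<\liminf g(k)$.
\item $Z$ is continuous and increasing on $[0,\Psi(\bar\rho)]$ with $Z\ge 1$, so $\log Z(\Psi(\rho))-\log Z(\Psi_x)\le\log Z(\Psi(\rho))$.
\item Because $u\log u$ is bounded below, $\rho_0\log\Psi_x$ is bounded above by $\bar\rho\,|\log\Psi(\bar\rho)|+C$.
\end{itemize}
Each term is also nonnegative, so $0\le H\le C N^d$ with $C$ depending only on $\bar\rho$, $\rho$ and $g$. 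That is the $O(N^d)$ bound.

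\emph{Monotonicity in $t$.} This is the main obstacle, because $\Omega$ is countable. However, the process conserves the particle number, so it splits into the finite closed classes $\Omega_K$. Condition on $K=\sum_x\eta(x)$ and write $\mu=\nu_{\rho_0(\cdot)}$, $\pi=\nu_\rho$. Let $\mu_K,\pi_K$ be the conditional laws on $\Omega_K$, and $a_K=\mu(\Omega_K)$, $b_K=\pi(\Omega_K)$. The chain rule for entropy gives
$$H(\mu P_t|\pi)=\sum_K a_K\log\frac{a_K}{b_K}+\sum_K a_K\,H(\mu_KP_t|\pi_K).$$
The first sum is unchanged in time, since $\mu P_t(\Omega_K)=a_K$. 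On the finite space $\Omega_K$, $\pi_K=\nu_{N,K}$ is invariant by Lemma \ref{lem:finite_inv}, and $\mu_K\ll\pi_K$ by the first step. Lemma \ref{lem:finite-entropy} therefore gives $H(\mu_KP_t|\pi_K)\le H(\mu_K|\pi_K)$. In particular each $\mu_KP_t\ll\pi_K$, so $\mu P_t\ll\pi$.

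Summing over $K$ yields $H(\mu P_t|\pi)\le H(\mu|\pi)$. The series converge because every term is nonnegative and $H(\mu|\pi)<\infty$. Alternatively, the Jensen argument in the proof of Lemma \ref{lem:finite-entropy} carries over verbatim to countable $\Omega$, once nonnegative series are interchanged via Tonelli.
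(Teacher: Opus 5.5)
Your proof is correct, but it takes a different route from the paper's at both substantive steps.

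\emph{Monotonicity.} For $H(\nu_{\rho_0(\cdot)}P_t|\nu_\rho)\le H(\nu_{\rho_0(\cdot)}|\nu_\rho)$, the paper points to the Jensen argument of Lemma \ref{lem:finite-entropy} run directly on the countable space (Exercise \ref{ex:finite-entropy}). That route needs only $\mu\ll\pi$ with $\pi$ invariant, and applies to any countable-state chain. You instead use particle conservation to split into the finite closed classes $\Omega_K$. You then apply the finite-space lemma exactly as proved, via the chain rule for entropy. This is the same device the paper uses later in the proof of Lemma \ref{lem:4.3.6}. It avoids all countable-sum and absolute-continuity issues, but it is tied to dynamics with finite closed classes.

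\emph{The $O(N^d)$ bound.} The paper's hint (after Remark V.1.2 in \cite{KL}) computes explicitly only for $\rho=\bar\rho$, where each site contributes at most $\log Z(\Psi(\bar\rho))$. It then passes to general $\rho$ through the entropy inequality with small $\alpha$. Your site-by-site formula handles every $\rho>0$ at once. It is simpler here because $\nu_{\rho_0(\cdot)}$ is a product measure whose mean $\sum_x\rho_0(x/N)$ is explicit. The paper's transfer argument buys generality: an $O(N^d)$ entropy bound relative to one $\nu_{\bar\rho}$ yields the same bound relative to every $\nu_\rho$, for an arbitrary initial law and not only a product local equilibrium.

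Two justifications should be tidied.
\begin{itemize}
\item The upper bound $\rho_0(x/N)\log\Psi_x\le\bar\rho\,|\log\Psi(\bar\rho)|$ follows simply from $\Psi_x\le\Psi(\bar\rho)$ and $0\le\rho_0\le\bar\rho$. The lower boundedness of $u\log u$ plays no role there.
\item The terms $a_K\log(a_K/b_K)$ are not individually nonnegative; only their sum is, being a relative entropy. What actually legitimizes regrouping the entropy series by $K$ is that the negative parts are summable. Indeed, $u\log u\ge -e^{-1}$ implies that the negative parts of $b_K\cdot\frac{a_K}{b_K}\log\frac{a_K}{b_K}$ sum to at most $e^{-1}$. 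Hence every series involved converges in $(-\infty,\infty]$ and can be split and compared termwise.
\end{itemize}
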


\begin{proof}
The absolute continuity follows as $\nu_\rho(\eta)>0$ for all $\eta\in \Omega$.  The bound $H(\nu_{\rho_0(\cdot)}P_t|\nu_\rho)\leq H(\nu_{\rho_0(\cdot)}|\nu_\rho)$ follows by the proof of Lemma \ref{lem:finite-entropy} and Exercise \ref{ex:finite-entropy}.  The last part is left as an exercise.
\end{proof}

\begin{exercise}\rm
Prove $H(\nu_{\rho_0(\cdot)}|\nu_\rho)=O(N^d)$ by explicit calculation when $\rho = \bar \rho = \|\rho_0(\cdot)\|_\infty$.  Update to other densities $\rho>0$, by use of the entropy inequality in Lemma \ref{lem:entropyinequality} with $\alpha = \alpha(\rho, \bar \rho)>0$ chosen small.  Hint: See Remark V.1.2 in \cite{KL}.
\end{exercise}

Extend the definition of $I(\cdot)$ with respect to \eqref{dirichlet_expression} in the current setting:
\begin{equation}
\label{eq:I-def}
I(h) = D_K\big(\sqrt{h}\big) = \frac{1}{2}\sum_{x,y}p(x,y)E_{\nu_{N,K}}\Big[g(\eta(x))\big(\sqrt{h(\eta^{x,y})}-\sqrt{h(\eta)}\big)^2\Big].
\end{equation}

\begin{lemma}
\label{lem:4.3.6}
Under the assumptions of Lemma \ref{ex:4.1}, we have, for $0\leq r\leq t$,
\begin{align*} 
H(\nu_{\rho_0(\cdot)}P_t| \nu_\rho) -H(\nu_{\rho_0(\cdot)}P_r|\nu_\rho)
&
= \int_r^t \Big\langle \frac{d\nu_{\rho_0(\cdot)}P_s}{d\nu_\rho}, L \log \frac{d\nu_{\rho_0(\cdot)}P_s}{d\nu_\rho}\Big\rangle_{\nu_\rho} ds\\
& \leq -2\int_r^tI\Big(\sqrt{\frac{d\nu_{\rho_0(\cdot)}P_s}{d\nu_\rho}}\Big)ds. 
\end{align*}
\end{lemma}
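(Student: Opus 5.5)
The plan is to reduce to the finite-state computations of Lemmas \ref{entropy_diff} and \ref{lem:sec4-relent} by exploiting conservation of particle number. Write $\mu=\nu_{\rho_0(\cdot)}$ and $f_s = d\mu P_s/d\nu_\rho$; this density exists by Lemma \ref{ex:4.1}. The dynamics preserves each $\Omega_K=\{\eta:\sum_x\eta(x)=K\}$, and each $\Omega_K$ is finite. So $\mu P_s=\sum_K \mu(\Omega_K)\,\mu_K P_s$, where $\mu_K=\mu(\cdot|\Omega_K)$ and $\mu_KP_s$ is supported on $\Omega_K$. Since $\nu_\rho(\cdot|\Omega_K)=\nu_{N,K}$ by Lemma \ref{lem:finite_inv}, the density satisfies
\[
f_s(\eta)=\frac{\mu(\Omega_K)}{\nu_\rho(\Omega_K)}\,\frac{d\mu_KP_s}{d\nu_{N,K}}(\eta)\qquad \eta\in\Omega_K .
\]
The same decomposition gives
\[
H(\mu P_s|\nu_\rho)=H\big(\mu(\Omega_\cdot)\,\big|\,\nu_\rho(\Omega_\cdot)\big)+\sum_K\mu(\Omega_K)\,H(\mu_KP_s|\nu_{N,K}).
\]
The first term does not depend on $s$, because the law of the total mass is conserved.

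On each finite $\Omega_K$, the chain is finite and $\nu_{N,K}$ is invariant. Lemma \ref{entropy_diff} therefore gives that $s\mapsto H(\mu_KP_s|\nu_{N,K})$ is differentiable, and integrating over $[r,t]$ gives
\[
\int_r^t\langle g^K_s, L\log g^K_s\rangle_{\nu_{N,K}}\,ds, \qquad g^K_s=\frac{d\mu_KP_s}{d\nu_{N,K}} .
\]
Lemma \ref{lem:sec4-relent} then bounds the integrand by $-2D_K(\sqrt{g^K_s})$. The constant factor $c_K=\mu(\Omega_K)/\nu_\rho(\Omega_K)$ drops out of $L\log$ since $L$ annihilates constants on $\Omega_K$. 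Multiplying by $\mu(\Omega_K)$ and summing over $K$ recombines $\sum_K \nu_\rho(\Omega_K)c_K\langle g^K_s,L\log g^K_s\rangle_{\nu_{N,K}}$ into $\langle f_s,L\log f_s\rangle_{\nu_\rho}$. In the same way, the Dirichlet forms recombine, after pulling out $c_K$ via $\sqrt{c_K g}$, into the Dirichlet form of $\sqrt{f_s}$ against $\nu_\rho$. This is how the right-hand side is read with $I$ from \eqref{eq:I-def}: the $\nu_\rho$-mixture over $K$ of the $D_K$. The integrand of the inequality should be read as $I(f_s)=D(\sqrt{f_s})$, consistent with the definition of $I$.

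The main obstacle is justifying the interchange of $\sum_K$ with $\frac{d}{ds}$ and $\int_r^t$, since infinitely many $K$ contribute. I would first work with the integrated identity on each $\Omega_K$, which holds exactly. Summing is then justified as follows. The right sides have a definite sign after the inequality, so monotone convergence applies. For the equality, I would use absolute summability from $\sum_K\mu(\Omega_K)H(\mu_KP_s|\nu_{N,K})\le H(\mu|\nu_\rho)=O(N^d)<\infty$, uniformly in $s$ by Lemma \ref{ex:4.1}. I would also use that $\langle g,L\log g\rangle$ splits into a nonpositive part and a part controlled by the entropy, which follows from the computation in Lemma \ref{entropy_diff}. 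If the equality proves delicate, I would take the first line as the definition of the integrated rate via this $K$-decomposition, which is legitimate and already yields the stated inequality.
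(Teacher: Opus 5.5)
Your proposal is correct and follows the paper's proof essentially step for step. The paper also splits $H(\nu_{\rho_0(\cdot)}P_s|\nu_\rho)$ into the time-independent entropy of the law of the total mass plus the $\Omega_K$-mixture of finite-state entropies, applies Lemmas \ref{entropy_diff} and \ref{lem:sec4-relent} on each finite $\Omega_K$, interchanges $\sum_K$ with $\int_r^t$ using the sign of the summands, and reads the integrand as $I(f_s)=D(\sqrt{f_s})$.

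Your hedging about the equality is unnecessary. By Lemma \ref{lem:sec4-relent}, each $\langle g^K_s, L\log g^K_s\rangle_{\nu_{N,K}}\le -2D_K(\sqrt{g^K_s})\le 0$. So Tonelli justifies swapping $\sum_K$ and $\int_r^t$ in the equality exactly as for the Dirichlet forms, without any absolute-summability or entropy-controlled splitting argument and without redefining the integrated rate.
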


\begin{proof}
By Lemma \ref{ex:4.1}, $\nu_{\rho_0(\cdot)}\ll \nu_\rho$ and therefore by Lemma \ref{lem:4.14}, we may write
$$H(\nu_{\rho_0(\cdot)}P_t| \nu_\rho) = \sum_\eta \nu_{\rho_0(\cdot)}(\eta)\log f_t(\eta)<\infty$$
where $f_t(\eta)={\nu_{\rho_0(\cdot)}P_t(\eta)}/{\nu_\rho (\eta)}$ for each $t\geq 0$.  
Decomposing on the number of particles in $\T_N^d$, the entropy $H(\nu_{\rho_0(\cdot)}P_t|\nu_\rho)$ equals
$$\sum_K \nu_\rho(\Omega_K) \sum_{\eta\in \Omega_K}\nu_{N,K}(\eta)f_t(\eta)\log f_t(\eta) = \sum_K\nu_\rho(\Omega_K) E_{\nu_{N,K}}\big[ f_t \log f_t\big].$$
Recall, here $\Omega_K=\{\eta: \sum_x \eta(x) = K\}$ is a finite set, and $\nu_{N,K}(\cdot) = \nu_\rho(\cdot | \Omega_K)$.  Let also $\nu_{\rho_0, K}(\cdot) = \nu_{\rho_0(\cdot)}(\cdot | \Omega_K)$ and $f_{t, K}(\eta) = \nu_{\rho_0, K}P_t(\eta)/\nu_{N, K}(\eta)$.

On $\Omega_K$, note by mass conservation that $\nu_{\rho_0(\cdot)}P_t(\eta) = \nu_{\rho_0(\cdot)}(\Omega_K) \nu_{\rho_0, K}P_t(\eta)$.  Then, on $\Omega_K$, $f_t(\eta) = \frac{\nu_{\rho_0(\cdot)}(\Omega_K)}{\nu_\rho(\Omega_K)} f_{t, K}(\eta)$.  Therefore, inputting these relations, 
\begin{align*}
E_{\nu_{N,K}}\big[ f_t \log f_t\big] &=  \frac{\nu_{\rho_0(\cdot)}(\Omega_K)}{\nu_\rho(\Omega_K)} \log
\frac{\nu_{\rho_0(\cdot)}(\Omega_K)}{\nu_\rho(\Omega_K)}  +
\frac{\nu_{\rho_0(\cdot)}(\Omega_K)}{\nu_\rho(\Omega_K)} E_{\nu_{N, K}}\big[ f_{t, K}\log f_{t, K}\big].
\end{align*}
Since $\sum_K \nu_\rho(\Omega)\big[ \frac{\nu_{\rho_0(\cdot)}(\Omega_K)}{\nu_\rho(\Omega_K)} \log
\frac{\nu_{\rho_0(\cdot)}(\Omega_K)}{\nu_\rho(\Omega_K)} \big]$ is itself a relative entropy of the measures $\mu_1(K)= \nu_{\rho_0(\cdot)}(\Omega_K)$ and $\mu_2(K)=\nu_\rho(\Omega_K)$ on $\N_0$, it is nonnegative.  Also, the relative entropy $E_{\nu_{N,K}}\big[f_{t,K}\log f_{t,K}\big]\geq 0$ for each $K\geq 0$.  Note also that $E_{\nu_{N,K}}\big[f_t\log f_t\big]$, as $\Omega_K$ is a finite space, is differentiable.

 Hence, given finiteness of $H(\nu_{\rho_0(\cdot)}P_t|\nu_\rho)$, we may write it as a sum of two finite, nonnegative terms, one of which, $H(\mu_1|\mu_2)$, does not depend on $t$.
Therefore, we are able to write the difference
\begin{align*}
H(\nu_{\rho_0(\cdot)}P_{t}| \nu_\rho) - H(\nu_{\rho_0(\cdot)}P_r| \nu_\rho)
&= \sum_K \nu_\rho(\Omega_K)\int_r^{t} \partial_s E_{\nu_{N, K}}\big[f_s\log f_s\big] ds.
\end{align*}
The right-hand side, by 
the arguments of Lemmas \ref{entropy_diff} and \ref{lem:sec4-relent}, since $\nu_{N,K}$ is an invariant measure on $\Omega_K$, is evaluated as
\begin{align*}
  \sum_K \nu_\rho(\Omega_K) \int_r^{t} E_{\nu_{N,K}}\big[f_s L \log f_s\big]ds
& \leq -2\sum_K \nu_\rho(\Omega_K)\int_r^{t}  D_K\big(\sqrt{f_s}\big)ds.
\end{align*}

Since the summands, as $D_K(\cdot)\geq 0$, are all negative, we may interchange the sum on $K$ and the integral.  The right-hand side above equals
$\int_r^{t} -2D\big(\sqrt{f_s}\big)ds$.
We obtain the desired statement as
$I(f_s) = D\big(\sqrt{f_s}\big)$.
\end{proof}

\begin{remark}\rm
Since $P_t$ and so $f_t$ is continuous in $t$, by Fatou's lemma, one may bound the upper derivative:
\[\limsup_{\delta\downarrow 0} \frac{1}{\delta}\big[H(\nu_{\rho_0(\cdot)}P_{t+\delta}|\nu_\rho) - H(\nu_{\rho_0(\cdot)}P_t|\nu_\rho)\big] \leq -2 I\big(\sqrt{f_t}\big). \]
See also discussion in Appendix I.1.9 in \cite{KL}.
\end{remark}

\section{Basic coupling}
\label{coupling}
We discuss a useful coupling in the context of zero-range processes when $g$ is an increasing function.  First, we define the notion of `stochastic domination' on the partially ordered set $\Omega$.  We say two probability measures $\mu_1$, $\mu_2$ on a countable set $\Omega$ are ordered, $\mu_1 \ll \mu_2$, if $E_{\mu_1}[f] \leq E_{\mu_2}[f]$ for all coordinatewise increasing functions $f:\Omega \rightarrow \R$.

There is an interesting characterization of ordered measures for whose proof we refer to \cite{Liggett1}[Theorem II.2.4].
\begin{proposition}
\label{Liggett_prop}
We have $\mu_1\ll \mu_2$ exactly when there is a bivariate distribution on $\Omega\times \Omega$ such that marginally variables $\eta$ and $\xi$ are distributed according to $\mu_1$ and $\mu_2$ and
$P(\eta \leq \xi) = 1$.
\end{proposition}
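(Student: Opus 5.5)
The easy direction is that a coupling gives the ordering. Suppose $P$ is a bivariate law on $\Omega\times\Omega$ with marginals $\mu_1,\mu_2$ and $P(\eta\le\xi)=1$. For any bounded coordinatewise increasing $f$ we have $f(\eta)\le f(\xi)$ almost surely. Taking expectations gives $E_{\mu_1}[f]=E_P[f(\eta)]\le E_P[f(\xi)]=E_{\mu_2}[f]$, so $\mu_1\ll\mu_2$. If unbounded $f$ are allowed, I would first truncate to $f\wedge M$ and $f\vee(-M)$, which are still increasing, and then pass to the limit by monotone convergence.

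For the converse I would use Strassen's theorem in its finite form, namely the max-flow/min-cut (or Hall marriage) criterion. Take first a finite subset $\Lambda\subset\Omega$. We seek a nonnegative matrix $P(x,y)$ on $\Lambda\times\Lambda$, supported on $\{x\le y\}$, with row sums $\mu_1(x)$ and column sums $\mu_2(y)$.

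Build a network with a source joined to each $x$ with capacity $\mu_1(x)$, an edge $x\to y$ of infinite capacity whenever $x\le y$, and each $y$ joined to the sink with capacity $\mu_2(y)$. A feasible coupling is the same as a flow of value $1$. By max-flow/min-cut, we need every cut to have capacity at least $1$. A cut of finite capacity is described by a set $A$ of left vertices kept on the source side. Finiteness forces the upper set $U(A)=\{y:\ y\ge x \text{ for some } x\in A\}$ onto the source side as well. The cut capacity is then $\mu_1(A^c)+\mu_2(U(A))$, so the condition reduces to
\[
\mu_1(A)\le \mu_2(U(A)) \quad\text{for every } A.
\]
This follows from the hypothesis applied to $f=1_{U(A)}$, which is increasing, together with $\mu_1(A)\le\mu_1(U(A))$.

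To reach countable $\Omega$ I would approximate. Let $\Lambda_n\uparrow\Omega$ be finite sets. Define $\mu_i^{(n)}$ by pushing the mass outside $\Lambda_n$ onto auxiliary points: a global minimum element for $\mu_1$ and a global maximum element for $\mu_2$, adjoined to $\Lambda_n$. This pushforward preserves the ordering $\mu_1^{(n)}\ll\mu_2^{(n)}$, and the finite case then gives couplings $P_n$. The marginals of $P_n$ converge to $\mu_1,\mu_2$, so the family $\{P_n\}$ is tight. Prokhorov's theorem, stated earlier, yields a weak limit $P$ on $\Omega\times\Omega$ with the correct marginals. Since $\{\eta\le\xi\}$ is closed for the discrete product topology, the portmanteau theorem gives $P(\eta\le\xi)=1$.

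In the setting actually used later, $\Omega=\N_0^{\T_N^d}$, the ordering is coordinatewise. So I can take $\Lambda_n=\{0,\dots,n\}^{\T_N^d}$, with $0$ and an added point $\infty$ serving as the extremes, and everything stays in a countable ordered product.

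The main obstacle is the min-cut verification. One must check that every finite-capacity cut really has the upper-set form claimed, and that the indicator of an upper set is a legitimate increasing test function. The limiting argument is routine by comparison. The only delicacy there is making the truncation keep the partial order compatible, which is why I would place the excess mass at extreme points rather than dropping it.
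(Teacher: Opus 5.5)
Your proof is correct, but it takes a different route from the paper. The paper gives no argument of its own; it cites Liggett's Theorem II.2.4. That theorem is stated for a compact metric space with a closed partial order and is proved by duality: the functional $f(\eta)+g(\zeta)\mapsto E_{\mu_1}[f]+E_{\mu_2}[g]$ is extended by Hahn--Banach, dominated by $h\mapsto\sup_{\eta\le\zeta}h(\eta,\zeta)$, and the Riesz representation theorem then produces the coupling.

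You replace that functional-analytic step with the finite max-flow/min-cut criterion. The hypothesis enters the same way in both proofs, through upper-set closures: your test function $1_{U(A)}$ plays the role of the increasing envelope $\zeta\mapsto\sup_{\eta\le\zeta}f(\eta)$ in the duality proof. Your version is elementary and constructive on finite pieces. It also works directly on the non-compact countable space $\N_0^{\T_N^d}$, which is where the paper actually uses the result; applying Liggett verbatim would first require compactifying to $(\N_0\cup\{\infty\})^{\T_N^d}$. In exchange, Liggett's version covers general compact state spaces, such as $\{0,1\}^{\Z^d}$, in one step with no approximation.

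You assert without checking that the truncated measures stay ordered; this follows from your own cut criterion. Write $\bot,\top$ for the adjoined extremes and $U'$ for upper sets in $\Lambda_n\cup\{\bot,\top\}$. If $\bot\in A$, then $U'(A)$ is the whole space. For any other nonempty $A$, the point $\top\in U'(A)$ carries all of $\mu_2(\Lambda_n^c)$, so $\mu_2^{(n)}(U'(A))\ge\mu_2(U(A\cap\Lambda_n))\ge\mu_1(A\cap\Lambda_n)=\mu_1^{(n)}(A)$.

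Similarly, your Prokhorov limit a priori lives on the enlarged product space. Its marginals are $\mu_1$ and $\mu_2$, which shows it puts no mass on $\bot$ or $\top$, so it is a coupling on $\Omega\times\Omega$ as claimed.
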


In this next result, $g$ does not have to be increasing.

\begin{lemma} Let $\alpha, \beta< \liminf g(k)$.  The marginals $\kappa_\alpha \ll\kappa_\beta$ exactly when $\alpha\leq \beta$.
\end{lemma}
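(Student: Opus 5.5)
We will prove both directions by exploiting the monotone likelihood ratio structure of the family $\{\kappa_\Psi\}$. Since each $\kappa_\Psi$ lives on $\N_0$, a totally ordered set, stochastic domination $\kappa_\alpha\ll\kappa_\beta$ is equivalent to comparison of tails: $\kappa_\alpha[k,\infty)\le \kappa_\beta[k,\infty)$ for all $k\ge 0$. This holds because increasing functions on $\N_0$ are limits of nonnegative combinations of indicators $1[k,\infty)$ plus constants. So the task reduces to comparing tail probabilities.

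For the direction $\alpha\le\beta \Rightarrow \kappa_\alpha\ll\kappa_\beta$, we compute the likelihood ratio. For $\alpha,\beta<\liminf g(k)$ with $\beta>0$,
\[
\frac{\kappa_\alpha(k)}{\kappa_\beta(k)} \;=\; \frac{Z(\beta)}{Z(\alpha)}\Big(\frac{\alpha}{\beta}\Big)^k ,
\]
because the factors $g(1)\cdots g(k)$ cancel. This is why monotonicity of $g$ is not needed. When $\alpha\le\beta$ the ratio is nonincreasing in $k$. Since both measures have total mass one, there is some $k_0$ with $\kappa_\alpha(k)\ge\kappa_\beta(k)$ for $k<k_0$ and $\kappa_\alpha(k)\le\kappa_\beta(k)$ for $k\ge k_0$. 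Then:
\begin{itemize}
\item for $k\ge k_0$, the inequality $\kappa_\alpha[k,\infty)\le\kappa_\beta[k,\infty)$ follows by summing pointwise;
\item for $k<k_0$, we write $\kappa[k,\infty)=1-\kappa[0,k)$ and use the reversed pointwise inequality on $[0,k)$.
\end{itemize}
The degenerate case $\alpha=0$ is trivial, since $\kappa_0=\delta_0$ is dominated by everything. One could alternatively invoke Proposition \ref{Liggett_prop} and build the coupling by quantile transforms, but the tail comparison suffices.

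For the converse, suppose $\alpha>\beta$. Applying the first direction with roles swapped gives $\kappa_\beta\ll\kappa_\alpha$. Moreover, the ratio $(\alpha/\beta)^k Z(\beta)/Z(\alpha)$ is strictly increasing and not constant, so $\kappa_\alpha\ne\kappa_\beta$. The argument above then yields strict inequality of some tail, namely $\kappa_\alpha[k,\infty)>\kappa_\beta[k,\infty)$ at $k=k_0$. Taking the increasing function $f=1[k_0,\infty)$ violates $E_{\kappa_\alpha}f\le E_{\kappa_\beta}f$, so $\kappa_\alpha\not\ll\kappa_\beta$. A simpler alternative is to use $f(k)=k$ together with the strict monotonicity of $\Psi\mapsto\rho(\Psi)$ noted earlier. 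However, that requires finite means, which hold since $\Psi<\liminf g(k)$ gives geometric tails.

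The main obstacle is mild. It consists of justifying the reduction of $\ll$ to tail comparisons for possibly unbounded increasing $f$, handled by monotone truncation $f\wedge M$, and checking the single-crossing step with care at the boundary $k_0$.
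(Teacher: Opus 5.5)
Your argument is correct and uses the same mechanism as the paper's: the factors $g(1)\cdots g(k)$ cancel, leaving the monotone likelihood ratio $(\alpha/\beta)^k$, and domination is reduced to tail comparisons. The bookkeeping differs in both directions.

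For sufficiency, the paper never locates a crossing point. It cross-multiplies the tail inequality by the normalizations and cancels the common terms with $k,\ell\ge L$, which turns it into $\kappa_\beta[L,\infty)\,\kappa_\alpha[0,L)\ge \kappa_\alpha[L,\infty)\,\kappa_\beta[0,L)$. It then compares the double sums term by term via $\beta^k\alpha^\ell\ge\alpha^k\beta^\ell$ for $k\ge L>\ell$. Your single-crossing route instead needs the existence of $k_0$ (from both measures having mass one) and a two-case split, though it makes the likelihood-ratio structure more visible.

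For necessity, the paper just tests $f(\eta)=\eta(0)$ and invokes the strict monotonicity of $\Psi\mapsto\rho(\Psi)$ asserted earlier; this is your `simpler alternative'. Your main route extracts a strictly larger tail from the strictly monotone ratio. It is longer but does not depend on that separately stated fact. You can shorten it: once $\kappa_\beta\ll\kappa_\alpha$ is known, $\kappa_\alpha\ll\kappa_\beta$ would force all tails, hence the two measures, to coincide, contradicting the non-constant ratio.

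Finally, in that direction write the ratio as $\kappa_\beta/\kappa_\alpha$ (or treat $\beta=0$ separately). Since $\kappa_0=\delta_0$, the ratio $\kappa_\alpha/\kappa_\beta$ is undefined when $\beta=0$.
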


\begin{proof}
If $\kappa_\alpha \ll\kappa_\beta$, since $f(\eta)=\eta(0)$ is increasing, we have $\rho(\alpha)\leq \rho(\beta)$ from which one deduces $\alpha\leq \beta$.

For the converse, since linear combinations of $1(\eta\in [L,\infty))$ are dense in the set of increasing functions, it is enough to show that
$$P_{\kappa_\beta}(\eta\geq L) \ \geq \ P_{\kappa_\alpha}(\eta\geq L)$$
for all $L\geq 0$.
This is equivalent, after cancellation, to showing
$$\sum_{k\geq L} \frac{\beta^k}{g(1)\cdots g(k)} Z(\alpha) \ \geq \ \sum_{k\geq L} \frac{\alpha^k}{g(1)\cdots g(k)} Z(\beta) $$
which is equivalent to 
$$\sum_{k\geq L}\sum_{\ell\leq L-1} \frac{\beta^k \alpha^\ell}{g(k)!g(\ell)!} \ \geq \ \sum_{k\geq L}\sum_{\ell\leq L-1} \frac{\alpha^k\beta^\ell}{g(k)!g(\ell)!}.$$
The last inequality will be shown if term by term the inequality is true.  But,
$\beta^k \alpha^\ell \geq \alpha^k\beta^\ell$ since $k\geq \ell$.\end{proof}

We now show the existence of the so-called `basic coupling' for the zero-range process when $g$ is increasing.  Another name for such an existence is that the zero-range process is `attractive'.

\begin{proposition}
 When $g$ is increasing, there is a joint process $(\eta_t, \xi_t)$ on $\Omega\times \Omega$, starting from $\mu_1\times \mu_2$ where $\mu_1\ll \mu_2$, such that at all later times $t\geq 0$, $\eta_t$ and $\xi_t$ marginally are the zero-range processes starting from $\mu_1$ and $\mu_2$ respectively and $\eta_t\leq \xi_t$ a.s.
\end{proposition}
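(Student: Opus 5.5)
The plan is to construct the coupled process explicitly through a joint generator on $\Omega\times\Omega$, the `basic coupling', and then check two things: that each marginal is a zero-range process, and that the set $\{(\eta,\xi):\eta\leq\xi\}$ is closed under the dynamics. The initial condition will not be taken literally as a product: by Proposition \ref{Liggett_prop}, since $\mu_1\ll\mu_2$, we can instead start from a bivariate law with marginals $\mu_1,\mu_2$ supported on $\{\eta\leq\xi\}$. The product $\mu_1\times\mu_2$ in the statement should be read as this ordered coupling of the initial measures.

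The joint generator $\bar L$ acts on functions $f$ on $\Omega\times\Omega$ by
\begin{align*}
(\bar L f)(\eta,\xi) &= \sum_{x,y} p(y)\, \min\{g(\eta(x)),g(\xi(x))\}\big[f(\eta^{x,x+y},\xi^{x,x+y})-f(\eta,\xi)\big]\\
&\quad + \sum_{x,y} p(y)\,\big[g(\eta(x))-g(\xi(x))\big]^+\big[f(\eta^{x,x+y},\xi)-f(\eta,\xi)\big]\\
&\quad + \sum_{x,y} p(y)\,\big[g(\xi(x))-g(\eta(x))\big]^+\big[f(\eta,\xi^{x,x+y})-f(\eta,\xi)\big].
\end{align*}
In words, both configurations make the same jump together at the common rate, and the excess rate moves only the configuration with the larger rate. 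The argument then runs in three steps.

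First, marginals. If $f(\eta,\xi)=h(\eta)$, then $\bar Lf$ collects the rate $\min\{a,b\}+(a-b)^+=a=g(\eta(x))$, so $\bar Lf(\eta,\xi)=(Lh)(\eta)$, and symmetrically for $\xi$. Because the joint chain is a continuous time Markov chain on a countable space, its marginal processes are then Markov with generator $L$. To justify this, I would compare the martingales $M^F$ of Proposition \ref{martingale} with those of the zero-range process, or argue directly through the jump-chain construction of Section \ref{lec1}, together with uniqueness of the law given the generator.

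Second, order preservation. Suppose $\eta\leq\xi$; I need every possible transition to keep $\eta\leq\xi$. Joint jumps subtract and add at the same sites in both configurations, so they preserve the order. A jump of $\eta$ alone raises $\eta(x+y)$ and lowers $\eta(x)$. Raising $\eta(x+y)$ could break the order only if $\eta(x+y)=\xi(x+y)$, and I expect this is the case to watch at the destination site. Lowering $\eta(x)$ is harmless. A jump of $\xi$ alone lowers $\xi(x)$ and could break the order at $x$, but only if $\eta(x)=\xi(x)$. In that case $g(\xi(x))-g(\eta(x))=0$, so this transition has rate zero. A jump of $\eta$ alone at $x$ requires $g(\eta(x))>g(\xi(x))$, which is impossible when $\eta(x)\leq\xi(x)$ because $g$ is increasing. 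So $\eta$ never jumps alone, $\xi$ alone only lowers a site where it is strictly above $\eta$, and $\xi$ alone raising a site only helps. Hence $\{\eta\leq\xi\}$ is closed under the dynamics, and $\eta_t\leq\xi_t$ for all $t$ almost surely.

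The main obstacle is regularity. The state space $\Omega=\N_0^{\T^d_N}$ is countable, and the rates $g$ may be unbounded, so I need the joint chain to be non-explosive, that is $T_\infty=\infty$. On the torus, particle number is conserved in each coordinate. Starting from a fixed pair, the chain therefore lives in the finite set $\Omega_K\times\Omega_{K'}$, where the total rates are bounded, so there is no explosion. Conditioning on the initial pair then gives the construction for a general initial law.
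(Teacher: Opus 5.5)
Your proposal is correct and follows the paper's proof. It uses the same basic-coupling generator, started from the ordered coupling supplied by Proposition \ref{Liggett_prop}, identifies the marginals by testing functions of one coordinate, and checks order preservation transition by transition using monotonicity of $g$. Your added remark that conservation of particles on the torus confines the chain to the finite set $\Omega_K\times\Omega_{K'}$, which rules out explosion, fills in a point the paper's jump-time induction leaves implicit.
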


\begin{proof}
Let $\bar\mu = \mu_1\times \mu_2$ be the joint probability on $\Omega\times \Omega$ such that $\eta_0\leq \xi_0$ a.s. (Proposition \ref{Liggett_prop}).  Consider the Markov process on $\Omega\times \Omega$ with initial distribution $\bar\mu$ and generator
\begin{eqnarray*}
(\bar L_N) f(\eta,\xi) &=& \sum_{x,y\in \T^d_N}p(y)\min\{g(\eta(x)),g(\xi(x))\}\big[f(\eta^{x,x+y},\xi^{x,x+y})-f\big]\\
&&\ + \ \sum_{x,y\in \T^d_N}p(y)\big(g(\eta(x))-g(\xi(x))\big)_+\big[f(\eta^{x,x+y},\xi)-f\big]\\
&& \ + \ \sum_{x,y\in \T^d_N}p(y)\big(g(\xi(x))-g(\eta(x))\big)_+\big[f(\eta,\xi^{x,x+y})-f\big].
\end{eqnarray*}
One can see, by inserting a function of coordinate $\eta$ only or of coordinate $\xi$ only, that the marginal processes are as desired.

To check that $\eta_t \leq \xi_t$ a.s., recall the joint process is a continuous time Markov chain on a countable state space.  At time $t=0$, we have arranged that $\eta_{0}\leq \xi_{0}$.  At the next jump time $\tau$, by the specification of the rates, we see that still $\eta_\tau \leq \xi_\tau$.  Hence, the joint process remains ordered for all time $t\geq 0$.
\end{proof}

\begin{exercise}\rm There is another way to show that $\eta_t\leq \xi_t$ for all $t\geq 0$ a.s.  (cf. Theorem II.5.2 in \cite{KL}).  Compute that $\bar L_N 1(\eta\leq \xi)\geq 0$.  Then, with $\bar E_{\bar \mu}$ denoting the process expectation starting from $\bar \mu$,
$\bar E_{\bar\mu}[1(\eta_t\leq \xi_t)]$ is increasing in $t$:  $\partial_t\bar E_{\bar\mu}[1(\eta_t\leq \xi_t)] = \bar E_{\bar \mu}[\bar L_N1(\eta_t\leq \xi_t)] \geq 0$.  Hence,
$$1\geq \bar E_{\bar\mu}[1(\eta_t\leq \xi_t)] \ \geq\  \bar E_{\bar\mu}[1(\eta_0\leq \xi_0)] \ = \ 1.$$
\end{exercise}

The next exercise will be useful in the later proof of hydrodynamics.  Recall that $\nu_\rho$ and $\nu_{\rho_0(\cdot)}$ are well-defined when $g$ is increasing by Remark \ref{ex:ginc}. 
\begin{exercise}
\label{ex:464}
\rm
Let $\bar\rho = \|\rho_0(\cdot)\|_\infty$.  Show that $\nu_{\rho_0(\cdot)}\ll \nu_{\bar\rho}$, and consequently $\P_{\nu_{\rho_0(\cdot)}}\ll\P_{\nu_{\bar\rho}}$, when $g$ is increasing.
\end{exercise}

We will also need later an estimate on $\Psi(\rho)= E_{\nu_\rho}[g(\eta(0))]$ if $g$ is a Lipschitz function, $|g(k)-g(l)|\leq C|k-l|$ for all $k,l\geq 0$.
\begin{lemma}
\label{lem:4.4.5}
If $g$ is Lipschitz, then $\Psi$ is also Lipschitz.
\end{lemma}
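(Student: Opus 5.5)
The plan is to avoid differentiating $\Psi$ and instead use the stochastic ordering of the one-site marginals, proved just above, together with the coupling characterization in Proposition \ref{Liggett_prop}. Fix densities $0\leq \rho<\rho'$ in the range where $\Psi$ is defined, and set $\alpha=\Psi(\rho)$ and $\beta=\Psi(\rho')$. Since $\Psi\mapsto\rho(\Psi)$ is strictly increasing, its inverse is also increasing, so $\alpha\leq\beta$. The aim is the one-sided bound
$$0\ \leq\ \Psi(\rho')-\Psi(\rho)\ \leq\ C(\rho'-\rho),$$
where $C$ is the Lipschitz constant of $g$. Swapping the roles of $\rho$ and $\rho'$ then gives $|\Psi(\rho')-\Psi(\rho)|\leq C|\rho'-\rho|$.

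First, by the preceding lemma on marginals (which does not need $g$ increasing), $\alpha\leq\beta$ gives $\kappa_\alpha\ll\kappa_\beta$. Second, by Proposition \ref{Liggett_prop} applied on $\Omega=\N_0$, there is a pair $(\eta,\xi)$ with $\eta\sim\kappa_\alpha$, $\xi\sim\kappa_\beta$ and $\eta\leq\xi$ almost surely. Third, recall that $E_{\kappa_{\Psi(\rho)}}[\eta]=\rho$ and $E_{\kappa_{\Psi(\rho)}}[g(\eta)]=\Psi(\rho)$; the latter follows from the computation $E_{\kappa_\Psi}[g(\eta)]=\Psi$, obtained by shifting the index in the defining series. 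Combining these,
$$\Psi(\rho')-\Psi(\rho)\ =\ E[g(\xi)-g(\eta)]\ \leq\ C\,E|\xi-\eta|\ =\ C\,E[\xi-\eta]\ =\ C(\rho'-\rho).$$
The key point is that the ordering $\eta\leq\xi$ lets us drop the absolute value in the third step.

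The only point needing care is integrability. We need $E[\xi]<\infty$ and $E[g(\xi)]<\infty$ under $\kappa_\beta$ for $\beta<\liminf g(k)$. These hold because the partition function $Z$ converges in a neighborhood of $\beta$, so the tails decay geometrically; moreover $g(k)\leq g(0)+Ck$, so $g(\xi)$ is integrable whenever $\xi$ is. I expect no real obstacle beyond this bookkeeping.

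As an alternative route if a derivative is preferred, one has $\rho'(\Psi)=\mathrm{Var}_{\kappa_\Psi}(\eta)/\Psi$. The statement is then equivalent to $\mathrm{Var}\geq\Psi/C$, and the coupling argument above is a clean way to obtain exactly this bound.
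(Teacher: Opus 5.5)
Your proof is correct and is essentially the paper's argument: the paper also couples the ordered measures $\nu_\beta\ll\nu_\delta$ (for $\delta\geq\beta$) so that $\xi(0)\leq\eta(0)$ almost surely. It then writes $\Psi(\delta)-\Psi(\beta)=\bar E[g(\eta(0))-g(\xi(0))]\leq \bar E|\eta(0)-\xi(0)|=\delta-\beta$, using the ordering to drop the absolute value exactly as you do. Your version is slightly more careful than the paper's: you keep the Lipschitz constant explicit, check integrability, and use only the static one-site coupling from the preceding marginal-ordering lemma (which, as you note, does not need $g$ increasing) rather than appealing to ``the basic coupling.''
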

\begin{proof}
Let $\delta \geq \beta$.  Then, by the basic coupling,
\begin{align*}
\Psi(\delta) - \Psi(\beta) &= E_{\nu_\delta}[g] - E_{\nu_\beta}[g]\\
&=\bar E[g(\eta(0)) - g(\xi(0))]\\
&\leq \bar E\big[|\eta(0) - \xi(0)|\big] \\
&= \bar E[ \eta(0)-\xi(0)]\ = \ \delta - \beta. \qedhere
\end{align*} 
\end{proof}

As we have seen, the zero-range model allows several `closed form' calculations. The following is another one of these. We will not use this exercise in the sequel.

\begin{exercise}\rm
Calculate that $\Psi'(\rho) = \Psi(\rho)/\sigma^2(\rho)$ where $\sigma^2(\rho)$ is the variance of $\eta(0)$ under $\nu_\rho$.
\end{exercise}

\section{What is the hydrodynamical equation?}
\label{sec:4.7}
We will start the process from initial configurations distributed according to $\mu^N=\nu_{\rho_0(\cdot)}$.  
Denote, as before, for fixed $T<\infty$, that $\P_{\nu_{\rho_0(\cdot)}}$ and $\E_{\nu_{\rho_0(\cdot)}}$ are the distribution of $\{\eta_t: t\in [0,T]\}$ and its expectation, when $\eta_0$ is governed by $\mu$.

We now informally compute the generator action to guess the hydrodynamic behavior.  Recall $\pi^N_t$ stands for the empirical measure
$$\pi^N_t \ = \ \frac{1}{N^d}\sum_{x\in \T^d_N} \eta_t(x)\delta_{x/N}.$$
For a smooth $G:\T^d\rightarrow\R$, consider the equation
$$\langle G,\pi^N_{v(N)t}\rangle \ = \ \langle G, \pi^N_0\rangle + v(N)\int_0^t L \langle G, \pi^N_{v(N)s}\rangle ds + M^N_{t}(G)$$
where $M^N_{t}(G)$ is a martingale.
One may compute the quadratic variation,
\begin{align*}
&\langle M^N(G)\rangle_{t}  = 
v(N)\int_0^t \Big[L\big(\langle G, \pi^N_{v(N)s}\rangle\big)^2 - 2 \langle G, \pi^N_{v(N)t}\rangle L\langle G, \pi^N_{v(N)s}\rangle \Big]ds\\ 
&\quad = \frac{v(N)}{N^{2d+2}}\int_0^t\sum_{x,y} \big(\nabla^N_{x,x+y}G\big)^2g(\eta_{v(N)s}(x))p(y)ds  =  O(v(N)tN^{-d-2}),
\end{align*}
which shows, easily if $g$ is bounded, that the martingale is negligible in the limit.

We now compute
\begin{eqnarray*}v(N)L\langle G, \pi^N_{v(N)t}\rangle & = & \frac{v(N)}{N^{d+1}}\sum_{x,y} g(\eta(x))p(y)\big[N(G(x+y/N) - G(x/N))\big].
\end{eqnarray*}
Here, again $v(N)=N^2$ when $p$ is symmetric (or mean-zero), and $v(N)=N$ otherwise.

When $p$ is symmetric, similar to symmetric exclusion, one can change $y$ to $-y$ and average.
The right-side equals
\begin{eqnarray*}&&\frac{v(N)}{2N^{d+2}}\sum_{x,y}g(\eta(x))p(y)\triangle^N_{x,y}G.
\end{eqnarray*}
where we recall $\triangle^N_{x,y}G = N^2[G(x+y/N) - 2G(x/N)+G(x-y/N)]$.

The trouble now is that this weighted average of functions $g(\eta(x))$ does not close in terms of the empirical measure.  The main point of `hydrodynamics' is to approximate this average by a function of the empirical measure.

What should it be?  In the microscopic scale, near point $x\in \T^d_N$ there is lots of local particle movement.  One expects the distribution of particles in an $N\epsilon$ neighborhood of $x$ at time $N^2t$ to be in `local' equilibrium given by the nearby density, such as $\nu_{\eta_{N^2t}^{N\epsilon}(x)}$ where
\begin{equation}
\label{eq:eta-l}
\eta^\ell(x) = \frac{1}{(2\ell +1)^d}\sum_{|y-x|\leq \ell}\eta(x)
\end{equation}
and $\epsilon>0$ is a small parameter.

Then, one might expect that
$$\frac{v(N)}{2N^{d+2}}\sum_{x,y}g(\eta(x))p(y)\triangle^N_{x,x+y}G \ \sim \ \frac{1}{2N^d}\sum_{x,y} \Psi\big(\eta_{N^2t}^{N\epsilon}(x)\big)p(y)\triangle^N_{x,x+y}G$$
where we recall
$\Psi(\rho)= E_{\nu_\rho}[g(\eta(0))]$.

Now, this `local' density, for $\epsilon>0$, can be re-expressed in terms of the empirical measure:
\begin{equation}
\label{eq:4.eta}
\eta^{N\epsilon}_{N^2t}(x) \ = \ \frac{(2\epsilon)^dN^d}{(2N\epsilon+1)^d}\big\langle (2\epsilon)^{-d}1([x/N-\epsilon,x/N+\epsilon]^d), \pi^N_{N^2t}\big\rangle
\end{equation}
which if $\pi^N_{N^2t}(x) \sim \pi(t,u)du$ is further approximated by
$$\frac{1}{(2\epsilon)^d}\int 1_{[-\epsilon,\epsilon]^d}(u-x/N) \pi(t,u)du.$$

Putting this together, one `closes' the equation and obtains, after first $N\uparrow\infty$ and then $\epsilon\downarrow 0$, that
$$\int_{\T^d}G(u)\pi(t,u)du \ = \ \int_{\T^d}G(u)\rho_0(u)du + \frac{1}{2}\int_0^t \triangle_C G(u) \Psi(\pi(t,u))du$$
which is the weak formulation of 
$$\partial_t \rho(t,u) \ = \ \frac{1}{2}\triangle_C \Psi\big(\rho(t,u)\big) \ \ \ {\rm and \ \ \ } \rho(0,u)=\rho_0(u)$$
where $\triangle_C$ is as defined before,
$\triangle_C  =  \sum_{i\leq i,j\leq d} C_{i,j}\frac{\partial^2}{\partial_{x_i}\partial_{x_j}}$
and $C_{i,j} = \sum_zz_iz_jp(z)$.

\section{Assumptions and statement of hydrodynamics}
\label{sec:4.8}

We will make the following assumptions to help simplify the proof and introduce main ideas.  We remark however that hydrodynamics has been shown for much more general zero-range processes; see the later Notes for details on extensions.

We will assume that $g$ is a Lipschitz function, bounded and increasing, with invariant measures at all densities $\rho\geq 0$:
\begin{itemize}
\item 
$|g(k+1)-g(k)| \ \leq \ a_0$
for $k\geq 0$
\item $\|g\|_\infty=\sup_{k\geq 0} g(k) \ \leq \ a_1$,
\item $g(k+1)\geq g(k) \ {\rm for \ }k\geq 0$.
\end{itemize}
Note by Remark \ref{ex:ginc} that $\rho(\Psi)\uparrow\infty$ as $\Psi\uparrow\lim g(k)$, and so $\nu_\rho$ is defined for all densities $0\leq \rho<\infty$. 

A standard example is $g(k) = 1(k\geq 1)$, under which $\kappa_\Psi$ is Geometric, that is $\kappa_\Psi(k) = (1-\Psi)\Psi^k$ for $k\geq 0$ and $\Psi\in [0,1)$.

Also, to reduce notation, we assume $p$ is nearest-neighbor and symmetric:
\begin{itemize}
\item $p(e) \ = \ (2d)^{-1}$ for coordinate unit vectors $e$, so that $\triangle_C  =\frac{1}{d} \triangle$.
\end{itemize}
Recall that $\P_\mu$ stands for the process measure when starting in $\mu$.

\begin{theorem}
\label{thm:481}
We have for all $G\in C^2(\T^d)$ and $\delta>0$ that
$$\lim_{N\uparrow\infty} \P_{\nu_{\rho_0(\cdot)}}\Big[ \Big| \langle G, \pi^N_{N^2t}\rangle - \int_{\T^d} G(u)\rho(t,u)du\Big|>\delta\Big] \ = \ 0$$
where $\rho$ is the unique weak solution of the hydrodynamic equation
$$\partial_t\rho \ = \ \frac{1}{2d}\triangle \Psi(\rho) \ \ \ {\rm and \  \ \ } \rho(0,u) \ = \ \rho_0(u).$$
\end{theorem}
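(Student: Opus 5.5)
The plan follows the three-step scheme of Chapter \ref{lec3}, with one new ingredient: the replacement of the nonlinear term $g(\eta(x))$ by $\Psi$ of a local density, done by the entropy method.

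Let $Q^N$ be the law of $\langle \pi^N_{N^2t}: t\in[0,T]\rangle$ on $\mathcal{D}([0,T];\mathcal{M}_+)$ under $\P_{\nu_{\rho_0(\cdot)}}$.

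\textbf{Step 1 (tightness).} I will use Proposition \ref{effective_tightness} and check conditions (1) and (2') for $\langle G,\pi^N_{N^2t}\rangle$. For (1), unlike exclusion, the mass is not bounded by one per site. However, the total mass $N^{-d}\sum_x\eta(x)$ is conserved, and under $\nu_{\rho_0(\cdot)}$ it has bounded mean (by Exercise \ref{ex:464} it is dominated by its mean under $\nu_{\bar\rho}$). So Markov's inequality gives tightness of $\langle 1,\pi^N_{N^2t}\rangle$, and relative compactness in $\mathcal{M}_+$ follows.

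For (2'), I write $\langle G,\pi^N_{N^2t}\rangle = \langle G,\pi^N_0\rangle + \frac{1}{2}\int_0^t N^{-d}\sum_{x,y} g(\eta_{N^2s}(x))p(y)\triangle^N_{x,y}G\,ds + M^N_t(G)$, using the symmetric computation of Section \ref{sec:4.7}.
\begin{itemize}
\item The integral term is Lipschitz in $t$ with constant $a_1\|\triangle G\|_\infty$, because $g\le a_1$.
\item The martingale term is handled by Doob's inequality exactly as in \eqref{condition_2'}. Its quadratic variation is $O(a_1 N^{-d})$, again because $g$ is bounded.
\end{itemize}

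\textbf{Step 2 (replacement).} The key estimate is, for every $\delta>0$,
\begin{equation*}
\limsup_{\epsilon\downarrow0}\limsup_{N\uparrow\infty}\P_{\nu_{\rho_0(\cdot)}}\Big[\Big|\int_0^t N^{-d}\sum_x H(x/N)\big\{g(\eta_{N^2s}(x))-\Psi(\eta^{N\epsilon}_{N^2s}(x))\big\}ds\Big|>\delta\Big]=0,
\end{equation*}
for continuous $H$. I would prove it by the entropy method, in the following order.

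1. Pass from $\P_{\nu_{\rho_0(\cdot)}}$ to the reference measure $\nu_\rho$ using the entropy inequality (Lemma \ref{lem:entropyinequality}) with $\alpha=\gamma N^d$ and the bound $H(\nu_{\rho_0(\cdot)}|\nu_\rho)=O(N^d)$ from Lemma \ref{ex:4.1}.

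2. Bound the resulting exponential moment by Feynman--Kac as a variational problem: $\sup_f\{\langle V,f\rangle_{\nu_\rho}-N^2\gamma^{-1}N^{-d}I(f)\}$.

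3. Control the Dirichlet form. Lemma \ref{lem:4.3.6} gives $\int_0^t I(f_s)\,ds\le H(\nu_{\rho_0(\cdot)}|\nu_\rho)/(2N^2)=O(N^{d-2})$. After time averaging and translation averaging, this gives a density $\bar f$ with $I(\bar f)=O(N^{d-2})$.

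4. Split the replacement into two lemmas.
\begin{itemize}
\item The one-block estimate replaces $g(\eta(x))$ by $\Psi(\eta^\ell(x))$ with $\ell$ fixed, then $N\uparrow\infty$, then $\ell\uparrow\infty$. Here the Dirichlet bound, restricted to a box, forces $\bar f$ to be nearly constant on each hyperplane $\{\sum\eta=K\}$. This reduces the claim to an equivalence of ensembles: the canonical measures $\nu_{\ell,K}$ look like $\nu_{K/(2\ell+1)^d}$. Large $K$ is cut off using the mass bound together with the fact that $g$ is bounded and $\Psi$ is Lipschitz (Lemma \ref{lem:4.4.5}).
\item The two-block estimate replaces $\eta^\ell(x)$ by $\eta^{N\epsilon}(x)$. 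It compares block averages at distance up to $N\epsilon$, by moving particles along paths whose cost is controlled by $I(\bar f)$.
\end{itemize}
I expect the two-block estimate, and the large-density truncation in both lemmas (the state space is not compact), to be the main obstacle. Attractiveness via the basic coupling helps here, through the domination by $\nu_{\bar\rho}$.

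\textbf{Step 3 (identification and uniqueness).} Given the replacement, rewrite $\eta^{N\epsilon}$ as the pairing in \eqref{eq:4.eta} of $\pi^N$ with an approximate identity, which is continuous in the Skorohod topology up to boundary effects. Any limit point $Q$ is then supported on paths satisfying
\begin{equation*}
\langle G,\pi_t\rangle=\langle G,\rho_0\rangle+\frac{1}{2d}\int_0^t\langle\triangle G,\Psi(\pi_s^\epsilon)\rangle\,ds+o_\epsilon(1).
\end{equation*}
For absolute continuity, note that the particle counts are not bounded, so I cannot use the exclusion argument directly. Instead, coupling with $\nu_{\bar\rho}$ via Exercise \ref{ex:464} and attractiveness gives, for nonnegative $G$, $\E\sup_t\langle G,\pi^N_{N^2t}\rangle$-type bounds of order $\bar\rho\|G\|_{L^1}$. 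Portmanteau then shows $\pi_t=\rho(t,u)du$ with $\rho\le\bar\rho$. Letting $\epsilon\downarrow0$ by Lebesgue differentiation, and using that $\Psi$ is Lipschitz, yields the weak equation. Uniqueness of bounded weak solutions of $\partial_t\rho=\frac{1}{2d}\triangle\Psi(\rho)$ with $\Psi$ increasing and Lipschitz, which I take from PDE theory, identifies $Q$ as a point mass. Continuity of the limit path (from uniform tightness) then gives convergence in probability at fixed $t$, as at the end of Chapter \ref{lec3}.
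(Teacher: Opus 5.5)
Your overall plan is the paper's entropy-method proof. It has the same ingredients: tightness from boundedness of $g$, and the replacement driven by the bound $I(\bar f^N_T)=O(N^{d-2})$ on the time-averaged density (Lemma \ref{lem:4.3.6}). It also uses coupling-based truncations, the $1$-block lemma via constancy on hyperplanes plus equivalence of ensembles, the $2$-block lemma via a long bond whose cost is paid along a path, and PDE uniqueness.

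\textbf{The gap: absolute continuity.} This step does not go through as written. Bounds on $\E\sup_t\langle G,\pi^N_{N^2t}\rangle$ survive the limit only as bounds on $Q$-expectations, such as $E_Q[\langle G,\pi_t\rangle]\le\bar\rho\int_{\T^d}G(u)\,du$ for $G\ge0$. Such bounds do not force $\pi_t$ to be absolutely continuous $Q$-a.s. For example, the law of the constant path $\pi_t\equiv\bar\rho\,\delta_U$, with $U$ uniform on $\T^d$, satisfies every such bound, yet it is singular. Portmanteau turns an estimate into a $Q$-a.s.\ statement only when the estimate is a probability bound with vanishing error.

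\textbf{The fix.} Your ingredients do give such a bound. For $G\ge0$ the event $\{\langle G,\pi\rangle>a\}$ is increasing. So by the basic coupling, stationarity of $\nu_{\bar\rho}$, and Chebyshev under the product measure,
\[
\P_{\nu_{\rho_0(\cdot)}}\Big[\langle G,\pi^N_{N^2t}\rangle>\bar\rho\int_{\T^d}G(u)\,du+\epsilon\Big]\ \le\ \nu_{\bar\rho}\Big[\frac{1}{N^d}\sum_{x}G(x/N)\eta(x)>\bar\rho\int_{\T^d}G(u)\,du+\epsilon\Big]\ =\ O(N^{-d}).
\]
This is what the paper does, in time-integrated form. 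Limit paths are continuous, so one obtains $Q$-a.s.\ that $\langle G,\pi_t\rangle\le\bar\rho\int G$ simultaneously for a countable dense set of $G\ge0$ and all rational $t$, hence for all $t$. That is, $\pi_t=\rho(t,u)\,du$ with $0\le\rho\le\bar\rho$.

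Repaired this way, your route gives slightly more than the paper. The $L^\infty$ bound lets you quote uniqueness of bounded weak solutions, so the paper's separate $L^2$ energy estimate becomes unnecessary.

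\textbf{A smaller point on Step 2.} Your items 1--2 (entropy inequality with $\alpha=\gamma N^d$ plus Feynman--Kac) and your item 3 (entropy production plus time averaging) are alternative ways to get the constraint $I(f)\le CN^{d-2}$. You need only one of them. The paper uses the second alone: after the reductions the quantity is the expectation of a nonnegative functional, which equals $T\,E_{\nu_{\bar\rho}}[\bar f^N_T\,\cdot\,]$. If you keep Feynman--Kac instead, the large-density cutoffs must be made at the process level before applying the entropy inequality, because the variational supremum gives no control on mass.
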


\subsection{Strategy}
We now separate the proof into two rough steps.

\medskip 
Step 1.  We will again consider the measures $Q^N$ which govern the trajectories of empirical distributions $\pi^N_{N^2t}$ on $D([0,T]; M_+(\T^d))$.  The first task is to show that $\{Q^N\}$ is tight, and all limit points are concentrated on trajectories of measures with densities $\pi(t,u)du$.  
\medskip

Step 2.  We show that all limit points are supported on weak solutions to the hydrodynamical equation in $L^2([0,T]\times \T^d)$.  It is a result in PDE that such weak solutions are unique.  Hence, the measures $Q^N$ converge to a single limit point.
One concludes convergence at a fixed time, as for simple exclusion processes.
The development on entropy, Dirichlet forms, and coupling will be useful in proving Step 2 in Section \ref{lec5}.

 \section{Notes}
The zero-range process was introduced in \cite{Spitzer}; see also \cite{Andjel}, \cite{Evans-Hanney}.  It has been a good vehicle to study a variety of phenomena, beyond hydrodynamics, such as condensation and metastability (cf. \cite{AGL}, \cite{Beltran}, \cite{Lan-meta}, \cite{Seo}, \cite{Loulakis-hyd}).

Much of the development of entropy and its use in various applications, including hydrodynamics, stems from the work of Guo, Papanicolaou, and Varadhan (GPV) \cite{GPV}.  
On the other hand the basic coupling, and its use in particle systems analysis was advanced by Liggett \cite{Liggett1}.

In terms of hydrodynamics, one may weaken the assumptions on $g$.  In particular, the increasing or boundedness assumptions are not needed when $p$ is mean-zero; see \cite{KL} and the Notes of Section \ref{lec5}.  See the Notes of Section \ref{lec6} for remarks when $p$ is asymmetric and not mean-zero.

There are at least five different proofs of the hydrodynamic behavior when $p$ say is symmetric.  One way, the `entropy' method \cite{GPV}, to be outlined in Section \ref{lec5}, shows that the limiting hydrodynamic density satisfies a weak form of hydrodynamical PDE.  Another way, the `relative entropy' method \cite{yau}, assumes that a classical, smooth solution $\rho=\rho(t,u)$ exists, and then shows that the relative entropy of $\nu_{\rho_0(\cdot)}$ with respect to $\nu_{\rho(\cdot)}$ vanishes; see Section \ref{lec6}.  One may use a Hopf-Lax formula, and the basic coupling, to prove in $d=1$ the hydrodynamical limit for systems with drift \cite{Seppalainen}, \cite{Andjel-Vares}, \cite{Ravi}.  A method using a logarithmic Sobolev inequality and compensated compactness ideas can be employed in $d=1$ \cite{Fritz}, \cite{Fritz-Nagy}.  Also, the hydrodynamic limit may be seen as a `gradient flow', and found via Gamma convergence methods \cite{Menz}, \cite{Simon}, \cite{Grunewaldetal}.  The first two methods are also discussed in \cite{KL}.  See also \cite{dmp} for a treatment of the GPV `entropy' method, and \cite{Funaki}, \cite{Varadhan_notes} for treatments of the `relative entropy' method.

\newpage

\chapter[Section $5$]{Entropy method for hydrodynamics of zero-range processes}
\label{lec5}

A rigorous proof of hydrodynamics (Theorem \ref{thm:481}) is given for a class of zero-range models through the `entropy' method of \cite{GPV}.  The main idea is that the drift computed from the generator action, an average of a nonlinear function of the occupation variables, may be understood in the scaling limit as a function of the limiting empirical density. 

 This sort of ergodic theorem, which proceeds in two replacement steps, the `1-block' and `2-block' lemmas which introduce additional scales, is facilitated by estimates on the Dirichlet form of a nonstationary Radon-Nikodym density function, and local central limit theorem asymptotics.

\section{Proof of Step 1: Tightness and absolute continuity}

Recall the notation from the Section \ref{lec4}, in particular the development and assumptions from Subsections \ref{sec:4.7} and \ref{sec:4.8}.  As before $E_\mu$ stands for the expectation under $\mu$, and $\P_\mu$ and $\E_\mu$ denote the process measure and expectation when starting in $\mu$.

Analogously as for simple exclusion, the first step can be divided into the tasks:
\begin{itemize}
\item Show that the measures $Q^N$ on $D([0,T]; \M_+(\T^d))$ which govern $\langle\pi_{N^2t}: t\in [0,T]\rangle$ starting from local equilibrium $\nu_{\rho_0(\cdot)}$ are tight.

\item Show that all limit points of $\{Q_N\}$ are supported on trajectories with densities $\pi(t,u)du$.
\end{itemize}

\begin{lemma} $\{Q^N\}$ is tight.
\end{lemma}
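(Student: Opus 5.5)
We follow the scheme used for symmetric exclusion in Section \ref{sec:3.3.3}. By Proposition \ref{effective_tightness}, it suffices to fix $G\in C^2(\T^d)$ and show that the real-valued processes $\langle G,\pi^N_{N^2t}\rangle$, $t\in[0,T]$, satisfy conditions (1) and (2') of Proposition \ref{tightness_prop}. We use the decomposition from Subsection \ref{sec:4.7} with $v(N)=N^2$ and $p$ nearest-neighbor symmetric:
\begin{equation*}
\langle G,\pi^N_{N^2t}\rangle=\langle G,\pi^N_0\rangle+\frac{1}{2}\int_0^t\frac{1}{N^d}\sum_{x,y}g(\eta_{N^2s}(x))p(y)\triangle^N_{x,y}G\,ds+M^N_t(G).
\end{equation*}

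For condition (1), the total mass is conserved, so $|\langle G,\pi^N_{N^2t}\rangle|\le \|G\|_\infty N^{-d}\sum_x\eta_0(x)$. Under $\nu_{\rho_0(\cdot)}$, the expectation of $N^{-d}\sum_x\eta_0(x)$ is $N^{-d}\sum_x\rho_0(x/N)\le\bar\rho=\|\rho_0\|_\infty$. Markov's inequality then gives tightness in $\R$, uniformly in $t$. Unlike for exclusion, there is no almost-sure bound here, so this step needs the first moment. Alternatively, one can dominate by $\nu_{\bar\rho}$ using Exercise \ref{ex:464}.

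For condition (2'), we treat the three terms separately. The initial term is constant in time. The integral term is easy because $g$ is bounded by $a_1$, which is the key simplification from the assumptions in Subsection \ref{sec:4.8}. We have $|\triangle^N_{x,y}G|\le\|\triangle G\|_\infty+o(1)$ uniformly, so the integrand is bounded by $a_1\|\triangle G\|_\infty+o(1)$. Hence for $|t-s|\le\gamma$ the increment is at most $C\gamma$ deterministically, and it vanishes as $\gamma\downarrow0$.

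For the martingale term, we repeat the partition argument of \eqref{condition_2'}. Split $[0,T]$ into intervals of length $\gamma$ and apply Doob's inequality on each piece. This bounds the probability by $9\lambda^{-2}\sum_k\E[(M^N_{l_{k+1}}-M^N_{l_k})^2]$. The second moments are controlled by the expected quadratic variation, which from Subsection \ref{sec:4.7} equals
\begin{equation*}
\frac{N^2}{N^{2d+2}}\int\sum_{x,y}(\nabla^N_{x,x+y}G)^2 g(\eta(x))p(y)\,ds\le a_1\|\nabla G\|^2_\infty N^{-d}|l_{k+1}-l_k|.
\end{equation*}
Again $g\le a_1$ is used here. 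Summing over the roughly $T/\gamma$ intervals gives $O(TN^{-d})$, which tends to $0$ as $N\uparrow\infty$ for each fixed $\gamma$. The only point needing care is the boundedness of $g$, which makes every bound deterministic. Without it, one would have to bound $\E[g(\eta_{N^2s}(x))]$ using attractiveness and the coupling with the invariant measure $\nu_{\bar\rho}$. We do not expect any genuine obstacle.
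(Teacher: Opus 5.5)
Your proof is correct and follows the paper's argument. You reduce via Proposition \ref{effective_tightness}, bound the drift deterministically by $O(\gamma)$ using $g\le a_1$, and control the martingale by the partition/Doob argument with the $O(N^{-d})$ quadratic-variation bound. The one deviation is in condition (1): you use conservation of mass on $\T^d_N$ together with the initial mean $N^{-d}\sum_x\rho_0(x/N)\le\bar\rho$, whereas the paper uses the attractiveness bound $\E_{\nu_{\rho_0(\cdot)}}[\eta_{N^2t}(x)]\le E_{\nu_{\bar\rho}}[\eta(0)]$. Your version is slightly more elementary, since it does not require $g$ to be increasing.
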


\begin{proof}  Considering Proposition \ref{effective_tightness}, we need only show items (1) for each $t\in [0,T]$ that $\langle G, \pi^N_{N^2t}\rangle$ is tight on $\R$, and
$${\rm (2')\ \ } \lim_{\gamma\downarrow0}\lim_{N\uparrow\infty}\P_{\nu_{\rho_0(\cdot)}}\Big(\sup_{|t-s|\leq \gamma} \big| \langle G, \pi^N_{N^2t}\rangle - \langle G, \pi^N_{N^2s}\rangle \big| > \epsilon\Big) \ = \ 0.$$

Now, by the basic coupling and Exercise \ref{ex:464}, we have that $\nu_{\rho_0(\cdot)}\ll \nu_{\bar\rho}$ where $\bar\rho = \|\rho_0\|_\infty$.
Hence, (1) follows as the first moments, $\E_{\nu_{\rho_0(\cdot)}}[\eta_{N^2t}(x)] \leq E_{\nu_{\bar \rho}}[\eta(0)]$, are uniformly bounded:
$$\E_{\nu_{\rho_0(\cdot)}}|\langle G, \pi^N_{N^2t}\rangle | \ \leq \ \frac{1}{N^d}\sum_{x\in \T^d_N} |G(x/N)| E_{\nu_{\bar\rho}}[\eta(0)] \ \leq \ C(G, \bar\rho).$$

To establish (2'), we consider, as we did for the exclusion process, the drift and martingale terms separately.
For the drift term, since $g$ is bounded, we have the uniform bound on $N$,
\begin{align*}
\sup_{|t-s|\leq \gamma} v(N)\int_s^t\big| L\langle G, \pi^N_{N^2u}\rangle \big| du & \leq \gamma \frac{v(N)}{2N^{d+2}}\sum_{x,y}p(y) \big(\triangle^N_{x,x+y}G\big)\|g\|_\infty \\
&\leq \gamma {N} C(R,g, p).
\end{align*}

Noting the form of the quadratic variation $\langle M^N(G)\rangle_t$ with $v(N)=N^2$ in Subsection \ref{sec:4.7}, the proof is similar to that as for exclusion processes.
\end{proof}

We now establish the following characterization of limit points.

\begin{lemma} All limit points $Q$ of $\{Q^N\}$ are supported on trajectories with densities $\pi(t,u)du$ such that, for a constant $C=C(\bar\rho)$,
$$E_Q \Big[\int_0^T \int \pi(t,u)^2dudt\Big] \ < \ C.$$
\end{lemma}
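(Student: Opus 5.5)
We will use the basic coupling to reduce to the stationary process. By Exercise \ref{ex:464}, $\nu_{\rho_0(\cdot)}\ll\nu_{\bar\rho}$ with $\bar\rho=\|\rho_0\|_\infty$, so there is a joint process $(\eta_t,\xi_t)$ with $\eta_t\le\xi_t$ for all $t$, where $\xi_t$ starts from the invariant measure $\nu_{\bar\rho}$ and so is distributed as $\nu_{\bar\rho}$ at each time (Lemma \ref{lem:finite_inv}). The controlling quantity is a smoothed $L^2$ norm: for $\epsilon>0$ let $\iota_\epsilon=(2\epsilon)^{-d}1([-\epsilon,\epsilon]^d)$ and consider
\[
\Phi_\epsilon(\pi)=\int_0^T\int_{\T^d}\big(\pi_t*\iota_\epsilon(u)\big)^2\,du\,dt .
\]
By \eqref{eq:4.eta}, $\pi^N_{N^2t}*\iota_\epsilon(x/N)$ is, up to a factor tending to $1$ and a boundary error, the block average $\eta^{N\epsilon}_{N^2t}(x)$. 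Hence it suffices to bound $\E_{\nu_{\rho_0(\cdot)}}\big[\int_0^T N^{-d}\sum_x(\eta^{N\epsilon}_{N^2t}(x))^2dt\big]$ uniformly in $N$ and $\epsilon$.

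\emph{First step: the uniform bound.} Since the map $\eta\mapsto(\eta^{N\epsilon}(x))^2$ is increasing, the coupling gives the bound $\E[(\xi^{N\epsilon}_{N^2t}(x))^2]=E_{\nu_{\bar\rho}}[(\eta^{N\epsilon}(0))^2]$. By the product structure this equals $\bar\rho^2+\sigma^2(\bar\rho)/(2N\epsilon+1)^d\le\bar\rho^2+\sigma^2(\bar\rho)$. The variance $\sigma^2(\bar\rho)$ is finite because $\kappa_{\Psi(\bar\rho)}$ has geometric tails, since $\Psi(\bar\rho)<\lim g(k)$. This yields a constant $C(\bar\rho)=T(\bar\rho^2+\sigma^2(\bar\rho))+1$, uniform in $N$ and $\epsilon$.

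\emph{Second step: passage to the limit.} The functional $\pi\mapsto\Phi_\epsilon(\pi)\wedge M$ is not continuous in the Skorohod topology, because $\iota_\epsilon$ is an indicator. We therefore replace $\iota_\epsilon$ by a continuous approximation $\iota_\epsilon^\delta\le(1+\delta)\iota_{\epsilon+\delta}$, or alternatively use the duality $\int f^2=\sup_{H}\{2\int Hf-\int H^2\}$ over a countable family of smooth $H$. Each resulting functional $\int_0^T\langle H_t*\iota^\delta_\epsilon,\pi_t\rangle dt$ is continuous on $D([0,T];\M_+)$. Weak convergence along the subsequence, with truncation at level $M$ followed by $M\uparrow\infty$ using monotone convergence, then gives $E_Q[\Phi_\epsilon(\pi)]\le C$ for every $\epsilon$.

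\emph{Third step: absolute continuity and the conclusion.} Finally let $\epsilon\downarrow0$. By Fatou's lemma, $E_Q[\liminf_\epsilon\Phi_\epsilon]\le C$, so $Q$-a.s. the family $\pi_t*\iota_\epsilon$ is bounded in $L^2([0,T]\times\T^d)$ along a sequence $\epsilon_k\downarrow0$. A weakly convergent subsequence has a limit $\pi(t,u)$, and testing against smooth $G$ identifies $\pi_t(du)=\pi(t,u)du$ for a.e. $t$. The bound $E_Q\int\!\int\pi^2\le C$ then follows from weak lower semicontinuity of the norm. To upgrade from a.e. $t$ to every $t$, we note that $Q$ is supported on continuous trajectories by the tightness in the uniform topology, and that the mass bound $\langle1,\pi_t\rangle$ is controlled.

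The main obstacle is the second step, the lack of continuity of the square-of-local-average functional. We expect it to be handled cleanly by the variational ($L^2$ duality) representation, which writes the functional as a supremum of continuous linear functionals and so makes it lower semicontinuous.
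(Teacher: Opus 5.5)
\paragraph{Where the proof breaks.} Everything up to the identification of densities for a.e.\ $t$ is sound. The coupling bound $E_{\nu_{\bar\rho}}[(\eta^{N\epsilon}(0))^2]=\bar\rho^2+\sigma^2(\bar\rho)/(2N\epsilon+1)^d$ is correct. Your variational representation of $\Phi_\epsilon$ as a supremum of continuous linear functionals gives lower semicontinuity, and hence $E_Q[\Phi_\epsilon]\le C$, more carefully than the paper's bare appeal to Fatou. In that representation $H_t*\iota_\epsilon$ is already smooth, so you do not need to smooth $\iota_\epsilon$. Extracting the density from the $L^2$ bound by weak compactness, instead of by Lebesgue differentiation of an already absolutely continuous measure, is also a legitimate shortcut.

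The gap is your last sentence. Continuity of $t\mapsto\pi_t$ together with a mass bound does not carry absolute continuity to the exceptional null set of times. On $\T^d$, take $\pi_t=|B(0,r_t)|^{-1}1_{B(0,r_t)}(u)\,du$ with $r_t=|t-t_0|^{a}$, $ad<1$, and $\pi_{t_0}=\delta_0$. This path is weakly continuous, has unit mass, and satisfies $\int_0^T\|\pi_t\|_{L^2}^2\,dt<\infty$, yet $\pi_{t_0}$ is singular. Your estimate holds only in expectation and only integrated over $[0,T]$. So $Q$-a.s.\ it gives no more than $\int_0^T\|\pi_t\|^2_{L^2}\,dt<\infty$, and therefore densities only for a.e.\ $t$.

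\paragraph{How to repair it.} To reach every $t$ you need a bound with a \emph{nonrandom} constant that survives localization to arbitrarily short time windows. The paper obtains this by a separate argument. For $G\ge 0$ it splits $\eta\le A+\eta 1(\eta>A)$. The remainder $\eta 1(\eta>A)-E_{\nu_{\bar\rho}}[\eta(0)1(\eta(0)>A)]$ is increasing, so by the coupling its time integral against $G$ is dominated by that of the stationary process. That process has variance $O(N^{-d})$ by the product structure at each fixed time. This gives, $Q$-a.s.,
\[
\int_0^T\langle G(s,\cdot),\pi_s\rangle\, ds\le C_A\|G\|_{L^1}, \qquad C_A=A+E_{\nu_{\bar\rho}}[\eta(0)1(\eta(0)>A)],
\]
for a countable family of $G$. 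Taking $G\approx\delta^{-1}1_{[t,t+\delta]}(s)1_B(u)$ and using time continuity then yields $\pi_t(B)\le C_A|B|$ for every $t$ simultaneously.

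You could also repair your own route. Show, by a law-of-large-numbers estimate under the product measure $\nu_{\bar\rho}$, that $\delta^{-1}\int_t^{t+\delta}\|\pi_s*\iota_\epsilon\|_{L^2}^2\,ds$ is bounded by a deterministic constant for all rational windows. The functional is increasing, so the coupling transfers this bound. Then use lower semicontinuity in $s$.

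\paragraph{What your argument already proves.} If only densities for a.e.\ $t$ are wanted, your argument without its final sentence is complete. That is all the weak formulation and the $L^2$ uniqueness in Step 2 actually use.
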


Before beginning the proof, we remark that absolute continuity of the limit trajectories means that particles cannot pile up to form point masses.  Given the process is `attractive', that is the basic coupling holds, one can bound the probability of large piles in terms of $\nu_{\bar\rho}$ estimates where $\bar\rho$ is an absolute bound on the hydrodynamic density.  Without attractiveness, the proof is harder and we refer to \cite{KL} for a general argument.

\begin{proof}
We show first that the trajectories are absolutely continuous under a limit point $Q$.  Let $G:[0,T]\times \T^d\rightarrow \R$ be a smooth function. 
Note, for $A>0$ and for all large $N$, by attractiveness and truncation bounds, that
\begin{eqnarray*}
&&\int_0^T\frac{1}{N^d}\sum_{x\in \T_N^d} G(s,x/N)\eta_{N^2s}(x)ds \\
&&\ \  \leq \ \big(A + E_{\nu_{\bar\rho}}[\eta(0)1(\eta(0)>A)]\big)\|G\|_{L^1([0,T]\times \R)} \\
&&\ \    + \int_0^T\frac{1}{N^d}\sum_{x\in \T_N^d} |G(s,x/N)|\big(\eta_{N^2s}(x)1(\eta_{N^2s}(x)>A) -E_{\nu_{\bar\rho}}[\eta(0)1(\eta(0)>A)]\big) .
\end{eqnarray*}
The last term is an increasing function of $\eta_{N^2s}$, which we will show is small. The role of $A$ is to introduce a truncation.  Let $\phi(\eta_{N^2s}(x)) = \eta_{N^2s}(x)1(\eta_{N^2s}>A) -E_{\nu_{\bar\rho}}[\eta(0)1(\eta(0)>A)$.  Therefore, by the basic coupling, Chebychev's inequality and invariance of $\nu_{\bar\rho}$, for $\epsilon>0$, we have
\begin{eqnarray*}
&&\P_{\nu_{\rho_0(\cdot)}}\Big(\int_0^T\frac{1}{N^d}\sum_x |G(s,x/N)|\phi(\eta_{N^2s}(x)) > \epsilon\Big) \\
&&  \ \ \ \ \ \ \  \leq \ 
\P_{\nu_{\bar\rho}}\Big(\int_0^T\frac{1}{N^d}\sum_x |G(s,x/N)|\phi(\eta_{N^2s}(x)) > \epsilon\Big)\\
&& \ \ \ \ \ \ \ \leq \ \frac{2T\|G\|_{L^2}}{N^d\epsilon^2} {\rm Var}_{\nu_{\bar\rho}}\big(\eta(0)1(\eta(0)>A)\big)\ = \ O(N^{-d}).
\end{eqnarray*}
Hence, by simple estimates,
$$Q^N\Big(\int_0^T \langle G(s, \cdot), \pi_s\rangle ds \leq \big( A + E_{\nu_{\bar\rho}}[\eta(0)1(\eta(0)>A)]\big)\|G\|_{L^1} + \epsilon\Big) \ \geq \ 1-O(N^{-d})$$
and
by weak convergence, since the function $\int_0^T \langle G(s,\cdot), \pi_s\rangle ds$ is continuous in the Skorohod topology and $Q(F)\geq \lim Q^N(F)$ for closed sets,
$$Q\Big( \int_0^T \langle G(s, \cdot), \pi_s\rangle ds \leq \big(A + E_{\nu_{\bar\rho}}[\eta(0)1(\eta(0)>A)]\big)\|G\|_{L^1} + \epsilon\Big)  \ = \ 1.$$
Since $\epsilon>0$ is arbitrary, we have a.s. under $Q$ that all trajectories satisfy
$$\int_0^T \langle G(s,\cdot),\pi_s\rangle ds \ \leq \ C\|G\|_{L^1}.$$
Now, note by the tightness estimate (2') already proven in the previous lemma that all trajectories under $Q$ are continuous in time in the underlying topology (e.g. $t\mapsto \langle H, \pi_t\rangle$ is continuous for $H\in C^2(\T^d)$).
Then, by choosing $G$ to approximate $\delta^{-1}1(t,t+\delta)1(B)$ for $B\subset \T^d$, we obtain $\pi_t(B) \leq C|B|$, that is $\pi_t$ is absolutely continuous for each $t\in [0,T]$.

To show the display in the lemma, recall \eqref{eq:eta-l} and consider the bound
$$\sup_{N\geq 1} \E_{\nu_{\rho_0(\cdot)}}\Big[ \int_0^T du \frac{1}{N^d}\sum_{x\in \T^d_N} \big(\eta_{N^2s}^{N\epsilon}(x)\big)^2\Big] \ \leq \ T E_{\nu_{\bar\rho}}[\eta(0)^2],$$
via the basic coupling and Schwarz inequality 
$$\big(\eta^{N\epsilon}(x)\big)^2 \leq \frac{1}{(2N\epsilon+1)^d}\sum_{|y-x|\leq N\epsilon} \eta(y)^2.$$

Hence, as $Q$ is a limit point, by Fatou's lemma again, we have
$$E_Q \Big[\int_0^T ds \int_{\T^d} dx \Big((2\epsilon)^{-d}\int_{B(u,\epsilon)} \pi(s,v) dv\Big)^2 \Big] \ \leq \ C(T, \bar\rho)$$
where $B(u,\epsilon)$ is the ball of radius $\epsilon$ around $u$.
Taking limit on $\epsilon\downarrow 0$, and another application of Fatou's lemma along with Lebesgue's differentiation theorem (a.e. $u$ is a Lebesgue point), we finish the proof. \end{proof}

\section{Proof of Step 2: Identification given replacement homogenization}
We now supply, modulo a replacement estimate, the proof of Theorem \ref{thm:481}.
Note that $\Psi(\rho) = E_{\nu_\rho}[g(\eta(0))]$ is a bounded function as $g$ is bounded.  Recall also the notation $\eta^\ell(x)$ in \eqref{eq:eta-l}.  Let $\tau_x$ be the shift operator, $(\tau_x\eta)(y)=\eta(x+y)$ and $\tau_x f(\eta) = f(\tau_x \eta)$.
\begin{theorem}[Replacement]
\label{replacement}
For $J\in C([0,T]\times \T^d)$, we have that
$$\limsup_{\epsilon\downarrow 0}\limsup_{N\uparrow\infty} \E_{\nu_{\rho_0(\cdot)}}\Big[\Big|\int_0^T \frac{1}{N^d} \sum_{x\in \T^d_N}J(s, x/N) \tau_x V_{N\epsilon}(\eta_{N^2s})ds \Big|\Big] \ = \ 0$$
where 
$$V_\ell(\eta) \ = \ g(\eta(0)) - \Psi(\eta^\ell(0)).$$
\end{theorem}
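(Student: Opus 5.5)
The plan follows the GPV entropy method: we reduce the replacement to a variational problem involving the Dirichlet form and then split it into the `1-block' and `2-block' estimates. We decompose, with an intermediate scale $\ell$ fixed (sent to infinity after $N\uparrow\infty$),
\[
\tau_x V_{N\epsilon}(\eta) = \big[g(\eta(x)) - \Psi(\eta^\ell(x))\big]^{\rm av} + \big[\Psi(\eta^\ell(x)) - \Psi(\eta^{N\epsilon}(x))\big],
\]
where the superscript ``av'' means that, after summation by parts against $J(s,x/N)$, we may replace $g(\eta(x))$ by its local average $(2\ell+1)^{-d}\sum_{|y-x|\le\ell} g(\eta(y))$. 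The error is $O(\ell/N)$ by the continuity of $J$ and the boundedness of $g$. The two pieces are handled as follows.
\begin{itemize}
\item The first piece is the \emph{1-block estimate}: $\limsup_{\ell}\limsup_N$ of the expectation of $\int_0^T N^{-d}\sum_x |(2\ell+1)^{-d}\sum_{|y|\le \ell}\tau_{x+y}g - \Psi(\eta^\ell(x))|\,ds$ vanishes.
\item The second piece is the \emph{2-block estimate}, which uses that $\Psi$ is Lipschitz (Lemma \ref{lem:4.4.5}). It reduces to showing that $N^{-d}\sum_x |\eta^\ell(x) - \eta^{N\epsilon}(x)|$ is small. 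Writing $\eta^{N\epsilon}(x)$ as an average of $\eta^\ell(x+y)$ over $|y|\le N\epsilon$, it suffices to control $|\eta^\ell(x+y) - \eta^\ell(x)|$ uniformly in $2\ell<|y|\le 2N\epsilon$, in the limit $N\uparrow\infty$, then $\epsilon\downarrow 0$, then $\ell\uparrow\infty$.
\end{itemize}

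For both blocks, we first reduce to the stationary reference measure $\nu_{\bar\rho}$. Let $f_t = d(\nu_{\rho_0(\cdot)}P_{N^2t})/d\nu_{\bar\rho}$ and $\bar f_T = T^{-1}\int_0^T f_t\,dt$. By Lemma \ref{ex:4.1} and Lemma \ref{lem:4.3.6} (time sped up by $N^2$), together with convexity of $I$, we get
\[
H(\bar f_T\nu_{\bar\rho}|\nu_{\bar\rho}) \le C N^d \quad\text{and}\quad I(\bar f_T) \le C N^{d-2}.
\]
The time-dependent $J$ is handled by approximating it with piecewise-constant functions in time, or more simply by bounding $|J|\le \|J\|_\infty$ once absolute values are inside. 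We then apply the entropy inequality (Lemma \ref{lem:entropyinequality}) with $\alpha = \gamma N^d$ to cut off large densities. By the basic coupling (Exercise \ref{ex:464}), the contribution of sites with $\eta^\ell(x) > A$ is uniformly small as $A\uparrow\infty$. The problem therefore reduces to bounding
\[
\sup\Big\{ \int N^{-d}\sum_x \tau_x W\, f\,d\nu_{\bar\rho} : I(f)\le CN^{d-2}\Big\},
\]
where $W$ is a local, truncated function. By translation invariance we replace $f$ by its spatial average over shifts $\tau_x f$, which is convex in $I$. We then project onto the marginal $f^\ell$ on the box $\Lambda_\ell$, whose restricted Dirichlet form satisfies $I_\ell(f^\ell)\le CN^{-2}\to 0$.

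\textbf{1-block.} As $N\uparrow\infty$, the densities $f^\ell$ on the finite-dimensional truncated spaces $\{\eta^\ell\le A\}$ converge along subsequences to densities $f$ with $I_\ell(f)=0$. Hence each such $f$ is constant on every hyperplane $\{\sum_{\Lambda_\ell}\eta=K\}$, that is, a mixture of the canonical measures $\nu_{\Lambda_\ell,K}$ (Lemma \ref{lem:finite_inv}). It then remains to show
\[
\sup_{K\le A(2\ell+1)^d} E_{\nu_{\Lambda_\ell,K}}\Big|\frac{1}{(2\ell+1)^d}\sum_{|y|\le\ell} g(\eta(y)) - \Psi\big(K/(2\ell+1)^d\big)\Big| \to 0 \quad\text{as } \ell\uparrow\infty.
\]
This is the equivalence of ensembles. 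I would prove it via a local central limit theorem: we compare the law of a fixed finite block under $\nu_{\Lambda_\ell,K}$ with the grand canonical product $\nu_{K/(2\ell+1)^d}$, and then apply the law of large numbers under the product measure. The required uniformity in $K$ comes from the fact that $\Psi$ and the variance are continuous on compact sets of densities.

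\textbf{2-block, the main obstacle.} Here the two boxes are at macroscopic distance up to $N\epsilon$, so the nearest-neighbour Dirichlet bound must be transported along a path. I would first try the ``moving particle'' lemma: the Dirichlet form of the exchange between $x$ and $x+y$ is bounded by $|y|$ times the sum of nearest-neighbour forms along a path. Since $|y|\le 2N\epsilon$, this gives a bound of order $|y|\cdot CN^{d-2}/N^d \le C\epsilon/N \to 0$ for the two-box marginal on $\Lambda_\ell\cup(y+\Lambda_\ell)$, with an extra bond joining the boxes. The limit densities are then invariant both for dynamics inside each box and for exchanges between the boxes, so they are mixtures of canonical measures on the union of the two boxes. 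Under such a measure the two box averages agree up to an error that vanishes as $\ell\uparrow\infty$, again by equivalence of ensembles or by a direct variance computation. The delicate points are making the path argument rigorous for zero-range rates, which uses monotonicity and boundedness of $g$ to compare rates, and controlling truncation uniformly in $y$.
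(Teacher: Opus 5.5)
Your outline follows the paper's proof of Theorem \ref{replacement}. The ingredients are the same: the three-term decomposition \eqref{eq:5.4.1}, the reduction to $\bar f^N_T$ with $I(\bar f^N_T)\le CN^{d-2}$, truncation via the basic coupling, averaging over shifts and projecting onto boxes, compactness to obtain densities with vanishing localized Dirichlet form, equivalence of ensembles, and an extra bond joining the two boxes in the 2-block step. Your equivalence-of-ensembles argument differs only mildly. You use finite-marginal comparison plus a variance/LLN bound under the canonical measure. The paper instead uses Schwarz's inequality with the local CLT lower bound on $\nu_a(\eta^\ell(0)=a)$, which gives $O(\ell^{-d/4})$. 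Either works.

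There is a genuine miscount in the key 2-block estimate. The moving-particle bound gives $I_b(f)\le C|y|\sum_{j=1}^{M_y}I_{q_j,q_{j+1}}(f)$, where the path has $M_y=O(|y|)$ bonds. For the shift-averaged density each bond contributes at most $CN^{-d}I(f)\le CN^{-2}$. Hence $I_b(f)\le C|y|^2N^{-2}\le C\epsilon^2$, not $|y|\cdot CN^{d-2}/N^d\le C\epsilon/N$: you dropped the path length.

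As a result, the limit densities obtained as $N\uparrow\infty$ are \emph{not} invariant under jumps between the boxes. They only satisfy $I^{y,\ell}(f^*)\le C\epsilon^2$, uniformly in $2\ell<|y|\le 2N\epsilon$, since all two-box spaces are isomorphic. You then need a second compactness step as $\epsilon\downarrow 0$ to reach densities with zero two-box Dirichlet form, and only then does your canonical-measure argument apply. This is exactly why the order $\lim_\ell\lim_\epsilon\lim_N$ is required. It is also why diffusive scaling is essential: in Euler scaling, $I(f)\le CN^{d-1}$ makes the correct bound $C\epsilon^2 N$. Your computation would wrongly suggest that the 2-block survives hyperbolic scaling, contradicting the remark in the paper's Notes to Section \ref{lec5}. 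The fix fits within the order of limits you already stated.

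Two smaller points.
\begin{enumerate}
\item For merely continuous $J$, the first term is bounded by $\|g\|_\infty$ times the modulus of continuity of $J$ at scale $O(\ell/N)$. It is not $O(\ell/N)$ unless $J$ is Lipschitz.
\item The path argument needs neither monotonicity nor boundedness of $g$. By Exercise \ref{ex:zr-particle}, $I_b(h)=\tfrac{\Psi(\bar\rho)}{2}E_{\nu_{\bar\rho}}\big[\big(\sqrt{h(\eta+\delta_y)}-\sqrt{h(\eta+\delta_0)}\big)^2\big]$, and this telescopes exactly along a nearest-neighbour path using only the product structure of $\nu_{\bar\rho}$.
\end{enumerate}
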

\noindent Notice that $V_\ell$ is a bounded function as $g$ is bounded.

Let us now see how this replacement allows to finish the proof of hydrodynamics.

\subsection{Proof of Theorem \ref{thm:481}} 
 First, we establish an equation that the densities $\pi(t,u)$ under a limit point $Q$ must satisfy.
Let $G\in C^2([0,T]\times \T^d)$.  Then, following the derivation when $G$ did not depend on time in Subsection \ref{sec:4.7}, we obtain
\begin{eqnarray*}
&&\langle G(t,\cdot), \pi^N_{N^2t}\rangle \ =\ \langle G(0,\cdot), \pi^N_0\rangle \\
&&\ \ \ \ + \int_0^t \frac{N^2}{N^{d+2}}\sum_{x\in \T^d_N}\Big[\partial_sG(s,x/N)\eta_{N^2s}(x)
+ \frac{1}{2d}\triangle G(s,x/N)g(\eta_{N^2s}(x))\Big]ds \\
&&\ \ \ \ \ \ \ \ \ \ \ \ \ \ \  + M^N_{t}(G) + o(1)
\end{eqnarray*}
where by Doob's inequality,
$$\E_{\nu_{\rho_0(\cdot)}}\big[\sup_{t\in [0,T]} \big(M^N_{t}(G)\big)^2 \big] \ \leq \ \E_{\nu_{\rho_0(\cdot)}} \big[\big(M^N_{T}(G)\big)^2 \big]\ = \ O(N^2 TN^{-d-2})$$ since $g$ is bounded.

On the other hand, by Theorem \ref{replacement}, since $\triangle G$ is continuous, we have
$$\lim_{\epsilon\downarrow 0}\lim_{N\uparrow\infty}\E_{\nu_{\rho_0(\cdot)}}\Big[\Big|\int_0^T \frac{1}{N^d}\sum_{x\in \T^d_N} \triangle G(s,x/N)\Big\{g(\eta_{N^2s}(x)) - \Psi\big(\eta_{N^2s}^{N\epsilon}(x)\big)\Big\}ds\Big|\Big] \ = \ 0.$$

Putting this together, noting \eqref{eq:4.eta}, we have
\begin{eqnarray}
\label{epsilon_function}
&&\lim_{\epsilon\downarrow 0}\lim_{N\uparrow\infty}
\E_{\nu_{\rho_0(\cdot)}}\Big[\Big|\langle G(T, \cdot), \pi^N_{N^2T}\rangle -  \langle G(0,\cdot), \pi^N_0\rangle - \int_0^T \Big\langle \partial_sG(s, \cdot), \pi^N_{N^2s}\Big\rangle ds\nonumber\\
&&\ \ \ \ \ \ \ \ \ \ \ \ \ \ - \frac{1}{2d}\int_0^t \int_{\T^d}\triangle G(s,u)\Psi(\pi^{(N, \epsilon)}_s(u))ds \Big|\Big] \ = \ 0
\end{eqnarray}
where
$\pi^{(N, \epsilon)}_s(u) = \big\langle (2\epsilon)^{-d}1_{[-\epsilon,\epsilon]^d}(\cdot - u), \pi^N_{N^2s}\big\rangle$.

Now, again the quantity in absolute value in \eqref{epsilon_function} is a continuous function of $\pi$ in the Skorohod topology.  Hence, any limit point $Q$, by Fatou's lemma, satisfies
\begin{eqnarray*}&&\lim_{\epsilon\downarrow 0} 
E_Q\Big[ \Big|\langle G, \pi_T\rangle -  \langle G, \pi_0\rangle - \int_0^T \Big\langle \partial_sG, \pi_s\Big\rangle ds\nonumber\\
&&\ \ \ \ \ \ \ \ \ \ \ \ - \frac{1}{2d}\int_0^T \int_{\T^d}\triangle G(s,u)\Psi(\pi^{(\epsilon)}_s(u))duds \Big| \Big] \ = \ 0,
\end{eqnarray*}
where $\pi^{(\epsilon)}_s(u) = \big\langle (2\epsilon)^{-d}1_{[-\epsilon,\epsilon]^d}(\cdot - u), \pi_{s}\big\rangle$.

But, since trajectories have densities under $Q$, and $\Psi$ is bounded and continuous, by Lebesgue's differentiation theorem and dominated convergence,
$$\lim_{\epsilon\downarrow 0}
E_Q\Big[ \int_0^T \int_{\T^d}\big|\Psi(\pi^{(\epsilon)}_s(u)) - \Psi(\pi(u,s))\big|duds\Big] \ = \ 0.$$

Finally, we obtain therefore $Q$ is supported on trajectories satisfying
$$\langle G(T, \cdot),\pi_T\rangle =  \langle G(0, \cdot), \pi_0\rangle + \int_0^T \langle G_s, \pi_s\rangle ds + \frac{1}{2d}\int_0^T\int_{\T^d}\triangle G(s,u)\Psi(\pi(s,u))duds$$
which are weak solutions to
$$\partial_t \rho \ = \ \frac{1}{2d}\triangle \Psi(\rho) \ \ \ {\rm \ \ }\rho(0,u)=\rho_0(u).$$

At this point, it is known in PDE that weak solutions with $L^2$ integrable densities are unique (cf. \cite[Appendix 2.4]{KL}).  Hence, following now the same argument as for simple exclusion, we obtain Theorem \ref{thm:481}.
\qed

\section{Proof of the replacement homogenization}
After preliminaries and some reductions, we prove Theorem \ref{replacement} at the end of this subsection.  
Since we start out of a stationary distribution, we have to understand the nonstationary contribution.  Controlling the entropy and Dirichlet form of the associated Radon-Nikodym probability density is the first step.

For $\bar\rho= \|\rho_0(\cdot)\|_\infty<\infty$, let $\nu_{\bar\rho}$ be a reference measure throughout the rest of Section \ref{lec5}.  Let also $f^N_t = d\nu_{\rho_0(\cdot)}P_{N^2t}/d\nu_{\bar \rho}$ be the Radon-Nikodym density at macroscopic time $t\geq 0$.  Let $H(f^N_t) = H(\nu_{\rho_0(\cdot)}P_{N^2t}|\nu_{\bar\rho})$.  By Lemma \ref{ex:4.1}, we know $H(f^N_0) \leq CN^d$ for some constant $C<\infty$.  
Since the time scaling is $v(N)=N^2$, we have by Lemma \ref{lem:4.3.6}
for each $t\in [0,T]$ that
$$H(f^N_t) + 2N^2\int_0^tI(f^N_s)ds \ \leq \ H(f^N_0) \ \leq \ CN^d.$$

Let $\bar f^N_t = \frac{1}{t}\int_0^t f^N_s ds$.  Then, by convexity of the entropy and Dirichlet form, we have
\begin{equation}
\label{time-average}
H\left(\bar f^N_t\right) \leq C N^d \ \ {\rm and \ \ } I\left(\bar  f^N_t\right) \leq C(2t)^{-1}N^{d-2}.
\end{equation}

For later reference, the following estimate will allow to introduce a truncation corresponding to the number of particles in a region.
\begin{lemma}
\label{truncation} For $\ell\geq 1$, we have
$$\sup_{x,y\in \T^d_N, t\in [0,T]}\E_{\nu_{\rho_0(\cdot)}}\Big[\eta_{N^2t}^\ell(x) 1\big(\eta_{N^2t}^\ell(y) > A\big)\Bigg] \ \leq \ A^{-1}E_{\nu_{\bar\rho}}[\eta^2(0)].$$
\end{lemma}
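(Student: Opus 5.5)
The plan is to combine a Chebyshev-type truncation with the basic coupling (attractiveness) and stationarity of $\nu_{\bar\rho}$, exactly as in the proof of the absolute continuity lemma of Step 1. First, on the event $\{\eta^\ell(y) > A\}$ we have $1 < \eta^\ell(y)/A$. Hence
\[
\eta^\ell(x)\,1\big(\eta^\ell(y)>A\big) \ \leq \ A^{-1}\,\eta^\ell(x)\,\eta^\ell(y).
\]
The right-hand side is a coordinatewise increasing function of the configuration $\eta$, since it is a product of two nonnegative increasing linear functions.

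Second, we transfer to the stationary reference measure. By Exercise \ref{ex:464}, $\nu_{\rho_0(\cdot)}\ll\nu_{\bar\rho}$ in the stochastic order. By the basic coupling, which is available since $g$ is increasing, we can realize $\eta_{N^2t}\leq \xi_{N^2t}$ almost surely, where $\xi$ starts from $\nu_{\bar\rho}$. Therefore
\[
\E_{\nu_{\rho_0(\cdot)}}\big[\eta^\ell_{N^2t}(x)\,\eta^\ell_{N^2t}(y)\big] \ \leq \ \E_{\nu_{\bar\rho}}\big[\eta^\ell_{N^2t}(x)\,\eta^\ell_{N^2t}(y)\big] \ = \ E_{\nu_{\bar\rho}}\big[\eta^\ell(x)\,\eta^\ell(y)\big],
\]
where the last equality uses invariance of $\nu_{\bar\rho}$ (Lemma \ref{lem:finite_inv}). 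This bound is uniform in $t\in[0,T]$.

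Third, we bound the stationary expectation. By Schwarz, $E[\eta^\ell(x)\eta^\ell(y)]\leq (E[(\eta^\ell(x))^2])^{1/2}(E[(\eta^\ell(y))^2])^{1/2}$. By translation invariance of the product measure, together with the convexity bound $(\eta^\ell(x))^2\leq (2\ell+1)^{-d}\sum_{|z-x|\leq\ell}\eta(z)^2$ already used in Step 1, each factor is at most $E_{\nu_{\bar\rho}}[\eta(0)^2]^{1/2}$. Combining the three steps gives the claim uniformly in $x,y,t$.

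The only delicate point is the monotonicity needed to apply the coupling. I will work with the product form $\eta^\ell(x)\eta^\ell(y)/A$ rather than the indicator-weighted quantity directly. The indicator expression $\eta^\ell(x)1(\eta^\ell(y)>A)$ is itself also increasing, but the product form makes both the coupling step and the subsequent second-moment bound transparent. The finiteness of $E_{\nu_{\bar\rho}}[\eta(0)^2]$ follows since $\kappa_{\Psi(\bar\rho)}$ has exponential tails when $\Psi(\bar\rho)<\lim g(k)$.
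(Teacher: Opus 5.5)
Your proposal is correct and follows essentially the paper's argument. The ingredients are the same: the Markov-type bound $\eta^\ell(x)1(\eta^\ell(y)>A)\le A^{-1}\eta^\ell(x)\eta^\ell(y)$, the basic coupling applied to an increasing function together with invariance of $\nu_{\bar\rho}$, and Schwarz with the convexity bound $(\eta^\ell)^2\le(2\ell+1)^{-d}\sum\eta^2$. The only difference is order: the paper's displayed step applies Schwarz under $\E_{\nu_{\rho_0(\cdot)}}$ before coupling each square, whereas you couple the product first and apply Schwarz under $\nu_{\bar\rho}$, which is equally valid.
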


\begin{proof}
By Markov's inequality, the basic coupling as $\eta^\ell(x)\eta^\ell(y)$ is an increasing function, and Schwarz inequality, we have the left-hand side is bounded by
\[
\sup_{x,y\in \T^d_N, t\in [0,T]}A^{-1}\E_{\nu_{\rho_0(\cdot)}}\Big[\big(\eta_{N^2t}^\ell(x) \big)^2\Big]^{1/2} \E_{\nu_{\rho_0(\cdot)}}\Big[\big(\eta_{N^2t}^\ell(y) \big)^2\Big]^{1/2}\ \leq \ A^{-1}E_{\nu_{\bar\rho}}[\eta^2(0)]. \qedhere\]
\end{proof}

\subsection{Reductions}
\label{sec:reductions}
Consider the following bound of the integrand in Theorem \ref{replacement}, by adding and subtracting terms.
Write
\begin{align}
\label{eq:5.4.1}
&\frac{1}{N^d} \sum_{x} J\Big(\frac{x}{N}\Big)\
\tau_x V^{N\epsilon}(\eta)\nonumber\\
&  = \ 
\frac{1}{N^d} \sum_x J\Big(\frac{x}{N}\Big)\
\Big\{g(\eta(x)) - \frac{1}{(2\ell + 1)^d}\sum_{|z|\leq \ell} g(\eta(z+x))\Big\}
\nonumber\\
& + \ 
\frac{1}{N^d} \sum_x J\Big(\frac{x}{N}\Big)\
\Big\{\frac{1}{(2\ell + 1)^d}\sum_{|z|\leq \ell} g(\eta(z+x)) - \Psi\big(\eta^{(\ell)}(x)\big)\Big\}
\nonumber\\
&  + \  
\frac{1}{N^d} \sum_x J\Big(\frac{x}{N}\Big)\
\Big\{\Psi \big(\eta^{(\ell)}(x)\big)- \Psi\big(\eta^{(N\epsilon)}(x)\big)\Big\}.
\end{align}

The first term on the right-hand side introduces the scale $\ell$ and more averaging: As $g$ is bounded and $J$ is uniformly continuous, it vanishes when $N\uparrow\infty$ and $\ell\uparrow\infty$.  If $J$ were $C^1$, it would be of order $O(\ell/N)$.

The second term, bringing an absolute value inside the sum, is bounded by
$\frac{\|J\|_\infty}{N^d}\sum_x \tau_x |V_\ell(\eta)|$
where we observe
$$|V_\ell(\eta)| = \Big| \frac{1}{(2\ell+1)^d}\sum_{|y|\leq \ell} g(\eta(y)) - \Psi(\eta^\ell(0))\Big| \leq 2\|g\|_\infty.$$
Moreover, we may introduce a truncation by Lemma \ref{truncation}, so that what remains to bound is for $A>0$ the expectation of 
$$\frac{1}{N^d}\sum_x \tau_x \Big(|V_\ell(\eta)| 1(\eta^\ell(0)\leq A)\Big).$$

In the third term, as $\Psi$ is Lipschitz, we bound it by
\begin{align*}
\frac{\|J\|_\infty}{N^d}\sum_x \Big | \Psi(\eta^\ell(x)) - \Psi(\eta^{N\epsilon}(x))\Big|
\leq \frac{\|J\|_\infty}{N^d}\sum_x \big|\eta^\ell(x) - \eta^{N\epsilon}(x)\Big|.
\end{align*}
We may further write the $N\epsilon$-window term $\eta^{N\epsilon}(x)$ in terms of an average of disjoint $\ell$-window terms $\eta^\ell(x+z_i)$ for $1\leq i\leq M:= \lfloor (2N\epsilon+1)^d/(2\ell+1)^d\rfloor$ as
$$\eta^{N\epsilon}(x) = \frac{1}{M}\sum_{i=1}^M \eta^\ell(x+z_i) + \frac{1}{(2N\epsilon+1)^d}\sum_j \eta(x+j),$$
where $\{z_i\}$ are centers of the cubes in the decomposition and the last sum is over at most $(2N\epsilon + 1)^d - (2\ell+1)^d\lfloor (2N\epsilon+1)^d/(2\ell+1)^d\rfloor \leq O\big((N\epsilon)^{d-1} (2\ell+1)^d\big)$ variables in possibly uncompleted $\ell$-windows.  One may also add into this sum the contributions from the $\ell$-windows neighboring $x$, say those with centers within $2\ell$ of $x$. Then, noting the $\ell^d$-overcount of these `edge' items, the third term is further bounded by
\begin{align*}
&\frac{\|J\|_\infty}{N^d}\frac{1}{M}\sum_{i=1}^M 1(2\ell\leq |z_i|\leq N\epsilon) \sum_x \Big| \eta^\ell(x) - \eta^\ell(x+z_i)\Big| \\
&\quad\quad\quad+ \frac{C\|J\|_\infty \ell^d(N\epsilon)^{d-1}}{(N\epsilon)^d}\frac{1}{N^d}\sum_x \eta(x).
\end{align*}

The $\nu^N_{\rho_0(\cdot)}$-process expectation of the last term is of order $O\big(\ell^d/(N\epsilon)\big)$ since by the basic coupling $\E_{\nu^N_{\rho_0(\cdot)}}\big[\eta_{N^2t}(x)\big] \leq E_{\nu_{\bar \rho}}\big[\eta(0)\big] = \bar\rho$.
We may also introduce truncations, for $A>0$ by Lemma \ref{truncation}, so that we need only bound the expectation of
\begin{align}
\label{third_term}
\frac{1}{N^d}\frac{1}{M}\sum_{i=1}^M 1(2\ell\leq |z_i|\leq 2N\epsilon) \sum_x \Big| \eta^\ell(x) - \eta^\ell(x+z_i)\Big| 1(\eta^\ell(x) + \eta^\ell(x+z_i) \leq A).
\end{align}

\subsection{Statements of $1$ and $2$-block lemmas}
Recall the definition of the density $\bar f^N_t$ near \eqref{time-average}.  By the development in the previous subsection, to bound the second term in \eqref{eq:5.4.1}, it is enough to estimate
\begin{align*}
&\E_{\nu_{\rho_0(\cdot)}}\Big[\int_0^T \frac{1}{N^d}\sum_x \tau_x \big\{|V_\ell(\eta_{N^2 s})|1(\eta_{N^2s}^\ell(0)\leq A)\big\} ds\Big]\\
&\quad\quad =T E_{\nu_{\bar\rho}}\Big[ \bar f^N_T(\eta) \frac{1}{N^d}\sum_x \tau_x \big\{|V_\ell(\eta)|1(\eta^\ell(0)\leq A)\big\} \Big].
\end{align*}
Since we know $I(\bar f^N_T)=O\big(N^{d-2}\big)$ from \eqref{time-average}, choosing $A>\bar\rho$, it will be sufficient to show the following.
\begin{proposition}[1-block lemma]
\label{1-block}
We have for all $A>\bar \rho$ that
$$\lim_{\ell\uparrow\infty}\lim_{N\uparrow\infty} \sup_{I(f)\leq C_0N^{d-2}}E_{\nu_{\bar\rho}}\Big[f(\eta)
\frac{1}{N^d}\sum_{x\in \T_N^d} \tau_x |V_\ell(\eta)|1(\eta^\ell(0)\leq A)\Big] \ = \ 0$$
\end{proposition}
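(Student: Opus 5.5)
The plan is the standard GPV route: use translation invariance to reduce to a problem on a single $\ell$-box, pass to a finite-dimensional compactness argument, and finish with the equivalence of ensembles. Since $\nu_{\bar\rho}$ is translation invariant and $I$ is convex and shift-covariant, I will replace $f$ by its spatial average $\hat f = N^{-d}\sum_x \tau_x f$. Then the expectation becomes $E_{\nu_{\bar\rho}}[\hat f\, |V_\ell|1(\eta^\ell(0)\le A)]$, and still $I(\hat f)\le C_0N^{d-2}$. The integrand depends only on $\Lambda_\ell=\{|y|\le \ell\}$, so I project $\hat f$ onto this box, writing $f_\ell = E_{\nu_{\bar\rho}}[\hat f\,|\,\eta(y), y\in\Lambda_\ell]$. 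By convexity of $I$ (Jensen), the Dirichlet form restricted to bonds inside $\Lambda_\ell$ satisfies $I_\ell(f_\ell)\le$ the contribution of those bonds to $I(\hat f)$. Summing over translates, each bond is counted $O(\ell^d)$ times among $N^d$ translates, so $I_\ell(f_\ell)\le C\ell^d N^{-d}\, I(\hat f)\le C C_0 \ell^d N^{-2}\to 0$ as $N\uparrow\infty$ with $\ell$ fixed.

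The next step is compactness in $N$ for fixed $\ell$. The truncation $1(\eta^\ell(0)\le A)$ restricts attention to the finite set of configurations on $\Lambda_\ell$ with at most $A(2\ell+1)^d$ particles. The densities $f_\ell$ restricted there are nonnegative with $E[f_\ell]\le 1$, so along subsequences they converge to some $f_\infty$. By lower semicontinuity of $I_\ell$, the limit satisfies $I_\ell(f_\infty)=0$. Since nearest-neighbor zero-range dynamics on $\Lambda_\ell$ is irreducible on each set $\{\sum_{\Lambda_\ell}\eta=K\}$ (the rates $g(k)>0$ for $k\ge 1$), the lemma on $I$ shows $f_\infty$ is constant on each such hyperplane. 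So the limsup in $N$ is bounded by
$$\sup_{K\le A(2\ell+1)^d} E_{\nu_{\Lambda_\ell,K}}\Big[\Big|\frac{1}{(2\ell+1)^d}\sum_{|y|\le\ell}g(\eta(y))-\Psi\big(K/(2\ell+1)^d\big)\Big|\Big],$$
where $\nu_{\Lambda_\ell,K}$ is the canonical measure on the box. This uses that mixing over $K$ with weights at most 1 is dominated by the sup over $K$.

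The last step, which I expect to be the main obstacle, is to show this sup tends to $0$ as $\ell\uparrow\infty$, uniformly over densities $K/(2\ell+1)^d\le A$. I will use the equivalence of ensembles, i.e. a local central limit theorem. Under $\nu_{\Lambda_\ell,K}$ with density $\rho=K/|\Lambda_\ell|$, the marginal law of a fixed finite set of sites is close to the product measure $\nu_\rho$, with error $O(1/\ell^d)$ uniformly for $\rho\le A$. I would first prove this via a local CLT for sums of i.i.d. variables under the tilted measure $\kappa_{\Psi(\rho)}$, with variance bounded below away from $\rho=0$; small $\rho$ is trivial since $g(0)=0$ and $g$ is Lipschitz. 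Then, by exchangeability, I bound the $L^1$ norm by the $L^2$ norm. This expands into a diagonal term of order $\|g\|_\infty^2/|\Lambda_\ell|$ plus the canonical covariance of $g(\eta(0))$ and $g(\eta(y))$ for $y\neq 0$. By equivalence of ensembles, that covariance converges to the grand-canonical value, which is $0$ by independence. The mean $E_{\nu_{\Lambda_\ell,K}}[g(\eta(0))]$ likewise converges to $\Psi(\rho)$. Together these show the sup vanishes, which gives the 1-block lemma.
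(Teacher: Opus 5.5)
Your proposal is correct. Through the reduction to canonical measures it coincides with the paper's proof:
\begin{itemize}
\item averaging over shifts and conditioning on $\mathcal{F}_\ell$;
\item the convexity bound $I_\ell(f_\ell)\le C\ell^dN^{-2}$;
\item compactness on the finitely many configurations with $\eta^\ell(0)\le A$;
\item constancy of the limit density on each hyperplane $\{\sum_{|x|\le\ell}\eta(x)=K\}$.
\end{itemize}
Your compactness step is slightly cleaner than the paper's. For fixed $\ell$ each such configuration has positive $\nu^\ell_{\bar\rho}$-mass, so $f_\ell$ is bounded there. The hypothesis $A>\bar\rho$, which the paper uses only to bound $\nu^\ell_{\bar\rho}(\eta^\ell(0)\le A)$ from below, therefore plays no role.

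The real difference is the last step. You expand the second moment under the canonical measure using exchangeability. You then need the equivalence of ensembles for one- and two-site marginals, uniformly in the density. That amounts to a local CLT with error control at the shifted points $K-i-j$, plus tail control.

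The paper avoids this. Write $S=\sum_{|x|\le\ell}\eta(x)$ and take the grand-canonical density $a=K/(2\ell+1)^d$, so the conditioning event $\{S=K\}$ sits at the mean. With $X=(2\ell+1)^{-d}\sum_{|x|\le\ell}(g(\eta(x))-\Psi(a))$, the paper writes the canonical expectation as $E_{\nu_a}[|X|1(S=K)]/\nu_a(S=K)$. Schwarz under the product measure $\nu_a$ then bounds it by $(E_{\nu_a}[X^2]/\nu_a(S=K))^{1/2}$. By independence $E_{\nu_a}[X^2]=O(\ell^{-d})$. The only limit theorem needed is the lower bound $\nu_a(S=K)\ge c\,\ell^{-d/2}$ from the local CLT at the centre, and the result is $O(\ell^{-d/4})$.

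So the paper's route needs less machinery. Yours uses a stronger standard tool and gets the better rate $O(\ell^{-d/2})$.

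Keep your remark that small densities are trivial. Under the canonical measure both $(2\ell+1)^{-d}\sum_{|x|\le\ell}g(\eta(x))$ and $\Psi(\rho)$ lie in $[0,a_0\rho]$, so the integrand is at most $a_0\rho$. This handles the degeneration of the local CLT as the density tends to $0$. The paper's sketch does not address that uniformity in $a\le A$, and the same observation would fix it there too.
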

where the supremum is over nonnegative densities $f$ with $\|f\|_{L^1(\nu_{\bar \rho})}=1$.
\vskip .1cm

The third term in \eqref{eq:5.4.1} is handled similarly.  Noting the formulation in \eqref{third_term}, it is enough to show the following limit.

\begin{proposition}[2-block lemma]
\label{2-block}
 We have for $A>\bar\rho$ that
\begin{eqnarray*}&&\lim_{\ell\uparrow\infty}\lim_{\epsilon\downarrow 0}\lim_{N\uparrow\infty} \sup_{I(f)\leq C_0N^{d-2}}\sup_{2\ell<|y|\leq 2N\epsilon}\\
&&\ \ \ \ \ \ E_{\nu_{\bar\rho}}\Big[f(\eta)
\frac{1}{N^d}\sum_x \big |\eta^\ell(x) - \eta^\ell(x+y)\big|1(\eta^\ell(x) + \eta^\ell(x+y)\leq A)\Big] \ = \ 0.\end{eqnarray*}
\end{proposition}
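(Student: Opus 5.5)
The plan is the standard GPV argument: localize the density $f$ to a box containing both windows, connect the two $\ell$-blocks by a path of nearest-neighbor particle moves, and reduce to a finite-volume, equilibrium statement. Fix $y$ with $2\ell<|y|\le 2N\epsilon$. By translation invariance of $\nu_{\bar\rho}$, the expectation equals $E_{\nu_{\bar\rho}}[\hat f\,|\eta^\ell(0)-\eta^\ell(y)|1(\cdot\le A)]$, where $\hat f=N^{-d}\sum_x\tau_{-x}f$. By convexity of $I$ and translation invariance, $I(\hat f)\le I(f)\le C_0N^{d-2}$. The integrand depends only on the two cubes $\Lambda_\ell$ and $y+\Lambda_\ell$. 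So I would project $\hat f$ onto the coordinates $\{\eta(z): z\in\Lambda_\ell\cup(y+\Lambda_\ell)\}$, obtaining a density $f_{\ell,y}$ with respect to the product of $\kappa_{\Psi(\bar\rho)}$ on $\Lambda_{\ell,y}:=\Lambda_\ell\cup(y+\Lambda_\ell)$.

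For the Dirichlet form of $f_{\ell,y}$ I would use a bond Dirichlet form with two parts. The first part consists of the nearest-neighbor bonds inside each cube; convexity bounds their contribution by $\ell^d N^{-d}\cdot O(N^{d-2})$, up to a constant times $N^{-2}$ per bond. The second part is an artificial bond moving a particle between $0$ and $y$. For it I would use a path estimate: moving a particle from $0$ to $y$ along a nearest-neighbor path of length $|y|\le 2N\epsilon$ produces a telescoping sum. By Cauchy--Schwarz and the particle change-of-variables identity of Exercise \ref{ex:zr-particle}, this part costs at most $|y|\sum_{\text{bonds on path}}(\text{bond Dirichlet form of }\hat f)\le |y|^2\cdot C N^{-2}\le C\epsilon^2$. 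The main technical obstacle is the change of variables along the path for zero-range, since intermediate sites change occupation. Two facts handle this: $g$ is bounded below on $\{k\ge1\}$ and $g$ is bounded, and I would write the move as a sequence of jumps with rates $g(\eta(z))$. The truncation at $A$ then gives a constant depending on $A$ and $g$. Altogether, the projected density satisfies $D_{\ell,y}(\sqrt{f_{\ell,y}})\le C(\ell,A)(N^{-2}+\epsilon^2)$.

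Next, take $N\uparrow\infty$ and then $\epsilon\downarrow0$. The densities $f_{\ell,y}$ lie in a set that is compact after truncation to at most $2A(2\ell+1)^d$ particles; since the integrand vanishes off that set, the state space is effectively finite. By lower semicontinuity of $I$, any limit point $f_\infty$ has zero Dirichlet form for the dynamics on two cubes of side $2\ell+1$, together with the $0\leftrightarrow y$ bond. That dynamics is irreducible on each hyperplane of fixed total particle number $K$ on the union of the two cubes. So $f_\infty$ is constant on each such hyperplane, i.e. it is a mixture over $K\le 2A(2\ell+1)^d$ of the canonical measures $\nu_{2\ell,K}$ on $2(2\ell+1)^d$ sites, which are uniform in $\Psi$. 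The limit in $N$ and then $\epsilon$ is therefore bounded by
\[
\sup_{K\le 2A(2\ell+1)^d} E_{\nu_{2\ell,K}}\big[|\eta^\ell(0)-\eta^\ell(y)|\big].
\]

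Finally, take $\ell\uparrow\infty$. Under the canonical measure with density $K/(2(2\ell+1)^d)\le A$, the equivalence of ensembles (a local central limit theorem for the i.i.d. marginals $\kappa_\Psi$, chosen with $\Psi$ matching the density) makes the two block averages asymptotically independent, each with mean equal to the common density and variance $O(\ell^{-d})$, uniformly for densities in $[0,A]$. Hence the expectation is $O(\ell^{-d/2})$ uniformly in $K$. A simpler alternative avoids the full local CLT: by exchangeability under $\nu_{2\ell,K}$, both $\eta^\ell(0)$ and $\eta^\ell(y)$ are sampling-without-replacement averages of half of the $2(2\ell+1)^d$ occupation values with sum $K$. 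The variance of their difference is then bounded by $C(2\ell+1)^{-d}$ times the canonical second moment of $\eta(0)$, and that second moment is uniformly bounded for $K/(2\ell+1)^d\le 2A$ by comparison with $\nu_{\bar\rho}$-type moment bounds via the local CLT. This last uniformity is the step I expect to need the most care.
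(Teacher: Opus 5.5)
Your proposal is correct and follows essentially the paper's own argument. You average over translations, project onto $\Lambda_\ell\cup(y+\Lambda_\ell)$, and add the long bond $0\leftrightarrow y$, controlled by a telescoping path estimate, so the localized Dirichlet form is $O(\ell^dN^{-2}+\epsilon^2)$; you then pass to limiting densities constant on fixed-particle-number hyperplanes and conclude by equivalence of ensembles. The ``obstacle'' you flag about intermediate sites is not one: the identity of Exercise~\ref{ex:zr-particle} writes each bond form as a multiple of $\Psi(\bar\rho)E_{\nu_{\bar\rho}}\big[(\sqrt{h(\eta+\delta_{q_{k+1}})}-\sqrt{h(\eta+\delta_{q_k})})^2\big]$ over a common base configuration $\eta$, so the telescoping is exact and needs neither the bounds on $g$ nor the truncation at $A$ (which cannot see sites off the two cubes anyway).
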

\vskip .1cm

\noindent {\bf Proof of Theorem \ref{replacement}.}  With the $1$ and $2$-block replacements Propositions \ref{1-block}, \ref{2-block} in hand, the proof follows from the reductions in Subsection \ref{sec:reductions}. \hfill \qed

\section{Proof of 1-block lemma}
To prove Proposition \ref{1-block}, the main idea is that the Dirichlet form of the density $f$ vanishes as $N\uparrow\infty$.  In some sense, the optimal density $f$ is almost constant.  Things then reduce to a standard ergodic theorem or law of large numbers associated with i.i.d. random variables distributed according to $\kappa_{\bar\rho}$.  

To make this strategy precise, define
$${\rm Av}(f) \ = \ \frac{1}{N^d} \sum_x \tau_x f(\eta).$$
Then, by translation-invariance of $\nu_{\bar\rho}$,
\begin{align}
&E_{\nu_{\bar\rho}}\Big[f(\eta)
\frac{1}{N^d}\sum_x \tau_x |V_\ell(\eta)|1(\eta^\ell(0)\leq A)\Big] \nonumber\\
&\quad\quad \quad = \ E_{\nu_{\bar\rho}}\Big[{\rm Av}(f) |V_\ell(\eta)|1(\eta^\ell(0)\leq A)\Big].
\label{average_density}\end{align}
Observe that ${\rm Av}(f)$ is a translation-invariant density.

Let now 
$$\Lambda_\ell = \{-\ell, \ldots, \ell\}^d \ \ {\rm  and  \ \ }\mathcal F_\ell = \sigma\{\eta(x): x\in \Lambda_\ell\}.$$  Denote $f_\ell = E_{\nu_{\bar\rho}}[{\rm Av}(f)|\mathcal{F}_\ell]$.  Since $|V_\ell(\eta)|1(\eta^\ell(0)\leq A)$ depends only on variables $\eta(x)$ where $x\in \Lambda_\ell$, we have the right-side of \eqref{average_density} equals
$$E_{\nu_{\bar\rho}}[f_\ell(\eta) |V_\ell(\eta)|1(\eta^\ell\leq A)] \ = \ E_{\nu^\ell_{\bar\rho}}[f_\ell(\eta) |V_\ell(\eta)|1(\eta^\ell\leq A)]$$
where $\nu^\ell_{\bar\rho}$ is the product measure over sites in $\Lambda_\ell$ with marginal $\kappa_{\Psi(\bar\rho)}$.

Now, recall that the Dirichlet form has explicit form \eqref{eq:I-def}.  For $x,y$ such that $|x-y|=1$, let
$$I_{x,x+y}(h) = \frac{1}{2(2d)}E_{\nu_{\bar\rho}}\Big[g(\eta(x))\big(\sqrt{h(\eta^{x,x+y})} - \sqrt{h(\eta)}\big)^2\Big].$$
Then, $I(h) = \sum_{x,|y|=1}I_{x,x+y}(h)$; note there are $dN^d$ edges in $\T_N^d$ .  

Define also the Dirichlet form corresponding to the dynamics restricted to $\Lambda_\ell$ with invariant measure $\nu^\ell_{\bar\rho}$.
$$I_\ell(h)\ =\ \sum_{\stackrel{x, |y|=1}{x,x+y\in \Lambda_\ell}}I_{x,x+y}(h);$$
note there are $d(2\ell +1)^{d-1}(2\ell)$ edges in $\Lambda_\ell$.
\begin{lemma}
\label{1-block_dirichlet}
We have that
\begin{eqnarray*}I_\ell(f_\ell) &\leq & I_\ell({\rm Av}(f))\\
& = & (2\ell +1)^{d-1}(2\ell)N^{-d}I({\rm Av}(f))\\
&\leq&   (2\ell +1)^{d-1}(2\ell)N^{-d}I(f) \\
&\leq&  C\ell^dN^{-2}.
\end{eqnarray*}
\end{lemma}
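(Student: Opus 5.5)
We prove the four relations in order; each comes from a standard property of the Dirichlet form.

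\emph{First inequality.} We show $I_\ell(f_\ell)\le I_\ell({\rm Av}(f))$ using convexity of $h\mapsto I_{x,x+y}(h)$ together with the conditional Jensen inequality. Fix an edge $(x,x+y)$ inside $\Lambda_\ell$ and write $\eta=(\zeta,\xi)$, where $\zeta$ is the configuration on $\Lambda_\ell$ and $\xi$ the configuration outside. Since $\nu_{\bar\rho}$ is a product measure, $f_\ell(\zeta)=\int {\rm Av}(f)(\zeta,\xi)\,d\nu(\xi)$. The rate $g(\eta(x))$ and the map $\eta\mapsto\eta^{x,x+y}$ only involve $\zeta$. The map $(a,b)\mapsto(\sqrt a-\sqrt b)^2$ is jointly convex on $\R_+^2$, being the perspective-type function $a+b-2\sqrt{ab}$ with $\sqrt{ab}$ concave. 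By Jensen in the $\xi$ variable,
\[
\Big(\sqrt{f_\ell(\zeta^{x,x+y})}-\sqrt{f_\ell(\zeta)}\Big)^2\le \int\Big(\sqrt{{\rm Av}(f)(\zeta^{x,x+y},\xi)}-\sqrt{{\rm Av}(f)(\zeta,\xi)}\Big)^2 d\nu(\xi).
\]
Multiplying by $g(\zeta(x))$, integrating against $\nu^\ell_{\bar\rho}$, and summing over the edges in $\Lambda_\ell$ gives the first inequality.

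\emph{Middle equality.} This uses translation invariance. ${\rm Av}(f)$ is invariant under all shifts $\tau_z$, and $\nu_{\bar\rho}$ is translation invariant. Hence $I_{x,x+y}({\rm Av}(f))=I_{0,y}({\rm Av}(f))$ for every edge, so all $dN^d$ edge terms in $I({\rm Av}(f))$ are equal. Therefore $I_{0,y}({\rm Av}(f))=(dN^d)^{-1}\sum_{|y|=1,\,y>0}\ldots$; organised by direction, each direction $e$ contributes $N^{-d}$ of its share of the total. Summing over the $d(2\ell+1)^{d-1}(2\ell)$ edges of $\Lambda_\ell$, which are split equally among the $d$ directions, gives $I_\ell({\rm Av}(f))=(2\ell+1)^{d-1}(2\ell)N^{-d}I({\rm Av}(f))$. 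One should check the precise constant: the paper's $I$ sums over oriented pairs $x,|y|=1$, and the same convention applies in $\Lambda_\ell$, so the ratio of edge counts is exactly the stated factor.

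\emph{Third inequality.} By convexity of $I$ (the lemma in Section~\ref{lec4}) and translation invariance $I(\tau_z f)=I(f)$, which follows from the translation invariance of $\nu_{\bar\rho}$ and of $p$, we get
\[
I({\rm Av}(f))\le N^{-d}\sum_z I(\tau_z f)=I(f).
\]

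\emph{Last bound.} Insert the hypothesis $I(f)\le C_0N^{d-2}$ from Proposition~\ref{1-block}:
\[
(2\ell+1)^{d-1}(2\ell)N^{-d}\,C_0N^{d-2}\le C\ell^dN^{-2}.
\]

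The main care is needed in the first step. Jensen must be justified for a nonlinear convex functional of a pair, so we must confirm that the convexity of $I$ proved earlier is really joint convexity of $(\sqrt a-\sqrt b)^2$, and that conditioning commutes with the exchange map $\eta\mapsto\eta^{x,x+y}$ because that map acts only on coordinates inside $\Lambda_\ell$.
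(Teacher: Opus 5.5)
Your proof is correct and follows the paper's argument. Convexity of the Dirichlet form gives the two inequalities (you justify it via joint convexity of $(\sqrt a-\sqrt b)^2$ and Jensen over the coordinates outside $\Lambda_\ell$). Translation invariance of ${\rm Av}(f)$ and $\nu_{\bar\rho}$ gives the equality, and the hypothesis $I(f)\le C_0N^{d-2}$ gives the last bound.

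One small correction: translation invariance only makes the edge terms $I_{x,x+y}({\rm Av}(f))$ equal for a fixed $y$, not across different coordinate directions. Your direction-by-direction count, with $(2\ell+1)^{d-1}(2\ell)$ pairs in $\Lambda_\ell$ for each of the $2d$ values of $y$ against $N^d$ on the torus, is exactly what yields the stated factor.
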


\begin{proof} Note $f_\ell$ and ${\rm Av}(f)$ are averages:  For instance, $f_\ell$ is a conditional expectation.  Then, the first and second inequalities follow as the Dirichlet form is convex.  The equality follows as ${\rm Av}(f)$ and the measure $\nu_{\bar\rho}$ are translation-invariant:  Namely, $I_{x,x+y}({\rm Av}(f)) = I_{z,z+w}({\rm Av}(f)$ for $x,z\in \T_N^d$ and $|y|=|w|=1$.
The last line follows from the bound $I(f)\leq C_0N^{d-2}$. \end{proof}

Now, taking into account \eqref{average_density} and the display after, and Lemma \ref{1-block_dirichlet}, we need to show
\begin{equation}
\label{1-block-ell}
\lim_{\ell\uparrow\infty}\lim_{N\uparrow\infty} \sup_{I_\ell(f)\leq C\ell^dN^{-2}}E_{\nu^\ell_{\bar\rho}}\big[f(\eta)|V_\ell(\eta)|1(\eta^\ell(0)\leq A)\big] \ = \ 0\end{equation}
where the supremum is over densities $f$ with respect to $\nu^\ell_{\bar\rho}$.

In fact, since $\nu^\ell_{\bar\rho}(\eta^\ell\leq A)>0$ is uniformly bounded below for all large $\ell$ because $A>\bar\rho$, in \eqref{1-block-ell}, we may replace the integrating measure $\nu^\ell_{\bar\rho}$ by $\nu^\ell_{\bar \rho, A}=\nu^\ell_{\bar\rho}(\cdot |\eta^\ell(0)\leq A)$ which supports only a finite number of configurations.  Also, as 
\[ E_{\nu^\ell_{\bar \rho, A}}[f] = E_{\nu^\ell_{\bar\rho}}[f1(\eta^\ell(0)\leq A)]/\nu^\ell(\eta^\ell(0)\leq A) \leq 1/\nu^\ell(\eta^\ell(0)\leq A) \leq C,\]
 we may view the supremum as over the collection of nonnegative $f$ with respect to $\nu^\ell_{\bar \rho, A}$ which are uniformly bounded $\|f\|_{L^1(\nu^\ell_{\bar \rho, A})}\leq C$.
The corresponding collection of subprobablity measures $fd\nu^\ell_{\bar\rho, A}/C$, being on a compact space, is tight and has a converging subsequence by a form of Prokhorov's theorem.

Hence, to evaluate \eqref{1-block-ell}, for fixed $\ell$, we consider a sequence in $N$ which approaches the limit supremum.  Densities $f^{N}$ can be found on which the supremum value is well approximated.  By tightness, we can find a subsequence where $f^N$ converges to $f^*$ for which necessarily $I_\ell(f^*)=0$.  Hence, it is enough to show
\begin{equation*}
\lim_{\ell\uparrow\infty} \sup_{I_\ell(f)=0} E_{\nu^\ell_{\bar\rho}}\big[f(\eta)|V_\ell(\eta)|1(\eta^\ell(0)\leq A)\big] \ = \ 0.
\end{equation*}

Now, given $I_\ell(f^*)=0$ and $\eta^\ell(0)\leq A$, we know that $f^*$ is constant on the configurations such that $\eta^\ell(0) = a$ for $a\leq A$.   Therefore, as $f^*\leq C$, decomposing along such `hyperplanes', it is enough to show that
\begin{equation}
\label{1-block-ell-2}
\lim_{\ell\uparrow\infty} \sup_{a\leq A}E_{\nu^\ell_{\bar\rho}}\big[|V_\ell(\eta)|\big |\eta^\ell(0)=a\big] \ = \ 0.
\end{equation}

The left-hand side expectation is the same as
$$E_{\nu_{\bar\rho}}\Big[\Big|\frac{1}{(2\ell+1)^d}\sum_{|x|\leq \ell} g(\eta(x)) - E_{\nu_a}\big[g(\eta(0))\big]\Big| \Big|\eta^\ell(0) = a\Big].$$
Here, we removed the restriction of the measure to $\Lambda_\ell$ since it does not matter.
Notice also that the integrating measure is the canonical measure on $\Lambda_\ell$ with $a(2\ell + 1)^d$ particles.  Hence, by the form $\nu_{\bar \rho}$, the parameter $\bar\rho$ of the underlying grand-canonical measure $\nu_{\bar\rho}$ does not matter, and can be chosen as we like, say $a$.  

We may rewrite the above display as

\begin{align}
\label{1-block-final}
&\frac{1}{\sqrt{(2\ell +1)^d}\nu_a(\eta^\ell(0) = a)}\\
&\quad\times  E_{\nu_a}\Big[\Big|\frac{1}{\sqrt{(2\ell+1)^d}}\sum_{|x|\leq \ell} \big(g(\eta(x)) - E_{\nu_a}\big[g(\eta(0))\big]\big) 1\big(\eta^\ell (0)= a\big)\Big].\nonumber
\end{align}
It is an exercise to see that the denominator is bounded away from $0$.
\begin{exercise}
\label{lclt}
\rm
Observe by a local central limit theorem, namely an expansion in the characteristic function, that
$$\lim_{\ell\uparrow\infty}\sqrt{(2\ell +1)^d}\nu_a(\eta^\ell(0)=a) \ = \ (2\pi)^{-d/2}.$$
\end{exercise}

Now, the variables $g(\eta(x)) - E_{\nu_a}[g(\eta(0))]$ in \eqref{1-block-final} are mean-zero and in $L^2(\nu_a)$.  Hence, by an application of Schwarz inequality and Exercise \ref{lclt} again, we have that \eqref{1-block-final} is at most of order $O(\ell^{-d/4})$ which vanishes.

This concludes the proof of Proposition \ref{1-block}. \hfill \qed

\section{Proof of 2-block lemma}

The argument for the proof of Proposition \ref{2-block} is similar to the proof of the `1-block Lemma' Proposition \ref{1-block}.  Now, we have to control the differences of averages in two separated blocks of width $2\ell+1$.  To compare these averages, we need again to show that the localized Dirichlet form of the density function is small.  Since the jump probabilities are nearest-neighbor, to localize on the two blocks of width $O(\ell)$, we will need to extend the Dirichlet form and dynamics to include a long jump from one block to the other.  In this way, the localized system can mix, and the difference in the averages will wash out.  There is a cost for this extension which however can be overcome.

As before, with the `$1$-block Lemma', the first step is to estimate the display in Proposition \ref{2-block} in terms of the averaged density over shifts ${\rm Av}(f)$.  We need to show that
\begin{eqnarray*}
&&\lim_{\ell\uparrow \infty}\lim_{\epsilon\downarrow 0}\lim_{N\uparrow\infty}  \sup_{I(f) \leq C_0N^{d-2}}\sup_{2\ell <|y|\leq 2N\epsilon}\\
&& \ \ \ E_{\nu_{\bar\rho}}\Big[ f(\eta) |\eta^\ell(0)-\eta^\ell(y)|1(|\eta^\ell(0) + \eta^\ell(y)\leq A)\Big] \ = \ 0
\end{eqnarray*}
Here the supremum is over translation invariant densities $f$.

Next, we localize to the union of the two blocks $\Lambda_\ell$ and $\Lambda^y_\ell = y + \Lambda_\ell$.  Let $\nu^{\ell,y}_{\bar\rho}$ be the product measure over $x\in \Lambda_\ell \cup \Lambda^y_\ell$ with marginal $\kappa_{\bar\rho}$.  Let also 
$f^{\ell,y}=E_{\nu^{\ell,y}_{\bar\rho}}[f|\mathcal{F}_{\ell,y}]$ where $\mathcal{F}_{\ell, y} = \sigma\{\eta(x): x\in \Lambda_\ell \cup \Lambda^y_\ell\}$.  Then, as before, the above display reduces to 
\begin{eqnarray*}
&&\lim_{\ell\uparrow \infty}\lim_{\epsilon\downarrow 0}\lim_{N\uparrow\infty}  \sup_{I(f) \leq C_0N^{d-2}}\sup_{2\ell <|y|\leq 2N\epsilon}\\
&& \ \ \ E_{\nu^{\ell,y}_{\bar\rho}}\Big[ f^{\ell,x}(\eta) |\eta^\ell(0)-\eta^\ell(y)|1(|\eta^\ell(0) + \eta^\ell(y)\leq A)\Big] \ = \ 0,
\end{eqnarray*}
for say $A> 2\bar \rho$.

The question now is how to treat the Dirichlet form $I(f)$.  If we use the argument for the `$1$-block Lemma', then a density in the limit would be constant on each hyperplane $\{\eta_{\Lambda_\ell \cup \Lambda^y_\ell}: \eta^\ell(0)+\eta^\ell(y)=a\}$ for $a\leq A$ where $\eta_B = \langle\eta(x): x\in B\rangle$.  But, the constants could be different on each block!

The trick is to add a (non-local) Dirichlet form bond corresponding to a jump say from $0\in \Lambda_\ell$ to $y\in \Lambda_\ell^y$.  Any such jump from a site in $\Lambda_\ell$ to $\Lambda^y_\ell$ would work.  Then, if the Dirichlet form localized to $\Lambda_\ell\cup \Lambda^y_\ell$, including this extra bond term, vanishes, the density would take the same constant value on both blocks.
 In terms of dynamics, the zero-range process associated to this modified Dirichlet form is irreducible on configurations in $\Omega^{y,\ell}=\{0,1,\ldots\}^{\Lambda_\ell \cup \Lambda^y_\ell}$.

For $h: \Omega^{y,\ell}\rightarrow \R$, define
\begin{align*}
I_b(h) &= \frac{1}{2}E_{\nu_{\bar\rho}}\Big[g(\eta(0))\big(\sqrt{h(\eta^{0,y})} - \sqrt{h(\eta)}\big)^2\Big]\\
&= \frac{\Psi(\bar\rho)}{2}E_{\nu_{\bar\rho}}\Big[\big(\sqrt{h(\eta + \delta_x)} - \sqrt{h(\eta+\delta_0)}\big)^2\Big],
\end{align*}
noting Exercise \ref{ex:zr-particle}.  By itself, $I_b$ is the Dirichlet form for the dynamics which moves a particle from $0$ to $y$ and back according to zero-range dynamics, and as such shares all the convexity and other Dirichlet form properties that we used in the proof of the `1-block Lemma'.

By the estimate
$\big(\sum_{j=1}^k q_j\big)^2 \ \leq \ k \sum_{j=1}^k q_j^2$
and adding and subtracting several terms,
\begin{align*}
I_b(h) & = \frac{\Psi(\bar\rho)}{2} E_{\nu_{\bar\rho}}\Big[\Big(\sum_{k=1}^{m_x} \sqrt{h(\eta+ \delta_{q_k})} - \sqrt{h(\eta+ \delta_{q_{k+1}})}\Big)^2\Big] \\
&\leq  
C|x|\sum_{j=1}^{M_x} I_{q_j,q_{j+1}}(h)
\end{align*}
where $\{q_k\}$ corresponds to a nearest-neighbor path from $0$ to $x$ in $M_x = O(|x|)$ steps.

We will apply the above estimate to the translation invariant density $f$.  Since $I_{e_j,e_{j+1}}(f) \leq CN^{-d} I(f)$, we have that
$I_b(f) \leq C|x|^2N^{-d}I(f)$.  Now, if $I(f) \leq CN^{d-2}$ and $|x|\leq C N^2\epsilon^2$, since the separation between $0$ and $x$ is of order $O(N\epsilon)$, we conclude $I_b(f) \leq C\epsilon^2$.

Let now $I^{y,\ell}(h) = I_\ell(h) + I_\ell^y(h) + I_b(h)$ where $I_\ell^y(h) = \sum_{\stackrel{z,|w|=1}{z,z+w\in \Lambda^y_\ell}}I_{z,z+w}(h)$.  By convexity, we have
\begin{align}
\label{eq:5-bond}
I^{y,\ell}(f^{\ell,x}) \ \leq \ I^{y,\ell}(f) \ \leq \ 2C\ell^dN^{-2} + C\epsilon^2 \ \leq \ C\epsilon^2
\end{align}
for fixed $\ell, \epsilon$ and all large $N$.

Hence, it is enough to show for each constant $C$ that
  \begin{eqnarray*}
&&\lim_{\ell\uparrow \infty}\lim_{\epsilon\downarrow 0}\lim_{N\uparrow\infty}  \sup_{I^{x,\ell}(f) \leq C\epsilon^2}\sup_{2\ell <|y|\leq 2N\epsilon}\\
&& \ \ \ E_{\nu^{\ell,x}_{\bar\rho}}\Big[ f(\eta) |\eta^\ell(0)-\eta^\ell(y)|1(|\eta^\ell(0) + \eta^\ell(y)\leq A)\Big] \ = \ 0.
\end{eqnarray*}
Here, the supremum is over densities with respect to $\nu_{\bar\rho}^{\ell,x}$.

But, at this point, the proof can follow the method as for the `$1$-block Lemma'.  We just point out that in the above display that, as
$\eta^\ell(0)$ and $\eta^\ell(y)$ share no terms, with respect to $\nu_{\bar\rho}^{\ell,x}$, they are independent.   This was the reason to avoid the nearest-neighbor cubes in the original decomposition of $\eta^{N\epsilon}(x)$ in Subsection \ref{sec:reductions}.
This finishes the proof of Proposition \ref{2-block}. \hfill \qed

\section{Notes}

We have followed the scheme of Chapter V in \cite{KL}, although there are differences. 
 The reader will have noted that translation invariance is much used in the above derivations.  We remark that there are ways to handle non-translation invariant dynamics if the inhomogeneity is `slowly varying'; see \cite{CR}, and \cite{GJ-random}, \cite{Faggionato}, \cite{FRS}, \cite{lpsx} when the particles move in a random environment.

Also, we comment that diffusive scaling $v(N)=N^2$ was crucial to prove the `$2$-block' estimate; see \eqref{eq:5-bond}.  In asymmetric models, with hyperbolic scaling $v(N)=N$, although the `$1$-block' result still holds, the `$2$-block' estimate is not generally available. 
Such a `$1$-block Lemma' is useful for the `relative entropy' method discussed in Section \ref{lec6} to deduce hydrodynamics, at least for short times, in asymmetric processes.

Finally, we comment, beyond Zero-range models, the `entropy' method works well in symmetric rate processes ($v(N)=N^2$) with conserved quantities that is of `gradient' type, e.g. allowing a twice sum-by-parts evaluation 
\[L\langle G, \pi^N_{V(N)t}\rangle \sim \frac{v(N)}{2N^{d+2}}\sum_{x,y}h(\tau_x \eta_{v(N)t}) p(y)\triangle^N_{x,y}G\]  
for some local function $h$ (depending only on a finite number of variables $\{\eta(x)\}$) to be homogenized, and invariant measures indexed to the conserved quantities with sufficient decorrelation properties (cf. for instance \cite{FHU}).  As alluded in Subsection \ref{sec:asym_meanzero}, diffusively scaled `nongradient' models can also be analyzed by versions of the `entropy' method (cf. \cite{Sasada}, \cite{KL}, \cite{Varadhan_notes}).

\newpage

\chapter[Section $6$]{Relative entropy method and hydrodynamics of TASEP}
\label{lec6}

We discuss the `relative entropy' method in the context of hydrodynamics of the totally asymmetric simple exclusion process (TASEP) in $d=1$. 
The underlying notion is to measure how far the distribution $\mu^N_t$ of the process at time $t$ is away from a specified product measure $\nu^N_t$ on $\T_N$.  If distance, measured in terms of relative entropy, is not so far away, then hydrodynamics will follow from calculations with respect to independent variables governed by $\nu^N_t$.   Such methods and ideas have proved useful for other processes and in other problems.

\section{Statement of hydrodynamics and sketch of the argument}
Recall the notation in Chaper \ref{lec2}, with respect to asymmetric exclusion processes $\eta_t$ on $\T^d_N$.  As before, $E_\mu$ denotes the expectation under $\mu$, and $\P_\mu$ and $\E_\mu$ the process measure and expectation when starting in $\mu$.

When $d=1$ and $p$ allows movement only to the nearest right site, that is $p(1)=1$ and $p(j)=0$ for $j\in \T_N$ otherwise, the process is known as the totally asymmetric simple exclusion process or TASEP for short.  To reduce notation, we will concentrate in the following on TASEP, although calculations may be extended to finite-range asymmetric processes in $d\geq 1$ with non-zero drift.

As mentioned in Section \ref{lec2}, the nontrivial time scaling is when $\upsilon(N) = N$, the `hyperbolic' or `Euler' scale, and the hydrodynamic density $\rho(t, u)$ satisfies $\partial_t \rho + m\cdot\nabla \big[\rho(t, u)(1-\rho(t,u))\big] = 0$.  In the context of TASEP, as $m = \sum jp(j) =1$, the equation simplifies to
\begin{equation}
\label{eq:asym-hyd}
\partial_t \rho + \partial_u \big(\rho(1-\rho)\big)=0.
\end{equation}

Solutions to this one-dimensional Burgers-type hyperbolic conservation law on $\T$ are not necessarily unique.  However, when starting from smooth $C^1$ initial profile $\rho_0(u) = \rho(0, u)$, bounded above and below on $\T$,
\[0<\rho_-\leq \rho_0(u)\leq \rho_+<1,\]
 it is known that a unique classical $C^1$ smooth solution, also bounded above and below by $\rho_+$ and $\rho_-$,  with continuous bounded derivatives, exists up to a short time $T>0$, which we now fix.  We mention one may specify uniquely a `physical' (or by other names, `entropy' or `viscosity') solution for all times $t\geq 0$ by imposing that the solution satisfies extra conditions; see \cite{Evans} as a general reference and for a comprehensive discussion.

Recall the empirical measure on $\T_N$:
\[\pi^N_t = \frac{1}{N} \sum_{x\in \T_N} \eta_t(x)\delta_{x/N}.\]
Define, for $t\in [0,T]$, with respect to the hydrodynamic density, the product measures on $\Omega = \{0,1\}^{\T_N}$:
\[\nu^N_t = \prod_{x\in \T_N} {\rm Bern}(\rho(t, x/N)).\]
Suppose that initially the process $\eta_0$ is distributed according to $\mu^N=\nu^N_0=\nu^N_{\rho_0(\cdot)}$.   In terms of the process semigroup $P_t$, let $\mu^N_t=\mu^NP_{Nt}$ be the distribution of $\eta_{Nt}$ starting from $\mu^N$.

We will show the following form of the hydrodynamic limit.

\begin{theorem}
\label{thm:sec61}
Let $\rho_0:\T\rightarrow [0,1]$ be a $C^1$ function.  Then, for $t\in [0,T]$, we have the convergence in probability,
\[\lim_{N\rightarrow\infty} \langle \pi^N_{Nt}, J\rangle = \frac{1}{N}\sum_{x\in \T_N} J(x/N)\eta_{Nt}(x) = \int_{\T}J(u)\rho(t,u)du\]
where $\rho(t,u)$ satisfies \eqref{eq:asym-hyd} with $\rho(0,u) = \rho_0(u)$.
\end{theorem}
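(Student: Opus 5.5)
The plan is Yau's relative entropy method. Set $H_N(t) = H(\mu^N_t|\nu^N_t)$, with $H_N(0)=0$ because $\mu^N=\nu^N_0$. The main goal is to show $H_N(t)=o(N)$ uniformly for $t\in[0,T]$. Once that holds, the entropy inequality (Lemma \ref{lem:entropyinequality}) finishes the proof. Take $A=\{|\langle \pi^N,J\rangle - \int J\rho(t,u)du|>\delta\}$. Under the product measure $\nu^N_t$, a large deviation (Hoeffding) bound gives $\nu^N_t(A)\le e^{-cN}$. Then
\[
\mu^N_t(A)\le \frac{\log 2 + H_N(t)}{\log(1+e^{cN})} \to 0,
\]
since $\int J\rho(t,u)du$ is the $\nu^N_t$-mean of $\langle\pi^N,J\rangle$ up to a Riemann sum error.

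To control $H_N$, I would use the generalized entropy production bound (Lemma \ref{lem:sec61-1}, referred to earlier) for a time-dependent reference measure that is not invariant:
\[
\partial_t H_N(t) \le \int \frac{1}{\psi_t}\big(N L^*\psi_t - \partial_t\psi_t\big)\, d\mu^N_t ,
\]
where $\psi_t = d\nu^N_t/d\nu_\alpha$ for a fixed reference density $\alpha\in(0,1)$, and $L^*$ is the adjoint generator, which corresponds to jumps to the left. Since $\nu^N_t$ is product Bernoulli, $\psi_t$ is explicit:
\[
\psi_t=\prod_x\Big(\tfrac{\rho(t,x/N)}{\alpha}\Big)^{\eta(x)}\Big(\tfrac{1-\rho(t,x/N)}{1-\alpha}\Big)^{1-\eta(x)}.
\]
The computation to carry out is of $\psi_t^{-1}(NL^*-\partial_t)\psi_t$. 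Write $\lambda(t,u)=\log\frac{\rho}{1-\rho}$. After Taylor expanding $\rho(t,(x+1)/N)-\rho(t,x/N)$ and using that $\rho$ is a $C^1$ solution of \eqref{eq:asym-hyd}, the terms linear in $\bar\eta(x)=\eta(x)-\rho(t,x/N)$ cancel. This cancellation is exactly where the hydrodynamic equation enters. What remains has the form
\[
\sum_x \partial_u\lambda(t,x/N)\,\bar\eta(x)\bar\eta(x+1) + o(N),
\]
with bounded coefficients. Here the bounds $\rho_-\le\rho\le\rho_+$ are what keep $\lambda$ and its derivative bounded.

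The main obstacle is bounding the quadratic term $\int \sum_x a(x/N)\bar\eta(x)\bar\eta(x+1)\,d\mu^N_t$ by $C H_N(t)+o(N)$. I would first replace $\bar\eta(x)\bar\eta(x+1)$ by a block average over a mesoscopic window of size $\ell$, of the form $(\bar\eta^\ell(x))^2$ plus a centered correction. This replacement is a one-block type estimate, which per the Notes of Section \ref{lec5} remains available in the hyperbolic scale. Its cost must be absorbed using the Dirichlet form term from the entropy production lemma, or else handled by summation by parts with error $O(N/\ell)$ plus a local average. Next, apply the entropy inequality with a small parameter $\gamma$:
\[
E_{\mu^N_t}[F] \le \frac{H_N(t)}{\gamma}+\frac{1}{\gamma}\log E_{\nu^N_t}\big[e^{\gamma F}\big].
\]
Under the product measure $\nu^N_t$, the averages $\bar\eta^\ell$ are sub-Gaussian with variance $O(1/\ell)$. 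Splitting into $\ell$ independent sublattices and using Hölder's inequality then gives $\log E_{\nu^N_t}[e^{\gamma F}]=o(N)$ for fixed small $\gamma$ once $\ell\to\infty$ after $N\to\infty$. This yields $\partial_t H_N\le C H_N + o(N)$, and Gronwall's inequality together with $H_N(0)=0$ closes the argument to give $H_N(t)=o(N)$.
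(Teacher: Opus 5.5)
Your proposal is correct and follows essentially the paper's route. Both arguments use Yau's relative entropy method: show $H(\mu^N_t|\nu^N_t)=o(N)$ via Lemma \ref{lem:sec61-1} with the Dirichlet term dropped, apply the $1$-block replacement of Proposition \ref{prop:sec61}, use the entropy inequality with sub-Gaussian bounds over $\ell$-sublattices, close with Gronwall, and finish with the entropy inequality and an exponential bound under $\nu^N_t$.

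The only differences are cosmetic. You center around $\rho(t,x/N)$ first, so the PDE visibly cancels the linear terms before you replace $\bar\eta(x)\bar\eta(x+1)$ by $(\eta^\ell(x)-\rho(t,x/N))^2$; the paper instead replaces each local function first and then Taylor-expands $F(m,\rho)$ at $m=\rho$. Also, the $1$-block lemma is only available in time-integrated form, so your Gronwall step should be run in integral form, as the paper does, rather than as a pointwise differential inequality.
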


\begin{proof}
The sketch of the argument is in a few steps.
\medskip

Step 1.  
Since $\nu^N_t(\eta)>0$ for all $\eta\in \Omega$, we have $\mu^N_t  \ll \nu^N_t$.  Noting Lemma \ref{lem:4.14}, we will show that the relative entropy $H(\mu^N_t|\nu^N_t)= E_{\mu^N_t}\big[ \log \frac{\mu^N_t}{\nu^N_t}\big]$ between the distribution $\mu^N_t$ of $\eta_{Nt}$ and $\nu^N_t$ is $o(N)$.  This step is the main part of the proof.  It takes the $1$-block lemma as input, and is discussed in the next subsection.

\medskip
Step 2.  The entropy inequality (cf. Lemma \ref{lem:entropyinequality}, Exercise \ref{exercise:entropy}) yields for sets $A\subset \Omega$ that
\[\mu^N_t(A) \leq \frac{\log(2) + H(\mu^N_t|\nu^N_t)}{\log \big(1 + 1/\nu^N_t(A)\big)}.\]

 Let now $\epsilon>0$ and
\[ A = \Big\{ \eta: \big | \frac{1}{N}\sum_{x\in \T_N} J(x/N)\big[\eta(x) - \rho(t, x/N)\big]\big|>\epsilon \Big\}.\]
An exponential bound $\nu^N_t(A)\leq e^{-CN}$ may be found as the variables $\{\eta_{Nt}(x):x\in \T_N\}$ are independent under $\nu^N_t$ with means $\{\rho(t, x/N): x\in \T_N\}$.  Therefore, given the relative entropy estimate in Step 1, the theorem follows. 
\end{proof}

\begin{exercise}  \rm Show the `Chernoff' exponential bound in Step 2, by computing the moment-generating functions.  The inequality $0<\rho_-\leq \rho(t, x/N)\leq \rho_+<1$ will be useful.
\end{exercise}

\section{Proof of Step 1: $o(N)$ entropy estimate}
Let $\nu_{1/2}=\prod_{x\in \T_N} {\rm Bern}(1/2)$ denote the product invariant measure of TASEP with density $1/2$.  Recall the form of $L$ in \eqref{eq:exclu_gen} (in $d=1$ and $p(1)=1$, $p(j)=0$ otherwise).  One may compute that the $L^2(\nu_{1/2})$-adjoint operator $L^*_N= NL^*$ of $L_N = NL$ is the generator of the asymmetric exclusion process, speeded up by $\upsilon(N) = N$, where particles jumps left, that is when $p^*(-1)=1$ and $p^*(j)=0$ otherwise:
\begin{align*}
L_N^*h(\eta) = \sum_{x\in \T_N} \eta(x)\big(1-\eta(x-1)\big)\big[h(\eta^{x, x-1}) - h(\eta)\big].
\end{align*}

The probabilities $\mu^N_t$ satisfy the forward equation (cf. Exercise \ref{exercise:6.2.1}),
\begin{equation}
\label{eq:sec61-0}\partial_t \mu^N_t = L^*_N \mu^N_t.
\end{equation}
However, $\mu^N_t$ is no longer a product measure, the process having mixed things up from the initial condition $\nu^N_0$. One feels though from the intuition given in Section \ref{lec2} that the process may not be far from $\nu^N_t$.  The relative entropy $H(t):=H(\mu^N_t|\nu^N_t)$ is a convenient, and analytically tractable measure of how far apart they are.  

\begin{exercise}
\label{exercise:6.2.1}\rm
Perform the computation of $L^*_N$.  Show also $\mu^N_t$ satisfies \eqref{eq:sec61-0} by differentiating 
$\E_{\nu_{1/2}}\big[h(\eta_{Nt})\big]=E_{\nu_{1/2}}\big[f^N_t(\eta)h(\eta)\big]= \sum_\eta \nu_{1/2}(\eta)f^N_t(\eta)h(\eta)$
where the density $f^N_t(\eta) = \frac{\mu^N_t(\eta)}{\nu_{1/2}(\eta)}$ and $h$ is a bounded function.  The relation $\nu_{1/2}(\eta) \equiv (1/2)^N$ for all $\eta\in \{0,1\}^{\T_N}$ will be useful.
\end{exercise}
\medskip

{\it Step A.} We first update the conclusion of Lemmas \ref{entropy_diff} and \ref{lem:sec4-relent}, one of the differences being that $\mu^N_t$ is not the invariant measure.

\begin{lemma}
\label{lem:sec61-1}
We have
\begin{align*}
H'(t) &\leq -\sum_\eta \nu^N_t(\eta) \eta(x)\big(1-\eta(x+1)\big) \left(\sqrt{\frac{\mu^N_t}{\nu^N_t}(\eta^{x,x+1})} - \sqrt{\frac{\mu^N_t}{\nu^N_t}(\eta)}\right)^2\\
&\quad \quad + E_{\mu^N_t}\Big[ \frac{1}{\nu^N_t(\eta)}\big(L^*_N - \partial_t\big)\nu^N_t(\eta)\Big].
\end{align*}
\end{lemma}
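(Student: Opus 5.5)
Set $\psi_t = \mu^N_t/\nu^N_t$ and $f^N_t = \mu^N_t/\nu_{1/2}$. The plan is to differentiate
\[
H(t)=\sum_\eta \mu^N_t(\eta)\log \psi_t(\eta)
\]
directly, which is legitimate since $\Omega=\{0,1\}^{\T_N}$ is finite and $\nu^N_t>0$. There are three contributions. The first is $\sum_\eta \partial_t\mu^N_t \log\psi_t$. The second is $\sum_\eta \mu^N_t\,\partial_t\mu^N_t/\mu^N_t=\sum_\eta\partial_t\mu^N_t = 0$, by mass conservation. The third is $-\sum_\eta \mu^N_t\,\partial_t\nu^N_t/\nu^N_t = -E_{\mu^N_t}\big[(\nu^N_t)^{-1}\partial_t \nu^N_t\big]$, which is the $-\partial_t$ part of the error term in the statement.

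For the first contribution I use the forward equation \eqref{eq:sec61-0}, $\partial_t\mu^N_t = L^*_N\mu^N_t$, where $L^*_N$ is the $\nu_{1/2}$-adjoint acting on densities. Since $\nu_{1/2}$ is uniform, applying it to measures gives
\[
\sum_\eta (L^*_N\mu^N_t)(\eta)\, h(\eta) = \sum_\eta \mu^N_t(\eta)\, (L_N h)(\eta).
\]
Taking $h=\log\psi_t$, the first contribution becomes $E_{\mu^N_t}[L_N\log\psi_t]$. I rewrite this as $\sum_\eta \nu^N_t\,\psi_t\, L_N\log\psi_t$ and apply the inequality $a(\log b-\log a)\le 2\sqrt a(\sqrt b-\sqrt a)$ used in Lemma \ref{lem:sec4-relent}, bond by bond, with $a=\psi_t(\eta)$ and $b=\psi_t(\eta^{x,x+1})$. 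This gives the upper bound
\[
\sum_\eta\nu^N_t(\eta)\sum_x N\eta(x)(1-\eta(x+1))\,2\sqrt{\psi_t(\eta)}\Big(\sqrt{\psi_t(\eta^{x,x+1})}-\sqrt{\psi_t(\eta)}\Big).
\]
Next I use $2\sqrt a(\sqrt b-\sqrt a) = -(\sqrt b-\sqrt a)^2 + (b-a)$ to split this into two pieces. The first piece is minus the Dirichlet-type sum $\sum_\eta\nu^N_t\,\eta(x)(1-\eta(x+1))\big(\sqrt{\psi_t(\eta^{x,x+1})}-\sqrt{\psi_t(\eta)}\big)^2$ summed over $x$, which is the first term in the statement (the factor $N$ from the speed-up is absorbed into the normalization there). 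The second piece is the remainder $\sum_\eta \nu^N_t\,(L_N\psi_t)(\eta)$.

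It remains to identify this remainder with $E_{\mu^N_t}\big[(\nu^N_t)^{-1}L^*_N\nu^N_t\big]$. Because $\nu^N_t$ is not invariant, the remainder does not vanish, and this identification is the main obstacle. I would do it by a change of variables $\zeta=\eta^{x,x+1}$ in $\sum_\eta \nu^N_t(\eta)\eta(x)(1-\eta(x+1))\psi_t(\eta^{x,x+1})$. The indicator $\eta(x)(1-\eta(x+1))$ becomes $\zeta(x+1)(1-\zeta(x))$, and the sum becomes $\sum_\zeta\mu^N_t(\zeta)\,\zeta(x+1)(1-\zeta(x))\,\nu^N_t(\zeta^{x+1,x})/\nu^N_t(\zeta)$. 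Subtracting the diagonal term $\sum_\eta\mu^N_t(\eta)\,\eta(x)(1-\eta(x+1))$ and reindexing $x+1\to x$ gives
\[
E_{\mu^N_t}\Big[\sum_x \eta(x)(1-\eta(x-1))\big(\nu^N_t(\eta^{x,x-1})-\nu^N_t(\eta)\big)\big/\nu^N_t(\eta)\Big].
\]
This is exactly $E_{\mu^N_t}\big[(\nu^N_t)^{-1}L^*_N\nu^N_t\big]$, where $L^*_N$ is applied to $\nu^N_t$ viewed as a function. Care is needed that the jump-count terms match after reindexing, so that the adjoint rates $\eta(x)(1-\eta(x-1))$ come out correctly. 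Adding back the $-\partial_t$ contribution then gives the claimed bound.
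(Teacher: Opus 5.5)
Your proposal is correct and follows the paper's proof. Both differentiate $H$ and use the forward equation with the $\nu_{1/2}$-adjoint relation to reach $E_{\mu^N_t}[L_N\log\psi_t]$. Both then apply $a(\log b-\log a)\le-(\sqrt b-\sqrt a)^2+(b-a)$ bond by bond, and identify the remainder $\sum_\eta\nu^N_t L_N\psi_t$ with $E_{\mu^N_t}[(\nu^N_t)^{-1}L^*_N\nu^N_t]$.

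The only difference is that you check this last identity by the explicit swap $\zeta=\eta^{x,x+1}$. There the diagonal terms agree only after summing over $x$, since on the torus $\sum_x\eta(x)(1-\eta(x+1))=\sum_x\eta(x+1)(1-\eta(x))$. The paper instead moves $L_N$ back to $L^*_N$ via the same adjoint identity, which encodes exactly this fact (invariance of $\nu_{1/2}$).
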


\begin{proof}
Write
\begin{align*}
\partial_t H(\mu^N_t|\nu^N_t) & = \partial_t \sum_\eta \mu^N_t(\eta)\big[\log \mu^N_t(\eta) - \log \nu^N_t(\eta)\big]\\
&=\sum_\eta L^*_N\mu^N_t(\eta)\big[[\log \mu^N_t(\eta) - \log \nu^N_t(\eta)\big]\\
&\quad\quad + \sum_\eta \mu^N_t(\eta) \big[ \frac{1}{\mu^N_t(\eta)}\partial_t \mu^N_t(\eta) - \partial_t \log \nu^N_t(\eta)\big].
\end{align*}
Note that $\partial_t \sum_\eta \mu^N_t(\eta) =0$ as $\sum_\eta \mu^N_t(\eta)\equiv 1$.  Also, 
\begin{align*}
\sum_\eta L^*_N\mu^N_t(\eta) \log \frac{\mu^N_t(\eta)}{\nu^N_t(\eta)}
&= \sum_\eta \nu_{1/2}(\eta) \Big(L^*_N \frac{\mu^N_t(\eta)}{\nu_{1/2}(\eta)} \Big)\log  \frac{\mu^N_t(\eta)}{\nu^N_t(\eta)}\\
&= \sum_\eta \nu_{1/2}(\eta) \frac{\mu^N_t(\eta)}{\nu_{1/2}(\eta)}\Big(L_N  \log  \frac{\mu^N_t(\eta)}{\nu^N_t(\eta)}\Big)\\
&=\sum_\eta \mu^N_t(\eta)L_N  \log  \frac{\mu^N_t(\eta)}{\nu^N_t(\eta)}.
\end{align*}
Then,
\begin{align}
\label{eq:sec61-lem}
\partial_t H(\mu^N_t|\nu^N_t)
&= \sum_\eta \mu^N_t(\eta) L_N \log \frac{\mu^N_t(\eta)}{\nu^N_t(\eta)} - \sum_\eta \mu^N_t(\eta) \partial_t \log \nu^N_t(\eta).
\end{align}

Now, consider the relation
 \[a(\log b - \log a)\ \leq \ 2\sqrt{a}\big(\sqrt{b}-\sqrt{a}\big)\ = \ -\big(\sqrt{b}-\sqrt{a}\big)^2 + \big(b-a\big),\]
  for $a,b>0$.  Then, as $L g(\eta) = \sum_x \eta(x)(1-\eta(x+1))\big[g(\eta^{x,x+1})-g(\eta)\big]$, with $a=\frac{\mu^N_t}{\nu^N_t}(\eta)$ and $b= \frac{\mu^N_t}{\nu^N_t}(\eta^{x,x+1})$,
we observe
\begin{align*}
& \sum_\eta \mu^N_t(\eta)L_N\log \frac{\mu^N_t(\eta)}{\nu^N_t(\eta)} \\
& \ \leq \sum_\eta \nu^N_t(\eta) \sum_x \eta(x)\big(1-\eta(x+1)\big)\cdot 2 \sqrt{\frac{\mu^N_t}{\nu^N_t}(\eta)} \left(\sqrt{\frac{\mu^N_t}{\nu^N_t}(\eta^{x, x+1})} - \sqrt{\frac{\mu^N_t}{\nu^N_t}(\eta)}\right),
\end{align*}
which further equals
\begin{align*} 
&-\sum_\eta \sum_x \nu^N_t(\eta)\eta(x)\big(1-\eta(x+1)\big)\left( \sqrt{\frac{\mu^N_t}{\nu^N_t}(\eta^{x, x+1})} - \sqrt{\frac{\mu^N_t}{\nu^N_t}(\eta)}\right)^2 \\
&\quad \quad\quad + \sum_\eta \nu^N_t(\eta) L_N 
 \frac{\mu^N_t(\eta)}{\nu^N_t(\eta)}.
 \end{align*}
 We may insert this expression into \eqref{eq:sec61-lem}.  Dividing/multiplying by $(1/2)^N\equiv \nu_{1/2}(\eta)$ again and moving $L_N$ to $L^*_N$ on the other side, we obtain the expression desired.
 \end{proof}

We remark that for the `relative entropy' method in the sequel, we will not use the first term, a `Dirichlet' form expression, bounding it above by $0$.  However, the the full force of Lemma \ref{lem:sec61-1} has been useful in other problems; see the Notes.
\medskip

{\it Step B.} Next, we evaluate the second term on the right-hand side in Lemma \ref{lem:sec61-1}.  Explicitly,
\begin{align*}
\nu^N_t(\eta) &= \prod_x \rho(t, x/N)^{\eta(x)}(1-\rho(t,x/N))^{1-\eta(x)}\\
&=\exp\big\{\psi_N(t,\eta)\big\},
\end{align*}
where
\[\psi_N(t,\eta) = \sum_x \Big\{\eta(x)\log \rho(t,x/N) + (1-\eta(x))\log (1-\rho(t,x/N))\Big\}.\]

Hence,
\begin{align*}
&\frac{1}{\nu^N_t(\eta)}L^*_N\nu^N_t(\eta) \\
&\quad\quad= N\sum_x \eta(x+1)(1-\eta(x))\\
&\quad\quad\quad \quad\quad \cdot \Big[\frac{1}{\exp\big\{\psi_N(t, \eta)\big\}} \Big(\exp\big\{\psi_N(t, \eta^{x, x+1})\big\} - \exp\big\{\psi_N(t, \eta)\big\} \Big)\Big]\\
&\quad\quad= N\sum_x \eta(x+1)(1-\eta(x))\Big[\exp\big\{\psi_N(t, \eta^{x, x+1}) - \psi_N(t, \eta)\big\} -1\Big].
\end{align*}
Observe
\begin{align*}
&\psi_N(t, \eta^{x, x+1}) -\psi_N(t,\eta) \\
&\quad= \Big\{\eta(x+1)\log \rho(t, x/N) + (1-\eta(x+1))\log (1-\rho(t, x/N)) \\
&\quad\quad\quad + \eta(x)\log \rho(t, (x+1)/N) + (1-\eta(x))\log (1-\rho(t, (x+1)/N))\Big\}\\
& \quad- \Big\{\eta(x)\log \rho(t, x/N) + (1-\eta(x))\log (1-\rho(t, x/N))\\
&\quad \quad \quad+ \eta(x+1)\log \rho(t, (x+1)/N) + (1-\eta(x+1))\log \rho(t, (x+1)/N)\Big\}.
\end{align*}
Hence, under the condition $\eta(x+1) = 1$ and $\eta(x)=0$, corresponding to $\eta(x+1)(1-\eta(x))=1$, we have by Taylor approximation that
\begin{align*}
&\exp\big\{\psi_N(t, \eta^{x, x+1}) - \psi_N(t, \eta)\big\} -1\\
&\quad= \exp\big\{\log \rho(t, x/N) - \log \rho(t, (x+1)/N) \\
&\quad\quad \quad+ \log (1-\rho(t, (x+1)/N)) - \log (1-\rho(t, x/N))\big\} -1\\
&\quad= \exp\big\{-\frac{1}{N} \frac{\rho'}{\rho}(t, x/N) - \frac{1}{N}\frac{\rho'}{1-\rho}(t, x/N) + o(1/N)\big\} -1\\
&\quad= \frac{-1}{N} \frac{\rho'(t, x/N)}{\rho(1-\rho)(t, x/N)} + o(1/N).
\end{align*}

Therefore, cancelling the `$N$', we obtain
\begin{align*}
\frac{1}{\nu^N_t(\eta)}L^*_N\nu^N_t(\eta) &= -\sum_x \eta(x+1)(1-\eta(x))\frac{\rho'(t, x/N)}{\rho(1-\rho)(t, x/N)} + o(N).
\end{align*}

On the other hand,
\begin{align*}
-\frac{1}{\nu^N_t(\eta)}\partial_t \nu^N_t(\eta) & = -\frac{1}{\exp\big\{\psi_N(t)\big\}}\partial_t \exp\big\{\psi_N(t)\big\}
\ = \ -\partial_t \psi_N(t)\\
&= -\sum_x \eta(x)\frac{\partial_t \rho}{\rho}(t, x/N) + \sum_x (1-\eta(x))\frac{\partial_t \rho}{\rho}(t, x/N).
\end{align*}

Recall $\partial_t \rho = -\partial_u\big(\rho(1-\rho)\big) = (2\rho -1)\partial_u \rho$.  Then, 
by Lemma \ref{lem:sec61-1}, dropping the negative `Dirichlet' term, and dividing by $N$, we conclude
\begin{align*}
\frac{1}{N}H'(t) &\leq E_{\mu^N_t}\Big[ \frac{1}{N} \sum_x \Big\{\eta(x+1)(1-\eta(x)) \frac{-\partial_u \rho}{\rho(1-\rho)}(t, x/N) \\
&\quad -\eta(x)\frac{(2\rho -1)\partial_u\rho}{\rho}(1, x/N)
+ (1-\eta(x))\frac{(2\rho-1)\partial_u\rho}{1-\rho}(t, x/N)\Big\}\Big] + o(1).
\end{align*}

\medskip
{\it Step C.} Now, we invoke a form of the $1$-block lemma in Section \ref{lec5}.  We state it in $d=1$ in the context of TASEP, although it can be generalized to $d\geq 1$ and other processes.  Recall that $\eta^\ell(x) = \frac{1}{2\ell+1}\sum_{|y-x|\leq \ell}\eta(y)$.

\begin{proposition}
\label{prop:sec61}
Let $G:[0,T]\times \T\rightarrow \R$ be continuous.  Let $h$ be a local function and $\tilde h(\rho) = E_{\nu_{\rho}}[h]$.  Then, for $t\in [0,T]$,
\[\lim_{\ell\rightarrow\infty}\lim_{N\rightarrow\infty} \E_{\mu^N}\Big[\Big|\int_0^t \frac{1}{N}\sum_x G(s, x/N)\tau_x \big[h(\eta_{Ns}) - \tilde h(\eta^\ell_{Ns}(0))\big] ds\Big|\Big] = 0.
\]
\end{proposition}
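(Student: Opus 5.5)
The plan is to transfer the 1-block argument of Section \ref{lec5} to TASEP in the hyperbolic scale. The reference measure will be the uniform invariant measure $\nu_{1/2}$, and the key input is an entropy/Dirichlet form bound for the time-averaged density. Set $f^N_s = d\mu^N P_{Ns}/d\nu_{1/2}$ and $\bar f^N_t = t^{-1}\int_0^t f^N_s ds$.

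\emph{Step 1: entropy and Dirichlet form bounds.} Since $\Omega=\{0,1\}^{\T_N}$ is finite and $\nu_{1/2}$ is invariant, Lemmas \ref{entropy_diff} and \ref{lem:sec4-relent} apply with generator $L_N = NL$. Together with $H(\mu^N|\nu_{1/2})\le N\log 2$, they give
\[H(f^N_t) + 2N\int_0^t D(\sqrt{f^N_s})\,ds \le CN.\]
The Dirichlet form is that of $L$, which by \eqref{dirichlet_expression} equals the one built from the symmetric part $S=(L+L^*)/2$, namely symmetric simple exclusion with rate $1/2$ per bond. Convexity of $I$ then yields $I(\bar f^N_t)\le C/t$, an $O(1)$ bound, whereas in Section \ref{lec5} the bound was $O(N^{d-2}) = O(N^{-1})$ per the normalization there. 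This $O(1)$ global bound is still enough for the 1-block step, because only a localization to a box of size $\ell$ is needed.

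\emph{Step 2: reduction and localization.} First use uniform continuity of $G$ to freeze time on small intervals, or simply bound $|G|\le\|G\|_\infty$ after pulling absolute values inside. Also replace $h$ by its spatial average $(2\ell'+1)^{-1}\sum_{|z|\le \ell'}\tau_z h$ with $\ell' = \ell - r$, where $r$ is the range of $h$; this costs $O(\ell/N)$ by continuity of $G$. No truncation is needed since the occupation variables are bounded by $1$. It then suffices to show
\[\lim_{\ell}\lim_N \sup_{I(f)\le C} E_{\nu_{1/2}}\Big[f\,\frac1N\sum_x \tau_x W_\ell\Big]=0,\qquad W_\ell=\Big|\frac{1}{2\ell'+1}\sum_{|z|\le\ell'}\tau_z h - \tilde h(\eta^\ell(0))\Big|.\]
As in Lemma \ref{1-block_dirichlet}, replace $f$ by ${\rm Av}(f)$ and condition on $\mathcal F_\ell$. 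Convexity and translation invariance give $I_\ell(f_\ell)\le C\ell N^{-1}I(f)\le C\ell/N\to 0$.

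\emph{Step 3: limit and equivalence of ensembles.} For fixed $\ell$ the box state space is finite, so by compactness any limit density $f^*$ has $I_\ell(f^*)=0$. Hence $f^*$ is constant on each hyperplane $\{\eta^\ell(0)=k/(2\ell+1)\}$, since the symmetric exclusion in the box is irreducible there. It remains to show
\[\sup_k E_{\nu_{1/2}}\big[W_\ell \,\big|\, \sum_{|y|\le\ell}\eta(y)=k\big]\to 0.\]
Under the conditioning the law is uniform, i.e. the canonical measure, which does not depend on the reference density. I would prove this by the equivalence of ensembles: the canonical law with density $a=k/(2\ell+1)$ is close to $\nu_a$ on boxes of size $\ell'\ll\ell$, with error $O(\ell'/\ell)$ uniformly in $a$ in local total variation. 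Combine this with the law of large numbers for the average of $\tau_z h$ under $\nu_a$, whose variance is $O(1/\ell')$ since $h$ is local and bounded, and with continuity of $\tilde h$, a polynomial in $\rho$. This replaces the local CLT of Exercise \ref{lclt}. I expect this uniform-in-$a$ equivalence of ensembles to be the main technical point, especially near $a\in\{0,1\}$; there, however, the canonical measure is essentially degenerate, so the estimate is easy by direct variance computation under sampling without replacement.
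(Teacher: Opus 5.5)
Your Steps 1 and 2 follow the route the paper sketches: the 1-block scheme of Section \ref{lec5} with reference measure $\nu_{1/2}$ and the Euler-scale Dirichlet bound $O(N^{d-1})=O(1)$, here $\int_0^t D(\sqrt{f^N_s})\,ds\le \tfrac12\log 2$, which gives $I_\ell(f_\ell)\le C\ell/N$. Those steps are fine. One small correction: the spatial-averaging cost is $o(1)$ through the modulus of continuity of $G$, and is $O(\ell/N)$ only if $G$ is Lipschitz in space.

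The gap is in Step 3. In Step 2 you set $\ell'=\ell-r$, so $W_\ell$ contains the average of $\tau_z h$ over $|z|\le \ell-r$, and the supports of these $\tau_z h$ cover all of $\Lambda_\ell$. Total-variation equivalence of ensembles is false at that scale. Since $\nu_{k,\ell}=\nu_a(\cdot\,|\,\sum_{y\in\Lambda_\ell}\eta(y)=k)$, its total variation distance to $\nu_a$ on $\Lambda_\ell$ equals $1-\nu_a(\sum_{y\in\Lambda_\ell}\eta(y)=k)$. This tends to $1$ for $a$ in the bulk, so an error ``$O(\ell'/\ell)$'' with $\ell'=\ell-r$ gives nothing. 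If instead the $\ell'$ of Step 3 is meant as a new scale $m\ll\ell$, you have only shown that $m$-block averages concentrate under $\nu_{k,\ell}$. The passage to the $(\ell-r)$-average that actually appears in $W_\ell$ is then missing.

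The repair is short. Split the $(\ell-r)$-average into about $2\ell/m$ consecutive $m$-block averages plus a remainder of size $O(m/\ell)$, and use convexity of $|\cdot|$. Then combine two bounds:
\begin{itemize}
\item the Diaconis--Freedman bound $C(m+2r)/\ell$, uniform in $k$, on the total variation distance between $\nu_{k,\ell}$ and $\nu_a$ restricted to a window of $m+2r$ sites;
\item the variance bound $O(r/m)$ for an $m$-block average under $\nu_a$.
\end{itemize}
With $m=\ell^{1/2}$ this gives $E_{\nu_{k,\ell}}[W_\ell]=O(\ell^{-1/4})$ uniformly in $k$. Equivalently, you may average at a scale $m\ll\ell$ already in Step 2, which that reduction allows.

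More simply, drop total variation and do, for every $k$, the sampling-without-replacement computation you reserve for $a$ near $0,1$. Write $h=\sum_{A}c_A\prod_{x\in A}\eta(x)$ over $A\subset\{-r,\dots,r\}$. Then
\[
E_{\nu_{k,\ell}}\Big[\prod_{x\in A}\eta(x)\Big]=\prod_{i=0}^{|A|-1}\frac{k-i}{2\ell+1-i}=a^{|A|}+O(1/\ell),
\]
and likewise for disjoint unions. Hence $E_{\nu_{k,\ell}}[\tau_z h]=\tilde h(a)+O(1/\ell)$, and $\mathrm{Cov}_{\nu_{k,\ell}}(\tau_z h,\tau_{z'}h)=O(1/\ell)$ for $|z-z'|>2r$, both uniformly in $k$. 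It follows that $E_{\nu_{k,\ell}}[W_\ell]=O(\ell^{-1/2})$ uniformly in $0\le k\le 2\ell+1$.

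This final step is where your argument genuinely differs from the paper's. The paper closes the 1-block (Proposition \ref{1-block}) with the local CLT of Exercise \ref{lclt}. For exclusion that estimate is not uniform as $a\to 0,1$, where $a(1-a)$ degenerates. The bound above is uniform, which makes the supremum over $k$ harmless.
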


\begin{proof}[Sketch of proof]  Consider the $1$-block shown in Section \ref{lec5}, namely Theorem \ref{replacement} with time-dependent $G(s,x/N)$ replacing $J(x/N)$,  $V_\ell$ replacing $V_{N\epsilon}$, and limits $\lim_{\ell\uparrow\infty}\lim_{N\uparrow}$ replacing $\lim_{\epsilon\downarrow 0}\lim_{N\uparrow\infty}$.  The same scheme can be followed here:  With respect to the first two terms on the right-hand side of \eqref{eq:5.4.1}, continuity of $G(s,x/N)$ in the space variable handles the first term, and as it is Proposition \ref{1-block} takes care of the second term.

A difference is that the time scaling previously was $v(N)=N^2$.  Although this was important for the later $2$-block estimate Proposition \ref{2-block}, for the $1$-block estimate, the Euler scale $v(N)=N$ suffices.  Indeed, we will now obtain the Dirichlet form of the density is of order $O(N^{d-1})$, instead of $O(N^{d-2})$ when $v(N)=N^2$.  Such an $O(N^{d-1})$ estimate is enough to obtain the $1$-block limit.  These computations are left to the reader.
\end{proof}

By Proposition \ref{prop:sec61}, applied with $h(\eta) = \eta(1)(1-\eta(0)$, $\eta(0)$, and $1-\eta(0)$, and $G$ in terms of $\rho(t, u)$, we may bound
\begin{align}
\label{eq:sec61-2}
& \frac{1}{N}H(t) - \frac{1}{N}H(0) \nonumber \\
&\quad\leq \int_0^t 
E_{\mu^N_s}\Big[ \frac{1}{N} \sum_x \Big\{\eta^\ell(x)(1-\eta^\ell(x)) \frac{-\partial_u \rho}{\rho(1-\rho)}(s, x/N) \nonumber\\
&\quad\quad\quad -\eta^\ell(x)\frac{(2\rho -1)\partial_u\rho}{\rho}(s, x/N)
+ (1-\eta^\ell(x))\frac{(2\rho-1)\partial_u\rho}{1-\rho}(s, x/N)\Big\}\Big]ds + o(1) \nonumber
\\
&\quad= \int_0^t E_{\mu^N_s}\Big[\frac{1}{N}\sum_x \partial_u\rho(s, x/N) F\Big(\eta^\ell(x), \rho(t, x/N)\Big)\Big]ds + o(1)
\end{align}
where, for $m,\rho\in [0,1]$,
\[F(m, \rho) = -\frac{m(1-m)}{\rho(1-\rho)} - \frac{m(2\rho-1)}{\rho} + \frac{(1-m)(2\rho-1)}{1-\rho}.\]

Observe that
\[\partial_m F(m, \rho) = -\frac{1-2m}{\rho(1-\rho)} - \frac{2\rho-1}{\rho} - \frac{2\rho-1}{1-\rho}\]
and that it vanishes when $m=\rho$.
Hence,
\[\big| F(m, \rho) - F(\rho, \rho)\big| \leq C\big|m-\rho\big|^2\]
with respect to a constant $C$.

Subtracting and adding terms $F(\rho(s, x/N), \rho(s, x/N))\equiv -1$ in the summation \eqref{eq:sec61-2} is negligible:  Indeed,
\begin{align*}
&\frac{1}{N}\sum_x \partial_u\rho(s, x/N) F\Big(\rho(s, x/N), \rho(s, x/N)\Big)\\
&\quad\quad= -\frac{1}{N}\sum_x \partial_u\rho(s, x/N)\\
&\quad\quad= -\frac{1}{N}\sum_x \Big[N\big(\rho(s, (x+1)/N) - \rho(t, x/N)\big) + O(1/N)\Big] = O(1/N).
\end{align*} 

Therefore, we have that 
\begin{align*}
& \frac{1}{N}H(t) - \frac{1}{N}H(0) \\
&\quad\quad \leq \int_0^t E_{\mu^N_s}\Big[\frac{1}{N}\sum_x \partial_u\rho(s, x/N) \\
&\quad\quad\quad\quad \times\Big\{F\Big(\eta^\ell(x), \rho(s, x/N)\Big) - F\Big(\rho(s, x/N), \rho(s, x/N)\Big)\Big\}\Big]ds + o(1)\\
&\quad\quad \leq \int_0^t E_{\mu^N_s}\Big[\frac{C\|\partial_u\rho\|_\infty}{N}\sum_x \big|\eta^\ell(x) - \rho(s, x/N)\big|^2\Big]ds + o(1).
\end{align*}

\medskip
{\it Step D.} We now estimate the expectation above,  via use of the entropy inequality (Lemma \ref{lem:entropyinequality}):  Let $C=C\|\partial_u \rho(s, x/N)\|_\infty$.  Then, with $\delta>0$,
\begin{align*}
&E_{\mu^N_s}\Big[\frac{C}{N}\sum_x \big|\eta^\ell(x) - \rho(s, x/N)\big|^2\Big] \\
&\quad\quad \leq \frac{C}{\delta N}H(s) + \frac{C}{\delta N}\log E_{\nu^N_s}\Big[\exp\Big\{\delta\sum_x \big|\eta^\ell(x) - \rho(s, x/N)\big|^2\Big\}\Big].
\end{align*}

The variables $\{\eta^\ell(x): x\in \T_N\}$ are $2\ell+1$-dependent.  To gain some independence, divide the interval $\{0,\ldots, N-1\}$ into $\lceil N/(2\ell+1)\rceil$ blocks of width $2\ell+1$, where the possible last block may be of length less than $2\ell+1$.  We may separate the sum over $x\in \T_N$ as a sum over $z$ such that $|z|\leq \ell$ and $w\in \Pi_{N, z, \ell}$ where $\Pi_{N, z, \ell}$ refers to the set $\{z + (2\ell+1)r: r=0, \ldots, \lfloor N/(2\ell+1)\rfloor -1\}$, and also a sum over the remaining $O(\ell)$ indices in the possible overflow block $\Pi'$. 

Let $q_x = \big|\eta^\ell(x) - \rho(s, x/N)\big|^2$.  The sum corresponding to the overflow block can be bounded $\sum_{x\in \Pi'} q_x \leq C_1\ell$.  
We may also center $q_x \leq 2\big| \eta^\ell(x) - \rho^\ell(s, x/N)\big|^2 + 2\big|\rho^\ell(s, x/N) - \rho(s, x/N)\big|^2$ where $\rho^\ell(s, x/N) = (2\ell + 1)^{-1}\sum_{|y|\leq\ell} \rho(s, (y+x)/N)$.  Note $|\rho(s, (x+y)/N) -\rho(s, x/N)| \leq C\|\partial_u\rho\|_\infty\ell/N$ for $|y|\leq\ell$.

Then, for each $|z|\leq \ell$, since $\{q_w: w\in \Pi_{N, z, \ell}\}$ are independent, by H\"older's inequality, 
\begin{align*}
&\frac{1}{N}\log E_{\nu^N_s}\Big[\exp\Big\{\delta \sum_x q_x\Big\}\Big] \\
&\quad \leq \frac{1}{N \ell}\sum_{|z|\leq \ell}\sum_{w\in \Pi(N, z, \ell)} \log E_{\nu^N_s}\Big[ \exp\Big\{2\delta \ell \big |\eta^\ell(w) - \rho^\ell(s, w)\big|^2\Big\}\Big] \\
&\quad\quad\quad + \frac{C_1\delta \ell}{N} + \frac{C(\|\partial_u\rho\|_\infty)\delta \ell}{N}.
\end{align*}

\medskip
{\it Step E.} We now apply a concentration inequality for subgaussian random variables.  We say $X$ is $\sigma^2$-subgaussian
if 
\[\log E\big[e^{\theta X}\big] \leq \sigma^2\theta^2/2\]
for all $\theta\in \R$.  
\begin{lemma}
\label{lem:concentration}
Let $X$ be $\sigma^2$-subgaussian.  Then, if $\gamma \leq (4\sigma^2)^{-1}$,
\[E\big[e^{\gamma X^2}\big] \leq 3.\]
\end{lemma}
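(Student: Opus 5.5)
The plan is the standard Gaussian-linearization (Hubbard--Stratonovich) trick. I would introduce an auxiliary standard normal variable $Z$, independent of $X$, and use the identity $e^{a^2/2} = E_Z\big[e^{aZ}\big]$ for real $a$. Applying it with $a = \sqrt{2\gamma}\,X$ (assume $\gamma>0$; the case $\gamma\leq 0$ is trivial since then $e^{\gamma X^2}\leq 1$) gives
\[
E\big[e^{\gamma X^2}\big] \ = \ E_X E_Z\big[e^{\sqrt{2\gamma}\,Z X}\big].
\]

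Next I would interchange the order of integration by Tonelli, which is allowed since the integrand is nonnegative. Then I apply the subgaussian hypothesis conditionally on $Z$, with $\theta = \sqrt{2\gamma}\,Z$. This yields
\[
E_X\big[e^{\sqrt{2\gamma}ZX}\big] \ \leq \ e^{\sigma^2 \gamma Z^2}.
\]
So $E\big[e^{\gamma X^2}\big] \leq E_Z\big[e^{\sigma^2\gamma Z^2}\big]$.

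The last step is the explicit Gaussian computation $E\big[e^{sZ^2}\big] = (1-2s)^{-1/2}$, valid for $s<1/2$. With $s=\sigma^2\gamma \leq 1/4$ this gives
\[
E\big[e^{\gamma X^2}\big] \ \leq \ (1-2s)^{-1/2} \ \leq \ \sqrt 2 \ \leq \ 3,
\]
which is the claimed bound.

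There is no real obstacle here. The only points needing care are the justification of Fubini/Tonelli and the restriction $s<1/2$, which the hypothesis $\gamma\le (4\sigma^2)^{-1}$ guarantees with room to spare.

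An alternative route, if one prefers to avoid the auxiliary Gaussian, would go through tails. Chernoff's bound gives $P(|X|>t)\leq 2e^{-t^2/(2\sigma^2)}$. Then
\[
E\big[e^{\gamma X^2}\big] = 1+\int_0^\infty 2\gamma t\, e^{\gamma t^2}P(|X|>t)\,dt \ \leq \ 1 + \frac{4\gamma\sigma^2}{1-2\gamma\sigma^2} \ \leq \ 3
\]
when $\gamma\sigma^2\leq 1/4$. I would present the first argument and mention this second one only as a check on the constant.
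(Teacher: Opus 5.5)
Your proof is correct, and both of your arguments check out: the Tonelli swap, the mgf bound applied at $\theta=\sqrt{2\gamma}Z$, and $E[e^{sZ^2}]=(1-2s)^{-1/2}$ give the main route, and the layer-cake integral $1+4s/(1-2s)$ with $s=\gamma\sigma^2$ gives the second.

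The paper does not actually prove this lemma. It is posed as an exercise with a pointer to Proposition E.7 of \cite{Jara-Menezes}.

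The constant $3$ in the statement is exactly what your second, tail-based route produces at the endpoint $s=1/4$, since $1+4s/(1-2s)=3$ there. A power-series variant gives the same number: expand $e^{\gamma X^2}$ and use $E[X^{2k}]\le 2\,k!\,(2\sigma^2)^k$, which follows from the Chernoff tail, to get $1+2\sum_{k\ge1}(2s)^k=3$. So a tail argument is presumably the intended one.

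Your main route via Gaussian linearization is genuinely different. It applies the moment generating function hypothesis directly at a random parameter and never passes through tail probabilities or a layer-cake integral. It also yields the sharper bound $(1-2\gamma\sigma^2)^{-1/2}\le\sqrt2$, which degrades more gently as $\gamma\sigma^2\uparrow 1/2$.

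The tail route, by contrast, uses only the Gaussian tail estimate $P(|X|>t)\le 2e^{-t^2/(2\sigma^2)}$. That hypothesis is weaker than the mgf condition, which forces $X$ to be centered. So the tail route applies more broadly, at the cost of a worse constant.

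Either constant suffices for Step E, where only a uniform bound on $E\big[e^{2\delta\ell|\eta^\ell(w)-\rho^\ell(s,w/N)|^2}\big]$ enters through its logarithm.
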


In our context, the variable $X_w:=\sqrt{\ell}\big(\eta^\ell(w) - \rho^\ell(s, w/N)\big)$, with respect to $\nu^N_s$, is subgaussian with parameter
\[\sigma^2(w) = \frac{2}{(2l+1)} \sum_{|y-w|\leq l} \rho(s, y/N)\big(1-\rho(s, y/N)\big).\]

\begin{exercise} \rm Show Lemma \ref{lem:concentration}.  Also, show that $X_w$ is subgaussion with respect to $\sigma^2(w)$.  Hint:  This is Proposition E.7 in \cite{Jara-Menezes}.
\end{exercise}

Since $0<\rho_-\leq \rho(s, u)\leq \rho_+<1$ on $[0,T]\times \T$, we have $0<2\rho_-\big(1-\rho_+)\leq \sigma^2(w)\leq 2\rho_+\big(1-\rho_-\big)<1$.   Then,
\[E_{\nu^N_s}\Big[\exp\Big\{ 2\delta l \big| \eta^\ell(w) - \rho^\ell(s, w/N)\big|^2\Big\}\Big] \leq 3\]
when $2\delta < 1/[8\rho_+(1-\rho_-)]$.

Hence, for such a $\delta$,
\begin{align*}
\frac{1}{\delta N}\log E_{\nu^N_s}\Big[\exp\Big\{\delta \sum_x q_x\Big\}\Big] &\leq \epsilon(N, \ell, \delta):=\frac{C}{\delta \ell} + \frac{C\ell}{N},
\end{align*}
which vanishes as $N\uparrow\infty$ and then $\ell\uparrow \infty$.

\medskip
{\it Step F.}
Putting things together, we observe that the entropy is bounded as
\begin{align*}
\frac{1}{N}\big(H(t) - H(0) \big)\leq \frac{C}{N\delta }\int_0^t H(s)ds + \epsilon(N, \ell, \delta)t.
\end{align*}
We have taken the initial distribution to be $\nu^N_0$, in which case $H(0)=0$ (an estimate $H(0)=o(N)$ with respect to the initial condition would also suffice).
By Gronwall's bound, we conclude as desired that
\[\frac{1}{N}H(t)\leq  (\delta/C)e^{C\delta^{-1} T} \epsilon(N, \ell, \delta)T = o(1)\]
as $N\uparrow\infty$ and $l\uparrow\infty$.
\qed

\section{Notes}
We have followed the scheme in \cite{Varadhan_notes}, although there are differences; see also Chapter VI in \cite{KL}.  
The hydrodynamic limit presented in Theorem \ref{thm:sec61} may be extended to all times $t\geq 0$, by different methods, where $\rho(t, u)$ is the `physical' solution mentioned earlier.  See for instance \cite{Rost}, when starting from a `step' initial profile $u_0 = 1(u\leq 0)$, and \cite{Ravi}, \cite{rezakhan-hyd} for general initial data.  

However, in general, hydrodynamics for all times has not been shown for processes with drift in Euler scale $\upsilon(N)=N$ that do not satisfy the `basic coupling' or are not `attractive' in the sense given in Subsection \ref{coupling}. It is an open problem to show such hydrodynamics. Here, TASEP can be verified to be `attractive', used in \cite{Ravi}, \cite{rezakhan-hyd}.  See also \cite{Sethuraman-Shahar} for hydrodynamics of long-range asymmetric exclusion and other processes.  

The `relative entropy' method is due to H.T. Yau \cite{yau}.  Other treatments of the method in different contexts may be found in \cite{KL}, \cite{Funaki}.  
Instead of using the concentration inequality for the last argument, one may invoke large deviations bounds as in \cite{KL} for instance.

While the `entropy' method of Guo-Papanicolaou-Varadhan \cite{GPV} presented in Section \ref{lec5} yields existence of a weak solution to the hydrodynamic partial differential equation, the `relative entropy' method shows uniqueness of classical solutions of the hydrodynamic equation: 
Although a requirement for the method is that a priori some smoothness of the hydrodynamic PDE solution is known.

 Interestingly, explicit knowledge of the invariant measures of the process is not needed for the relative entropy method, as Lemma \ref{lem:sec61-1} holds generally with respect to a reference measure $\nu_*$ (we used $\nu_{1/2}$ here for convenience) and approximations $\nu^N_t$ of $\mu^N_t$; see \cite{Jara-Menezes}[Lemma A.1] for the original derivation.

The relative entropy method has been helpful in several other problems, from deriving mean-curvature flows \cite{Funaki-Tsunoda}, \cite{EFHPS}, \cite{Funaki-1}, \cite{Funaki0}, \cite{Funaki1} to non-equilibrium fluctuations \cite{Dagalier-Landim}, \cite{Jara-Menezes}.  Here, the negative Dirichlet term in Lemma \ref{lem:sec61-1} is useful in the estimations.

\newpage
\chapter[Section $7$]{Construction of particle systems in infinite volume}
\label{lec7}

We construct, using the method of Liggett and Spitzer \cite{Liggett_Spitzer} and Andjel \cite{Andjel}, zero-range particle systems on $\Z^d$.  The approach is to take a limit of the processes restricted to large but finite sets.  Such a construction would also hold for exclusion and related models.  Other construction techniques are mentioned in the Notes subsection.

\section{What does it mean to construct a process?}

  In Section \ref{lec1}, on finite or countable state spaces $\Omega$, `construction of the process' meant determining the transition probabilities $P_t(x,y)= P(\eta(t)=y|\eta(0)=x)$ of a Markov chain, in terms of infinitesimal rates, from which probabilities $P(\eta(t)\in A|\eta(0)=x)$ could be found for $A\subset \Omega$.  One could then write down then the semigroup operator $P_t$, acting on bounded functions, and understand its connection to the generator $L$ in terms of backward and forward equations.
    
On more exotic spaces $\Omega$, to construct a Markov process usually means building a semigroup $P_t$, that is an operator, with the `Chapman-Kolmogorov' property $P_{t+s}=P_tP_s$ for $s,t\geq 0$, acting on a collection of functions on $\Omega$, and relating $P_t$ to a generator $L$ via backward and forward evolution equations.  In this context, one can usually associate, by Kolmogorov's extension theorem, a probability $P^\eta[\eta(t)\in d\zeta]$, with initial condition $\eta$, on the Borel sets in $\Omega$.  Sometimes, we will want the semigroup $P_t$ to have certain properties, such as `strong continuity' or the `Feller property', but this is not guaranteed (cf. \cite{Liggett_zr}[p. 247]).

For the particle systems we have studied, exclusion and zero-range models, we would like to extend the background space $\T^d_N$ to $\Z^d$.  Then, the configuration space would be $\Omega = \{0,1\}^{\S}$ for exclusion systems, and $\Omega = \N_0^{\S}$ for zero-range models.  With respect to exclusion systems, $\Omega$ is compact, which is helpful and allows different ways to construct the process.  However, for zero-range models, since $\Omega$ is not compact, some care must be taken and assumptions on the parameters, namely the rate function $g$ and transition probability $p$, should be made.  

Interestingly, if $g$ is in general unbounded, but Lipschitz, not all configurations $\eta\in \Omega$ may be `allowed`.  In other words, we will construct the zero-range process on a strict subset $\Omega'\subset \Omega$, so that once begun in $\Omega'$, the process will stay in $\Omega'$.  Part of the reason for this restriction is that if the rate function is large, particles jump faster, and the process may be influenced from particles at `infinity'; see the end of Section 2 in \cite{Andjel} for an example.

 On the other hand, if $g$ is bounded, then the process can be constructed on the full $\Omega$ by a different method \cite{Holley}.
 
 \section{Zero-range model and statement of results}
 
 We will assume in the following that
 $g:\N_0\rightarrow \R_+$ satisfies $g(0)=0$ and $g(k)>0$ for $k\geq 1$, and $g$ is Lipschitz:  There is a constant $a_0$ such that
 \begin{itemize}
 \item[(LIP)] $\sup_{k\geq 0} |g(k+1)-g(k)|\ \leq \ a_0.$
\end{itemize}
In particular, we do not assume in this Section \ref{lec7} that $g$ is increasing.

Also, we will take the jump probability $p$ on $\S$ ($p(x,y)\geq 0$ and $\sum_{y\in \S}p(x,y)=1$) to be irreducible and such that $\lim_{x\rightarrow\infty} p(x,y) = 0$ for all $y\in \S$.  Note that we do not assume that $p$ is translation-invariant, e.g. $p(x,y) = p(0,y-x) = p(y-x)$, or that $p$ is finite-range, that is when $p(x)=0$ for $|x|>R$ for some $R<\infty$.

\medskip
We now specify the allowed configuration space $\Omega'$.  For $x\in \S$,
let $$\beta(x) \ =\  \sum_{n\geq 0} \frac{p^{(n)}(x,0)}{2^n}$$
where $p^{(n)}(x,y)$ is the probability that a single particle reaches $y\in \S$ in $n$ steps from $x$.
Observe that 
\begin{equation}
\label{beta_ineq}
\sum_{y\in \S}p(x,y)\beta(y) \ \leq \ 2\beta(x).\end{equation}

\begin{exercise}\rm
Show \eqref{beta_ineq}.
\end{exercise}

Define for $\eta,\zeta\in \Omega = \N_0^\S$ the norm
$$\|\eta-\zeta\| \ = \ \sum_{x\in \S} |\eta(x)-\zeta(x)|\beta(x).$$
The collection of allowed configurations will be
\begin{equation}
\label{def:omega'}
\Omega' \ = \ \Big\{\eta\in \Omega| \|\eta\|= \sum_{x\in \S}\eta(x)\beta(x) <\infty\Big\}.
\end{equation}
Examples of configurations in $\Omega'$ include those with only a finite number of particles, for instance:  We say a configuration $\eta$ is finite if the number of particles $\sum_{x\in \S}\eta(x)<\infty$.

We now define a class of `Lipschitz' functions on which we construct the process.  We say that $f:\Omega'\rightarrow \R$ is Lipschitz if there is a constant $c$ such that
$$|f(\eta) - f(\zeta)| \ \leq \ c\|\eta-\zeta\|$$
for all $\eta,\zeta\in \Omega'$.  Let $c(f)$ be the smallest such constant $c$.

Denote by $\L$ the collection of all Lipschitz functions on $\Omega'$.  Note that $\L$ includes `simple' functions, those say depending on a finite number of variables $\eta(x)$ and taking on a finite number of values.  Moreover, trivially, $f(\eta)= \|\eta\|$ belongs to $\L$ with $c(f)=1$.

Define the operator $L$, which we will identify later as our generator, on functions in $\L$ and $\eta\in \Omega'$ by
$$(Lf)(\eta) \ = \ \sum_{x,y\in \S} p(y)g(\eta(x))\big[f(\eta^{x,x+y}) - f(\eta(x))\big].$$

\begin{lemma}
\label{lem:Lbound}
The operator $L$ is well defined for $f\in \L$ and $\eta\in \Omega'$, and
$$|Lf(\eta)| \ \leq \ 3a_0c(f)\|\eta\|.$$
\end{lemma}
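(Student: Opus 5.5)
The plan is to bound each summand of $Lf(\eta)$ separately, using the Lipschitz property of $f$ and (LIP), and then to sum using \eqref{beta_ineq}. The generator is presumably meant with a general, not necessarily translation-invariant, $p$. I will write it as $\sum_{x,y}p(x,y)g(\eta(x))[f(\eta^{x,y})-f(\eta)]$, so that a particle moves from $x$ to $y$; the notation $p(y)$, $\eta^{x,x+y}$ is the translation-invariant special case.

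\emph{Step 1 (one jump).} The configurations $\eta^{x,y}$ and $\eta$ differ by one particle removed at $x$ and one added at $y$. Hence
\[
\|\eta^{x,y}-\eta\| \le \beta(x)+\beta(y),
\]
and $\eta^{x,y}\in\Omega'$ whenever $\eta\in\Omega'$. Since $f\in\mathcal L$,
\[
|f(\eta^{x,y})-f(\eta)|\le c(f)\big(\beta(x)+\beta(y)\big).
\]

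\emph{Step 2 (rate bound).} Since $g(0)=0$, (LIP) gives $g(k)\le a_0 k$ for all $k$, so $g(\eta(x))\le a_0\eta(x)$.

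\emph{Step 3 (summation).} Each term of the series is therefore bounded in absolute value by
\[
a_0c(f)\,\eta(x)\,p(x,y)\big(\beta(x)+\beta(y)\big).
\]
Summing over $y$, stochasticity $\sum_y p(x,y)=1$ handles the $\beta(x)$ part. For the $\beta(y)$ part, \eqref{beta_ineq} gives $\sum_y p(x,y)\beta(y)\le 2\beta(x)$. So the sum over $y$ is at most $3a_0c(f)\,\eta(x)\beta(x)$. Summing over $x$ then gives
\[
\sum_{x,y}\big|p(x,y)g(\eta(x))[f(\eta^{x,y})-f(\eta)]\big| \le 3a_0c(f)\|\eta\|<\infty
\]
for $\eta\in\Omega'$. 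This absolute convergence shows $Lf(\eta)$ is well defined, independent of summation order, and that it satisfies the stated bound.

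The only point needing care is the appeal to \eqref{beta_ineq}, which is itself an exercise. It follows by writing
\[
\sum_y p(x,y)\beta(y)=\sum_{n\ge0}2^{-n}p^{(n+1)}(x,0)=2\sum_{m\ge1}2^{-m}p^{(m)}(x,0)\le 2\beta(x).
\]
Everything else is routine. The constant $3$ arises precisely as $1+2$ from these two contributions.
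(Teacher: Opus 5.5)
Your proposal is correct and follows essentially the same route as the paper: you bound each jump term by $c(f)\big(\beta(x)+\beta(y)\big)$ using the Lipschitz property, use $g(k)\le a_0k$ from (LIP) together with $g(0)=0$, and sum using stochasticity of $p$ and \eqref{beta_ineq} to obtain the constant $3=1+2$. Your absolute-convergence remark and your verification of \eqref{beta_ineq}, which the paper leaves as an exercise, fill in details the paper only sketches; the one small caveat is that $\eta^{x,y}\in\Omega'$ needs $\eta(x)\ge 1$, but when $\eta(x)=0$ the term carries the factor $g(0)=0$ and is simply taken to vanish.
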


\begin{proof}
Note that $\|\eta^{x, x+y} - \eta\| = \beta(x+y) + \beta(x)$.  The desired estimate follows from
\begin{eqnarray*}
|Lf(\eta)| & \leq & \sum_{x,y}p(x,x+y)g(\eta(x))\big|f(\eta^{x,x+y})-f(\eta)\big|\\
&\leq& c(f)\sum_{x,y}p(x,x+y)g(\eta(x))\big(\beta(x+y) - \beta(x)\big)\\
&\leq& a_0c(f)\sum_{x,y}p(x,x+y)\eta(x)\big(\beta(x+y) +\beta(x)\big)\\
&\leq& 3a_0c(f)\|\eta\|
\end{eqnarray*}
using (LIP), and \eqref{beta_ineq} in the last step.
\end{proof}

We now come to the main theorems.  

\begin{theorem}
\label{construction1}
There exists a semigroup $P_t$ on $\L$, which has specification $P_tf(\eta) = E^\eta[f(\eta_t)]$ for $f\in \L$ and finite configurations $\eta$ in terms of the countable-state process.

Moreover, for $\eta, \zeta\in \Omega'$ and $f\in \L$, the semigroup satisfies
$$\big|P_tf(\eta) - P_tf(\zeta)\big| \ \leq \ c(f)e^{4a_0t} \|\eta-\zeta\|$$
and $c(P_t f) \leq c(f)e^{4a_0t}$.
Also,
$$P_tf(\eta) \ = \ f(\eta) + \int_0^t LP_sf(\eta)ds.$$
\end{theorem}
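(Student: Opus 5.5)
The plan follows the Liggett--Spitzer/Andjel scheme: first obtain the Lipschitz estimate for finite configurations, where the process is a countable-state Markov chain, and then extend by density to all of $\Omega'$. For finite $\eta$, the total number of particles is conserved and finite. Particles can still travel far, however, so the chain on finite configurations is regular: the jump rate is at most $\sum_x g(\eta(x))\leq a_0\sum_x\eta(x)$, which is bounded along the trajectory. Hence $P_tf(\eta)=E^\eta[f(\eta_t)]$ is well defined for $f\in\L$, where integrability follows from $|f(\eta_t)|\leq |f(\eta)|+c(f)\|\eta_t-\eta\|$ together with a moment bound on $\|\eta_t\|$. The backward equation $P_tf(\eta)=f(\eta)+\int_0^tLP_sf(\eta)ds$ holds for finite $\eta$ by the countable-state theory of Section \ref{lec1}, once we know $P_sf\in\L$ so that Lemma \ref{lem:Lbound} gives integrability.

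The key step is the estimate $|P_tf(\eta)-P_tf(\zeta)|\leq c(f)e^{4a_0t}\|\eta-\zeta\|$ for finite $\eta,\zeta$. I would use a coupling, not the monotone basic coupling of Subsection \ref{coupling}, since $g$ need not be increasing here. For $\zeta=\eta+\delta_z$ (one extra particle), run the joint chain in which at each site $x$ both copies jump together to the same destination at rate $\min\{g(\eta(x)),g(\zeta(x))\}$, and the excess rate $|g(\eta(x))-g(\zeta(x))|\leq a_0|\eta(x)-\zeta(x)|$ moves only one copy. Then $\zeta_t-\eta_t$ remains a signed discrepancy. Its weighted norm $D_t=\|\eta_t-\zeta_t\|$ satisfies
\[
\frac{d}{dt}E[D_t]\leq a_0\,E\Big[\sum_x|\eta_t(x)-\zeta_t(x)|\sum_y p(x,y)\big(\beta(y)+\beta(x)\big)\Big]\leq 3a_0E[D_t],
\]
using \eqref{beta_ineq}. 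A more careful accounting, in which a discrepancy is moved rather than created, gives the generator bound $\sum_y p(x,y)\beta(y)-\beta(x)\leq\beta(x)$; either way Gronwall yields $E[D_t]\leq e^{4a_0t}\|\eta-\zeta\|$. Discrepancies of opposite sign may annihilate, which only helps. For general finite $\eta,\zeta$, I would interpolate through a chain of single-particle changes, or run the same coupling directly with many discrepancies, since the computation is linear in $|\eta-\zeta|$. Then $|P_tf(\eta)-P_tf(\zeta)|\leq c(f)E[D_t]$.

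Extension: finite configurations are dense in $\Omega'$ under $\|\cdot\|$ (truncate $\eta$ to a large box). The Lipschitz bound therefore lets $P_tf$ extend uniquely to a Lipschitz function on $\Omega'$ with $c(P_tf)\leq c(f)e^{4a_0t}$. The semigroup property $P_{t+s}=P_tP_s$ holds on finite configurations by the Markov property, and it passes to the limit because both sides are Lipschitz in $\eta$ with controlled constants. For the integral equation on $\Omega'$, take finite $\eta_n\to\eta$. Lemma \ref{lem:Lbound} and the Lipschitz property of $LP_sf$ in $\eta$ give convergence of $\int_0^tLP_sf(\eta_n)ds$. I expect this last point to be the main obstacle: Lemma \ref{lem:Lbound} only bounds $|Lh(\eta)|$, not $|Lh(\eta)-Lh(\zeta)|$. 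I would show directly that $|Lh(\eta)-Lh(\zeta)|\leq C\,c(h)(\|\eta-\zeta\|+\cdots)$ by splitting the sum over sites where $\eta,\zeta$ agree and differ, or more simply use dominated convergence on each term $p\,g(\eta_n(x))[h(\eta_n^{x,x+y})-h(\eta_n)]$. The domination is summable by the computation in Lemma \ref{lem:Lbound} applied to $\sup_n\|\eta_n\|\leq\|\eta\|$, and $h=P_sf$ is continuous on $\Omega'$ uniformly in $s\leq t$.
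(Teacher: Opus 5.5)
Your argument is correct, but it takes a different route from the paper.

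\textbf{The paper's route.} The paper never uses the chain on finite configurations of $\Z^d$. It works with finite-volume dynamics on the cubes $A_L$, driven by the kernels $q_L$ in which jumps out of $A_L$ are suppressed. Particles outside $A_L$ are then frozen, so $P^{(L)}_tf(\eta)$ makes sense for every $\eta\in\Omega'$.
\begin{itemize}
\item It proves the same min-coupling Lipschitz estimate on the cubes (Lemma \ref{semigroup_Lip}). Since $q_L$ only satisfies \eqref{beta_ineq} with constant $3$, the rate there is $e^{4a_0t}$.
\item It shows $\{P^{(L)}_tf(\eta)\}_L$ is Cauchy, uniformly for bounded $t$, $\|\eta\|$ and $c(f)$, by combining the Duhamel identity (Lemma \ref{identity}), the generator-difference bound (Lemma \ref{diff_gen_est}) and the branching-process bound on first moments (Lemma \ref{branching}).
\item It then passes the semigroup property and the integral equation to the limit (Proposition \ref{cauchy} and \eqref{lim_ex}).
\end{itemize}

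\textbf{Your route.} You replace all of this perturbation machinery with three facts. The chain on finite configurations is regular, with total rate at most $a_0\sum_x\eta(x)$. Finite configurations are $\|\cdot\|$-dense in $\Omega'$. The coupling bound makes $P_tf$ Lipschitz on that dense set, so it extends uniquely. The semigroup property then follows from uniqueness of Lipschitz extensions, and the integral equation from dominated convergence along box truncations.

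\textbf{What each buys.} Your route gives the sharper constant $e^{3a_0t}$. It also proves the clause ``$P_tf(\eta)=E^\eta[f(\eta_t)]$ for finite $\eta$'' directly; the paper's proof leaves this implicit, since one would still need the box processes to converge to the countable chain. The paper's route instead yields the quantitative comparison \eqref{2.5} between semigroups with different kernels. That is the Liggett--Spitzer template, and it also covers systems with no natural finite-configuration chain.

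\textbf{Two small imprecisions.} First, with one extra particle and nonmonotone $g$, an $\eta$-only jump can create new discrepancies rather than preserve a single one. Your bound $\beta(x)+\beta(x+y)$ already covers this, so only the many-discrepancy version of the computation is needed. Second, the termwise domination in your last step needs $\eta_n\le\eta$ pointwise, which holds for truncations, not merely $\|\eta_n\|\le\|\eta\|$.
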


We observe, when there are only a finite number of particles in the system, $P_t$ and $L$ are the semigroup and generator of a countable state Markov chain.

More properties are given in the following result.
\begin{theorem}
\label{construction2}
We have for $f\in \L$ and $\eta\in \Omega'$ that
\begin{itemize}
\item [(i)] $|L P_s f(\eta)| \leq 3c(f)e^{4a_0 s}\|\eta\|$ and $|P_t f(\eta) - f(\eta)| \leq (4a_0)^{-1}c(f)\|\eta\|(e^{4a_0t}-1)$
\item [(ii)] $\lim_{t\downarrow 0} t^{-1}\big[P_tf(\eta) -f(\eta)\big] = Lf(\eta)$
\item [(iii)] $LP_tf(\eta) = P_tLf(\eta)$.
\end{itemize}
\end{theorem}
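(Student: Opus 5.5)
The plan is to derive all three items from Theorem \ref{construction1}, which gives the Lipschitz bound $c(P_sf)\le c(f)e^{4a_0s}$ and the integral equation $P_tf=f+\int_0^t LP_sf\,ds$. Lemma \ref{lem:Lbound} will then control everything. The order is: (i) first, then (ii) from (i), and finally (iii) via the semigroup property. Throughout, I will first check each identity on finite configurations, where $P_t$ is the countable-state semigroup. I will then pass to general $\eta\in\Omega'$ by approximating with finite configurations $\eta_n\uparrow\eta$, for which $\|\eta_n-\eta\|\to0$, using the Lipschitz continuity of $P_t f$ in $\eta$.

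\textbf{Step (i).} Since $P_sf\in\L$ with $c(P_sf)\le c(f)e^{4a_0s}$, Lemma \ref{lem:Lbound} gives
\[
|LP_sf(\eta)|\le 3a_0c(f)e^{4a_0s}\|\eta\|,
\]
which is the first bound, with the constant $a_0$ absorbed as in the statement. Integrating this in the integral equation gives
\[
|P_tf(\eta)-f(\eta)|\le \int_0^t 3a_0c(f)e^{4a_0 s}\,ds\,\|\eta\|\le (4a_0)^{-1}c(f)\|\eta\|(e^{4a_0t}-1)
\]
up to the stated constant.

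\textbf{Step (ii).} Here I need $s\mapsto LP_sf(\eta)$ to be right-continuous at $s=0$. Then $t^{-1}\int_0^t LP_sf(\eta)\,ds\to Lf(\eta)$. Write
\[
LP_sf(\eta)-Lf(\eta)=\sum_{x,y}p(y)g(\eta(x))\big[h_s(\eta^{x,x+y})-h_s(\eta)\big],
\qquad h_s=P_sf-f.
\]
Each term tends to $0$ by (i), since $|h_s(\zeta)|\le C(e^{4a_0s}-1)\|\zeta\|$. Dominated convergence then applies: the summands are bounded by $2c(f)e^{4a_0s}\,p(y)g(\eta(x))(\beta(x)+\beta(x+y))$, which is summable by (LIP) and \eqref{beta_ineq}.

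\textbf{Step (iii).} By the semigroup property, $t^{-1}[P_{t+h}f-P_tf]=P_t\big(h^{-1}(P_hf-f)\big)$. The right derivative of $P_tf(\eta)$ is $LP_tf(\eta)$ by (ii) applied to $P_tf\in\L$. It would equal $P_tLf(\eta)$ if I could pass the limit inside $P_t$. This interchange is the main obstacle: $Lf$ need not be Lipschitz, and $P_t$ is only defined on $\L$. I would first try to prove the identity for finite $\eta$, where $P_t$ is an honest expectation. There, (i) gives domination $|h^{-1}(P_hf-f)(\zeta)|\le C\|\zeta\|$, and $E^\eta\|\eta_t\|\le e^{Ct}\|\eta\|$ follows from Gronwall using $L\|\cdot\|\le 3a_0\|\cdot\|$. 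Dominated convergence then gives $LP_tf(\eta)=P_tLf(\eta)$ for finite $\eta$.

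To extend to general $\eta\in\Omega'$, I would define $P_tLf(\eta)$ as the limit along finite approximants $\eta_n\uparrow\eta$, i.e.\ interpret it through this extension. The left side $LP_tf(\eta_n)$ converges to $LP_tf(\eta)$ by the same dominated-convergence argument as in (ii), using the Lipschitz bound on $P_tf$. If that extension is awkward, a fallback is to use the integral form: differentiate $P_t f=f+\int_0^tLP_sf\,ds$ and compare with $P_tf-f=\int_0^tP_sLf\,ds$ on finite configurations.
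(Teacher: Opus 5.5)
Your treatment of (i) and (ii) matches the paper's. You apply Lemma~\ref{lem:Lbound} to $P_sf$ with $c(P_sf)\le c(f)e^{4a_0s}$ and integrate in the identity of Theorem~\ref{construction1}. You then get right-continuity of $s\mapsto LP_sf(\eta)$ at $0$ by dominated convergence with the majorant $\sum_{x,y}p(y)g(\eta(x))(\beta(x)+\beta(x+y))\le 3a_0\|\eta\|$. The constant mismatch you flag is already present in the statement. Your argument for (iii) on finite configurations is also the paper's argument, restricted to those $\eta$.

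The gap is the passage to general $\eta\in\Omega'$ in (iii). The obstacle you name, that $P_t$ is only defined on $\L$, is not actually there. Lemmas~\ref{stayinset} and~\ref{lem:7.2.6} give, for \emph{every} $\eta\in\Omega'$, a kernel $P^\eta[\eta_t\in d\zeta]$ with three properties: it is concentrated on $\Omega'$, it satisfies $E^\eta\|\eta_t\|\le e^{4a_0t}\|\eta\|$, and $P_th(\eta)=E^\eta[h(\eta_t)]$ for $h\in\L$ or $h\ge 0$. So the right side of (iii) means $E^\eta[Lf(\eta_t)]$, which is well defined since $|Lf|\le 3a_0c(f)\|\cdot\|$.

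Your extension instead \emph{defines} $P_tLf(\eta)$ as $\lim_n P_tLf(\eta_n)$ along finite $\eta_n\uparrow\eta$. You do show this limit exists and equals $LP_tf(\eta)$, but then (iii) holds by fiat. You have not shown that the limit equals $E^\eta[Lf(\eta_t)]$. That would require $E^{\eta_n}[Lf(\eta_t)]\to E^\eta[Lf(\eta_t)]$. The only continuity in the initial configuration you have is $|P_th(\eta_n)-P_th(\eta)|\le c(h)e^{4a_0t}\|\eta_n-\eta\|$ for $h\in\L$, and this does not cover $Lf$. Approximating $Lf$ by $h_s=s^{-1}(P_sf-f)\in\L$ does not rescue it, because the available bound on $c(h_s)$ grows like $s^{-1}$.

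The fix is to drop the finite-configuration detour and run your dominated-convergence argument directly under the kernel:
\begin{itemize}
\item By the semigroup property and (ii) applied to $P_tf\in\L$, $LP_tf(\eta)=\lim_{s\downarrow 0}P_th_s(\eta)=\lim_{s\downarrow0}E^\eta[h_s(\eta_t)]$.
\item By (i), $|h_s|\le C\|\cdot\|$ uniformly for $0<s\le 1$.
\item By (ii), $h_s\to Lf$ pointwise on $\Omega'$.
\item $\|\eta_t\|$ is $P^\eta$-integrable.
\end{itemize}
Hence the limit is $E^\eta[Lf(\eta_t)]=P_tLf(\eta)$. This is exactly how the paper proves (iii), with no approximation by finite configurations.
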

We prove Theorems \ref{construction1} and \ref{construction2} in Subsections \ref{sec:construction1}, and \ref{sec:construction2}.

\subsection{The meaning of Theorem 7.2.3}
\label{sec:meaning_construction}

Recall that `simple' or cylinder functions, $f(\eta) = \prod_{j=1}^k 1_{A_j}(\eta(x_j))$ for $A_j\subset \N_0$ and $\{x_j\}_{j=1}^k \subset \S$, $k<\infty$, are Lipschitz functions.  Since, for a given $\eta\in \Omega'$, one may approximate $\eta$ by finite configurations $\zeta^n$ such that $\|\eta - \zeta^n\|\downarrow 0$ as $n\uparrow\infty$, by Theorem \ref{construction1}, the semigroup $P_tf(\eta)$ may be computed from that of countable state Markov chains:
$P_tf(\eta) = \lim P_tf(\zeta^n)=\lim E^{\zeta^n}[f(\eta_t]$.  Therefore, the finite dimensional distributions of $(\eta_t(x), t\geq 0, x\in \S)$ for a given initial configuration $\eta\in \Omega'$ may be identified.

Then, by Kolmogorov's extension theorem, there exists a probability measure $P^\eta[\eta(t)\in d\zeta]$ on Borel sets in $\Omega$ (not necessarily for the moment on $\Omega'$!) for each $t\geq 0$ and $\eta\in \Omega'$ such that
\begin{equation}
\label{finite}
P_tf(\eta) \ = \ \int P^\eta[\eta_t\in d\zeta]f(\zeta) \ = \ E^\eta[f(\eta_t)]\end{equation}
for cylinder functions $f$.

\begin{lemma}\label{stayinset} For $\eta\in \Omega'$, we have $E^\eta\big[\|\eta_t\|\big]<\infty$, and therefore the measure $P^\eta[\eta_t\in d\zeta]$ concentrates on $\Omega'$.
\end{lemma}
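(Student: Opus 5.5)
The plan is to bound $E^\eta[\|\eta_t\|]$ by monotone truncation of the norm into cylinder functions, using the semigroup bound of Theorem \ref{construction2}(i). For a finite set $B\subset\S$ and $M<\infty$, let
\[f_{B,M}(\zeta)=\sum_{x\in B}\big(\zeta(x)\wedge M\big)\beta(x).\]
This function depends on finitely many coordinates and takes finitely many values, so it is a cylinder function. It is also Lipschitz with $c(f_{B,M})\leq 1$, since $|a\wedge M-b\wedge M|\leq|a-b|$. Since $f_{B,M}(0)=0$ for the empty configuration $0$, Theorem \ref{construction1} gives $|P_tf_{B,M}(\eta)|=|P_tf_{B,M}(\eta)-P_tf_{B,M}(0)|\leq e^{4a_0t}\|\eta\|$. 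Alternatively, Theorem \ref{construction2}(i) gives $P_tf_{B,M}(\eta)\leq f_{B,M}(\eta)+(4a_0)^{-1}\|\eta\|(e^{4a_0t}-1)\leq \|\eta\|e^{4a_0t}$.

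Next, I will identify $P_tf_{B,M}(\eta)$ with $E^\eta[f_{B,M}(\eta_t)]$ using \eqref{finite}. The function $f_{B,M}$ is a finite linear combination of indicator cylinder functions $1(\zeta(x)=k)$, and \eqref{finite} holds for each of these by construction of $P^\eta$ via Kolmogorov's extension. Linearity of $P_t$ on $\L$ then gives $E^\eta[f_{B,M}(\eta_t)]\leq e^{4a_0t}\|\eta\|$ uniformly in $B$ and $M$. The bounds in Theorem \ref{construction1} show that $P_t$ is linear on $\L$, since it arises as a limit of finite-configuration expectations.

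Letting $M\uparrow\infty$ and then $B\uparrow\S$, the functions $f_{B,M}(\zeta)$ increase to $\|\zeta\|$ for every $\zeta\in\Omega$, allowing the value $+\infty$. Monotone convergence under $P^\eta[\eta_t\in d\zeta]$ then yields $E^\eta[\|\eta_t\|]\leq e^{4a_0t}\|\eta\|<\infty$. It follows that $\|\eta_t\|<\infty$ $P^\eta$-a.s., that is $P^\eta[\eta_t\in\Omega']=1$. Note that $\Omega'=\{\zeta: \|\zeta\|<\infty\}$ is Borel in $\Omega$, because $\|\cdot\|$ is a countable increasing limit of measurable cylinder functions.

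The main obstacle is the identification step: \eqref{finite} is stated only for cylinder functions, so I must make sure $f_{B,M}$ qualifies. That is why I truncate by $M$ rather than use $\sum_{x\in B}\zeta(x)\beta(x)$ directly. If needed, a backup is to approximate $\eta$ by finite configurations $\zeta^n$, where $E^{\zeta^n}$ is an honest countable-state expectation, and pass to the limit via the Lipschitz bound of Theorem \ref{construction1}.
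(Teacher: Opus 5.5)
Your proposal is correct and follows the paper's argument: apply the Lipschitz bound of Theorem \ref{construction1} with $\zeta=0$ to the norm and read off $E^\eta[\|\eta_t\|]\leq e^{4a_0t}\|\eta\|$. The difference is that your cylinder truncations $f_{B,M}$ and monotone convergence justify identifying $P_t$ applied to $\|\cdot\|$ with $E^\eta[\|\eta_t\|]$, which the paper simply asserts even though \eqref{finite} is only available for cylinder functions at that point. (Your aside via Theorem \ref{construction2}(i) gives the constant $e^{4a_0t}$ only when $4a_0\geq 1$, but any finite multiple of $\|\eta\|$ suffices.)
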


\begin{proof}
We apply Theorem \ref{construction1} to the function $f(\eta) = \|\eta\|$ belonging to $\L$ with $c(f)=1$.  Hence,
$$e^{4a_0t}\|\eta\| \ \geq \ |P_tf(\eta)| \ = \ |E^\eta[f(\eta_t)]| \ \ = \ E^\eta\big[\|\eta_t\|\big]$$
and so $\|\eta_t\|<\infty$ a.s. starting from $\eta$. \end{proof}

\begin{lemma} 
\label{lem:7.2.6}
For $\eta\in \Omega'$, we may identify for $f\geq 0$ or $f\in \L$ that
$$P_tf(\eta) \ = \ \int P^\eta[\eta_t\in d\zeta]f(\zeta) \ = \ E^\eta[f(\eta_t)].$$
\end{lemma}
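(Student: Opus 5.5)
The plan is to extend the identity \eqref{finite} from cylinder functions to all nonnegative measurable $f$ and to all $f\in\L$. For nonnegative $f$, the right-hand side $\int P^\eta[\eta_t\in d\zeta]f(\zeta)$ always makes sense. So for $f\ge 0$ the statement is read as a definition extending $P_t$. It must then be checked that it agrees with the semigroup of Theorem \ref{construction1} whenever $f\ge0$ also lies in $\L$. Thus the real content is: for $f\in\L$ and $\eta\in\Omega'$, $P_tf(\eta)=E^\eta[f(\eta_t)]$, where the expectation is finite by Lemma \ref{stayinset}.

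First, I would pass from cylinder functions to bounded Lipschitz functions depending on finitely many coordinates. Such a function is a bounded function of $(\eta(x_1),\dots,\eta(x_k))\in\N_0^k$. It is approximated pointwise by the truncations $f_M(\eta)=f(\eta(x_1)\wedge M,\dots,\eta(x_k)\wedge M)$, which are finite linear combinations of cylinder indicators. Each $f_M$ is Lipschitz with $c(f_M)\le c(f)$, since truncation is a contraction in each coordinate and hence in $\|\cdot\|$.

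On the semigroup side, I would compare through finite configurations. For finite $\zeta^n\to\eta$ in $\|\cdot\|$, the estimate in Theorem \ref{construction1} gives $|P_tf_M(\eta)-P_tf_M(\zeta^n)|\le c(f)e^{4a_0t}\|\eta-\zeta^n\|$, uniformly in $M$. For finite $\zeta^n$, $P_t$ is the countable-chain expectation. Hence $P_tf_M(\zeta^n)\to P_tf(\zeta^n)$ by bounded convergence, and uniformity in $M$ gives $P_tf_M(\eta)\to P_tf(\eta)$. On the measure side, $E^\eta[f_M(\eta_t)]\to E^\eta[f(\eta_t)]$, again by bounded convergence.

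Next, for general $f\in\L$, I would approximate by finitely-dependent functions. The natural choice is $f_n(\eta)=f(\eta 1_{\Lambda_n})$, with $\Lambda_n\uparrow\S$ finite, further truncated as $(f_n\wedge K)\vee(-K)$ to keep it bounded. Then $c(f_n)\le c(f)$ and $|f_n(\zeta)-f(\zeta)|\le c(f)\sum_{x\notin\Lambda_n}\zeta(x)\beta(x)$, which tends to $0$ for each $\zeta\in\Omega'$. Integrability comes from the domination $|f_n(\zeta)|\le |f(0)|+c(f)\|\zeta\|$ and $E^\eta\|\eta_t\|\le e^{4a_0t}\|\eta\|$ (Lemma \ref{stayinset}). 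By dominated convergence, $E^\eta[f_n(\eta_t)]\to E^\eta[f(\eta_t)]$; the $K$-truncation is removed in the same way.

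The main obstacle is the semigroup side, $P_tf_n(\eta)\to P_tf(\eta)$. Pointwise convergence does not control $P_t$ directly, because Theorem \ref{construction1} only gives Lipschitz-in-$\eta$ continuity, not continuity in $f$. I would handle this by linearity and positivity of $P_t$ on finite configurations, inherited by the limit. The key bound is
\[
|f_n-f|(\zeta)\le c(f)\,\|\zeta 1_{\Lambda_n^c}\|=:c(f)h_n(\zeta),
\]
where $h_n\in\L$ with $c(h_n)\le 1$. For finite $\zeta$ this gives $|P_tf_n(\zeta)-P_tf(\zeta)|\le c(f)P_th_n(\zeta)$. Passing to $\eta\in\Omega'$ via the Lipschitz estimate preserves this inequality, since $h_n$ is Lipschitz. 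It then remains to show $P_th_n(\eta)\to0$. Since $h_n$ is nonnegative, nonincreasing in $n$, and its pieces $\zeta(x)\beta(x)$ are monotone limits of cylinder functions, I would use monotone convergence together with the cylinder identity to get $P_th_n(\eta)=E^\eta[h_n(\eta_t)]$. That expectation tends to $0$ by dominated convergence, using the bound $E^\eta\|\eta_t\|<\infty$ from Lemma \ref{stayinset}. This concludes the identification for $f\in\L$, and the nonnegative case follows as described above.
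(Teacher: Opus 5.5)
\textbf{The gap is in your last step.} You obtain $P_th_n(\eta)=E^\eta[h_n(\eta_t)]$ ``by monotone convergence together with the cylinder identity,'' and that step does not follow.

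At a point $\eta\in\Omega'$ with infinitely many particles, $P_t$ is known only as the limit of the finite-volume semigroups $P^{(L)}_t$. For non-cylinder functions it is not yet known to be integration against $P^\eta[\eta_t\in d\zeta]$. So monotone convergence can be applied to $E^\eta[\,\cdot\,]$, but not to $P_t(\cdot)(\eta)$.

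Concretely, take the cylinder combinations $g_{F,M}(\zeta)=\sum_{x\in F}\beta(x)(\zeta(x)\wedge M)\uparrow h_n$. Positivity of $P_t$ and \eqref{finite} give only
\[
E^\eta[h_n(\eta_t)]=\lim E^\eta[g_{F,M}(\eta_t)]=\lim P_tg_{F,M}(\eta)\le P_th_n(\eta).
\]
This is the wrong direction for showing $P_th_n(\eta)\to 0$. The identity you want is itself an instance of the lemma for the non-cylinder function $h_n\in\L$, which is exactly the obstacle you had just flagged. The same unjustified interchange is hidden in ``the $K$-truncation is removed in the same way,'' on the semigroup side.

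\textbf{The repair is the device from your own second paragraph.} It combines three facts:
\begin{itemize}
\item Lipschitz constants are uniform along the approximation.
\item Finite configurations are dense in $\Omega'$.
\item From a finite configuration with $m$ particles, the countable chain keeps $\|\eta_t\|\le 2m$ (since $\beta\le 2$), so every function in $\L$ is bounded along its paths.
\end{itemize}
With $\zeta^k=\eta\,1_{\Lambda_k}$ and $c(f_n)\le c(f)$,
\[
|P_tf_n(\eta)-P_tf(\eta)|\le 2c(f)e^{4a_0t}\|\eta-\zeta^k\|+c(f)\,E^{\zeta^k}[h_n(\eta_t)].
\]
For fixed $k$, the last term vanishes as $n\uparrow\infty$ by bounded convergence; then let $k\uparrow\infty$. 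This needs no identification of $P_th_n(\eta)$ at all.

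Running your second paragraph the same way for unbounded finitely-dependent $f_n$ also makes the $K$-truncation unnecessary. On the measure side, use the domination $|f(0)|+c(f)\|\eta_t\|$ from Lemma \ref{stayinset}.

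With this fix your argument is complete. It is also more careful than the paper's proof, which passes to the limit on both sides of \eqref{finite} by monotone or dominated convergence and takes the limit as the definition of $P_tf(\eta)$. The paper does not check that this agrees with the semigroup of Theorem \ref{construction1}.
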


\begin{proof} When $f\geq 0$, one can approximate $f$ by nonnegative simple functions (belonging to $\L$). Then, by monotone convergence, one may take a limit in \eqref{finite}, and define $P_tf(\eta)$ in this way.

When $f\in \L$, however, not necessarily positive, we may still approximate $f(\eta)$ by simple functions in $\L$, dominated by $|f(\eta)|$ pointwise for $\eta\in \Omega'$.  Since $|f(\eta)| \leq c(f)\|\eta\| + |f(0)|$ when $\eta\in \Omega'$, the 
 point is that, by Lemma \ref{stayinset} $\eta\mapsto \|\eta\|$ is integrable, and so we can pass to the limit in \eqref{finite} to define $P_tf(\eta)$.
 \end{proof}

\section{Construction estimates on a finite cube}

The strategy to construct an infinite volume semigroup $P_t$ is first to obtain estimates on the semigroup and generator which are well defined when the space is finite, and then use these estimates to define $P_t$ as the volume grows.  Throughout this subsection, the underlying space will be a cube of width $2L+1$: 
$$A_L\ = \ \big\{x: |x_i|\leq L, 1\leq i\leq d\big\}.$$ 
Let $P_t = P_t^{(A)}$ and $\eta_t=\eta_t^{(A)}$ be the (countable-state) zero-range process corresponding to a transition probability $p(x,y)=p_A(x,y)$ on $A=A_L$, with no transitions from $A$ to $A^c$.

We now bound the means of such a process at times $t\geq 0$.

\begin{lemma}
\label{branching}   For $t\geq 0$, and $y\in A$, we have
$$E^\eta[\eta_t(y)] \ \leq \ \sum_{x\in A}\eta(x) \sum_{\ell =0}^\infty \frac{(a_0t)^\ell}{\ell!} p^{(\ell)}(x,y).$$
Here, $p^{(\ell)}(x,y)$ is the $\ell$-step transition probability from $x$ to $y$.
\end{lemma}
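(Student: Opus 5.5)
We compare the first moments with a linear system. Set $m_t(y) = E^\eta[\eta_t(y)]$ for $y\in A$. Since $A$ is finite and (LIP) with $g(0)=0$ gives $g(k)\leq a_0 k$, the total number of particles is conserved and all jump rates are bounded by $a_0\sum_x\eta(x)$. The process on $A$ is therefore a regular countable-state chain living on the finite set $\Omega_K$ of configurations with $K=\sum_{x}\eta(x)$ particles, so the forward equation applies with no integrability issues. We apply it to $f(\zeta)=\zeta(y)$, using the generator restricted to $A$:
\[
Lf(\zeta) \ = \ \sum_{x\in A} g(\zeta(x))p(x,y) \;-\; g(\zeta(y))\sum_{z\in A} p(y,z).
\]
The outgoing term is nonpositive, and $g(\zeta(x))\leq a_0\zeta(x)$. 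Hence we obtain the differential inequality
\[
\frac{d}{dt} m_t(y) \ \leq \ a_0\sum_{x\in A} m_t(x)p(x,y) \;-\; g\text{-outflow} \ \leq \ a_0\sum_{x\in A} m_t(x)p(x,y).
\]
Alternatively, we can use the martingale $M^f_t$ from Proposition \ref{martingale} to get the integral form
\[
m_t(y)\ \leq\ \eta(y) + a_0\int_0^t \sum_{x} m_s(x)p(x,y)\,ds .
\]

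Next we define the comparison function
\[
u_t(y) = \sum_x \eta(x)\sum_\ell \frac{(a_0t)^\ell}{\ell!}p^{(\ell)}(x,y) = \big(\eta\, e^{a_0 tP}\big)(y).
\]
The series converges because $p^{(\ell)}\leq 1$ and $A$ is finite. This $u$ solves $\dot u_t = a_0 u_t P$ with $u_0=\eta$; equivalently,
\[
u_t(y) = \eta(y) + a_0\int_0^t \sum_x u_s(x)p(x,y)\,ds .
\]
The claim then follows from a comparison (Gronwall-type) argument for a linear system with nonnegative coefficients.

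To carry out the comparison, let $w_t = u_t - m_t$. Then $w_0=0$ and
\[
\dot w_t(y) \ \geq\ a_0\sum_x w_t(x)p(x,y).
\]
Writing $\dot w_t = a_0 w_tP + h_t$ with $h_t\geq 0$, the variation of constants formula gives
\[
w_t = \int_0^t h_s\, e^{a_0(t-s)P}\,ds .
\]
Since $e^{a_0(t-s)P}$ has nonnegative entries, $w_t\geq 0$. Equivalently, one can iterate the integral inequality $m\leq \eta + a_0\int mP$ $n$ times. Because $m_s(x)\leq \sum_z\eta(z)$ is bounded, the remainder term is at most $C(a_0t)^n/n!\to 0$, which yields $m_t\leq u_t$ directly.

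The main point needing care is justifying the forward equation, or the martingale identity, for the unbounded function $f(\zeta)=\zeta(y)$. I plan to handle it by the observation above: particle number is conserved, so the chain effectively lives on a finite state space where $f$ is bounded. Everything else is routine.
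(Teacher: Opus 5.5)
Your proof is correct, but it takes a different route from the paper's. The paper couples the zero-range process on $A$ with a multitype branching random walk $\eta^+_t$. In that process each particle at $x$ stays put and gives birth at rate $a_0$ to an offspring placed at $x+y$ with probability $p(x,x+y)$. Since $g(\eta(x))\le a_0\eta(x)\le a_0\eta^+(x)$, every zero-range jump can be matched by a birth, so $\eta_t\le\eta^+_t$ coordinatewise along paths. The paper then computes $E[\eta^+_t(y)]$ exactly from the linear system $\dot w=a_0wP$.

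You work only with first moments. Dropping the outflow term and using $g(k)\le a_0k$ is the averaged version of that coupling: parents do not leave, and births occur at rate $a_0$. The positivity of $e^{a_0sP}$, through variation of constants or Picard iteration, replaces the check that the coupling preserves order.

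What each approach buys:
\begin{itemize}
\item \emph{Your route} is more elementary and fully rigorous as written. You only apply the forward equation on the finite set $\Omega_K$. The paper's step $\frac{d}{dt}E[\eta^+_t(x)]=E[L^+\eta^+_t(x)]$ instead tacitly needs a (standard) integrability justification, because $f$ is unbounded and the branching population grows without bound.
\item \emph{The paper's coupling} gives a stronger, pathwise domination. It controls any coordinatewise increasing functional of $\eta_t$, such as higher moments, not just the means.
\end{itemize}
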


\begin{proof}  The argument is by coupling the zero-range process to a continuous-time multitype branching process $\eta^+_t$ on $\N_0^A$ with generator
$$L^+f(\eta^+) \ = \ \sum_{x,y} a_0\eta(x)p(x,x+y)\big[f(\eta^+ + \delta_{x+y})-f(\eta^+)\big].
$$
Here, $\eta^+ + \delta_z$ is the configuration which adds a particle at site $z$ to $\eta^+$.  Note also the sum is over $x, y$ where $x, x+y\in A$.  An inspection of the formula reveals that $\eta_t^+$ is such that each particle at $x$ gives birth to a new particle at rate $a_0$.  This new particle is then displaced by $y$ with probability $p(x,x+y)$.  Alternatively, each particle gives birth to two new particles before it dies; one is kept at the birth location $x$, and the other is displaced by $y$.  

The coupling of $\eta_t$ and $\eta^+_t$ as follows:  Since by (LIP), $g(\eta(x))\leq a_0\eta(x)$, whenever a zero-range particle displaces from $x$ to $x+y$, one may arrange also that the branching process gives birth at $x$ and creates a new particle at $x+y$.  In this way, if the two processes are started from the same initial configuration, then $\eta_t(x)\leq \eta_t^+(x)$ for all $x\in A$ and $t\geq 0$.  

In particular, when $\eta\leq \eta^+$ coordinatewise, we have $E^\eta[\eta_t(x)] \leq E^{\eta^+}[\eta^+_t(x)]$ for all $x\in A$.

To finish, we need only calculate $E^{\eta^+}[\eta^+_t(x)]$.  Observe that
\begin{align*}
\frac{d}{dt}E^{\eta^+}[\eta^+_t(x)] &= E^{\eta^+}\big[L^+ \eta^+_t(x)\big]\\
&= \sum_z a_0 E^{\eta^+}[\eta_t^+(z)] p(z, x).
\end{align*}
Then, $\vec w_t :=\big(E^{\eta^+}[\eta_t^+(y)]: y\in A\big)$ satisfies
$\dot {\vec w}_t = a_0 \vec w_t P$ where $P= \big(p(z, x)\big)$ is the transition matrix.  Hence,
$\vec w_t = \vec w_0 e^{a_0 Pt}$.  In particular, 
\begin{align*}
E^{\eta^+}[\eta^+_t(y)]=& \sum_{x\in A} \eta(x) \sum_{\ell=0}^\infty \frac{(a_0t)^\ell}{\ell!} p^{(\ell)}(x,y). \qedhere
\end{align*}
\end{proof}

\begin{lemma}
\label{semigroup_Lip}
For $t\geq 0$ and $f\in \L$, we have $P_tf\in \L$ and $c(P_tf) \leq c(f)e^{3a_0t}$.
\end{lemma}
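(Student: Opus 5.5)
Since $\|\eta\|$ is not a bounded function on $\N_0^A$, we cannot simply apply the generator estimate of Lemma \ref{lem:Lbound}. Instead, the plan is to bound $|P_tf(\eta)-P_tf(\zeta)|$ through a coupling of two finite-cube zero-range processes, one started at $\eta$ and one at $\zeta$, and then to control the expected weighted discrepancy $E\|\eta_t-\zeta_t\|$ by a Gronwall argument. It is enough to treat $\zeta=\eta+\delta_z$, a single extra particle at $z\in A$. Indeed, a general pair $\eta,\zeta$ is connected by a path through $\eta\wedge\zeta$, adding or removing one particle at a time, and $\|\cdot\|$ is additive along such a path. Hence a bound $|P_tf(\eta+\delta_z)-P_tf(\eta)|\le c(f)e^{3a_0t}\beta(z)$ yields $c(P_tf)\le c(f)e^{3a_0t}$ by the triangle inequality.

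For the coupling, run $(\eta_t,\zeta_t)$ with $\zeta_0=\eta_0+\delta_z$ by the basic-coupling generator of Subsection \ref{coupling}. At each site $x$ the common rate $\min\{g(\eta(x)),g(\zeta(x))\}$ moves particles jointly, and the excess rate $|g(\eta(x))-g(\zeta(x))|$ moves a particle in only one marginal. Because $g$ need not be increasing, the order $\eta_t\le\zeta_t$ is not preserved. However, the discrepancy configuration $\xi_t=|\zeta_t-\eta_t|$ still has total mass $\sum_x\xi_t(x)\le 1$ throughout. Starting from one discrepancy, a single-marginal jump at $x$ either moves the discrepancy from $x$ to $x+y$, or cancels it and creates one elsewhere, so the count never exceeds one. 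The only exception is a site where $\eta(x)=\zeta(x)$, but there the excess rate vanishes, so no new discrepancy can arise.

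Write $X_t$ for the location of the discrepancy. Then $\|\eta_t-\zeta_t\|=\beta(X_t)$, and $|f(\zeta_t)-f(\eta_t)|\le c(f)\beta(X_t)$. The discrepancy jumps from $x$ to $x+y$ at rate at most $|g(\eta(x))-g(\zeta(x))|\,p(x,x+y)\le a_0\,p(x,x+y)$ by (LIP). Applying the martingale formula of Proposition \ref{martingale} to the function $\beta(X_t)$ gives
\[
\frac{d}{dt}E[\beta(X_t)] \le a_0\,E\Big[\sum_y p(X_t,y)\beta(y)-\beta(X_t)\Big]\le a_0E[\beta(X_t)],
\]
where the last step uses \eqref{beta_ineq}. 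The drift may be negative, and the bound by $a_0$ times the current value holds after dropping the negative part.

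This estimate gives only the growth rate $e^{a_0t}$, which is better than the $e^{3a_0t}$ claimed, so there is slack. That slack matters for the step I expect to be the main obstacle: justifying the accounting rigorously. One must check that the discrepancy count stays at most one when the two marginals sit at different heights with non-monotone $g$. The key observation is that an extra particle can be created only at the site of the existing discrepancy, since elsewhere the occupations agree. One must also verify the integrability needed for the martingale computation on the countable space $\N_0^A$. For that I would use Lemma \ref{branching}, which controls $E^\eta[\eta_t(y)]$ and hence all the rates, or alternatively stop the process at the first time the particle number exceeds $K$ and let $K\uparrow\infty$. If the sharper constant turned out to fail, the cruder count (one removed site plus two sites affected per jump) gives the factor $3a_0$, matching the statement. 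Finally, $P_tf\in\L$ also requires $P_tf$ to be finite. This follows from $|f(\eta)|\le|f(0)|+c(f)\|\eta\|$ together with Lemma \ref{branching} and \eqref{beta_ineq}, which give $E\|\eta_t\|<\infty$.
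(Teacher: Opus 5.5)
Your reduction to $\zeta=\eta+\delta_z$ is fine. The step everything rests on is false, however: that under the basic coupling $\sum_x|\zeta_t(x)-\eta_t(x)|\le 1$, so that $\|\eta_t-\zeta_t\|=\beta(X_t)$. It fails when $g$ is not increasing, which is exactly the setting of Section \ref{lec7}. You are right that new discrepancies can only arise at the current discrepancy site $X$, but they do arise there. If $\zeta(X)=\eta(X)+1$ and $g(\eta(X))>g(\eta(X)+1)$, the coupling makes $\eta$-only jumps $X\to X+y$ at rate $\big(g(\eta(X))-g(\zeta(X))\big)p(X,X+y)$. After such a jump, $\zeta(X)-\eta(X)=2$ and $\eta(X+y)-\zeta(X+y)=1$, which is three units of discrepancy. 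This is not only a defect of the coupling: the rate $a_0$ you derive is itself false. Take nearest-neighbour $p$, a site $X\neq 0$ away from the boundary of $A$, and $g(1)=a_0$, $g(k)=\epsilon$ for $k\ge 2$. Let $\eta=\delta_X$, $\zeta=2\delta_X$ and $f(\xi)=\|\xi-2\delta_X\|$, so that $c(f)\le 1$. Using $\sum_y p(x,y)\beta(y)=2\beta(x)$ for $x\neq 0$,
\[
\frac{d}{dt}\Big|_{t=0}\big[P_tf(\eta)-P_tf(\zeta)\big]=(a_0-\epsilon)\sum_y p(X,X+y)\big(\beta(X)+\beta(X+y)\big)=3(a_0-\epsilon)\beta(X).
\]
Hence $|P_tf(\eta)-P_tf(\zeta)|=\beta(X)\big(1+3(a_0-\epsilon)t\big)+O(t^2)$. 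This exceeds $e^{a_0t}\beta(X)$ for small $t$ once $\epsilon<2a_0/3$. The constant $3a_0$ is therefore essentially sharp, and the slack you identified does not exist.

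Your closing fallback, that the cruder count gives $3a_0$, is the right repair, but as stated it is not an argument. Once discrepancies multiply there is no single $X_t$ to follow, and you must track $\phi(\eta,\zeta)=\|\eta-\zeta\|$ for arbitrary coupled pairs. That is precisely the paper's proof:
\begin{itemize}
\item joint moves leave $\phi$ unchanged;
\item a one-marginal move at $x$ changes $\phi$ by at most $\beta(x)+\beta(x+y)$;
\item such a move occurs at rate $|g(\eta(x))-g(\zeta(x))|p(x,x+y)\le a_0|\eta(x)-\zeta(x)|p(x,x+y)$.
\end{itemize}
Then \eqref{beta_ineq} gives $\bar L\phi\le 3a_0\phi$, and Gronwall yields $\bar E\|\eta_t-\zeta_t\|\le e^{3a_0t}\|\eta-\zeta\|$ for all pairs at once. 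No reduction to one extra particle is needed, and no monotonicity is used. The paper's remark that the order $\eta_t\le\zeta_t$ is preserved likewise needs $g$ increasing, but its estimate never uses it. Your single-discrepancy argument is correct, and gives the better rate $e^{a_0t}$, only in the attractive case where $g$ is increasing.
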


\begin{proof}
Consider the basic coupling in Subsection \ref{coupling}, the joint Markov process on $(\N_0^A)^2$ generated by
\begin{align*}
\bar L\phi(\eta,\zeta) =& \sum_{x,x+y\in A} \min\{g(\eta(x)),g(\zeta(x))\} p(x,x+y) \big[\phi(\eta^{x,x+y},\zeta^{x,x+y})-\phi(\eta,\zeta)\big]\\ 
&\ + \ \sum_{x,x+y\in A} \big(g(\eta(x))-g(\zeta(x))\big)_+ p(x,x+y) \big[\phi(\eta^{x,x+y},\zeta)-\phi(\eta,\zeta)\big]\\
&\ + \ \sum_{x,x+y\in A} \big(g(\zeta(x))-g(\eta(x))\big)_+ p(x,x+y) \big[\phi(\eta,\zeta^{x,x+y})-\phi(\eta,\zeta)\big].
\end{align*}
Let $\bar P_t$ be the coupled process semigroup.  Both marginals are zero-range process, and if initially $\eta \leq \zeta$, then the coordinatewise ordering is preserved, $\eta(t)\leq \zeta(t)$.

Now write
\begin{eqnarray*}
|P_tf(\eta) - P_t(\zeta)| \ = \ |\bar P_t(f(\eta) - f(\zeta)|\ 
\leq \ c(f) \bar P_t\|\eta - \zeta\|.
\end{eqnarray*}
We give now an estimate of the derivative of the right-hand side.  Note
\begin{align*}
\|\eta^{x,x+y} - \zeta\| - \|\eta - \zeta\| & =  \|\eta - \zeta^{x,x+y}\| - \|\eta - \zeta\| \\
& = \ \beta(x+y) + \beta(x)\end{align*}
and $\|\eta^{x, x+y} - \zeta^{x,x+y}\| - \|\eta-\zeta\| = 0$.
Then, adding together the positive and negative parts of $g(\eta(x))-g(\zeta(x))$, with an application of \eqref{beta_ineq}, we obtain
\begin{eqnarray*}
\bar L\|\eta - \zeta\| & \leq  & a_0 \sum_{x,x+y\in A} |\eta(x)-\zeta(x)|p(x,x+y) \big(\beta(x+y) + \beta(x)\big)\\
&\leq & 3a_0\|\eta - \zeta\|.
\end{eqnarray*}

Hence, since $\frac{d}{dt}\bar P_t \|\eta - \zeta\| = \bar P_t \bar L \|\eta - \zeta\|$,
by comparison, we obtain
\[|P_t f(\eta) - P_tf(\zeta)| \ \leq \ c(f)e^{3a_0t}\|\eta - \zeta\|, \]
which finishes the proof.  
\end{proof}

Let now $(P^1_t, L_1)$ and $(P^2_t, L_2)$ be zero-range processes on $A$ according to jump probabilities $p_1$ and $p_2$ that do not allow transitions from $A$ to $A^c$.

\begin{lemma}
\label{diff_gen_est}
For $f\in \L$, we have
\begin{align*}
&|(L_1-L_2)f(\eta)| \\
&\quad  \ \leq \ a_0c(f)\sum_{x,x+y\in A} \eta(x)|p_1(x,x+y)-p_2(x,x+y)|\big(\beta(x) + \beta(x+y)\big).
\end{align*}
\end{lemma}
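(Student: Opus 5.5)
The plan is a direct computation from the generator formula, in the spirit of Lemma \ref{lem:Lbound}. I will write both generators on the cube $A$ in their common form: for $i=1,2$,
\[
L_if(\eta) \ = \ \sum_{x,x+y\in A} g(\eta(x))\,p_i(x,x+y)\big[f(\eta^{x,x+y})-f(\eta)\big].
\]
Both sums run over the same finitely many bonds of $A$. Since $\eta$ has finitely many particles in $A$, only finitely many terms are nonzero, so there is no convergence issue.

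First, I subtract the two expressions term by term. The rate factor $g(\eta(x))$ and the increment $f(\eta^{x,x+y})-f(\eta)$ are common, which gives
\[
(L_1-L_2)f(\eta) \ = \ \sum_{x,x+y\in A} g(\eta(x))\big(p_1(x,x+y)-p_2(x,x+y)\big)\big[f(\eta^{x,x+y})-f(\eta)\big].
\]
Next, I apply the triangle inequality and bound each factor in turn.

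For the rate factor, I use (LIP) together with $g(0)=0$ to get $g(\eta(x))\le a_0\eta(x)$, exactly as in Lemma \ref{branching}.

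For the increment, I use the Lipschitz property of $f$ in $\L$: $|f(\eta^{x,x+y})-f(\eta)|\le c(f)\|\eta^{x,x+y}-\eta\|$. Moving one particle from $x$ to $x+y$ changes $\eta$ by one at each of these two sites, so $\|\eta^{x,x+y}-\eta\|=\beta(x)+\beta(x+y)$. This identity was already noted in the proof of Lemma \ref{lem:Lbound}. The only care needed is the degenerate case $y=0$; there the increment vanishes, so the bound still holds trivially.

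Multiplying the three bounds and summing over $x,x+y\in A$ gives the stated estimate. I do not expect any real obstacle here. The only point to watch is that the terms with $\eta(x)=0$ vanish on both sides, so the bound $g(\eta(x))\le a_0\eta(x)$ loses nothing essential.
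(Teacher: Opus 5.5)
Your proof is correct and takes the intended route. The paper leaves this lemma as an exercise, to be done by the same computation as in Lemma \ref{lem:Lbound}. That computation is what you wrote: subtract the generators term by term, bound $g(\eta(x))\le a_0\eta(x)$ using (LIP) and $g(0)=0$, and use $|f(\eta^{x,x+y})-f(\eta)|\le c(f)\big(\beta(x)+\beta(x+y)\big)$, with the $y=0$ and $\eta(x)=0$ terms vanishing trivially.
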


\begin{lemma} 
\label{identity}
We have
$$P^1_tf(\eta) - P^2_tf(\eta) \ = \ \int_0^t P^1_s\big[L_1-L_2\big] P^2_{t-s}f(\eta) ds.$$
\end{lemma}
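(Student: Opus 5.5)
The plan is the standard Duhamel (variation of constants) argument for two semigroups on the same countable state space. Fix $t>0$, $\eta$ and $f\in\L$, and define for $0\leq s\leq t$
$$\Phi(s) \ = \ P^1_s P^2_{t-s} f(\eta).$$
Then $\Phi(0)=P^2_tf(\eta)$ and $\Phi(t)=P^1_tf(\eta)$. The identity is exactly $\Phi(t)-\Phi(0)=\int_0^t \Phi'(s)\,ds$ once we establish
$$\Phi'(s) \ = \ P^1_s L_1 P^2_{t-s}f(\eta) - P^1_s L_2 P^2_{t-s}f(\eta).$$

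The derivative comes from the Kolmogorov equations of Section \ref{lec1}. Differentiating the outer factor uses the forward equation $\frac{d}{ds}P^1_s h = P^1_s L_1 h$ with $h=P^2_{t-s}f$. Differentiating the inner factor uses the backward equation $\frac{d}{dr}P^2_r f = L_2P^2_r f$ at $r=t-s$, which produces the minus sign. A product-rule step, splitting $\Phi(s+\delta)-\Phi(s)$ as
$$\big[P^1_{s+\delta}-P^1_s\big]P^2_{t-s-\delta}f \ + \ P^1_s\big[P^2_{t-s-\delta}-P^2_{t-s}\big]f,$$
then gives the formula, provided limits may be interchanged with the sums defining $P^1_s$. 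Equivalently, one can avoid differentiating $\Phi$ and instead integrate the forward equation in $s$ after writing $P^2_{t-s}$ via Kolmogorov.

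The main obstacle is justifying these interchanges, since the state space $\N_0^A$ is countable but not finite and $f$ need not be bounded. Two facts help.

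First, the number of particles is conserved. From a fixed $\eta$ with $K$ particles, both processes stay in the finite set $\Omega_K$ of configurations on the finite cube $A$ with $K$ particles. Restricted to $\Omega_K$, $P^i_s$ is the matrix exponential $e^{sL_i}$ of a finite generator matrix.

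Second, the identity is then the purely algebraic statement
$$e^{tL_1}-e^{tL_2} \ = \ \int_0^t e^{sL_1}(L_1-L_2)e^{(t-s)L_2}\,ds$$
for finite matrices. This holds because $\frac{d}{ds}\big[e^{sL_1}e^{(t-s)L_2}\big]=e^{sL_1}(L_1-L_2)e^{(t-s)L_2}$, and all derivatives are legitimate for finite matrices. Lemma \ref{diff_gen_est} and Lemma \ref{semigroup_Lip} are not needed for the identity itself. They are only used afterwards to show that the integrand is finite and to bound it.
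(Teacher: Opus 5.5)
Your proof is correct and is the standard Duhamel (variation-of-constants) argument: differentiate $s\mapsto P^1_sP^2_{t-s}f(\eta)$ using the forward equation for $P^1$ and the backward equation for $P^2$. The paper gives no proof of its own, leaving the identity as an exercise described as standard for any pair of Markov processes, and your use of particle-number conservation to confine both dynamics started from $\eta$ to the finite set $\Omega_K$ settles the interchange issues, since everything then reduces to the matrix identity $e^{tL_1}-e^{tL_2}=\int_0^t e^{sL_1}(L_1-L_2)e^{(t-s)L_2}\,ds$.
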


\begin{exercise}\rm
Prove, using previous estimates, Lemma \ref{diff_gen_est}.  Prove also Lemma \ref{identity}, noting that it is a standard inequality which holds generally for a pair of Markov processes.
\end{exercise}

\begin{lemma}
For $f\in \L$, we have
\begin{align}
\label{2.5}
&|P^1_tf(\eta) - P^2_tf(\eta)|\\
& \leq \
a_0c(f)\int_0^t e^{3a_0(t-s)}\sum_{x,y}P^1_s \eta(x) \nonumber\\
&\ \ \ \ \ \ \ \ \ \ \ \ \ \ \ \ \ \ \ \ \ \ \ \ \ \ \ \ \ \ \ \ \cdot |p_1(x,x+y)-p_2(x,x+y)|\big(\beta(x) + \beta(x+y)\big) ds \nonumber\\
& \leq \ a_0c(f)\int_0^t e^{3a_0(t-s)}\sum_{x,y}\Big[ \sum_{z\in A}\eta(z)\sum_{\ell\geq 0} \frac{(a_0s)^\ell}{\ell !} p_1^{(\ell)}(z,x)\Big]\nonumber \\
&\ \ \ \ \ \ \ \ \ \ \ \ \ \ \ \ \ \ \ \ \ \ \ \ \ \ \ \ \ \ \ \ \ \ \ \ \ \ |p_1(x,x+y)-p_2(x,x+y)|\big(\beta(x) + \beta(x+y)\big) ds.\nonumber
\end{align}
\end{lemma}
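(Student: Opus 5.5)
The plan is to chain the three preceding finite-cube estimates. First I would apply Lemma \ref{identity}, which writes
\[
P^1_tf(\eta) - P^2_tf(\eta) = \int_0^t P^1_s\big[(L_1-L_2)P^2_{t-s}f\big](\eta)\,ds .
\]
Since $P^1_s$ is the semigroup of a Markov chain, it is a positive contraction in the sense that $|P^1_s h|\le P^1_s|h|$. Moving the absolute value inside the integral therefore gives $|P^1_tf(\eta)-P^2_tf(\eta)|\le \int_0^t P^1_s\big|(L_1-L_2)P^2_{t-s}f\big|(\eta)\,ds$. On the finite cube $A$ the integrands are finite, since $\eta$ is a finite configuration and all sums are finite, so these manipulations are legitimate.

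Next I would bound the inner generator difference pointwise. By Lemma \ref{semigroup_Lip}, applied to the process $P^2$, the function $h=P^2_{t-s}f$ is Lipschitz with $c(h)\le c(f)e^{3a_0(t-s)}$. Feeding $h$ into Lemma \ref{diff_gen_est} gives, for every configuration $\zeta$,
\[
\big|(L_1-L_2)h(\zeta)\big| \le a_0c(f)e^{3a_0(t-s)}\sum_{x,x+y\in A}\zeta(x)\,|p_1(x,x+y)-p_2(x,x+y)|\big(\beta(x)+\beta(x+y)\big).
\]
The right-hand side is linear in $\zeta$ with nonnegative coefficients. Applying $P^1_s$ and using linearity, the expectation passes onto each $\zeta(x)$ and produces $P^1_s\eta(x)=E^\eta[\eta_s(x)]$. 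This yields the first inequality in \eqref{2.5}.

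For the second inequality, I would replace $P^1_s\eta(x)$ by the branching-process bound of Lemma \ref{branching} for the process with jump probabilities $p_1$:
\[
E^\eta[\eta_s(x)] \le \sum_{z\in A}\eta(z)\sum_{\ell\ge0}\frac{(a_0s)^\ell}{\ell!}\,p_1^{(\ell)}(z,x).
\]
This substitution is allowed because all the other factors in the sum are nonnegative.

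I do not expect a serious obstacle. The only points that need care are the positivity step $|P^1_s h|\le P^1_s|h|$ and checking that Lemma \ref{semigroup_Lip} is applied to the correct process, namely $P^2$, while Lemma \ref{branching} is applied to $P^1$.
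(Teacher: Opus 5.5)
Your proposal is correct and follows the paper's own argument: start from Lemma \ref{identity}, bound the generator difference with Lemma \ref{diff_gen_est} using the Lipschitz constant of $P^2_{t-s}f$ from Lemma \ref{semigroup_Lip}, then replace $P^1_s\eta(x)$ by the branching bound of Lemma \ref{branching}. You additionally make explicit the positivity/monotonicity of $P^1_s$ and which process each lemma is applied to, which the paper leaves implicit.
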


\begin{proof}  The inequalities follow by applying Lemma \ref{identity}, Lemma \ref{diff_gen_est}, and then Lemmas \ref{semigroup_Lip} and \ref{branching}. \end{proof}

\section{Extension to $\Z^d$ and proofs of the construction theorems}

Let $A=A_L$ increase to $\cup_{L\geq 1} A_L = \Z^d$.  For a transition probability $p$ on $\Z^d$, define the modified transition probabilities:
$$q_L(x,z) \ = \ \left\{\begin{array}{rl}
p(x,z) & {\rm if \ } x,z\in A_L, x\neq z\\
1& {\rm if \ } x=z\not\in A_L\\
p(x,x) + \sum_{w\not\in A_L}p(x,w) & {\rm if \ } x=z\in A_L.
\end{array}\right.$$
Note that $q_L$ does not allow transitions from $A_L$ to $A_L^c$.

It will be useful to observe that $q_L(x,z) \leq p(x,z) + 1(x=z)$.  Hence, $q_L$ satisfies \eqref{beta_ineq} with constant $3$ instead of $2$.

Let $P^{(L)}_t$ and $L^{(A)}$ be the zero-range semigroup and generator corresponding to $q_L$ on $A=A_L$.
 \begin{proposition}
 \label{cauchy}
 For $f\in \L$ and $\eta\in \Omega'$, we have
 $\lim_{L\uparrow\infty}P^{(L)}_tf(\eta)$ converges uniformly on sets of bounded $t$, sets of $\eta$ with bounded $\|\eta\|$, and sets of functions with bounded $c(f)$.
 \end{proposition}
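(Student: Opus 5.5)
The plan is to show that $\{P^{(L)}_tf(\eta)\}_L$ is uniformly Cauchy, by comparing $P^{(L)}_t$ with $P^{(M)}_t$ for $M>L$ through the perturbation estimate \eqref{2.5}. Both $q_L$ and $q_M$ act on the larger cube $A_M$ without transitions out of it, since $q_L$ freezes sites outside $A_L$. So I apply \eqref{2.5} on $A=A_M$ with $p_1=q_M$ and $p_2=q_L$. Because $q_L,q_M\leq p+\delta$, \eqref{beta_ineq} holds with constant $3$. The constants $e^{3a_0t}$ from Lemma \ref{semigroup_Lip}, and the branching bound of Lemma \ref{branching} with $p^{(\ell)}$ replaced by $q_M^{(\ell)}$, then hold uniformly in $L,M$ after enlarging $a_0$ to a fixed multiple. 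This gives, for $t\leq T$,
\[
|P^{(M)}_tf(\eta)-P^{(L)}_tf(\eta)|\ \leq\ C(T)\,c(f)\sum_{z}\eta(z)\,K_{L,M}(z),
\]
where
\[
K_{L,M}(z)=\int_0^T\sum_{\ell\geq 0}\frac{(a_0s)^\ell}{\ell!}\sum_x q_M^{(\ell)}(z,x)\,D_{L,M}(x)\,ds
\]
and $D_{L,M}(x)=\sum_y|q_M-q_L|(x,x+y)\big(\beta(x)+\beta(x+y)\big)$.

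Next I identify $D_{L,M}$ explicitly. It vanishes unless $x\in A_M$ and either $x\notin A_L$ or $x$ has a jump to $A_L^c$. For $x\in A_L$ it is bounded by $4\sum_{w\notin A_L}p(x,w)(\beta(x)+\beta(w))$. For $x\in A_M\setminus A_L$ it is at most $C\beta(x)$, by \eqref{beta_ineq}. Since $\sum_x q_M^{(\ell)}(z,x)\beta(x)\leq 3^\ell\beta(z)$, we get $K_{L,M}(z)\leq C(T)\beta(z)$ uniformly in $L,M$. Uniformity in $\eta$ with $\|\eta\|\leq B$ then reduces to showing
\[
\epsilon_L:=\sup_{M>L}\ \sup_z\ \frac{K_{L,M}(z)}{\beta(z)}\ \longrightarrow\ 0 .
\]
Given this, the difference is at most $C(T)\,c(f)\,B\,\epsilon_L$. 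That bound is uniform over $t\leq T$, over $\|\eta\|\leq B$ and over bounded $c(f)$, and gives the stated uniform convergence.

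The main obstacle is showing $\epsilon_L\to 0$, and in particular handling $z$ far outside $A_L$. For such $z$, the frozen dynamics of $q_L$ and the true dynamics differ at order $\beta(z)$, not at order $o(\beta(z))$. What I would try first is to split the sum over $z$ into $z\in A_{L'}$ with $L'\ll L$ and the complement. On $A_{L'}$ I would use the decay $p(x,y)\to 0$ as $x\to\infty$ together with the series defining $\beta$. Namely, $\sum_x q^{(\ell)}(z,x)\beta(x)1(x\notin A_L)$ is small relative to $\beta(z)$, uniformly over $z\in A_{L'}$ and over $\ell$ with the Poisson weights. This should follow from the geometric factor $2^{-n}$ in $\beta$, which makes the mass reaching $A_L^c$ and then returning to $0$ negligible. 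For the complement I would use the a priori bound $K\leq C\beta(z)$. Closing this step requires controlling $\sum_{z\notin A_{L'}}\eta(z)\beta(z)$. If this tail cannot be made uniformly small over $\|\eta\|\leq B$ (a single far site may carry mass of order $B/\beta(z)$), I expect to need a sharper estimate there. The idea would be to compare both dynamics directly with the trivial frozen motion, via the Lipschitz bound of Theorem \ref{construction2}(i), and show that the contributions of the far sites to $P^{(L)}_t$ and to $P^{(M)}_t$ nearly cancel. I regard this far-field cancellation as the crux of the proof.
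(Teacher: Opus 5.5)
There is a genuine gap, and it cannot be filled. Your reduction is correct up to $K_{L,M}(z)\le C(T)\beta(z)$. But the step you single out as the crux fails, because uniform convergence over $\{\eta:\|\eta\|\le B\}$ is false.

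As you suspected, $\epsilon_L\not\to0$: already the $\ell=0$ term is of order $\beta(z)$ for $z\in A_M\setminus A_L$. No far-field cancellation rescues this, as the following counterexample shows.
\begin{itemize}
\item Take $d=1$, $p(x,x\pm1)=\frac12$, $g(k)=k$ and $f=\|\cdot\|$, so $c(f)=1$.
\item Set $\eta^{(L)}=n_L\delta_{L+1}$ with $n_L=\lfloor B/\beta(L+1)\rfloor$, so that $\|\eta^{(L)}\|\le B$.
\item Under $q_L$ the site $L+1$ is frozen, so $P^{(L)}_tf(\eta^{(L)})=\|\eta^{(L)}\|$.
\item Under $q_{L+1}$ the particles are independent walks $X$ on $A_{L+1}$ that leave $L+1$ (towards $L$) at rate $\frac12$.
\item $\beta$ is decreasing in $|x|$, and decomposing at the first visit to $L$ gives $\beta(L+1)=E_{L+1}[2^{-\tau_L}]\,\beta(L)\le\frac12\beta(L)$.
\end{itemize}
Hence
\[
P^{(L+1)}_tf(\eta^{(L)})-P^{(L)}_tf(\eta^{(L)})\ \ge\ n_L\big(\beta(L)-\beta(L+1)\big)\,P_{L+1}\big(|X_t|\le L\big)\ \ge\ \big(B-\beta(L+1)\big)\,\frac{t}{2}\,e^{-t},
\]
which stays bounded away from $0$ as $L\to\infty$. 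A pile of mass of order $B/\beta(z)$ at a site $z\in A_M\setminus A_L$ is frozen in one dynamics and moves in the other, and nothing cancels this.

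What is true is convergence for each fixed $\eta\in\Omega'$, uniformly in $t\le T$ and in $c(f)\le c_0$. Your own estimate gives this once you drop the $\sup_z$. More generally, your $A_{L'}$/complement split gives uniformity over any set of $\eta$ with uniformly small tails $\sum_{z\notin A_m}\eta(z)\beta(z)$, for instance $\{\eta\le\eta_0\}$. The argument is dominated convergence:
\begin{itemize}
\item With $r(x,y)=p(x,y)+1(x=y)$, you have $q_M^{(\ell)}\le r^{(\ell)}$ entrywise.
\item $\sup_{M>L}D_{L,M}(x)\le C\beta(x)$, and $\sup_{M>L}D_{L,M}(x)\to0$ for each fixed $x$.
\item So $\eta(z)\frac{(a_0s)^\ell}{\ell!}r^{(\ell)}(z,x)\,C\beta(x)$ is an $L,M$-independent dominating function of total mass $C(T)\|\eta\|$.
\item Dominated convergence over $(s,\ell,z,x)$ then finishes.
\end{itemize}

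This is exactly what the paper's proof establishes. It bounds the integrand of \eqref{2.5} by $8e^{3a_0t}\|\eta\|$ and uses that $q_L-q_{L'}$ vanishes pointwise. That is a fixed-$\eta$ dominated-convergence argument, and despite its wording it yields no uniformity over bounded $\|\eta\|$. Only the fixed-$\eta$ statement, uniform in bounded $c(f)$, is used in the proof of Theorem \ref{construction1}. That is what you should prove, rather than pursuing the far-field cancellation.
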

 
 \begin{proof}
 We show that $\{P^{(L)}_tf(\eta)\}_{L\geq 1}$ forms a Cauchy sequence with the uniformity properties.
Consider $p_1 = q_L$ and $p_2= q_{L'}$, where $L\leq L'$.  Both $p_1, p_2$ are transition probabilities on $A_{L'}$.   We have that the integrand in \eqref{2.5} is bounded by
\begin{eqnarray*}
&&e^{3a_0(t-s)}\sum_{x\in A_{L'}}\Big[ \sum_{z\in A_{L'}}\eta(z)\sum_{\ell\geq 0} \frac{(a_0s)^\ell}{\ell !} q_{L}^{(\ell)}(z,x)\Big](2+ 2\cdot 3)\beta(x)\\
&&\ \ \leq  \ 8e^{3a_0(t-s)}\sum_{z\in A_{L'}} \eta(z)\sum_{\ell\geq 0}\frac{(3a_0s)^\ell}{\ell !}\beta(z)\
 \leq \ 8e^{3a_0t}\|\eta\|.
\end{eqnarray*}
Here, we used \eqref{beta_ineq} several times.  

Hence, the integrand is dominated.  Given 
$|p_1(x, x+y) - p_2(x,x+y)| \leq \sum_{w\not\in A_L} p(x, w) + \sum_{w\not\in A_{L'}}p(x,w)$ vanishes, the integrand vanishes pointwise for each $0\leq s\leq t$, as $L, L'\uparrow\infty$.

Plugging into \eqref{2.5}, we see that the convergence is uniform as desired.        
\end{proof}

\subsection{Proof of Theorem \ref{construction1}.}
\label{sec:construction1}
By Proposition \ref{cauchy}, we may now define, for $f\in \L$ and $\eta\in \Omega'$ that
$$P_tf(\eta) \ = \ \lim_{L\uparrow\infty} P^{(L)}_tf(\eta).$$ 
Moreover, by Lemma \ref{semigroup_Lip}, since $q_L$ satisfies \eqref{beta_ineq} with constant $3$, we have
$$c(P^{(L)}_tf) \ \leq \ c(f)e^{4a_0t}.$$

Consider now the two properties stated in Theorem \ref{construction1}.
We first establish the semigroup property for $P_t$:  Namely, $P_{t}P_s = P_{t+s}$ for $s,t\geq 0$.  Already this property holds for $P^{(L)}_t$.  It is sufficient to show for $f\in \L$ and $\eta\in \Omega'$ that
\begin{equation*}
 \lim_{L\uparrow\infty} [P_t-P^{(L)}_t]P^{(L)}_sf(\eta)  =  0 \ \ {\rm and} \ \ 
  \lim_{L\uparrow\infty} P_t[P^{(L)}_s-P_s]f(\eta)  =  0.
\end{equation*}

The first limit follows from the uniform convergence in Proposition \ref{cauchy}, given $c(P^{(L)}_sf) \leq c(f)e^{4a_0s}$ uniformly in $L$.

For the second limit, for fixed $t$ and $\eta\in \Omega'$, by construction, we may find by Kolmogorov's extension theorem (as discussed in Subsection \ref{sec:meaning_construction}) a probability 
$\mu:= P^\eta(\eta_t\in d\zeta)$ on $\Omega$ such that $\int \|\zeta\|\mu(d\zeta)<\infty$ and, for $h\in \L$,
$$P_t h(\eta) \ =\  \int h(\zeta) \mu(d\zeta).$$
Now observe 
$$|P^{(L)}_sf(\eta) - P_sf(\eta)| \ = \ |P^{(L)}_s(f(\eta) - f(0)) - P_s(f(\eta)-f(0))| \  \leq \ 2c(f)e^{4a_0s}\|\eta\|.$$
Hence, by dominated convergence, 
$$P_t[P^{(L)}_s-P_s]f(\eta)  = \int [P^{(L)}_sf(\zeta) - P_sf(\zeta)]\mu(d\zeta) \ \rightarrow \ 0.$$

To establish the integral property, note that it holds for the finite-volume semigroup $P^{(L)}_t$ and generator $L^{(A_L)}$:
$$P^{(L)}_tf(\eta) - f(\eta) \ = \ \int_0^t L^{(A_L)}P^{(L)}_sf(\eta)ds.$$
We can already pass to the limit on the left-hand side.  To pass on the right-hand side we need to show that
\begin{equation}
\label{lim_ex}
L^{(A_L)}P^{(L)}_s f(\eta) - LP_sf(\eta) \ \rightarrow \ 0\end{equation}
and that the the integrand is dominated.  Domination holds as
\begin{equation}
\label{gen_bound}
|L^{(A_L)}P^{(L)}_sf(\eta)| \ \leq \ 3a_0c(f)e^{4a_0s}\|\eta\|.\end{equation}
We leave to the reader to show \eqref{lim_ex}.
\qed
\medskip
\begin{exercise}\rm  Show \eqref{lim_ex} using estimates developed, and the explicit form of $L$.  Hint:  See the argument of Lemma 2.12 in \cite{Liggett_Spitzer}.
\end{exercise}

\subsection{Proof of Theorem 7.2.4}
\label{sec:construction2}
We will use the previous estimates to show the items in Theorem \ref{construction2}.  

\medskip
\noindent {\it Proof of (i).}  From Lemma \ref{lem:Lbound} and Theorem \ref{construction1} (see also \eqref{lim_ex} and \eqref{gen_bound}), we have that
$$|LP_sf(\eta)| \ \leq \  3c(f)e^{4a_0s}\|\eta\|.$$
Now, plugging into the integral expression in Theorem \ref{construction1}, and integrating, we complete (i).
\medskip

\noindent {\it Proof of (ii).}  If we can show that $LP_sf(\eta)$ is continuous at $t=0$, then (ii) follows from the integral formula in Theorem \ref{construction1}.  Now, from part (i), $P_t$ is continuous at $t=0$.  Write
$$LP_sf(\eta) \ = \ \sum_{x,x+y} g(\eta(x))p(x,x+y) \big[P_sf(\eta^{x,x+y})-P_sf(\eta)\big].$$
To show continuity of $LP_sf(\eta)$, we dominate
$$|P_sf(\eta^{x,x+y})-P_sf(\eta)| \ \leq \ c(P_sf)\|\eta^{x,x+y}-\eta\| \ \leq \ c(f)e^{3a_0t}[\beta(x+y) + \beta(x)].$$
The right-hand side bound is summable:
$$\sum_{x,x+y} g(\eta(x))p(x,x+y) [\beta(x+y) + \beta(x)] \ \leq \ 3a_0\|\eta\|.$$
Hence, the desired continuity follows from dominated convergence.

\medskip

\noindent {\it Proof of (iii).}  Write
\begin{eqnarray*}
LP_tf(\eta) & = & \lim_{s\downarrow 0} s^{-1}\big[ P_sP_tf(\eta) - P_tf(\eta)\big]\\
&=& \lim_{s\downarrow 0} P_t \frac{P_sf - f}{s}(\eta)\\
&=& P_t Lf(\eta).
\end{eqnarray*}
The first equality is from part (ii).  The second equation is from the semigroup property proved in Theorem \ref{construction1}.  The third equality is from dominated convergence and Lemma \ref{lem:7.2.6}, writing $P_th(\eta) = E^\eta[h(\eta_t)]$ for $h = [P_s f -f]/s$.  To justify these steps, we note the pointwise convergence is established already in part (ii).  The domination and $h$ belonging to $\mathcal L$ follows from part (i):  For $0<s\leq 1$, using $|e^z - 1|\leq |z|e^{|z|}$,
$$\big|s^{-1}\big[P_sf - f\big](\eta)\big| \ \leq \ (4a_0)^{-1}c(f)\|\eta\|s^{-1}|e^{4a_0s} - 1| \leq 2c(f)e^{4a_0}\|\eta\|.$$

This completes the argument. \hfill \qed

\section{Notes}

The material follows \cite{Andjel} which is almost always cited when working with zero-range processes.  On the other hand,
other construction methods exist.  For instance, if $g$ is bounded, one can construct the semigroup by the Hille-Yosida theorem as in \cite{Holley}.  \cite{Liggett_zr} gives another way, along the lines developed above, with different estimates, to construct the semigroup.  See also \cite{Marton} for yet another construction.

The exclusion process can also be constructed on $\Z^d$ in the way above, but can be done also using the Hille-Yosida theorem; see \cite{Liggett1}.  In this setting, local functions, those which depend on a finite number of variables $\{\eta(x): x\in \Z^d\}$, provide a natural core for the generator.

\newpage
\chapter[Section $8$]{Invariant measures: ergodicity and extremality}
\label{lec8}

We consider the invariant measures in zero-range processes on $\Z^d$.  Given the mass-conservative dynamics, there are several invariant measures corresponding to different particle densities.  
 To determine them, since the set of invariant measures is a convex set, it is useful to identify extreme points of this set.  In particular, if an invariant measure is an extreme point, the process run under it has interesting ergodic properties; see Subsection \ref{sec:extremal}. 
 
  Our focus here will be to show that a class of product invariant measures $\nu_\rho$ are extreme points.  With respect to zero-range models, many things are known, but there are still several open questions. 

\section{Invariant measures for zero-range processes}

To simplify the discussion, we will assume in the remainder of Section \ref{lec8} that $p$ is translation-invariant, but not necessarily finite-range.  However, a good theory exists when $p$ is not translation-invariant, remarked upon in the Notes subsection.  We will also keep the assumptions on $g$ made in Section \ref{lec7}:  Namely, $g(0)=0$, $g(k)>0$ for $k\geq 1$, and the (LIP) condition, $|g(k+1)-g(k)|\leq a_0$ for $k\geq 0$.

Recall the discussion of invariant measures for the zero-range process on a torus $\T^d_N$ in Subsection \ref{sec:zero-rangemodels}.  Our initial goal will be to extend these notions to $\Z^d$.  Recall the marginal $\kappa_\Psi$ where
$$\kappa_\Psi(k) \ = \ \left\{\begin{array}{rl}
\frac{1}{Z(\Psi)} \frac{\Psi^k}{g(k)!} & \ {\rm for \ }k\geq 1\\
\frac{1}{Z(\Psi)} & \ {\rm for \ } k=0
\end{array}
\right.
$$
for $0\leq \Psi < \liminf g(k)$, and form the measure
$$\bar\nu_\Psi \ = \ \prod_{x\in\Z^d}\kappa_\Psi.$$
As before, $\rho = \rho(\Psi) = E_{\bar\nu_\Psi}[\eta(0)]$ is a strictly increasing function of $\Psi$, and hence the inverse exists.  One then defines
$$\nu_\rho \ = \ \bar\nu_{\Psi(\rho)}$$
for $\rho < \rho^*:=\lim_{\Psi \uparrow \liminf g(k)}\rho(\Psi)$.

Recall $\Omega = \N_0^{\Z^d}$, and from Section \ref{lec7}, the norm $\|\eta\| = \sum_{x\in \Z^d}\eta(x)\beta(x)$ where $\beta(x) = \sum_{n\geq 0}p^{(n)}(x,0)/2^n$, and subset $\Omega' = \{\eta\in \Omega: \|\eta\|<\infty\}$.

\begin{lemma}
\label{lem:norm-squared}
We have $E_{\nu_\rho}\big[\|\eta\|\big]<\infty$ and hence the measure $\nu_\rho$ fully charges $\Omega'$, that is 
\begin{equation}
\label{fully_charge}
\nu_\rho(\Omega') \ =\  1.\end{equation}
Moreover, we have $E_{\nu_\rho}\big[\|\eta\|^2\big]<\infty$.
\end{lemma}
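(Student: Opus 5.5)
The plan is to reduce everything to the summability of the weights $\beta$, together with finiteness of the first two moments of $\kappa_{\Psi(\rho)}$.

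\textbf{Step 1: $\sum_x \beta(x)<\infty$.} Since $p$ is translation-invariant, $\sum_{x\in\Z^d}p^{(n)}(x,0)=\sum_x p^{(n)}(0,-x)=1$ for every $n\geq 0$. By Tonelli,
\[
\sum_{x}\beta(x)\ =\ \sum_{n\geq 0}2^{-n}\sum_x p^{(n)}(x,0)\ =\ 2 .
\]
Also $0\le\beta(x)\le 2$ for each $x$.

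\textbf{Step 2: the first moment.} By monotone convergence,
\[
E_{\nu_\rho}\|\eta\| \ =\ \sum_x \beta(x)E_{\nu_\rho}[\eta(x)] \ =\ 2\rho<\infty ,
\]
using $E_{\nu_\rho}[\eta(0)]=\rho<\rho^*$, which is finite. Hence $\|\eta\|<\infty$ $\nu_\rho$-a.s., which is \eqref{fully_charge}.

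\textbf{Step 3: the second moment.} Expand the square and use Tonelli again:
\[
E_{\nu_\rho}\|\eta\|^2=\sum_{x,y}\beta(x)\beta(y)E_{\nu_\rho}[\eta(x)\eta(y)]
=\rho^2\Big(\sum_x\beta(x)\Big)^2+\sigma^2(\rho)\sum_x\beta(x)^2 ,
\]
where $\sigma^2(\rho)$ is the variance of $\kappa_{\Psi(\rho)}$. The identity uses independence of the coordinates under the product measure $\nu_\rho$. The second sum is at most $2\sum_x\beta(x)=4$, so $E_{\nu_\rho}\|\eta\|^2 \le 4\rho^2+4\sigma^2(\rho)$.

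\textbf{Main point to check: $\sigma^2(\rho)<\infty$.} Since $\Psi(\rho)<\liminf_k g(k)$, choose $\Psi(\rho)<\Psi'<\liminf_k g(k)$. Then $g(k)\ge\Psi'$ for all $k\ge k_0$, so
\[
\kappa_\Psi(k)\ \le\ C\,(\Psi/\Psi')^{k}\qquad (k\ge k_0),
\]
and $\kappa_\Psi$ has geometric tails. All its moments, in particular the second, are therefore finite. Alternatively, one can note that $Z(\cdot)$ is analytic inside its radius of convergence, so its derivatives give the moments.

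This finiteness of the second moment is the only step that is not purely bookkeeping.
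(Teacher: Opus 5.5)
Your proof is correct. Steps 1 and 2 match the paper's argument: $\sum_x\beta(x)=2$ by translation invariance, then linearity of expectation.

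For the second moment your route differs slightly from the paper's. The paper does not expand the square. It applies the weighted Cauchy--Schwarz inequality $\bigl(\sum_x\eta(x)\beta(x)\bigr)^2\le\sum_x\beta(x)\cdot\sum_x\eta(x)^2\beta(x)$, which gives $E_{\nu_\rho}\|\eta\|^2\le 4E_{\nu_\rho}[\eta(0)^2]$. That argument never uses independence of the coordinates, only a uniform bound on $E[\eta(x)^2]$. So it would apply unchanged to non-product measures with bounded second moments.

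Your expansion uses the product structure of $\nu_\rho$ and gives the exact value $4\rho^2+\sigma^2(\rho)\sum_x\beta(x)^2$. Using $\beta\le 2$, this is at most the same bound, $4\rho^2+4\sigma^2(\rho)=4E_{\nu_\rho}[\eta(0)^2]$.

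Your geometric-tail argument for $\sigma^2(\rho)<\infty$ fills in a point the paper takes for granted, and both proofs need it.
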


\begin{proof}
  Write
$$E_{\nu_\rho}\big [\|\eta\| \big ] \ = \ \sum_{x\in \Z^d} E_{\nu_\rho}[\eta(x)]\beta(x) \ = \ \rho\sum_{x\in \Z^d}\beta(x),$$
where
$
\sum_{x\in \Z^d}\beta(x) =  \sum_{x\in \Z^d} \sum_{n\geq 0}\frac{p^{(n)}(x,0)}{2^n}  =  2
$, since $p^{(n)}(x,0) = p^{(n)}(0,-x)$ by translation-invariance of $p$.

Moreover, by Schwarz inequality,
\begin{align*}
E_{\nu_\rho}\big[\|\eta\|^2\big] & = E_{\nu_\rho}\Big[\Big(\sum_x \eta(x)\beta(x)\Big)^2\Big]\\
&\leq \sum_x \beta(x) \cdot \sum_x E_{\nu_\rho}\big[(\eta(x))^2\big]\beta(x)\\
&\leq 4E_{\nu_\rho}\big[(\eta(0))^2\big].\qedhere
\end{align*}\end{proof}

We now update the finite-volume definition of an invariant measure to $\Z^d$.  Recall the Lipschitz functions $\L$ and the semigroup $P_t$ of the process on $\L$ from Section \ref{lec7}.
We will say that a probability measure $\mu$ on $\Omega'$ is an {\it invariant measure} if 
$$\int P_t f d\mu \ = \ \int f d\mu$$
for all bounded Lipschitz functions $f\in \L$.  Let $\I$ denote the convex set of invariant measures for the process on $\Z^d$.

Our first main theorem is the following infinite volume invariance.
\begin{theorem}[Invariance]
\label{inv_thm}
For $\rho< \rho^*$, we have $\nu_\rho\in \I$.  
\end{theorem}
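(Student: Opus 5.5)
Our plan is to reduce the infinite-volume invariance to finite-volume invariance for the truncated dynamics $q_L$ on the cubes $A_L$, and then pass to the limit using Proposition \ref{cauchy} together with the integrability of $\|\eta\|$ under $\nu_\rho$ (Lemma \ref{lem:norm-squared}).

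\textbf{Step 1: invariance of the restricted product measure.} Let $\nu^{(L)}_\rho=\prod_{x\in A_L}\kappa_{\Psi(\rho)}$ be the restriction of $\nu_\rho$ to $A_L$. We claim it is invariant for the zero-range process with jump probability $q_L$ on $A_L$. The argument is that of Lemma \ref{lem:finite_inv}, using the identity in Exercise \ref{ex:zr-particle}: $E[g(\eta(x))f(\eta^{x,z})]=E[g(\eta(z))f(\eta)]$. This gives
\[
E_{\nu^{(L)}_\rho}[L^{(A_L)}f]=\sum_{z\in A_L}E\big[g(\eta(z))f\big]\Big(\sum_{x\in A_L}q_L(x,z)-1\Big).
\]
The diagonal terms of $q_L$ contribute nothing to $L^{(A_L)}$, since $\eta^{x,x}=\eta$. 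So what is needed is that the off-diagonal column sums $\sum_{x\ne z,\,x\in A_L}p(x,z)$ equal the off-diagonal row sums $\sum_{x\ne z,\,x\in A_L}p(z,x)$, i.e. that $q_L$ restricted to $A_L$ is doubly stochastic off the diagonal. This holds with periodic (torus) truncation but \emph{fails} for the truncation $q_L$ as defined: mass leaving $A_L$ is kept at $x$, yet mass entering $A_L$ from outside is lost. This is the main obstacle, and we see two ways around it.

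\textbf{Obstacle, option (a).} Replace $q_L$ by a doubly stochastic truncation $\tilde q_L$ on $A_L$, for instance the periodized kernel on the torus $A_L$, or add compensating jumps. The estimates of Proposition \ref{cauchy} go through as long as $|\tilde q_L(x,y)-p(x,y)|\to0$ pointwise and $\tilde q_L$ satisfies \eqref{beta_ineq} with a fixed constant; these hold once the weights $\beta$ are handled. The finite-volume invariance then follows exactly as in Lemma \ref{lem:finite_inv}, with countable state space, bounded local $f$, and a decomposition over $\Omega_K$ if needed.

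\textbf{Obstacle, option (b).} Keep $q_L$ and bound the error directly. For bounded Lipschitz $f$, the boundary defect is at most
\[
\|f\|_\infty\,\Psi(\rho)\sum_{z\in A_L}\Big|\sum_{x\notin A_L}p(x,z)-\sum_{x\notin A_L}p(z,x)\Big|,
\]
which is not small, since it is of surface order. So we would go with option (a), and more precisely with a time-integrated version of it. Lemma \ref{identity} and Lemma \ref{diff_gen_est} give
\[
\Big|\int(P_t-P^{(L)}_t)f\,d\nu_\rho\Big|\le a_0c(f)\int_0^t e^{4a_0(t-s)}\sum_{x,y}E_{\nu_\rho}\big[P^{(L)}_s\eta(x)\big]\,|\tilde q_L-p|\,(\beta(x)+\beta(y))\,ds .
\]
Here $E_{\nu_\rho}[P^{(L)}_s\eta(x)]=\rho$ by invariance, so the sum is dominated by $\rho\sum_x\beta(x)\cdot C<\infty$, and it vanishes by dominated convergence because $|\tilde q_L-p|\to0$ pointwise. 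Strictly speaking this passage goes through an infinite-volume comparison of $P^{(L)}_t$ with $P_t$ under $\nu_\rho$; to justify it, we first apply Proposition \ref{cauchy} pointwise in $\eta$ and then integrate.

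\textbf{Step 2: passage to the limit.} For bounded $f\in\L$, Proposition \ref{cauchy} gives $P^{(L)}_tf(\eta)\to P_tf(\eta)$ for each $\eta\in\Omega'$, and $\nu_\rho(\Omega')=1$ by \eqref{fully_charge}. Domination comes from $|P^{(L)}_tf(\eta)|\le\|f\|_\infty$, which holds since the finite-volume process is a genuine Markov chain on $\N_0^{A_L}$ and $f$ depends on the $A_L$-coordinates once we freeze the outside configuration. Hence $\int P_tf\,d\nu_\rho=\lim_L\int P^{(L)}_tf\,d\nu_\rho=\int f\,d\nu_\rho$ by finite-volume invariance and the product structure. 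The configuration outside $A_L$ does not move and is independent under $\nu_\rho$, so invariance on $A_L$ holds after conditioning on it. Therefore $\nu_\rho\in\I$.
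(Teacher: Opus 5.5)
You correctly diagnose the obstacle, and your option (a) with compensating jumps is exactly the truncation $p_n$ the paper uses. The gap is in how you pass to the limit.

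First, you assert that a doubly stochastic $\tilde q_L$ satisfies \eqref{beta_ineq} with an $L$-independent constant, so that Proposition \ref{cauchy} carries over. This is false in general. Take $d=1$, $p(1)=1-\epsilon$, $p(-1)=\epsilon$ with $\epsilon<1/3$. Counting paths gives $\beta(L)\le \epsilon^L/(1-\epsilon)$ and $\beta(-L)\ge ((1-\epsilon)/2)^L$. Both the periodized kernel and the compensating kernel give the jump $L\to -L$ probability at least $(1-\epsilon)^2$, so $\sum_y \tilde q_L(L,y)\beta(y)/\beta(L)\to\infty$. Since \eqref{beta_ineq} for the truncated kernel is used both in Lemma \ref{semigroup_Lip} and in summing the branching bound of Lemma \ref{branching}, Proposition \ref{cauchy} is established only for $q_L$.

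Second, Step 2 combines pointwise convergence, known only for $q_L$, with finite-volume invariance, which fails for $q_L$ when $p$ is not symmetric (as you yourself noted). No single truncation has both properties, so Step 2 as written does not close.

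The repair is the time-integrated comparison you already displayed: make it the main line, and note that it never needs \eqref{beta_ineq} for $\tilde q_L$.
Work on $A_{L'}$ with $L'>L$. Apply Lemma \ref{identity} with $\tilde P^{(L)}$ as the outer semigroup and the $q_{L'}$-semigroup $P^{(L')}$ as the inner one. This is a finite-dimensional identity on each closed class with a fixed number of particles in $A_{L'}$.
Bound the inner term by Lemma \ref{diff_gen_est} together with $c(P^{(L')}_{t-s}f)\le c(f)e^{4a_0(t-s)}$. Integrate the outer term against $\nu_\rho$ using $E_{\nu_\rho}[\tilde P^{(L)}_s\eta(x)]=\rho$, which follows from finite-volume invariance plus a monotone truncation of $\eta(x)$.
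Then let $L'\to\infty$ using Proposition \ref{cauchy}, $\nu_\rho(\Omega')=1$ and bounded convergence, and finally let $L\to\infty$. This gives $\int P_tf\,d\nu_\rho=\int f\,d\nu_\rho$, once you verify $\sum_{x\ne y}|\tilde q_L(x,y)-p(x,y)|(\beta(x)+\beta(y))\to 0$.

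Pointwise convergence of $\tilde q_L$ alone does not give this, since the kernel changes with $L$. For compensating jumps, pairs inside $A_L$ contribute at most $\sum_{x\in A_L}a_x\beta(x)+\sum_{y\in A_L}b_y\beta(y)$. Here $a_x=\sum_{z\notin A_L}p(x,z)$ and $b_y=\sum_{z\notin A_L}p(z,y)$ are bounded by $1$ and tend to $0$ pointwise. The remaining pairs form a tail of the convergent series $\sum_{x,y}p(x,y)(\beta(x)+\beta(y))$.

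The paper reaches the result more directly with the same truncation. It stays at the generator level, proving $E_{\nu_\rho}[L_nf]=0$ and $E_{\nu_\rho}|L_nf-Lf|\to0$, and then invokes Proposition \ref{generator_characterization} ((1)$\Rightarrow$(2)). That step rests only on Theorem \ref{construction1} (the integral formula and the Lipschitz bound for $P_sf$), so it needs no semigroup comparison and no second truncation.
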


\section{Proof of the invariance of $\nu_\rho$ in infinite volume}
It will be convenient to give a generator characterization of invariance of a measure.

\begin{proposition}
\label{generator_characterization}
Let $\mu$ be a probability measure on $\Omega$ such that $E^\mu\big[\|\eta\|\big]<\infty$.  Then, the following are equivalent:
\begin{itemize}
\item [(1)] $\int Lf d\mu = 0$ for all bounded $f\in \L$
\item [(2)] $\mu\in \I$.
\end{itemize}
\end{proposition}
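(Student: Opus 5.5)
The plan is to use the integral identity $P_tf(\eta)=f(\eta)+\int_0^t LP_sf(\eta)\,ds$ from Theorem \ref{construction1}, together with the commutation $LP_sf=P_sLf$ from Theorem \ref{construction2}(iii), and integrate against $\mu$. The needed domination comes from the bounds $|LP_sf(\eta)|\le 3c(f)e^{4a_0s}\|\eta\|$ (Theorem \ref{construction2}(i)) and $E_\mu\|\eta\|<\infty$. These let us apply Fubini to interchange $\int d\mu$ and $\int_0^t ds$.

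\emph{(1) $\Rightarrow$ (2).} Fix a bounded $f\in\L$. Integrating the identity gives
\[
\int P_tf\,d\mu-\int f\,d\mu=\int_0^t\int LP_sf\,d\mu\,ds .
\]
We would like to conclude that $\int LP_sf\,d\mu=0$ for each $s$. However, hypothesis (1) applies only to bounded Lipschitz functions, and $P_sf$ is Lipschitz with $c(P_sf)\le c(f)e^{4a_0s}$ and is bounded by $\|f\|_\infty$. To see the boundedness, write $P_sf(\eta)=E^\eta[f(\eta_s)]$ via Lemma \ref{lem:7.2.6}, or pass through the finite-volume approximations $P^{(L)}_s$. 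So $P_sf$ is a bounded element of $\L$, (1) applies to it, and $\int LP_sf\,d\mu=0$. Hence $\int P_tf\,d\mu=\int f\,d\mu$, which is invariance.

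\emph{(2) $\Rightarrow$ (1).} Fix a bounded $f\in\L$. Using (iii), $\int_0^t LP_sf\,ds=\int_0^t P_sLf\,ds$, so invariance applied to the left side of the identity gives
\[
0=\frac1t\int (P_tf-f)\,d\mu=\frac1t\int_0^t\int LP_sf\,d\mu\,ds .
\]
Let $t\downarrow0$. By Theorem \ref{construction2}(ii) together with the continuity of $s\mapsto LP_sf(\eta)$ established in its proof, $\frac1t\int_0^t LP_sf(\eta)\,ds\to Lf(\eta)$ pointwise. The domination $3c(f)e^{4a_0}\|\eta\|$ is $\mu$-integrable, so dominated convergence yields $\int Lf\,d\mu=0$.

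The delicate step is the direction (1) $\Rightarrow$ (2): one must check that $P_sf$ is again a bounded Lipschitz function, so that (1) may legitimately be applied to it, and that $s\mapsto\int LP_sf\,d\mu$ is measurable so that Fubini is justified. Both follow from the uniform-in-$L$ estimates of Section \ref{lec7} and the pointwise limit definition of $P_s$.
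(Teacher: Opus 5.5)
Your direction (1) $\Rightarrow$ (2) is the paper's argument. You integrate $P_tf=f+\int_0^t LP_sf\,ds$ against $\mu$, dominate $|LP_sf(\eta)|$ by a constant times $e^{4a_0s}\|\eta\|$, and interchange the integrals by Fubini. You then apply (1) to $P_sf$, which is a bounded element of $\L$ because $c(P_sf)\le c(f)e^{4a_0s}$ and $|P_sf|\le\|f\|_\infty$. You justify that boundedness via Lemma~\ref{lem:7.2.6} or the finite-volume semigroups, and you flag joint measurability. Both are points the paper passes over silently.

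The genuine difference is in (2) $\Rightarrow$ (1). The paper does not prove this direction and refers to Andjel's Lemma 2.9; you give a self-contained argument, and it is correct. It can be made simpler than you present it:
\begin{itemize}
\item Invariance applied to the bounded Lipschitz $f$ gives $\int t^{-1}(P_tf-f)\,d\mu=0$ directly.
\item Theorem~\ref{construction2}(ii) gives $t^{-1}(P_tf-f)(\eta)\to Lf(\eta)$ for every $\eta\in\Omega'$, and $\mu(\Omega')=1$ since $E^\mu\|\eta\|<\infty$.
\item Theorem~\ref{construction2}(i) dominates $|t^{-1}(P_tf-f)(\eta)|$ by a constant multiple of $c(f)\|\eta\|$ for $0<t\le 1$, exactly as in the paper's proof of part (iii).
\end{itemize}
Dominated convergence then yields $\int Lf\,d\mu=0$. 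So in this direction neither Fubini nor the commutation $LP_sf=P_sLf$ is needed. Your appeal to Theorem~\ref{construction2}(iii) is a detour you never use. Invariance could not be applied to $Lf$ in any case, since $Lf$ need not be bounded or Lipschitz.

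What your version buys is a complete proof of the stated equivalence using only results established in Section~\ref{lec7}, rather than an external citation.
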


In what follows, we will only use that `(1) implies (2)', and so we will prove this part of the result.

\begin{proof}[Proof of Proposition \ref{generator_characterization}: (1) $\Rightarrow$ (2)]
For bounded $f\in \L$, we have by Theorem \ref{construction1} that 
$$\int \Big[P_tf - f\Big] d\mu \ = \ \int \Big[\int_0^t LP_sf ds \Big]d\mu.$$
Since $E^\mu\big[\|\eta\|\big]<\infty$, we have $\mu$ is supported on $\Omega'$.
For $f\in \L$ and $\eta\in \Omega'$, by Lemma \ref{lem:Lbound} and Theorem \ref{construction1} we have $c(P_s f) \leq c(f)e^{4a_0 s}$ and
\begin{eqnarray*}
|LP_sf(\eta)| 
&\leq& 3a_0c(f)e^{4a_0s}\|\eta\|.
\end{eqnarray*}
Moreover, as $f$ is bounded, $P_s f$ is a bounded, Lipschitz function.

By the domination, we may interchange the integrals by Fubini's theorem.  Then, given (1), we have
$$\int \Big[\int_0^t LP_sf ds \Big]d\mu \ = \ \int_0^t \Big[\int LP_sf d\mu \Big]ds \ = 0$$
and so conclude (2).
\end{proof}

\begin{remark}
\rm
To prove the converse, `(2) implies (1)', we refer the reader to \cite{Andjel}[Lemma 2.9].
\end{remark}

 We now assert that
\begin{equation}
\label{finite_inv}
\nu^n_\rho:=\prod_{x\in A_n}\kappa_{\Psi(\rho)}
\end{equation}
 is an invariant measure of the process with translation-invariant transition probability restricted to the finite set $A_n$.
  Indeed, this follows immediately from (the proof of) Lemma \ref{lem:finite_inv} in Section \ref{lec4}.

\begin{proof}[Proof of Theorem \ref{inv_thm}]  For a transition probability $p$ on $\Z^d$, define the `truncation' (different than in the Section \ref{lec7}),
\begin{align*}
&p_n(x,y)\\
&   =  \left\{\begin{array}{ll} 1 &\ {\rm if \ } x=y\not\in A_n\\
p(x,y) + Q_n^{-1}\big[\sum_{z\not\in A_n}p(x,z)\big]\big[\sum_{z\not\in A_n}p(z,y)\big] & \ {\rm if \ }x,y\in A_n\\
0 & \ {\rm otherwise}
\end{array}\right.
\end{align*}
where
\begin{align*}
Q_n &= \sum_{z\in A_n, y\not\in A_n}p(z,y) = \sum_{z\in A_n}\Big[1 - \sum_{y\in A_n} p(z,y)\Big] \\
&= |A_n| - \sum_{z,y\in A_n}p(z,y)=\sum_{y\in A_n, z\not\in A_n}p(z,y).
\end{align*}

We observe the following properties of $p_n$:

(a).  First, $p_n$ is a transition probability: $\sum_y p_n(x,y) = 1$ for all $x\in \Z^d$.  If $x\not\in A_n$, it is trivial.  If $x\in A_n$, noting that $Q_n = \sum_{z\not\in A_n, y\in A_n}p(z,y)$, we have $\sum_y p_n(x,y) = \sum_{y\in A_n}p(x,y) +\sum_{y\not\in A_n}p(x,y)= 1$.  Moreover, $p_n$ has no transitions from $A_n$ to its complement.

(b).  In addition, $\sum_{x} p_n(x,y) = 1$ for all $y\in \Z^d$:  If $y\not\in A_n$, the claim is trivial.  If $y\in A_n$, write
\begin{align*}
\sum_xp_n(x,y) &=  \sum_{x\in A_n} p(x,y) + Q_n^{-1}\big[ \sum_{x\in A_n,z\not\in A_n}p(x,z)\big]\big[\sum_{z\not\in A_n}p(z,y)\big]  =  \sum_x p(x,y).
\end{align*}
The last quantity equals $1$ if $p$ is doubly-stochastic, the case if $p$ is translation-invariant.

(c).  Also, $p_n \rightarrow p$ for all $x,y\in Z^d$.  Eventually, $x,y\in A_n$.  The claim follows as $Q_n^{-1}\sum_{z\not\in A_n}p(z,y) \leq 1$.
\medskip 

Now, by the comments near \eqref{finite_inv}, as nothing moves away from $A_n$ when starting in $A_n$, we have $\nu_\rho$ restricted to $A_n$ is invariant.  Then, we have $E_{\nu_\rho}[L_n f]=0$ for bounded $f\in \L$ where $L_n$ is the generator for the zero-range process on $A_n$ according to transition probability $p_n$.  Therefore, to show $E_{\nu_\rho}[Lf]=0$, it is enough to show
$$E_{\nu_\rho}\big[|L_nf - Lf|\big] \ \rightarrow\ 0.$$

One may write, noting (LIP), $f\in \L$, $\|\eta^{x,y}-\eta\| = \beta(x)+\beta(y)$ when $\eta(x)\geq 1$ and $x\neq y$, and $f(\eta^{x,y})=f(\eta)$ for $x=y$, that
\begin{eqnarray*}
E_{\nu_\rho} \big[|L_nf - Lf|\big] & \leq & a_0c(f)\sum_{x\in \Z^d} E_{\nu_\rho}[\eta(x)]\sum_{y\neq x} |p(x,y) - p_n(x,y)|\big(\beta(x) + \beta(y)\big)\\
&=&a_0c(f) \rho \sum_{x\in \Z^d} \sum_{y\neq x} |p(x,y) - p_n(x,y)|\big(\beta(x) + \beta(y)\big).
\end{eqnarray*}

For fixed $x\neq y$, 
\[ |p(x,y) - p_n(x,y)|  \leq \frac{ 1(x,y\in A_n)}{Q_n}\sum_{z\not\in A_n}p(x,z) \sum_{z\not\in A_n}p(z,y).\]
Hence, since $\sum_z \beta(z)<\infty$ and $p$ is doubly stochastic,
 the penultimate display vanishes as $n\uparrow\infty$ by dominated convergence. 
 \end{proof}

\section{Extremality, harmonicity and ergodicity}
\label{sec:extremal}

We digress for the moment to an abstract $L^2$ setting.  Let $\Theta$ be a space with Borel sets $\B$.  Let $\eta_t$ be a Markov process on $\Theta$ with process semigroup $T_tf(\eta) = E^\eta[f(\eta_t)]$, acting on bounded functions $f$ (and well-defined extensions). We say here that $Q$ is an invariant measure if for all bounded functions $f$ on $\Theta$ (and therefore $L^2(Q)$ functions) and $t\geq 0$, we have $\int T_t f dQ = \int f dQ$.  
Let $\P_Q$ be the process measure on the path space with initial distribution $Q$.

\begin{definition}
We say $Q$ is an extremal invariant measure if the the following property holds.  When for $0<\epsilon<1$ and invariant probability measures $Q_1$ and $Q_2$ we have $Q = \epsilon Q_1 + (1-\epsilon)Q_2$, then $Q = Q_1=Q_2$.
\end{definition}

We note if the process has only one invariant measure $Q$, then of course it is extremal.  The import of the definition comes when the process is reducible in some way.  For instance, in finite state Markov chains, with exactly two irreducible components $C_1$ and $C_2$, the extreme invariant measures are exactly the unique invariant measures supported on $C_1$ and $C_2$ respectively.

\medskip
One might ask why is it useful to know when an invariant measure is extremal.  It turns out there is an interesting connection with harmonic functions and shift-ergodicity.  

\begin{exercise}
\label{ex:adjoint-harm}
With respect to invariant measure $Q$, noting the representation $T_t f(\eta) = E^\eta\big [f(\eta_t)\big]$, show that $T_t$ is an $L^2(Q)$ contraction.  Let $T^*_t$ be the $L^2(Q)$ adjoint of $T_t$.  Show also that $T^*_t$ is an $L^2(Q)$ contraction for each $t\geq 0$.
\end{exercise}

We will say that $f$ is harmonic, if $T_t f = f$ for all $t\geq 0$.  
\begin{lemma} 
\label{decomp}
The space $L^2(Q)$ is the direct sum of 
$I_0 \ = \ \big\{g\in L^2(Q): T_tg = g, t\geq 0\big\}$
 and the closure of
 $ I_1 \ = \ \big\{T_t h - h: h\in L^2(Q), t\geq 0\big\}$.
\end{lemma}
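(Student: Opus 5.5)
The plan is to identify the orthogonal complement of $I_1$ in $L^2(Q)$ and show that it coincides with $I_0$. The decomposition $L^2(Q) = I_0 \oplus \overline{I_1}$ then follows from the general Hilbert space fact $L^2(Q) = \overline{I_1}\oplus I_1^\perp$.

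First I would describe $I_1^\perp$ through the adjoint. A function $g$ lies in $I_1^\perp$ exactly when $\langle g, T_th - h\rangle_Q = 0$ for all $h\in L^2(Q)$ and $t\geq 0$. Equivalently, $\langle T^*_t g - g, h\rangle_Q=0$ for all $h$, that is, $T^*_t g = g$ for all $t\geq 0$. So $I_1^\perp = \{g: T^*_tg = g,\ t\geq 0\}$. It remains to show that $T_tg=g$ if and only if $T^*_tg = g$, using that both $T_t$ and $T^*_t$ are $L^2(Q)$ contractions (Exercise \ref{ex:adjoint-harm}).

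Suppose $T_t g = g$. Then
$$\|T^*_tg - g\|^2 = \|T^*_tg\|^2 - 2\langle T^*_t g, g\rangle + \|g\|^2 = \|T^*_tg\|^2 - 2\langle g, T_tg\rangle + \|g\|^2 = \|T^*_tg\|^2 - \|g\|^2.$$
The right side is $\leq 0$ because $T^*_t$ is a contraction, so $T^*_t g = g$. The converse follows by the same computation with the roles of $T_t$ and $T^*_t$ exchanged. Here I use $(T^*_t)^* = T_t$, which holds for bounded operators. Hence $I_0 = I_1^\perp$. Since $\overline{I_1}$ is a closed subspace, $L^2(Q) = \overline{I_1}\oplus I_1^\perp = \overline{I_1}\oplus I_0$, and the sum is orthogonal.

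The step I expect to need the most care is the contraction property: it must hold on all of $L^2(Q)$, not only on bounded functions, so that $T_t$ is a bounded operator and its adjoint makes sense. For this I would use Jensen's inequality, $(T_tf)^2\leq T_t(f^2)$, together with invariance, $\int T_t(f^2)\,dQ = \int f^2\,dQ$. This gives the bound on bounded $f$, and it extends to $L^2(Q)$ by density. The contraction property of $T^*_t$ then follows because an operator and its adjoint have the same norm.
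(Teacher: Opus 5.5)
Your proof is correct and follows essentially the same route as the paper. Both arguments identify $(\overline{I_1})^\perp$ with the $T^*_t$-fixed functions via the adjoint, then expand $\|T^*_tg-g\|^2$ (resp.\ $\|T_tg-g\|^2$) and use that $T_t$ and $T^*_t$ are $L^2(Q)$ contractions to get $T_tg=g \iff T^*_tg=g$; your Jensen-plus-invariance justification of the contraction property supplies what the paper leaves to Exercise~\ref{ex:adjoint-harm}.
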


\begin{proof}
 Let $f$ be perpendicular to all functions in $\bar I_1$, the closure of $I_1$.  Then, $\langle f, T_th-h\rangle_Q = 0$ for all $h$ and $t\geq 0$.  This means $T^*_tf=f$ for all $t\geq 0$.  Therefore, as $T_t$ is a $L^2(Q)$ contraction, $E_Q\big[(T_t f - f)^2\big] = E_Q\big[(T_t f)^2\big] -2E_Q\big[f (T_t f)\big] + E_Q\big[f^2\big] \leq 2E_Q\big[f^2\big] - 2E_Q\big[(T^*_t f)f\big] = 0$. Hence, $T_tf =  f$ and $f\in I_0$ or $(\bar I_1)^\perp \subset I_0$.
 
 On the other hand, let $f\in I_0$.  Then, $\langle f, T_t h -h\rangle_Q = \langle T^*_t f - f, h\rangle_Q$.  Since $f\in I_0$, by a similar argument as above, we conclude $E_Q\big[(T^*_t f - f)^2\big] \leq 0$ and so $T^*_tf =f$ and $\langle f, T_t h - h\rangle_Q=0$.  Hence, $I_0\subset (\bar I_1)^\perp$.
\end{proof}

We have the following ergodic theorem due to Von Neumann.

\begin{proposition}
\label{prop:Von}
Let $f\in L^2(Q)$ and define $\hat f \in L^2(Q)$ to be the projection onto the subspace $H_0$.  In other words, $\hat f$ is the conditional expectation $\hat f = E[f| \mathcal T]$ with respect to the time-shift invariant sets $\mathcal T$.

Then, we have the $L^2(Q)$ convergence as $t\rightarrow\infty$,
$$\frac{1}{t}\int_0^t T_sf \; ds \ \rightarrow \ \hat{f}.$$
\end{proposition}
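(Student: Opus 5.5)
The plan is to use the orthogonal decomposition of Lemma \ref{decomp}, $L^2(Q) = I_0 \oplus \bar I_1$, and to check the convergence separately on each summand. Here $H_0$ is read as $I_0$, the space of harmonic functions. Write $A_t f = \frac{1}{t}\int_0^t T_s f\,ds$. By Exercise \ref{ex:adjoint-harm}, each $T_s$ is an $L^2(Q)$ contraction. Hence $\|A_t f\|_{L^2(Q)} \le \|f\|_{L^2(Q)}$ for all $t>0$, so the operators $A_t$ are uniformly bounded. Measurability of $s\mapsto T_sf$ can be justified through Fubini applied to the path-space representation under $\P_Q$.

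First take $f\in I_0$. Then $T_sf=f$ for every $s$, so $A_tf=f=\hat f$ identically.

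Next take $f=T_rh-h\in I_1$, with $h\in L^2(Q)$ and $r\ge0$. The semigroup property gives
\[
A_tf=\frac1t\Big(\int_t^{t+r}T_sh\,ds-\int_0^rT_sh\,ds\Big),
\]
and this has $L^2(Q)$ norm at most $2r\|h\|/t\to0$. By linearity the convergence to $0$ holds on the span of $I_1$. By an $\epsilon/3$ argument using the uniform bound $\|A_t\|\le1$, it extends to the closure $\bar I_1$. For $f\in\bar I_1$ the projection onto $I_0$ is $0$, so again $A_tf\to\hat f$.

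For general $f$, write $f=\hat f+(f-\hat f)$ with $f-\hat f\in\bar I_1$ and add the two limits.

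The main obstacle I expect is the identification of the orthogonal projection onto $I_0$ with the conditional expectation $E[f\mid\mathcal T]$ with respect to time-shift invariant sets. My approach would be to show that $g\in I_0$ exactly when $g(\eta_t)=g(\eta_0)$ holds $\P_Q$-a.s. for each $t$. The forward direction uses invariance together with $T_t g = g$ to get
\[
\E_Q\big[(g(\eta_t)-g(\eta_0))^2\big]=2E_Q[g^2]-2E_Q[gT_tg]=0.
\]
Functions in $I_0$ therefore correspond to invariant events, and conversely an invariant set yields a harmonic indicator. One then checks the defining property $\langle f-\hat f,\mathbf 1_B\rangle_Q=0$ for invariant $B$. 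I would treat this identification as a remark, leaving details to the reader, since the convergence statement itself only needs the projection.
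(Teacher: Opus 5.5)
Your proof is correct and follows the paper's argument. You split $f$ using the decomposition of Lemma \ref{decomp}, note that $T_s$ fixes the harmonic part, and telescope $T_rh-h$ to an $O(1/t)$ term; your passage through the span of $I_1$ and the uniform bound $\|A_t\|\le 1$ just makes explicit the approximation step the paper only gestures at.
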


\begin{proof}
Decompose $f = \hat{f} + g$ where $g\in \bar I_1$.  Clearly, $T_t \hat{f} = \hat{f}$.  Since $\bar g\in \bar I_1$ can be approximated by $g\in I_1$ in $L^2(Q)$, we need to understand the contribution of the term $g$, in form $g = T_uh - h$ for some $h\in L^2(Q)$ and $u\geq 0$.
We see
$$\frac{1}{t}\int_0^t T_sg \; ds \ = \ \frac{1}{t}\Big[\int^{t+u}_t T_s h \; ds - \int^u_0 T_sh \; ds\Big] \ = \ O(t^{-1})$$
in $L^2(Q)$ from H\"older's inequality as $T_t$ is an $L^2(Q)$ contraction.
\end{proof}

We come now to the main result of the subsection.  Recall $\P_Q$ stands for the process measure when starting in $Q$.
\begin{proposition}
\label{equivalence}
Let $Q$ be an invariant measure.  All are equivalent:
\begin{itemize}
\item[(a)] For sets $A\in \B$, $T_tI(A) = I(A)$ $Q$ a.s. $\Rightarrow Q(A) = 0 {\rm \ or \ }1$.
\item[(b)] $\P_Q$ is ergodic:  For each $f\in L^2(Q)$, $\hat{f} = E_Q[f]$, $Q$ a.s.
\item[(c)] $Q$ is extremal.
\end{itemize}
\end{proposition}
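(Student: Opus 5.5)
I will prove the cycle (a) $\Rightarrow$ (b) $\Rightarrow$ (c) $\Rightarrow$ (a), using the decomposition $L^2(Q)=I_0\oplus \bar I_1$ from Lemma \ref{decomp} and the projection $f\mapsto \hat f$ onto $I_0$ from Proposition \ref{prop:Von}. The main point is that $I_0$, the harmonic functions, is closed under taking indicators of level sets. Condition (a) says all harmonic indicators are constant, and (b) says $I_0$ consists of constants.

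For (a) $\Rightarrow$ (b), I will show that $I_0$ is a lattice. Take $f\in I_0$. By Jensen, $T_t|f|\ge |T_tf|=|f|$, and invariance of $Q$ gives $\int (T_t|f|-|f|)\,dQ=0$, so $T_t|f|=|f|$ $Q$-a.s. Hence $f^+$, $f\wedge c$ and $(f-c)^+\wedge 1$ all lie in $I_0$. Letting $n\uparrow\infty$ in $n(f-c)^+\wedge 1\uparrow 1(f>c)$ and using $L^2$-continuity of $T_t$, I get $1(f>c)\in I_0$. By (a), every level set $\{f>c\}$ then has measure $0$ or $1$, so $f$ is a.s. constant. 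Since $\hat f\in I_0$, this gives $\hat f=E_Q[\hat f]=E_Q[f]$. The last equality holds because constants lie in $I_0$, so they are orthogonal to $\bar I_1$, which means $\bar I_1$ has mean zero.

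For (b) $\Rightarrow$ (c), suppose $Q=\epsilon Q_1+(1-\epsilon)Q_2$ with $Q_1,Q_2$ invariant. Then $Q_1\le \epsilon^{-1}Q$, so $h=dQ_1/dQ$ is bounded by $\epsilon^{-1}$ and lies in $L^2(Q)$. For any bounded $g$, invariance of $Q_1$ gives $\langle h, T_tg-g\rangle_Q=0$. Thus $h\perp \bar I_1$, so $h\in I_0$ by Lemma \ref{decomp}, and $h=\hat h$. By (b), $h=E_Q[h]=1$, so $Q_1=Q$, and likewise $Q_2=Q$.

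For (c) $\Rightarrow$ (a), take $A$ with $T_t1(A)=1(A)$ and suppose $0<Q(A)<1$. Set $Q_1=Q(\cdot|A)$ and $Q_2=Q(\cdot|A^c)$, so that $Q=Q(A)Q_1+Q(A^c)Q_2$. I need $Q_1$ to be invariant, i.e. $\int_A T_tf\,dQ=\int_A f\,dQ$ for bounded $f$. Since $T^*_t$ is the adjoint, the left side is $\int f\, T^*_t1(A)\,dQ$, so it suffices to show $T^*_t1(A)=1(A)$. The argument in Lemma \ref{decomp} does this: $I_0=(\bar I_1)^\perp$, and an element $f$ of $(\bar I_1)^\perp$ satisfies $T_t^*f=f$. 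Then extremality forces $Q_1=Q$, contradicting $Q_1(A^c)=0<Q(A^c)$.

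The step I expect to be the main obstacle is the lattice argument in (a) $\Rightarrow$ (b). In particular, the Jensen inequality $T_t|f|\ge|T_tf|$ pointwise needs the representation $T_tf(\eta)=E^\eta[f(\eta_t)]$ to extend to $L^2(Q)$ functions, and that extension should be checked. I also need to identify $\hat f$ as a conditional expectation with respect to the invariant $\sigma$-field, which follows once the level sets of harmonic functions are known to be invariant sets.
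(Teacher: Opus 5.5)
Your proof is correct and follows the same cycle (a)$\Rightarrow$(b)$\Rightarrow$(c)$\Rightarrow$(a) as the paper. Only the (b)$\Rightarrow$(c) step takes a genuinely different route.

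\textbf{(a)$\Rightarrow$(b).} This is the same lattice argument as the paper's, with two cosmetic differences. You argue directly on an arbitrary $f\in I_0$ rather than by contradiction. You also get $T_t|f|=|f|$ from $\int (T_t|f|-|f|)\,dQ=0$ with a nonnegative integrand, instead of the paper's second-moment expansion. The two obstacles you flag are not real. The inequality $|T_tf|\le T_t|f|$ on $L^2(Q)$ is just the ``positive contraction'' property that the paper also uses. And you never need to identify $\hat f$ as a conditional expectation, since you only use it as the projection onto $I_0$.

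\textbf{(b)$\Rightarrow$(c).} The paper applies von Neumann's theorem under both $Q$ and $Q_1$ and compares the $L^2(Q_1)$ limits of $t^{-1}\int_0^t T_sf\,ds$. You instead show that $h=dQ_1/dQ\le\epsilon^{-1}$ is orthogonal to $\bar I_1$. You state this only for bounded $g$; pass to all of $I_1$ by density of bounded functions and $L^2$-continuity of $T_t$, which is one line. Then $h\in I_0$ by Lemma~\ref{decomp}, and (b) forces $h\equiv 1$. Your route is purely Hilbert-space and avoids the ergodic theorem. It also gives the useful byproduct that the density of any invariant $Q_1\le\epsilon^{-1}Q$ is harmonic. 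The paper's route instead keeps the dynamical picture of ergodic averages identifying the measure.

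\textbf{(c)$\Rightarrow$(a).} You check invariance of $Q(\cdot\,|A)$ via $\int_A T_tf\,dQ=\langle T_t^*1(A),f\rangle_Q$ and $T_t^*1(A)=1(A)$. This makes rigorous the paper's informal claim that the process started in $A$ stays in $A$. Add one sentence: $Q_2=(Q-Q(A)Q_1)/Q(A^c)$ is then automatically invariant, which you need because extremality requires both pieces to be invariant.
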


Note that part (b) is the same as `time-shift ergodicity' or in other words that the shift invariant $\sigma$-field $\mathcal T$ is trivial:  Sets $\Lambda\in\mathcal T$ satisfy $\P_Q(\Lambda) = 0$ or $1$.

\begin{proof}

`b$\Rightarrow$c' Let $Q$ be an invariant measure whose path measure
is
ergodic.  Write $Q= \epsilon Q_1 + (1-\epsilon)Q_2$ for $0<\epsilon<1$
and invariant measures $Q_1$ and $Q_2$.  
Let now $f$ be a bounded function.  Then, by (b), as $t\rightarrow \infty$,
$\frac{1}{t}\int_0^t (T_sf)ds$ converges to $E_Q[f]$ in $L^2(Q)$ and therefore also in $L^2(Q_1)$.
Moreover, by Proposition \ref{prop:Von}, $\frac{1}{t}\int_0^t (T_sf)ds$ converges to $\hat{f}$ in $L^2(Q_1)$.  Hence,
$\hat{f} = E_Q[f]$ $Q_1$ a.s. and taking expectation, $E_{Q_1}[f]=E_Q[\hat f]=E_Q[f]$.
This gives
$Q_1(B)=Q(B)$ for $B\in \B$ and therefore $Q_1=Q$.

`a$\Rightarrow$b' Let $Q$ be an invariant measure and
suppose that $\P_Q$ is not ergodic.
Then there exists an $f\in
L^2(Q)$
such that $\hat{f}$ is not constant $Q$-a.s.  Let $c$ be such that
$Q(A)=\epsilon$, $0<\epsilon <1$ where $A= \{\hat{f}>c\}$. 

 Now, as
$T_t \hat{f}=\hat{f}$ $Q$-a.s. and
$T_t$ is a positive contraction taking $1$ to $1$, we have
that $T_tI(A)=I(A)$ $Q$-a.s.:  
Indeed, first, as $T_t$ is a positive
operator, $|\hat{f}|=|T_t\hat{f}| \leq
T_t|\hat{f}|$, so that, as $T_t$ is an $L^2$ contraction, 
we have $\big(T_t |\hat f| - |\hat f|\big)^2 \leq T_t |\hat f|^2 + |\hat f|^2 -2|\hat f|T_t|\hat f| \leq T_t |\hat f|^2 + |\hat f|^2 - 2|\hat f|^2$.  Note also $E_Q\big[T_t |\hat f|^2\big] = E_Q\big[|\hat f|^2\big]$.  Then, we have $E_Q\big[ \big(T_t |\hat f| - |\hat f|\big)^2\big] \leq 0$, and so
$Q$-a.s. $T_t|\hat{f}|=|\hat{f}|$.  
Therefore, $\max\{0, \hat{f}\} =
(\hat{f}+|\hat{f}|)/2$ is harmonic.  Further, if $f,g \in L^2$ are
harmonic, then $\max\{f,g\}=\max\{0, f-g\} +g$ is harmonic.
Correspondingly,
$\min\{f,g\} = -\max\{-f,-g\}$ is harmonic.  Of course, $1$ is
harmonic.
All of this gives that $\big(\min\{n\max(0,\hat{f}-c),1\}\big)_{n\geq 1}$
is a sequence of uniformly bounded 
harmonic functions.  The limit, as $n\rightarrow
\infty$,
is $I(A)$ which is therefore harmonic by dominated convergence.  Hence, by (a), $I(A)$ is constant, a contradiction.

`c$\Rightarrow$a' 
Let $A$ be such that $T_tI(A)=I(A)$ $Q$-a.s. and
$Q(A) = \epsilon$ for
$0<\epsilon<1$.  By this relation, the
process begun on $A$ stays in $A$ and, if begun in $A^c$ stays in $A^c$, with $Q$-probability $1$.  Then, we
have that
$Q_1(B) = \epsilon^{-1}Q(B\cap A)$ and
$Q_2(B) = (1-\epsilon)^{-1}Q(B\cap A^c)$ are distinct  invariant probability
measures such that $Q=\epsilon Q_1 +(1-\epsilon)Q_2$.  Therefore
$Q$ is not extremal.
\end{proof}

\section{Extremality of $\nu_\rho$}
We now address the `extremality' of $\nu_\rho$ in the convex set of invariant measures $\I$.  To do this rigorously, we will need to extend the process so that the extended semigroup $T_t^\rho$ and generator $L^\rho$ act on $L^2(\nu_\rho)$ functions, not just Lipschitz functions $\L$.  In this way, we can fit into Subsection \ref{sec:extremal}, and in particular $\nu_\rho$ will be an invariant measure in this setting.

However, to present the main ideas we assume this extension has been done, and that adjoints can be taken.  In the next subsection, we detail the extension to a $L^2(\nu_\rho)$ process.
Recall $s(x,y) = s(y-x) =  \frac{1}{2}\big(p(y-x) + p(x-y)\big)$
is the symmetrized transition probability.

\begin{theorem}
\label{thm:extremal}
When $s(\cdot)$ is irreducible on $\Z^d$, the invariant measure $\nu_\rho$ is extremal.
\end{theorem}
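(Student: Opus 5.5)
By Proposition \ref{equivalence}, it suffices to verify condition (a). Suppose $A$ is a Borel set with $T^\rho_t I(A) = I(A)$ $\nu_\rho$-a.s. for all $t\geq 0$. We will show that $I(A)$ is a.s. constant, so that $\nu_\rho(A)\in\{0,1\}$.

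The approach is Dirichlet-form based. Write $h=I(A)$. Since $T^\rho_t h = h$ and $h^2=h$, we have
\[E_{\nu_\rho}\big[(T_th-h)^2\big]=0 \quad\text{and}\quad E_{\nu_\rho}[hT_th]=E_{\nu_\rho}[h^2].\]
Hence $\frac1t E_{\nu_\rho}[h(h-T_th)]=0$ for all $t$. In the limit $t\downarrow 0$ this is the Dirichlet form of $h$, computed with the symmetric part $S=(L^\rho+L^{\rho*})/2$. Using the adjoint (which, by the computation behind Exercise \ref{ex:zr-particle} and Lemma \ref{lem:finite_inv}, is again a zero-range generator with $p^*(y)=p(-y)$), we would write
\[0=\lim_{t\downarrow0}\frac1{t}E_{\nu_\rho}[h(h-T_th)]=\frac12\sum_{x,y}s(y-x)\,E_{\nu_\rho}\Big[g(\eta(x))\big(h(\eta^{x,y})-h(\eta)\big)^2\Big].\]
This is the infinite-volume analogue of \eqref{dirichlet_expression}. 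Justifying this limit is the technical point: it requires $h$ in the domain of the form, or at least monotonicity of $t\mapsto \frac1t E[h(h-T_th)]$ via a spectral argument for the symmetrized semigroup. We would rely on the $L^2(\nu_\rho)$ extension assumed at this point in the text and follow Andjel's argument \cite{Andjel}.

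From the vanishing of the form we conclude that for every pair $x,y$ with $s(y-x)>0$, we have $h(\eta^{x,y})=h(\eta)$ for $\nu_\rho$-a.e. $\eta$ with $\eta(x)\geq1$ (as $g(k)>0$ for $k\geq1$). Since $s$ is irreducible, chaining along $s$-paths shows that $h$ is invariant under moving any single particle between any two sites. In other words, $h$ is invariant under all finite permutations of particles among sites.

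The main obstacle is converting this exchangeability into triviality, since a particle-move invariance only preserves the total number in a finite box. The plan is to invoke a Hewitt--Savage type zero-one law for the product measure $\nu_\rho$. Approximate $A$ in $L^1(\nu_\rho)$ by a cylinder event $B$ depending on sites in a box $A_n$. Condition on the number $K$ of particles in a larger box $A_m$, under which $\nu_\rho$ restricted to $A_m$ becomes the canonical measure $\nu_{m,K}$. Then $h$ is a.s. a function of $(K,\eta|_{A_m^c})$ only, because $\nu_{m,K}$ is the unique measure on that hyperplane invariant under the irreducible moves. Letting $m\uparrow\infty$, we would show that $h$ is measurable with respect to the exchangeable/tail $\sigma$-field. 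By a local limit argument (equivalence of ensembles, in the spirit of Exercise \ref{lclt}), the conditional law of $\eta|_{A_n}$ given $K$ is asymptotically independent of $K$ in the relevant range. This makes $E[h\mid\eta|_{A_n}]$ asymptotically constant, so $h$ is independent of $\mathcal F_n$ for every $n$, hence independent of itself, and therefore $\nu_\rho(A)\in\{0,1\}$. Alternatively, one can apply the Hewitt--Savage theorem directly: representing particles as i.i.d. labels is awkward, so we would first try the canonical-measure conditioning route.
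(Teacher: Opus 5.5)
Your proposal is correct in substance. The first half is the paper's argument: a harmonic indicator has vanishing Dirichlet form, so it is a.s.\ invariant under $\eta\mapsto\eta^{x,y}$ along $s$-bonds, and by chaining under all single-particle moves. The `technical point' you flag is not one. Since $P^\rho_t h=h$, the difference quotient is identically zero, so $h\in{\rm Dom}(\rho)$ with $L^\rho h=0$. Then \eqref{eq:Dirichlet_equality}, which the paper proves for all of ${\rm Dom}(\rho)$, gives $D_\rho(h)=0$ at once, with no spectral or monotonicity argument.

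The routes part at the last step. The paper notes that the swap $\eta_{x,y}$ of the values at $x$ and $y$ is itself a composition of single-particle moves: transfer $|\eta(x)-\eta(y)|$ particles from the fuller site to the emptier one. Hence $h$ is invariant under finite permutations of the i.i.d.\ site variables $\{\eta(x)\}_{x\in\Z^d}$, and Hewitt--Savage applies directly. It is applied to sites, not particles, so no labelling is needed. Your `obstacle' that moves only preserve box totals is therefore not an obstacle, since a swap preserves the box total too.

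Your canonical-conditioning route also works, once it is justified properly:
\begin{itemize}
\item $h$ is a.s.\ a function of $(K_m,\eta|_{A_m^c})$ because moves inside $A_m$ connect all points of the finite hyperplane, each of positive mass. Uniqueness of $\nu_{m,K}$ is not the reason.
\item $|E[h\mid\eta|_{A_n}]-E[h]|$ is at most the total-variation distance between the law of $K_m$ given $\eta|_{A_n}$ and its unconditional law.
\item That distance tends to zero as $m\uparrow\infty$ by the local CLT for the aperiodic marginal $\kappa_{\Psi(\rho)}$.
\item The left side does not depend on $m$, so $h$ is independent of every $\mathcal F_n$.
\end{itemize}
The detour through an `exchangeable/tail' $\sigma$-field is unnecessary; this conditional-expectation bound is all you need.

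The trade-off: the paper's step is one line but uses that the sites are exchangeable under $\nu_\rho$. Yours costs a local limit theorem but uses only independence of the sites. It is the equivalence-of-ensembles argument that survives when the invariant product measure has site-dependent marginals, as happens for non-translation-invariant $p$.
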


\begin{proof}
The idea is simple.   We will need to understand the Dirichlet form of the process.  In the next subsection, we show for $f\in {\rm Dom}(\rho)$, the domain of the extended generator $L^\rho$, that the associated Dirichlet form satisfies
$$D_\rho(f):= \langle f, -L^\rho f\rangle_{\nu_\rho} \ = \ \frac{1}{2}\sum_{x,y\in \Z^d} s(x,y)E_{\nu_\rho}\big[g(\eta(x))\big(f(\eta^{x,y})-f(\eta)\big)^2 \big].$$

Now, let $f$ be a bounded harmonic function, that is when $P^\rho_t f = f$, $\nu_\rho$ a.s.  Hence, as the limit,
$$\lim_{t\downarrow 0} \frac{1}{t}\big[P^\rho_t f - f\big] \ = \ 0,$$
(trivially) exists $\nu_\rho$ a.s., we conclude $f\in {\rm Dom}(\rho)$ and  $L^\rho f = 0$, $\nu_\rho$ a.s.

Therefore, $D_\rho(f)=0$, and so all summands vanish when $s(x,y)>0$.  In particular, as $g(k)>0$ exactly when $k\geq 1$,
$$f(\eta^{x,y}) = f(\eta) {\rm \ a.s.-}\nu_\rho \ \ {\rm for \ all \ }x,y \ {\rm such \ that \ } s(x,y)>0 {\rm \ when \ }\eta(x)\geq 1.$$
Consequently, $f$ is invariant to the motion of particles!  So, by irreducibility of $s(\cdot)$ and countability of $\Z^d$, one has
$$f(\eta_{x,y}) = f(\eta) \ \ {\rm for \ all \ }x,y \ {\rm a.s.-}\nu_\rho$$
where $\eta_{x,y}$ is the configuration which exchanges values $\eta(x)$ and $\eta(y)$.

Therefore, $f$ is finite permutation invariant.  Since $\nu_\rho$ is a product measure with i.i.d. marginals, by Hewitt-Savage $0-1$ law, $f$ is constant a.s.-$\nu_\rho$.  Inputting into Proposition \ref{equivalence} shows extremality.
\end{proof}

\section{Extension to an $L^2(\nu_\rho)$ process}

We first extend the semigroup defined on $\L$ to $L^2(\nu_\rho)$.  
We define the concept of a Markov semigroup on $\mc X=L^2(\nu_\rho)$.

\begin{definition}
A family of linear operators $P_t$ on $\mc X$ is a Markov semigroup if
(a) $P_0 = I$; (b) for $f\in \mc X$, $P_tf$ is right-continuous in $t$ on $\mc X$; (c) $P_t$ satisfies the semigroup property $P_{t+s} = P_tP_s$ for $s,t\geq 0$; (d) $P_t1 = 1$ for $t\geq 0$; (e) $P_tf\geq 0$ for $t\geq 0$ and nonnegative $f\in \mc X$.
\end{definition}

\begin{lemma}
$P_t$ on $\mathcal{L}$ extends by 
continuity to a Markov semigroup $P^\rho_t$ on 
$L^2(\nu_\rho)$.
\end{lemma}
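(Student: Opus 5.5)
The plan is to show that $P_t$ is an $L^2(\nu_\rho)$ contraction on a dense subclass of $\L$, extend it by uniform continuity, and then verify properties (a)--(e) by passing to limits. First I fix the class. Let $\L_b$ be the bounded functions in $\L$. Bounded cylinder functions lie in $\L_b$ and are dense in $L^2(\nu_\rho)$, so $\L_b$ is dense. For $f\in\L_b$, Lemma \ref{lem:7.2.6} gives $P_tf(\eta)=E^\eta[f(\eta_t)]$ on $\Omega'$, and by \eqref{fully_charge} this determines $P_tf$ $\nu_\rho$-a.s. Since $|P_tf|\le\|f\|_\infty$, and $c(P_tf)\le c(f)e^{4a_0t}$ by Theorem \ref{construction1}, we also have $P_tf\in\L_b$. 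So $P_t$ maps $\L_b$ into itself.

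The key step is the contraction. By Jensen (conditional expectation under $P^\eta$), $(P_tf)^2\le P_t(f^2)$ pointwise on $\Omega'$. Now $f^2$ is bounded and Lipschitz, since $|f^2(\eta)-f^2(\zeta)|\le 2\|f\|_\infty c(f)\|\eta-\zeta\|$. Theorem \ref{inv_thm} then gives $\int P_t(f^2)\,d\nu_\rho=\int f^2\,d\nu_\rho$. Hence $\|P_tf\|_{L^2(\nu_\rho)}\le\|f\|_{L^2(\nu_\rho)}$. Linearity on $\L_b$ is clear from the expectation representation. Therefore $P_t$ extends uniquely to a linear contraction $P^\rho_t$ on $L^2(\nu_\rho)$.

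Next I check (a)--(e).
\begin{itemize}
\item (a), (d), (e): these hold on $\L_b$ by the expectation representation, with positivity meaning $f\ge0$ implies $P_tf\ge 0$. For (e) on general $f\ge0$ in $L^2$, I approximate by nonnegative elements of $\L_b$, for instance truncations $\min\{\max\{h,0\},M\}$ of approximating bounded cylinder functions $h$; these stay Lipschitz. Positivity then passes to $L^2$ limits along an a.s.\ convergent subsequence.
\item (c): the semigroup property holds on $\L_b$ by Theorem \ref{construction1}, and it extends by density and the uniform bound $\|P^\rho_t\|\le1$.
\item (b): I expect this to be the main obstacle. For $f\in\L_b$, Theorem \ref{construction2}(i) gives $|P_tf(\eta)-f(\eta)|\le(4a_0)^{-1}c(f)\|\eta\|(e^{4a_0t}-1)$. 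Lemma \ref{lem:norm-squared} gives $E_{\nu_\rho}[\|\eta\|^2]<\infty$, so $\|P_tf-f\|_{L^2(\nu_\rho)}\to0$ as $t\downarrow0$. The same argument at $P_sf\in\L_b$, combined with the semigroup property, gives right-continuity at every $s$. An $\epsilon/3$ argument using the contraction bound then extends strong right-continuity to all of $L^2(\nu_\rho)$.
\end{itemize}
The delicate points in the last item are ensuring the approximating class is closed under the needed operations and that the second-moment bound on $\|\eta\|$ is what converts the pointwise Lipschitz estimate into $L^2$ convergence.
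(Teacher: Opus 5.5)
Your proof is correct and follows the paper's route. Jensen plus invariance of $\nu_\rho$ gives the $L^2(\nu_\rho)$ contraction, density of simple functions gives the extension, and you also supply the checks of (a)--(e) that the paper leaves to the reader. Restricting to bounded $f\in\mathcal{L}$ is more careful than the paper, because invariance is only defined for bounded Lipschitz functions and $f^2$ need not lie in $\mathcal{L}$ when $f$ is unbounded.

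The one thing to add is that your extension agrees with $P_t$ on unbounded $f\in\mathcal{L}$, which the phrase ``extends $P_t$ on $\mathcal{L}$'' and Lemma \ref{lem:closure} require. Truncate to $f_M=(f\wedge M)\vee(-M)\in\mathcal{L}$. Then $f_M\to f$ in $L^2(\nu_\rho)$ by Lemma \ref{lem:norm-squared}, and $P_tf_M\to P_tf$ pointwise on $\Omega'$ by dominated convergence with Lemmas \ref{stayinset} and \ref{lem:7.2.6}.
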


\begin{proof}  For $f\in \L$, we have $|f(\eta)|\leq c(f)\|\eta\|$, and hence $\mc L \subset L^2(\nu_\rho)$ by Lemma \ref{lem:norm-squared}.  Also, for $f\in \mc L$, we may write $P_t f(\eta) = E^\eta[f(\eta_t)]$.
Therefore, by Jensen inequality,
$[P_tf(\eta)]^2 \leq [P_t |f(\eta)|]^2 \leq P_t f^2(\eta)$.
Then,
\begin{eqnarray*}
\|P_t f\|^2_{L^2(\nu_\rho)} &=& \int [P_t f]^2 d\nu_\rho\\
&\leq& \int P_t f^2 d\nu_\rho \\
&=& \int f^2 d\nu_\rho = \|f\|^2_{L^2(\nu_\rho)}.
\end{eqnarray*}
Given that simple functions, those supported on finitely many variables $\{\eta(x): x\in \Z^d\}$ and taking finitely many values, are contained in $\L$, we observe $\L$ 
is dense in $L^2(\nu_\rho)$.  Therefore, for $f\in L^2(\nu_\rho)$, we may define $P^\rho_t f$ as the Cauchy limit of $\{P_t f_n\}$ for a sequence of simple functions $f_n$ which converge to $f$ in $L^2(\nu_\rho)$.  Such an extension, given Theorems \ref{construction1} and \ref{construction2},
 fulfills the definition of a Markov semigroup on $L^2(\nu_\rho)$, details left to the reader.
\end{proof}

\begin{exercise}
\rm
Verify that $\nu_\rho$ satisfies the definition of an invariant measure given at the beginning of Subsection \ref{sec:extremal} with respect to $P_t^\rho$. 
\end{exercise}

There is a one-to-one correspondence between Markov semigroups and associated generators by the Hille-Yosida theorem; see \cite{EK}.
  
\begin{proposition}[Hille-Yosida Theorem]
For a Markov semigroup $T_t$ on $L^2(Q)$, define
\begin{eqnarray*}
{\rm Dom} &=& \big\{f\in L^2(Q):  \lim_{t\downarrow 0} \frac{T_tf - f}{t} \ {\rm exists}\big\}\\
Lf&=& \lim_{t\downarrow 0} \frac{T_tf - f}{t} \ {\rm for \ }f\in {\rm Dom}
\end{eqnarray*}

Then, if $f\in {\rm Dom}$, we have $T_tf\in {\rm Dom}$ and $\frac{d}{dt} T_tf = LT_tf = T_tLf$.
\end{proposition}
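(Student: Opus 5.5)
The plan is to prove the two assertions of the Hille--Yosida statement directly from the semigroup property, positivity/contraction, and right-continuity of the Markov semigroup $T_t$ on $L^2(Q)$. Here $Q$ is invariant, so by Exercise \ref{ex:adjoint-harm} $T_t$ is an $L^2(Q)$ contraction. Fix $f\in{\rm Dom}$ and $t\geq 0$.

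First I will show that $T_tf\in{\rm Dom}$ with $LT_tf=T_tLf$. By the semigroup property,
\[
\frac{T_h(T_tf)-T_tf}{h} \;=\; T_t\Big(\frac{T_hf-f}{h}\Big).
\]
Since $T_t$ is a bounded linear operator of norm at most $1$, the right side converges in $L^2(Q)$ to $T_tLf$ as $h\downarrow0$. This gives membership in ${\rm Dom}$ and the commutation $LT_tf=T_tLf$. It also shows that the right derivative of $s\mapsto T_sf$ at $s=t$ exists and equals $T_tLf$.

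Next I will upgrade this to a two-sided derivative for $t>0$. For small $h\in(0,t]$ I will write
\[
\frac{T_tf-T_{t-h}f}{h}-T_tLf \;=\; T_{t-h}\Big(\frac{T_hf-f}{h}-Lf\Big)+\big(T_{t-h}Lf-T_tLf\big).
\]
The first term is bounded in norm by $\|(T_hf-f)/h-Lf\|$, using the contraction property, and this tends to $0$. The second term requires continuity of $s\mapsto T_sLf$ from the left at $t$.

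I expect this left-continuity to be the main obstacle, since the definition of a Markov semigroup only gives right-continuity. My first attempt will be to establish strong continuity as follows.
\begin{itemize}
\item For any $g$, right-continuity at $0$ gives $\|T_{t}g-T_{t-h}g\|=\|T_{t-h}(T_hg-g)\|\leq\|T_hg-g\|\to0$, again by contraction.
\item This yields two-sided continuity of $s\mapsto T_sg$ on $(0,\infty)$ for every $g\in L^2(Q)$, and in particular for $g=Lf$.
\end{itemize}

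With both one-sided derivatives equal to $T_tLf=LT_tf$, I will conclude that $\frac{d}{dt}T_tf=LT_tf=T_tLf$, where at $t=0$ the derivative is understood as the right derivative. If one also wants the integrated form $T_tf-f=\int_0^tT_sLf\,ds$, it follows by integrating the continuous derivative in the Bochner/Riemann sense in $L^2(Q)$.
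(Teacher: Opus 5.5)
Your proposal is correct, and it is the standard argument from Ethier--Kurtz, to which the paper simply defers without giving a proof. The right derivative and the identity $LT_tf=T_tLf$ come from $T_t$ commuting with the difference quotient, and the left derivative from your splitting through $T_{t-h}$ together with continuity of $s\mapsto T_sLf$.

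The only ingredient beyond the semigroup axioms is the local bound $\sup_{0\le s\le t}\|T_s\|<\infty$. You get it from contractivity when $Q$ is invariant, which is the paper's intended setting (contractivity is checked there for $P^\rho_t$). If $Q$ were not assumed invariant, the bound still follows from Banach--Steinhaus and right-continuity at $0$, since each positive $T_s$ is bounded on $L^2(Q)$; your argument then goes through unchanged.
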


Let $L^\rho$ be the generator associated with semigroup $P_t^\rho$ with domain ${\rm Dom}(\rho)$.  
\begin{lemma}
\label{lem:closure} We have $\L\subset {\rm Dom}(\rho)$, $L^\rho$ on ${\rm Dom}(\rho)$ extends $L$ on $\L$, and $L^\rho$ is the closure of $L$ on $\L$, that is the graph of $L^\rho$ is the closure in $L^2(\nu_\rho)\times L^2(\nu_\rho)$ of the graph of $L$.
\end{lemma}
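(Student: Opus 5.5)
The plan is to prove the three assertions of the lemma in order: (a) $\L \subset {\rm Dom}(\rho)$; (b) $L^\rho f = Lf$ for $f\in\L$; (c) $L^\rho$ is the closure of $L|_{\L}$. Throughout, $\L$ and $L^2(\nu_\rho)$ are related through the bound $|f(\eta)|\le |f(0)| + c(f)\|\eta\|$ together with Lemma \ref{lem:norm-squared}.

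\textbf{Steps (a) and (b).} Fix $f\in\L$. By Theorem \ref{construction1}, for every $\eta\in\Omega'$,
\[
t^{-1}\big[P_tf(\eta)-f(\eta)\big] = t^{-1}\int_0^t LP_sf(\eta)\,ds .
\]
Theorem \ref{construction2}(ii) shows this converges pointwise to $Lf(\eta)$ as $t\downarrow 0$. Theorem \ref{construction2}(i) gives, for $0<t\le 1$, the domination
\[
\big|t^{-1}(P_tf-f)(\eta)\big| \le C\,c(f)\,\|\eta\| .
\]
Since $E_{\nu_\rho}[\|\eta\|^2]<\infty$ by Lemma \ref{lem:norm-squared}, dominated convergence upgrades the pointwise limit to convergence in $L^2(\nu_\rho)$. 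Here $P^\rho_t f = P_t f$ for $f\in\L$: this holds by the construction of $P^\rho_t$ from approximating simple functions, using the Lipschitz continuity of $P_t$ and dominated convergence. Hence $f\in{\rm Dom}(\rho)$ and $L^\rho f = Lf$. Note that $Lf\in L^2(\nu_\rho)$ by Lemma \ref{lem:Lbound}.

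\textbf{Step (c).} Because $L^\rho$ is the generator of a strongly continuous contraction semigroup, it is closed, so the closure $\bar L$ of $L|_{\L}$ exists and $\bar L\subset L^\rho$. For the reverse inclusion I would use the standard core criterion (e.g.\ \cite{EK}, Proposition 1.3.3): a subspace $D\subset{\rm Dom}(\rho)$ that is dense in $L^2(\nu_\rho)$ and invariant under $P^\rho_t$ is a core. Density of $\L$ is already established, since simple functions lie in $\L$. Invariance of $\L$ under $P_t$ follows from Theorem \ref{construction1}, which gives $c(P_tf)\le c(f)e^{4a_0t}<\infty$.

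I would still sketch the core argument directly, to be safe. Given $f\in{\rm Dom}(\rho)$, choose $f_n\in\L$ with $f_n\to f$ in $L^2$. For fixed $t$ and $\epsilon$, set
\[
h_n = \epsilon^{-1}\int_0^\epsilon P_s f_n\,ds .
\]
Approximating this integral by Riemann sums keeps it in $\L$ (Riemann sums lie in $\L$), and the closedness of the graph handles the limit. One has $L^\rho h_n = \epsilon^{-1}(P_\epsilon f_n - f_n)$, which converges to $\epsilon^{-1}(P^\rho_\epsilon f - f)$ as $n\to\infty$. Then letting $\epsilon\downarrow 0$ sends this to $L^\rho f$. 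A diagonal choice of $n$ and $\epsilon$ shows that $(f,L^\rho f)$ lies in the graph closure of $L|_{\L}$.

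The main obstacle is the Riemann-sum step: showing that $\epsilon^{-1}\int_0^\epsilon P_sf_n\,ds$ is in the graph closure of $L$ on $\L$. This needs $s\mapsto LP_sf_n$ to be continuous in $L^2$, not merely bounded, which I would derive from Theorem \ref{construction2}(iii), $LP_s=P_sL$, and strong continuity. I would try the cited core proposition first, and fall back on this direct computation only if that is insufficient.
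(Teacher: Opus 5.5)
Your proposal is correct and is essentially the paper's proof. Dominated convergence, using the bound $|t^{-1}(P_tf-f)|\le C\,c(f)\|\eta\|$ together with $E_{\nu_\rho}[\|\eta\|^2]<\infty$, gives $\L\subset{\rm Dom}(\rho)$ with $L^\rho f=Lf$, and the Ethier--Kurtz core criterion, applied to the dense subspace $\L$ (invariant under $P_t$ by Theorem \ref{construction1}), gives the closure statement. Your Riemann-sum fallback is not needed, and its ``obstacle'' is harmless anyway: $LP_sf_n=L^\rho P^\rho_s f_n=P^\rho_s L^\rho f_n$ by part (b) and the Hille--Yosida relation, and this is $L^2$-continuous in $s$ by strong continuity.
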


\begin{proof}
For $f\in \L$ and $\eta\in \Omega'$, by Lemma \ref{lem:Lbound} and Theorem \ref{construction1}, we have 
\begin{eqnarray*}
\frac{1}{t}\big[P_t f - f\big] &=& \frac{1}{t}\int_0^t LP_sf ds\\
&\leq& C(a_0)c(f)\|\eta\|t^{-1}\int_0^te^{4a_0s}ds.\end{eqnarray*}

Since $E_{\nu_\rho}\big[\|\eta\|^2\big]<\infty$ by Lemma \ref{lem:norm-squared}, and
$P^\rho_t$ on $L^2(\rho)$ extends $P_t$ on $\L$, we have by dominated convergence for $f\in \L$ that
$$\frac{1}{t}\big[P_t^\rho f - f\big] \ \rightarrow \ Lf$$
in $L^2(\nu_\rho)$ as $t\downarrow 0$.  Hence, $\L\subset {\rm Dom}(\rho)$, $Lf = L^\rho f$, and $L^\rho$ is an extension of $L$ on $\L$.

Finally, as $P_t: \L \rightarrow \L$ (cf. Lemma \ref{lem:Lbound}), and $\L$ is dense in $L^2(\nu_\rho)$ and therefore in ${\rm Dom}(\rho)$, one may conclude that $\L$ is a `core' for $L^\rho$, that is the closure of $L$ on $\L$ is equal to that of $L^\rho$ on ${\rm Dom}(\rho)$; see \cite[Lemma I.3.3]{EK}.  We comment for later use that bounded functions in $\L$ by the same token also form a `core' for $L^\rho$.
\end{proof}

We now give the formula for the Dirichlet form.  Recall the symmetrized transition probability $s(\cdot)$.
\begin{lemma}
For $f\in {\rm Dom}(\rho)$, we have that
\begin{equation}
\label{eq:Dirichlet_equality}
D_\rho(f) \ = \ \frac{1}{2}\sum_{x,y} s(x,y)E_{\nu_\rho}\big[g(\eta(x))\big(f(\eta^{x,y}) - f(\eta)\big)^2\big].
\end{equation}
\end{lemma}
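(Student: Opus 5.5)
First I prove \eqref{eq:Dirichlet_equality} for bounded $f\in\L$, and then pass to all of ${\rm Dom}(\rho)$ using the core property of Lemma \ref{lem:closure}. For bounded $f\in\L$, Lemma \ref{lem:closure} gives $L^\rho f=Lf$, so
\[
D_\rho(f)=-\sum_{x,y}p(x,y)E_{\nu_\rho}\big[g(\eta(x))f(\eta)\big(f(\eta^{x,y})-f(\eta)\big)\big].
\]
Exchanging expectation and the double sum is justified by domination. Indeed, $|f(\eta^{x,y})-f(\eta)|\le c(f)(\beta(x)+\beta(y))$, $g(\eta(x))\le a_0\eta(x)$, $f$ is bounded, and $\sum_{x,y}p(x,y)(\beta(x)+\beta(y))\rho<\infty$ by \eqref{beta_ineq} and $\sum_x\beta(x)=2$, as in the proof of Lemma \ref{lem:norm-squared}.

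Next I use the identity $-a(b-a)=\tfrac12(b-a)^2-\tfrac12(b^2-a^2)$ with $a=f(\eta)$ and $b=f(\eta^{x,y})$. This gives
\[
D_\rho(f)=\frac12\sum_{x,y}p(x,y)E_{\nu_\rho}\big[g(\eta(x))(f(\eta^{x,y})-f(\eta))^2\big]-\frac12 E_{\nu_\rho}\big[Lf^2\big].
\]
Since $f^2$ is a bounded Lipschitz function, Theorem \ref{inv_thm} together with Proposition \ref{generator_characterization} (the direction (2)$\Rightarrow$(1), cited from \cite{Andjel}), or the finite-volume approximation argument in the proof of Theorem \ref{inv_thm} applied to $f^2$, gives $E_{\nu_\rho}[Lf^2]=0$. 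To replace $p$ by $s$, I use the change of variables in Exercise \ref{ex:zr-particle}. For bounded local $h$ this gives $E_{\nu_\rho}[g(\eta(x))h(\eta^{x,y})]=E_{\nu_\rho}[g(\eta(y))h(\eta)]$. Applying it with $h(\zeta)=(f(\zeta^{y,x})-f(\zeta))^2$, and noting $(\eta^{x,y})^{y,x}=\eta$, yields
\[
E_{\nu_\rho}\big[g(\eta(x))(f(\eta^{x,y})-f(\eta))^2\big]=E_{\nu_\rho}\big[g(\eta(y))(f(\eta^{y,x})-f(\eta))^2\big].
\]
Thus the $(x,y)$ term with weight $p(x,y)$ equals the $(y,x)$ term, and symmetrizing turns $p$ into $s$. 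This identity is stated for local $f$; for bounded Lipschitz $f$ I would obtain it by approximating with local truncations (Lipschitz in $\|\cdot\|$) and using dominated convergence with the bound above.

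For general $f\in{\rm Dom}(\rho)$, bounded functions in $\L$ form a core (Lemma \ref{lem:closure}), so there are bounded $f_n\in\L$ with $f_n\to f$ and $L f_n\to L^\rho f$ in $L^2(\nu_\rho)$. Then $D_\rho(f_n)\to D_\rho(f)$. Writing $\mathcal E(f)$ for the right side of \eqref{eq:Dirichlet_equality}, $\mathcal E$ is a nonnegative quadratic form, and $\mathcal E(f_n-f_m)=D_\rho(f_n-f_m)\to0$. So $\sqrt{\mathcal E}$ is a seminorm in which $(f_n)$ is Cauchy, and Fatou along an a.s.-convergent subsequence gives $\mathcal E(f_n-f)\to0$ and hence $\mathcal E(f_n)\to\mathcal E(f)$. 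Here $\mathcal E(f)$ is understood pointwise, which makes sense since each $f(\eta^{x,y})$ is defined $\nu_\rho$-a.s. This holds because the law of $\eta^{x,y}$ on $\{\eta(x)\ge1\}$ weighted by $g$ is absolutely continuous with respect to $\nu_\rho$, again by Exercise \ref{ex:zr-particle}.

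The main obstacle is this closure step: making sure $\mathcal E$ is well defined on $L^2$ equivalence classes and closed under the approximation. The change-of-measure identity handles this, since it shows $\mathcal E(h)\le C\sum_{x,y}s(x,y)\big(E[g(\eta(y))h^2]+E[g(\eta(x))h^2]\big)$, and this bound is finite only for suitable $h$. For that reason I would rely on Fatou and lower semicontinuity rather than on continuity of $\mathcal E$ in $L^2$.
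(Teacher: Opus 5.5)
Your proposal is correct and follows essentially the same route as the paper. You first establish the formula on the core of bounded functions in $\mathcal{L}$, using Fubini and the exchange identity $E_{\nu_\rho}[g(\eta(x))h(\eta^{x,y})]=E_{\nu_\rho}[g(\eta(y))h(\eta)]$, and then extend to ${\rm Dom}(\rho)$ by approximation using Fatou's lemma and the seminorm (Cauchy--Schwarz) property of the quadratic form, just as the paper does via $R(f)\le D_\rho(f)$ and $R(f-f_n)\le D_\rho(f-f_n)\to 0$. Your small variations are valid: you use $E_{\nu_\rho}[Lf^2]=0$, which the finite-volume argument in the proof of Theorem \ref{inv_thm} gives for bounded Lipschitz $f^2$, and you check explicitly that $f(\eta^{x,y})$ is $\nu_\rho$-a.s. well defined so that Fatou applies, a point the paper leaves implicit.
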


\begin{proof}
We note, by the last line in the proof of Lemma \ref{lem:closure}, that bounded functions in $\L$ form a `core' for $L^\rho$.  First, the formula \eqref{eq:Dirichlet_equality} may be obtained for bounded $f\in \mc L$, given that $Lf$ is explicitly defined
and $D_\rho(f) = E_{\nu_\rho}\big[f(-Lf)\big]$, and is left as an exercise.

We now extend the representation to ${\rm Dom}(\rho)$.  Let $R(f)$ be the right-hand side of the display in the lemma. 
For $f\in {\rm Dom}(\rho)$, take bounded $f_n \in \L$ so that $f_n\rightarrow
f$
and $L^\rho f_n \rightarrow L^\rho f$ in 
$L^2(\nu_\rho)$.
Then
$$\lim_{n\rightarrow \infty} D_{\rho}(f_n) = D_{\rho}(f), \ \ {\rm
  and} \ \ 
\liminf_{n\rightarrow \infty} R(f_n)\geq R(f)$$
by Fatou's lemma.  Therefore, $R(f)\leq D_{\rho}(f)$ and in
particular,
$R(f)<\infty$ for $f\in {\rm Dom}(\rho)$.  However also,
$$0\leq D_{\rho}(f-f_n) \leq \|f-f_n\|_{L^2}\cdot
\|L^\rho f - L^\rho f_n\|_{L^2}$$
which vanishes as $n\rightarrow \infty$.  Hence, 
$\lim_{n\rightarrow \infty}R(f-f_n) =0$.  With the inequality $ab =\inf_{\epsilon>0}\big\{ \epsilon a^2/2 + \epsilon^{-1}b^2/2\big\}$, one may bound
\begin{align*}
\frac{1}{2}\sum_{x,y} s(x,y)E_{\nu_\rho}\big[g(\eta(x))\big(h_1(\eta^{x,y}) - h_1(\eta)\big)\big(h_2(\eta^{x,y}) - h_2(\eta)\big)\big] \leq \sqrt{R(h_1)R(h_2)}.
\end{align*}
Hence, writing $f_n = (f_n -f) + f$, we may conclude
$\lim_{n\rightarrow \infty}R(f_n) = R(f)$ to finish the proof. 
\end{proof}

\begin{exercise}
\rm
Show that $D_\rho(f)=-\langle f, Lf\rangle_{\nu_\rho}$ satisfies \eqref{eq:Dirichlet_equality} for bounded $f\in \mc L$.  Hint:  Show $f(\eta)Lf(\eta)$ is absolutely summable and integrable with respect to $\nu_\rho$.  Then, by Fubini's theorem, one can interchange the expectation and sum.  Noting $E_{\nu_\rho}\big[g(\eta(x))f(\eta^{x,y})f(\eta)\big]=E_{\nu_\rho}\big[g(\eta(y))f(\eta)f(\eta^{y,x})\big]$, one may reorganize the sum.  See also Lemma 2.4 in \cite{Sethuraman}.
\end{exercise}

\section{Notes}

In these notes, we have followed \cite{Andjel} for the invariance of $\nu_\rho$ and \cite{Sethuraman} for the extension to $L^2$ (see also Section IV.4 in \cite{Liggett1}) and extremality of $\nu_\rho$; see these papers for extensions to non translation-invariant transition probabilities.  See also \cite{Rosenblatt}[Section 2] for more on the connections between extremality, harmonicity, and ergodicity.

A natural question is what are all the extremals of the process.    In \cite{Andjel}, it is shown in $d=1,2$ when $g$ is an increasing function that $\{\bar\nu_\Psi: \Psi<\liminf g(k)\}$ are all the extremals!  When $p$ is not translation-invariant, also in $d=1,2$ when $g$ is increasing, all the extremals are found--in this case, the invariant measures are not necessarily translation-invariant \cite{Andjel}.  When $p$ is positive-recurrent, the extremals concentrate on configurations with a finite number of particles \cite{Waymire}, \cite{Andjel}.

For simple exclusion on $\Z^d$, since local functions, those depending on a finite number of variables $\{\eta(x):x\in \Z^d\}$, form a core, one may verify that the Bernoulli product measures $\nu_\rho = \prod_{x\in \Z^d}{\rm Bern}(\rho)$ for $\rho\in [0,1]$ are invariant for the infinite volume process (as compared to the finite-volume one in Proposition \ref{prop:exclusion_invariant}); see Chapter VIII in \cite{Liggett1}.  One may also use the arguments given here for the zero-range process.  Extremality of $\nu_\rho$ can also be seen from the arguments for Theorem \ref{thm:extremal}.  

However, for zero-range and exclusion models, there remain many open problems!  In particular, determining all the extremal invariant measures in all model settings is still not fully resolved.
See \cite{Liggett1}, \cite{Sethuraman}, as well as \cite{Amir}, \cite{BSS}, \cite{CRS}, \cite{Bramson1}, \cite{Bramson2}, \cite{Gantert0}, \cite{Jung}, \cite{Lin} for more references, and discussion.

\newpage
\chapter[Section $9$]{Additive functionals and central limit theorems}
\label{lec9}

The occupation time of a set, for instance, is an additive functional of a process.  One would like to know the first order and second order behaviors, namely the LLN and CLT for these objects.  This is an old subject, and in stochastic particle systems where the configuration space is difficult, with interesting scalings and limits.

We focus here mostly on symmetric exclusion processes on $\Z^d$, when starting from an invariant measure.  After a general introduction, we discuss ergodic and fluctuation behaviors, when starting under an invariant measure.  The Kipnis-Varadhan CLT, which has wide application, not only to particle systems, is stated and proved.   Comments on the behaviors in asymmetric simple exclusion are also made.

\section{Basic problem and ergodic theory}
\label{sec:9.1}

Consider a Markov process $\eta(t)$ on a state space $\Omega$.    Let $\mu$ be an invariant measure for the process.
Let $f:\Omega \rightarrow \R$ be an $L^2(\mu)$ function.  Recall that $E_\mu$ stands for the expectation under $\mu$, and $\P_\mu$ and $\E_\mu$ for the process measure and expectation when starting in $\mu$.

The basic problem is to determine the behavior as $t\rightarrow\infty$ of 
$$A_f(t) \ = \ \int_0^t f(\eta_s)ds.$$
In the following, we call $A_f(t)$ an `additive functional' since $A_f(t+s) - A_f(s)$ under $\mu$ has the same distribution as $A_f(t)$ under $\mu$.

If $\mu$ is extremal, then, as we have seen in Proposition \ref{equivalence}, the process started from initial distribution $\mu$ is ergodic with respect to time shifts, and hence (by Birkhoff's theorem)
$$\lim_{t\uparrow\infty} \frac{1}{t}\int_0^tf(\eta_s)ds \ = \ E_\mu[f] \ \ \mu-{\rm a.s.}$$
This covers the case when $\mu$ is unique, for instance, in positive-recurrent Markov chains.  When $\mu$ is null-recurrent, different behaviors may emerge (see \cite{Papa_Var_Duke}).

The next order question is to ask about the fluctuations:  Find the limits of
$$\frac{1}{\sqrt{t}} \int_0^t \big(f(\eta_s) - E_\mu[f]\big) ds.$$
One expects a Gaussian limit, with sufficient mixing.  In the following, we will assume $\mu$ is extremal and that $f$ is already centered, that is $E_\mu[f]=0$ to simplify expressions.

\medskip
Consider the example of finite-state irreducible Markov chains.  We may view $\mu = \langle \mu(x): x\in \Omega\rangle$ and $f = \langle f(x): x\in \Omega\rangle$ as elements of $\R^{|\Omega|}$.  Note that $(\mu L)(x)=0$ for all $x\in \Omega$ where $L$ is the generator of the process.  Since $\mu$ is the unique invariant measure, ${\rm Null}(L^T) = \{c\mu : c\in \R\}$.  The orthogonal complement of this null space is ${\rm Range}(L)$:  For $\phi\in {\rm Range}(L)$, we have $\phi = Lu$.  Then, $\sum_x \mu(x) (Lu)(x) = 0$ as $\mu$ is invariant.

As a consequence, given $E_\mu[f]=\sum_x f(x)\mu(x) = 0$, we have $f\perp \mu$ and hence $f$ belongs to the range ${\rm Range}(L)$, and $f = -Lu$ for some function $u$.  That is, $f$ solves a `Poisson' equation with respect to the generator.

  We may now write
\begin{equation}
\label{mart_decomp}
\frac{t}{\sqrt{t}}\int_0^t f(\eta_s)ds \ = \ \frac{1}{\sqrt{t}}M(t) + \frac{1}{\sqrt{t}}\big( u(\eta(0)) - u(\eta(t))\big)\end{equation}
where 
$$M(t) \ = \ u(\eta(t)) - u(\eta(0)) - \int_0^t Lu(\eta_s) ds$$
is a martingale with respect to natural sigma-fields.  The quadratic variation, as we have observed in Subsection \ref{sec:2.3}, equals
$$\langle M\rangle(t) \ = \ \int_0^t (L u^2 - 2u L u)(\eta_s)ds$$
whose mean is
$$\E_\mu\big[\langle M\rangle(t)\big] \ = \ t\E_\mu\big[M^2(1)\big]\ = \ 2tE_\mu[u(-Lu)] \ =: \ 2t D(u)$$
where $D(u)$ is the Dirichlet form.

The second term in \eqref{mart_decomp} is of order $t^{-1/2}$ since $u$ is a function on finite state space, and hence bounded.
The first term, however, can be treated with a martingale central limit theorem.

\begin{proposition}
\label{mclt}
  Let $(\mathcal{M}_t, \mathcal{F}_t)$ be a mean-zero martingale with stationary, ergodic increments such that
$\E\big [ \mathcal{M}^2_1\big] = \sigma^2$.
Then, 
$$\frac{1}{\sqrt{t}}\mathcal{M}_t \ \Rightarrow \ {\rm N}(0,\sigma^2).$$ 
\end{proposition}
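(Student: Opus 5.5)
We reduce to the classical discrete-time martingale central limit theorem for stationary ergodic martingale differences (Billingsley--Ibragimov). Set $\xi_k = \mathcal{M}_k - \mathcal{M}_{k-1}$ for $k\geq 1$. By hypothesis $\{\xi_k\}$ is stationary and ergodic, and $\E[\xi_k|\mathcal{F}_{k-1}]=0$. Orthogonality of martingale increments gives $\E[\mathcal{M}_n^2]=n\sigma^2$, and in particular $\E[\xi_1^2]=\sigma^2$. The plan is first to prove $n^{-1/2}\mathcal{M}_n\Rightarrow {\rm N}(0,\sigma^2)$ along integers, and then to interpolate to continuous $t$.

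For the integer step we use the characteristic function method. Write $\E[\exp(i\theta n^{-1/2}\mathcal{M}_n)]$ as a telescoping product over $k$, and expand each factor $e^{i\theta \xi_k/\sqrt{n}} = 1 + i\theta\xi_k/\sqrt n - \theta^2\xi_k^2/(2n) + R_{n,k}$, where $|R_{n,k}|\leq \min\{|\theta\xi_k|^3 n^{-3/2}, \theta^2\xi_k^2 n^{-1}\}$. The linear terms vanish after conditioning on $\mathcal F_{k-1}$. There are two inputs. The first is the Birkhoff ergodic theorem, which gives $n^{-1}\sum_{k\le n}\xi_k^2\to \sigma^2$ a.s. and in $L^1$; this is where ergodicity enters. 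The second is a conditional Lindeberg condition: $n^{-1}\sum_k \E[\xi_k^2 1(|\xi_k|>\epsilon\sqrt n)] = \E[\xi_1^2 1(|\xi_1|>\epsilon\sqrt n)]\to 0$, which holds by stationarity and dominated convergence. These two facts verify the hypotheses of the Lindeberg--Feller martingale CLT (e.g.\ McLeish's form), so the product tends to $e^{-\theta^2\sigma^2/2}$. This step is the main obstacle. The summed conditional variances $\sum \E[\xi_k^2|\mathcal F_{k-1}]/n$ are not literally $\sum\xi_k^2/n$, so I would follow McLeish and work with $T_n=\prod_k(1+i\theta\xi_k/\sqrt n)$. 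Its expectation is $1$, and it is uniformly integrable by truncation. Then $\exp(i\theta \mathcal M_n/\sqrt n) = T_n \exp(-\theta^2\sum\xi_k^2/(2n) + o_P(1))$, and the exponential factor converges to the constant $e^{-\theta^2\sigma^2/2}$, which can be pulled out of the expectation in the limit.

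For the continuous-time step, take $n=\lfloor t\rfloor$. By Doob's inequality, $\E[\sup_{n\le s\le n+1}(\mathcal M_s-\mathcal M_n)^2]\le 4\E[(\mathcal M_{n+1}-\mathcal M_n)^2]=4\sigma^2$, using stationarity of increments. This is only $O(1)$, so it gives no decay on its own; the needed control comes from uniform integrability. Set $\zeta_n = \sup_{n\le s\le n+1}|\mathcal M_s-\mathcal M_n|$. These are identically distributed by stationarity and square integrable. Then $\P(\zeta_n>\epsilon\sqrt t) \le \E[\zeta_0^2 1(\zeta_0>\epsilon\sqrt t)]/(\epsilon^2 t)\to 0$, so $t^{-1/2}(\mathcal M_t-\mathcal M_{\lfloor t\rfloor})\to 0$ in probability. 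Since also $\lfloor t\rfloor/t\to1$, Slutsky's lemma gives $t^{-1/2}\mathcal M_t\Rightarrow {\rm N}(0,\sigma^2)$.
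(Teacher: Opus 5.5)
Your proposal is correct and follows the route the paper indicates: the paper cites the discrete-time CLT for stationary ergodic martingale differences and leaves the passage to continuous time through $\mathcal{M}_{\lfloor t\rfloor}$ as an exercise, and your McLeish sketch simply fills in that cited discrete-time result. One small correction: contrary to your remark, the Doob bound alone already gives the needed decay, since Chebyshev yields $\P(\zeta_n>\epsilon\sqrt t)\le 4\sigma^2/(\epsilon^2 t)\to 0$, so your uniform-integrability refinement is valid but unnecessary.
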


\begin{exercise}\rm
This theorem can be found in discrete time in many places (cf. \cite{Hellend}[Theorem 2.5 (b)], \cite{MartingaleCLT}[Theorem 3]).  From the discrete time version, prove the proposition above, by considering the discrete time martingale $M_{\lfloor t\rfloor}$.
\end{exercise}

To complete the argument, note 
the martingale 
$M(t)$ has stationary and ergodic increments when the process is started from $\mu$.
Hence, by Proposition \ref{mclt}, we conclude
$$\frac{1}{\sqrt{t}}\int_0^t f(\eta_s)ds \ \Rightarrow \ {\rm N}(0, 2D(u)).$$

\section{Occupation functionals in exclusion systems}

Consider the $d$-dimensional exclusion process $\eta_t$ with semigroup $P_t$ and generator $L$,
\begin{equation*}
Lf(\eta) = \sum_{x,y\in \Z^d} \eta(x)\big(1-\eta(y)\big)p(x,y)\big[f(\eta^{x,y}) - f(\eta)\big],
\end{equation*}
lifting the finite-volume definition in Section \ref{lec2}.   Such an infinite-volume system on $\Omega =\{0,1\}^{\Z^d}$ may be well constructed; see the Notes in Section \ref{lec7}.  In the following, we will assume the jump probability $p$ is finite-range and translation-invariant.  

Let $\nu_\rho=\prod_{x\in \Z^d}{\rm Bern}(\rho)$ be the Bernoulli product measure with marginal success probability $\rho\in [0,1]$ on $\Omega$.  These measures can be shown invariant and extremal for the process, and when $p$ is symmetric they are also reversible; see the Notes in Section \ref{lec8}.  Compare also with the finite-volume discussion in Section \ref{lec2}.  In the following, we will work with the $L^2(\nu_\rho)$-exclusion processes.

Consider now the `additive functional' question for occupation functions $f(\eta)=\eta(0)-\rho$.
Given $\nu_\rho$ is extremal and therefore ergodic with respect to time shifts (cf. Proposition \ref{equivalence}), we see that the LLN behavior starting under $\nu_\rho$ is clear:  For $f\in L^2(\nu_\rho)$,
\[\lim_{t\uparrow\infty} \frac{1}{t}\int_0^t f(\eta_s) ds \ = \ E_{\nu_\rho}[f]\]
both in $L^2(\nu_\rho)$ and $\nu_\rho$-a.s. by Proposition \ref{prop:Von} and Birkhoff's ergodic theorem.

However, the fluctuation behavior of $A(t)$, with respect to $f(\eta)=\eta(0)-\rho$ is different depending on the dimension $d$, the symmetry/asymmetry of the jump probability $p$, and the density $\rho$.  A more or less complete theory is known, except for some interesting asymmetric cases in $d\leq 2$ which connect with `KPZ' class phenomenon, and `mean-zero asymmetric' cases.

We first consider the types of variances that might be obtained.
What is the variance of $A_f(t)$, what are its limits, and how does the limit depend on $f$?  For a general local function $f$, that is one that depends only on a finite number of variables $\{\eta(x):x\in \Z^d\}$, let us try to get a formula for ${\rm Var}(A_f(t))$.  Write, using stationarity, that
\begin{eqnarray*}
{\rm Var}(A_f(t)) & =& \E_{\nu_\rho} \Big[ \Big(\int_0^t f(\eta_s) ds \Big)^2\Big]\\
&= &2 \int_0^t (t-s) \E_{\nu_\rho}[f(\eta_s)f(\eta_0)]ds.
\end{eqnarray*}
By reversibility of $\nu_\rho$, the semigroup $P_s$ is self-adjoint, and we may express
\begin{eqnarray*} \E_{\nu_\rho}[f(\eta_s)f(\eta_0)] & =&  \langle P_sf, f\rangle_{\nu_\rho} \\
& = & \langle P_{s/2}f, P_{s/2}f\rangle_{\nu_\rho} \ = \ \|P_{s/2}f\|_{L^2(\nu_\rho}^2 \ \geq \ 0.\end{eqnarray*}
Recall, $\langle f, g\rangle_\mu :=E_{\mu}[fg]$.

Then, when the variance is scaled by $t$, its limit exists, possibly infinite, and is in form
\begin{eqnarray*}\sigma^2(f) & = & \lim_{t\uparrow\infty} \frac{1}{t}{\rm Var}(A_f(t))\\
&=& \lim_{t\uparrow\infty}
2\int_0^t \Big[1-\frac{s}{t}\Big]  \E_{\nu_\rho}[f(\eta_s)f(\eta_0)]ds\\
&=& 2\int_0^\infty \E_{\nu_\rho}[f(\eta_s)f(\eta_0)] ds.
\end{eqnarray*}
The last equality follows from monotone convergence.  This formula is the familiar `sum of correlations' expression with respect to sums of correlated random variables.

\subsection{Symmetric $p$ and duality relations}
\label{sec:duality}

 In symmetric exclusion, any mean-zero, local function can be written as
\begin{eqnarray*}
f(\eta) & = & \sum_{B} \f(B) \prod_{x\in B}\frac{\eta(x) - \rho}{\sqrt{\rho(1-\rho)}}\\
&=& \sum_{n\geq 1} \sum_{|B|=n} \f(B)\prod_{x\in B}\frac{\eta(x)-\rho}{\sqrt{\rho(1-\rho)}}.
\end{eqnarray*}
Here, the decomposition is over `degrees', that is, functions which are the product of $n$ centered, normalized occupation variables.

This basis is quite central to symmetric exclusion because of the `duality' relation.  Namely, for a function
$$f_B(\eta) \ = \ \prod_{x\in B}\frac{\eta(x) - \rho}{\sqrt{\rho(1-\rho)}}$$
where $|B|=n$, we have
$$(P_tf_B)(\eta)\ = \ \sum_{|U|=n} p_t(B,U) \prod_{x\in U}\frac{\eta(x)-\rho}{\sqrt{\rho(1-\rho)}}$$
where $p_t(B,U)$ is the transition probability of $n$-particle simple exclusion from a configuration $B$ to configuration $U$ in time $t$.

One way to prove this relation is to observe that 
$$Lf_B(\eta) \ = \ \sum_{|U|=n} q(B,U)f_U(\eta) $$
where $q(B,U)$ is the transition rate of $n$-particle symmetric exclusion; see Exercise \ref{ex:duality}.

At this point, we can use the duality relation, and independence of coordinates under $\nu_\rho$, to evaluate the term
\begin{eqnarray*}
E_{\nu_\rho}[(P_sf(\eta_0)f(\eta_0)] &=& E_{\nu_\rho} \Big[ \sum_B \f(B) \sum_U p_t(B,U) f_U(\eta_0) \cdot f(\eta_0)\Big]\\
&=& \sum_B \sum_U \f(B)\f(U) p_t(B,U)\\
&=& \sum_{n\geq 1} \sum_{|B|, |U|=n} \f(B)\f(U)p_t(B,U).
\end{eqnarray*}

Now, when $f(\eta) = \eta(0)-\rho$ is the centered occupation function, $\f(B)=0$ for all $B\neq \{0\}$, and $\f(\{0\}) = \sqrt{\rho(1-\rho)}$.  Then, the variance is calculated as
$${\rm Var}(A_f(t)) \ = \ 2\rho(1-\rho)\int_0^t (t-s)p_s(0,0)ds$$
where $p_t(0,0)$ is the return probability of a random walk according to jump probabilities $p$!

From local limit theorems, when the transition probability is simple, $p(e_i)=p(-e_i)=1/(2d)$ for the standard basis $\{e_i\}_{i=1}^d$, we have the following asymptotics.
\begin{proposition}
\label{variance_sym}
For $f(\eta)=\eta(0)-\rho$, under symmetric simple exclusion processes,
$${\rm Var}(A_f(t)) \ = \ \left\{\begin{array}{rl}
\frac{8\rho(1-\rho)}{3\sqrt{2\pi}}t^{3/2} + o(t^{3/2}) & \ {\rm in \ } d=1\\
\frac{2\rho(1-\rho)}{\pi}t\log t + o(t\log t) & \ {\rm in \ }d=2\\
2\rho(1-\rho)\int_0^\infty p_t(0,0)dt & \ {\rm in \ }d\geq 3.
\end{array}\right.
$$
\end{proposition}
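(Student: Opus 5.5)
The plan is to start from the exact formula just derived, ${\rm Var}(A_f(t)) = 2\rho(1-\rho)\int_0^t (t-s)p_s(0,0)\,ds$, where $p_s(0,0)$ is the return probability of the continuous-time simple random walk on $\Z^d$ with jump rate $1$. Everything then reduces to the large-$s$ asymptotics of $p_s(0,0)$, followed by a Tauberian-type integration against $(t-s)$. I would first record the local limit theorem for $p_s(0,0)$. The continuous-time walk factors into $d$ independent coordinate walks, each running at rate $1/d$, so $p_s(0,0) = \big(q_{s/d}(0)\big)^d$ with $q_u(0)=e^{-u}I_0(u)$ for the one-dimensional rate-$1$ walk (equivalently, use the Poisson representation $p_s(0,0)=\sum_n e^{-s}s^n p^{(n)}(0,0)/n!$ as in Example \ref{example2}). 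By Fourier inversion, $q_u(0)=\frac{1}{2\pi}\int_{-\pi}^{\pi}e^{-u(1-\cos\theta)}d\theta \sim (2\pi u)^{-1/2}$, and therefore
\[
p_s(0,0)\ \sim\ \Big(\frac{d}{2\pi s}\Big)^{d/2}.
\]
This is the main technical input. The only care needed is the Laplace-method estimate near $\theta=0$ together with the observation that the integrand is exponentially small away from $0$; there is no periodicity issue in continuous time.

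Next I would split according to dimension. In $d=1$, $p_s(0,0)\sim (2\pi s)^{-1/2}$, and the substitution $s=tv$ gives
\[
\int_0^t (t-s)p_s(0,0)\,ds = t^{3/2}\int_0^1 (1-v)\,t^{1/2}p_{tv}(0,0)\,dv .
\]
Dominated convergence applies because $t^{1/2}p_{tv}(0,0)\le C v^{-1/2}$, which holds since $\sup_s \sqrt{1+s}\,p_s(0,0)<\infty$, so the integral converges to $(2\pi)^{-1/2}\int_0^1(1-v)v^{-1/2}dv = (2\pi)^{-1/2}\cdot\tfrac43$. Multiplying by $2\rho(1-\rho)$ gives $\frac{8\rho(1-\rho)}{3\sqrt{2\pi}}t^{3/2}$. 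In $d=2$, $p_s(0,0)\sim 1/(\pi s)$. I would write $\int_0^t(t-s)p_s\,ds = t\int_0^t p_s\,ds - \int_0^t s p_s\,ds$. The first term is $t\big(\pi^{-1}\log t + o(\log t)\big)$, while the second is $O(t)$ because $s p_s$ is bounded. This yields $\frac{2\rho(1-\rho)}{\pi}t\log t$. In $d\ge3$, $p_s(0,0)=O(s^{-3/2})$ is integrable, so $\frac1t\int_0^t(t-s)p_s\,ds\to\int_0^\infty p_s(0,0)\,ds$ by dominated convergence, in line with the monotone convergence remark before the statement. The third line of the statement should therefore be read as the coefficient of $t$, i.e. ${\rm Var}(A_f(t)) = 2\rho(1-\rho)t\int_0^\infty p_s(0,0)\,ds + o(t)$.

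I expect the main obstacle to be the uniform bound $p_s(0,0)\le C(1+s)^{-d/2}$ needed for dominated convergence, as opposed to the pointwise asymptotics alone. I would obtain it directly from the Fourier integral, using $1-\cos\theta\ge 2\theta^2/\pi^2$ on $[-\pi,\pi]$, which gives $q_u(0)\le C(1+u)^{-1/2}$; raising this to the $d$-th power gives the bound in all dimensions.
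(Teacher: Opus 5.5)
Your proposal is correct and follows the paper's route. The paper derives the duality formula ${\rm Var}(A_f(t)) = 2\rho(1-\rho)\int_0^t (t-s)p_s(0,0)\,ds$ and then simply invokes the local limit theorem $p_s(0,0)\sim (d/(2\pi s))^{d/2}$; you supply the details it omits, including the uniform bound $p_s(0,0)\le C(1+s)^{-d/2}$ needed for dominated convergence. You are also right to read the $d\geq 3$ line as ${\rm Var}(A_f(t)) = 2\rho(1-\rho)\,t\int_0^\infty p_s(0,0)\,ds + o(t)$, since the factor $t$ is missing from the statement as written.
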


\subsection{Asymmetric $p$}
\label{sec:asym-second}

By asymmetric, we mean that $\sum_x xp(x) \neq 0$.  The case that $p$ is not symmetric but $\sum_x xp(x)=0$, the `mean-zero' asymmetric situation is not considered here; see the Notes for citations.

As in symmetric exclusion, one can understand the general asymptotics by calculating the term $\E_{\nu_\rho}[f(\eta_s)f(\eta_0)]$.  However, the `duality relation' is no longer valid in the form given, since the generator does not preserve the `degree' of a function, that is $L$ applied to a function of $n$ coordinates is no longer a linear combination of $n$ coordinate functions.

However, for $f(\eta)= \eta(0) - \rho$, write
\begin{eqnarray*}
\E_{\nu_\rho}[f(\eta_s)f(\eta_0)]  & = & \E_{\nu_\rho}[\eta_s(0)\eta_0(0)] - \rho^2\\
&=& \rho\big\{\E_{\nu_\rho}[\eta_s(0)|\eta_0(0)=1] - \E_{\nu_\rho}[\eta_s(0)]\big\}\\
&=& \rho(1-\rho)\big\{\E_{\nu_\rho}[\eta_s(0)|\eta_0(0)=1]  - \E_{\nu_\rho}[\eta_s(0)|\eta_0(0)=0]\big\}.
\end{eqnarray*}
The basic coupling, with respect to exclusion, couples two copies of the exclusion process starting from configurations $\eta' \geq \eta'$ where $\eta'(x) = \eta'(x)$ for $x\neq 0$ and $\eta'(0)=1$, $\eta'(0)=0$.  Then, $(\eta'_t, \eta'_t)$ has generator
\begin{eqnarray*}
\bar L (\eta', \eta') & = & \sum_{x,y} p(y-x) 1(\eta'(x)=\eta'(x)=1)\big[f((\eta')^{x,y}, (\eta')^{x,y}) - f\big]\\
&& + \sum_{x,y}p(y-x)1(\eta'(x)=1, \eta'(x)=0)\big[f((\eta')^{x,y}, \eta') - f\big].
\end{eqnarray*}
The $\eta'_t$ process always majorizes $\eta'_t$, with exactly one discrepancy, whose position we label $R_t$.  

The dynamics of $R_t$ is as follows:
Infinitesimally, it displaces by $z$ with rate $p(z)(1-\eta(R_t +z)) + p(-z)\eta(R_t +z)$.  The term $p(z)(1-\eta(R_t +z))$ corresponds to the discrepancy, or `second-class' particle as it is sometimes known, moving by its own intention, and the term $p(-z)\eta(R_t+ z)$ refers to when a particle at $R_t +z$ would move to location $R_t$ in which case by the basic coupling, $R_t$ accedes and takes the place $R_t +z$.

Then, we have that
$$ \E_{\nu_\rho}[f(\eta_s)f(\eta_0)] \ = \ \rho(1-\rho) \bar P(R_t = 0) \ \geq \ 0.$$
Hence, by monotone convergence the limit exists,
$$\sigma^2_f \ = \ 2\rho(1-\rho)\int_0^\infty \bar P(R_t = 0) dt.$$

In mean value, substituting $\rho$ for $\eta(R_t + z)$, we see that a `mean' infinitesimal drift is
$$\sum z[p(z)(1-\rho) + p(-z)\rho]\ = \ (1-2\rho)\sum zp(z).$$
This leads to the conjecture that 
$$\sigma^2_f < \infty \ \Leftrightarrow \ \rho\neq 1/2$$
which has been proved; see the Notes.

\begin{proposition}
\label{variance_asym}
For $f(\eta)=\eta(0)-\rho$, with respect to asymmetric exclusion with drift $\sum zp(z)\neq 0$, we have
$${\rm Var}(A_f(t)) \ = \ 
\sigma^2_f t + o(t)$$
when $\rho\neq 1/2$ or when $d\geq 3$.

However, when $\rho=1/2$, we have ${\rm Var}(A_f(t)) \geq C_1t^{5/4}$.  In $d=2$, we have ${\rm Var}(A_f(t)) \geq C_2t\log\log t$.
\end{proposition}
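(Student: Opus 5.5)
The starting point is the second-class particle representation derived just above,
\[
{\rm Var}(A_f(t)) \;=\; 2\rho(1-\rho)\int_0^t (t-s)\,\bar P(R_s=0)\,ds ,
\]
whose integrand is nonnegative. Dividing by $t$ gives $2\rho(1-\rho)\int_0^t(1-s/t)\bar P(R_s=0)\,ds$. This increases, by monotone convergence, to $\sigma^2_f=2\rho(1-\rho)\int_0^\infty \bar P(R_s=0)\,ds$. Hence the first assertion, ${\rm Var}(A_f(t))=\sigma^2_f t+o(t)$, reduces entirely to showing that $\int_0^\infty \bar P(R_s=0)\,ds<\infty$ in the two stated regimes. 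Conversely, for $\rho=1/2$ the lower bounds amount to quantifying how this integral diverges. I would work throughout with the Laplace transform $\int_0^\infty e^{-\lambda s}\langle P_s f,f\rangle_{\nu_\rho}ds=\langle f,(\lambda-L)^{-1}f\rangle_{\nu_\rho}$. A Tauberian argument then converts its behavior as $\lambda\downarrow 0$ into the stated growth in $t$.

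\emph{Case $d\ge 3$.} I would use the standard comparison between a nonreversible generator and its symmetric part $S=(L+L^*)/2$ (cf. Section \ref{lec4}). For the centered $f$, $\langle f,(\lambda-L)^{-1}f\rangle_{\nu_\rho}\le \langle f,(\lambda-S)^{-1}f\rangle_{\nu_\rho}$. This follows from the variational formula $\langle f,(\lambda-L)^{-1}f\rangle=\sup_h\{2\langle f,h\rangle-\langle h,(\lambda-S)h\rangle-\langle Ah,(\lambda-S)^{-1}Ah\rangle\}$ after dropping the last, nonpositive term. Now $S$ is the generator of the symmetric exclusion process with jump law $s(\cdot)$. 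By the duality computation of Subsection \ref{sec:duality}, the right-hand side equals $\rho(1-\rho)\int_0^\infty e^{-\lambda t}p^s_t(0,0)\,dt$. This stays bounded as $\lambda\downarrow0$, since the symmetrized random walk is transient in $d\ge3$. Letting $\lambda\downarrow 0$ gives $\sigma^2_f<\infty$.

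\emph{Case $\rho\neq1/2$, $d=1,2$.} Here I would exploit the drift of the second-class particle, which is also the main obstacle of the proof. The first step is the exact identity $\bar E[R_t]=(1-2\rho)m\,t$, where $m=\sum_z zp(z)$. It is obtained from the martingale decomposition of $R_t$ together with stationarity of the environment seen from $R_t$: under the coupling started from $\nu_\rho$ conditioned on the discrepancy, the law of $\eta_t$ around $R_t$ remains product Bernoulli$(\rho)$ off the discrepancy. The second step is Chebyshev: $\bar P(R_t=0)\le \bar P(|R_t-vt|\ge |v|t)\le {\rm Var}(R_t)/(vt)^2$, with $v=(1-2\rho)m\neq0$. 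It therefore suffices to prove a subballistic bound ${\rm Var}(R_t)\le Ct^{2-\delta}$. I would attempt this from the martingale part of $R_t$, whose quadratic variation is $O(t)$, plus the compensator fluctuation $\int_0^t\sum_z z[p(z)(1-\eta_s(R_s+z))+p(-z)\eta_s(R_s+z)]ds-vt$. The latter is an additive functional of the environment process; I would bound its variance using the $H_{-1}$ estimate of the $d\ge3$ case applied in the moving frame. I expect this to be the delicate point, and I am unsure it closes in $d=1$. The fallback is to use $\sum_x x^2\bar P(R_t=x)\le Ct^{2-\delta}$ for the two-point function. This is the known superdiffusive-but-subballistic bound (order $t^{4/3}$), and I would import it from the literature cited in the Notes.

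\emph{Lower bounds at $\rho=1/2$.} When $\rho=1/2$ we have $v=0$. I would bound the Laplace transform from below by restricting the supremum in the variational formula to test functions $h$ of degree one and two in the basis $f_B$. On such functions $S$ acts by duality, and $A$ maps degree one into degree two (and degree two into degrees one and three); the degree-one coefficient of $A$ vanishes exactly because $\rho=1/2$. Optimizing over degree-two correctors in the style of Landim--Quastel--Salmhofer--Yau should give $\langle f,(\lambda-L)^{-1}f\rangle\ge C\lambda^{-1/4}$ in $d=1$ and $\ge C\log|\log\lambda|$ in $d=2$. A Tauberian step (valid because $\langle P_sf,f\rangle\ge0$) then yields ${\rm Var}(A_f(t))\ge C_1t^{5/4}$ and $\ge C_2t\log\log t$ respectively.
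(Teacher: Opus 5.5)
Your reduction to $\int_0^\infty \bar P(R_s=0)\,ds<\infty$ is correct, and so is the $d\ge 3$ case. The comparison $\langle f,(\lambda-L)^{-1}f\rangle_{\nu_\rho}\le\langle f,(\lambda-S)^{-1}f\rangle_{\nu_\rho}$ follows at once from $\langle u,(\lambda-L)u\rangle_{\nu_\rho}=\langle u,(\lambda-S)u\rangle_{\nu_\rho}$, and duality plus transience of the walk with law $s(\cdot)$ (tacitly irreducible) finish it.

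The genuine gap is the case $\rho\neq 1/2$, $d=1,2$, and it is quantitative, not merely ``delicate''. Chebyshev turns ${\rm Var}(R_t)\le Ct^{2-\delta}$ into $\bar P(R_t=0)\le Ct^{-\delta}$. This is integrable only if $\delta>1$, i.e.\ only under a subdiffusive bound ${\rm Var}(R_t)=o(t)$, which is not to be expected. Already a diffusive bound would give just $\bar P(R_t=0)=O(t^{-1})$, and the $t^{4/3}$ estimate you propose to import gives $O(t^{-2/3})$; neither is integrable. So no sub-ballistic second-moment information about $R_t$ can give $\sigma^2_f<\infty$, whether or not your variance bound closes. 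What is needed is a genuine tail bound against the drift, $\bar P(R_t\le 0)=O(t^{-1-\epsilon})$ for some $\epsilon>0$. That means a moderate or large deviation estimate, or sub-ballistic control of a high enough moment of $R_t-vt$; in other words, transience of the second-class particle. The paper does not prove the proposition itself. It rests on precisely such a transience result for one-dimensional nearest-neighbor asymmetric exclusion, together with the comparison of $\mathcal H_{-1}$ norms across finite-range exclusion processes with the same drift to pass to general $p$ (see the Notes of Section \ref{lec9}). Your proposal has a substitute for neither.

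There are three further points. First, your justification of $\bar E[R_t]=(1-2\rho)mt$ is wrong: under $\nu_\rho$ the environment seen from the second-class particle is not stationary. For TASEP, with $\zeta_t$ that environment, the time derivative at $t=0$ of $\bar E[\zeta_t(1)]$ is $\rho(1-\rho)$, and that of $\bar E[\zeta_t(-1)]$ is $-\rho(1-\rho)$. The identity does hold, but by mass conservation: $\frac{d}{dt}\sum_x x\,{\rm Cov}(\eta_t(x),\eta_0(0))=\sum_y{\rm Cov}_{\nu_\rho}(h,\eta(y))=m\rho(1-\rho)(1-2\rho)$, with $h(\eta)=\sum_z zp(z)\eta(0)(1-\eta(z))$. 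The same non-stationarity undercuts your plan to treat the compensator of $R_t$ by $\mathcal H_{-1}$ estimates, which need an explicit invariant measure for the environment.

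Second, for $\rho=1/2$ your outline is the Landim--Quastel--Salmhofer--Yau route of the cited work, but the degree-two test functions are inert. At $\rho=1/2$ the operator $A$ shifts degree by $\pm1$, so for $f$ of degree one the variational problem separates odd from even degrees. The bounds $\lambda^{-1/4}$ and $\log|\log\lambda|$ therefore come from degree-one $h$ alone, once $\langle Ah,(\lambda-S)^{-1}Ah\rangle_{\nu_\rho}$ is bounded above by comparison with free two-particle walks.

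Third, positivity of $\langle P_sf,f\rangle_{\nu_\rho}$ converts Laplace-transform upper bounds into pointwise ones, not lower bounds. Combined with the growth bound from the symmetric comparison, this suffices in $d=2$. In $d=1$ it yields $t^{5/4}$ only up to a logarithmic factor, so there your argument gives the bound only in the Laplace sense.
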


\begin{remark}\label{rem:add-conj}
\rm
We remark the orders expected when $\rho\neq 1/2$ in $d=1,2$ are $t^{4/3}$ and $t(\log t)^{2/3}$ respectively.  These orders connect with certain KPZ class phenomena, and are open to verify.  See the Notes for more discussion.
\end{remark}

\section{Central limit theorems and $H_{-1}$ norms}
Our goal now is to prove asymptotic normality of $t^{-1/2}A_f(t)$ when $\sigma^2_f<\infty$.  Except for the two open cases in Proposition \ref{variance_asym}, a central limit theorem also holds when $A_f(t)$ is normalized by the square root of its variance, although we do not discuss these results here; see the following Notes.

We focus on the symmetric case where the Kipnis-Varadhan CLT applies.  The Kipnis-Varadhan theorem is a CLT for additive functionals of a Markov process, reversible and ergodic, on general state space $\Omega$.

\begin{theorem}
\label{Kipnis-Varadhan}
Consider an $L^2(\mu)$ Markov process $\eta_t$ begun with ergodic, reversible invariant measure $\mu$.
Let $f:\Omega\rightarrow\R$ be an $L^2(\mu)$ function such that $\sigma^2_f<\infty$. 
Then,
$$\frac{1}{\sqrt{t}}\int_0^t f(\eta_s)ds \ \Rightarrow \ {\rm N}(0,\sigma^2_f).$$
\end{theorem}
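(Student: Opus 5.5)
The plan is to mimic the finite-state argument of Section \ref{sec:9.1}, replacing the exact Poisson equation $-Lu=f$ by the resolvent equation $(\lambda - L)u_\lambda = f$ for $\lambda>0$. Such a $u_\lambda\in{\rm Dom}$ always exists, with $u_\lambda = \int_0^\infty e^{-\lambda s}P_s f\,ds$. We then send $\lambda\downarrow 0$ along the choice $\lambda = 1/t$.

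The decomposition is
$$\int_0^t f(\eta_s)\,ds \;=\; M_\lambda(t) + u_\lambda(\eta_0) - u_\lambda(\eta_t) + \lambda\int_0^t u_\lambda(\eta_s)\,ds,$$
where $M_\lambda(t)=u_\lambda(\eta_t)-u_\lambda(\eta_0)-\int_0^t Lu_\lambda(\eta_s)\,ds$ is a martingale (Proposition \ref{martingale}, extended to domain functions). Under $\mu$ it has stationary increments and $\E_\mu[M_\lambda(t)^2]=2tD(u_\lambda)$.

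The key spectral input is reversibility. $L$ is self-adjoint and nonpositive on $L^2(\mu)$, so let $E(dx)$ be the spectral measure of $f$ with respect to $-L$. The computation of $\sigma_f^2$ in the previous subsection gives
$$\sigma_f^2 = 2\int_0^\infty\langle P_sf,f\rangle_\mu\, ds = 2\int \frac{1}{x}\,E(dx) < \infty,$$
that is, $f\in H_{-1}$. Spectral calculus then gives the following, with the limits coming from dominated convergence since $\int x^{-1}E(dx)<\infty$:
$$\lambda\|u_\lambda\|^2_\mu=\int\frac{\lambda}{(\lambda+x)^2}E(dx)\to0,\qquad D(u_\lambda)=\int\frac{x}{(\lambda+x)^2}E(dx)\to\tfrac12\sigma_f^2,$$
$$D(u_\lambda-u_{\lambda'})\to 0 \quad\text{as }\lambda,\lambda'\downarrow 0.$$

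The terms are handled in order.
\begin{itemize}
\item \emph{Boundary terms.} With $\lambda=1/t$, we have $t^{-1}\E_\mu[u_\lambda(\eta_0)^2]=\lambda\|u_\lambda\|^2\to0$, so the boundary terms vanish after dividing by $\sqrt t$.
\item \emph{Drift term.} By stationarity and Schwarz, $t^{-1}\E_\mu\big[(\lambda\int_0^tu_\lambda)^2\big]\le \lambda^2 t\|u_\lambda\|^2=\lambda\|u_\lambda\|^2\to0$.
\item \emph{Martingale.} Since $D(u_\lambda-u_{\lambda'})\to0$, the martingales $M_\lambda$ form a Cauchy family: $\E[(M_\lambda(t)-M_{\lambda'}(t))^2]=2tD(u_\lambda-u_{\lambda'})$. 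Hence there is a limiting $L^2$ martingale $M(t)$ with stationary increments, and $t^{-1}\E[(M_{1/t}(t)-M(t))^2]\le 2\sup_{\lambda'\le\lambda}D(u_{\lambda}-u_{\lambda'})\to0$. Moreover $M$ has ergodic increments, because $\mu$ is ergodic (Proposition \ref{equivalence}) and $M$ is a functional of the stationary path. Proposition \ref{mclt} then gives $M(t)/\sqrt t\Rightarrow{\rm N}(0,\lim 2D(u_\lambda))={\rm N}(0,\sigma_f^2)$.
\end{itemize}
Combining these three $L^2$-negligibility statements with Slutsky finishes the proof.

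The main obstacle is the construction of the limiting martingale $M$ and the justification that it has stationary ergodic increments. It is built as an $L^2$ limit of $M_\lambda$ rather than from an actual solution of $-Lu=f$. The Cauchy estimate on $D(u_\lambda-u_{\lambda'})$ via the spectral theorem is the crux. I would also double-check the relation $\sigma_f^2=2\int x^{-1}E(dx)$, which uses $\langle P_sf,f\rangle_\mu=\int e^{-sx}E(dx)$, itself available since $P_s$ is self-adjoint by reversibility.
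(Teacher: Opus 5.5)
Your proposal is correct. Its architecture is exactly the paper's: the resolvent equation, the decomposition $A_f(t)=M_\lambda(t)+\zeta_\lambda(t)$, the choice $\lambda=t^{-1}$, the $L^2$-Cauchy family $M_\lambda$ with limit $M$, and Proposition \ref{mclt} (compare Proposition \ref{KV_prop} and Step 3).

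You differ genuinely in the analytic core, Proposition \ref{1_2}.

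\emph{The paper's route.} It uses no spectral representation of $f$. From the resolvent equation it gets the uniform bounds $\|u_\lambda\|_1\le\|f\|_{-1}$ and $\lambda\|u_\lambda\|^2_{L^2(\mu)}\le\|f\|_{-1}^2$. It then extracts a weakly convergent subsequence in $\mathcal H_1$ and upgrades to strong convergence via Mazur's lemma plus a lower-semicontinuity squeeze using $\|w\|_1^2=\langle w,f\rangle_\mu$. Next it shows all subsequential limits coincide by the resolvent identity. Finally it identifies the variance separately through Lemma \ref{lem:asymptoticH-1}.

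\emph{Your route.} You write everything against the spectral measure $E(dx)$ of $f$ for $-L$. Then $\lambda\|u_\lambda\|^2$, $D(u_\lambda)$ and $D(u_\lambda-u_{\lambda'})$ are explicit integrals dominated by $x^{-1}\in L^1(E)$, using
\[
\frac{\lambda}{(\lambda+x)^2}\le \frac{1}{4x}
\quad\text{and}\quad
\Big|\frac{1}{\lambda+x}-\frac{1}{\lambda'+x}\Big|\le \frac1x .
\]
Dominated and monotone convergence then give, in one stroke, convergence of the whole family (no subsequences) and $2D(u_\lambda)\to\sigma_f^2$. This is essentially the original Kipnis--Varadhan argument, and it is shorter and more explicit.

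\emph{What each buys.} Your computation needs a spectral resolution of $-L$, which exists only because reversibility makes $-L$ self-adjoint. The paper's route is the one that survives beyond the reversible case. It uses only the $\mathcal H_1$/$\mathcal H_{-1}$ duality, the resolvent identity, and the bound $\|Lu\|_{-1}\le\|u\|_1$. For nonreversible generators that bound is precisely a sector condition (see the Notes of that chapter), and there no spectral calculus is available.
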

A functional CLT/invariance principle can also be proved under the assumption of the theorem, but we do not discuss this extension here.

The main tool to prove Theorem \ref{Kipnis-Varadhan} is to approximate $t^{-1/2}A_f(t)$ by a martingale, and then to use Proposition \ref{mclt}.  
The idea is to try to write $f = -Lu$.  However, this is not possible in general.  If it were possible, one could use the results in \cite{Bhattacharya}.  A resolvent type equation always holds however, that can be worked with.
The following, which can be proved by this approach, is sufficient for the martingale approximation.

\begin{proposition}
\label{KV_prop}
Under the assumptions of Theorem \ref{Kipnis-Varadhan}, there is a martingale $M(t)$ with stationary and ergodic increments such that
$$\frac{1}{\sqrt{t}}A_f(t) \ = \ \frac{1}{\sqrt{t}}M(t) + \frac{1}{\sqrt{t}}\zeta(t)$$
where 
$$\lim_{t\uparrow\infty}\frac{1}{t}\E_\mu[\zeta^2(t)] \ = \ 0 \ \ \ {\rm and \ \ \ }
\E_\mu\big[ M^2(t)\big] =t \sigma^2_f.$$
\end{proposition}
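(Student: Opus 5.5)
We follow the standard Kipnis--Varadhan resolvent approach. For $\lambda>0$, let $u_\lambda\in L^2(\mu)$ solve the resolvent equation $\lambda u_\lambda - L u_\lambda = f$, that is $u_\lambda = \int_0^\infty e^{-\lambda s}P_s f\,ds$. This is well defined because $P_s$ is an $L^2(\mu)$ contraction (Exercise \ref{ex:adjoint-harm}), and $u_\lambda$ lies in the domain of the generator by Hille--Yosida. Reversibility makes $-L$ a nonnegative self-adjoint operator. Let $E(d\xi)$ be its spectral measure relative to $f$. Then
\[
\sigma^2_f=2\int_0^\infty \langle P_sf,f\rangle_\mu\, ds = 2\int \xi^{-1}E(d\xi)<\infty,
\]
so $f$ lies in the $H_{-1}$ space. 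We will check two spectral bounds. First, $\lambda\|u_\lambda\|^2_{L^2(\mu)}=\int \lambda(\lambda+\xi)^{-2}E(d\xi)\to 0$. Second, $D(u_\lambda-u_{\lambda'})=\int \xi\big((\lambda+\xi)^{-1}-(\lambda'+\xi)^{-1}\big)^2E(d\xi)\to 0$ as $\lambda,\lambda'\downarrow 0$. Both follow by dominated convergence against $\xi^{-1}E(d\xi)$.

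Next we decompose the additive functional. By Proposition \ref{martingale}, extended to the $L^2$ domain through a core,
\[
M_\lambda(t)=u_\lambda(\eta_t)-u_\lambda(\eta_0)-\int_0^t Lu_\lambda(\eta_s)ds
\]
is a martingale with stationary increments and $\E_\mu[M_\lambda(t)^2]=2tD(u_\lambda)$. Substituting $Lu_\lambda=\lambda u_\lambda - f$ gives
\[
A_f(t)=M_\lambda(t)+u_\lambda(\eta_0)-u_\lambda(\eta_t)+\lambda\int_0^t u_\lambda(\eta_s)ds .
\]
The Cauchy property of $D(u_\lambda)$ implies that $M_\lambda(t)$ converges in $L^2$, uniformly on $t\in[0,T]$ after dividing by $\sqrt t$ (using Doob's inequality), to a martingale $M(t)$. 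Its increments are stationary, and they are ergodic because $\mu$ is ergodic and $M$ is a measurable function of the stationary path. Moreover, $\E_\mu[M(t)^2]=2t\lim_\lambda D(u_\lambda)$. We define $\zeta(t)=A_f(t)-M(t)$ and take $\lambda=1/t$.

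The main obstacle is showing $t^{-1}\E_\mu[\zeta(t)^2]\to 0$. We split $\zeta(t)$ into four pieces. The boundary terms satisfy $t^{-1}\E[(u_{1/t}(\eta_0)-u_{1/t}(\eta_t))^2]\le 4t^{-1}\|u_{1/t}\|^2$, which tends to zero by the first spectral bound. The drift term $\lambda\int_0^t u_\lambda$ is controlled using reversibility: its variance equals $2\lambda^2\int_0^t(t-s)\langle P_su_\lambda,u_\lambda\rangle ds$, which is at most $2\lambda^2 t\int (\xi)^{-1}(\lambda+\xi)^{-2}\,\xi\,\cdots$. We will bound it by $C t\int \lambda(\lambda+\xi)^{-2}\,\cdot\,\lambda t\, E(d\xi)$-type expressions that vanish when $\lambda=1/t$. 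Concretely, the variance is at most $t\cdot 2\lambda^2\int \xi^{-1}(\lambda+\xi)^{-2}E(d\xi)$, and $\lambda^2(\lambda+\xi)^{-2}\to 0$ is dominated by $1$ against $\xi^{-1}E(d\xi)$. The remaining piece is $M_{1/t}(t)-M(t)$, whose second moment is $2tD(u_{1/t}-u_0)=o(t)$, where $u_0$ denotes the $H_1$ limit.

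Finally we identify the variance. Since $\zeta=o(\sqrt t)$ in $L^2$, we have $t^{-1}\E[M(t)^2]=\lim t^{-1}\mathrm{Var}(A_f(t))=\sigma^2_f$. By stationarity of the increments, the left side is in fact the constant $\E[M(1)^2]$, which gives $\E_\mu[M^2(t)]=t\sigma^2_f$.
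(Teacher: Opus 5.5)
Your proof is correct, but it reaches the two key analytic facts by a different route than the paper.

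\emph{The paper's route.} It never uses the spectral theorem. From the energy identity $\lambda\|u_\lambda\|^2_{L^2(\mu)}+\|u_\lambda\|_1^2=\langle f,u_\lambda\rangle_\mu$ it gets uniform bounds and extracts a weakly convergent subsequence in $\H_1$. It then upgrades this to strong convergence via Mazur's lemma and lower semicontinuity (Proposition \ref{1_2}). Finally it identifies the variance through $\|w\|_1^2=\langle w,f\rangle_\mu$ and $\lim_\lambda\langle u_\lambda,f\rangle_\mu=\|f\|_{-1}^2$.

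\emph{Your route.} You diagonalize $-L$ against the spectral measure $E(d\xi)$ of $f$. Then $\lambda\|u_\lambda\|^2\to0$ and the $\H_1$-Cauchy property of $u_\lambda$ each reduce to one application of dominated convergence against $\xi^{-1}E(d\xi)$. For the pointwise limits you implicitly use $E(\{0\})=0$, which holds because $\int\xi^{-1}E(d\xi)<\infty$. This is the classical spectral form of the Kipnis--Varadhan argument, and it is shorter. It also handles the drift term more sharply. Your bound
$t^{-1}\E_\mu\big[(\lambda\int_0^t u_\lambda(\eta_s)ds)^2\big]\le 2\int\lambda^2(\lambda+\xi)^{-2}\xi^{-1}E(d\xi)$
vanishes as $\lambda\downarrow0$ uniformly in $t$. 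The paper's Schwarz bound $\lambda^2t^2\|u_\lambda\|^2$ only becomes small after setting $\lambda=t^{-1}$, though both proofs need that choice anyway for the boundary terms. The two sentences before your ``Concretely'' are garbled and should be cut; the concrete bound is the right one.

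Your variance identification is fine: you use $t^{-1}\mathrm{Var}(A_f(t))\to\sigma_f^2$ together with the linearity $\E_\mu[M(t)^2]=2t\lim_\lambda D(u_\lambda)$. In your framework it can be made even more direct, since $2\int\xi(\lambda+\xi)^{-2}E(d\xi)\to2\int\xi^{-1}E(d\xi)=\sigma^2_f$.

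\emph{What each route buys.} Yours uses the self-adjointness of $L$, hence reversibility, in an essential way. The paper's ingredients (weak compactness, Mazur's lemma and the energy identity) are the ones that survive in nonreversible settings under sector-type conditions, where no spectral measure for $L$ is available. Those are the generalizations mentioned in the Notes.
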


\vskip .1cm
\subsection{$\mc H_{1}$ and $\mc H_{-1}$ norms}
Before proving Proposition \ref{KV_prop}, some definitions will be useful.  The generator $L$ of exclusion is well defined on the core of local functions.  Define, for local functions $\phi$ and $\lambda\geq 0$, the semi-norm $\|\phi\|_{1,\lambda}$ by
$$\|\phi\|_{1,\lambda}^2 \ = \ \langle \phi, (\lambda-L)\phi\rangle_{\mu} \ = \ D(\phi) + \lambda \|\phi\|^2_{L^2(\mu)}.$$
Note that, since $-L$ is a nonnegative self-adjoint operator, 
$$\langle \phi, (\lambda -L) \psi\rangle_{\mu} \ = \langle (\lambda -L)^{1/2}\phi, (\lambda -L)^{1/2}\psi\rangle_{\mu} \ \leq \ \|\phi\|_{1,\lambda} \|\psi\|_{1,\lambda}.$$
After modding out by functions with $\|\phi\|_{1,\lambda}=0$, define the space $\H_{1,\lambda}$ as the completion with respect to $\|\cdot \|_{1,\lambda}$. 

Now, we define for a local function $\phi$ that
$$\|\phi\|_{-1,\lambda} \ = \ \sup \left\{\frac{\langle \phi, \psi\rangle_{\mu}}{\|\psi\|_{1,\lambda}}: \psi \ {\rm local}\right\}.$$
Again, after modding out by functions $\|\phi\|_{-1,\lambda}=0$, define $\H_{-1,\lambda}$ as the completion with respect to $\|\cdot \|_{-1,\lambda}$.
When $\lambda>0$, $(\lambda -L)^{-1}$ is a bounded operator (bounded by $\lambda^{-1}$ from resolvent formulas--see the next subsection), and 
$$\|\phi\|_{-1,\lambda}^2 \ = \ \langle \phi, (\lambda -L)^{-1}\phi \rangle_{\mu}.$$

Both spaces $\H_{1,\lambda}$, $\H_{-1,\lambda}$ are Hilbert spaces where innerproducts are given by polarization:
$$\langle\langle \phi, \psi\rangle\rangle \ = \ \frac{1}{4}\left\{\|\phi + \psi\| - \|\phi-\psi\|\right\}.$$ 

These two norms are dual to each other:  For local functions, 
$$\langle \phi, \psi\rangle_{\mu} \ \leq \ \|\phi\|_{-1,\lambda}\|\psi\|_{1,\lambda}.$$
When $\lambda =0$, $\|\cdot\|_1:=\|\cdot \|_{1,0}$ and $\|\cdot\|_{-1}:=\|\cdot\|_{-1,0}$ are called the $\H_{1}$ and $\H_{-1}$ norms respectively.  Note that $\|\phi\|_1^2=D(\phi)$.

We note there is another formula for $\|\phi\|_{-1,\lambda}$ of note:
\begin{align}
\label{eq:H-1formula}
\|\phi\|_{-1,\lambda}^2 &=  \sup \left\{ 2\langle \phi, \psi\rangle_{\mu} -\|\psi\|^2_{1,\lambda}: \psi \ {\rm local}\right\}.
\end{align}

We remark, when $L$ is not self-adjoint, there are useful notions of $\mc H_{1,\lambda}$ and $\mc H_{-1, \lambda}$ spaces; see \cite{B}[Lemma 2.1], \cite{Ko_La_Ol} for instance.

\begin{exercise}
\rm
Verify formula \eqref{eq:H-1formula} from the definitions.  

Also, show $\|\phi\|_{-1, \lambda}\uparrow \|\phi\|_{-1}$ and $\|\phi\|_{1,\lambda}\downarrow \|\phi\|_1$ as $\lambda\downarrow 0$.
\end{exercise}

Sometimes the $\H_{-1}$ norm is referred to as the `variance' norm.  Since $\|\phi\|_{-1,\lambda}$ is increasing as $\lambda \downarrow 0$, and $\lim_{\lambda\downarrow 0}\|\phi\|_{-1,\lambda} = \|\phi\|_{-1}$, we have
\begin{eqnarray*}
2\|\phi\|_{-1}^2 & = & 2\lim_{\lambda\downarrow 0 }\|\phi\|_{-1,\lambda}^2 \\
& = & 2\lim_{\lambda\downarrow 0} \langle \phi, (\lambda -L)^{-1}\phi\rangle_{\mu}\\
&=& 2\lim_{\lambda\downarrow 0}\int_0^\infty e^{-\lambda t} E_{\mu}[\phi(P_t\phi)] dt\\
&=& 2\int_0^\infty \E_{\mu}[\phi(\eta_t)\phi(\eta_0)]dt \ = \ \sigma^2_\phi.
\end{eqnarray*}
The last evaluation of the limit follows as $E_{\mu}[\phi(P_t\phi)] = \E_{\mu}[\phi(\eta_t)\phi(\eta_0)]$ is nonnegative since $P_t$ is self-adjoint.

In particular, we observe the following.
\begin{lemma}
\label{lem:asymptoticH-1}
We have $\sigma^2_f = 2\|f\|^2_{-1}$ and so
\[{\rm the \ condition \ }\sigma^2_f<\infty \Leftrightarrow \|f\|^2_{-1}<\infty. \]
\end{lemma}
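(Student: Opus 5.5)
The plan is to make rigorous the chain of identities displayed just before the lemma, using the spectral theorem for the nonnegative self-adjoint operator $-L$ on $L^2(\mu)$ (reversibility of $\mu$). Let $E(d\theta)$ be the spectral resolution of $-L$, and let $m_f(d\theta)=\langle E(d\theta)f,f\rangle_\mu$ be the spectral measure of $f$ on $[0,\infty)$. This is a finite measure with total mass $\|f\|^2_{L^2(\mu)}$. Both sides of the claimed identity will be expressed through $m_f$.

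\emph{Step 1 (variance side).} Self-adjointness gives $\E_\mu[f(\eta_s)f(\eta_0)]=\langle P_sf,f\rangle_\mu=\int e^{-s\theta}m_f(d\theta)\ge 0$. We already know $\sigma_f^2=2\int_0^\infty \langle P_sf,f\rangle_\mu ds$ by monotone convergence. Tonelli then yields
\[
\sigma_f^2 = 2\int_{[0,\infty)}\theta^{-1}\,m_f(d\theta),
\]
with the convention $\theta^{-1}=\infty$ at $\theta=0$. An atom at $0$ would be a harmonic component of $f$. Since $\mu$ is ergodic and $f$ is centered, Proposition \ref{equivalence} forces $E(\{0\})f=E_\mu[f]=0$, so $m_f(\{0\})=0$. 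Even without this, both sides would simply be infinite.

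\emph{Step 2 ($\H_{-1,\lambda}$ side).} For $\lambda>0$ the resolvent $(\lambda-L)^{-1}=\int_0^\infty e^{-\lambda t}P_tdt$ is bounded. The variational formula \eqref{eq:H-1formula} is maximized at $\psi=(\lambda-L)^{-1}f$; this $\psi$ is approximated in the $\|\cdot\|_{1,\lambda}$ norm by local functions because local functions form a core. Hence
\[
\|f\|^2_{-1,\lambda}=\langle f,(\lambda-L)^{-1}f\rangle_\mu=\int(\lambda+\theta)^{-1}m_f(d\theta).
\]
Letting $\lambda\downarrow0$, monotone convergence gives $\lim_\lambda\|f\|_{-1,\lambda}^2=\int\theta^{-1}m_f(d\theta)$.

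\emph{Step 3 (the obstacle).} It remains to identify $\lim_{\lambda\downarrow 0}\|f\|_{-1,\lambda}$ with $\|f\|_{-1}$, which is defined directly at $\lambda=0$ as a supremum over local $\psi$; this is the delicate step. Monotonicity in $\lambda$ gives $\|f\|_{-1,\lambda}\le\|f\|_{-1}$. For the reverse inequality, I would use the $\lambda=0$ variational formula $\|f\|_{-1}^2=\sup_\psi\{2\langle f,\psi\rangle_\mu-D(\psi)\}$ together with
\[
2\langle f,\psi\rangle_\mu-D(\psi)=\lim_{\lambda\downarrow0}\big(2\langle f,\psi\rangle_\mu-\|\psi\|^2_{1,\lambda}\big)\le\liminf_{\lambda\downarrow 0}\|f\|^2_{-1,\lambda}.
\]
Taking the supremum over $\psi$ closes the argument. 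Combining Steps 1–3 gives $\sigma_f^2=2\|f\|^2_{-1}$, and the equivalence of finiteness of the two quantities follows at once.
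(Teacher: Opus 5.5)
Your proof is correct and follows the same chain as the paper's display just before the lemma ($\|f\|_{-1}=\lim_{\lambda\downarrow 0}\|f\|_{-1,\lambda}$, then $\|f\|^2_{-1,\lambda}=\langle f,(\lambda-L)^{-1}f\rangle_\mu$, then monotone convergence using nonnegativity of the correlations $\langle f,P_tf\rangle_\mu$). The only changes are that your spectral measure $m_f$ replaces the paper's Laplace-transform representation of the resolvent, and that your Steps 2 and 3 actually justify the resolvent identity and the limit $\lim_{\lambda\downarrow 0}\|f\|_{-1,\lambda}=\|f\|_{-1}$, which the paper only asserts or leaves as an exercise.
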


\subsection{Step 1: Resolvent calculations}
To facilitate the proof of Proposition \ref{KV_prop},
consider the resolvent equation
\begin{equation}
\label{resolvent_eqn}
 \lambda u_\lambda - Lu_\lambda \ = \ f
 \end{equation}
where $$u_\lambda(\eta) \ = \ (\lambda -L)^{-1}f\ = \ \int_0^\infty e^{-\lambda t} P_tf(\eta) dt.$$
\begin{exercise}
\rm
Show that $E_{\nu'_\rho}[u^2_\lambda] \leq \lambda^{-1}E_{\nu'_\rho}[f^2]$.  This bound is part of the standard theory of `resolvents'.
\end{exercise}

Let us multiply the resolvent equation by $u_\lambda$ and take expectation:
$$ \lambda \|u_\lambda\|_{L^2(\mu)} + D(u_\lambda) \ = \ \langle f, u_\lambda\rangle_{\mu}.$$
Then,
$$ \lambda \|u_\lambda\|_{L^2(\mu)} + \|u_\lambda\|_1^2 \ \leq \ \|f\|_{-1}\|u_\lambda\|_1$$
which gives immediately that
$$\|u_\lambda\|_1\ \leq \ \|f\|_{-1} \ \ {\rm and  \ \ } \lambda \|u_\lambda\|^2_{L^2(\mu)}\ \leq \ \|f\|_{-1}^2$$
uniformly in $\lambda>0$.

Also, since $|\langle L\phi, \psi\rangle_{\mu}| \leq \|\phi\|_1\|\psi\|_1$, we observe
$L: \H_1 \rightarrow \H_{-1}$ is a bounded operator, with bound $1$.  Hence, for all $\lambda>0$, 
$$\|Lu_\lambda\|_{-1} \ \leq \ \|u_\lambda\|_1 \ \leq \ \|f\|_{-1}.$$

Now, by the uniform boundedness principle, one can find a subsequence which converges weakly to an element $w\in\H_1$:
$$u_{\lambda_n} \ \rightarrow \ w.$$
For $\phi$ local, and as local functions are dense in $H_1$, we have
$$\lambda \langle u_\lambda, \phi\rangle_{\mu} \ \leq \ \sqrt{\lambda} \|f\|_{-1} \|\phi\|_{L^2},$$
and since $\|\lambda u_\lambda\|_{-1} \leq 2\|f\|_{-1}$, we conclude $\lambda u_\lambda \rightarrow 0$ weakly in $\H_{-1}$.
Therefore,
\begin{equation}
\label{weak_convergence_Lu}
-Lu_{\lambda_n} \ \rightarrow \ f \ \ {\rm weakly \ in \ }\H_{-1}.
\end{equation}

\subsection{Step 2: Strong approximations}
At this point, we would like to claim that
$$\langle f, u_\lambda\rangle \rightarrow \langle f, w\rangle, \ \langle u_\lambda, -Lu_\lambda\rangle = \|u_\lambda\|^2_1 \rightarrow \|w\|^2_1, \ {\rm and \ } \lambda \|u_\lambda\|^2_{L^2} \rightarrow 0$$
so that $\langle f, w\rangle = \|w\|^1_1$ and $f$ satisfies a `weak' Poisson equation.  However, the weak convergence shown, as there are limits in $\lambda$ in both components of $\langle u_\lambda, -Lu\lambda\rangle$, is not strong enough to conclude this relation immediately.

Nevertheless, we will show that the convergences can be strengthened as follows.
\begin{proposition}
\label{1_2}
We have
\begin{itemize}
\item[(i)] $\lim_{\lambda\downarrow 0} \lambda \|u_\lambda\|^2_{L^2(\mu)} \ = \ 0$.

\item[(ii)] There is a $w\in \H_1$ such that $u_\lambda \rightarrow w$ strongly in $\H_1$.  

Also $\|w\|_1^2 = \langle w,f\rangle_{\mu}$.
\end{itemize}
\end{proposition}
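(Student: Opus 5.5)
The plan is the standard Kipnis--Varadhan Cauchy argument. It rests on the resolvent identity and on the reversibility of $L$, together with the bounds from Step 1: $\|u_\lambda\|_1\le\|f\|_{-1}$ and $\lambda\|u_\lambda\|^2_{L^2(\mu)}\le \|f\|^2_{-1}$.

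\emph{Key identity.} For $\lambda,\lambda'>0$, subtracting the two resolvent equations gives $(\lambda-L)(u_\lambda-u_{\lambda'}) = (\lambda'-\lambda)u_{\lambda'}$. Pairing this with $u_\lambda-u_{\lambda'}$ gives
\[
\lambda\|u_\lambda-u_{\lambda'}\|^2_{L^2(\mu)} + \|u_\lambda-u_{\lambda'}\|_1^2 = (\lambda'-\lambda)\langle u_{\lambda'}, u_\lambda-u_{\lambda'}\rangle_\mu .
\]
Using $\|u_\lambda-u_{\lambda'}\|_1^2 = \|u_\lambda\|_1^2+\|u_{\lambda'}\|_1^2-2\langle u_\lambda,-Lu_{\lambda'}\rangle_\mu$, I will rearrange both sides in terms of the quantities $a(\lambda)=\|u_\lambda\|_1^2$, $b(\lambda)=\lambda\|u_\lambda\|^2_{L^2}$ and $c(\lambda)=\langle f,u_\lambda\rangle_\mu = a(\lambda)+b(\lambda)$.

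\emph{Monotonicity.} By the spectral theorem for the self-adjoint nonnegative operator $-L$, with spectral measure $\mu_f$ of $f$,
\[
c(\lambda)=\int \frac{1}{\lambda+x}\,\mu_f(dx), \qquad b(\lambda)=\int\frac{\lambda}{(\lambda+x)^2}\,\mu_f(dx), \qquad a(\lambda)=\int\frac{x}{(\lambda+x)^2}\,\mu_f(dx).
\]
The hypothesis $\|f\|_{-1}^2=\int x^{-1}\mu_f(dx)<\infty$ forces $\mu_f(\{0\})=0$. For (i), note that $\lambda/(\lambda+x)^2\le 1/x$ and $\lambda/(\lambda+x)^2\to0$ for each $x>0$, so dominated convergence gives $b(\lambda)\to0$. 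For (ii), compute
\[
\|u_\lambda-u_{\lambda'}\|_1^2=\int x\Big(\frac{1}{\lambda+x}-\frac{1}{\lambda'+x}\Big)^2\mu_f(dx).
\]
The integrand tends to $0$ pointwise as $\lambda,\lambda'\downarrow0$ and is dominated by $4/x$, which is $\mu_f$-integrable. Hence $\{u_\lambda\}$ is Cauchy in $\H_1$ and converges strongly to some $w$, and $w$ agrees with the weak limit from Step 1.

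\emph{The identity $\|w\|_1^2=\langle w,f\rangle_\mu$.} Start from $a(\lambda)+b(\lambda)=\langle f,u_\lambda\rangle_\mu$. Strong convergence in $\H_1$ gives $a(\lambda)\to\|w\|_1^2$, and (i) gives $b(\lambda)\to0$. Since $f\in\H_{-1}$, the functional $\psi\mapsto\langle f,\psi\rangle_\mu$ is continuous on $\H_1$ by the duality inequality, so $\langle f,u_\lambda\rangle_\mu\to\langle f,w\rangle_\mu$, where the pairing is understood as the extension by continuity.

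\emph{Main obstacle.} The delicate point is rigor: justifying the spectral representation and computing $\|\cdot\|_1$ on $u_\lambda$, which lies in the domain of $L$ but need not be local. This requires that local functions form a core, so that $\|\cdot\|_1^2=\langle\cdot,-L\cdot\rangle_\mu$ extends to the domain. If one wants to avoid spectral calculus, the fallback is to use the key identity to show that $a(\lambda)$ is monotone and bounded, hence convergent, and then deduce the Cauchy property directly.
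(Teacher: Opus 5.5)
Your argument is correct, but it takes a different route from the paper. You use the spectral theorem for the nonnegative self-adjoint operator $-L$. With $\mu_f$ the spectral measure of $f$, the hypothesis becomes $\int x^{-1}\mu_f(dx)<\infty$; this follows from the paper's formula $\|f\|^2_{-1,\lambda}=\langle f,(\lambda-L)^{-1}f\rangle_\mu$ and monotone convergence as $\lambda\downarrow0$. Then (i) and the Cauchy property of $u_\lambda$ in $\H_1$ are both dominated convergence. The identity $\|w\|_1^2=\langle w,f\rangle_\mu$ follows by letting $\lambda\downarrow0$ in $\|u_\lambda\|_1^2+\lambda\|u_\lambda\|^2_{L^2(\mu)}=\langle f,u_\lambda\rangle_\mu$.

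The paper never diagonalizes $L$. It proceeds as follows:
\begin{itemize}
\item it takes a weakly convergent subsequence $u_{\lambda_n}\to w$ in $\H_1$ and uses Mazur's lemma to get convex combinations $v_n\to w$ strongly;
\item it upgrades $-Lv_n\to f$ to strong convergence in $\H_{-1}$ via $\|L\phi\|_{-1}\le\|\phi\|_1$, which yields $\|w\|_1^2=\langle w,f\rangle_\mu$;
\item it squeezes $\|u_{\lambda_n}\|_1^2+\lambda_n\|u_{\lambda_n}\|^2_{L^2(\mu)}$ between $\|w\|_1^2$ (lower semicontinuity) and $\langle f,w\rangle_\mu$;
\item it finishes with a subsequence-uniqueness argument.
\end{itemize}

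Your version is shorter, gives convergence of the whole family at once, and is essentially the original Kipnis--Varadhan proof. However, it rests entirely on self-adjointness. The paper's weak-compactness route needs only a bound of the form $\|L\phi\|_{-1}\le C\|\phi\|_1$. That bound, as a sector condition, survives for non-reversible generators, where no spectral calculus is available.

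The technical point you flag, evaluating $\|\cdot\|_1^2=\langle\cdot,-L\cdot\rangle_\mu$ on the non-local $u_\lambda$, is used tacitly in the paper's argument as well. It is not a weakness specific to your route.

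One caveat on your fallback: monotonicity and boundedness of $a(\lambda)$ alone do not give the Cauchy property. You also need $\langle u_\lambda,-Lu_{\lambda'}\rangle_\mu\ge a(\lambda')$ for $\lambda<\lambda'$, which gives $\|u_\lambda-u_{\lambda'}\|_1^2\le a(\lambda)-a(\lambda')$. That inequality is itself most naturally proved spectrally, so the fallback does not really avoid spectral calculus.
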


\medskip

\begin{proof}[Proof of Proposition \ref{KV_prop}]
Given (i) and (ii) above, to be shown later, we now finish the proof of the Proposition \ref{KV_prop}.  Write
\begin{eqnarray*}
A_f(t) &=& M_\lambda(t) + \lambda \int_0^t u_\lambda(\eta_s)ds + u_\lambda(\eta_0) - u_\lambda(\eta_s)\\
&=:& M_\lambda(t) + \zeta_\lambda(t)
\end{eqnarray*}
where
$$M_\lambda(t) \ = \ u_\lambda(\eta_t) - u_\lambda(\eta_0) - \int_0^t Lu_\lambda(\eta_s)ds$$
is a martingale and $\zeta_\lambda(t)$ is the remainder.  

We claim now $\E_\mu\big[M^2_\lambda(t)\big] = 2t\|u_\lambda\|_1^2$.  Indeed, we need only evaluate 
\begin{align}
\label{eq:M-lim}
\lim_{t\downarrow 0}t^{-1}\E_\mu\big[M^2_\lambda(t)\big] = 2\|u_\lambda\|_1^2.
\end{align}
  First, 
as $Lu_\lambda = \lambda u_\lambda -f$ and $u_\lambda, f\in L^2(\mu)$, we have $E_\mu\big[(Lu_\lambda)^2\big]<\infty$ and so 
$$\E_\mu\Big[\Big(t^{-1}\int_0^t Lu_\lambda (\eta_s)ds\Big)^2\Big]\leq Ct\big(\|u_\lambda\|_{L^2} + \|f\|_{L^2}\big) \rightarrow 0.$$  Second, to finish,
\begin{align*}
t^{-1}\E_\mu\Big[ \Big(u_\lambda (\eta_t) - u_\lambda(\eta_0)\Big)^2\Big]
& = 2t^{-1}E_\mu\big[u^2_\lambda - u_\lambda P_t u_\lambda\big]\\
& = -2E_\mu\Big[ u_\lambda\Big(\frac{P_t u_\lambda - u_\lambda}{t}\Big)\Big]\\
& \rightarrow -2E_\mu\big[u_\lambda Lu_\lambda\big] = 2\|u_\lambda\|_1^2.
\end{align*}

As a consequence, we observe
$$\E_{\mu}\big[|M_\lambda(t) - M_\theta(t)|^2\big] \ =\  2t\|u_\lambda - u_\theta\|_1^2 \ \rightarrow \ 0$$
as $\lambda,\theta\downarrow 0$ from (ii).  Hence, being a Cauchy sequence, $M_\lambda(t) \rightarrow M(t)$ in $L^2(\mu)$.  Therefore, $M(t)$ (with respect to the natural filtration of $\eta_t$) is a martingale with ergodic and stationary increments such that
$$\E_\mu\big[M^2(t)\big] = t \E_\mu\big[M^2(1)\big]$$
and
$$\E_{\mu}\big[M^2(1)\big] \ = \ \lim_{\lambda\downarrow 0} \E_{\mu}\big[M^2_\lambda(1)\big] \ = \ 2\lim_{\lambda\downarrow 0} \|u_\lambda\|^2_1 \ = \ 2 \|w\|^2_1.$$

Now, the error $\zeta_\lambda(t)$ is handled as follows:  By the equation $A_f(t) = M_\lambda(t) + \zeta_\lambda(t)$, we see that
a limit $\zeta_\lambda(t)\rightarrow \zeta(t)$ holds as $\lambda\downarrow 0$ in $L^2(\mu)$.  
Hence, $A_f(t) = M(t) + \zeta(t)$ and
$$\zeta(t) \ = \ M_\lambda(t) - M(t) + \zeta_\lambda(t).$$

By Schwarz inequality, using that $\mu$ is invariant, we have by (i), choosing $\lambda = t^{-1}$, that
\begin{eqnarray*}
t^{-1}\|\zeta_\lambda(t)\|_{L^2(\mu)}^2 & \leq & 3\big[ t^{-1}t^2 \lambda^2\|u_\lambda\|^2_{L^2} + 2t^{-1}\|u_\lambda\|^2_{L^2}\big]\\
&=& \frac{C}{t}\|u_{t^{-1}}\|_{L^2}^2 \ \rightarrow \ 0
\end{eqnarray*}
 as $t\uparrow\infty$.

On the other hand, 
\begin{eqnarray*}t^{-1}\|M_{t^{-1}}(t) - M(t) \|^2_{L^2} & = & t^{-1}\lim_{\theta\downarrow 0} \|M_{t^{-1}}(t) - M_\theta(t)\|^2_{L^2}\\
& = & t^{-1}\cdot 2t\|u_{t^{-1}} - w\|_1^2 \ \rightarrow \ 0
\end{eqnarray*}
as $t\uparrow\infty$.

Finally, we need to identify the variance:  By (ii) and Lemma \ref{lem:asymptoticH-1},
$$2\|w\|_1^2 \ = \ 2\langle w, f\rangle_{\mu} \ = \ \lim_{\lambda\downarrow 0} 2\langle u_\lambda, f\rangle_{\mu} \ = \ 2\|f\|_{-1}^2 \ = \ \sigma^2_f.$$ 

This finishes the proof of Proposition \ref{KV_prop}, subject to Proposition \ref{1_2}.
\end{proof}

\subsection{Step 3: Proof of Theorem \ref{Kipnis-Varadhan}}
We now have a representation 
$$\frac{1}{\sqrt{t}}\int_0^t f(\eta_s)ds \ = \ \frac{1}{\sqrt{t}}M(t) + \frac{1}{\sqrt{t}}\zeta(t).$$
The error vanishes in $L^2(\mu)$ and $\E_\mu\big[M^2(1)\big]=\sigma^2_f$.

 The martingale central limit theorem, Proposition \ref{mclt}, finishes the proof. \qed

 \subsection{Step 4: Proof of Proposition \ref{1_2}}

 We now recall Mazur's theorem (see \cite{Liggett2}[Lemma 4.38]).  A proof is given at the end.
 \begin{lemma}
 In a Hilbert space, if $x_n\rightarrow x$ weakly, there is a convex combination of the $\{x_1,x_2,\ldots, x_n\}$ which converges strongly to $x$, that is $\|x_n -x\|\rightarrow 0$.
 \end{lemma}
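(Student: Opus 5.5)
The plan is to use the Banach–Saks construction. Subtracting $x$, I assume without loss of generality that $x=0$, so $x_n\to 0$ weakly. By the uniform boundedness principle, a weakly convergent sequence is bounded, say $\|x_n\|\le K$. The goal is a subsequence $x_{n_1},x_{n_2},\ldots$ whose Cesàro averages $\frac{1}{k}\sum_{j=1}^k x_{n_j}$ converge to $0$ in norm. These averages are convex combinations of $\{x_1,\ldots,x_{n_k}\}$, which is the form the statement asks for. If one insists on a combination of the first $n$ elements for every $n$, it suffices to take the combination of index $k$ for all $n_k\le n<n_{k+1}$.

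The subsequence is chosen inductively. Set $n_1=1$. Having $n_1<\cdots<n_k$, I use weak convergence against the finitely many fixed vectors $x_{n_1},\ldots,x_{n_k}$ to pick $n_{k+1}>n_k$ with $|\langle x_{n_j},x_{n_{k+1}}\rangle|\le 1/k$ for all $j\le k$.

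The norm is then controlled by expanding the square:
\begin{align*}
\Big\|\frac{1}{k}\sum_{j=1}^k x_{n_j}\Big\|^2 \le \frac{1}{k^2}\Big(kK^2 + 2\sum_{i=2}^{k}\sum_{j<i}|\langle x_{n_j},x_{n_i}\rangle|\Big) \le \frac{K^2}{k}+\frac{2}{k^2}\sum_{i=2}^k (i-1)\frac{1}{i-1} \le \frac{K^2+2}{k}.
\end{align*}
This tends to $0$, which proves the claim.

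I expect no real obstacle here. The only points needing care are the boundedness of weakly convergent sequences, which uniform boundedness gives, and the fact that at each step finitely many inner-product constraints can be met simultaneously, which follows from weak convergence applied to each fixed vector separately. In the application to Proposition \ref{1_2}, the relevant Hilbert space is $\H_1$, where $u_{\lambda_n}\to w$ weakly.
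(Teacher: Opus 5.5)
Your proposal is correct and follows essentially the same route as the paper. Both reduce to $x=0$, pick a subsequence with $|\langle x_{n_j},x_{n_k}\rangle|$ small against all earlier indices, expand the square of the Ces\`aro average, and set $v_n$ equal to the $k$th average for $n_k\le n<n_{k+1}$; you also make explicit, via uniform boundedness, the boundedness of the sequence that the paper uses implicitly when it asserts $k^{-2}\sum_{j\le k}\|u_{n_j}\|^2\to 0$.
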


Now, let $v_n$ be a a convex combination of $\{u_{\lambda_k}\}$ such that $v_n \rightarrow w$ strongly in $\H_1$.  Then, as $-Lu_{\lambda_n}\rightarrow f$ weakly in $\H_1$ and $L$ is linear, $-Lv_n \rightarrow f$ weakly in $\H_1$.  In fact, since $\|L(v_n - v_m)\|_{-1} \leq \|v_n-v_m\|_1$, as $L$ is self-adjoint, we see that $\{-Lv_n\}$ is a Cauchy in $H_{-1}$, and hence converges strongly to $f$ in $H_{-1}$.

Then,
$$\|w\|^2_1\ = \ \lim \|v_n\|_1^2 \ = \ \lim\langle v_n, -Lv_n\rangle \ = \ \langle w, f\rangle.$$

Now, returning to the subsequence $u_{\lambda_n}$, by lower semicontinuity and weak convergence of $u_{\lambda_n}\rightarrow w$ in $H_1$, and \eqref{resolvent_eqn}, we have 
\begin{eqnarray*}\|w\|_1^2 & \leq & \liminf \|u_{\lambda_n}\|_1^2\\
&\leq& \liminf \lambda_n \|u_{\lambda_n}\|^2_{L^2} + \|u_{\lambda_n}\|^2_1 \\
&=&\liminf \langle f, u_{\lambda_n}\rangle\\
&=& \langle f, w\rangle \ = \ \|w\|_1^2.
\end{eqnarray*}
The same calculation can be repeated with `$\liminf$' replaced by `$\limsup$'.  

Therefore, we conclude $\|u_{\lambda_n}\|_1^2 \rightarrow \|w\|^2_1$, which means $u_{\lambda_n}\rightarrow w$ strongly in $\H_1$, and also that
$\lambda_n \|u_{\lambda_n}\|^2_{L^2} \rightarrow 0$.

By the same arguments, on any subsequence of $\{u_\lambda\}$, a further subsequence $u_{\lambda'_n}$ can be found so that $\lambda'_n\|u_{\lambda'_n}\|^2_{L^2}\rightarrow 0$.  Hence, $\lim_{\lambda\downarrow 0}\lambda \|u_\lambda\|^2_1 = 0$, showing part (i).

To show part (ii), we need to show that the limit $w$ is obtained on any subsequential limit of $u_\lambda$ strongly in $\H_1$.  To this end, as before, suppose $\{u_{\lambda'_n}\}$ is a subsequence converging strongly to $w'$ in $\H_1$.  We conclude as before that $-Lu_{\lambda'_n}\rightarrow f$ strongly in $\H_{-1}$.
We now show that $\|w-w'\|_1^2 = 0$ which will finish the claim.

Write, 
\begin{eqnarray*}
\|w-w'\|_1^2 & = & \lim_{n} \|u_{\lambda_n} - u_{\lambda'_n}\|^2_1\\
&=&\lim_n  \langle u_{\lambda_n} - u_{\lambda'_n}, -Lu_{\lambda_n} + Lu_{\lambda'_n}\rangle \\
&=& \lim_n \langle u_{\lambda_n} - u_{\lambda'_n}, -\lambda_nu_{\lambda_n} + \lambda'_nu_{\lambda'_n}\rangle
\end{eqnarray*}
since $-Lu_{\lambda_n} + Lu_{\lambda'_n} = -\lambda_nu_{\lambda_n} + \lambda'_nu_{\lambda'_n}$ noting \eqref{resolvent_eqn}.
Now, $\lambda'_nu_{\lambda'_n}, \lambda_n u_{\lambda_n} \rightarrow 0$ weakly in $H_{-1}$ (cf. before \eqref{weak_convergence_Lu}).  Hence, via the penultimate equation, and the relation $\|Lu\|_{-1}\leq \|u\|_1$, we may approximate $u_{\lambda_n}$ by $w$ and $u_{\lambda'_n}$ by $w'$.  Then, by the last equation, we see that the right-hand side vanishes.
 Therefore, $w=w'$ in $H_1$.
This finishes the proof of Proposition \ref{1_2}. \qed
\medskip

\noindent {\bf Proof of Mazur's Theorem.}  Let us assume the limit is $w=0$ without loss of generality. From the weak convergence, one can find iteratively a subsequence $\{n_k\}$ with $n_1=1$ such that 
$$|\langle\langle u_{n_k}, u_{n_j}\rangle\rangle | \ \leq \ k^{-1}, \ \ \ {\rm for \ } 1\leq j< k.$$
Then, 
$$\big\|\frac{1}{k}\sum_{j=1}^k u_{n_j}\big\|^2 \ \leq \ \frac{1}{k^2}\sum_{j=1}^k \|u_{n_j}\|^2 + \frac{2}{k^2}\sum_{1\leq i<j\leq k} \frac{1}{j}$$
which vanishes as $k\uparrow\infty$.  Hence, we can take $v_n = k^{-1}\sum_{j=1}^k u_{n_j}$ for $n_k\leq n<n_{k+1}$.
\qed

\section{Notes}
Aside from the original paper \cite{KV}, treatments of the Kipnis-Varadhan theorem, and applications to additive functionals and tagged particles, can be found in \cite{Ko_La_Ol}, \cite{Liggett2}.
There have been generalizations of the Kipnis-Varadhan theorem to nonreversible situations \cite{SVY}, \cite{Varadhan}, \cite{Woodroofe}, some of which has been used here, albeit in the reversible setting.  In the nonreversible case, as we have seen, it may be that $\sigma^2_f<\infty$ but $\|f\|_{-1, S} =\infty$ where the $\H_{-1}$ norm is with respect to the symmetrized operator $-S = -(L + L^*)/2$, e.g. $f(\eta)=\eta(0)-\rho$ for $\rho\neq 1/2$ in $d=1$.  
However, a non-asymptotic bound ${\rm Var}\big(A_f(t)\big)\leq \|f\|^2_{-1, S}$ holds; see Lemma \ref{H-1bound}, Remark \ref{rem:non-asymptotic}.
An open problem of interest is to show in the general nonreversible case, with a condition such as $\|f\|_{-1,S}<\infty$, that a CLT holds.

The asymptotic variances in Proposition \ref{variance_sym} and associated CLT's were first proved in \cite{Kip_flu}.  For more general additive functionals, when $f$ is not necessarily the occupation function of a site, CLT's and diffusive variance criteria are proved in \cite{SX} for reversible mass conservative systems.  

For the asymmetric process with a non-zero drift,
Proposition \ref{variance_asym} is proved in a combination of papers \cite{B}, \cite{SS}, \cite{S_comp}, \cite{S_superdiff}, and associated CLT's are shown in \cite{Sclt}.  In particular, the paper \cite{S_comp} shows that $\H_{-1}$ norms are comparable across $d\geq 1$ finite-range exclusion processes with the same drift.  For instance such comparisons would allow to lift variance calculations from $d=1$ asymmetric simple exclusion processes, e.g. `ASEP' or `TASEP', as in \cite{SS}, to more general finite-range processes.

The `second-class' particle $R_t$, introduced in Subsection \ref{sec:asym-second} in the context of ASEP is of interest by itself.
 When the process starts under $\nu_\rho$, in $d=1$, ${\rm Var}\big(R_t\big)$ scales as $t^{4/3}$ \cite{Balasz}, \cite{Quastel-Valko}, and in $d=2$ scales as $t(\log t)^{2/3}$ \cite{yau-second}, both `super-diffusive'.  In $d\geq 3$, the variance is diffusive of order $O(t)$; see \cite{LY}.  However, when the initial condition is a `step' condition (to the left and right with different densities) in $d=1$, the variance may have different orders \cite{FF}, \cite{Ferrari-Patrik}; see also \cite{Adams-J-M} and references therein.  Moreover, among other properties, the second-class particle in $d=1$ tracks the location of microscopic `shocks' in the system \cite{FF}, \cite{FKS}.  See also \cite{Corwin-asep} and references therein for a `speed' process coupling between second-class particles with different priorities.

 The superdiffusive conjectures in Remark \ref{rem:add-conj} regarding ${\rm Var}\big(A_f(t)\big)$, for $\eta(0)-1/2$ and $\rho=1/2$, rely on a `Gaussian ansatz', that the local limit terms $\bar P(R_t = 0)\sim t^{-2/3}$ in $d=1$ and $t^{-1/2}(\log t)^{-1/3}$ in $d=2$ scale like the power $\big[{\rm Var}(R_t)\big]^{-1/2}$.  Verification is an open problem; see however \cite{BFP} for a weak form of the ansatz in $d=1$.

In the mean-zero asymmetric setting, variance orders are the same as in the symmetrized process, and CLTs hold when the $\H_{-1}$ norm is finite.  This would include occupation functionals in $d\geq 3$. See \cite{Varadhan} for a Kipnis-Varadhan-type CLT using a `sector inequality' for the generators of mean-zero processes.
In $d=1$ mean-zero asymmetric processes, fractional Brownian motion limits for properly scaled, centered occupation variables have been shown \cite{GJ-addfclt}.  For $d=2$ mean-zero asymmetric systems, it is open to show a fluctuation limit for the properly scaled, centered occupation functionals.
See \cite{BGS2}, \cite{BGS} for more discussion of these problems in simple exclusion and zero-range processes, when starting from an invariant measure.

Starting from a non-invariant initial condition, LLNs can be inferred for additive functionals in attractive systems such as exclusion processes via `local equilibrium' results; see \cite{KL}[Chapter 9].  However, less is known about associated fluctuations; see however \cite{Franco-Dirk}, \cite{Xu-Zhao}.

Finally, we comment that the study of `duality' (cf. Subsection \ref{sec:duality}), and its applications in particle systems and other models has advanced in recent years (cf. \cite{Fran}, \cite{Giardina}, \cite{Redig}, and references therein).

\newpage

 \chapter[Section $10$]{A tagged particle in symmetric exclusion on $Z^d$}
\label{lec10}

Understanding the motion of a tracer particle, as it interacts with others, is a basic applied concern. 
We consider here the problem in the finite range symmetric exclusion processes when started under an invariant measure $\nu_\rho$.  Except for the case in dimension $d=1$ when the jump law $p$ is nearest-neighbor, the tagged particle motion is diffusive and converges to a Brownian motion.  In the exceptional case, it can be shown that the motion is subdiffusive and converges to a fractional Brownian motion of Hurst parameter $1/4$.  

First, we discuss LLNs.  Then, the diffusive behavior is considered using the Kipnis-Varadhan theorem.  Finally, we discuss an exceptional subdiffusive case.

\section{Tagged problem setting}

Consider the $d$-dimensional exclusion process $\eta_t$ with finite-range translation-invariant jump probability $p(\cdot)$, as discussed in Section \ref{lec9}.  We will assume that the symmetrized jump probability $s(\cdot) = \big(p(\cdot) + p(-\cdot)\big)/2$ is irreducible.  The system consists typically of an infinite number particles.  Let us identify one of them initially and let $x_t$ be its position at time $t\geq 0$.  To fix things, suppose it starts at the origin initially, $x_0=0$.

How to capture the evolution of $x_t$, say LLN and CLT statements?  With respect to its own history, it is not Markovian because of influence of the other particles.  However, if we consider $x_t$ and $\eta_t$ together, then the joint process $(x_t, \eta_t)$ is Markovian with generator
\begin{eqnarray*}
\tilde Lf(x,\eta) &=& \sum_{u,v\neq x} p(v-u)\eta(u)(1-\eta(v))\big[f(x,\eta^{u,v}) - f(x,\eta)\big]\\
&&\ \ + \sum_{v} p(v)(1-\eta(x+v))\big[f(x+v,\eta^{x,x+v}) - f(x,\eta)\big].
\end{eqnarray*}

It will be convenient to consider `Lagrangian' coordinates, or those in the reference frame of the tagged motion.  Define $\zeta_t = \tau_{x_t}\eta_t$, that is $\zeta_t(y) = \eta_t(y+x_t)$ for all $y\in \Z^d$ where $\tau_x$ is shift by $x$. Then, the joint process $(x_t,\zeta_t)$ is also Markovian and has generator
\begin{eqnarray*}
\hat Lf(x,\zeta) &=& \sum_{u,v\neq 0} p(v-u)\zeta(u)(1-\zeta(v))\big[f(x,\zeta^{u,v}) - f(x,\zeta)\big]\\
&&\ \ + \sum_{v} p(v)(1-\zeta(v))\big[f(x+v,\theta_v\zeta) - f(x,\zeta)\big]
\end{eqnarray*}
where $\theta_v\zeta$ is the configuration which exchanges the values $\zeta(0)$ and $\zeta(v)$, displacing the particle at the origin, namely the tagged particle by $v$, and then shifts the reference frame to this new origin.

In particular, an explicit description of $\theta_v\zeta$ is given by
$$(\theta_v\zeta)(y) \ = \ \left\{\begin{array}{rl} \zeta(y+v) & \ {\rm for \ }y\neq -v,0\\
\zeta(v) & \ {\rm for \ }y = -v\\
1& \ {\rm for \ }y = 0.
\end{array}\right.
$$

Both joint processes can be constructed with a core of local functions.
Interestingly, the process $\zeta_t$ by itself is a Markov process.  This can be seen as the generator $L$ acting on functions of $\zeta$ alone,
 \begin{eqnarray*}
Lf(\zeta) &=& \sum_{u,v\neq 0} p(v-u)\zeta(u)(1-\zeta(v))\big[f(\zeta^{u,v}) - f(\zeta)\big]\\
&&\ \ + \sum_{v} p(v)(1-\zeta(v))\big[f(\theta_v\zeta) - f(\zeta)\big],
\end{eqnarray*}
does not depend on $x$, and hence the associated semigroup also does not depend on $x$.
The process $\zeta_t$ `drives' the process $x_t$ in that $x_t$ can be recovered in terms of reference frame shifts. 

 Define $N_v(t)$ as the count of shifts of $\zeta_\cdot$ of displacement $v$ up to time $t$.  Then, 
\begin{equation}
\label{x_equation}
x_t \ = \ \sum_v vN_v(t)\end{equation}
where the sum may be restricted to $v$ in the support of $p(\cdot)$.  For instance, in one dimension, when $p$ is nearest-neighbor, $x_t$ is the number of right shifts minus the number of left shifts.

Moreover, it is not difficult to find invariant measures for $\zeta_t$ generated by $L$.
\begin{lemma}
\label{lemma1_x}
The Bernoulli product measure $\nu'_\rho=\nu_\rho(\cdot|\eta(0)=1)$, conditioned to have a particle at the origin, is invariant for $\zeta_t$.  When $p$ is symmetric, $\nu'_\rho$ is reversible.
\end{lemma}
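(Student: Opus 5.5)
The plan is to follow the template of Proposition \ref{prop:exclusion_invariant}: for local $f,h$ on $\{0,1\}^{\Z^d}$ (with $\zeta(0)=1$), compute the $\nu'_\rho$-adjoint $L^*$ of the environment generator $L$. Then invariance reduces to $L^*1=0$, and reversibility to $L^*=L$ when $p$ is symmetric. Since $\nu'_\rho$ is a product of ${\rm Bern}(\rho)$ marginals on $\Z^d\setminus\{0\}$ with $\zeta(0)\equiv 1$, it is enough to verify two change-of-variables identities, one for each part of $L$.

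\emph{Exclusion part (bonds $u,v\neq 0$).} The map $\zeta\mapsto\zeta^{u,v}$ is an involution that preserves $\nu'_\rho$, since it permutes two i.i.d.\ coordinates away from the origin. Hence
\[E_{\nu'_\rho}\big[h(\zeta)\zeta(u)(1-\zeta(v))p(v-u)f(\zeta^{u,v})\big]=E_{\nu'_\rho}\big[h(\zeta^{u,v})\zeta(v)(1-\zeta(u))p(v-u)f(\zeta)\big].\]
This is exactly the calculation in Proposition \ref{prop:exclusion_invariant}, so the adjoint of this part is the exclusion generator on $\Z^d\setminus\{0\}$ with reversed jump law $p(-\cdot)$.

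\emph{Shift part.} We need the key change of variables. On $\{\zeta(v)=0\}$, the map $\zeta\mapsto\theta_v\zeta$ is a bijection onto $\{\xi:\xi(-v)=0\}$, and its inverse is $\theta_{-v}$: shifting back by $-v$ returns the empty site to $v$ and the tagged particle to the origin. Moreover, $\nu'_\rho(\theta_v\zeta)=\nu'_\rho(\zeta)$. The reason is that the occupation values off the origin are merely relabeled, with $\zeta(v)=0$ going to position $-v$ and $\zeta(0)=1$ staying at $0$, so the numbers of ones and zeros are unchanged. These counts are only finite after localizing. I will justify the step rigorously by checking the identity on cylinder events, or equivalently by noting that $\theta_v$ acts as a product-measure-preserving relabeling of the i.i.d.\ coordinates indexed by $\Z^d\setminus\{0\}$, restricted to the appropriate events. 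It follows that
\[E_{\nu'_\rho}\big[h(\zeta)(1-\zeta(v))p(v)f(\theta_v\zeta)\big]=E_{\nu'_\rho}\big[h(\theta_{-v}\xi)(1-\xi(-v))p(v)f(\xi)\big].\]
Summing over $v$ and substituting $v\to -v$, the adjoint of the shift part is the same operator with $p(v)$ replaced by $p(-v)$.

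Putting the two parts together, $L^*$ is the environment generator built from $p^*(z)=p(-z)$. Both pieces of $L^*$ annihilate constants, so $L^*1=0$, which gives $E_{\nu'_\rho}[Lf]=0$ for local $f$. By the core property of local functions mentioned above, $\nu'_\rho$ is then invariant. When $p$ is symmetric we have $p^*=p$, so $L^*=L$ and $\nu'_\rho$ is reversible.

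The main obstacle is making the measure-preservation of $\theta_v$ precise in infinite volume, including the bookkeeping of which site receives the vacated zero. I expect to handle it with cylinder functions, where only finitely many coordinates are involved, and a direct product-measure computation.
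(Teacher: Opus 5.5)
Your route is the one the paper points to: compute the $\nu'_\rho$-adjoint using invariance of $\nu'_\rho$ under the exchanges $\zeta\mapsto\zeta^{u,v}$ ($u,v\neq0$) and under $\theta_v$. It works as written when $p$ is symmetric, giving reversibility and hence invariance. For non-symmetric $p$, however, your piecewise identifications are false, and so is the conclusion that ``both pieces of $L^*$ annihilate constants.''

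The change of variables only transforms the jump terms. The diagonal terms of each adjoint are still the \emph{forward} exit rates $\sum_{u,v\neq0}p(v-u)\zeta(u)(1-\zeta(v))$ and $\sum_v p(v)(1-\zeta(v))$. In Proposition \ref{prop:exclusion_invariant} the diagonal term agrees pointwise with the reversed exit rate, because the lattice is the full translation-invariant torus. On $\Z^d\setminus\{0\}$, where jumps onto the tagged particle are suppressed, it does not. Likewise $\sum_v p(v)(1-\zeta(v))\neq\sum_v p(-v)(1-\zeta(v))$ in general. Writing $L=L_{\mathrm{ex}}+L_{\mathrm{sh}}$ for the exclusion and shift parts, one finds for local $f$
\[
E_{\nu'_\rho}[L_{\mathrm{ex}}f]=E_{\nu'_\rho}\Big[f\sum_{w\neq0}\big(p(-w)-p(w)\big)\zeta(w)\Big],\qquad E_{\nu'_\rho}[L_{\mathrm{sh}}f]=E_{\nu'_\rho}\Big[f\sum_{w}\big(p(w)-p(-w)\big)\zeta(w)\Big].
\]
These are nonzero in general. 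For $d=1$, $p(1)=1$ and $f(\zeta)=\zeta(1)$, one has $L_{\mathrm{ex}}f=-\zeta(1)(1-\zeta(2))$ and $L_{\mathrm{sh}}f=(1-\zeta(1))\zeta(2)$, with means $-\rho(1-\rho)$ and $\rho(1-\rho)$. So neither part of $L$ leaves $\nu'_\rho$ invariant on its own.

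The missing idea is that the two defects cancel. The exit rate lost because particles cannot jump onto the origin is exactly compensated by the asymmetry of the tagged particle's own moves, so only the full generator satisfies $E_{\nu'_\rho}[Lf]=0$. Your final identification of the full adjoint as the environment generator for $p(-\cdot)$ is true (e.g.\ on a torus), but not part by part.

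Since the total rates are infinite on $\Z^d$, prove the cancellation directly for local $f$. Sum over ordered pairs $(u,v)$, $u,v\neq0$, that meet a box $\Lambda$ containing the sites on which $f$ depends and the $R$-neighborhood of the origin. After the change of variables, the pairs with $u\notin\Lambda$ contribute $\rho\,E_{\nu'_\rho}[f]\sum_{u\notin\Lambda,\,v\in\Lambda}\big(p(u-v)-p(v-u)\big)$. This vanishes by double stochasticity of $p$, which is the same flux identity as for $Q_n$ in Section \ref{lec8}. The remaining pairs give the first displayed sum, which cancels against the (finite) shift contribution.

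By contrast, the measure preservation of $\theta_v$ that you single out as the main obstacle is immediate. On $\{\zeta(0)=1\}$, $\theta_v$ is the bijective relabeling of the coordinates in $\Z^d\setminus\{0\}$ given by $y\mapsto y+v$ for $y\neq-v$ and $-v\mapsto v$.
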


The Dirichlet form can be computed also on local functions:
\begin{eqnarray}
\label{eq:dirichlet-De}
D(f) &=& E_{\nu'_\rho}[f(-Lf)]\\
&=& \frac{1}{2}\sum_{u,v\neq 0} s(v-u)E_{\nu'_\rho}[(f(\zeta^{u,v}) - f(\zeta))^2] \nonumber\\
&&\ \ \ + \frac{1}{2}\sum_v s(v) E_{\nu'_\rho}[(1-\zeta(v))f(\theta_v\zeta) - f(\zeta))^2]\nonumber\\
&=& D_e(f) + D_t(f)\nonumber
\end{eqnarray}
where $s(\cdot)$ is the symmetrized probability, $s(v) = (p(v)+p(-v))/2$, assumed to be irreducible on $\Z^d$.

\begin{lemma}
\label{lemma2_x}
$\nu'_\rho$ is extremal for the process $\zeta_t$.
\end{lemma}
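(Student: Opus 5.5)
We follow the scheme of Theorem \ref{thm:extremal}: by Proposition \ref{equivalence} it suffices to show that any bounded harmonic function $f$ for the $L^2(\nu'_\rho)$ environment process $\zeta_t$ is $\nu'_\rho$-a.s. constant. As in Section \ref{lec8}, we assume that $\zeta_t$ has been extended to a Markov semigroup on $L^2(\nu'_\rho)$ with core the local functions, and that the Dirichlet form formula \eqref{eq:dirichlet-De} extends from local functions to the domain of the closed generator, by the Fatou and polarization argument used for \eqref{eq:Dirichlet_equality}.

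\textbf{Step 1: reduce to vanishing Dirichlet form.} A harmonic $f$ satisfies $P_tf=f$. Hence $f$ is in the domain of the generator and $Lf=0$, so $D(f)=D_e(f)+D_t(f)=0$. Every summand in \eqref{eq:dirichlet-De} is nonnegative, so each one vanishes. From $D_e(f)=0$ we get $f(\zeta^{u,v})=f(\zeta)$ a.s. for all $u,v\neq 0$ with $s(v-u)>0$.

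\textbf{Step 2: invariance under permutations of $\Z^d\setminus\{0\}$.} We want to compose these transpositions to obtain every transposition of two sites in $\Z^d\setminus\{0\}$. This needs the graph on $\Z^d\setminus\{0\}$ with edges $\{u,v: s(v-u)>0\}$ to be connected. Irreducibility of $s$ on $\Z^d$ gives connectivity of $\Z^d$, but deleting the origin can disconnect it. The clearest case is $d=1$ nearest-neighbor, where $\Z\setminus\{0\}$ splits into two half-lines.

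We would handle this in two ways. First, use the tagged jumps from $D_t(f)=0$: they give $f(\theta_v\zeta)=f(\zeta)$ on $\{\zeta(v)=0\}$ for $s(v)>0$. Combined with the exchange invariance on each component, this still yields invariance of $f$ under finite permutations of the configuration restricted to each component. Second, apply the Hewitt--Savage $0$-$1$ law under the product measure $\nu'_\rho$, restricted to the coordinates off the origin, with one component at a time. Functions invariant under finite permutations within each component are measurable with respect to the product of exchangeable $\sigma$-fields, which is trivial. So $f$ is a.s. constant.

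In the generic connected case, one application of Hewitt--Savage on $\Z^d\setminus\{0\}$ already finishes the argument, and the $D_t$ term is not needed.

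\textbf{Main obstacle.} The delicate point is that the exchangeable $\sigma$-field of a product of i.i.d. Bernoulli families indexed by several disjoint infinite sets is trivial. Each component is infinite (even in $d=1$ it consists of two half-lines), and a product of independent trivial-tail structures remains trivial: apply Hewitt--Savage to the joint i.i.d. family, noting that invariance under the smaller group of component-preserving permutations still forces measurability in an asymptotically trivial $\sigma$-field. We expect to argue this by $L^2$ approximation of $f$ by local functions, followed by permuting a finite window far away inside each component. This is the step we would write out most carefully, together with justifying that the a.s. identities of Step 1 hold simultaneously for the countably many pairs $(u,v)$.
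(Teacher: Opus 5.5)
Your proposal is correct and follows the paper's route: the paper's hint is to follow the proof of Theorem \ref{thm:extremal}, namely that a bounded harmonic $f$ has $D(f)=D_e(f)+D_t(f)=0$, hence is invariant under the allowed exchanges, and the Hewitt--Savage $0$-$1$ law under the product measure $\nu'_\rho$ then makes it constant. Your extra care is well placed: $\Z^d\setminus\{0\}$ is connected under $s$ except exactly when $d=1$ and $p$ is nearest-neighbor, and in that case your componentwise Hewitt--Savage argument on the two infinite half-lines settles the point the paper's hint glosses over, with no need for the $D_t$ invariance.
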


\begin{exercise}\rm
Since $\nu'_\rho$ is invariant to exchanges of coordinates and also the operator $\theta_v$, the proof of Lemma \ref{lemma1_x} is similar to the proof of Proposition \ref{prop:exclusion_invariant} which shows $\nu^N_\rho$ is invariant in the finite-volume setting. 
Verify these calculations.
\end{exercise}

\begin{exercise}\rm
Perform the computations to derive the Dirichlet form, and deduce Lemma \ref{lemma2_x} following the proof of Theorem \ref{thm:extremal}.
\end{exercise}

Now, as before, with zero-range processes, we can extend the process $\zeta_t$ to an $L^2(\nu_\rho(\cdot|\eta(0)=1))$ process.  The problem now is to understand the LLN and fluctuations for $x_t$ in terms of the formula \eqref{x_equation}.

\subsection{Martingales for $x_t$}

Each count $N_v(t)$, as with a Poisson process, can be compensated by its intensity, $\int_0^t p(v)(1-\zeta_s(v))ds$, to form a martingle
$$M_v(t) \ = \ N_v(t) - \int_0^t p(v)(1-\zeta_s(v))ds$$
with quadratic variation
$$\langle M_v\rangle(t) \ = \ \int_0^t p(v)(1-\zeta_s(v))ds.$$
Then,
$$\M_v:= M^2_v(t) - \int_0^t p(v)(1-\zeta_s(v))ds$$
is a martingale.

How to verify the above statements?  Technically, we should have formed the generator $\vec{L}$ for the process $(\{N_v(t)\}, \zeta_t)$.  Then, with $f(\{N_v\},\zeta) = N_v$, we may compute $\vec{L}f$ and $\vec{L}f^2 - 2f\vec{L}f$ to see that
$$M_v(t) \ = \ f(\{N_v(t)\}, \zeta_t) - f(\{N_v(0)\},\zeta_0) - \int_0^t \vec{L}f ds$$
and $$ \langle M_v\rangle(t) \ = \ \int_0^t \big\{\vec{L}f^2 - 2f\vec{L}f \big\}ds.$$
Although $f$ is not a bounded function, one may apply truncations to bring it in to the domain of the generator $\vec{L}$, while noting $N_v(t) \leq N(t):=\sum_w N_w(t)$, a Poisson$(1)$ distributed r.v., to help remove the truncations.

Now, we may write
\begin{equation}
\label{mart_decomp_x}
x_t \ = \ \sum_v vM_v(t) + \int_0^t \sum_v  vp(v)(1-\zeta_s(v))ds.
\end{equation}
The first term, $\hat{M}(t) = \sum vM_v(t)$ is another (vector valued) martingale with quadratic variation, for $\ell\in \Z^d$, given by
\begin{equation} \label{quad_var_x}\langle \hat{M}\cdot \ell \rangle(t) \ = \ \int_0^t \sum_v (v\cdot \ell)^2 p(v)(1-\zeta_s(v))ds.
\end{equation}

\begin{remark}
\label{rem:construction}
\rm
At this point, we comment, we will need only formulas \eqref{mart_decomp_x} and \eqref{quad_var_x} in the following.  These could have been derived from the generator action of $\hat{L}$.  The purpose above was to explain more physically their interpretation in terms of counts $\{N_v\}$.
\end{remark}

We also note, since $\nu'_\rho$ is extremal,
by Proposition \ref{equivalence}, $\hat{M}(t)$ has stationary and ergodic increments and, by \eqref{quad_var_x},
\begin{align}
\label{eq:sec_mom_hatM}
\E_{\nu'_\rho}\big[\big(\hat{M}(t)\cdot \ell\big)^2\big] = t (1-\rho)\sum_v (v\cdot\ell)^2p(v).
\end{align}

The fourth moment of $\hat M(t)\cdot\ell$ may also be bounded via a generalization of the Burkholder-Gundy-Davis inequalities:
\begin{align*}
\E_{\nu'_\rho}\big[ \big(\hat M(t)\cdot\ell\big)^4\big]& \leq C \E_{\nu'_\rho}\big[ \max\big\{\langle \hat M(t)\cdot\ell\rangle^2 (t), A_t^{(2)}\big\}\big] \\
&\leq C\E_{\nu'_\rho}\big[\langle \hat M(t)\cdot\ell\rangle^2 (t)\big] + C\E_{\nu'_\rho}\big[A_t^{(2)}\big]
\end{align*}
where $A_t^{(2)}$ is the compensator of the jumps process $D_t:=\sum_{0\leq s\leq t} \big(\Delta (\hat M(s)\cdot \ell)\big)^2= \sum_{0\leq s\leq t}\big(\Delta( x_s\cdot \ell)\big)^2$ and $\Delta y(t) = |y(t)-y(t-)|$ measures the size of a jump; see \cite{JS}[Theorem I.3.17] and \cite{HJ}[Corollary 2.4].  It is also known that $\E_{\nu'_\rho}\big[A_t^{(2)}\big] \leq 4\E_{\nu'_\rho}\big[D^2_t\big]$; see \cite{HJ}[page 4] and \cite{Len}[Lemma 4.1].

 As the range of jumps $R<\infty$, we have $|\Delta (x_s\cdot \ell)|\leq R\|\ell\|$.  Also, the integrand of $\langle \hat M\cdot \ell\rangle(t)$ is bounded.  Then, we may bound
  \[\E_{\nu'_\rho}\big[ \big(\hat M(t)\cdot\ell\big)^4\big] \leq Ct^2.\]

\section{LLN for $x_t$}

The law of large numbers for $x_t$ is already interesting.  In some sense $x_t$ should be a random walk, but it is impeded by the other particles.
How much is the question.

\begin{theorem}
\label{lln_x}
Starting under $\nu'_\rho$, we have a.s. and in $L^1$ that
$$\frac{1}{t}{x_t} \ \rightarrow \ (1-\rho) \sum_v vp(v).$$
\end{theorem}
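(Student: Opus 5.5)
We work with the decomposition \eqref{mart_decomp_x},
\[
\frac{x_t}{t} \ = \ \frac{\hat M(t)}{t} + \frac{1}{t}\int_0^t \sum_v v p(v)(1-\zeta_s(v))\,ds,
\]
and treat the two terms separately. The drift term is an additive functional $\frac1t\int_0^t h(\zeta_s)\,ds$ of the environment process $\zeta_t$, with the local bounded function $h(\zeta)=\sum_v vp(v)(1-\zeta(v))$. Here the sum is finite, since $p$ has finite range.

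For the drift term we use Lemma \ref{lemma2_x}, which says that $\nu'_\rho$ is extremal for $\zeta_t$. By Proposition \ref{equivalence}, the stationary process $\zeta_\cdot$ started from $\nu'_\rho$ is then ergodic under time shifts. Birkhoff's ergodic theorem, as used in Subsection \ref{sec:9.1}, gives almost surely and in $L^1$
\[
\frac1t\int_0^t h(\zeta_s)\,ds \ \to\ E_{\nu'_\rho}[h] \ = \ \sum_v vp(v)\big(1-E_{\nu'_\rho}[\zeta(v)]\big) \ = \ (1-\rho)\sum_v vp(v).
\]
The last equality holds because $\nu'_\rho$ is Bernoulli$(\rho)$ off the origin and $p(0)=0$.

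For the martingale term, \eqref{eq:sec_mom_hatM} gives $\E_{\nu'_\rho}[(\hat M(t)\cdot\ell)^2]=O(t)$ for each coordinate direction $\ell$. Hence $\hat M(t)/t\to 0$ in $L^2$, and so in $L^1$. This settles the $L^1$ statement.

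The almost sure statement for the martingale term is the only step needing care, and we plan two routes.

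The first route uses the fourth-moment bound $\E_{\nu'_\rho}[(\hat M(t)\cdot\ell)^4]\le Ct^2$ on integer times $n$. Chebyshev's inequality gives $P(|\hat M(n)\cdot\ell|>\epsilon n)\le C/(\epsilon^4 n^2)$, which is summable, so Borel–Cantelli yields $\hat M(n)/n\to 0$ almost surely.

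To pass from integer times to all $t$, we control $\sup_{n\le t\le n+1}|\hat M(t)-\hat M(n)|$. The jumps of $x_t$ are bounded by $R$, so $|x_t-x_n|\le R\,(N(n+1)-N(n))$. The compensator term is bounded by a constant per unit time. Here $N(\cdot)$ counts all attempted jumps and is dominated by a rate-one Poisson process, so its increments $N(n+1)-N(n)$ are i.i.d.\ Poisson$(1)$ (after coupling). Then $\max_{k\le n}(N(k+1)-N(k))/n\to 0$ almost surely, by Borel–Cantelli with a finite second moment. Combining these gives $\hat M(t)/t\to0$ almost surely.

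The second route, which we keep as a fallback, is to note that $\hat M$ has stationary ergodic increments with bounded quadratic variation per unit time. A martingale strong law, for example Doob's inequality on dyadic blocks $[2^k,2^{k+1}]$ with second moments of order $2^k$, then gives $\hat M(t)/t\to0$ almost surely directly.

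Combining the almost sure and $L^1$ limits of both terms proves Theorem \ref{lln_x}.
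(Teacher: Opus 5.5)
Your proposal is correct and follows the paper's proof. It uses the same decomposition \eqref{mart_decomp_x}, Birkhoff's theorem for the drift term via extremality of $\nu'_\rho$ (Lemma \ref{lemma2_x} and Proposition \ref{equivalence}), and the fourth-moment bound $\E_{\nu'_\rho}[(\hat M(t)\cdot\ell)^4]\le Ct^2$ with Borel--Cantelli for $\hat M(t)/t$; you also supply the passage from integer to continuous times that the paper leaves implicit. Your fallback via Doob's maximal inequality on dyadic blocks is also valid, and it shows that the second-moment identity \eqref{eq:sec_mom_hatM} alone already gives almost sure convergence, which sharpens the paper's remark that this estimate `does not lead immediately' to a.s.\ convergence.
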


\begin{proof}
Write
$$\frac{1}{t}{x_t} \ = \ \frac{1}{t}\hat{M}(t) + \frac{1}{t}\int_0^t \sum_v vp(v)(1-\zeta_s(v))ds.$$
Since the fourth moment of $|\hat M(t)|$ is bounded by $t^2$ and $\nu'_\rho$ is extremal and therefore ergodic to time shifts (Proposition \ref{equivalence}), we have by the ergodic theorem (Birkhoff) that the right side converges to its mean, equal to $(1-\rho)\sum_v vp(v)$, both a.s. and in $L^1$.
\end{proof}

We comment the more simple estimate $\E_{\nu'_\rho}\big[|\hat{M}(t)|^2\big] \leq Ct$ (cf. \eqref{eq:sec_mom_hatM}) 
 does not lead immediately to a.s. convergence, although $L^1$ convergence would hold.
  
\section{General CLT for $x_t$}

We now discuss a central limit theorem for $x_t$ when the jump probability $p$ is symmetric, first proved in \cite{KV}.  It can be extended to a functional CLT with respect to a Brownian motion limit, but that is not done here.  We mostly follow the scheme in \cite{KV}.

\begin{theorem}
\label{CLT_x}
Let $p$ be symmetric.  Starting under $\nu'_\rho$, we have 
$$\frac{1}{\sqrt{t}}x_t \ \Rightarrow \ {\rm N}(0,\mathcal C)$$
where $\mathcal C= \mathcal C(\rho,p)$ is a covariance matrix. 
\end{theorem}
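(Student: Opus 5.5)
By the Cramér--Wold device, it suffices to fix $\ell\in\Z^d$ and show that $\frac{1}{\sqrt t}x_t\cdot\ell\Rightarrow {\rm N}(0,\ell\cdot\mathcal C\ell)$. The starting point is the decomposition \eqref{mart_decomp_x}. Since $p$ is symmetric, $\sum_v vp(v)=0$, so the drift term equals $\int_0^t f_\ell(\zeta_s)\,ds$ with
\[f_\ell(\zeta) \ = \ \sum_v (v\cdot\ell)p(v)(1-\zeta(v)) \ = \ -\sum_v (v\cdot\ell)p(v)\big(\zeta(v)-\rho\big),\]
a mean-zero local function under $\nu'_\rho$. The process $\zeta_t$ is reversible and ergodic under $\nu'_\rho$ by Lemmas \ref{lemma1_x} and \ref{lemma2_x}, so the Kipnis--Varadhan machinery of Section \ref{lec9} applies. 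Concretely, Proposition \ref{KV_prop} (via the resolvent $u_\lambda=(\lambda-L)^{-1}f_\ell$ and Proposition \ref{1_2}) gives
\[\int_0^t f_\ell(\zeta_s)\,ds \ = \ M^f(t)+\zeta(t),\]
where $M^f$ is a martingale with stationary ergodic increments and $t^{-1}\E[\zeta(t)^2]\to0$. This requires $\|f_\ell\|_{-1}<\infty$.

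\textbf{Step 1: the $\H_{-1}$ bound (main obstacle).} I expect proving $\|f_\ell\|_{-1}<\infty$ to be the main difficulty, because $f_\ell$ is a degree-one function and the environment alone (cf. Proposition \ref{variance_sym}) gives infinite variance in $d\le2$. The fix is to use the tagged jump part $D_t$ of the Dirichlet form \eqref{eq:dirichlet-De}. For local $\psi$, write $\langle f_\ell,\psi\rangle=\sum_v (v\cdot\ell)p(v)E_{\nu'_\rho}[(1-\zeta(v))\psi]$. By symmetry of $p$, pair $v$ with $-v$, and use the change of variables $\zeta\mapsto\theta_v\zeta$, which preserves $\nu'_\rho$ restricted to $\{\zeta(v)=0\}$, mapping it to $\{\zeta(-v)=0\}$. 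This should give
\[E_{\nu'_\rho}[(1-\zeta(-v))\psi] \ = \ E_{\nu'_\rho}[(1-\zeta(v))\psi\circ\theta_v].\]
Hence $\langle f_\ell,\psi\rangle=\frac12\sum_v (v\cdot\ell)p(v)E[(1-\zeta(v))(\psi-\psi\circ\theta_v)]$, and Cauchy--Schwarz bounds this by $C\sqrt{D_t(\psi)}\le C\|\psi\|_1$. This is the step I would check most carefully; the exact sign conventions of $\theta_v$ must make the pairing work.

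\textbf{Step 2: combine the two martingales.} The sum $\hat M(t)\cdot\ell+M^f(t)$ is a martingale with respect to the natural filtration of $(x_t,\zeta_t)$, with stationary ergodic increments under $\nu'_\rho$, since both pieces are $L^2$ limits of functionals of the ergodic stationary process. Proposition \ref{mclt} then gives
\[\frac{1}{\sqrt t}\big(\hat M(t)\cdot\ell+M^f(t)\big) \ \Rightarrow \ {\rm N}\big(0,\sigma^2_\ell\big), \qquad \sigma^2_\ell=\E\big[(\hat M(1)\cdot\ell+M^f(1))^2\big].\]
The error term $\zeta(t)/\sqrt t$ vanishes in $L^2$, so Slutsky's lemma yields the result.

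\textbf{Step 3: covariance.} Since $\sigma^2_\ell$ is a quadratic form in $\ell$ (by linearity in $\ell$ of $\hat M\cdot\ell$ and $f_\ell$), it defines $\ell\cdot\mathcal C\ell$. I do not need to identify $\mathcal C$ explicitly, nor show it is nondegenerate, for the statement as posed. The bound $\sigma^2_\ell\le 2(1-\rho)\sum_v(v\cdot\ell)^2p(v)+4\|f_\ell\|_{-1}^2$ follows from \eqref{eq:sec_mom_hatM} and Lemma \ref{lem:asymptoticH-1}.
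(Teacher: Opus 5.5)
Your proposal is correct and follows essentially the same route as the paper: the decomposition \eqref{mart_decomp_x}, the $H_{-1}$ bound on the drift, Kipnis--Varadhan applied to the drift, and the martingale CLT for $\hat M\cdot\ell+M\cdot\ell$. For the $H_{-1}$ bound you use the same pairing $v\leftrightarrow -v$ and change of variables $\theta_v$ as Lemma \ref{hcdotell}, and you state the measure-preservation more precisely than the paper, as a bijection $\{\zeta(v)=0\}\to\{\zeta(-v)=0\}$ under $\nu'_\rho$.

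One correction concerns only your motivation. The drift is a combination of differences $\zeta(w)-\zeta(-w)$, so the environment part $D_e$ alone already controls it except when $d=1$ and $p$ is nearest-neighbor (cf. Lemma \ref{nondeg-lem}). Since you bound it with $D_t$ anyway, this does not affect your proof.
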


The limiting covariance $\mathcal C$ is not explicit, although there are physics conjectures, and some rigorous results for its behavior as a function of $\rho$ \cite{LOV}.  However, it is nondegenerate, except for a particular case.

\begin{proposition}
\label{nondeg_C}
Except in the case, $d=1$ and $p$ is nearest-neighbor ($p(1)=p(-1)=1/2$), the covariance $\mathcal C$ is nondegenerate.
\end{proposition}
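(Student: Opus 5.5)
The plan is to use the variational characterization of $\mathcal C$ that comes out of the Kipnis--Varadhan decomposition in the proof of Theorem \ref{CLT_x}, and then argue by contradiction using a closed loop of moves along which the tagged particle makes a net displacement.

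\emph{Step 1: variational formula.} By \eqref{mart_decomp_x}, $x_t\cdot\ell=\hat M(t)\cdot\ell+\int_0^t\phi_\ell(\zeta_s)\,ds$ with $\phi_\ell(\zeta)=\sum_v (v\cdot\ell)p(v)(1-\zeta(v))$. Since $p$ is symmetric, $E_{\nu'_\rho}[\phi_\ell]=0$. I will use the resolvent solutions $u_\lambda$ of $\lambda u_\lambda-Lu_\lambda=\phi_\ell$ exactly as in Proposition \ref{KV_prop}. In the limit, $\ell\cdot\mathcal C\ell$ should be the variance rate of a single martingale: the tagged-jump martingale $\hat M\cdot\ell$ corrected by the jump part of the corrector. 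Completing the square should then give
\[
\ell\cdot\mathcal C\ell=\inf_{f\ {\rm local}}\Big\{\sum_v s(v)\,E_{\nu'_\rho}\big[(1-\zeta(v))\big(v\cdot\ell+f(\theta_v\zeta)-f(\zeta)\big)^2\big]+2D_e(f)\Big\},
\]
where $D_e$ is the environment part of the Dirichlet form in \eqref{eq:dirichlet-De}. The constant $2$ is not important. To justify this I would take $f=u_\lambda$, use Proposition \ref{1_2}, and use density of local functions in $\mathcal H_1$; this part is routine.

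\emph{Step 2: contradiction setup.} Suppose $\ell\cdot\mathcal C\ell=0$ for some $\ell\neq0$. Then there are local $f_n$ with $D_e(f_n)\to0$ and, for every $v$ in the support of $s$,
\[
E_{\nu'_\rho}\big[(1-\zeta(v))\big(v\cdot\ell+f_n(\theta_v\zeta)-f_n(\zeta)\big)^2\big]\to0 .
\]
Each admissible elementary move $\zeta\mapsto T\zeta$ is either an environment exchange $\zeta^{u,v}$ with $s(v-u)>0$, or a tagged shift $\theta_v$ on $\{\zeta(v)=0\}$. Such a move maps $\nu'_\rho$ to a measure with Radon--Nikodym derivative bounded by a constant depending only on $\rho$. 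Hence along any fixed finite word $T_k\cdots T_1$ of admissible moves, restricted to the positive-probability cylinder event on which every move is allowed, each increment $f_n(T_{j+1}\cdots)-f_n(T_j\cdots)$ converges in $L^2(\nu'_\rho)$ to either $0$ (environment moves) or $-v_j\cdot\ell$ (tagged moves).

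\emph{Step 3: the loop.} Outside the case $d=1$ with $p$ nearest-neighbor, I will construct, for each $\ell$, a finite word of moves that returns the local configuration to itself while the tagged particle has total displacement $w$ with $w\cdot\ell\neq0$.
\begin{itemize}
\item If $d\geq2$, move the tagged particle around a region so that the environment is left unchanged. The loop ends with a net translation, since composing shifts corresponds to moving the frame. An equivalent bookkeeping: move the tagged particle by $w$ through empty sites and transport a block of environment particles along the same path, so the configuration seen from the tagged particle is restored.
\item If $d=1$ with range at least $2$, the tagged particle can jump over a neighbour, so the same cyclic construction is available.
\end{itemize}
Telescoping $f_n$ around the loop gives $0=\sum_j(\text{increments})\to -w\cdot\ell$ in $L^2$ on an event of positive probability. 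This is a contradiction, so $\mathcal C$ is nondegenerate.

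\emph{Main obstacle.} The hard part is Step 3. I must make sure the loop genuinely returns to the same configuration $\zeta$ in the tagged frame, including all sites the moves touch, and that every move is allowed on a common cylinder event of positive $\nu'_\rho$-measure. I would first try $d=1$ with a jump of size $2$, which is the simplest case, and then build the general case by a lattice path argument using irreducibility of $s$. The construction must fail exactly for $d=1$ nearest-neighbour, where particle order is preserved and no such loop can exist.
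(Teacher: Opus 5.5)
Steps 1 and 2 are sound. The gap is in Step 3, and the contradiction depends entirely on it. No finite word of admissible moves with net tagged displacement $w\neq 0$ returns $\zeta$ to itself on an event of positive probability. The moves change only finitely many sites. After the tagged particle has moved by $w$, the configuration seen from it agrees with $\zeta(\cdot+w)$ off a finite set. So $T\zeta=\zeta$ would force $\zeta(y+w)=\zeta(y)$ for all but finitely many $y$, and this event is $\nu'_\rho$-null.

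Your $d\ge 2$ description mixes up two situations. If the tagged particle goes around a closed loop, it returns to its starting site, so $w=0$ and telescoping gives $0\to 0$. If it ends somewhere else, the environment it sees is translated, not unchanged. Restoring only a fixed window does not rescue this. The $f_n$ approximate the corrector and depend on windows that grow with $n$, so $f_n(T\zeta)=f_n(\zeta)$ fails for a fixed word. If instead the word grows with $n$, you lose control of the accumulated $L^2$ errors.

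Your strategy can be repaired by asking for return in law rather than pointwise. Find a cylinder event $A$ and a word $T$, allowed on $A$, that maps $A$ bijectively onto $A$ and preserves $\nu'_\rho$. Then $E_{\nu'_\rho}[1_A(f_n\circ T-f_n)]=0$, since $f_n\in L^2$. Your Step 2 shows the same quantity converges to $-w\cdot\ell\,\nu'_\rho(A)$, so $w\cdot\ell=0$.

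Such a $T$ exists whenever, for $v$ in the support of $p$, there is a simple path $v=q_0,\dots,q_m=-v$ with steps in the support that avoids $0$. Take $A=\{\zeta(v)=0\}$ and apply $\theta_v$. Then transpose the contents of sites $v$ and $-v$ using environment exchanges along the path; the transposition $(q_0\,q_m)$ is a product of the adjacent transpositions $(q_k\,q_{k+1})$. The result is $T\zeta(v)=0$, $T\zeta(-v)=\zeta(2v)$, and $T\zeta(y)=\zeta(y+v)$ otherwise. This is a measure-preserving bijection of $A$ with $w=v$. Since the support of $p$ spans $\R^d$, you get $\ell=0$.

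This uses the same geometric input as the paper's Lemma \ref{nondeg-lem}, but the paper applies it more directly. It telescopes $\zeta(v)-\zeta(-v)$ along the avoiding path to obtain $|\langle h\cdot\ell,\phi\rangle_{\nu'_\rho}|\le C D_e(\phi)^{1/2}$. This gives $\|u_\lambda\|_1^2\le C D_e(u_\lambda)^{1/2}$. It then uses that $2D_e(u_\lambda)$ bounds the variance of $\hat M(1)\cdot\ell+M_\lambda(1)\cdot\ell$ from below. So the paper needs no loop construction and no contradiction on cylinder events.
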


We first prove Theorem \ref{CLT_x} and then later Proposition \ref{nondeg_C}.  Let $h(\zeta) = \sum (v\cdot \ell)p(v)(1-\zeta(v))$ be the local, bounded drift function.
For $\ell\in \Z^d$, it will be useful to bound the $H_{-1}$ norm, with respect to $L$, of $h(\zeta)\cdot \ell$.
\begin{lemma}
\label{hcdotell}
We have $h(\zeta)\cdot \ell = \sum (v\cdot \ell)p(v)(1-\zeta(v))$ belongs to 
$H_{-1}$.
\end{lemma}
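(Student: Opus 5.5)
The plan is to bound $\|h\cdot\ell\|_{-1}$ through the variational formula \eqref{eq:H-1formula} (with $\lambda=0$, or with $\lambda>0$ followed by $\lambda\downarrow 0$). Concretely, I will show that for every local $\psi$,
\[
\big|\langle h\cdot \ell, \psi\rangle_{\nu'_\rho}\big| \ \leq \ C\, D(\psi)^{1/2},
\]
with $C=C(p,\ell)$ independent of $\psi$. Then $2\langle h\cdot\ell,\psi\rangle - D(\psi) \le 2C D(\psi)^{1/2}-D(\psi)\le C^2$, so $\|h\cdot\ell\|_{-1}^2\le C^2<\infty$. Since $D_e\geq 0$, it suffices to control the inner product by the tagged-particle part $D_t(\psi)$ of \eqref{eq:dirichlet-De}.

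\emph{Step 1: rewrite $h\cdot\ell$ as a difference.} I will use the symmetry $p(v)=p(-v)$ and substitute $v\mapsto -v$ in half of the sum:
\[
h\cdot\ell=\sum_v (v\cdot\ell)p(v)(1-\zeta(v)) = \frac12\sum_v (v\cdot\ell)p(v)\big[\zeta(-v)-\zeta(v)\big].
\]
In particular $E_{\nu'_\rho}[h\cdot\ell]=0$. The sum is finite because $p$ has finite range.

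\emph{Step 2: change of variables under $\theta_v$.} This is the key point and the step I expect to need the most care. The map $\zeta\mapsto\theta_v\zeta$ is a bijection from $\{\zeta(0)=1,\ \zeta(v)=0\}$ onto $\{\zeta(0)=1,\ \zeta(-v)=0\}$, by the explicit formula for $\theta_v$. Because it only permutes and relabels the sites other than the origin while keeping the values, it preserves $\nu'_\rho$ (the Bernoulli product on $\Z^d\setminus\{0\}$). This gives
\[
E_{\nu'_\rho}\big[(1-\zeta(v))\psi(\theta_v\zeta)\big]=E_{\nu'_\rho}\big[(1-\zeta(-v))\psi(\zeta)\big],
\]
and hence
\[
E_{\nu'_\rho}\big[(1-\zeta(v))\big(\psi(\theta_v\zeta)-\psi(\zeta)\big)\big]=E_{\nu'_\rho}\big[(\zeta(v)-\zeta(-v))\psi\big].
\]
This is the same computation that underlies the invariance in Lemma \ref{lemma1_x}, so I would verify it as in that lemma's exercise.

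\emph{Step 3: Cauchy--Schwarz.} Combining Steps 1 and 2,
\[
\langle h\cdot\ell,\psi\rangle_{\nu'_\rho}=-\frac12\sum_v (v\cdot \ell)p(v)\,E_{\nu'_\rho}\big[(1-\zeta(v))\big(\psi(\theta_v\zeta)-\psi(\zeta)\big)\big].
\]
Applying Cauchy--Schwarz to each $v$, and using $(1-\zeta(v))^2=1-\zeta(v)$ together with $p(v)=s(v)$ under symmetry,
\[
\big|\langle h\cdot\ell,\psi\rangle_{\nu'_\rho}\big| \le \frac12\sum_v |v\cdot\ell|\, p(v)^{1/2}\Big(p(v)E_{\nu'_\rho}\big[(1-\zeta(v))(\psi(\theta_v\zeta)-\psi)^2\big]\Big)^{1/2}.
\]
Each bracketed term is at most $2D_t(\psi)$. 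Summing over the finitely many $v$ gives
\[
\big|\langle h\cdot\ell,\psi\rangle_{\nu'_\rho}\big|\le C D_t(\psi)^{1/2}\le C D(\psi)^{1/2}, \qquad C = R\|\ell\| \sum_v p(v)^{1/2},
\]
which is the required bound. Density of local functions then places $h\cdot\ell$ in $\H_{-1}$.
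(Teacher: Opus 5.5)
Your proof is correct and follows the paper's argument. Both use the symmetry of $p$ to write $h\cdot\ell$ as a combination of $\zeta(-v)-\zeta(v)$, and then use the $\nu'_\rho$-invariance of $\theta_v$ to convert $\langle \zeta(v)-\zeta(-v),\psi\rangle_{\nu'_\rho}$ into $E_{\nu'_\rho}[(1-\zeta(v))(\psi(\theta_v\zeta)-\psi(\zeta))]$, closing with Cauchy--Schwarz against the tagged part $D_t(\psi)\le D(\psi)$. The only cosmetic difference is that you keep the weights $p(v)^{1/2}$ explicit, whereas the paper divides by $p_{\min}$ to compare each term with the Dirichlet form.
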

\begin{proof}  Since $p$ is symmetric, $h$ is a linear combination of $(1-\zeta(w)) -(1- \zeta(-w))$ for $w$ in the support of 
$p$.  Consider, as $\nu'_\rho$ is invariant with respect to the operator $\theta_w$, for local functions $\phi$ that
\begin{align*}
&\langle (1-\zeta(w))-(1-\zeta(-w)), \phi\rangle_{\nu'_\rho} \\
&\quad\quad =  E_{\nu'_\rho}[(1-\zeta(w))\phi(\zeta)] - E_{\nu'_\rho}[(1-\theta_w(\zeta)(-w))\phi(\theta_w(\zeta))]\\
&\quad\quad = E_{\nu'_\rho}[(1-\zeta(w))(\phi(\zeta)-\phi(\theta_w(\zeta))]\\
&\quad \quad \leq E_{\nu'_\rho}[(1-\zeta(w))^2 (\phi(\zeta)-\phi(\theta_w(\zeta))^2]^{1/2}\\
&\quad \quad = E_{\nu'_\rho}[(1-\zeta(w)) (\phi(\zeta)-\phi(\theta_w(\zeta))^2]^{1/2}.
\end{align*}
Now, the last quantity is less than $\sqrt{2/p_{min}}D(\phi)^{1/2}$ where $p_{min}=\min\{p(v): v\in {\rm Supp}(p)\}$.  Hence, $h\cdot \ell$ belongs to $H_{-1}$.
\end{proof}

\begin{proof}[Proof of Theorem \ref{CLT_x}]  From the martingale decomposition \eqref{mart_decomp_x}, we have
$$\frac{1}{\sqrt{t}}{x_t} \ = \ \frac{1}{\sqrt{t}}\hat{M}(t) + \frac{1}{\sqrt{t}}\int_0^t \sum vp(v)(1-\zeta_s(v))ds.$$
Since $p$ is symmetric, the process $\zeta_t$ is reversible with respect to $\nu'_\rho$.  We want to apply the Kipnis-Varadhan apparatus to the second term.

By Lemma \ref{hcdotell}, we have $h\cdot\ell\in H_{-1}$.  Then, by Proposition \ref{KV_prop}, there is a square integrable martingale $M(t)\cdot\ell$ and error $\chi(t)$ such that $|\chi(t)|/\sqrt{t}$ vanishes in $L^2$,
$$\int_0^t h(\zeta_s)\cdot \ell\ ds \ = \ M(t)\cdot\ell + \chi(t),$$
and 
\begin{align}
\label{eq:M-secMom}
\E_{\nu'_\rho}\big[\big(M(t)\cdot \ell\big)^2\big] = t \E_{\nu'_\rho}\big[\big(M(1)\cdot\ell\big)^2\big]<\infty.
\end{align}

Hence, 
$$\frac{1}{\sqrt{t}}{x_t} \ = \ \frac{\hat{M}(t)\cdot\ell + M(t)\cdot\ell}{\sqrt{t}} + \frac{1}{\sqrt{t}}\chi(t).$$
At this point, one
completes the argument by the martingale central limit Theorem \ref{mclt}, since $\hat{M}(t) + M(t)$ has stationary and ergodic increments and, noting \eqref{eq:M-secMom}, \eqref{eq:sec_mom_hatM},
$$\E_{\nu'_\rho}\big[\big(\hat{M}(t)\cdot \ell + M(t)\cdot \ell\big)^2\big] = t  \E_{\nu'_\rho}\big[\big((\hat{M} + M)(1)\cdot \ell\big)^2\big]<\infty.$$

Finally, we comment that the covariance $\mathcal C$ satisfies 
\begin{align*}
2\mathcal C_{i,j}  &= \E_{\nu'_\rho}\big[ \big((\hat M + M)(1)\cdot (e_i + e_j)\big)^2\big] \\
&\quad\quad - \E_{\nu'_\rho}\big[\big((\hat M + M)(1)\cdot e_i \big)^2\big] -\E_{\nu'_\rho}\big[\big((\hat M + M)(1)\cdot e_j\big)^2\big].
\qedhere
\end{align*}
\end{proof}

\subsection{Proof of Proposition \ref{nondeg_C}}

In the exceptional case, the two martingales $\hat{M}$ and $M$ cancel each other, leading to subdiffusive fluctuations for $x_t$ discussed in the next subsection.  However, in all other situations, full cancellation does not occur.
 
The argument relies on the following bound.  Recall the Dirichlet form $D_e$ from \eqref{eq:dirichlet-De}.

\begin{lemma}
\label{nondeg-lem}
We have for $\ell\in \Z^d$ and $\phi\in L^2(\nu'_\rho)\cap H_1$ that
$$|\langle h\cdot\ell, \phi\rangle_{\nu'_\rho}| \ \leq \ C D_e(\phi)^{1/2}.$$ 
\end{lemma}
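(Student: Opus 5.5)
The plan is to rewrite the drift $h\cdot\ell$ as a combination of differences of occupation variables at two sites, neither of which is the origin, and then to bound each difference using only exchanges of environment particles, that is, only the bonds appearing in $D_e$ in \eqref{eq:dirichlet-De}. Throughout I will assume we are not in the exceptional case ($d=1$, $p$ nearest-neighbor), since that is the setting in which Lemma \ref{nondeg-lem} is used for Proposition \ref{nondeg_C}; in the exceptional case the argument below genuinely breaks down.

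First, because $p$ is symmetric, pairing the terms $v$ and $-v$ gives
\[h(\zeta)\cdot\ell=\frac12\sum_{w}(w\cdot\ell)\,p(w)\,\big[\zeta(-w)-\zeta(w)\big].\]
It therefore suffices to show $|\langle \zeta(-w)-\zeta(w),\phi\rangle_{\nu'_\rho}|\le C_w D_e(\phi)^{1/2}$ for each $w$ in the finite support of $p$.

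Fix such a $w$. The measure $\nu'_\rho$ is invariant under the transposition of the coordinates at $w$ and $-w$, since both sites are nonzero. Changing variables $\zeta\mapsto\zeta^{w,-w}$ in one of the two expectations gives
\[\langle \zeta(-w)-\zeta(w),\phi\rangle_{\nu'_\rho}=E_{\nu'_\rho}\big[\zeta(-w)\big(\phi(\zeta)-\phi(\zeta^{w,-w})\big)\big]\le E_{\nu'_\rho}\big[(\phi(\zeta^{w,-w})-\phi(\zeta))^2\big]^{1/2}.\]
The transposition $(w,-w)$ need not be a bond of $D_e$, so next I will choose a chain $w=a_0,a_1,\dots,a_k=-w$ of nonzero sites with $s(a_{i+1}-a_i)>0$ for each $i$. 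The transposition $(w,-w)$ can then be written as a product of $m=O(k)$ transpositions $(a_i,a_{i+1})$. Let $\zeta^{(j)}$ denote the configuration after applying the first $j$ of these transpositions. Telescoping and the inequality $(\sum_{j=1}^m q_j)^2\le m\sum_{j=1}^m q_j^2$ reduce the right-hand side to a sum of terms $E_{\nu'_\rho}\big[(\phi((\zeta^{(j)})^{a,b})-\phi(\zeta^{(j)}))^2\big]$. Since each map $\zeta\mapsto\zeta^{(j)}$ preserves $\nu'_\rho$, each such term equals $E_{\nu'_\rho}[(\phi(\zeta^{a,b})-\phi(\zeta))^2]$, which is at most $(2/s(b-a))\,D_e(\phi)$. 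Summing over the finitely many $w$ then gives the claim with $C$ depending on $\ell$, $p$ and the chosen chains. For $\phi\in L^2(\nu'_\rho)\cap H_1$ rather than local $\phi$, I will approximate by local functions, exactly as in the closure argument of Lemma \ref{lem:closure}.

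The main obstacle is the combinatorial step of connecting $w$ to $-w$ within $\Z^d\setminus\{0\}$ using steps in the support of $s$. For $d\ge2$ I would use that irreducibility of $s$ on $\Z^d$ gives a path, and that any passage through $0$ can be rerouted around it, since removing a single point does not disconnect the graph when $d\ge2$. For $d=1$ with range at least $2$, I would use a jump $r\ge2$ with $s(r)>0$ to hop over the origin, combined with irreducibility on each half-line, where the gcd condition lets one move within $\{x>0\}$ and within $\{x<0\}$. This is exactly where the exceptional case fails: in $d=1$ nearest-neighbor, removing $0$ disconnects $\Z$, so $\pm1$ cannot be joined without passing through the tagged particle.
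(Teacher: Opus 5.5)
Your proof is correct. The combinatorial input is the same as the paper's: a chain from $w$ to $-w$ in $\Z^d\setminus\{0\}$ along bonds of $s$, going around the origin in $d\ge 2$ and hopping over it with a jump of length at least $2$ in $d=1$. The difference is in what you telescope along that chain.

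The paper telescopes the \emph{function}. Because $h\cdot\ell$ is linear in $\zeta$, it writes $\zeta(w)-\zeta(-w)=\sum_k[\zeta(q_k)-\zeta(q_{k+1})]$ and bounds each summand by the single change of variables $\zeta\mapsto\zeta^{q_k,q_{k+1}}$. Each term is then controlled by one bond of $D_e$, with no permutation bookkeeping and no Cauchy--Schwarz over the path.

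You instead perform the long exchange $\zeta\mapsto\zeta^{w,-w}$ first. You then factor the transposition $(w,-w)$ into bond transpositions and telescope $\phi$ along the intermediate configurations, using invariance of $\nu'_\rho$ under permutations fixing the origin. This is the path (comparison) argument the paper itself uses for $I_b$ in the $2$-block lemma. Both routes give a constant of the same order in the path length.

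What your route buys is generality. It bounds the long-bond term $E_{\nu'_\rho}[(\phi(\zeta^{w,-w})-\phi(\zeta))^2]$ by $C\,D_e(\phi)$, and therefore controls $\langle f-f\circ\sigma,\phi\rangle_{\nu'_\rho}$ for any $f$ and any permutation $\sigma$ fixing $0$. The paper's shortcut uses specifically that $h\cdot\ell$ is a combination of occupation variables, and is shorter for that reason.

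Your closing approximation by local functions is not needed. Neither the change of variables nor the comparison with the explicit formula for $D_e$ uses locality. The bound therefore holds directly for every $\phi\in L^2(\nu'_\rho)$, trivially when $D_e(\phi)=\infty$.
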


\begin{proof}  From the form of $h$, we need only show the result for $\zeta(w)-\zeta(-w)$ where $w$ is in the support of $p$.  The key point is that, aside from the exceptional case, one can either go around, in terms of exchanges of coordinates, the origin in $d\geq 2$ or hop over it in $d=1$, without disturbing the tagged particle sitting at the origin.  

One can rewrite, by irreducibility of $p$, that 
$$\zeta(w)-\zeta(-w)  \ = \ \sum_{k=0}^m \zeta(q_k) - \zeta(q_{k+1})$$
in terms of a sequence from $w=q_0$ to $-w=q_m$ so that $p(q_{k+1}-q_k)>0$ for $0\leq k\leq m$.  Then,
\begin{align*}
|\langle \zeta(q_k) - \zeta(q_{k+1}), \phi\rangle_{\nu'_\rho}| & =  E_{\nu'_\rho}[\zeta(q_k)[\phi(\zeta) - \phi(\zeta^{q_k,q_{k+1}})]\\
&\leq  \rho^{1/2}E_{\nu'_\rho}[(\phi(\zeta) - \phi(\zeta^{q_k,q_{k+1}}))^2]^{1/2}\\
&\leq  \sqrt{\frac{2\rho}{p_{min}}} D_e(\phi)^{1/2}. \qedhere
\end{align*}
\end{proof}
\medskip

\begin{proof}[Proof of Proposition \ref{nondeg_C}]  From the formula for the limiting variance, we need to show there is an $\ell\in \Z^d$ such that
$$\lim_{\lambda\downarrow 0} \E_{\nu'_\rho}\big[(\hat{M}(1)\cdot\ell + M_\lambda(1)\cdot\ell)^2\big]  \ > \ 0.$$
In the argument below, $\ell$ in the support of $p$ will suffice.

\medskip
{\it Step 1.}  Note that $h$ is bounded, and consider the resolvent equation
$$\lambda u_\lambda -Lu_\lambda = h\cdot\ell.$$
Multiply by $u_\lambda$ and take expectation to obtain $\lambda E_{\nu'_\rho}[u^2_\lambda] + \|u_\lambda\|^2_1 = \langle h\cdot \ell, u_\lambda\rangle_{\nu'_\rho}$.   Since $h\cdot\ell\in H_{-1}$ by Lemma \ref{hcdotell}, we obtain that $u_\lambda \in L^2(\nu'_\rho)\cap H_{1}$.
Hence, 
we obtain the inequality
\begin{equation}
\label{Dboundedbelow}
\|u_\lambda\|_1^2 \ \leq \ \langle h\cdot \ell, u_\lambda\rangle_{\nu'_\rho} \ \leq \ C D_e(u_\lambda)\end{equation}
by Lemma \ref{nondeg-lem}.

 Let $\g(x,\zeta) = x\cdot\ell + u_\lambda(\zeta)$.  Then, 
$$\hat{M}(t)\cdot\ell + M_\lambda(t)\cdot\ell \ = \ \g(x_t,\zeta_t) - \g(x_0,\zeta_0) - \int_0^t \hat{L}\g(x_s,\zeta_s)ds.$$
By a similar argument as for \eqref{eq:M-lim} (in the proof of the Kipnis-Varadhan Proposition \ref{KV_prop}),  we have
$$ \E_{\nu'_\rho}\big[(\hat{M}(1)\cdot\ell + M_\lambda(1)\cdot\ell)^2\big] \ = \ \frac{1}{t}\E_{\nu'_\rho}\big[(\g(x_t, \zeta_t)-\g(x_0, \zeta_0))^2\big]\ = \ -2E_{\nu'_\rho}\big[\g\hat{L}\g\big].$$
An explicit computation gives that
the right hand side equals
\begin{eqnarray*}
&&E_{\nu'_\rho}\Big[ \sum_{u,v\neq 0} p(v-u)\big[\g(x,\zeta^{u,v}) - \g(x,\zeta)\big]^2\Big]\\
&& \ \ + E_{\nu'_\rho}\Big[ \sum_v p(v) (1-\zeta(v))\big[\g(x+v,\theta_v\zeta)-\g(x,\zeta)\big]^2\Big].
\end{eqnarray*}
The first term is $2D_e(u_\lambda)$.  Then, by dropping the second term, we have the lower bound
$$\E_{\nu'_\rho}\big[(\hat{M}(1)\cdot\ell + M_\lambda(1)\cdot \ell)^2\big] \ \geq \ 2D_e(u_\lambda).$$

\medskip
{\it Step 2.}  
Now, if $\|u_\lambda\|_1$ vanishes as $\lambda\downarrow 0$, we have
$$\E_{\nu'_\rho}\big[ \big(M_\lambda(1)\cdot \ell\big)^2\big] \ = \ 2\|u_\lambda\|_1^2 \ \rightarrow \ 0$$
and so the limiting variance would be $\E_{\nu'_\rho}\big[ (\hat{M}(1)\cdot\ell)^2\big]= 2\sum_v (v\cdot \ell)^2p(v)(1-\rho)$ which is positive for $\ell$ in the support of $p$.

On the other hand, if $\|u_\lambda\|_1\not\rightarrow 0$, then $D_e(u_\lambda)$ also does not vanish by \eqref{Dboundedbelow}.
Hence, still the limiting variance is positive.
\end{proof}

\section{Subdiffusive CLT in the exceptional case}
\label{sec:subdiffusive}

Since particles are ordered in $d=1$ when the transition probability $p$ is nearest-neighbor, one may suspect that the expected displacement of a particle n time $t$ may be less than diffusive when $p$ is additionally symmetric.  This is indeed the case, when starting under $\nu'_\rho$.

\begin{theorem}
\label{tgp_thm}
Starting under $\nu'_\rho$, we have
$$\frac{1}{t^{1/4}}x_t \ \Rightarrow \ {\rm N}(0, \sigma^2)$$
where
$\sigma^2 = \sqrt{2/\pi}[(1-\rho)/\rho]$.
\end{theorem}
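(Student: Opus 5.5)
The plan is to use the one-dimensional ordering of particles, which is the standard Arratia route. Since particles cannot pass each other in nearest-neighbor symmetric exclusion on $\Z$, the tagged particle's displacement is determined by the current through a bond and the local density. Let $J_t$ be the net number of particles crossing the bond $(0,1)$ from left to right by time $t$; the tagged particle starts at $0$. Then $x_t\geq n>0$ exactly when the particles initially in $(0,n]$, together with the tagged one, stay to the right of the tagged position and there are none in between. Concretely, I will write
\[
\{x_t > n\} = \Big\{ J^{(n)}_t > \sum_{y=1}^{n}\eta_0(y) \Big\}
\]
up to boundary conventions, where $J^{(n)}_t$ is the current through the bond $(n,n+1)$. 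The analogous identity holds for negative $n$.

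Next I use the unlabeled picture. Unlabeled symmetric simple exclusion has the same occupation dynamics as independent random walks with stirring, so the current $J^{(n)}_t$ is a functional of the stirring process. I will compute its first two moments by duality, in the sense of Subsection \ref{sec:duality}. By translation invariance, $E[J^{(n)}_t]=0$ under $\nu_\rho$. The variance should be $\rho(1-\rho)\sum_{y}|P(Z_t\,\text{crosses})\ldots|$; in the stirring representation one gets $\mathrm{Var}(J_t)\sim \rho(1-\rho)\cdot 2\,E|Z_t|\sim \rho(1-\rho)\sqrt{2t/\pi}\cdot 2$. Conditioning on the tagged particle at the origin changes only $O(1)$ terms.

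I then take $n=a t^{1/4}$. The condition becomes $J^{(n)}_t - \rho n > \sum_{y\le n}(\eta_0(y)-\rho)$. The right-hand side has variance $\rho(1-\rho)n=O(t^{1/4})$, which is negligible against $t^{1/2}$. We have $J^{(n)}_t \approx J_t$ because the difference is the change of mass in $(0,n]$, which has size $O(\sqrt n)$. So
\[
P(x_t > a t^{1/4})\approx P\big(J_t/t^{1/4} > \rho a\big).
\]
It remains to show that $J_t/t^{1/4}$ is asymptotically Gaussian with variance $\rho(1-\rho)\sqrt{2/\pi}$. Dividing by $\rho^2$ then gives the stated $\sigma^2=\sqrt{2/\pi}(1-\rho)/\rho$.

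The main obstacle is the CLT for the current $J_t$, as opposed to its variance. My plan there is to write $J_t=\sum_{y\le 0}\eta_t\text{-stuff}$ as a difference of two occupation sums:
\[
J_t=\sum_{y\ge 1}\eta_t(y)\mathbf 1(\text{traced back}\le 0)-\ldots
\]
Under the stirring construction this is a sum over particles of indicators of crossing. Via duality, the cumulants of order at least $3$ are controlled by those of independent walkers; equivalently, one can use the negative correlation (strong Rayleigh) property of symmetric exclusion to get a Lindeberg-type CLT for sums of weakly dependent Bernoullis whose variance diverges. Concretely, I would show that the third and higher cumulants of $J_t/t^{1/4}$ vanish by bounding the $k$-point correlation functions through $k$-particle exclusion compared to independent walks (Liggett's comparison inequality). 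I would then close with the method of moments.
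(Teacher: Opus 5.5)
Your reduction to the current is correct and follows the paper's route. Your identity $\{x_t>n\}=\{J^{(n)}_t>\sum_{y=1}^n\eta_0(y)\}$ is exact. By mass conservation, $J^{(0)}_t-J^{(n)}_t=\sum_{y=1}^n(\eta_t(y)-\eta_0(y))$, so it is the same event as $\{J^{(0)}_t>\sum_{y=1}^n\eta_t(y)\}$, which is the paper's \eqref{tgp_current} up to $O(1)$ boundary conventions. Your version has one small advantage: $\{\eta_0(y)\}_{y\geq 1}$ is exactly i.i.d.\ Bernoulli$(\rho)$ under $\nu'_\rho$.

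There is a slip in your variance computation. The correct asymptotics is ${\rm Var}(J_t)\sim 2\rho(1-\rho)E[(Z_t)_+]=\rho(1-\rho)E|Z_t|\sim\rho(1-\rho)\sqrt{2t/\pi}$, not $\rho(1-\rho)\cdot 2E|Z_t|$. The value you use afterwards is the correct one.

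The genuine gap is the CLT for $J_t$, which is the whole content of Theorem \ref{current_thm}. Your plan is to show that the normalized cumulants of order $\geq 3$ vanish, by comparing $k$-point correlations with independent walkers through Liggett's comparison inequality. This does not work as stated. That inequality only gives one-sided comparisons, $E^{\rm excl}[f(A_t)]\leq E^{\rm ind}[f(A_t)]$ for positive definite $f$, so at best it bounds moments. But the $k$-th moment of $J_t$ is itself of order $t^{k/4}$. Showing $\kappa_k(J_t)=o(t^{k/4})$ requires the leading-order contributions to cancel exactly, and no one-sided bound delivers that. A Lindeberg CLT for negatively correlated Bernoullis would also miss the target: it concerns a crossing count, whereas $J_t$ mixes the randomness of $\eta_0$ with that of the stirring.

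The missing idea is to separate these two sources of randomness. The Poisson clocks of the stirring are independent of $\eta_0$. Every site carries a stirring particle, so the numbers of stirring particles crossing the bond in the two directions coincide: $K(t)=K_+(t)=K_-(t)$. Conditionally on the stirring,
\[
J_t=\sum_{k=1}^{K(t)}\big(\eta_0(i_k)-\eta_0(j_k)\big),
\]
with $i_k$ left of the bond and $j_k$ right of it. This is a sum of independent bounded terms with mean $0$ and variance $2\rho(1-\rho)$, except the single term involving the origin under $\nu'_\rho$. The Gaussian limit therefore comes from $\eta_0$ alone.

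All you need from the stirring is a law of large numbers: $K(t)/\sqrt{t}\to(2\pi)^{-1/2}$ in probability. This follows from two facts:
\begin{itemize}
\item $E[K(t)]=E[(Z_t)_+]$;
\item ${\rm Var}(K(t))\leq E[K(t)]$, which is exactly where the negative correlation of the indicators $1(\xi^x_t\geq 0)$, $x<0$, enters (Lemma \ref{lem:correlation}).
\end{itemize}
A conditional CLT then gives the limit variance $2\rho(1-\rho)(2\pi)^{-1/2}=\sqrt{2/\pi}\,\rho(1-\rho)$. Combined with your reduction, this yields $\sigma^2=\sqrt{2/\pi}\,(1-\rho)/\rho$.
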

A functional CLT is also known, but here instead of Brownian motion the limit will be a fractional BM with Hurst parameter $1/4$ \cite{PS}.

There are a couple of ways to prove this subdiffusive CLT, first proved in \cite{Arratia}. 
Another proof was given in \cite{RV}.  
We present a method which makes a connection with the current through the bond $(-1,0)$ based on \cite{DemF}.

Define
$$J_{-1,0}(t) \ = \ N_+(t) - N_-(t)$$
where $N_+(t)$ is the number of particles which start on the left of $x=1/2$ and move across the bond $(-1,0)$ up to time $t$, and $N_-(t)$ is the number of particles to the right of $x=1/2$ crossing to the left.

Then, we have the following relations between $x_t$ and $J_{-1,0}(t)$:
\begin{eqnarray}
\{x_t \geq A\} &=& \{ J_{-1,0}(t) \geq \sum_{x=1}^{A-1}\eta_t(x)\} \ \ {\rm when \ } A\geq 1\nonumber\\
\{x_t = 0\} & = & \{J_{-1,0}(t) = 0\}\nonumber\\
\{x_t\leq A\} & = & \{J_{-1,0}(t) \leq \sum_{x = A+1}^{0} \eta_t(x)\} \ \ {\rm when \ }A\leq -1.
\label{tgp_current}\end{eqnarray}

We will show the following result which will imply Theorem \ref{tgp_thm}.  The proof of Theorem \ref{current_thm} is deferred to Subsection \ref{sec:currentthm}.
\begin{theorem}
\label{current_thm}Starting under $\nu'_\rho$, we have
$$\frac{1}{t^{1/4}}J_{-1,0}(t) \ \Rightarrow \ {\rm N}(0,\rho^2\sigma^2).$$
\end{theorem}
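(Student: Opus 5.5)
I would prove Theorem \ref{current_thm} through the duality of symmetric exclusion and a moment/characteristic function computation. The reference sets are $\{x\le -1\}$ (particles starting left of the bond) and $\{x\ge 0\}$ (starting right). The first reduction concerns the initial law. Under $\nu'_\rho$ the only difference from $\nu_\rho$ is the forced particle at the origin, and this changes $J_{-1,0}(t)$ by at most $O(1)$ via the basic coupling of the two exclusion processes, which differ by one discrepancy. Since the normalization is $t^{1/4}$, it suffices to prove the limit under $\nu_\rho$.

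Under $\nu_\rho$ with nearest-neighbor symmetric $p$, write
\[ J_{-1,0}(t)=\sum_{x\ge 0}\big(\eta_t(x)-\eta_0(x)\big) \]
as a formal flux identity. To make this rigorous I would truncate: set $J^{(n)}(t)=\sum_{x\ge0}\phi_n(x)(\eta_t(x)-\eta_0(x))$ with $\phi_n(x)=(1-x/n)_+$, and check that $J^{(n)}-J$ is negligible in $L^2$ once $n\gg \sqrt t$. Duality (Subsection \ref{sec:duality}) then gives each linear statistic $\sum_x\phi(x)\eta_t(x)$ the conditional mean $\sum_x (P_t\phi)(x)\eta_0(x)$, with $P_t$ the single random walk semigroup. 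Variances become random-walk quantities. Using stationarity,
\[ \E_{\nu_\rho}[J(t)^2]=2\rho(1-\rho)\sum_{x\ge0,\,y\le -1}p_t(x,y), \]
and this equals $2\rho(1-\rho)E|X_t|/2$-type expressions, namely $\rho(1-\rho)E[(X_t)_+]\cdot 2 \sim \rho(1-\rho)\sqrt{2t/\pi}$. So the variance is of order $\sqrt t$. I would then check that $\rho^2\sigma^2=\rho(1-\rho)\sqrt{2/\pi}$, which matches. The constant needs a careful count, and I would match it against $\sigma^2$ in Theorem \ref{tgp_thm}.

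For Gaussianity I would decompose $J(t)$ as a martingale plus a term driven by initial data. Let $\psi(s,x)=P_{t-s}1_{[0,\infty)}(x)$. Then
\[ \sum_x \psi(s,x)\eta_s(x)-\sum_x\psi(0,x)\eta_0(x) \]
is a martingale $\mathcal M_t$, because $\partial_s\psi+\Delta\psi=0$ and duality closes the generator action for symmetric exclusion. Hence
\[ J(t)=\mathcal M_t+\sum_x\big(P_t1_{[0,\infty)}(x)-1_{[0,\infty)}(x)\big)(\eta_0(x)-\rho). \]
The second term is a weighted sum of i.i.d. centered Bernoulli variables with weights decaying on scale $\sqrt t$, and Lindeberg gives it a Gaussian limit of variance $\rho(1-\rho)\sum_x(P_t1_+-1_+)^2\sim c\sqrt t$. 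For the martingale, the quadratic variation is
\[ \int_0^t\sum_x\big(\nabla\psi(s,x)\big)^2\big(\eta_s(x)-\eta_s(x+1)\big)^2ds. \]
Its mean is of order $\sqrt t$, and I would show its variance is $o(t)$ by again using duality to compute the two-point correlations of $(\eta_s(x)-\eta_s(x+1))^2$, which are weak for symmetric exclusion (negative correlations bounded by random-walk kernels). A martingale CLT for triangular arrays then applies: jumps are bounded by $\sup|\nabla\psi|\to0$, and the bracket over $\sqrt t$ converges in probability. I would also show the two pieces are asymptotically independent. The martingale is conditionally centered given $\eta_0$, and its bracket concentrates, so the joint characteristic function factorizes.

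The main obstacle is the concentration of the quadratic variation, i.e., controlling the covariances of $\eta_s(x)\eta_s(x+1)$ over space-time at the needed precision. I would first try the correlation inequality for symmetric exclusion (negative association / Liggett's inequality comparing with independent walks), which bounds the degree-two correlation terms by products of heat kernels. Summed against $(\nabla\psi)^2(\nabla\psi)^2$, these should give $O(t^{1/2}\log t)=o(t)$. Adding the two variances and accounting for the $\nu'_\rho$ correction yields ${\rm N}(0,\rho^2\sigma^2)$.
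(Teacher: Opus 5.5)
Your proposal is correct, but it takes a genuinely different route from the paper.

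\textbf{The paper's route.} The paper uses the stirring representation. Conditionally on the stirring paths, $J_{-1,0}(t)=\sum_{k=1}^{K(t)}a_k$ is a sum of conditionally independent centered $\{-1,0,1\}$-valued variables. They are identically distributed except possibly $a_1$, which carries the extra particle of $\nu'_\rho$. So the only probabilistic input is the law of large numbers $K(t)/\sqrt t\to(2\pi)^{-1/2}$. This comes from $E[K(t)]=E[(z(t))_+]$ and the negative-correlation bound ${\rm Var}(K(t))\le E[K(t)]$. A CLT with a random number of summands then finishes.

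\textbf{Your route.} You use duality to write an exact microscopic Duhamel formula with the backward heat test function $\psi(s,\cdot)=P_{t-s}1_{[0,\infty)}$. This splits $J$ into a linear statistic of $\eta_0$, handled by Lindeberg, plus a dynamical martingale whose bracket you control through space-time two-point correlations. It is the fixed-$t$ microscopic counterpart of the picture in Section~\ref{lec11}, where the limit is $W_t(H_0)-W_0(H_0)$ for the generalized Ornstein--Uhlenbeck process. It avoids tightness in the space of distributions and the Holley--Stroock uniqueness theorem. It also separates the variance into its two sources: $\rho(1-\rho)\sqrt{t/\pi}$ from the noise and $\rho(1-\rho)(\sqrt2-1)\sqrt{t/\pi}$ from the initial data. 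These add to $\rho(1-\rho)\sqrt{2t/\pi}=\rho^2\sigma^2\sqrt t$.

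\textbf{Trade-off.} The paper's proof is almost combinatorial once one conditions on the stirring. Yours needs a triangular-array martingale CLT and a bracket-variance estimate. In exchange it uses only that linear statistics close under the generator, and it displays the limit as initial fluctuations plus independent dynamical noise.

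\textbf{Details to tidy.}
\begin{itemize}
\item The jumps of $\mathcal M$ are $\pm\nabla\psi(s,x)=\pm p_{t-s}(0,-x-1)$, which is close to $1$ for $s$ near $t$. So $\sup|\nabla\psi|$ does not tend to $0$; what vanishes is the normalized jump size, which is at most $t^{-1/4}$.
\item The bracket carries a factor $\tfrac12$ from the rate $\tfrac12$ per bond. Without it the noise contribution doubles and the total variance comes out wrong.
\item For the bracket variance you do not need Liggett's inequality. Each dual particle is marginally a simple random walk, so the two-particle kernel of Subsection~\ref{sec:duality} satisfies $p_u(\{x,x+1\},\{y,y+1\})\le 2\sup_z p_u(0,z)$. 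Together with $\sum_x(\nabla\psi(s,x))^2=p_{2(t-s)}(0,0)$, this gives ${\rm Var}(\langle\mathcal M\rangle_t)=O(\sqrt t)=o(t)$.
\item Asymptotic independence is cleanest if you reveal the $\eta_0(x)$ one site at a time as a discrete martingale and concatenate it with $\mathcal M$. A single martingale CLT then applies, with bracket $\rho(1-\rho)\sum_x\big(P_t1_{[0,\infty)}-1_{[0,\infty)}\big)(x)^2+\langle\mathcal M\rangle_t$.
\end{itemize}
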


We now give the proof of Theorem \ref{tgp_thm}.  
\medskip

\begin{proof}[Proof of Theorem \ref{tgp_thm}] Write, for $A>0$, that
\begin{eqnarray*}
\P_{\nu'_\rho}\big(x_t\geq At^{1/4}\big) & = & \P_{\nu'_\rho}\Big(J_{-1,0}(t) \geq \sum_{x=1}^{\lceil At^{1/4}\rceil -1}\eta_t(x)\Big)\\
&=&\P_{\nu'_\rho}\Big(t^{-1/4}J_{-1,0}(t) \geq t^{-1/4}\sum_{x=1}^{\lceil At^{1/4}\rceil -1}\eta_t(x)\Big).
\end{eqnarray*}
Since, under $\nu'_\rho$, $\{\eta_t(x)\}_{x\geq 1}$ are i.i.d. Bernoulli random variables,
$$t^{-1/4}\sum_{x=1}^{\lceil At^{1/4}\rceil-1}\eta_t(x) \ \rightarrow \ A\rho \ \ \ {\rm a.s.}$$
Then, from Theorem \ref{current_thm}, we have
\begin{align*}
\lim_{t\rightarrow\infty} \P_{\nu'_\rho}(x_t\geq At^{1/4}) &=\lim_{t\rightarrow\infty} \P_{\nu'_\rho}\big(t^{-1/4}J_{-1,0}(t) \geq A\rho\big) \\
&= P\big({\rm N}(0,\sigma^2\rho^2) \geq A\rho\big).
\end{align*}
The right-hand side is the same as $P({\rm N}(0,\sigma^2)\geq A)$.

A similar argument works for $A<0$. \end{proof}

\subsection{Stirring representation}
One way to prove Theorem \ref{current_thm} is to use a `graphical' representation of symmetric exclusion process (cf. \cite{Liggett1}[Chapter 8).
We present the representation in the nearest-neighbor setting in $d=1$.  

Recall that the generator for symmetric simple exclusion has form (cf. \eqref{simplification})
\[
L\phi(\eta) \ = \ \sum_{x\in \Z}\big[\phi(\eta^{x,x+1})-\phi(\eta)\big].
\]
In other words, the process evolves by `exchanging' values with neighboring coordinate variables.

Now, consider a collection of Poisson processes with intensity $1/2$ indexed by the bonds $\{(x,x+1): x\in \Z\}$.  Place a particle at each vertex in $\Z$.  At the event times of the Poisson process with index $(x,x+1)$, the particles at $x$ and $x+1$ exchange positions.  Let $\xi^x_t$ be the location of the particle intially at $x$ at time $t\geq 0$.  Marginally, each $\xi^x_t$ has the statistics of a nearest-neighbor symmetric random walk, although jointly they are dependent.

We claim that
$$\eta_t(x) \ = \ 1\big\{ x\in \{\xi^i_t: \eta_0(i)=1\}\big\},$$
that is $\eta_t(x)=1$ exactly when there is an $i\in \Z$ such that $\xi^i_t = x$ and $\eta_0(i)=1$.  Infinitesimally, the possible transitions are exchanges of nearest-neighbor values which is the same as given in the generator $L$.
Then, 
$$J_{-1,0}(t) \ = \ \sum_{i<0}1(\xi^i_t\geq 0)\eta_0(i) - \sum_{i\geq 0}1(\xi^i_t<0)\eta_0(i).$$

Define now
$$K_+(t) = \sum_{i< 0}1\{\xi^i_t\geq 0\}, \ \ {\rm and \ \ } K_-(t) = \sum_{i\geq 0} 1\{\xi^i_t< 0\}$$
which represent the number of stirring particles starting from the left of the origin which end up on the right at time $t$, and vice versa.

Since, in the stirring process, a crossing of the bond $(-1,0)$ in one direction corresponds to a crossing in the other direction, as all sites are occupied by stirring particles, we have $K_+(t)-K_-(t)$ is constant in time.  But, $K_+(0)=K_-(0)=0$, and hence $K(t):=K_+(t)=K_-(t)$ for all $t\geq 0$.
When $K(t)\geq 1$, let $i_1<i_2<\cdots <i_{K(t)}<0$ be the random locations where $\xi_t^{i_k}\geq 0$, and $0\leq j_1<\cdots < j_{K(t)}$ be the random locations where $\xi_t^{j_k}<0$.  Then, we can write the current as
\begin{align}
\label{eq:JK}
J_{-1,0}(t) \ = \ \sum_{k=1}^{K(t)} a_k
\end{align}
where $a_k(t) = \eta_0(i_k) - \eta_0(j_k)$; Note the time dependence on $t$ comes from the random locations $i_k, j_k$ which depend on $t$.  Conditionally on $K(t)$, and the random locations, the variables $\{a_k\}_{k=1}^{K(t)}$ are independent with respect to $\nu'_\rho$.  Moreover, conditionally, only the variable $\alpha_1$, if $j_1=0$, is different from $\{\alpha_k\}_{k=2}^{K(t)}$ which are i.i.d., taking values $-1, 0, 1$ with mean $0$ and variance $2\rho(1-\rho)$.

\subsection{Estimates on $K$}
In the following, $P$ and $E$ refer to the stirring process probability and expectation when $\eta_0$ is distributed according to $\nu'_\rho$.
We have the following properties of $K$.

\begin{lemma}
\label{lem:Kest}
We have
$$t^{-1/2}E[K(t)] \ \rightarrow \ (2\pi)^{-1/2}.$$
\end{lemma}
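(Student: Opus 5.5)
Since $K(t)=K_+(t)=\sum_{i<0}1\{\xi^i_t\geq 0\}$, and each stirring particle $\xi^i_t$ is marginally a continuous-time nearest-neighbor symmetric random walk started at $i$, we can compute the mean by linearity, without worrying about the dependence between the walks. Each bond carries a Poisson clock of rate $1/2$, so a particle jumps left or right each at rate $1/2$. Hence $\xi^i_t \stackrel{d}{=} i + X_t$, where $X_t$ is a continuous-time simple symmetric walk of total rate $1$, with ${\rm Var}(X_t)=t$. Then
\[
E[K(t)] \ = \ \sum_{i<0} P(i+X_t\geq 0) \ = \ \sum_{m\geq 1}P(X_t\geq m) \ = \ E\big[(X_t)_+\big],
\]
using the tail-sum formula for the nonnegative integer variable $(X_t)_+$. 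The initial distribution $\nu'_\rho$ plays no role, because $K$ depends only on the stirring clocks.

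It remains to show $t^{-1/2}E[(X_t)_+]\to E[Z_+]=(2\pi)^{-1/2}$ for $Z\sim{\rm N}(0,1)$. The CLT for random walks from Section \ref{lec1} (Example \ref{example2}) gives $t^{-1/2}X_t\Rightarrow {\rm N}(0,1)$. The map $x\mapsto x_+$ is continuous, so $t^{-1/2}(X_t)_+\Rightarrow Z_+$. To upgrade this to convergence of means, we use uniform integrability: $E[(t^{-1/2}X_t)^2]=1$ for all $t$, which gives uniformly bounded second moments. Therefore $t^{-1/2}E[(X_t)_+]\to E[Z_+]=\int_0^\infty x\,e^{-x^2/2}dx/\sqrt{2\pi}=(2\pi)^{-1/2}$.

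The only point needing care is the normalization, namely checking that the walk's variance is exactly $t$ under the rate-$1/2$ bond clocks. This is confirmed by the generator $\sum_x[\phi(\eta^{x,x+1})-\phi(\eta)]$ with Poisson intensity $1/2$ per bond, or equivalently from $p(\pm1)=1/2$ at total rate $1$. Everything else is routine.
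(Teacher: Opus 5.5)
Your proof is correct and follows the paper's argument. Translation invariance and linearity reduce $E[K(t)]$ to $E[(X_t)_+]$ for a single rate-one simple symmetric walk via the tail-sum identity, and the CLT with uniform integrability (which you justify through the bounded second moment) gives the limit. Your evaluation $E[Z_+]=(2\pi)^{-1/2}$ is the correct constant; the paper's final display writes this integral as equal to $1$, which is a slip.
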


\begin{proof}
Write, by symmetry,
\begin{eqnarray*}
E[K(t)] & = & \sum_{i<0} P(\xi^i_t\geq 0)\
= \ \sum_{i<0} P(\xi^0_t\geq -i)\\
&=& \sum_{i>0} P(\xi^0_t\geq i)\ =\ E[(z(t))_+]
\end{eqnarray*}
where $z(t)$ is a simple random walk starting at the origin and $(z(t))_+$ is the positive part.

Now, $\{t^{-1/2}z(t)\}$ is a uniformly integrable sequence of random variables which converges in distribution to ${\rm N}(0,1)$.  Hence, 
\[t^{-1/2}E[K(t)] \ \rightarrow \ \int_0^\infty (2\pi)^{-1/2}xe^{-x^2/2}dx \ = \ 1.\qedhere\]
\end{proof}

We also have that the variables $1(\xi^x_t\geq 0)$ and $1(\xi^y_t\geq 0)$ are negatively correlated, which makes intuitive sense since the exchange mechanism is repulsive to an extent.

\begin{lemma}
\label{lem:correlation}
For $x,y<0$, we have that
$$P(\xi^x_t,\xi^y_t\geq 0) \ \leq \ P(\xi^x_t\geq 0)P(\xi^y_t\geq 0).$$
\end{lemma}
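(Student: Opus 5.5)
The plan is to exploit the fact that, in the stirring representation, the pair $(\xi^x_t,\xi^y_t)$ is itself a Markov chain: two particles performing symmetric nearest-neighbor stirring. Each particle, when not adjacent to the other, jumps to each neighbor at rate $1/2$. When they are adjacent, they exchange positions at rate $1/2$ (a single Poisson clock on the shared bond). I will compare this pair with a pair of \emph{independent} rate-$1/2$ walks $(z^x_t,z^y_t)$ started from $(x,y)$, and show
$$E\big[\phi(\xi^x_t)\phi(\xi^y_t)\big]\ \le\ E\big[\phi(z^x_t)\phi(z^y_t)\big]\ =\ P(\xi^x_t\ge 0)P(\xi^y_t\ge 0)$$
for $\phi=1_{[0,\infty)}$. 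The last equality holds because each marginal $\xi^x_t$ is a simple symmetric walk.

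First I set up the generators. Let $\mathcal L_2$ be the generator of the stirring pair and $\mathcal L_0$ that of two independent walks, both acting on functions $F(u,v)$ on $\Z^2$. The two coincide except when $|u-v|=1$. There, $\mathcal L_0$ contains the two moves $(u,v)\to(v,v)$ and $(u,v)\to(u,u)$, each at rate $1/2$, where the particles land on the same site. $\mathcal L_2$ instead has the single swap $(u,v)\to(v,u)$ at rate $1/2$. Thus, for symmetric $F$, on $|u-v|=1$,
$$(\mathcal L_0-\mathcal L_2)F(u,v)=\tfrac12\big[F(u,u)+F(v,v)-2F(u,v)\big].$$
Next I use the Duhamel identity, as in Lemma~\ref{identity}:
$$E^{(x,y)}_0[F]-E^{(x,y)}_2[F]=\int_0^t P^{(2)}_s(\mathcal L_0-\mathcal L_2)P^{(0)}_{t-s}F\,ds .$$
I apply it with $F(u,v)=\phi(u)\phi(v)$, so that $P^{(0)}_{t-s}F(u,v)=\psi(u)\psi(v)$ with $\psi=P^{\rm rw}_{t-s}\phi$. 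This $F$ is symmetric, and $\psi(u)^2+\psi(v)^2-2\psi(u)\psi(v)=(\psi(u)-\psi(v))^2\ge 0$. The integrand is therefore nonnegative, since $P^{(2)}_s$ is a positive operator, which gives the displayed inequality.

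The main point to check is that the comparison of generators is exactly right at adjacent sites. The stirring dynamics (and its mean, after symmetrization) must match independent walks everywhere except on the diagonal-touching moves. Because the swap preserves the unordered pair $\{u,v\}$, it acts trivially on symmetric $F$, so only the independent-walk coincidence moves contribute. Finiteness and integrability are trivial because $F$ is bounded. This argument is essentially Liggett's negative-correlation inequality for symmetric exclusion, so I could alternatively cite \cite{Liggett1}[Chapter VIII].
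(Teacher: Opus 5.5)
Your argument is correct, and it is the two-particle case of Liggett's comparison between symmetric stirring/exclusion and independent walks for positive definite functions. The paper does not prove the lemma itself; it cites this same comparison from \cite{Liggett1} and \cite{Arratia}. Note that your argument, like the lemma itself, needs $x\neq y$ (the pair chain starts off the diagonal, and the inequality fails for $x=y$), which is implicit in the statement.
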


We refer the reader to Lemma 4.12 \cite{Liggett1} or page 365 \cite{Arratia} for proofs of the lemma (in a more general context). See also the discussion in \cite{Liggett3}, which shows a very general form of negative association, a `strong Rayleigh' condition for symmetric exclusion processes started from product measures.
\medskip

A consequence of the Lemma \ref{lem:correlation} is that ${\rm Var}(K(t)) \leq E[K(t)]$.

\subsection{Proof of Theorem \ref{current_thm}}
\label{sec:currentthm}
Consider the following steps.

{\it Step 1.}
To prove the theorem, noting \eqref{eq:JK}, we show that the following difference converges in probability, as $t\rightarrow\infty$,
$$W(t) \ = \ t^{-1/4}\Big[\sum_{k=1}^{K(t)} a_k - \sum_{k=1}^{\lfloor (2\pi)^{-1/2}t^{1/2}\rfloor }a_k \Big] \ \rightarrow \ 0.$$
We make explicit the contribution of the term $\alpha_1$ which might reflect the tagged particle initially at the origin:
Write, noting empty sums vanish,
\begin{align*}
W(t) &= W(t)1(K(t)=0) + W(t)1(K(t)\geq 1)\\
&=  t^{-1/4}\Big[\sum_{k=2}^{K(t)} a_k - \sum_{k=2}^{\lfloor (2\pi)^{-1/2}t^{1/2}\rfloor }a_k\Big]1(K(t)\geq 1),
\end{align*}
with respect to variables $\{\alpha_k\}_{k=2}^{K(t)}$ which are i.i.d. mean $0$ and variance $2\rho(1-\rho)$, conditionally on $K(t), \{i_k, j_k\}$.

  Then, the variance, noting the conditional mean vanishes,
\begin{eqnarray*}
{\rm Var}(W(t)) & = & E\Big [E[W(t)^2|K(t), \{i_k,j_k\}]\Big] \\
& \leq & 2\rho(1-\rho)t^{-1/2}E\Big[\big|K(t) - \lfloor (2\pi)^{-1/2}t^{1/2}\rfloor \big|\Big]\\
&\leq& 2\rho(1-\rho)t^{-1/2}E\Big[\big|K(t)-E[K(t)]\big|\Big] \\
&&\ \ \ \ + 2\rho(1-\rho)t^{-1/2}\big|E[K(t)] - \lfloor (2\pi)^{-1/2}t^{1/2}\rfloor\big |.
\end{eqnarray*}

Since $K(t)$ is the sum of negatively correlated random variables, we have ${\rm Var}(K(t))\leq E[K(t)] = O(t^{1/2})$ and also $t^{-1/2}\big|E[K(t)] - (2\pi)^{-1/2}\big| = o(1)$ by Lemma \ref{lem:Kest}.
Hence, for $b>0$, we can apply Chebychev's inequality to get
$$P(W(t)> b) \ \leq \  b^{-2}{\rm Var}(W(t)) \ \leq \ b^{-2}\big[O(t^{-1/4})  + o(1)\big]\ \rightarrow \ 0.$$

\medskip
{\it Step 2.} Now, since $\alpha_11(K(t)\geq1)$ is bounded, $t^{-1/4}\alpha_11(K(t)\geq 1)\rightarrow 0$ a.s.  Hence, to finish the argument, we will take the limit of
\begin{eqnarray*}
&&E\Big[\exp\Big\{i\theta \frac{1}{t^{1/4}}\sum_{k=2}^{\lfloor (2\pi)^{-1/2}t^{1/2}\rfloor}a_k\Big\}\Big] \\
&&\ \ \ = \ E\Big[ E\Big[\exp\Big\{i\theta \frac{1}{t^{1/4}}\sum_{k=2}^{\lfloor(2\pi)^{-1/2}t^{1/2}\rfloor}a_k \Big\}\Big| K(t), \{i_k,j_k\}\Big]\Big].
\end{eqnarray*}
Note, by Lemma \ref{lem:Kest}, that $((2\pi)^{-1/2}-b)t^{1/2}\leq K(t)\leq ((2\pi)^{-1/2}+b)t^{1/2}$ with high probability for each $b>0$.  
Then, there are roughly $\lfloor ((2\pi)^{-1/2}\pm b)t^{1/2}\rfloor$ summands in the sum above, with high probability.

Hence, the last display may be approximated by
\begin{align*}
E\Big[\psi\big(\theta t^{-1/4}\big)^{\lfloor (2\pi)^{-1/2}t^{1/2}\rfloor +o(t^{1/2})\rfloor }\Big], 
\end{align*}
where $\psi$ is the characteristic function of $a_k$ conditioned on $K(t)$ and the random locations, which has an explicit distribution on values $0, -1,1$ with mean $0$ and variance $2\rho(1-\rho)$.  One now follows the strategy for the usual CLT to obtain the last term converges to the characteristic function of a Normal r.v. with variance $2\rho(1-\rho)\cdot (2\pi)^{-1/2}=\sqrt{2/\pi}\rho(1-\rho)$ as desired. \qed

\section{Notes}

The tagged particle problem, starting from $\nu_\rho$, is somewhat complete at the level of LLN and CLT for symmetric exclusion processes with finite range on $\Z^d$.  See \cite{Liggett1}, \cite{Liggett2} and references therein for treatments.  See also \cite{Jara} for longer range symmetric exclusion processes.  Large and moderate deviations have also been studied \cite{Sasamoto}, \cite{SV}, \cite{SV1}, \cite{Xue-Zhao}.  See also \cite{Toth} and references therein for related work on random walk in random environment.

For asymmetric simple exclusion, a CLT has been shown in diffusive scale when $d\geq 3$ \cite{SVY}, or when $d=1$ and the jumps are nearest-neighbor \cite{Kipnis}.  Also, when the jumps are mean-zero, finite-range, but asymmetric in $d\geq 1$, the CLT also holds \cite{Varadhan}.  

When the jumps are asymmetric with a nonzero drift, the variance of the tagged particle at time $t$ is shown to be diffusive, in the sense of Laplace transforms for all $d=1, 2$ \cite{Svarbound}.
However, the associated CLT, when $d=2$ and when $d=1$ and the jumps are not nearest-neighbor, is open.

When starting out of the invariant measure, the LLN behavior has been studied in \cite{Rezakhan-tagged}, but less is known for the fluctuations or other limits; see however \cite{JL}, 
\cite{CGS}, \cite{CS}, \cite{CS2},
\cite{Franco-Dirk}, \cite{Tracy-Widom}.

See also \cite{HLS}, \cite{jls1}, \cite{jls2}, \cite{Szrtg} for discussions and some results for a tagged particle in zero-range models.  
On other spaces, such as trees, see \cite{Chen}, \cite{Gantert-tagged} for recent developments. 

We comment that the tagged particle problem is an old one going back at least to Einstein's 1905 opus.  Recently, derivation of Brownian motion rigorously from deterministic hard sphere dynamics with elastic collisions, where initially positions of particles are in thermal equilibrium, has been addressed in \cite{Bodineau-inv}.  See \cite{Gal}, \cite{Ard}, \cite{Spohn}, \cite{Toth1} for reviews and discussions.

\newpage
\chapter[Section $11$]{Equilibrium fluctuations of symmetric exclusion}
\label{lec11}

We motivate the study of `equilibrium' fluctuations of $d$-dimensional symmetric exclusion via another proof of the 
$t^{1/4}$-scaling limit of the current, Theorem \ref{current_thm}, in the one dimensional nearest-neighbor process.  `Equilibrium' fluctuations capture the CLT behavior of the empirical mass density around the constant solution of the hydrodynamic equation, when started from an invariant measure $\nu_\rho$.  The limiting equation is a type of linear stochastic heat equation, an SPDE whose solution is an infinite dimensional `Ornstein-Uhlenbeck' process.  

We will focus our discussion in the symmetric simple exclusion context when $p(\pm e_i)=(2d)^{-1}$ with respect to the standard basis $\{e_i\}_{i=1}^d$.

\section{Another derivation of the current fluctuations limit}
Consider the $d=1$ nearest-neighbor, symmetric exclusion setting discussed in Subsection \ref{sec:subdiffusive}.  
Recall the current $J_{x,x+1}(t)$ through the bond $(x,x+1)$ in $\Z$ is the number of particles crossing the bond from left to right minus the number crossing from right to left up to time $t$.

A moment's thought gives, reflecting the boundary values at $x$, that
$$J_{x-1,x}(t) - J_{x,x+1}(t) \ = \ \eta_t(x) - \eta_0(x),$$
the difference being equal to $0$, $1$ or $-1$.
Then, formally,
$$J_{-1,0}(t) \ = \ \sum_{x\geq 0} J_{x-1,x}(t) - J_{x,x+1}(t) \ = \ \sum_{x\geq 0}\eta_t(x) - \eta_0(x).$$
However, the display does not make sense as typically there are an infinite number of particles in the system.

We may truncate however in the following way:  Let 
$$G_n(x) \ = \ \Big(1-\frac{x}{n}\Big)1(0\leq x\leq n).$$
Write, in terms of a scaling parameter $N$, that
\begin{eqnarray*}
&&\sum_{x\geq 0}G_n(x/N)\big[J_{x-1,x}(t) - J_{x,x+1}(t)\big] \\
&&\ \ \ \  =  J_{-1,0}(t) + \sum_{x\geq 1} \big[G_n(x/N) - G_n(x-1/N)\big]J_{x-1,x}(t)\\
 &&\ \ \ \  = J_{-1,0}(t) + \frac{1}{nN}\sum_{x=1}^{N+1}J_{x-1,x}(t).
 \end{eqnarray*}
 At the same time, we have
 \begin{eqnarray*}
 \sum_{x\geq 0}G_n(x/N)\big[J_{x-1,x}(t) - J_{x,x+1}(t)\big] &=& \sum_{x\geq 0} G_n(x/N)\big[\eta_t(x) - \eta_0(x)\big].
 \end{eqnarray*}

Following our custom, since space is scaled by $N$, we will speed up time by $N^2$, and define the mass `fluctuation field' $W^N_t$ with respect to $\Z^d$ by its action on $G$:
 \begin{equation}
 \label{eq:fluc_field}
 W^N_t(G) \ = \ \frac{1}{N^{d/2}}\sum_{x\in \Z^d} G(x/N)\big(\eta_{N^2t}(x) - \rho\big),
 \end{equation}
 here written for $d\geq 1$.
 
In $d=1$, scaling the current by the fourth root of the time scaling, we have for all $n\geq 1$ that
\begin{align}
\label{eq:current_rep}
\frac{1}{\sqrt{N}}J_{-1,0}(N^2t)  =  W^N_t(G_n) - W^N_0(G_n) -\frac{1}{nN^{3/2}}\sum_{x=1}^{N+1}J_{x-1,x}(N^2t).
\end{align}

The idea now is that the first two terms on the right should be given in terms of a limit fluctuation field $W_t$, which we must define, while the last term on the right hand side should vanish as $N\uparrow\infty$ and $n\uparrow\infty$.  

One can adjust the construction of the process (cf. Remark \ref{rem:construction}), to include counts $N^+_{x,x+1}(t)$ and $N^-_{x,x+1}(t)$, keeping track of the numbers of particles crossing $(x,x+1)$ from left to right and vice versa.  Then, $J_{x,x+1}(t) = N^+_{x,x+1}(t)-N^-_{x,x+1}(t)$ and 
\begin{align*}
M_{x,x+1}(t) & =  J_{x,x+1}(t) - \frac{1}{2}\int_0^t \big[\eta_s(x)(1-\eta_s(x+1)) - \eta_s(x+1)(1-\eta_s(x)) \big]ds   \\
&= J_{x,x+1}(t) - \frac{1}{2}\int_0^t \big[\eta_s(x)- \eta_s(x+1)\big] ds \ \ {\rm and }
\end{align*}
\[
M_{x,x+1}(t)^2 - \frac{1}{2}\int_0^t \big[ \eta_s(x)(1-\eta_s(x+1)) + \eta_s(x+1)(1-\eta_s(x)) \big]ds 
\]
are martingales.  
Since jumps are not simultaneous in the process, $\{M_{x,x+1}(t)\}_{x\in \Z}$ are orthogonal martingales.

\begin{exercise}
\rm Show $E[M_{x,x+1}(t)M_{y,y+1}(t)]=0$ by decomposing on the possible crossing times of bonds $(x,x+1)$ and $(y,y+1)$, which are stopping times occurring a.s. at distinct times, and the martingale property.  That is, write 
$M_{x,x+1}(t) = \sum (M_{x,x+1}(\tau_{k+1})-M_{x,x+1}(\tau_k))$ and a similar formula for $M_{y,y+1}(t)$ where $0=\tau_0<\cdots <\tau_{N(t)}<\tau_{N(t)+1}=t$ are the $N(t)$ jumps on these bonds up to time $t$.  Then, we have $$E\big[(M_{x,x+1}(\tau_{k+1})-M_{x,x+1}(\tau_k))(M_{y,y+1}(\tau_{j+1})-M_{y,y+1}(\tau_j))\big]$$
equals zero if $k =j$ since jumps are not simultaneous.  But, if $k<j$, then since $M_{y,y+1}(\tau_{j+1}\wedge t)$ is a martingale, the display also vanishes.
\end{exercise}

In the following, recall as before that $E_\mu$ stands for the probability and expectation under $\mu$, and $\P_\mu$ and $\E_\mu$ for the process measure and expectation when starting in $\mu$.

\begin{lemma}
\label{error}
Starting in an invariant measure, $\nu_\rho$, we have
$$\lim_{n\uparrow\infty}\sup_{N\geq 1} \E_{\nu_\rho}\Big[ \Big(\frac{1}{nN^{3/2}}\sum_{x=1}^{N+1}J_{x-1,x}(N^2t)\Big)^2\Big] \ = \ 0.$$
\end{lemma}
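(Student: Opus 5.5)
The plan is to bound the second moment of the sum by a bound of order $n^{-2}$, uniform in $N$. Write $S_N=\sum_{x=1}^{N+1}J_{x-1,x}(N^2t)$. The quantity to control is $(nN^{3/2})^{-2}\E_{\nu_\rho}[S_N^2]$. By Schwarz (or Minkowski), $\E[S_N^2]\le (N+1)\sum_{x}\E[J_{x-1,x}(N^2t)^2]$, and translation invariance of $\nu_\rho$ makes every summand equal to $\E_{\nu_\rho}[J_{-1,0}(N^2t)^2]$. It therefore suffices to show
\[
\E_{\nu_\rho}\big[J_{-1,0}(s)^2\big]\le C\sqrt{s}
\]
for large $s$. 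Then $\E[S_N^2]\le C(N+1)^2 N$, and the whole expression is $O(n^{-2})$ uniformly in $N$. For small $s$ the bound is trivial, since $|J|$ is dominated by a Poisson number of crossings with mean of order $s$; so in practice one uses the bound $C(1+\sqrt s)$.

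To prove the moment bound, I would use the stirring representation \eqref{eq:JK} from the proof of Theorem \ref{current_thm}, which is valid under $\nu_\rho$ as well as $\nu'_\rho$. It gives $J_{-1,0}(s)=\sum_{k=1}^{K(s)}a_k$. Conditionally on $K(s)$ and the locations $\{i_k,j_k\}$, the $a_k=\eta_0(i_k)-\eta_0(j_k)$ are independent and mean zero, each with variance $2\rho(1-\rho)$, because the stirring Poisson clocks are independent of $\eta_0$. Hence
\[
\E\big[J_{-1,0}(s)^2\big]=2\rho(1-\rho)\,E[K(s)].
\]
Lemma \ref{lem:Kest} shows $E[K(s)]\sim (2\pi)^{-1/2}\sqrt{s}$, and in fact $E[K(s)]=E[(z(s))_+]\le \sqrt{s}$ for all $s$, with $z(s)$ the continuous-time simple random walk. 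This gives the required bound.

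An alternative route avoids stirring. Use the martingale decomposition $J_{x,x+1}(t)=M_{x,x+1}(t)+\tfrac12\int_0^t(\eta_s(x)-\eta_s(x+1))\,ds$ and sum over $x$. The martingales are orthogonal, so
\[
\E\Big[\Big(\sum_x M_{x-1,x}(N^2t)\Big)^2\Big]\le C N\cdot N^2 t.
\]
Divided by $n^2N^3$, this is $O(t/n^2)$. The drift telescopes to $\tfrac12\int_0^{N^2t}(\eta_s(0)-\eta_s(N+1))\,ds$. Its second moment is controlled by the occupation-time variance in Proposition \ref{variance_sym}, which is of order $(N^2t)^{3/2}=N^3t^{3/2}$ in $d=1$. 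After dividing by $n^2N^3$ this is again $O(t^{3/2}/n^2)$.

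The main obstacle in the stirring route is justifying the conditional independence, and the identification of $J$ with $\sum_k a_k$ when the configuration is infinite. The second route avoids this and relies only on the variance asymptotics of Proposition \ref{variance_sym}, via $(a-b)^2\le 2a^2+2b^2$ and translation invariance. I would present the second route as the main argument and the first as a check.
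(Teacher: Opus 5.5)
Your main argument is the paper's proof, and it is correct: you split each $J_{x-1,x}$ into its orthogonal bond martingale plus drift, bound the martingale sum by $CN^3t$, telescope the drift to $\tfrac12\int_0^{N^2t}(\eta_s(0)-\eta_s(N+1))\,ds$, and control that term by the $d=1$ occupation-time bound $CT^{3/2}$ from Proposition~\ref{variance_sym} with $T=N^2t$, which gives $O(n^{-2})$ uniformly in $N$; your endpoint $N+1$ and power $N^3t^{3/2}$ are right where the paper's display has slips. Your stirring check is also sound: conditionally on the stirring, $J_{-1,0}(s)$ is a difference of two sums of $K(s)$ independent Bernoulli$(\rho)$ variables, so $\E_{\nu_\rho}[J_{-1,0}(s)^2]=2\rho(1-\rho)E[K(s)]\le 2\rho(1-\rho)\sqrt{s}$, and the crude Cauchy--Schwarz over the $N+1$ bonds then still yields $O(n^{-2})$.
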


\begin{proof}
Write 
\begin{align}\label{current1}
&\frac{1}{nN^{3/2}}\sum_{x=1}^{N+1}J_{x-1,x}(N^2t) \nonumber\\
&\quad\quad= \frac{1}{nN^{3/2}}\sum_{x=1}^{N+1} \frac{1}{2}\int_0^{N^2t} \big[ \eta(x-1)(1-\eta(x)) - \eta(x)(1-\eta(x-1)) \big]ds \nonumber \\
&\quad\quad \ \ \ \ \ + \ \frac{1}{nN^{3/2}}\sum_{x=1}^{N+1} M_{x-1,x}(N^2t) \\
&\quad\quad = \frac{1}{nN^{3/2}}\frac{1}{2}\int_0^{N^2t} \big[\eta_{s}(0) - \eta_{s}(N)\big] ds + \frac{1}{nN^{3/2}}\sum_{x=1}^{N+1} M_{x-1,x}(N^2t).\nonumber
\end{align}
Here, we used that $\eta(x-1)(1-\eta(x)) - \eta(x)(1-\eta(x-1)) = \eta(x-1)-\eta(x)$.

Now, from an $H_{-1}$-norm/variance estimate Proposition \ref{variance_sym}, we have for all $y$ that
$$\E_{\nu_\rho}\Big[ \Big(\int_0^{N^2t}\big( \eta_s(y) - \rho\big) ds\Big)^2\Big] \ \leq \ C(\rho)N^{3/2}t^{3/2}.$$
Hence, the integral term in \eqref{current1} is bounded by
$Cn^{-2}N^{-3}N^3 t^{3/2} = Cn^{-2}$ which vanishes as $n\uparrow\infty$.

For the martingale term in \eqref{current1}, using orthogonality of martingales and $\eta(x)\leq 1$, we obtain
\begin{eqnarray*}
&&\E_{\nu_\rho}\Big[ \Big(\frac{1}{nN^{3/2}}\sum_{x=1}^{N+1} M_{x-1,x}(N^2t)\Big)^2\Big]\\
&&\ \ \ \leq \ \frac{1}{n^2N^3}\sum_{x=1}^{N+1}\E_{\nu_\rho} \big[M^2_{x-1,x}(N^2t)\big]\\
&&\ \ \ \leq \ \frac{1}{2n^2N^3}\sum_{x=1}^{N+1}\E_{\nu_\rho} \int_0^{N^2t}\big[\eta(x)(1-\eta(x+1)) + \eta(x+1)(1-\eta(x))\big] ds\\
&&\ \ \ =\  O(n^{-2})
\end{eqnarray*}
To conclude the proof. \end{proof}

To complete the argument, we need to understand the limit fluctuation fields.  This is the subject of the next subsection.  In Subsection \ref{sec:returntocurrent}, we will complete the proof of Theorem \ref{current_thm}.

\section{Infinite dimensional Ornstein-Uhlenbeck process limit}
\label{sec:infinite-OUlimit}
We consider now the general $d\geq 1$ setting.  Consider the space $\mf D= C^\infty_c(\R^d)$, consisting of compactly supported, $C^\infty$ real functions on $\R^d$.  Its dual $\mf D'$ is the space of distributions, that is the continuous linear functionals acting on $\mf D$.

Recall the definition of $W^N_t(G)$ in \eqref{eq:fluc_field}. 
To capture the evolution of $W^N_t(G)$ for a fixed $G\in \mf D$, write, with respect to the finite-range symmetric exclusion generator $L$ (cf. \eqref{simplification}),
\begin{eqnarray*}
W^N_t(G) &=& W^N_0(G) + N^2\int_0^t LW^N_s(G) ds + \M^N_t(G)
\end{eqnarray*}
where, after the usual twice summation-by-parts under symmetry,
$$N^2LW^N_t(G) \ = \ \frac{1}{2N^{d/2}}\sum_{x,y\in \Z^d} p(y)\triangle^N_{x,y}G(x/N) \eta_{N^2s}(x)$$
and
$\M^N_t(G)$ is a martingale such that
$${\mc N}^{N}_t(G)\ = \ (\M^N_t(G))^2 - N^2\int_0^t\big[ L(W^N_s(G))^2 - 2Y^N_s(G)LW^N_s(G)\big]ds$$
is a martingale.
The last integral in the display, the quadratic variation of $\M^N_t(G)$, can be evaluated as
\begin{eqnarray*}
&&\big\langle \M^N(G)\big\rangle_t \ = \ N^2\int_0^t \big[L(W^N_s(G))^2 - 2Y^N_s(G)LW^N_s(G)\big] ds \\
&&\ \ \  = \ \frac{1}{N^{d}}\int_0^t \sum_{x\in \Z^d} p(y)[\nabla^N_{x,y}G(x/N)]^2\eta_{N^2s}(x)(1-\eta_{N^2s}(x+y)) ds.\end{eqnarray*}
Here, similar to the hydrodynamics calculations (cf. Subsection \ref{sec:sketch-exclusionhydro}), $\nabla^N_{x,y}= N\big[G((x+y)/N) - G(x/N)\big]$ and $\triangle^N_{x,y}G= N^2\big[G((x+y)/N) -2G(x/N) + G((x-y)/N)\big]$ are the scaled discrete gradient and Laplacian.

Since $\sum_x p(y) \triangle^N_{x,y} G(x/N) =0$, we may subtract $\rho$ and write
\begin{align}
\label{eq:fluc-mart-rep}
W^N_t(G) = W^N_0(G) + \frac{1}{2} \int_0^t W^N_s\Big(\sum_y p(y)\triangle^N_{\cdot,y}G(\cdot/N)\Big)ds  + \M^N_t(G) \ \ \ \ \  {\rm and}
\end{align}
\begin{align*}
&{\mc N}^N_t(G) \\
&\ = \big(\M^N_t(G)\big)^2 - \frac{1}{N^d}\int_0^t \sum_{x,y\in \Z^d}p(y)\big[\nabla^N_{x,y} G(x/N)\big]^2\eta_{N^2s}(x)\big(1-\eta_{N^2s}(x+y)\big)ds.
\end{align*}

\medskip
Now, as before with respect to the hydrodynamics, we have two steps:
\medskip

Step 1:  Show tightness of $\{W^N_t: t\in [0,T]\}_{N\geq 1}$ in an appropriate space, and continuity of limit trajectories under limit points.  

Step 2:  Identify the limit points in terms of a unique `infinite dimensional Ornstein-Uhlenbeck' process.
\medskip

Putting it together, we will arrive at the following informal description: $W^N_t$ converges to $W_t$ which solves, in the nearest-neighbor symmetric setting,
\begin{equation}
\label{infinity_eqn}
dY_t \ = \ \frac{1}{2d}\triangle W_t dt + \sqrt{d^{-1}\rho(1-\rho)}\nabla d\B_t,
\end{equation}
more explained in the following subsections.

\medskip
\subsubsection{Spaces}
A natural space will be $D([0,T], {\mf D}')$, the distribution valued right-continuous with left limits
 trajectories on ${\mf D}$, with the strong dual topology.

 We remark there are other natural spaces one could consider.  For instance, $W^N_t$ could act on the Schwartz space of rapidly decreasing functions $\mc S$, in which case $W^N_t$ would be a member of $\mc S'$, the space of `tempered distributions.'  One could work also with the Hermite basis based $\mc H_{k}$ spaces containing $\mc S$. Then, $W^N_t$ would be a member of $\mc H_{-k}\subset \mc S'$. 
 In another direction, we could have also specified the lattice as $\T_N^d$, instead of $\Z^d$.
 See Chapter 11 \cite{KL} for instance where spaces $\mc H_{-k}$ with respect to $\T_N^d$ are used; on $\R^d$, the Hermite spaces would use definitions in \cite{Reed-Simon}[p. 142].

Both $\mc S'$ and $\mc H_{-k}$ are nuclear, Frech\'et spaces, whereas the space of distributions $\mf D'$ is a `strict inductive limit' of nuclear Frech\'et spaces.  Importantly, in each of these spaces, tightness of $\{W^N_t: t\in [0,T]\}$ is implied by tightness of $\{W^N_t(G): t\in [0,T]\}$ for each $G$ in $\mc S'$, $\mc H_{-k}$, or $\mc D'$; see \cite{Fonseca}, \cite{Fouque}, \cite{Mitoma} for more discussion.

\subsection{A precise statement of \eqref{infinity_eqn}}  
Let $Q_N$ be the probability measure on $D([0,T], \mf D')$ governing $\{W^N_t: t\in [0,T]\}$, when the underlying nearest-neighbor, symmetric exclusion process starts from invariant measure $\nu_\rho$.

\begin{theorem}
\label{fluc_thm}
We have $Q_N$ converges to $Q$, concentrated on $C([0,T], \mf D')$, governing a Gaussian Markov random field with mean $0$ and covariance,
\begin{eqnarray*}
&&E_Q[W_s(G)W_t(H)] \\
&&\ = \ \frac{\rho(1-\rho)}{(2\pi d^{-1}(t-s))^{d/2}}\int_{\R^d}\int_{\R^d} G(v)H(u)\exp\Big\{-\frac{d|u-v|^2}{2(t-s)}\Big\}dudv
\end{eqnarray*}
for all $0\leq s\leq t$, $G, H\in \mf D$.
\end{theorem}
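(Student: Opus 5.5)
We follow the two-step scheme announced before the statement: tightness of $\{W^N_t : t\in[0,T]\}$, then identification of every limit point through a martingale problem whose solution is unique. The starting point is the decomposition \eqref{eq:fluc-mart-rep}, specialized to $p(\pm e_i)=(2d)^{-1}$, where $\frac12\sum_y p(y)\triangle^N_{x,y}G = \frac{1}{2d}\triangle G(x/N)+O(N^{-1})$ uniformly on compacts.

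\textbf{Tightness.} By the criteria of Mitoma/Fouque cited above, it suffices to prove tightness of the real processes $\{W^N_t(G): t\in[0,T]\}$ for each fixed $G\in\mf D$, and we use Proposition \ref{tightness_prop} with condition (2'). The three terms are handled as follows.

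\begin{itemize}
\item Under $\nu_\rho$, $W^N_t(G)$ has variance $N^{-d}\sum_x G(x/N)^2\rho(1-\rho)\to \rho(1-\rho)\|G\|^2_{L^2}$. This gives condition (1).
\item For the drift, Schwarz in time and stationarity give
\[
\E_{\nu_\rho}\Big[\Big(\int_s^t W^N_r(\tfrac{1}{2d}\triangle^N G)\,dr\Big)^2\Big]\le C(G)|t-s|^2 .
\]
Kolmogorov's criterion, or a direct modulus bound via Schwarz, then gives (2').
\item For the martingale, the quadratic variation is bounded by $N^{-d}\sum_{x,y}p(y)(\nabla^N_{x,y}G)^2\,|t-s|\le C(G)|t-s|$. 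The Doob partition argument \eqref{condition_2'} then applies.
\end{itemize}

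Continuity of limit paths is the step needing care, since the jump sizes of $W^N(G)$ are $O(N^{-d/2})$. We will show it as in the Exercise after Proposition \ref{tightness_prop}, because the maximal jump vanishes uniformly.

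\textbf{Identification.} For any limit point $Q$ and $G\in\mf D$, we show that
\[
M_t(G)=W_t(G)-W_0(G)-\tfrac{1}{2d}\int_0^t W_s(\triangle G)\,ds
\]
is a martingale, and that $M_t(G)^2-t\,d^{-1}\rho(1-\rho)\|\nabla G\|^2_{L^2}$ is a martingale. The first claim follows by passing to the limit in \eqref{eq:fluc-mart-rep}; uniform integrability comes from the second-moment bounds above. The second claim is the main obstacle, since the quadratic variation $\langle \M^N(G)\rangle_t$ contains $\eta(x)(1-\eta(x+y))$ and must be shown to concentrate on its mean $t\rho(1-\rho)\,d^{-1}\|\nabla G\|^2$. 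For this we would not use the replacement lemmas. Instead we compute the $L^2(\P_{\nu_\rho})$ variance of
\[
N^{-d}\int_0^t\sum_x(\nabla^N G)^2\big[\eta_{N^2s}(x)(1-\eta_{N^2s}(x+y))-\rho(1-\rho)\big]\,ds .
\]
By stationarity and Schwarz it is at most $t^2$ times the static $\nu_\rho$-variance. Since the summands are centered and finitely dependent, that static variance is $O(N^{-d})$. Fourth moments of $\M^N(G)$, obtained via the BDG argument of Section \ref{lec10} with jumps $O(N^{-d/2})$, give the uniform integrability needed to pass the squared martingale to the limit. Finally, $W_0$ is Gaussian white noise with variance $\rho(1-\rho)$, by the CLT for the i.i.d. Bernoulli field under $\nu_\rho$.

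\textbf{Uniqueness and covariance.} Applying the martingale relations to time-dependent test functions $G_s=S_{t-s}H$, where $S$ is the heat semigroup of $\frac{1}{2d}\triangle$ (the extension to $C^{1,2}$ test functions is routine), shows that $W_t(H)-W_s(S_{t-s}H)$ is a continuous martingale increment with deterministic quadratic variation. By Lévy's characterization it is Gaussian and independent of $\mathcal F_s$. This determines all finite-dimensional distributions, giving a Gaussian Markov field. The covariance formula then follows from
\[
E_Q[W_s(G)W_t(H)]=E_Q[W_s(G)W_s(S_{t-s}H)]=\rho(1-\rho)\langle G,S_{t-s}H\rangle ,
\]
using stationarity (the one-time covariance is $\rho(1-\rho)\langle\cdot,\cdot\rangle$). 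Writing $S_{t-s}$ as convolution with the Gaussian kernel of variance $(t-s)/d$ gives the displayed formula.
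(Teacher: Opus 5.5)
\textbf{Gap in tightness of the martingale term.} Your route matches the paper's: tightness of each $W^N(G)$ via the decomposition \eqref{eq:fluc-mart-rep}, then identification through the Holley--Stroock martingale problem. Your uniqueness argument with test functions $S_{t-s}H$ and L\'evy's characterization is just a direct proof of Theorem \ref{OU}, which the paper cites. However, the tightness of the martingale term does not go through as you wrote it.

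In \eqref{condition_2'} one sums, over the $\lfloor T/\gamma\rfloor+1$ blocks, the quantity $9\lambda^{-2}\E_{\nu_\rho}\big[(\M^N_{l_{k+1}}(G)-\M^N_{l_k}(G))^2\big]$. With your bound $C(G)\gamma$ per block, the total is about $9C(G)T/\lambda^2$, which does not vanish as $\gamma\downarrow 0$. In Section \ref{lec3} the same argument worked only because the hydrodynamic martingale had quadratic variation of order $N^{-d}\gamma$ per block, so it was itself negligible. In the fluctuation scaling $\M^N(G)$ is of order one (it converges to a Brownian motion), so second moments per block are too weak to control the modulus of continuity.

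\textbf{How to repair it.} Use a higher moment on each block. By stationarity of increments under $\nu_\rho$ and Doob's maximal inequality applied to $|\M^N_t(G)-\M^N_{l_k}(G)|^4$, each block contributes at most $C\lambda^{-4}\E_{\nu_\rho}\big[(\M^N_\gamma(G))^4\big]$. The bound $\E_{\nu_\rho}\big[(\M^N_\gamma(G))^4\big]\le C(G)\big(\gamma^2+N^{-d-2}\gamma\big)$ then gives a total of order $T\lambda^{-4}\big(\gamma+N^{-d-2}\big)$. This vanishes as $N\uparrow\infty$ and then $\gamma\downarrow 0$.

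That fourth-moment bound is Lemma \ref{lem:fourthmoment}. It also follows from the BDG estimate you already invoke in the identification step, since the jumps have size $O(N^{-d/2-1})$. This is exactly how the paper treats this term.

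Alternatively, Aldous' criterion together with the vanishing jump sizes would also give $C$-tightness of the martingale part. So would the martingale functional CLT, since you do prove that $\langle \M^N(G)\rangle_t$ converges in probability to the deterministic limit $t\,d^{-1}\rho(1-\rho)\|\nabla G\|^2_{L^2}$. With this repair, the rest of your argument (drift modulus, one-time marginals, quadratic-variation concentration, covariance via the heat semigroup) is sound.
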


The process $W_t$ governed by $Q$ is sometimes called a  `generalized Ornstein-Uhlenbeck process', as discussed in Holley-Stroock \cite{HS}.  The scalings and the limit in Theorem \ref{fluc_thm}, where space is scaled by $1/N$, time by $v(N)=N^2$ and the empirical measure by $1/\sqrt{N}$, are sometimes referred to as a `Edwards-Wilkinson' fluctuation limit.
\medskip

\subsubsection{Holley-Stroock martingale problem}
The existence/uniqueness of $Q$ in Theorem \ref{fluc_thm} follows from a `martingale problem' characterization. 

 Let $U= (2d)^{-1}\triangle$ be the nonnegative self-adjoint operator defined on domain $L^2(\R^d)$ and let $T_t$ be the associated heat semigroup.  Define $B= \sqrt{d^{-1}\rho(1-\rho)}\nabla$ to be the linear gradient operator.  Let also $\F_t$ be the sigma-field in $D([0,T], \mf D)$ generated by a process $W_s(H)$ for $s\leq t$ and $H\in \mf D$.

\begin{theorem}
\label{OU}
Suppose $Q$ is a probability measure governing $W_t$, concentrating on $C([0,T], \mf D')$, and for each $H\in \mf D$,
$$M_t^{U,H} \ = \ W_t(H) - W_0(H) - \int_0^t W_s(UH)ds$$
and
$$N^{U,H}_t \ = \ (M_t^{U,H})^2 - \|BH\|^2_{L^2} t$$
are $L^1(Q)$, $\F_t$-martingales.  Then, for all $0\leq s<t$, and subsets $A\subset \R^d$, $Q$ a.s.,
\begin{eqnarray}
\label{eq:HS-fd}
&&Q\Big[W_t(H)\in A| \F_s\big] \\ 
&& \ \ = \ \int_A \frac{1}{\sqrt{2\pi \int_0^{t-s}\|BT_rH\|^2_{L^2} dr}}\exp\Big\{\frac{-|y- W_s(T_{t-s}H)|^2}{2\int_0^{t-s}\|BT_rH\|^2_{L^2} dr}\Big\}dy.\nonumber
\end{eqnarray}
Therefore, the finite-dimensional distributions of $Q$, and hence the measure $Q$ on $C([0,T], \mf D)$, are determined by its restriction to $\F_0$.
\end{theorem}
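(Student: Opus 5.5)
The plan is to compute the conditional characteristic function of $W_t(H)$ given $\F_s$. To do this I will upgrade the two martingale statements, which are stated for time-independent test functions, to time-dependent test functions $H_r = T_{t-r}H$ for $r\in[s,t]$, and then exponentiate.

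First I extend the martingale property to smooth time-dependent $G(r,\cdot)$. The claim is that
\[
W_r(G_r) - W_s(G_s) - \int_s^r W_u\big((\partial_u + U)G_u\big)\,du
\]
is an $\F_r$-martingale with quadratic variation $\int_s^r \|BG_u\|^2_{L^2}\,du$. I would prove this by partitioning $[s,r]$ finely, freezing $G$ on each subinterval, and using the fixed-$H$ martingales $M^{U,H}$ and $N^{U,H}$ on each piece. Continuity of $W$ (since $Q$ is concentrated on $C([0,T],\mf D')$) and $L^1$ bounds let the Riemann-type sums pass to the limit.

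One technical point arises here. $T_{t-r}H$ is smooth but not compactly supported, so it lies outside $\mf D$. I would either approximate it by cutoffs $\chi_R T_{t-r}H \in \mf D$ and control the errors in $L^1(Q)$, or first carry out the argument for compactly supported smooth time-dependent functions and take limits. This approximation step is the main obstacle I expect. It needs some a priori control of $W_u$ on functions with Gaussian tails, which I would try to extract from the martingale identities themselves, for instance via $E_Q[W_u(G)^2]$ bounds obtained from $N^{U,H}$ and Gronwall.

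With $G_r = T_{t-r}H$ the drift vanishes, because $\partial_r G_r + UG_r = 0$. So $X_r := W_r(T_{t-r}H)$, for $r\in[s,t]$, is a continuous martingale with deterministic quadratic variation $\int_s^r\|BT_{t-u}H\|^2_{L^2}\,du$. Fix $\theta\in\R$. By Itô's formula for continuous martingales, or directly through the discretized exponential argument,
\[
\exp\Big\{i\theta X_r + \tfrac{\theta^2}{2}\int_s^r\|BT_{t-u}H\|^2_{L^2}\,du\Big\}
\]
is a bounded $\F_r$-martingale. Evaluating it at $r=t$ and conditioning on $\F_s$ gives
\[
E_Q\big[e^{i\theta W_t(H)}\,|\,\F_s\big] = \exp\Big\{i\theta W_s(T_{t-s}H) - \tfrac{\theta^2}{2}\int_0^{t-s}\|BT_uH\|^2_{L^2}\,du\Big\}.
\]
This is the characteristic function of the Gaussian in \eqref{eq:HS-fd}, and Fourier inversion gives the displayed formula.

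Finally, I iterate the conditional formula over times $t_1<\dots<t_k$, using linearity in $H$ by applying it to linear combinations. This shows that the finite-dimensional distributions are Gaussian and determined by the law of $W_0$. Since $Q$ lives on continuous paths, these finite-dimensional distributions determine $Q$, which proves the claimed uniqueness given the restriction of $Q$ to $\F_0$.
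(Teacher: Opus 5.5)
Your overall route is the standard Holley--Stroock argument. The paper gives no proof of its own here and defers to \cite{HS} and Chapter 11 of \cite{KL}. The ingredients are time-dependent test functions, the choice $G_r=T_{t-r}H$ to kill the drift, and the exponential martingale. For test functions in $\mf D$, your time-dependent extension, the exponential martingale and the iteration over times are fine. The uniform integrability you need there comes from $E_Q[(M^{U,H}_t)^2]=t\|BH\|^2_{L^2}$, not from any $L^1$ bound on $W$ itself.

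\textbf{The gap.} It is exactly at the cutoff step you flag, and the repair you suggest cannot work. The hypotheses carry no information about $W$ at spatial infinity. $N^{U,H}$ only controls the martingale part, and the law of $W_0$ is unconstrained. In $W_u(G)=W_0(G)+\int_0^u W_v(UG)\,dv+M^{U,G}_u$ the drift involves the different test function $UG$, so no Gronwall inequality for $E_Q[W_u(G)^2]$ closes.

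In fact nothing can close it, because on $C([0,T],\mf D')$ the conclusion is false. Let $v$ be a Tychonoff-type nonzero smooth solution of $\partial_t v=Uv$ on $[0,T]\times\R^d$ with $v(0,\cdot)\equiv 0$; it grows like $e^{c|x|^2/t}$. Let $Y$ be the generalized Ornstein--Uhlenbeck process of Theorem \ref{fluc_thm}. Then $W_t:=v(t,\cdot)\,dx+Y_t$ is a continuous $\mf D'$-valued path. Integrating by parts against compactly supported $H$ shows that $W$ has exactly the same $M^{U,H}$, $N^{U,H}$ and canonical filtration as $Y$. So both laws satisfy every hypothesis and coincide on $\F_0$, while $E[W_t(H)]-E[Y_t(H)]=\int v(t,x)H(x)\,dx\neq 0$ for suitable $t,H$. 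Correspondingly, $W_s(T_{t-s}H)$ in \eqref{eq:HS-fd} is not even defined for a general $W_s\in\mf D'$.

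\textbf{What is needed.} The missing input is an additional growth hypothesis along the whole path; control of $W_0$ alone does not suffice, as the example shows. There are two natural options.
\begin{itemize}
\item Work in the Holley--Stroock setting, with $Q$ concentrated on $C([0,T],\mc S')$. Then $r\mapsto T_{t-r}H$ is a $C^1$ curve in $\mc S$. The martingale properties extend from $\mf D$ to $\mc S$ by pathwise limits together with the same $L^2$ bound on $M^{U,H}$, and no cutoffs are needed.
\item Assume the a priori bound $\sup_{u\le T}E_Q[W_u(G)^2]\le C\|G\|^2_{L^2}$ for $G\in\mf D$. This is what is actually available when the theorem is used in Theorem \ref{fluc_thm}, by stationarity under $\nu_\rho$ and Fatou. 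With it, $W_u(\chi_R T_{t-u}H)$ converges in $L^2(Q)$, and the extra drift $[U,\chi_R]\,T_{t-u}H\in\mf D$ tends to $0$ in $L^2(\R^d)$. Doob's inequality then gives a continuous limiting martingale with deterministic bracket, and the rest of your argument goes through unchanged.
\end{itemize}
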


We refer to the work of Holley-Stroock \cite{HS} for more details; see also Chapter 11 \cite{KL}.
\medskip

\subsubsection{Sketch of proof of Theorem \ref{fluc_thm}}
Assuming Steps 1,2, we outline the proof of Theorem \ref{fluc_thm}.  In our context, the restriction to $\F_0$ is already known:  Under the invariant measure $\nu_\rho$, we have
$W^N_0$ converges to a Gaussian field with mean zero and covariance
\begin{equation}
\label{0_cov}
E_Q[W_0(G)W_0(H)] \ = \ \rho(1-\rho)\langle G, H\rangle_{L^2}.\end{equation}

\begin{exercise}\rm
Show that the joint distribution of $W^N_0(H_1), \ldots, W^N_0(H_\ell)$ is Gaussian with covariance given via \eqref{0_cov}.  One can use the Cram\'er-Wold device.
\end{exercise}

Also, from the martingale property in Theorem \ref{OU}, the formula \eqref{eq:HS-fd}, and that $\frac{d}{dt} T_t = T_t U$, we can argue
\begin{eqnarray}
\label{eq:W-covariance}
E_Q[W_s(G)W_t(H)] & = & E_Q\big[W_s(G)(M_t^{U,H} - M_s^{U,H})\big] \\
&&\ \ \ \ + \int_s^t E_Q\big[W_s(G)W_u(UH)\big]du + E_Q\big[W_s(G)W_s(H)\big]\nonumber\\
&=& E_Q\big[W_s(G)W_s(T_{t-s}H)\big] \ = \ E_Q\big[W_0(G)W_0(T_{t-s}H)\big].\nonumber
\end{eqnarray}
This covariance is exactly what is given in Theorem \ref{fluc_thm}.

Now, since $\B_t(H) = \|BH\|^{-1}_{L^2}M_t^{U,H}$ is a continuous martingale with quadratic variation $t$, by Levy's characterization, we have that $\B_t(H)$ is distributed as a Brownian motion.  Hence, we have
\begin{equation}
\label{eq:W_t-decomp}
W_t(H) \ = \ W_0(H) + \int_0^t W_s(UH)ds + \|BH\|_{L^2} \B_t(H)
\end{equation}
where $\B_t$ is the infinite dimensional Brownian motion (Gaussian Markov random field) with covariance
$$E_Q[\B_s(G)\B_t(H)] \ = \ (\min\{s,t\})\int_{\R^d} \frac{\nabla G(u)}{\|\nabla G\|_{L^2}}\frac{\nabla H(u)}{\|\nabla H\|_{L^2}} du.$$
In this way, \eqref{eq:W_t-decomp} gives a meaning to the integral form of \eqref{infinity_eqn}.

\begin{exercise}\rm
Use polarization with the martingales $M^{U,G}_s$ and $M^{U,H}_t$ to show the formula in the last display.
\end{exercise}

What remains in the proof of Theorem \ref{fluc_thm} is to show Steps 1,2 from which one may conclude that $Q_N$ converges to a $Q$, supported on continuous trajectories, satisfying the `martingale problem' conditions in Theorem \ref{OU}.  This is done in Subsection \ref{sec:proofs-eq-fluc-12}.

\subsection{Application: Proof of Theorem \ref{current_thm}}
\label{sec:returntocurrent}

We return to our motivating example with respect to current fluctuations.  By \eqref{eq:current_rep} and Lemma \ref{error}, since $N^{-1/2}J_{-1,0}(N^2t)$ does not depend on $n$, we have
uniformly in $N\geq 1$ that 
$$\{W^N_t(G_n) + W^N_0(G_n): n\geq 1\}$$
is a Cauchy sequence in $L^2(\nu_\rho)$.  
Note, by stationarity, that
$$\sup_{N\geq 1}\E_{\nu_\rho}\big[\big(W^N_t(H)\big)^2\big] \leq 2\|H\|^2_{L^2(\R^d)}.$$
Then, by Theorem \ref{fluc_thm}, approximating $G_n$ by smooth compactly supported functions, we have for fixed $n$ as $N\uparrow\infty$ that
$$W^N_t(G_n) - W_0^N(G_n) \ \Rightarrow \ W_t(G_n)- W_0(G_n).$$
Since $\{W_t(G_n)-W_0(G_n): n\geq 1\}$ is Cauchy in $L^2(\nu_\rho)$, we denote its limit by $W_t(H_0) - W_0(H_0)$, whose distribution is a mean-zero Gaussian.  

In particular, 
$$\frac{1}{\sqrt{N}}J_{-1,0}(N^2t) \ \Rightarrow \ W_t(H_0)-W_0(H_0).$$
By Lemma \ref{error}, one may identify the limiting variance by computing
\begin{align}
\label{eq:duality-variance}
\lim_{n\uparrow\infty}\lim_{N\uparrow\infty}{\rm Var}\big(N^{-1/2}J_{-1,0}(N^2t)\big)  =  \lim_{n\uparrow\infty} \lim_{N\uparrow\infty}\E_{\nu_\rho}\Big[\big(W^N_t(G_n) - W^N_0(G_n)\big)^2\Big].
\end{align}

\begin{exercise}\rm
Use duality, that is $\E_{\nu_\rho}[(\eta_{N^2t}(x)-\rho)(\eta_0(y)-\rho)] = \rho(1-\rho)P_{N^2t}(0,y-x)= \rho(1-\rho)P\big(Z_{N^2t}=y-x\big)$, where $Z_\cdot$ is a continuous-time simple symmetric random walk starting at $0$ (cf. Subsection \ref{sec:duality}), to verify that the variance \eqref{eq:duality-variance} with $t=1$ equals $\sqrt{2/\pi}\rho(1-\rho)$.  Hint:  To analyze the cross term, weak convergence $Z_{N^2}/N\Rightarrow {\rm N}(0, 1)$ may be useful.
\end{exercise}

\section{Proof of Steps 1,2: Tightness and identification}
\label{sec:proofs-eq-fluc-12}
We first restate Steps 1, 2 given informally before \eqref{infinity_eqn} in the setting of nearest-neighbor symmetric exclusion.
\medskip

Step 1:  Show tightness of $\{Q_N\}_{N\geq 1}$ governing $\{W^N_t: t\in [0,T]\}_{N\geq 1}$, members in $D([0,T], \mf D')$, in the uniform topology of $C([0,T], \mf D')$.
Hence, trajectories under limit points $Q$ will be continuous.

Step 2:  Identify the limit points $Q$ in terms of the unique `infinite dimensional Ornstein-Uhlenbeck' process given in Theorem \ref{fluc_thm}.
\medskip

In the following, recall that $p$ is nearest-neighbor, symmetric and translation-invariant.

\medskip
\subsection{Moments of $\M^N_t(H)$}
Recall the martingales $\M^N_t(H)$ and $\mc N^N_t(H)$ specified in \eqref{eq:fluc-mart-rep}.   Before going to the proofs of Steps 1,2, we record some moment estimates of $\M^N_t(H)$ for $H\in \mf D$.

By taking expectation of the quadratic variation, we obtain
\begin{align}
\label{eq:secondmomentofM}
\E_{\nu_\rho}\big[ (\M^N_t(H))^2\big] 
 = \frac{\rho(1-\rho)T}{N^d}\sum_{x,y\in \Z^d}p(y)\big[\nabla^N_{x,y} H(x/N)\big]^2 \leq C(\rho, T)\|\nabla H\|^2_{L^2(\R^d)}.
\end{align}

To bound the fourth moment, we will take the following approach.  Other arguments, via Burkholder-Gundy-Davis inequalities may also be used.
\begin{lemma}
For all local functions $F$, $s,t\in [0,T]$, and $\lambda \in \R$, we have
$$Z^\lambda_{s,t} \ = \ \exp\Big\{\lambda F(\eta_t) - \lambda F(\eta_s) - \int_s^t e^{-\lambda F(\eta_u)}Le^{\lambda F(\eta_u)}du\Big\}$$
is a martingale.
\end{lemma}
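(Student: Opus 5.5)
The plan is to realize $Z^\lambda_{s,t}$ as the Dynkin martingale of Proposition \ref{martingale} for a time-dependent function of the joint process $(\eta_t, A_t)$, where $A_t=\int_s^t e^{-\lambda F(\eta_u)}Le^{\lambda F(\eta_u)}du$. Since $F$ is local, $e^{\lambda F}$ is a bounded local function, so $Le^{\lambda F}$ is well defined. Because $p$ is finite range (nearest-neighbor here) and $F$ depends on finitely many sites, only finitely many bonds change $F$, and the integrand $e^{-\lambda F}Le^{\lambda F}$ is a bounded local function. In particular $A_t$ is bounded by $C|t-s|$ and $Z^\lambda_{s,t}$ is bounded on $[s,T]$, so integrability is not an issue.

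Set $\Phi(t,\eta,a)=\exp\{\lambda F(\eta)-a\}$ and consider the Markov process $(\eta_t,A_t)$, whose second coordinate evolves deterministically by $\dot A_t = h(\eta_t)$ with $h=e^{-\lambda F}Le^{\lambda F}$. Its generator acts on such $\Phi$ as $\tilde L\Phi = e^{-a}Le^{\lambda F}(\eta) - h(\eta)\Phi$. By the choice of $h$ this vanishes identically, since $e^{-a}Le^{\lambda F} = e^{-a}e^{\lambda F}h$.

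Proposition \ref{martingale}, in its time-dependent form (exercise after it), then gives that
\[
\Phi(\eta_t,A_t)-\Phi(\eta_s,A_s)-\int_s^t\tilde L\Phi\,du = e^{\lambda F(\eta_t)-A_t}-e^{\lambda F(\eta_s)}
\]
is a martingale in $t\ge s$. Dividing by the $\mathcal F_s$-measurable, positive, bounded factor $e^{\lambda F(\eta_s)}$ yields that $Z^\lambda_{s,t}$ is a martingale in $t$ with $Z^\lambda_{s,s}=1$.

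An alternative that avoids the augmented process is a product rule. Write $Z_t=X_tY_t$ with $X_t=e^{\lambda F(\eta_t)}$, which equals $X_s+\int_s^t LX\,du+$ martingale, and $Y_t=e^{-A_t}$, which is continuous and of finite variation with $dY=-hY\,dt$. Integration by parts then gives $dZ=Y\,dM$ plus a drift $Y(LX - hX)\,dt = 0$. The stochastic integral of the bounded predictable $Y$ against the martingale $M$ is a martingale.

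The only real obstacle is that the exclusion process here lives on $\mathbb Z^d$ (infinite volume), so Proposition \ref{martingale}, proved for countable state space, must be justified there. I would handle this by noting that local functions form a core and the Dynkin formula $P_tf - f=\int_0^t P_uLf\,du$ holds for local $f$. The martingale property for $e^{\lambda F(\eta_t)}-\int Le^{\lambda F}$ then follows exactly as in the proof of Proposition \ref{martingale}, and the product-rule argument needs nothing more.
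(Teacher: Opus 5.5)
Your proof is correct and follows the paper's route. The paper simply cites Ethier--Kurtz for the martingale property of this Girsanov-type exponential and then uses boundedness of the occupation variables to get integrability; your product-rule computation $d(XY)=Y\,dM+Y(LX-hX)\,dt=Y\,dM$ supplies the cited step, and your observation that $F$ and $e^{-\lambda F}Le^{\lambda F}$ are bounded local functions supplies the integrability. Of your two variants, keep the product-rule one: the augmented process $(\eta_t,A_t)$, with its path-dependent continuous coordinate, is not literally covered by Proposition \ref{martingale} or its time-dependent exercise.
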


\begin{proof} The lemma is a type of `Girsanov' formula.  See \cite{EK}[around p. 175] which shows $Z_{s,t}$ is a local martingale.   Since we are dealing with the exclusion process, where occupation numbers are bounded, it is integrable and so a martingale.
\end{proof}

\begin{lemma}
\label{lem:fourthmoment}
For $H\in \mf D$, and $t\in [0, T]$, we have 
$$\E_{\nu_\rho}\big[\big(\M^N_t(H)\big)^4\big] \ \leq \ C(H)\big( t^2 + N^{-d-2}t\big).$$
\end{lemma}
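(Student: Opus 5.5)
We will use the exponential martingale of the preceding lemma with $F=W^N(H)$ (time sped up by $N^2$), expand in $\lambda$, and read off the fourth moment of $\M^N_t(H)$ from the $\lambda^4$ coefficient. For local $F$, write
\[
e^{-\lambda F}N^2Le^{\lambda F}=N^2\sum_{x,y}p(y)\eta(x)(1-\eta(x+y))\big[e^{\lambda(F(\eta^{x,x+y})-F(\eta))}-1\big].
\]
Since $H$ is compactly supported, $W^N(H)$ is a finite sum and hence local, so the lemma applies. A single jump changes $W^N(H)$ by $\delta_{x,y}=N^{-d/2}\big(H((x+y)/N)-H(x/N)\big)=N^{-d/2-1}\nabla^N_{x,y}H$, which is of order $N^{-d/2-1}$.

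Expanding $e^{\lambda\delta}-1=\lambda\delta+\lambda^2\delta^2/2+\lambda^3\delta^3/6+\lambda^4\delta^4/24+O(\lambda^5\delta^5)$, the compensator becomes
\[
\lambda A^{(1)}+\tfrac{\lambda^2}{2}A^{(2)}+\tfrac{\lambda^3}{6}A^{(3)}+\tfrac{\lambda^4}{24}A^{(4)}+\cdots,\qquad A^{(k)}_t=N^2\int_0^t\sum_{x,y}p(y)\eta(x)(1-\eta(x+y))\,\delta_{x,y}^k\,ds.
\]
Here $A^{(1)}$ is the drift, so that $W^N_t-W^N_0-A^{(1)}_t=\M^N_t$, and $A^{(2)}=\langle\M^N(H)\rangle$. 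Because $\E[Z^\lambda_{0,t}]=1$ for all $\lambda$, the Taylor coefficients of $\E[Z^\lambda]-1$ vanish. Reading off orders $\lambda^2,\lambda^3,\lambda^4$ gives the Itô-type identity
\[
\E[\M^4]=6\,\E[\M^2A^{(2)}]+4\,\E[\M A^{(3)}]-3\,\E[(A^{(2)})^2]+\E[A^{(4)}],
\]
up to sign conventions. Equivalently, one can argue directly: $\M^4-\int(\ldots)$ is a martingale by applying the generator to polynomials, which avoids the formal exponential expansion. I would use whichever is cleaner; the moment identity above is the target.

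Next come deterministic bounds, using $0\le\eta\le1$, the nearest-neighbor structure, and $|\nabla^N_{x,y}H|\le\|\nabla H\|_\infty$ supported on $O(N^d)$ sites. These give
\[
|A^{(2)}_t|\le C(H)t,\qquad |A^{(3)}_t|\le C(H)N^{-d/2-1}t,\qquad |A^{(4)}_t|\le C(H)N^{-d-2}t.
\]
The four terms then bound as follows:
\begin{itemize}
\item $\E[(A^{(2)})^2]\le Ct^2$.
\item $\E[\M^2A^{(2)}]\le Ct\,\E[\M^2]\le Ct^2$, by \eqref{eq:secondmomentofM}.
\item $|\E[\M A^{(3)}]|\le CN^{-d/2-1}t\,\E[\M^2]^{1/2}\le CN^{-d/2-1}t^{3/2}$, which is at most $C(t^2+N^{-d-2}t)$ by AM--GM.
\item $\E[A^{(4)}]\le CN^{-d-2}t$.
\end{itemize}
Summing gives the claimed bound $C(H)\big(t^2+N^{-d-2}t\big)$.

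The main obstacle is making the $\lambda$-expansion rigorous: justifying differentiation of $\E[Z^\lambda]$ in $\lambda$ at $0$, and controlling the fifth-order remainder. The tool for this is that $F$ is local and bounded over a finite time with bounded jump rates inside the support of $H$, so $Z^\lambda$ is bounded uniformly for $|\lambda|\le1$ and dominated convergence applies. If that becomes messy, the fallback is the direct polynomial-martingale computation for $\M^4$.
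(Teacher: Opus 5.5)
Your proposal is correct and follows essentially the same route as the paper: take the exponential martingale $Z^\lambda$ with $F=W^N(H)$, use $\E_{\nu_\rho}[Z^\lambda]=1$, and match coefficients of $\lambda$ (the paper only sketches this and leaves the $\lambda^4$ matching as an exercise). Your identity $\E[\M^4]=6\E[\M^2A^{(2)}]+4\E[\M A^{(3)}]-3\E[(A^{(2)})^2]+\E[A^{(4)}]$ is exact, with no sign ambiguity, and your bounds on $A^{(k)}$, the AM--GM step, and the justification via boundedness of $Z^\lambda$ for fixed $N$ and $|\lambda|\le 1$ correctly complete that sketch.
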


\begin{proof}
Let $Z^\lambda_{N^2s,N^2t}$ be the martingale with 
\[F(\eta) = N^{-d/2}\sum_x H(x/N)\big(\eta(x)-\rho\big).\]
By explicit calculation,
\begin{align*}
&e^{-\lambda F(\eta_{N^2u})}(N^2L)e^{\lambda F(\eta_{N^2u})} \\
&\quad\quad = \ N^2\sum_{x,y} p(y-x)\big[e^{\lambda N^{-d/2}[H(y/N)-H(x/N)]} - 1\big]\eta_{N^2u}(x)\big(1-\eta_{N^2u}\big).
\end{align*}
Morevoer,
\begin{equation}\label{mart=1}
\E_{\nu_\rho} \big[Z^\lambda_{N^2s,N^2t}\big] \ = \ 1.\end{equation}

Now, we may expand the left-hand side and equate in powers of $\lambda$.  By symmetry of $p$, we evaluate \eqref{mart=1} as
\begin{eqnarray*}
&&\E_{\nu_\rho}\Big[\exp\Big\{\lambda M^N_t(H)\big)\\
&&\ \ \  - \frac{\lambda^2N^2}{2N^d}\int_0^t \sum_{x,y} p(y-x)\big(H(y/N)-H(x/N)\big)^2\eta_{N^2u}(x)\big(1-\eta_{N^2u}(y)\big) du \\
&&\ \ \ \ \ \ +\lambda^3 \int_0^t \mc R^1 ds+ \lambda^4 \int_0^t \mc R^2ds + \lambda^5\int_0^t \mc R^4ds\Big\}\Big] \ = \ 1.\end{eqnarray*}
It is not difficult now, although a long calculation, to estimate $\{\mc R^i\}_{i=1}^3$ and obtain the desired statement. Note that by matching $\lambda^2$ terms, one recovers \eqref{eq:secondmomentofM} for instance.
\end{proof}

\begin{exercise}\rm
Make the computation in the proof of the above lemma to match fourth powers of $\lambda$ to obtain the lemma statement.
\end{exercise}

\subsection{Proof of Step 2, given Step 1}
We will apply results in the limit theory of martingales in the context of symmetric simple exclusion.  References in this vein include \cite{EK}, \cite{JS}, \cite{Whitt}, among others.

Suppose $Q$ is a limit point found with respect to the uniform topology of $\{Q_N\}_{N\geq 1}$, necessarily supported on continuous $\mf D'$-valued trajectories.  We will identify the subsequence by $\{N: N\geq 1\}$ itself to streamline notation. 

 We now show that $M_t^{U,H}$ and $N^{U,H}_t$ are $L^1(Q)$ martingales, and thereby identify $Q$ via Theorem \ref{OU}.

Consider the formula for $\M^N_t(G)$ in \eqref{eq:fluc-mart-rep}.  Given Step 1 and that the function $w_\cdot(H)\mapsto w_t(H) -w_0(H) - (1/2)\int_0^tw_s\big(\sum_y p(y) \triangle^N_{\cdot, y}H(\cdot/N)\big)ds$ is continuous in the uniform topology, the term $\M^N_t(G)$ would converge in distribution to a limit
$$M^{U,H}_t:=W_t(H)-W_0(H) - \frac{1}{2d}\int_0^t W_s(\triangle H)ds.$$

 Observe, 
 by \eqref{eq:secondmomentofM}, that $\sup_{N\geq 1}\E_{\nu_\rho}\big[\big(\M^N_t(H)\big)^2\big]<\infty$.  Therefore, we conclude
  $\{\M^N_t(H)\}_{N\geq 1}$ is uniformly integrable for each $t\in [0,T]$.  
 Since the martingales $\M^N_t(H)\Rightarrow M^{U, H}_t$, we conclude the limit $M^{U, H}_t$ is an $L^2(Q)$ martingale by Theorem IX.1.12 in \cite{JS}.

Also, as
\begin{align}
\label{eq:quad_var_prob}
&\E_{\nu_\rho}\Big[\Big\{\frac{1}{N^d}\int_0^t \sum_{x,y\in \Z^d} p(y) \big(\nabla^N_{x,y} H(x/N)\big)^2\\
&\quad \quad \cdot \Big( \eta_{N^2s}(x)\big(1-\eta_{N^2s}(x+y)\big) - \rho(1-\rho)\Big)ds\Big\}^2\Big]
\ \leq \ \frac{C(\rho)t}{N^d}\|\nabla H\|^2_{L^2(\R^d)},\nonumber
\end{align}
we have $\langle \M^N(H)\rangle_t$ converges to $\rho(1-\rho)t\|\nabla H\|^2_{L^2(\R^d)}$ in probability.

By the continuous mapping theorem, $\big(\M^N_t(H)\big)^2\Rightarrow \big(M^{U, H}_t\big)^2$.  Moreover, by the convergence in probability implied by \eqref{eq:quad_var_prob}, we observe $\big(\M^N_t(H)\big)^2 - \langle \M^N(H)\rangle_t \Rightarrow \big(M^{U, H}_t\big)^2 - \rho(1-\rho)t\|\nabla H\|^2_{L^2(\R^d)}=: N^{U,H}_t$.  To see that the limit $N^{U, H}_t$ is an $L^1(Q)$ martingale, we observe $\big\{\big(\M^N_t(H)\big)^2 - \langle \M^N(H)\rangle_t: N\geq 1\big\}$ is uniformly integrable by the fourth moment estimate Lemma \ref{lem:fourthmoment}.  
Hence, as desired $N^{U, H}_t$ is a martingale, again by Theorem IX.1.12 in \cite{JS}.

\medskip

\subsection{Proof of Step 1}
To show tightness of $\{Q_N\}$ 
with respect to the uniform topology, 
as $\mf D$ is the `inductive limit' of nuclear Frech\'et spaces, it is sufficient to verify tightness of $\{W^N_t(H): N\geq 1\}$ for each $H\in \mf D$; see \cite{Fouque}.

We remark, as in the study of hydrodynamics, tightness with respect to uniform topology implies that in the Skorohod topology.
We have therefore the following tightness criterion. 
Denote the uniform modulus of continuity by
$$w_{W}(\delta) \ = \ \sup_{\stackrel{|t-s|\leq \delta}{s,t\in [0,T]}} |W_t - W_s|.$$
\begin{lemma}
\label{tightness}
A family of probability measures $\{Q_N\}$ on $D([0,T],\mf D')$ is tight if for each $H\in \mf D$,
\begin{itemize}
\item[(a)] $\lim_{A\uparrow\infty}\lim_{N\uparrow\infty} Q_N\big[ |W^N_0(H)| >A\big] \ = \ 0 $
and, 
\item[(b)] for all $\epsilon>0$,
$\lim_{\delta\downarrow 0} \lim_{N\uparrow\infty} Q_N\big[w_{W^N(H)}(\delta)\geq \epsilon\big] \ = \ 0$.
\end{itemize}
\end{lemma}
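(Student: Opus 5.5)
The plan is to reduce tightness of $\{Q_N\}$ in $D([0,T],\mf D')$ to tightness of the real-valued processes $\{W^N_t(H): t\in[0,T]\}$, one test function $H\in\mf D$ at a time. This reduction is Mitoma's criterion (cf. \cite{Mitoma}, \cite{Fouque}): because $\mf D$ is a strict inductive limit of nuclear Fr\'echet spaces, a family of laws on $D([0,T],\mf D')$ is tight as soon as the one-dimensional projections $\{W^N_\cdot(H)\}_N$ are tight in $D([0,T],\R)$ for every $H\in\mf D$. I will take this as the structural input and not reprove it.

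For a fixed $H$, apply Proposition \ref{tightness_prop} with $\Omega=\R$, in its uniform-modulus version (2'). Condition (a) of the lemma controls the initial position. By Remark/Exercise-type reasoning as in Exercise \ref{moduli_ex}, condition (b) controls $w'$ through $w$. Condition (1) of Proposition \ref{tightness_prop} asks for tightness of $W^N_t(H)$ at every fixed $t$, not only at $t=0$. To get it, write $W^N_t(H)=W^N_0(H)+(W^N_t(H)-W^N_0(H))$ and observe that $|W^N_t(H)-W^N_0(H)|\le w_{W^N(H)}(\delta)\cdot\lceil T/\delta\rceil$. Taking $\delta$ small and using (b) bounds the increment in probability, so (a) together with (b) gives a compact interval carrying mass at least $1-\epsilon$ uniformly in $N$ and $t$.

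Condition (2') is exactly (b). Its consequence, stated in the Exercise after Proposition \ref{tightness_prop}, is that every limit point charges only continuous paths. This is the uniform-topology tightness sought in Step 1.

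The main obstacle is the passage from tightness of each projection to tightness on $\mf D'$. Since $\mf D'$ is neither metrizable nor Fr\'echet, Prokhorov-type arguments do not apply directly. Mitoma's theorem handles this through nuclearity. The argument first shows that the laws are concentrated, with high probability, on a single step $\mf D_K'$ of the inductive limit, consisting of distributions on functions supported in a compact $K$; this uses that each $H$ has compact support. It then uses that the family is tight in a Hilbertian norm $\|\cdot\|_{-k}$. That Hilbertian tightness follows from a Hilbert–Schmidt embedding combined with the uniform-in-$H$ continuity supplied by the Banach–Steinhaus theorem. I would cite \cite{Mitoma} for this step rather than attempt it in full.
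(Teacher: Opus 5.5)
Your argument is a correct proof of the criterion as stated, but the paper's proof block does something different. The paper takes the reduction to one-dimensional projections for granted, citing \cite{Fouque} in the sentence before the lemma. Its proof then checks that the exclusion fluctuation fields satisfy (a) and (b), using the decomposition \eqref{W_decom}:
\begin{itemize}
\item the initial field $W^N_0(H)$ is controlled by its mean square under $\nu_\rho$;
\item the drift integral is controlled by Chebyshev, Schwarz and stationarity, giving a bound of order $T\delta\epsilon^{-2}\|\triangle H\|^2_{L^2}$;
\item the martingale is controlled by splitting $[0,T]$ into $O(T/\delta)$ blocks and combining Doob's inequality with the fourth-moment bound of Lemma \ref{lem:fourthmoment}.
\end{itemize}
Your route instead supplies the abstract part the paper leaves to citation. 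It also makes explicit a point the paper leaves implicit: tightness at every fixed $t$, which Proposition \ref{tightness_prop} requires, follows from (a) and (b) by chaining. So yours proves the lemma as literally stated, whereas the paper's supplies the model-specific input without which the lemma cannot be used in Step 1.

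In your chaining step, note that $\lceil T/\delta\rceil$ grows as $\delta\downarrow 0$, so the increment bound is not small. What you want is to fix one $\delta$ for which (b) gives $w_{W^N(H)}(\delta)<1$ with probability at least $1-\epsilon$ for large $N$. This bounds $\sup_t|W^N_t(H)|$ by $A+\lceil T/\delta\rceil$. The finitely many remaining $N$ are handled by tightness of each individual measure.

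Your sketch of the internals of the cited theorem is inaccurate, though your proof does not rely on it. $\mf D$ is the strict inductive limit of the spaces $\mf D_K$, so its strong dual $\mf D'$ is the \emph{projective} limit of the $\mf D_K'$ under restriction maps. The $\mf D_K'$ are not steps of an inductive limit sitting inside $\mf D'$, so the laws cannot concentrate on one of them. Moreover, the fields $W^N_t$ are not supported in any compact set. Fouque's extension reduces tightness in $D([0,T],\mf D')$ to tightness of the restrictions in each $D([0,T],\mf D_K')$. The concentration on a single Hilbertian dual norm is Mitoma's step, carried out inside each nuclear Fr\'echet space $\mf D_K$ via a Baire-category continuity argument and a Hilbert--Schmidt embedding.
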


\begin{proof}
Prelimit, we have
\begin{align}\label{W_decom}
W^N_t(H) &=  \M^N_t(H) + W^N_0(H) \\
&\quad + \frac{1}{2N^{d/2}}\int_0^t \sum_{x,y}p(y) (\triangle^N_{x,y} H)(x/N) (\eta_{N^2s}(x)-\rho)ds.\nonumber
\end{align}
We need to show each of these terms satisfies the criteria.  
\vskip .1cm

{\it The term $W^N_0(H)$.}  The mean square, starting from the invariant measure $\nu_\rho$, is bounded in the limit by $2\rho(1-\rho)\|H\|^2_{L^2(\R^d)}$, and hence item (a) holds.  Part (b) holds trivially.
\vskip .1cm

{\it The integral term.}  Part (a) holds trivially.  For item (b), 
 by Chebychev and Schwarz inequalities, we may bound
\begin{align*}
&Q_N\Big[\sup_{\stackrel{|t-s|\leq \delta}{s,t\in [0,T]}} \Big|\int_s^t \frac{1}{2N^{d/2}}\sum_{x,y} p(y)\big(\triangle^N_{x,y}G\big)(x/N)(\eta_{N^2u}(x)-\rho)du\Big|>\epsilon\Big]\\
&\quad \leq \frac{\delta}{\epsilon^2} \int_0^T E_{Q_N}\Big[\Big(\frac{1}{2N^{d/2}}\sum_{x,y} p(y)\big(\triangle^N_{x,y}G\big)(x/N)(\eta_{N^2u}(x)-\rho)\Big)^2\Big] du.
\end{align*}
Starting from $\nu_\rho$, the last quantity is bounded by $2d^{-1}\rho(1-\rho)T\delta \epsilon^{-2}\|\triangle G\|_{L^2(\R^d)}^2$, which vanishes as $\delta\downarrow 0$.

\vskip .1cm
{\it The term $\M^N_t(H)$.} Part (a) holds trivially.  For item (b), 
We employ the standard `three $\epsilon$ argument, dividing the interval $[0,T]$ into $O(T/\delta)$ subintervals (a similar scheme was used in Subsection \ref{sec:3.3.3}).  Noting when $s,t$ belong to a single subinterval or are in adjacent ones, we may bound 
$$\sup_{|t-s|\leq \delta}|\M^N_t(h_z) - \M^N_s(h_z)| \leq \max_i\sup_{t\in I_i} \ 3|\M^N_t(h_z) - \M^N_{t_i}(h_z)|$$
where the max is over the subintervals $\{I_i\}$ and $t_i$ is the left endpoint of the $i$th one.
Then, by stationarity, Chebychev and Doob's inequality and Lemma \ref{lem:fourthmoment},
\begin{align*}
&Q_N\Big[\sup_{\stackrel{|t-s|\leq \delta}{t,s\in [0,T]}}| \M^N_t(H)-\M^N_s(H) |> \epsilon\Big] \\
&\quad \leq \sum_i Q_N\Big[\sup_{t\in I_i}| \M^N_t(H)-\M^N_{t_i}(H) |> \epsilon/3\Big]\\
&\quad \leq \sum_i \frac{C}{(\epsilon/3)^4}E_{Q_N}\Big[\big(\M^N_\delta(H)\big)^4\Big] \ \leq\  \frac{C(H)|T}{\delta} \cdot  \big[\delta^2 + {\delta}/{N^{d+1}}\big]. 
\end{align*}
The right-hand side of the display vanishes as $N\uparrow\infty$ and $\delta\downarrow 0$.
\end{proof}

\section{Notes}
The proof given here for the current fluctuations Theorem \ref{current_thm} follows \cite{JL}, and
the idea of truncation goes back to \cite{RV}.

We comment, with respect to equilibrium fluctuations, in the context of symmetric exclusion models, as for the hydrodynamic limit, there is no `replacement' estimate needed; see also the treatment in \cite{KL}.     The theory of generalized Ornstein Uhlenbeck processes originates in \cite{HS}.

Recently, a proof of fluctuations starting from `flat' states via `discrete regularity structures' has been given in symmetric exclusion \cite{HMW}.  

`Nonequilibrium' fluctuations have also been shown in symmetric exclusion \cite{Ravishankar}. 
In asymmetric models, fluctuations starting from an invariant measure are also known in $d\geq 3$ \cite{Chang}.

  \subsection{KPZ etc.}
In $d=1$, for the nearest-neighbor asymmetric process, including TASEP, there are many works describing the behaviors of currents, `height' functions, and fluctuation fields starting from an invariant measure and other initial conditions.
 Instead of a generalized OU process, under time scaling $v(N) = N^{3/2}$, space scaling $1/N$, and say scaling $1/\sqrt{N}$ of the `height' functions, the limits are different, those with respect to a `KPZ' (Kardar-Parisi-Zhang) fixed point (cf. \cite{MQR}).
 
 On the other hand, if the asymmetry is weak in the sense the drift of the transition probability is $O(1/\sqrt{N})$, under diffusive time/space scaling, the limit of certain translated fluctuation fields solves a `stochastic Burgers' or sometimes called `KPZ-Burgers' equation.  
 
 These limits connect with integrable probability, last passage percolation and the `Directed Landscape', polymers, singular KPZ SPDE, and random matrices.  See \cite{ACQ}, \cite{Baik}, \cite{BG}, \cite{Corwin}, \cite{Das}, \cite{DOV}, \cite{GJ}, \cite{GJS}, \cite{GK}, \cite{GPS}, \cite{GP}, \cite{GP1}, \cite{Hairer}, \cite{IGP}, \cite{Johansson}, \cite{Liu}, \cite{Quastel}, \cite{Spohnreview}, \cite{Yang}, \cite{Yang1} for reviews and discussion, among other references.
 
 In $d\geq 2$, the work \cite{CGT} on scaled continuum stochastic Burgers equation shows Gaussian fluctuations. In $d=2$, Gaussian behaviors have been shown with respect to scaled `subcritical' continuum KPZ equations \cite{CD}, \cite{CSZ}, \cite{G}.     In $d\geq 3$, Gaussian behaviors have been seen starting from asymmetric exclusion \cite{LOY1}.  As mentioned in \cite{CGT}, in $d=2$, it is open to see such behavior starting from asymmetric exclusion, or other particle systems.  

In $d=1$, the fluctuation fields of many-component systems of particles, and coupled SPDE's including coupled stochastic Burgers equations have been considered, although many open problems remain; see \cite{ABC}, \cite{BFS}, \cite{FFSV}, \cite{Schutz}, \cite{RDKKS}, \cite{RDKS}, \cite{SKP}, \cite{Spohn-coupled} and references therein.

\newpage
\chapter[Section $12$]{Boltzmann-Gibbs principle for symmetric zero-range processes}
\label{lec12}

We discuss the equilibrium fluctuations of $d$-dimensional symmetric finite-range zero-range processes, that is the scaling limit of the empirical mass fluctuation field, starting from an invariant measure $\nu_\rho$.  Unlike for symmetric exclusion models, a `replacement' at the fluctuation level must be made in order to identify the limit field as a generalized OU process.  This replacement, sometimes called the `Boltzmann-Gibbs' principle, is of its own interest.  The proof we give makes use of a `spectral gap' estimate.

\section{Statement of equilibrium fluctuations}
Recall, from Sections \ref{lec4} and \ref{lec7}, that the rate function $g:\N_0\rightarrow \R_+$ and the jump probability $p(\cdot)$ define the zero-range process with generator
$$Lf(\eta) = \sum_{x,y\in \Z^d}p(y)g(\eta(x))\big[f(\eta^{x,x+y})-f(\eta)\big].$$
 To reduce notation, we will assume that $p$ is symmetric, translation-invariant, and nearest-neighbor: $p(\pm e_i)= 1/(2d)$ for the standard basis $\{e_i\}_{i=1}^d$.  Also, we remind that $g$ satisfies $g(0)=0$, $g(k)>0$ for $k\geq 1$.  We will also impose the following.
\begin{itemize}
\item[(Lip)] 
$|g(k+1)-g(k)|\leq a_0$ for all $k\geq 0$.
\item[(M)] There exists $k_0$ and $\epsilon_0>0$ such that $g(k+k+0) - g(k)\geq \epsilon_0$ for all $k\geq 0$.
\end{itemize}
The first assumption is something we have already seen in the construction of the process on $\Z^d$ in Section \ref{lec7}, which includes the independent particle process when $g(k)\equiv k$.  The second assures a uniform `spectral gap' that will be discussed later in Subsection \ref{sec:specgap}.

We will also fix an invariant measure $\nu_\rho=\prod_{x\in \Z^d}\kappa_{\Psi(\rho)}$, a product of `Poisson' like marginals, and the process will be assumed to begin under this extremal invariant measure.  We will assume the process is an $L^2(\nu_\rho)$ process on $\Z^d$. Recall Section \ref{lec8} for specifications and details.
There is no difficulty in assuming if preferred that the process is on the torus $\T^d_N$ where there are no construction issues.
As before, $E_\mu$ denotes the probability and expectation under $\mu$, and $\P_\mu$ and $\E_\mu$ the process measure and expectation when starting in $\mu$.

Recall, as in Section \ref{lec11}, that $W^N_t(G)$ for fixed $G$ smooth with compact support stands for the fluctuation field,
$$W^N_t(G) \ = \ \frac{1}{N^{d/2}}\sum_x G(x/N)\big(\eta_{N^2t}(x) - \rho\big).$$
To derive the limit field, write as before
\begin{eqnarray*}
W^N_t(G) &=& W^N_0(G) + N^2\int_0^t LW^N_s(G) ds + \M^N_t(G)
\end{eqnarray*}
where, after the usual summation-by-parts,
$$LW^N_t(G) \ = \ \frac{1}{2N^{d/2}}\sum_{x,y\in \Z^d}p(y) \triangle^N_{x,y}G(x/N) \big(g\big(\eta_{N^2s}(x)\big)-\Psi(\rho)\big).$$
Here, $\Psi(a) = E_{\nu_a}[g(\eta(0))]$,
and
$\M^N_t(G)$ is a martingale such that
$${\mc N}^{N}_t(G)\ = \ (\M^N_t(G))^2 - N^2\int_0^t L(Y^N_s(G))^2 - 2Y^N_s(G)LY^N_s(G)ds$$
is a martingale.
The last integral in the display, equal to $\langle \M^N(G)\rangle_t$, can be evaluated as
\begin{eqnarray*}
&&N^2\int_0^t \Big[L(Y^N_s(G))^2 - 2Y^N_s(G)LY^N_s(G)\Big]ds \\
&&\ \ \  = \ \frac{1}{N^{d}}\int_0^t \sum_{x,y\in \Z^d}p(y) [\nabla^N_{x,y}G(x/N)]^2g\big(\eta_{N^2s}(x)\big) ds.\end{eqnarray*}
These calculations are analogous to those for hydrodynamics in Sections \ref{lec4}, \ref{lec5}.

\medskip
Following the method described for exclusion processes in Section \ref{lec11}, we have two steps:
\medskip

Step 1:  Show tightness of $\{W^N_t: t\in [0,T]\}$ in an appropriate space, and continuity of limit trajectories under limit points.

Step 2:  Identify the limit points in terms of a unique `infinite dimensional Ornstein-Uhlenbeck' process 
\medskip

As before, the space of trajectories is $D([0,T], \mc D')$, where $\mc D'$ is the dual space of distributions with respect to $\mc D= C^\infty_c(\R^d)$.  
Step 1, tightness, is accomplished as before, and it is left to the reader to verify the proof in the zero-range setting.

The more interesting part, for zero-range processes, is Step 2 where instead of the occupation variable $\eta(x)$ we have a function of it, namely $g(\eta(x))$ in both martingales above.  If the normalization were $N^{-d}$ instead of $N^{-d/2}$, replacing the nonlinear function with a function of the mass empirical density is the `standard' hydrodynamic replacement.  Here, we have to work a little harder.  However, starting in equilibrium $\nu_\rho$ helps.

The following `Boltzmann-Gibbs' estimate allows the replacement in $\M^N_t(G)$.

\begin{theorem}
\label{BG}
For smooth, compactly supported $G$, we have
\begin{align*}
&\lim_{N\uparrow\infty}  \E_{\nu_\rho}\Big[ \Big| \int_0^t\frac{1}{N^{d/2}}\sum_{x,y\in \Z^d} p(y)\triangle^N_{x,y}G(x/N) \\
&\quad\quad\quad \cdot \Big(g\big(\eta_{N^2s}(x)\big) - \Psi(\rho) - \Psi'(\rho)\big(\eta_{N^2s}(x)-\rho\big)\Big)]ds\Big|^2\Big] \ = \ 0.
\end{align*}
\end{theorem}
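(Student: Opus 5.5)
The plan is the standard `Boltzmann-Gibbs' scheme: a local averaging replacement controlled by the $\H_{-1}$ variance bound together with a spectral gap on boxes. Write $V(\eta)=g(\eta(0))-\Psi(\rho)-\Psi'(\rho)(\eta(0)-\rho)$ and $H_N(x)=\sum_y p(y)\triangle^N_{x,y}G(x/N)$, which is bounded and effectively supported on $O(N^d)$ sites. The quantity to control is $\E_{\nu_\rho}[(\int_0^t N^{-d/2}\sum_x H_N(x)\tau_xV(\eta_{N^2s})ds)^2]$. The first tool is the stationary variance bound (the $\H_{-1}$ estimate behind Lemma \ref{lem:asymptoticH-1} and the resolvent computation of Section \ref{lec9}), valid since $\nu_\rho$ is reversible. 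It gives
\[
\E_{\nu_\rho}\Big[\Big(\int_0^t F(\eta_{N^2s})ds\Big)^2\Big]\ \leq\ C t\, N^{-2}\|F\|_{-1}^2,
\]
where $\|F\|_{-1}^2=\sup_\phi\{2\langle F,\phi\rangle_{\nu_\rho}-D(\phi)\}$ is the $\H_{-1}$ norm for $L$. The $N^{-2}$ arises because time is sped up by $N^2$.

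First I would decompose, with a mesoscopic scale $\ell$ fixed first and sent to infinity after $N$, and $\eta^\ell(x)$ the box average over $\Lambda_\ell(x)$:
\[
\tau_xV=\big[\tau_xV-E_{\nu_\rho}[\tau_xV\,|\,\eta^\ell(x)]\big]+\big[E_{\nu_\rho}[\tau_xV\,|\,\eta^\ell(x)]\big].
\]
For the first piece, $\tau_xV-E[\tau_xV|\eta^\ell(x)]$ has mean zero on every canonical hyperplane of the box $\Lambda_\ell(x)$. I would then invoke the spectral gap of zero-range restricted to $\Lambda_\ell$ with $K$ particles, which is of order $\ell^{-2}$ uniformly in $K$ under (M); this is the estimate to be discussed in Subsection \ref{sec:specgap}. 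By solving the Poisson equation on the box, it gives $\langle h,\phi\rangle_{\nu_\rho}\le C\ell\, \|h\|_{L^2}\,D_{\Lambda_\ell(x)}(\phi)^{1/2}$ for such mean-zero local $h$. Next I would group the sites $x$ into $O(\ell^d)$ classes with disjoint boxes, summing the Dirichlet forms $D_{\Lambda_\ell(x)}$ with overlap at most $(2\ell+1)^d$. Using Schwarz, this bounds the $\H_{-1}$ norm squared of $N^{-d/2}\sum_x H_N(x)(\cdot)$ by $C\ell^{2}\ell^d N^{-d}\sum_x H_N(x)^2=O(\ell^{d+2})$. Multiplying by $tN^{-2}$ shows this piece vanishes as $N\to\infty$ for fixed $\ell$. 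Here (Lip) ensures $V\in L^2$.

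For the second piece, I would expand $E_{\nu_\rho}[V\,|\,\eta^\ell(0)=a]$ via equivalence of ensembles, the local CLT as in Exercise \ref{lclt}. Its value is $\Psi(a)-\Psi(\rho)-\Psi'(\rho)(a-\rho)+O(\ell^{-d})$, and by the choice of the linear correction this is $\tfrac12\Psi''(\rho)(a-\rho)^2+O(\ell^{-d})+o((a-\rho)^2)$. The term is then bounded crudely by the Schwarz inequality in time and stationarity:
\[
t^2 N^{-d}\, E_{\nu_\rho}\Big[\Big(\sum_x H_N(x)\, \Phi_\ell(\eta^\ell(x))\Big)^2\Big],
\]
where $\Phi_\ell$ is centered and has size $O(\ell^{-d})$ in $L^2$, because $(\eta^\ell-\rho)^2$ minus its mean has variance $O(\ell^{-2d})$ and $\ell$-dependent correlations. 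This yields $O(t^2\ell^{d}\ell^{-2d})=O(t^2\ell^{-d})$, which vanishes as $\ell\to\infty$. Any remaining constant bias is removed by centering: the constant term times $N^{-d/2}\sum_x H_N(x)$ is zero, since $\sum_x\triangle^N G=0$.

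I expect the main obstacle to be the uniform-in-$K$ spectral gap on boxes and the attendant control of unbounded occupation numbers. In particular, the equivalence of ensembles must be made uniform over all densities $a$, including large ones, where $\Psi$ may be nonsmooth in the tails. If the uniform gap is not available, I would first try truncating to $\eta^\ell\le A$, as in Lemma \ref{truncation}, and bounding the tail through the moments of $\kappa_{\Psi(\rho)}$.
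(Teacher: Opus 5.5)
Your proposal is correct and is essentially the paper's argument. You condition on the box density at scale $\ell$, bound the canonically centered piece in $\mathcal{H}_{-1}$ via the uniform-in-$K$ spectral gap and the Kipnis--Varadhan variance bound (giving $O(t\ell^{d+2}/N^2)$), and treat the conditional-expectation piece by Schwarz in time, an $\ell^d$ cost from $\ell$-dependence, and equivalence of ensembles with truncation. As in the paper, that truncation should be two-sided, $|\eta^\ell-\rho|\le\rho/2$, not just $\eta^\ell\le A$, since the local CLT step needs the density bounded away from $0$ as well as from $\infty$. The only real difference is that you condition all of $V$, so the linear term $\Psi'(\rho)(\eta(x)-\eta^\ell(x))$ is absorbed into the spectral-gap piece; this replaces the paper's separate step of swapping $\eta(x)$ for $\eta^\ell(x)$ using the smoothness of $H$.
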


The replacement, however, in the square martingale ${\mc N}^N_t(G)$ will be a consequence of the following limit.
\begin{equation}
\label{ergodic}
\lim_{N\uparrow\infty} \E_{\nu_\rho}\Big[ \Big|\int_0^t\frac{1}{N^d}\sum_{x,y}p(y) [\nabla^N_{x,y}G(x/N)]^2\Big[g\big(\eta_{N^2s}(x)\big) - \Psi(\rho)\Big] ds\Big|^2\Big] \ = \ 0.
\end{equation}

Hence, we arrive at the following result, specializing to the nearest-neighbor setting, when $p(\pm e_i)=1/(2d)$ with respect to the standard basis $\{e_i\}_{i=1}^d$.

\begin{theorem}
We have that $W^N_t$ converges to $W_t$ where
\begin{equation}
\label{infinity_eqn-zr}
dW_t \ = \ \frac{\Psi'(\rho)}{2d}\triangle 
W_t dt + \sqrt{d^{-1}\Psi(\rho)}\nabla d\B_t.\end{equation}
\end{theorem}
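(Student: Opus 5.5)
The plan is to follow the scheme of Section \ref{lec11} and reduce the statement to the Holley--Stroock characterization, Theorem \ref{OU}, with $U=\frac{\Psi'(\rho)}{2d}\triangle$ and $B=\sqrt{d^{-1}\Psi(\rho)}\nabla$.

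First comes tightness (Step 1) in $C([0,T],\mf D')$. By the criterion of Lemma \ref{tightness}, it suffices to treat separately the three terms of the decomposition $W^N_t(G)=W^N_0(G)+\int_0^t N^2LW^N_s(G)ds+\M^N_t(G)$.

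For $W^N_0(G)$, the bound follows from $E_{\nu_\rho}[(\eta(0)-\rho)^2]<\infty$, using the product structure of $\nu_\rho$.

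For the integral term, the integrand is $\frac{1}{2N^{d/2}}\sum_{x,y}p(y)\triangle^N_{x,y}G(x/N)\,\big(g(\eta(x))-\Psi(\rho)\big)$. Under $\nu_\rho$ its second moment is bounded by $C\|\triangle G\|_{L^2}^2\,{\rm Var}_{\nu_\rho}(g(\eta(0)))$, which is finite because $g(k)\le a_0k$ by (Lip). Stationarity together with the Schwarz inequality then gives the $\delta$-modulus bound, exactly as for exclusion.

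For the martingale, I would follow the exclusion argument: an expected quadratic variation bound of order $\|\nabla G\|^2_{L^2}E_{\nu_\rho}[g]$, and a fourth moment bound $C(G)(t^2+N^{-d-2}t)$. The fourth moment would come from the exponential martingale $Z^\lambda$ as in Lemma \ref{lem:fourthmoment}, or alternatively from Burkholder--Davis--Gundy. Occupation numbers are now unbounded, so the expansion needs moments of $g(\eta(x))$ under $\nu_\rho$, all of which are finite since $\kappa_{\Psi}$ has exponential tails for $\Psi<\liminf g$. Combined with Doob's inequality on a $\delta$-partition, this gives (b).

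Next comes identification (Step 2). Take a limit point $Q$. Theorem \ref{BG} lets us replace $g(\eta_{N^2s}(x))-\Psi(\rho)$ by $\Psi'(\rho)(\eta_{N^2s}(x)-\rho)$ in the drift, with an $L^2$ error that vanishes. Since $\frac{1}{2}\sum_y p(y)\triangle^N_{\cdot,y}G\to\frac{1}{2d}\triangle G$ uniformly on compact support, the drift becomes $\int_0^tW^N_s(UG)ds+o(1)$. The map $w\mapsto w_t(G)-w_0(G)-\int_0^tw_s(UG)ds$ is continuous in the uniform topology, so $\M^N_t(G)\Rightarrow M^{U,G}_t$. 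The second moment bound gives uniform integrability, and Theorem IX.1.12 of \cite{JS} then shows that $M^{U,G}_t$ is a $Q$-martingale.

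For the quadratic variation, \eqref{ergodic} shows that $\langle\M^N(G)\rangle_t\to t\Psi(\rho)\frac{1}{d}\|\nabla G\|^2_{L^2}=t\|BG\|^2_{L^2}$ in probability. Uniform integrability of $(\M^N_t)^2-\langle\M^N\rangle_t$, from the fourth moment bound, then yields the martingale property of $N^{U,G}_t$. Finally, $W^N_0$ converges to a Gaussian field with covariance $\sigma^2(\rho)\langle G,H\rangle$, by the product structure and Cram\'er--Wold. Theorem \ref{OU} thus identifies $Q$ uniquely, and the whole sequence converges to the solution of \eqref{infinity_eqn-zr}.

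The main obstacles are Theorem \ref{BG} and \eqref{ergodic}. Both are taken as given here, but they are where the real work lies. For \eqref{ergodic} I would first project $g(\eta(x))-\Psi(\rho)$ onto $\eta^\ell(x)$ and then bound the time integral by an $H_{-1}$ variance estimate, as in Proposition \ref{variance_sym} and Lemma \ref{lem:asymptoticH-1}. The spectral gap under (M) controls the $\H_{-1}$ norm of locally centered functions by $O(\ell^2)$ per site. The residual $\Psi(\eta^\ell(x))-\Psi(\rho)$, carrying the $N^{-d}$ normalization, is small in $L^2(\nu_\rho)$ by the law of large numbers.

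The secondary technical point is the fourth moment bound with unbounded $\eta$. I expect exponential moments under $\nu_\rho$ to suffice for controlling the remainder terms $\mc R^i$.
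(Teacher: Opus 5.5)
Your proposal is correct and follows the paper's route: tightness exactly as in Section \ref{lec11}, Theorem \ref{BG} to replace $g(\eta(x))-\Psi(\rho)$ by $\Psi'(\rho)(\eta(x)-\rho)$ in the drift, \eqref{ergodic} for the quadratic variation, and the Holley--Stroock characterization (Theorem \ref{OU}) with $U=\frac{\Psi'(\rho)}{2d}\triangle$ and $B=\sqrt{d^{-1}\Psi(\rho)}\nabla$. One simplification: \eqref{ergodic} is not where the real work lies and needs no $H_{-1}$ or spectral-gap argument, because stationarity, Cauchy--Schwarz in time and independence of $\{g(\eta(x))\}$ under $\nu_\rho$ already bound its left side by $O(t^2N^{-d})$, just as \eqref{eq:quad_var_prob} does for exclusion.
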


The characterization of $W_t$ in terms of the generalized OU martingale problem of Holley and Stroock is as before with symmetric exclusion in Section \ref{lec11}.  Though, the operators $U = (\Psi'(\rho)/(2d))\triangle$ and $B = I\Psi(\rho)/d)\nabla$ differ in prefactor constants.

Analogous to before with symmetric exclusion,
$$E_{\nu_\rho}[W_0(G)W_0(H)] \ = \ \sigma^2\langle G, H\rangle$$
where $\sigma^2(\rho) = E_{\nu_\rho}[(\eta(0)-\rho)^2]$.  Also, the covariance of $W_t(G)$ and $W_s(H)$ satisfies the formula \eqref{eq:W-covariance} where $T_t$ is the semigroup associated to $(\Psi'(\rho)/(2d))\triangle$.

One way to look at \eqref{infinity_eqn-zr} is to relate it to the hydrodynamic equation:
$$\partial_t \rho \ = \ \frac{1}{2d}\triangle \Psi(\rho)$$
with initial condition $\rho(0,x) = \rho_0(x)$.  The quantity $W^N_t$ can be seen as an `error' via a `linearization' of the hydrodynamic equation about the equilibrium density $\rho$ (the constant solution when starting from $\nu_\rho$), where $\Psi(\rho(t,x)) \sim \Psi(\rho) + \Psi'(\rho)\rho(t,x)$.  See \cite{Spohn} for more physical intuition behind this intepretation.

We remark in passing, in the case $g(k)\equiv k$, the setting of independent particles, the replacement Theorem \ref{BG} is not needed as, analogous to symmetric exclusion, the martingale $\mc M^N_t(G)$ is already `closed' with respect to $W^N_t$, that is fully expressed in terms of $W^N_t(G)$ and $W^N_t(\triangle^N_{x,y}G)$.

\section{Kipnis-Varadhan estimate}

We begin with the following `Kipnis-Varadhan' \cite{KV} non-asymptotic bound of independent interest, helpful to establish the Boltzmann-Gibbs principle.  Recall the notions of $H_1$ and $H_{-1}$ norms from Section 
\ref{lec9}.  These extend to the zero-range context, where the local functions used to define spaces $H_{1}$ and $H_{-1}$ are local $L^2(\nu_\rho)$ functions.

\begin{lemma}
\label{H-1bound}
For all local $L^2(\nu_\rho)$ functions, we have for the symmetric zero-range process
$$\E_{\nu_\rho}\Big[\Big(\int_0^t f(\eta_s) ds \Big)^2\Big] \ \leq \ 12t \|f\|_{-1}^2.$$
\end{lemma}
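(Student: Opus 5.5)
The plan is to follow the resolvent/martingale decomposition used in the proof of Proposition \ref{KV_prop}, but now keeping track of constants non-asymptotically, with the choice $\lambda = t^{-1}$. By reversibility of $\nu_\rho$ (the process is symmetric), $L$ is self-adjoint on $L^2(\nu_\rho)$. Fix $\lambda>0$ and let $u_\lambda=(\lambda-L)^{-1}f$ solve the resolvent equation \eqref{resolvent_eqn}. As in Step 1 of Section \ref{lec9}, multiplying by $u_\lambda$ and using duality of $\H_1,\H_{-1}$ gives
\[
\lambda\|u_\lambda\|^2_{L^2(\nu_\rho)}+\|u_\lambda\|_1^2\le \|f\|_{-1}^2 .
\]
We may assume $\|f\|_{-1}<\infty$, else there is nothing to prove.

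Next we write, exactly as in the proof of Proposition \ref{KV_prop},
\[
\int_0^t f(\eta_s)ds = M_\lambda(t)+\lambda\int_0^t u_\lambda(\eta_s)ds + u_\lambda(\eta_0)-u_\lambda(\eta_t),
\]
where $M_\lambda(t)=u_\lambda(\eta_t)-u_\lambda(\eta_0)-\int_0^tLu_\lambda(\eta_s)ds$ is a martingale. By \eqref{eq:M-lim} and stationarity, $\E_{\nu_\rho}[M_\lambda(t)^2]=2t\|u_\lambda\|_1^2$. Using $(a+b+c+d)^2\le 4(a^2+b^2+c^2+d^2)$ or a similar split, we bound the remaining terms as follows.
\begin{itemize}
\item By Schwarz and invariance, the integral term satisfies $\E[(\lambda\int_0^tu_\lambda)^2]\le \lambda^2t^2\|u_\lambda\|^2_{L^2}$.
\item The boundary terms each contribute $\|u_\lambda\|^2_{L^2}$.
\end{itemize}
Choosing $\lambda=1/t$, we get $\lambda^2t^2\|u_\lambda\|_{L^2}^2 = t\cdot\lambda\|u_\lambda\|_{L^2}^2\le t\|f\|_{-1}^2$, and likewise $\|u_\lambda\|^2_{L^2}\le t\|f\|_{-1}^2$. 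Summing with weights gives a bound $C t\|f\|_{-1}^2$. A three-term split, taking $M_\lambda$ and the two boundary terms together, or grouping $u_\lambda(\eta_0)-u_\lambda(\eta_t)$ with variance at most $2\|u_\lambda\|^2_{L^2}\cdot 2$, should produce the constant $12$. Getting that exact constant is only bookkeeping: for instance $3(2t+t+4t)$-type arithmetic, or using $2\lambda\|u\|^2+2\|u\|_1^2$-type combined bounds.

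The main technical point is justifying the manipulations for a local $f$ on the infinite-volume zero-range process: that $u_\lambda$ lies in ${\rm Dom}(\rho)$, and that the martingale formula and the identity $\E[M_\lambda^2]=2t D_\rho(u_\lambda)$ hold for it. I would handle this with the $L^2(\nu_\rho)$ extension of Section \ref{lec8}. By the Hille--Yosida theory, $u_\lambda\in{\rm Dom}(\rho)$ and $L^\rho u_\lambda=\lambda u_\lambda-f\in L^2$. The martingale property for domain functions follows by approximation, using that bounded Lipschitz functions form a core (Lemma \ref{lem:closure}). The identification of $\langle u_\lambda,-L^\rho u_\lambda\rangle$ with the Dirichlet form \eqref{eq:Dirichlet_equality}, which is needed for the $\H_{\pm1}$ duality, is then available. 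Alternatively, if one prefers to avoid the infinite-volume issues, one can prove the bound on a finite torus, where everything is a countable chain, and note the constant is uniform.
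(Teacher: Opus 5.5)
Your proposal is correct and is essentially the paper's proof. The paper also uses the resolvent $u_\lambda=(\lambda-L)^{-1}f$ with $\lambda=t^{-1}$, the decomposition into $M_\lambda(t)$, the $\lambda$-integral and two boundary terms, and the split $(a+b+c+d)^2\le 4(a^2+b^2+c^2+d^2)$. The constant $12$ needs no extra bookkeeping: that split gives $4\{2t\|u_\lambda\|_1^2+3\|u_\lambda\|_{L^2(\nu_\rho)}^2\}\le 12t\big(\|u_\lambda\|_1^2+\lambda\|u_\lambda\|_{L^2(\nu_\rho)}^2\big)=12t\langle f,u_\lambda\rangle_{\nu_\rho}\le 12t\|f\|_{-1}^2$, whereas your ``$3(2t+t+4t)$'' arithmetic would give $21$. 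Your domain and core remarks cover technical points the paper leaves implicit.
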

Note that it may be that $\|f\|_{-1}$ diverges for a given $f\in L^1(\nu_\rho)$.

\begin{proof}
Write the resolvent equation, for $\lambda>0$,
$$\lambda u_\lambda - Lu_\lambda \ = \ f.$$
Multiplying by $u_\lambda$ and integrating, we have
$$\lambda \|u_\lambda\|^2_0 + \|u_\lambda\|^2_1 \ = \ \langle f, u_\lambda\rangle_{\nu_\rho},$$
where $\langle f,h\rangle_{\nu_\rho} = E_{\nu_\rho}[fg]$.  Note that $\langle f, u_\lambda\rangle_{\nu_\rho}\leq \|f\|_{-1}\|u_\lambda\|_{1}$, and so $\|u_\lambda\|_1\leq \|f\|_{-1}$.
Now, consider the martingale
$$M_\lambda(t) \ = \ u_\lambda(\eta_t) - u_\lambda(\eta_0) - \int_0^t Lu_\lambda(\eta_s)ds,$$
with quadratic variation
$\langle M_\lambda \rangle = \int_0^t\big[Lu_\lambda^2 - 2u_\lambda Lu_\lambda\big]ds$.

Write
$$\int_0^t f(\eta_s)ds \ = \ M_\lambda(t) - \lambda\int_0^t u_\lambda(\eta_s)ds +u_\lambda(\eta_0) - u_\lambda(\eta_t).$$
Since $\E_{\nu_\rho}\big[M_\lambda^2(t)\big] = 2t\|u_\lambda\|_1^2$, from squaring the left hand side of the above display, using $(a+b+c+d)^2 \leq 4(a^2+b^2+c^2+d^2)$, by stationarity, we have
$$\E_{\nu_\rho}\Big[\Big(\int_0^t f(\eta_s) ds \Big)^2\Big] \ \leq \ 4\Big\{ 2t\|u_\lambda\|_1^2 + \lambda^2 t^2 \|u_\lambda\|^2_0 + 2\|u_\lambda\|^2_0\Big\}.$$
Choosing $\lambda = t^{-1}$, we see that the left hand side is bounded by 
\[12t\langle f, u_\lambda\rangle_{\nu_\rho} 
 \ \leq \ 12t\|f\|^2_{-1}. \qedhere\]  \end{proof}

\begin{remark}
\label{rem:non-asymptotic}
\rm
We comment Lemma \ref{H-1bound} also holds for asymmetric zero-range processes, among others, where the $H_1$ and $H_{-1}$ norms are with respect to the symmetrized generator $S = (L+L^*)/2$.  One can also put a `$\sup_{0\leq t\leq T}$' inside the $\E_{\nu_\rho}$-expectation, via a martingale argument.  See \cite{Ko_La_Ol}, \cite{S_comp} for these generalizations.
\end{remark}

\section{Derivation of Boltzmann-Gibbs estimate}

We need to show the variance of 
$$\int_0^t\frac{1}{N^{d/2}}\sum_{x,y\in \Z^d} p(y)\triangle^N_{x,y}G(x/N) \Big(g\big(\eta_{N^2s}(x)\big) - \Psi(\rho) - \Psi'(\rho)\big(\eta_{N^2s}(x)-\rho\big)\Big)ds$$
vanishes in the $N\uparrow\infty$ limit. 
 It will be enough to bound the variance of 
\begin{align}
\label{eq:BG-H}
\int_0^t\frac{1}{N^{d/2}}\sum_{x\in \Z^d} H(x/N) \Big(g\big(\eta_{N^2s}(x)\big) - \Psi(\rho) - \Psi'(\rho)\big(\eta_{N^2s}(x)-\rho\big)\Big)ds
\end{align}
for $H\in \mc D$, from which Theorem \ref{BG} can be deduced.

Write \eqref{eq:BG-H} as
\begin{align}
\label{eq:BG-2}
&\int_0^t \sum_x N^{-d/2}H(x/N) \Big\{g(\eta_{N^2s}(x)) 
-\E_{\nu_\rho}\big[g(\eta_{N^2s}(x))\big | \sum_{y\in B_{\ell,x}}\eta_{N^2s}(y)\big]\Big\} ds\nonumber\\
&\ \ \ \ + \ \int_0^t\sum_x N^{-d/2}H(x/N) \Big\{\E_{\nu_\rho}\big[g(\eta_{N^2s}(x))\big | \sum_{y\in B_{\ell,x}}\eta_{N^2s}(y)\big] \\
&\ \ \ \ \ \ \ \ \ \ \ \ \ \ \ \ \  - \Psi(\rho)- \Psi'(\rho)\big(\eta_{N^2s}(x)-\rho\big)\Big\} ds \ = \ A_1 + A_2.\nonumber
\end{align}
Here, $B_{\ell,x}$ is a block of width $\ell\geq 2$ centered at $x$, and  $\ell\ll N$ is another scaling parameter.  Related to the study of the hydrodynamic limit, the idea is that $A_1$ considers the approximation of $g(\eta_{N^2s}(x)$ by its conditional expectation average with respect to a local density.  The term $A_1$ gives the error with respect to the leading order terms in this conditional expectation.  

The strategy will be to bound the $H_{-1}$ norm of $A_1$, and to use Schwarz inequality and Taylor expansions with $A_2$, a sort of `equivalence of ensembles' estimate.  These are done in the next two subsections.  In Subsection \ref{sec:proofofBG}, we assemble these bounds to prove Theorem \ref{BG}.

\subsection{Bound on $A_1$}
To bound the variance of $A_1$, we will bound the $H_{-1}$ norm of its integrand, denoted $\mf A_1$.  Since time has been sped up by $v(N)=N^2$, the generator of $\eta_{N^2s}$ is $N^2L$.  For local $L^2(\nu_\rho)$ functions $\phi$, we would like to demonstrate
$$\big\langle \mf A_1, \phi\big\rangle_{\nu_\rho} \leq {\mf Y_1(N)} \big(N^2D(\phi)\big)^{1/2},$$
where the Dirichlet form
$N^2D(\phi) = \big\langle \phi, (-N^2L)\phi\big\rangle_{\nu_\rho}$ and $\mf Y_1(N)$ is a bound of the maximum ratio $\big\langle \mf A_1,\phi\rangle_{\nu_\rho}/\big(N^2D(\phi)\big)^{1/2}$.  Then, the $H_{-1}$ norm of $\mf A_1$ is bounded by $\mf Y_1(N)$, which we show vanishes as $N\uparrow\infty$.

To this end,  write
$$
\big\langle N^{-d/2}H(x/N) \tau_x V(\eta),\phi\big\rangle_{\nu_\rho} \ = \ \big\langle N^{-d/2}H(x/N) \tau_x V(\eta),\phi_\ell\big\rangle_{\nu^\ell_\rho},$$
given $V(\eta) = g(\eta(0)) - E_{\nu_\rho}[g(\eta(0))|\sum_{y\in B_{\ell, 0}}\eta(y)]$ depends only on variables indexed by $B_{\ell,0}$, where $\tau_x$ is the shift operator,  $\nu^\ell_\rho =\prod_{y\in B_{\ell,x}}\kappa_{\Psi(\rho)}$ is the restriction, and $\phi_\ell = E_{\nu_\rho}\big[\phi|\{\eta(y): y\in B_{\ell,x}\}\big]$.  We comment, as $\nu_\rho$ is a product measure, that $V(\eta) = g(\eta(0)) - E_{\nu^\ell_\rho}[g(\eta(0))|\sum_{y\in B_{\ell, 0}}\eta(y)]$.

We may further evaluate the right-hand side as
$$
\sum_{k\geq 0} P(k,\ell) \big\langle N^{-d/2}H(x/N) \tau_xV(\eta), \phi_\ell \big\rangle_{\nu_{k, \ell}}
$$
where $\nu_{k, \ell}$ is the `canonical' measure, that is $\nu^\ell_\rho$ conditioned
on there being $k$ particles in $B_{\ell, x}$, and $P(k,\ell) = P_{\nu_\rho}\big(\sum_{y\in B_{\ell,x}}\eta(y) = k\big)$.

Recall $\eta^\ell(x) = (2\ell+1)^{-d}\sum_{y\in B_{\ell, x}}\eta(y)$ (cf. \eqref{eq:eta-l} from Section \ref{lec4}).  Underlying the above localization, that is for adding and subtracting the conditional expectation $E_{\nu_\rho}[g(\eta(x))|\eta^\ell(x)]$, is that we can now solve a certain `Poisson' equation, no matter the particle number $k$, since $E_{\nu_{k, \ell}}[\tau_x V]=0$ for all $k$.

Indeed, note that $\nu_{k,\ell}$ is the unique invariant measure for the irreducible zero-range dynamics localized to $B_{\ell,x}$ with generator
$$L_{x, \ell}f(\eta) = \sum_{z, z+y\in B_{\ell, x}}p(y)g(\eta(z))\big[f(\eta^{z, z+y})-f(\eta)\big],$$
 and $\tau_xV$ is mean-zero with respect to $\nu_{k, \ell}$, which assigns probabilities to configurations in $\{0, 1, \ldots, k\}^{B_{\ell, x}}$.  Moreover, one may verify that $\nu_{k, \ell}$ is reversible with respect to $L_{x, \ell}$.  Also, the Dirichlet form with respect to $-L_{x, \ell}$ may be computed,
 \begin{align*}
 D_{x, \ell}(f;\nu_{k,\ell}) &= \big\langle f, (-L_{x, \ell})f\big\rangle_{\nu_{k, \ell}} \\
 &= \sum_{z, z+y\in B_{\ell, x}}p(y)E_{\nu_{k, \ell}}\Big[g(\eta(z))\big(f(\eta^{z, z+y})-f(\eta)\big)^2\Big].
 \end{align*}
 So, in particular, $-L_{x, \ell}$ is a nonnegative, reversible operator. 
 
\begin{exercise}\rm
Verify the reversibility of $\nu_{k,\ell}$ and the form of the Dirichlet form $D_{x,\ell}$. 
\end{exercise}
 
Now, thinking of $\nu_{k, \ell}$ as a vector in $\R^{|B_{\ell, x}|(k+1)}$, since it is in null space of the transpose $N^2 L^{T}_{x, \ell}$ as it is a stationary measure, and $E_{\nu_{k, \ell}}[\tau_x V]=0$ so that $\tau_x V$ is orthogonal to $\nu_{k, \ell}$, we conclude that $\tau_x V$ belongs to the range of $N^2 L_{x,\ell}$.  See Subsection \ref{sec:9.1} for related decompositions.

Hence, we may solve
$\tau_x V \ = \ -N^2L_{x,\ell} u$
for some function $u$.  
In particular, as $-L_{x,\ell}$ is nonnegative, and reversible with respect to $\nu_{k, \ell}$,
\begin{align*}
&\big\langle N^{-d/2}H(x/N) \tau_x V(\eta), \phi_\ell \big\rangle_{\nu_{k, \ell}}  =  \big\langle N^{-d/2}H(x/N) \big(-N^2L_{x,\ell}\big)u, \phi_\ell\big \rangle_{\nu_{k,\ell}} \\
&\quad \quad =\big\langle N^{-d/2}H(x/N)\big(-N^2L_{x,\ell}\big)^{1/2}u, \big(-N^2L_{x, \ell}\big)^{1/2}\phi_\ell\big\rangle_{\nu_{k, \ell}}\\
&\quad\quad =\big\langle N^{-d/2}H(x/N) \big(-N^2L_{\ell,k}\big)^{-1/2}\tau_x V, \big(-N^2L_{\ell,k}\big)^{1/2}\phi_\ell \big\rangle_{\nu_{k,\ell}}.
\end{align*}

At this point, we state a spectral gap inequality to be discussed later.  Namely,
for mean-zero functions, we have that
$$\big\|\big(-L_{x,\ell}\big)^{-1/2} \tau_x V\big\|^2_{L^2(\nu_{k,\ell})} \ \leq \ {\mf M}(\ell, k)\big\|\tau_xV\big\|^2_{L^2(\nu_{\ell,k})}$$
where a bound of the inverse of the `spectral gap' ${\mf M}(\ell, k) \leq C_0\ell^2$ is independent of $k$ when (Lip) and (M) hold for the rate $g$ \cite{LSV}.
    
Hence, by the relation $2ab =\inf_{\epsilon>0}\big\{ \epsilon a^2 + \epsilon^{-1}b^2\big\}$ for $a,b>0$, we have
\begin{align*}
&\big\langle N^{-d/2}H(x/N) \tau_xV(\eta), \phi_\ell \big\rangle_{\nu_{k, \ell}} \\
&\quad\quad  \leq \ \frac{\epsilon}{2} N^{-d}H^2(x/N) N^{-2} C_0^2\ell^2\big\|\tau_xV\big\|^2_{L^2(\nu_{k,\ell}} + \frac{\epsilon^{-1}N^2}{2}D_{x,\ell}(\phi_\ell; \nu_{k,\ell}).
\end{align*}

Note, by unraveling the measures, that 
\begin{align*}
\sum_{k\geq 0} P(k,\ell) D_{x,\ell}(\phi_\ell; \nu_{k,\ell}) & =  \sum_{z,z+y\in B_{\ell,x}} p(y)E_{\nu_\rho}\Big[g(\eta(z))\big(\phi(\eta^{z,z+y}) - \phi(\eta)\big)^2\Big] \\
& :=  D_{x,\ell}(\phi;\nu_\rho).
\end{align*}
Also, by estimating the overcount of terms over bonds $(z, z+y)$, we have
$$\sum_x D_{x,\ell}(\phi; \nu_\rho) \ \leq \ C_1\ell^d D(\phi),$$
where the full Dirichlet form is evaluated
$$D(\phi) = \sum_{z, y\in \Z^d}p(y) E_{\nu_\rho}\Big[g(\eta(z))\big(\phi(\eta^{z, z+y}) - \phi(\eta)\big)^2\Big].
$$
In addition,
$\sum_{k\geq 0} P(k,\ell) \big\|\tau_x V\big\|^2_{L^2(\nu_{k,\ell})} \ = \ \big\|V\big\|^2_{L^2(\nu_\rho)}$.

Then, coming back to \eqref{eq:BG-2}, summing over $x$, 
we have
\begin{align*}
\big\langle \mf A_1, \phi\big\rangle_{\nu_\rho} &= \sum_{x\in \Z^d}\sum_{k\geq 0}
P(k,\ell)\big\langle N^{-d/2}H(x/N)\tau_x V, \phi_\ell\big\rangle_{\nu_{k,\ell}}\\
&\leq \frac{\epsilon}{2N^d}\frac{C^2\ell^2}{N^2} \sum_x H^2(x/N)\sum_{k\geq 0}P(k,\ell)\big\|\tau_x V\big\|^2_{\nu_{k,\ell}}\\
&\quad\quad + \frac{\epsilon^{-1}N^2}{2}\sum_x\sum_{k\geq 0}P(k,\ell)D_{x,\ell}(\phi_\ell; \nu_{k,\ell})\\
&\leq \frac{\epsilon}{2N^d}\frac{C^2\ell^2}{N^2} \sum_x H^2(x/N)\big\|\tau_x V\big\|^2_{\nu_\rho} + \frac{\epsilon^{-1}N^2}{2}\cdot C\ell^d D(\phi)\\
&\leq  C\|V\|_{L^2(\nu_\rho)} \|H\|_{L^2(\R^d)}\frac{\ell^{d/2 +1}}{N}\big(N^2D(\phi)\big)^{1/2},
\end{align*}
after optimizing on $\epsilon>0$.
Here, we used translation-invariance to yield $\|\tau_x V\|_{\nu_\rho} = \|V\|_{\nu_\rho}$ and the constant $C$ may have changed line to line.

Hence, the $H_{-1}$ norm of $\mf A_1$ is bounded 
$$\mf Y_1(N)= C\|V\|_{L^2(\nu_\rho)} \|H\|_{L^2(\R^d)}\frac{\ell^{d/2 +1}}{N}=O(\ell^{d/2+1}/N),$$ and therefore the variance of $A_1$ by Lemma \ref{H-1bound} is bounded
 \begin{align}
 \label{eq:A1bound}
 \E_{\nu_\rho}\big[(A_1)^2\big] \leq Ct\|V\|^2_{L^2(\nu_\rho)}\|H\|^2_{L^2(\nu_\rho)}\frac{\ell^{d+2}}{N^2}.\rightarrow 0,
 \end{align}
   as $N\uparrow\infty$ for each $\ell$ fixed.

\subsection{Bound on $A_2$}
Since $H$ is smooth, we may replace $\eta(x)$ with 
$\eta^\ell(x)$
in $A_2$.  By invariance of $\nu_\rho$, adding and subtracting $\rho$, and Schwarz inequality,
\begin{eqnarray*}
&&\lim_{N\uparrow\infty}
\E_{\nu_\rho}\Big[ \Big|\int_0^t \frac{1}{N^{d/2}}\sum_x H(x/N)\Big[\eta_{N^2s}(x)- \eta^\ell_{N^2s}(x)\Big] ds \Big|^2\Big]\\
&& = \ \lim_{N\uparrow\infty} \E_{\nu_\rho}\Big[ \Big|\int_0^t \frac{1}{N^{d/2}}\sum_x\\
&&\quad\quad \quad  \frac{1}{(2\ell+1)^d}\sum_{y\in B_{\ell,x}}\Big[H(x/N)-H((y-x)/N)\Big](\eta_{N^2s}(x)- \rho) ds \Big|^2\Big]\\
&& \leq \ C(\rho, H,t)\ell^2 N^{-2} \ \rightarrow \ 0.
\end{eqnarray*}

Then, by Schwarz inequality, using invariance and translation-invariance of $\nu_\rho$, 
we have
\begin{align}
\label{A_2_1}
&\E_{\nu_\rho}\Big[\Big|\int_0^t\sum_x N^{-d/2}H(x/N) \\
&\quad\quad\quad \times \Big\{\E_{\nu_\rho}\big[g(\eta_{N^2s}(x))\big | \sum_{y\in B_{\ell,x}}\eta_{N^2s}(y)\big] - \Psi\rho)- \Psi'(\rho)\big(\eta_{N^2s}^\ell(x)-\rho\big)\Big\} ds\Big|^2\Big]\nonumber\\
&\ \leq  \  Ct^2\ell^d\Big(\frac{1}{N^d}\sum_x H^2(x/N)\Big)\nonumber \\
&\quad\quad \quad \times E_{\nu_\rho}\Big[ \Big(E_{\nu_\rho}[g(\eta(0))| \sum_{y\in B_{\ell,0}}\eta(y)] - \Psi(\rho)- \Psi'(\rho)\big(\eta^\ell(0)-\rho\big)\Big)^2\Big]\nonumber\\
&\ \leq \ C(t, H) \ell^d E_{\nu_\rho}\Big[ \Big(E_{\nu_\rho}[g(\eta(0))| \sum_{y\in B_{\ell,0}}\eta(y)] - \Psi(\rho)- \Psi'(\rho)\big(\eta^\ell(0)-\rho\big)\Big)^2\Big]. \nonumber
\end{align}
 The factor $\ell^d$ arises since the summands over $x$ are independent only when they are separated by distance $2\ell +1$.

We now estimate the mean-square of
$$ E_{\nu_\rho}\Big[g(\eta(0))- \Psi(\rho)- \Psi'(\rho)\big(\eta^\ell(0)-\rho\big)\Big| \sum_{y\in B_{\ell,0}}\eta(y)\Big].$$

\subsubsection{Outside truncation} We first truncate the number of particles in $B_{\ell,0}$.  Let $K=\rho/2$.  Recall $g(\eta(0))\leq a_0\eta(0)$.  Then, by Schwarz inequality,
\begin{align}
\label{eq:outside}
&E_{\nu_\rho}\Big[\Big(E_{\nu_\rho}\Big[g(\eta(0))- \Psi(\rho)- \Psi'(\rho)\big(\eta^\ell(0)-\rho\big)\Big| \sum_{y\in B_{\ell,0}}\eta(y)\Big]\Big)^21\big(|\eta^\ell(0)-\rho |>K)\Big]\nonumber\\
&\quad \leq C(a_0, \rho)P_{\nu_\rho}\big(\eta^\ell(0)>K\big)^{1/2}\  \leq \ \frac{C(a_0, \rho)}{\ell^{2d} K^4}.
\end{align}
Here, we used Chebychev to bound $P_{\nu_\rho}\big(|\eta^\ell(0)-\rho|>K\big)\leq E_{\nu_\rho}[(\eta^\ell(0)-\rho)^8]/K^8$.

Inserting into \eqref{A_2_1}, we see that the multiplying $\ell^d$ factor is compensated, and the cost of truncation is small, uniformly over $N$, for large $\ell$.

\medskip

\subsubsection{Inside truncation} 
We now use a Taylor expansion and a local central limit theorem.
For $M = \sum_{y\in B_{\ell,0}}\eta(y)$, let $M/(2\ell +1)^d:=\bar M_\ell$.  Given the truncation, we have that $\rho/2\leq \bar M_\ell\leq 3\rho/2$ is bounded away from $0$ and $\infty$ uniformly in $\ell$.

Since $\nu_\rho$ is a product measure with marginals in the form given in Subsection \ref{sec:zero-rangemodels}, the conditional expectation
$E_{\nu_\rho}[h|\sum_{y\in B_{\ell, 0}}\eta(y)=M]$ does not depend on the density $\rho$ and can be changed to another, say $\bar M_\ell$; see near \eqref{1-block-final} for a similar discussion with respect to the $1$-block hydrodynamics lemma.
Recall the calculation in Exercise \ref{ex:zr-particle},
$$E_{\nu_a}\big[g(\eta(0) h(\eta)] = \Psi(a)E_{\nu_a}[h(\eta + \delta_0)]$$
where $\delta_0$ is the configuration with a single particle at $0$.
Then,
\begin{align}
\label{eq:Lan}
E_{\nu_\rho}\big[g(\eta(0)\big | \sum_{y\in B_{\ell,0}}\eta(y)=M\big]&= 
E_{\nu_{\bar M_\ell}}\big[g(\eta(0)\big | \sum_{y\in B_{\ell,0}}\eta(y)=M\big] \\
& =  \Psi(\bar M_\ell)\frac{P_{\nu_{\bar M_\ell}}[\sum_{y\in B_{\ell,0}}\eta(y) = M-1]}{P_{\nu_{\bar M_\ell}}[\sum_{y\in B_{\ell,0}}\eta(y) = M]}.\nonumber
\end{align}

Recall $\sigma^2(\bar M_\ell) = E_{\nu_{\bar M_\ell}}\big[(\eta(0) - \bar M_\ell)^2\big]$. 
By the local central limit Theorem VII.13 in \cite{Petrov} (applied with $k=3$ there), when $|S-M|<\infty$ and $\bar M_\ell$ is bounded uniformly away from $0$ and $\infty$, we have for $z = (S-M)/(\kappa(2\ell+1)^{d/2})$ and large $\ell$,
\begin{align*}
\Big|\sigma(\bar M_\ell)(2\ell + 1)^{d/2}P_{\nu_{\bar M_\ell}}[\sum_{y\in B_{\ell,0}}\eta(y) = S]  - \frac{1}{\sqrt{2\pi}} e^{-\frac{z^2}{2}} 
\Big| \ = \ o(\ell^{-d/2}).
\end{align*}
For $S=M-1$, we see $z= -\kappa^{-1} (2\ell+1)^{-d/2}$, whereas for $S=M$, we have $z=0$, in which case $\sigma(\bar M_\ell)(2\ell+1)^{d/2}P_{\nu_{\bar M_\ell}}\big(\sum_{y\in B_{\ell, 0}}\eta(y) = M\big) \sim (2\pi)^{-1/2}$ for large $\ell$.

Hence, multiplying and dividing \eqref{eq:Lan} by $\sigma(\bar M_\ell)(2\ell+1)^{d/2}$, on the set $\big\{\rho/2\leq \eta^\ell(0)=\bar M_\ell\leq 3\rho/2\big\}$, we have
\begin{eqnarray*}
E_{\nu_{\bar M_\ell}}\big[g(\eta(0))\big | \sum_{y\in B_{\ell,0}}\eta(y) = M\big] & = & \Psi(\bar M_\ell)\big(1+o(\ell^{-d/2}) \big).
\end{eqnarray*}
Then, 
\begin{align*}
&\Big|E_{\nu_\rho}\Big[g(\eta(0))- \Psi(\rho)- \Psi'(\rho)\big(\eta^\ell(0)-\rho\big)\Big| \sum_{y\in B_{\ell,0}}\eta(y)\Big]\Big|^2 1\big(|\eta^\ell(0)-\rho|\leq \rho/2\big)\\
&\quad  = \Big|\Psi(\bar M_\ell) -\Psi(\rho)- \Psi'(\rho)\big[\eta^\ell(0) - \rho\big) + o(\ell^{-d/2})\Big|^2 1\big(|\eta^\ell(0)-\rho|\leq \rho/2\big)\\
&\quad \leq \sup_{|a-\rho|\leq \rho/2}|\Psi'(a)|^2 \big[\eta^\ell(0) -\rho\big]^4 + o(\ell^{-d}),
\end{align*}
from Taylor approximation.

Since $E_{\nu_\rho}\big[(\eta^\ell(0)-\rho)^4\big] = O(\ell^{-2d})$, we observe
\begin{align}
\label{eq:inside}
&E_{\nu_\rho}\Big[\Big|E_{\nu_\rho}\Big[g(\eta(0))- \Psi(\rho)- \Psi'(\rho)\big(\eta^\ell(0)-\rho\big)\Big| \sum_{y\in B_{\ell,0}}\eta(y)\Big]\Big|^2 1\big(|\eta^\ell(0)-\rho|\leq \rho/2\big)\Big]\nonumber\\
& \quad = o(\ell^{-d}).
\end{align}
 Hence, with respect to \eqref{A_2_1}, the quantity \eqref{eq:inside} multiplied by $\ell^d$ vanishes, uniformly in $N$. 
 \medskip
 
 Finally, with estimates \eqref{eq:outside} and \eqref{eq:inside}, inserting into \eqref{A_2_1}, we have
\begin{align}
\label{eq:A2bound}
\lim_{\ell\uparrow\infty}\lim_{N\uparrow\infty}\E_{\nu_\rho}\big[(A_2)^2\big] = 0.
\end{align}

\subsection{Proof of Theorem \ref{BG}}  
\label{sec:proofofBG}
We need only show that $\E_{\nu_\rho}\big[(A_i)^2\big]$, for $i=1,2$, vanish as $N\uparrow\infty$ and $\ell\uparrow\infty$, where $A_1, A_2$ are given in \eqref{eq:BG-2}.  These limits are furnished in \eqref{eq:A1bound} and \eqref{eq:A2bound}, concluding the argument.\qed

\section{Spectral gap}
\label{sec:specgap}
One can define the `spectral gap' for a reversible process generator $L_{k,\ell}$ in terms of a Poincare inequality:
$${\rm Var}_{\nu_{k,\ell}}[f] \ \leq \ {\mf M}(k,\ell) E_{\nu_{k,\ell}}\big[f(-L_{k,\ell})f\big].$$
Then, the gap would be the reciprocal of the smallest ${\mf M}(k,\ell)$ for which the inequality is true for all $f$.

Equivalently, the gap is the difference between the two largest eigenvalues of $-L_{k,\ell}$.  Since $0$ is the largest eigenvalue, the gap would be the negative of the second largest eigenvalue, $-\lambda_2$.  Since $L_{k,\ell}$ is a matrix, one can in principle compute it exactly, but this is usually hard to do when the state space gets large.  

The spectral gap has connections with the mixing time of the process.  Recall that 
$${\rm Var}_{\nu_{k,\ell}}(P_t f) \ \leq \ Ce^{\lambda_2 t}.$$  The larger $-\lambda_2$, the faster the approach of $P_t f$ to its mean $E_{\nu_{k,\ell}}[f]$.

In zero-range processes, since the jump times are controlled by the rate $g$, one expects the orders of the mixing time to depend on $g$.  This is in fact the case.  When $g(k)=k$, the independent case, the mixing is rapid, and corresponds to the mixing time of a single random walk on $B_{\ell,0}$ which is $O(\ell^{-2})$.  For instance, consider $d=1$, then a nearest-neighbor walk, in moving from one end of the interval to the other, has to take $\ell$ steps which usually takes $\ell^2$ units of time.  

If $g$ is not too different from the independent case, one expects similar behavior, not dependent on the number of particles, and this is the result quoted and used above, valid under our assumptions (Lip) and (M) \cite{LSV}.

If $g$ is sublinear, that is $g(k) = k^\gamma$ for $0<\gamma<1$, then it has been shown the spectral gap depends on the number of particles $k$, namely the gap is $O((1+\rho)^{-\gamma} \ell^{-2})$ where $\rho = k/(2\ell +1)^d$ \cite{Nagahata}.  For instance, if $\rho\uparrow \infty$, movement becomes difficult, and the gap vanishes!

If $g(k) = 1(k\geq 1)$, then it can be seen that the gap is $O((1+\rho)^{-2}\ell^{-2})$ \cite{Morris}.  When $d=1$, this is the case where there is a connection between the zero-range process and simple exclusion:  The number of spaces between particles in simple exclusion correspond to the number of particles in the zero-range process. 

It seems to be an open problem to characterize the gap when $g$ is bounded in general. One presumably expects the behavior as in the last case.

\section{Notes}

The Boltzmann-Gibbs principle, under a stationary measure, was named and proved first by Brox and Rost \cite{BR} which did not use a spectral gap assumption. See also \cite{KL} for a presentation along this line.  The notion of using the spectral gap estimate to prove the Boltzmann-Gibbs principle seems to be newer; see also \cite{GJ},  \cite{GJS}.

While equilibrium fluctuations of particle systems with finite-range symmetric interactions is relatively well understood (cf. \cite{KL}, \cite{Ko_La_Ol}, \cite{Spohn}), a main open problem is to understand `nonequilibrium' fluctuations of the density field in a general class of particle systems.  A main difficulty is to perform the needed `replacement' estimates in the fluctuation scales.  Results in dimension $d=1$ are known for finite-range symmetric models; see \cite{CY}, \cite{Ferrari}.  Recently, these results have been generalized to $d\leq 3$, for a class of particle systems \cite{Jara-Menezes}; see also \cite{Dagalier-Landim},  \cite{FLS}, \cite{JL-fluc}, and discussions therein.

There are many works on mixing time, spectral gap and related log-Sobolev estimates.  See for instance \cite{BDagT}, \cite{Caputo-Japan}, \cite{CLR}, \cite{Caputo}, \cite{Cancrini}, \cite{DaiPrai}, \cite{Diaconis}, \cite{Gantert}, \cite{Hermon}, \cite{Labbe}, \cite{Rameh}, \cite{Salez} and references therein.

Beyond fluctuations, other scalings and limit fields have been considered.  We mention in passing, cogent to the present narrative, these include the `Navier-Stokes' corrections \cite{LOY}, and large deviations of the empirical density and ,currents  \cite{Bertini-etal}, \cite{Bertini1}, \cite{Bodineau}, \cite{KOV}, \cite{Mariani}, \cite{QT}, \cite{Vjapan}.

\end{document}